\newcommand{\id}{\operatorname{id}}
\newcommand{\im}{\operatorname{im}}
\newcommand{\gr}{\operatorname{gr}}
\newcommand{\pr}{\operatorname{pr}}
\newcommand{\divi}{\operatorname{div}}
\newcommand{\supp}{\operatorname{supp}}
\newcommand{\Sp}{\operatorname{Sp}}
\newcommand{\Fr}{\operatorname{Fr}}
\newcommand{\Hom}{\operatorname{Hom}}
\newcommand{\End}{\operatorname{End}}
\newcommand{\Ext}{\operatorname{Ext}}
\newcommand{\Fil}{\operatorname{Fil}}
\newcommand{\Mod}{\operatorname{Mod}}
\newcommand{\Lie}{\operatorname{Lie}}
\newcommand{\Tor}{\operatorname{Tor}}
\newcommand{\Ve}{\operatorname{Vec}}
\newcommand{\Aut}{\operatorname{Aut}}
\newcommand{\Spec}{\operatorname{Spec}}
\newcommand{\GL}{\operatorname{GL}}
\newcommand{\Gal}{\operatorname{Gal}}
\newcommand{\co}{\widehat{\otimes}}
\newcommand{\la}{\mathrm{la}}
\newcommand{\Qp}{\mathbf{Q}_p}
\newcommand{\Cp}{\mathbf{C}_p}
\newcommand{\Zp}{\mathbf{Z}_p}
\newcommand{\ZZ}{\mathbf{Z}}
\newcommand{\QQ}{\mathbf{Q}}
\theoremstyle{plain}
\newtheorem{theorem}{Theorem}[subsection]
\newtheorem{corollary}[theorem]{Corollary}
\newtheorem{lemma}[theorem]{Lemma}
\newtheorem{proposition}[theorem]{Proposition}
\newtheorem{fact}[theorem]{Fact}
\newtheorem*{theoa}{Theorem A}
\newtheorem*{theob}{Theorem B}
\newtheorem*{theoc}{Theorem C}
\newtheorem*{theorec}{Theorem}
\theoremstyle{definition}
\newtheorem{definition}[theorem]{Definition}
\theoremstyle{remark}
\newtheorem{remark}[theorem]{Remark}
\begin{document}

\title{Rigid character groups, Lubin-Tate theory, and $(\varphi,\Gamma)$-modules}

\author{Laurent Berger}
\address{Laurent Berger \\ UMPA de l'ENS de Lyon \\
UMR 5669 du CNRS \\ IUF \\ Lyon \\ France}
\email{laurent.berger@ens-lyon.fr}
\urladdr{http://perso.ens-lyon.fr/laurent.berger/}

\author{Peter Schneider}
\address{Peter Schneider \\ Universit\"at M\"unster \\
Mathematisches Institut \\ M\"unster \\ Germany}
\email{pschnei@uni-muenster.de}
\urladdr{http://wwwmath.uni-muenster.de/u/schneider/}

\author{Bingyong Xie}
\address{Bingyong Xie \\ÊDepartment of Mathematics \\
East China Normal University \\ Shanghai \\ PR China}
\email{byxie@math.ecnu.edu.cn}
\urladdr{http://math.ecnu.edu.cn/~byxie}

\thanks{We acknowledge support by the DFG Sonderforschungsbereich 878 at M\"unster}

\date{\today}

\subjclass[2010]{11F; 11S; 14G; 22E; 46S}

\begin{abstract}
The construction of the $p$-adic local Langlands correspondence for $\GL_2(\Qp)$ uses in an essential way Fontaine's theory of cyclotomic $(\varphi,\Gamma)$-modules. Here \emph{cyclotomic} means that $\Gamma = \Gal(\Qp(\mu_{p^\infty})/\Qp)$ is the Galois group of the cyclotomic extension of $\Qp$. In order to generalize the $p$-adic local Langlands correspondence to $\GL_2(L)$, where $L$ is a finite extension of $\Qp$, it seems necessary to have at our disposal a theory of Lubin-Tate $(\varphi,\Gamma)$-modules. Such a generalization has been carried out to some extent, by working over the $p$-adic open unit disk, endowed with the action of the endomorphisms of a Lubin-Tate group. The main idea of our article is to carry out a Lubin-Tate generalization of the theory of cyclotomic $(\varphi,\Gamma)$-modules in a different fashion. Instead of the $p$-adic open unit disk, we work over a character variety, that parameterizes the locally $L$-analytic characters on $o_L$. We study $(\varphi,\Gamma)$-modules in this setting, and relate some of them to what was known previously.
\end{abstract}

\maketitle

\setlength{\baselineskip}{16pt}

\tableofcontents

\section*{Introduction}

The construction of the $p$-adic local Langlands correspondence for $\GL_2(\Qp)$ (see \cite{ICBM}, \cite{CGL2}, and \cite{LBGL}) uses in an essential way Fontaine's theory \cite{Fon} of cyclotomic $(\varphi,\Gamma)$-modules. Here \emph{cyclotomic} means that $\Gamma = \Gal(\Qp(\mu_{p^\infty})/\Qp)$ is the Galois group of the cyclotomic extension of $\Qp$. These $(\varphi,\Gamma)$-modules are modules over various rings of power series (denoted by $\mathscr{E}$, $\mathscr{E}^\dagger$, and $\mathscr{R}$) that are constructed, by localizing and completing, from the ring $\Qp \otimes_{\Zp} \Zp[[X]]$ of bounded functions on the $p$-adic open unit disk $\mathfrak{B}$. The Frobenius map $\varphi$ and the action of $\Gamma$ on these rings come from the action of the multiplicative monoid $\Zp \setminus \{0\}$ on $\mathfrak{B}$. The relevance of these objects comes from the following theorem (which combines results from \cite{Fon}, \cite{CC98}, and \cite{KSF}).

\begin{theorec}
\label{cyclorecall}
There is an equivalence of categories between the category of $p$-adic representations of $G_{\Qp}$ and the categories of \'etale $(\varphi,\Gamma)$-modules over either $\mathscr{E}$, $\mathscr{E}^\dagger$, or $\mathscr{R}$.
\end{theorec}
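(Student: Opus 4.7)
The plan is to establish the equivalence in three stages, one for each of $\mathscr{E}$, $\mathscr{E}^\dagger$, and $\mathscr{R}$. For $\mathscr{E}$, I would follow Fontaine's original construction via the field of norms. Let $\widetilde{\mathbf{E}}$ be the tilt of $\Cp$, a perfect algebraically closed field of characteristic $p$ with compatible actions of $G_{\Qp}$ and Frobenius; its subfield fixed by $H := \Gal(\overline{\Qp}/\Qp(\mu_{p^\infty}))$ contains the field of norms $\mathbf{E}_{\Qp} = \Fp((\overline{X}))$, and the residue field $\mathscr{E}/p\mathscr{E} = \Fp((X))$ embeds $G_{\Qp}$-equivariantly via $X \mapsto [\varepsilon] - 1$. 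Lifting by Witt vectors yields an embedding $\mathscr{E} \hookrightarrow \mathscr{E}^{nr}$, where $\mathscr{E}^{nr}$ is the $p$-adic completion of the maximal unramified extension of $\mathscr{E}$, and one checks $(\mathscr{E}^{nr})^H = \mathscr{E}$. Setting $D(V) := (\mathscr{E}^{nr} \otimes_{\Zp} V)^H$ and $V(D) := (\mathscr{E}^{nr} \otimes_{\mathscr{E}} D)^{\varphi=1}$, one verifies that the two compositions return the identity on étale objects by faithfully flat descent along $\mathscr{E} \to \mathscr{E}^{nr}$, using the vanishing of the relevant $H^1$ for $\GL_d$.

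Second, to pass from $\mathscr{E}$ to $\mathscr{E}^\dagger$, I would invoke the Cherbonnier--Colmez overconvergence theorem: given an étale $(\varphi,\Gamma)$-module $D$ over $\mathscr{E}$, produce a $\varphi$- and $\Gamma$-stable $\mathscr{E}^\dagger$-lattice $D^\dagger \subset D$ generating $D$ after base change. Choosing $\gamma \in \Gamma$ close enough to $1$ so that $\gamma - 1$ is sufficiently contracting on a suitable scale, one builds a Tate--Sen style operator whose fixed points provide overconvergent generators. The main technical obstacle at this stage is showing that the resulting $D^\dagger$ is finitely generated over $\mathscr{E}^\dagger$, which requires careful estimates on the rate of overconvergence of the $\Gamma$-action.

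Third, passing from $\mathscr{E}^\dagger$ to $\mathscr{R}$, base change supplies the functor, and fully faithfulness is essentially formal since the $\mathscr{E}^\dagger$-structure on an étale $\varphi$-module over $\mathscr{R}$ is unique if it exists. Essential surjectivity, which I expect to be the deepest step overall, is the content of Kedlaya's slope filtration theorem: any $\varphi$-module over $\mathscr{R}$ admits a canonical filtration by slope, and the pure-slope-zero case---equivalent to the étale condition---descends uniquely to $\mathscr{E}^\dagger$. This is proved via a Dieudonné--Manin type classification after base change to an algebraic closure of $\mathscr{R}$, combined with a careful Newton polygon analysis in the Robba ring. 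Once the $\varphi$-structure has been descended, the $\Gamma$-action descends automatically by uniqueness of the $\mathscr{E}^\dagger$-structure.
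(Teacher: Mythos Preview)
Your outline is correct and matches exactly what the paper does: this theorem is not proved in the paper but is stated as a known result combining \cite{Fon} (Fontaine, for $\mathscr{E}$), \cite{CC98} (Cherbonnier--Colmez, for $\mathscr{E}^\dagger$), and \cite{KSF} (Kedlaya, for $\mathscr{R}$), which are precisely the three ingredients you identify. There is no proof in the paper to compare against beyond these citations.
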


In order to generalize the $p$-adic local Langlands correspondence to $\GL_2(L)$, where $L$ is a finite extension of $\Qp$, it seems necessary to have at our disposal a theory of Lubin-Tate $(\varphi,\Gamma)$-modules, where \emph{Lubin-Tate} means that $\Gamma$ is now the Galois group of the field generated over $L$ by the torsion points of a Lubin-Tate group $LT$ attached to a uniformizer of $L$. Such a generalization has been carried out to some extent (see \cite{Fon}, \cite{KR}, \cite{FX} and \cite{PGMLAV}). The resulting $(\varphi,\Gamma)$-modules are modules over rings $\mathscr{E}_L(\mathfrak{B})$, $\mathscr{E}_L^\dagger(\mathfrak{B})$, and $\mathscr{R}_L(\mathfrak{B})$ that are constructed from the ring of bounded analytic functions on the rigid analytic open unit disk $\mathfrak{B}_{/L}$ over $L$, with an $o_L \setminus \{0\}$-action given by the endomorphisms of $LT$. If $M$ is a $(\varphi,\Gamma)$-module over $\mathscr{R}_L(\mathfrak{B})$, the action of $\Gamma$ is differentiable, and we say that $M$ is $L$-analytic if the derived action of $\Lie(\Gamma)$ is $L$-bilinear. We can also define the notion of an $L$-analytic representation of $G_L$. In this setting, the following results are known (see \cite{KR} for (i) and \cite{PGMLAV} for (ii)).

\begin{theorec}
\label{introrecall}
(i) There is an equivalence of categories between the category of $L$-linear continuous representations of $G_L$ and the category of \'etale $(\varphi,\Gamma)$-modules over $\mathscr{E}_L(\mathfrak{B})$.

(ii) There is an equivalence of categories between the category of $L$-linear $L$-analytic representations of $G_L$ and the category of \'etale $L$-analytic $(\varphi,\Gamma)$-modules over $\mathscr{R}_L(\mathfrak{B})$.
\end{theorec}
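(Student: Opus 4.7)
The plan is to establish both equivalences in parallel with Fontaine's cyclotomic picture, replacing the cyclotomic tower and the multiplicative monoid action on $\mathfrak{B}$ by the Lubin-Tate tower $L_\infty/L$ and the action of $o_L \setminus \{0\}$ via endomorphisms of $LT$. Throughout, write $H_L = \Gal(\overline{L}/L_\infty)$ and $\Gamma = \Gal(L_\infty/L)$, so that $G_L/H_L = \Gamma$.

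For part (i), the first step is to construct a Lubin-Tate analogue of Fontaine's period rings. Concretely, one builds a characteristic-$p$ ring $\mathbf{E}_L$ inside the tilt $\widetilde{\mathbf{E}} = \overline{L}^{\flat}$ as the $t$-adic completion of the subfield generated by the image of a coordinate on $LT$, and then lifts it to a characteristic-zero Cohen ring $\mathbf{A}_L$ equipped with a Frobenius $\varphi$ (the $q$-power map, $q = \#(o_L/\pi)$) and an $o_L^\times$-action that factors through $\Gamma$; inverting $\pi$ recovers $\mathscr{E}_L(\mathfrak{B})$. The second step is to verify the Tate--Sen axioms of Berger--Colmez for the triple $(\widetilde{\mathbf{B}}, H_L, \Gamma)$ in the Lubin-Tate context, which yields descent of $\widetilde{\mathbf{B}}$-semilinear $H_L$-representations to étale $\varphi$-modules over $\mathscr{E}_L(\mathfrak{B})$. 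The functors are the usual ones, $V \mapsto (\widetilde{\mathbf{B}} \otimes_L V)^{H_L}$ and its inverse $D \mapsto (\widetilde{\mathbf{B}} \otimes_{\mathbf{B}_L} D)^{\varphi = 1}$.

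For part (ii), the plan is first to apply (i) to produce an étale $(\varphi,\Gamma)$-module $D$ over $\mathscr{E}_L(\mathfrak{B})$, and then to show that $L$-analyticity of $V$ forces $D$ to be overconvergent, so that $D$ descends to a module $D^\dagger$ over $\mathscr{E}_L^\dagger(\mathfrak{B})$. Base change gives $M := \mathscr{R}_L(\mathfrak{B}) \otimes_{\mathscr{E}_L^\dagger(\mathfrak{B})} D^\dagger$, whose derived $\Lie(\Gamma)$-action is $L$-bilinear because $V$ is $L$-analytic. The inverse functor proceeds by combining Kedlaya's slope filtration theorem (producing an étale $D^\dagger$ from any étale $M$) with part (i), after checking that the $L$-analyticity of $M$ is preserved under these descents and that the resulting $G_L$-representation is $L$-analytic, which amounts to a Sen-type calculation identifying the derived $\Gamma$-action on $M$ with the Lie algebra action on $V$.

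The principal obstacle is the overconvergence step in (ii). In the cyclotomic setting Cherbonnier--Colmez establish overconvergence for \emph{every} $p$-adic $G_{\Qp}$-representation using a locally analytic vectors argument. In the Lubin-Tate setting this is known to fail in general: arbitrary $L$-linear continuous representations of $G_L$ need not be overconvergent, and the $L$-analyticity hypothesis is essential. The work therefore reduces to distinguishing $\Qp$-locally analytic vectors from $L$-locally analytic vectors inside $(\widetilde{\mathbf{B}}^\dagger \otimes_L V)^{H_L}$, and showing that the $L$-analytic hypothesis on $V$ produces enough $L$-analytic vectors to descend to $\mathscr{E}_L^\dagger(\mathfrak{B})$. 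The Tate--Sen verification underlying (i) is by contrast routine once the period rings are in place, but it still requires careful control of ramification in the Lubin-Tate tower, since the extension $L_\infty/L$ is no longer $\Qp$-abelian and the usual Tate traces have to be replaced by their Lubin-Tate analogues.
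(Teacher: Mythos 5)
The paper does not prove this theorem; it recalls it as known, and the sentence preceding the statement explicitly attributes part (i) to Kisin--Ren \cite{KR} and part (ii) to Berger \cite{PGMLAV}. Your sketch should therefore be compared against those references rather than against an argument in this paper.

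Your reading of both references is broadly faithful: for (i) the construction of Lubin--Tate period rings with the $q$-power Frobenius and Fontaine's formalism of \'etale $\varphi$-modules; for (ii) the recognition that Cherbonnier--Colmez overconvergence fails for general Lubin--Tate $(\varphi,\Gamma)$-modules, so that $L$-analyticity is exactly what supplies the missing locally analytic vectors, with Kedlaya's slope theorem combined with (i) giving the inverse functor. The one correction worth making concerns part (i): the Tate--Sen axioms of Berger--Colmez are not what ``yields descent'' there. At the $\mathscr{E}_L(\mathfrak{B})$ level Kisin--Ren rely on the Fontaine--Wintenberger theory of norm fields (identifying $H_L$ with an absolute Galois group in characteristic $p$) and the resulting equivalence with \'etale $\varphi$-modules over the imperfect residue field; the Tate--Sen machinery, which controls normalized traces and descent to finite layers of the tower, is the engine for the overconvergence step in (ii), and it is also what Section 3.1 of the present paper uses for its own purposes. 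Your final remark about ``Tate traces'' and $L_\infty/L$ being ``no longer $\Qp$-abelian'' conflates these two tools and also gets the group theory slightly backwards: $L_\infty/L$ \emph{is} abelian, with Galois group $\Gamma_L \cong o_L^\times$; what differs from the cyclotomic case is that $\Gamma_L$ is a $[L:\Qp]$-dimensional $p$-adic Lie group rather than a one-dimensional one, and it is precisely this extra room between locally $\Qp$-analytic and locally $L$-analytic vectors that makes overconvergence fail in general and makes the $L$-analyticity hypothesis in (ii) genuinely necessary.
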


The main idea of our article is to carry out a Lubin-Tate generalization of the theory of cyclotomic $(\varphi,\Gamma)$-modules in a different fashion. The open unit disk $\mathfrak{B}_{/\Qp}$, with its $\Zp \setminus \{0\}$-action, is naturally isomorphic to the space of locally $\Qp$-analytic characters on $\Zp$. Indeed, if $K$ is an extension of $\Qp$, a point $z \in \mathfrak{B}(K)$ corresponds to the character $\kappa_z : a \mapsto (1+z)^a$ and all $K$-valued continuous characters are of this form. In the Lubin-Tate setting, there exists (\cite{ST}) a rigid analytic group variety $\mathfrak{X}$ over $L$, whose closed points in an extension $K/L$ parameterize locally $L$-analytic characters $o_L \to K^\times$. The ring $\mathcal{O}^b_L(\mathfrak{X})$ of bounded analytic functions on $\mathfrak{X}$ is endowed with an action of the multiplicative monoid $o_L \setminus \{0\}$, and can also be localized and completed. This way we get new analogs $\mathscr{E}_L(\mathfrak{X})$, $\mathscr{E}_L^\dagger(\mathfrak{X})$, and $\mathscr{R}_L(\mathfrak{X})$ of the rings on which the cyclotomic $(\varphi,\Gamma)$-modules are defined, and a corresponding theory of $(\varphi,\Gamma)$-modules.

Note that the varieties $\mathfrak{B}_{/L}$ and $\mathfrak{X}$ are quite different. For instance, the ring $\mathcal{O}_L(\mathfrak{B})$ of rigid analytic functions on $\mathfrak{B}_{/L}$ is isomorphic to the ring of power series in one variable with coefficients in $L$ converging on $\mathfrak{B}(\Cp)$, and is hence a Bezout ring (it is the same for an ideal of $\mathcal{O}_L(\mathfrak{B})$ to be closed, finitely generated, or principal). If $L \neq \Qp$, then in the ring $\mathcal{O}_L(\mathfrak{X})$ there is a finitely generated ideal that is not principal. It is still true, however, that $\mathcal{O}_L(\mathfrak{X})$ is a Pr\"ufer ring (it is the same for an ideal of $\mathcal{O}_L(\mathfrak{X})$ to be closed, finitely generated, or invertible). The beginning of our paper is therefore devoted to establishing geometric properties of $\mathfrak{X}$, and the corresponding properties of $\mathscr{E}_L(\mathfrak{X})$, $\mathscr{E}_L^\dagger(\mathfrak{X})$, and $\mathscr{R}_L(\mathfrak{X})$.

Although the varieties $\mathfrak{B}_{/L}$ and $\mathfrak{X}$ are not isomorphic, they become isomorphic over $\Cp$. This gives rise to an isomorphism $\mathscr{R}_{\Cp}(\mathfrak{B})  = \mathscr{R}_{\Cp}(\mathfrak{X})$ (and likewise for $\mathscr{E}$ and $\mathscr{E}^\dagger$; the subscript ${-}_{\Cp}$ denotes the extension of scalars from $L$ to $\Cp$). In addition, there is an action of $G_L$ on those rings and  $\mathscr{R}_L(\mathfrak{B})  = \mathscr{R}_{\Cp}(\mathfrak{X})^{G_L}$. There is also a ``twisted'' action of $G_L$ and $\mathscr{R}_L(\mathfrak{X})  = \mathscr{R}_{\Cp}(\mathfrak{B})^{G_L,*}$. These isomorphisms make it possible to compare the theories of $(\varphi,\Gamma)$-modules over $\mathscr{R}_L(\mathfrak{B})$ and $\mathscr{R}_L(\mathfrak{X})$, by extending scalars to $\Cp$ and descending. We construct two functors $M \mapsto M_{\mathfrak{X}}$ and $N \mapsto N_{\mathfrak{B}}$ from the category of $(\varphi,\Gamma)$-modules over $\mathscr{R}_L(\mathfrak{B})$ to the category of $(\varphi,\Gamma)$-modules over $\mathscr{R}_L(\mathfrak{X})$ and vice versa.

\begin{theoa}
\label{theoA}
The functors $M \mapsto M_{\mathfrak{X}}$ and $N \mapsto N_{\mathfrak{B}}$ from the category of $L$-analytic $(\varphi,\Gamma)$-modules over $\mathscr{R}_L(\mathfrak{B})$ to the category of $L$-analytic $(\varphi,\Gamma)$-modules over $\mathscr{R}_L(\mathfrak{X})$ and vice versa, are mutually inverse and give rise to equivalences of categories.
\end{theoa}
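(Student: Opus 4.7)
My plan is to exploit the Galois descent structure $\mathscr{R}_L(\mathfrak{B})=\mathscr{R}_{\Cp}(\mathfrak{X})^{G_L}$ and $\mathscr{R}_L(\mathfrak{X})=\mathscr{R}_{\Cp}(\mathfrak{B})^{G_L,*}$ to pass between the two sides by extending scalars to $\Cp$ and re-descending via the other ($*$-twisted) Galois action.

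Concretely, given an $L$-analytic $(\varphi,\Gamma)$-module $M$ over $\mathscr{R}_L(\mathfrak{B})$, I first form the base change $\widetilde{M}:=\mathscr{R}_{\Cp}(\mathfrak{B})\otimes_{\mathscr{R}_L(\mathfrak{B})}M$. Using $\mathscr{R}_{\Cp}(\mathfrak{B})=\mathscr{R}_{\Cp}(\mathfrak{X})$, it is simultaneously an $\mathscr{R}_{\Cp}(\mathfrak{X})$-module. It carries a $\varphi$ and a $\Gamma$ action, and a natural $G_L$-action (via the first factor) under which its invariants recover $M$. The idea for $M_{\mathfrak{X}}$ is to equip $\widetilde{M}$ instead with the \emph{twisted} $G_L$-action: the Galois action on scalars is the $*$-twist used to recover $\mathscr{R}_L(\mathfrak{X})$, and on $M$ it is induced by the $\Gamma$-action composed with the Lubin--Tate character $G_L\twoheadrightarrow\Gamma$, so that semilinearity with respect to the twisted action is preserved. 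Set $M_{\mathfrak{X}}:=\widetilde{M}^{G_L,*}$. The functor $N\mapsto N_{\mathfrak{B}}$ is defined symmetrically.

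The main steps are then: (i) verify that $\varphi$ and $\Gamma$ descend, producing a $(\varphi,\Gamma)$-module over $\mathscr{R}_L(\mathfrak{X})$; (ii) verify that $L$-analyticity transfers—crucially, the $\Gamma$-action on $\widetilde{M}$ is intrinsic and independent of which descent picture we choose, so the condition on $\Lie(\Gamma)$ is the same on both sides; (iii) prove that the natural base change map
\[\mathscr{R}_{\Cp}(\mathfrak{B})\otimes_{\mathscr{R}_L(\mathfrak{X})} M_{\mathfrak{X}}\;\xrightarrow{\sim}\;\widetilde{M}\]
is an isomorphism (and symmetrically on the other side), which gives the canonical identification $(M_{\mathfrak{X}})_{\mathfrak{B}}\cong M$; and (iv) conclude that the two functors are mutually quasi-inverse.

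The principal obstacle will be step (iii), which is a non-trivial Galois descent statement: finitely generated $(\varphi,\Gamma)$-modules over $\mathscr{R}_{\Cp}$ carrying a compatible semilinear $G_L$-action must descend faithfully flatly to the corresponding $\mathscr{R}_L$-base, despite $\mathscr{R}_L(\mathfrak{X})$ being only Pr\"ufer rather than Bezout, so one cannot rely on the classical finite free theory. The central input should be a Hilbert~90 style vanishing $H^1(G_L,\mathrm{GL}_n(\mathscr{R}_{\Cp}(\mathfrak{X})))=0$ together with faithful flatness of $\mathscr{R}_L(\ast)\to\mathscr{R}_{\Cp}(\ast)$. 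The $L$-analyticity hypothesis is essential here: it forces the differentiated $\Gamma$-action to be $L$-linear, which is precisely what makes the twist by the Lubin--Tate character compatible with the $\mathscr{R}_{\Cp}$-module structure on $\widetilde M$, so that the cocycle governing the descent lives in the right place and can be trivialised.
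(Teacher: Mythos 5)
Your high-level setup — scalar extension to $\mathscr{R}_{\Cp}$, twisting the $G_L$-action by $\tau$, and taking invariants — matches the paper's construction, and your outline of steps (i)–(iv) is the correct architecture (cf.\ Thm.\ \ref{descent-to-X}, Lemma \ref{prop:N}, Thm.\ \ref{equiv}). However, the heart of the matter, your step~(iii), is misidentified, and the gap is essential rather than cosmetic.

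You propose to prove $\mathscr{R}_{\Cp}(\mathfrak{B})\otimes_{\mathscr{R}_L(\mathfrak{X})} M_{\mathfrak{X}}\xrightarrow{\sim}\widetilde{M}$ by a Hilbert~90 vanishing $H^1(G_L,\GL_n(\mathscr{R}_{\Cp}(\mathfrak{X})))=0$ together with faithful flatness. This does not work. First, $\mathscr{R}_{\Cp}(\mathfrak{X})$ is an LF-space, not a Banach algebra, and there is no reasonable vanishing theorem for continuous $\GL_n$-cohomology with values in it; you would first have to descend to the Fr\'echet pieces $\mathcal{O}_{\Cp}(\mathfrak{X}\setminus\mathfrak{X}_n)$ and then to the constituent affinoid Banach algebras, which is exactly what the paper does and which constitutes most of the work. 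Second, and more fundamentally, a Hilbert~90 statement would equally apply to every $(\varphi_L,\Gamma_L)$-module, with no $L$-analyticity hypothesis, and would falsely "prove" that the functor $M\mapsto M_{\mathfrak{X}}$ preserves rank in general. It does not: for non-$L$-analytic $M$, the module $M_{\mathfrak{X}}$ is still finitely generated projective over $\mathscr{R}_L(\mathfrak{X})$ (Thm.\ \ref{descent-to-X}.i) but can have strictly smaller rank. So the correct descent statement cannot be obtained by a cohomological formality; it is an analytic fact depending on the $L$-analyticity of the $\Gamma_L$-action.

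The actual mechanism in the paper is the Colmez--Sen--Tate formalism (Section~\ref{sec:key}). One reduces to a finitely generated free module $P$ over a Banach affinoid $A_{\Cp}$, produces an $A_{L_n}$-lattice $Q_n$ (Cor.\ \ref{TS2}), and identifies the obstruction to full descent as the Sen operator $\nabla_{Sen}$ on $Q_n$. Prop.\ \ref{nabsenul} shows that if the $o_L^\times$-actions are locally $L$-analytic then $\nabla_{Sen}=0$; the proof uses the Lubin--Tate decomposition \eqref{diag:splitting} of $L\otimes_{\Qp}\Lie(\Gal(L^{max}/L))$ into the directions $\nabla_\sigma$, the structure of $\widehat{L^{max}}^{\la}$ from \cite{STLAV}, and a case split according to whether $L^{cyc}\subseteq M\cdot L_\infty$ or $L^{cyc}\cap L_\infty$ is finite. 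This is the non-formal analytic heart that your proposal replaces by a cohomological appeal. Your parenthetical remark that $L$-analyticity "makes the twist compatible with the $\mathscr{R}_{\Cp}$-module structure" is not the true role of the hypothesis; the twist is compatible regardless, and $L$-analyticity instead controls the size of the invariants through the Sen operator.

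A secondary issue: your step (iv) concluding quasi-inverseness is also not purely formal. The identity $(M_{\mathfrak{X}})_{\mathfrak{B}}=M$ uses the algebraic input $\mathscr{R}_{\Cp}(\mathfrak{B})^{G_L}\otimes_{\mathscr{R}_L(\mathfrak{B})}M=(\mathscr{R}_{\Cp}(\mathfrak{B})\otimes_{\mathscr{R}_L(\mathfrak{B})}M)^{G_L}$, which the paper justifies by reducing to $M$ free over $\mathscr{R}_L(\mathfrak{B})$ (Bezout, Remark~\ref{B-proj-free}); in the reverse direction one must first pass to the free $N\oplus N^*$ (Remark~\ref{summand-of-free}) since $\mathscr{R}_L(\mathfrak{X})$ is only Pr\"ufer. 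Your proposal does not address how to commute invariants with tensor products for merely projective modules.
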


\begin{theob}
\label{theoB}
The functors $M \mapsto M_{\mathfrak{X}}$ and $N \mapsto N_{\mathfrak{B}}$ preserve degrees, and give equivalences of categories between the categories of \'etale objects on both sides. There is an equivalence of categories between the category of $L$-linear $L$-analytic representations of $G_L$ and the category of \'etale $L$-analytic $(\varphi,\Gamma)$-modules over $\mathscr{R}_L(\mathfrak{X})$.
\end{theob}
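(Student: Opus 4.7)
The approach is to reduce Theorem B to Theorem A combined with the second equivalence stated in the introduction (the result of \cite{PGMLAV}). The content to be added is the preservation of degrees, and therefore of étale objects, under the functors $M \mapsto M_{\mathfrak{X}}$ and $N \mapsto N_{\mathfrak{B}}$; once this is in hand, the second assertion of Theorem B follows by composition with the known $\mathscr{R}_L(\mathfrak{B})$-side equivalence.

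First I would recall that both functors are constructed by extending scalars to $\Cp$ along the isomorphism $\mathscr{R}_{\Cp}(\mathfrak{B}) = \mathscr{R}_{\Cp}(\mathfrak{X})$ and descending under the two $G_L$-actions. They are therefore exact and rank-preserving, and they commute with the formation of duals and of $\bigwedge^{\mathrm{top}}$. In particular, for any $L$-analytic $(\varphi,\Gamma)$-module $M$ over $\mathscr{R}_L(\mathfrak{B})$, the base changes $M \otimes_L \Cp$ and $M_{\mathfrak{X}} \otimes_L \Cp$ are canonically identified as $(\varphi,\Gamma)$-modules over $\mathscr{R}_{\Cp}$. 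Since the degree is, up to the standard convention, the $p$-adic valuation of the determinant of $\varphi$ on $\bigwedge^{\mathrm{top}} M$, and since this invariant is preserved by the faithfully flat extension of scalars to $\mathscr{R}_{\Cp}$, the degrees of $M$ and $M_{\mathfrak{X}}$ agree.

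Next I would promote the equality of degrees to the preservation of étaleness. An $L$-analytic $(\varphi,\Gamma)$-module over $\mathscr{R}_L$ is étale in the sense of Kedlaya precisely when the module itself and every $\varphi$-stable sub-object have degree zero. Because the functors are exact, carry sub-objects to sub-objects, and preserve ranks and degrees, the Kedlaya criterion for $M$ translates verbatim into the one for $M_{\mathfrak{X}}$, and étale objects on one side correspond to étale objects on the other. The main technical obstacle here is to check that Kedlaya's slope formalism, including the classification of rank-one objects and the existence of the Harder-Narasimhan filtration, is available for $\mathscr{R}_L(\mathfrak{X})$; this is not fully routine since $\mathcal{O}_L(\mathfrak{X})$ is only Prüfer and not Bezout, so one has to go back to the geometric inputs established earlier in the paper and verify that the usual arguments of \cite{KSF} go through in the character-variety setting. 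I expect this step to be where the bulk of the work lies.

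Finally, combining Theorem A, now known to restrict to an equivalence between the étale $L$-analytic $(\varphi,\Gamma)$-modules over $\mathscr{R}_L(\mathfrak{B})$ and those over $\mathscr{R}_L(\mathfrak{X})$, with the equivalence of \cite{PGMLAV} between $L$-linear $L$-analytic representations of $G_L$ and étale $L$-analytic $(\varphi,\Gamma)$-modules over $\mathscr{R}_L(\mathfrak{B})$, yields the second statement of Theorem B.
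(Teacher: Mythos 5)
Your high-level strategy -- transport degrees by extending scalars to $\Cp$, deduce preservation of \'etaleness, then compose with the equivalence of \cite{PGMLAV} -- matches the paper's proof of Theorem~B (Prop.~\ref{mxet}, Thm.~\ref{etalequiv}, Cor.~\ref{repequiv}). However, you misidentify where the real work lies.

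First, you gloss over a genuine prerequisite: the paper's degree (Def.~\ref{def:deg}) is defined only for \emph{free} $(\varphi_L,\Gamma_L)$-modules, as $\|\det(A_M)\|_1$ for the matrix of $\varphi_M$ in a basis. Your ``degree is the valuation of $\det\varphi$ on $\bigwedge^{\mathrm{top}}M$'' argument tacitly requires $\bigwedge^{\mathrm{top}}M$ to be trivial, which is precisely the rank-one case of Thm.~\ref{freeness}. Over $\mathscr{R}_L(\mathfrak{X})$, which is only Pr\"ufer and not Bezout, finitely generated projective need not be free, so freeness of $L$-analytic $(\varphi_L,\Gamma_L)$-modules is a nontrivial theorem. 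The paper proves it by \emph{using} Theorem~A: the classification of rank-one modules over $\mathscr{R}_L(\mathfrak{B})$ (Prop.~\ref{rank1b}, from \cite{FX}) is transported to $\mathfrak{X}$ via Props.~\ref{perdeltau}, \ref{r1b2x}, \ref{rank1x}, and then the $1\frac{1}{2}$-generator Pr\"ufer structure (Cor.~\ref{pruefer2}) reduces the general case to rank one.

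Second, your anticipated ``main technical obstacle'' -- re-developing Kedlaya's slope formalism, Harder--Narasimhan filtrations, and rank-one classification over $\mathscr{R}_L(\mathfrak{X})$ -- is not what the paper does, and it is not needed. The paper defines \'etaleness over $\mathfrak{X}$ (Def.~\ref{def:etaleR}) using sub-$(\varphi_L,\Gamma_L)$-modules (not merely sub-$\varphi_L$-modules); such submodules are automatically $L$-analytic and hence free by Thm.~\ref{freeness} (Remark~\ref{remetale}), so their degree is well defined without any slope machinery over $\mathfrak{X}$. Since Theorem~A is an equivalence, it puts sub-$(\varphi_L,\Gamma_L)$-modules of $M$ and of $M_\mathfrak{X}$ in bijection; combined with Prop.~\ref{mxet} this gives Thm.~\ref{etalequiv} immediately. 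Harder--Narasimhan and \cite{KedAst} enter only in Remark~\ref{B-proj-free}.ii, over $\mathscr{R}_L(\mathfrak{B})$ where they are already available, to check that the definition of \'etale via $(\varphi,\Gamma)$-submodules agrees with the usual one via $\varphi$-submodules. Redoing \cite{KSF} over $\mathfrak{X}$ would be a substantial detour that the paper's definitions are designed precisely to avoid.
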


One way of constructing cyclotomic $(\varphi,\Gamma)$-modules over $\mathscr{R}$ is to start from a filtered $\varphi$-module $D$, and to perform a modification of $\mathscr{R} \otimes_{\Qp} D$ according to the filtration on $D$ (see \cite{BEQ}). The generalization of this construction to $\mathscr{R}_L(\mathfrak{B})$ has been done in \cite{KR} and \cite{KFC}: They attach to every filtered $\varphi_L$-module $D$ over $L$ a $(\varphi_L,\Gamma_L)$-module $\mathcal{M}_{\mathfrak{B}}(D)$ over $\mathcal{O}_L(\mathfrak{B})$ (we can then extend scalars to $\mathscr{R}_L(\mathfrak{B})$). We carry out the corresponding construction over $\mathcal{O}_L(\mathfrak{X})$, and show the following.

\begin{theoc}
\label{theoC}
(i) The functor $D \mapsto \mathcal{M}_{\mathfrak{X}}(D)$ provides an equivalence of categories between the category of filtered $\varphi_L$-modules over $L$ and the category $\Mod^{\varphi_L,\Gamma_L,\mathrm{an}}_{/\mathfrak{X}}$ of $(\varphi_L,\Gamma_L)$-modules $M$ over $\mathcal{O}_L(\mathfrak{X})$ defined in \ref{defmodw}.

(ii) The functors $D \mapsto \mathscr{R}_L(\mathfrak{B}) \otimes_{\mathcal{O}_L(\mathfrak{B})} \mathcal{M}_{\mathfrak{B}}(D)$ and $D \mapsto \mathscr{R}_L(\mathfrak{X}) \otimes_{\mathcal{O}_L(\mathfrak{X})} \mathcal{M}_{\mathfrak{X}}(D)$ are compatible with the equivalence of categories of Theorem A.
\end{theoc}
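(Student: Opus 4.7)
The strategy is to construct $\mathcal{M}_{\mathfrak{X}}$ in direct parallel with the Kisin--Ren--Kedlaya construction $\mathcal{M}_{\mathfrak{B}}$ and then to pivot through Theorem~A to transfer known facts from the $\mathfrak{B}$-side. Concretely, $\mathcal{M}_{\mathfrak{X}}(D)$ should be obtained by performing on $\mathcal{O}_L(\mathfrak{X}) \otimes_L D$ the same filtered modification along the zeros of the Lubin--Tate period $t_{LT}$ that, on the $\mathfrak{B}$-side, produces $\mathcal{M}_{\mathfrak{B}}(D)$ in the sense of \cite{KR}, \cite{KFC}. Since the $\Gamma_L$-action on $\mathcal{O}_L(\mathfrak{X})$ is $L$-analytic by construction of the character variety, the result is automatically $L$-analytic; what must then be checked is that it satisfies the precise conditions that define $\Mod^{\varphi_L,\Gamma_L,\mathrm{an}}_{/\mathfrak{X}}$ in \ref{defmodw}.

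I would prove part~(ii) first, as it is what allows the $\mathfrak{B}$-side equivalence to be transported to the $\mathfrak{X}$-side. After extension of scalars to $\mathscr{R}_{\Cp}$, both $\mathscr{R}_L(\mathfrak{B}) \otimes_{\mathcal{O}_L(\mathfrak{B})} \mathcal{M}_{\mathfrak{B}}(D)$ and $\mathscr{R}_L(\mathfrak{X}) \otimes_{\mathcal{O}_L(\mathfrak{X})} \mathcal{M}_{\mathfrak{X}}(D)$ describe the same filtered modification of $\mathscr{R}_{\Cp} \otimes_L D$, thanks to the isomorphism $\mathfrak{B}_{/\Cp} \cong \mathfrak{X}_{/\Cp}$ and the fact that the two copies of $t_{LT}$ differ only by a unit. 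This identification is equivariant for the standard $G_L$-action on the $\mathfrak{B}$-side and for the twisted $G_L$-action on the $\mathfrak{X}$-side, so taking $G_L$-invariants and invoking the isomorphisms $\mathscr{R}_L(\mathfrak{B}) = \mathscr{R}_{\Cp}(\mathfrak{X})^{G_L}$ and $\mathscr{R}_L(\mathfrak{X}) = \mathscr{R}_{\Cp}(\mathfrak{B})^{G_L,*}$ recalled in the introduction gives precisely the required compatibility with the functors $M \mapsto M_{\mathfrak{X}}$ and $N \mapsto N_{\mathfrak{B}}$ of Theorem~A.

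Granting (ii), part~(i) reduces to the $\mathfrak{B}$-side equivalence of \cite{KR}, \cite{KFC} via Theorem~A. Full faithfulness follows once one shows that $\Hom_{\mathcal{O}_L(\mathfrak{X})}(\mathcal{M}_{\mathfrak{X}}(D_1), \mathcal{M}_{\mathfrak{X}}(D_2))$ agrees with $\Hom_{\mathscr{R}_L(\mathfrak{X})}$ of the $\mathscr{R}_L(\mathfrak{X})$-extensions, which amounts to showing that $\mathcal{M}_{\mathfrak{X}}(D)$ is saturated in its $\mathscr{R}_L(\mathfrak{X})$-extension at each zero of $t_{LT}$. For essential surjectivity, given $M \in \Mod^{\varphi_L,\Gamma_L,\mathrm{an}}_{/\mathfrak{X}}$, one applies $N \mapsto N_{\mathfrak{B}}$ and the $\mathfrak{B}$-side equivalence to the $\mathscr{R}_L(\mathfrak{X})$-extension of $M$ to produce a candidate $D$, and then descends the resulting isomorphism $\mathscr{R}_L(\mathfrak{X}) \otimes M \cong \mathscr{R}_L(\mathfrak{X}) \otimes \mathcal{M}_{\mathfrak{X}}(D)$ back to $\mathcal{O}_L(\mathfrak{X})$ using the defining conditions in \ref{defmodw}.

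The main obstacle will be this last descent. Since $\mathcal{O}_L(\mathfrak{X})$ is Pr\"ufer but not Bezout, finitely generated modules over it need not be free, so the naive freeness arguments available on the $\mathfrak{B}$-side are not available here. I expect that one must verify pointwise at each zero of $t_{LT}$ that $\mathcal{M}_{\mathfrak{X}}(D)$ and an abstract $M \in \Mod^{\varphi_L,\Gamma_L,\mathrm{an}}_{/\mathfrak{X}}$ have the same local shape, using the precise definition \ref{defmodw}, so that any $\mathscr{R}_L(\mathfrak{X})$-isomorphism necessarily carries one $\mathcal{O}_L(\mathfrak{X})$-lattice onto the other. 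A secondary subtlety is keeping careful track of the twist in the $G_L$-action on $\mathscr{R}_{\Cp}(\mathfrak{B})$, which enters the comparison of the two period divisors in the second step and must remain compatible with the $\varphi_L$-module structures on both sides.
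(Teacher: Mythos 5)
Your proposal for part (ii) is essentially the paper's argument: one checks that $\mathcal{O}_{\Cp}(\mathfrak{X}) \otimes_{\mathcal{O}_L(\mathfrak{X})} \mathcal{M}_{\mathfrak{X}}(D)$ and $\mathcal{O}_{\Cp}(\mathfrak{B}) \otimes_{\mathcal{O}_L(\mathfrak{B})} \mathcal{M}_{\mathfrak{B}}(D)$ agree under $\kappa$ because both are the same filtered modification of $\mathcal{O}_{\Cp} \otimes_L D$ along the common torsion locus, then invokes the definition of the functors in Theorem~A. This matches Lemma~\ref{O-iso} and Theorem~\ref{mdbxcomp}.

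Your approach to part (i), however, diverges from the paper and has a genuine gap. You want to reduce (i) to the known $\mathfrak{B}$-side equivalence by passing through Theorem~A, but that route does not directly give essential surjectivity. Given $M \in \Mod^{\varphi_L,\Gamma_L,\mathrm{an}}_{/\mathfrak{X}}$, the object $(\mathscr{R}_L(\mathfrak{X}) \otimes_{\mathcal{O}_L(\mathfrak{X})} M)_{\mathfrak{B}}$ is only a $(\varphi_L,\Gamma_L)$-module over the Robba ring $\mathscr{R}_L(\mathfrak{B})$, and the Kisin--Ren equivalence is from $\Ve(\Fil,\varphi_L)$ to a category of modules over $\mathcal{O}_L(\mathfrak{B})$ (finite-height objects). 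To extract a candidate filtered $\varphi_L$-module $D$ you would first have to show that this $\mathscr{R}_L(\mathfrak{B})$-module lies in the essential image of $D \mapsto \mathscr{R}_L(\mathfrak{B}) \otimes_{\mathcal{O}_L(\mathfrak{B})} \mathcal{M}_{\mathfrak{B}}(D)$, i.e.\ that it descends to an $\mathcal{O}_L(\mathfrak{B})$-lattice of the right shape — which is precisely the nontrivial content you are trying to establish. The reduction is therefore circular or at least missing a descent lemma. Your full-faithfulness sketch has a similar soft spot: reducing $\Hom_{\mathcal{O}_L(\mathfrak{X})}$ to $\Hom_{\mathscr{R}_L(\mathfrak{X})}$ requires a saturation statement at the torsion points that you flag but do not supply, and it is exactly this local analysis that carries the weight.

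The paper avoids these difficulties by proving (i) entirely on the $\mathfrak{X}$-side, with no appeal to Theorem~A. It constructs an explicit inverse functor $\mathcal{D}(M) := M/\mathfrak{n}(1)M$ and shows $\mathcal{D} \circ \mathcal{M} \cong \id$ and $\mathcal{M} \circ \mathcal{D} \cong \id$. The two key technical inputs are Proposition~\ref{flat}, which uses the $p$-adic Fuchs theorem for disks to produce, from conditions a)--c) of Definition~\ref{defmodw}, an isomorphism $\mathcal{O}_L(\mathfrak{X} \setminus Z) \otimes_L M\{Z^{-1}\}^{\Gamma_L} = M\{Z^{-1}\}$ (i.e.\ $M$ becomes trivial after removing the torsion locus), and Lemma~\ref{ED}, which shows that any $\Gamma_x$-invariant $\mathcal{O}_x$-lattice in $\Fr(\mathcal{O}_x) \otimes_L V$ is automatically of the form $\sum_i \mathfrak{m}_x^i \otimes_L \Fil^{-i} V$ for an $L$-rational filtration. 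Your plan to verify a "local shape" at each torsion point is the right intuition, and in the paper this is precisely Lemma~\ref{ED}, but it must be proved from scratch — the $\Gamma_x$-semisimplicity of $L_x$ and Galois descent, not a transfer from $\mathfrak{B}$, is what makes it work. Your approach would buy nothing here, since both the direct proof and the intermediary you propose would ultimately require the same lattice analysis.
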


\subsection*{Notations and preliminaries}

Let $\Qp \subseteq L \subseteq K \subseteq \Cp$ be fields with $L/\Qp$ finite Galois and $K$ complete. Let $d := [L:\Qp]$, $o_L$ the ring of integers of $L$, and $\pi_L \in o_L$ a fixed prime element. Let $q$ be the cardinality of $k_L := o_L/\pi_L$ and let $e$ be the ramification index of $L$. We always use the absolute value $|\ |$ on $\Cp$ which is normalized by $|p| = p^{-1}$.

For any locally $L$-analytic manifold $M$ we let $M_0$ denote $M$ but viewed as a locally $\Qp$-analytic manifold.

For any rigid analytic variety $\mathfrak{Y}$ over $L$, let $\mathcal{O}_K(\mathfrak{Y})$ denote the ring of $K$-valued global holomorphic functions on $\mathfrak{Y}$. Suppose that $\mathfrak{Y}$ is reduced. Then a function $f \in \mathcal{O}_K(\mathfrak{Y})$ is called bounded if there is a real constant $C > 0$ such that $|f(y)| < C$ for any rigid point $y \in \mathfrak{Y}(\Cp)$.

\begin{remark}
Let $A$ be an affinoid algebra over $K$ whose base extension $A_{\Cp} := A \widehat{\otimes}_K \Cp$ is a reduced affinoid algebra over $\Cp$. Then $A$ is reduced as well, and the corresponding supremum norms $\|\ \|_{\sup,K}$ on $A$ and $\|\ \|_{\sup, \Cp}$ on $A_{\Cp}$ are characterized as being the only power-multiplicative complete algebra norms on $A$ and $A_{\Cp}$, respectively (\cite{BGR} Lemma 3.8.3/3 and Thm.\ 6.2.4/1). It follows that $\|\ \|_{\sup, \Cp} \mid_A = \|\ \|_{\sup,K}$.
\end{remark}

On the subring
\begin{equation*}
    \mathcal{O}_K^b(\mathfrak{Y}) := \{f \in \mathcal{O}_K(\mathfrak{Y}) : f\ \text{is bounded}\}
\end{equation*}
we have the supremum norm
\begin{equation*}
    \|f\|_\mathfrak{Y} = \sup_{y \in \mathfrak{Y}(\Cp)} |f(y)|.
\end{equation*}
One checks that $(\mathcal{O}_K^b(\mathfrak{Y}), \|\ \|_\mathfrak{Y})$ is a Banach $K$-algebra.

\section{Lubin-Tate theory and the character variety}

\subsection{The ring of global functions on a one dimensional smooth quasi-Stein space}\label{sec:prufer}

Let $\mathfrak{Y}$ be a rigid analytic variety over $K$ which is smooth, one dimensional, and quasi-Stein. In this section we will establish the basic ring theoretic properties of the ring $\mathcal{O}_K(\mathfrak{Y})$. For simplicity we always will assume that $\mathfrak{Y}$ is connected in the sense that $\mathcal{O}_K(\mathfrak{Y})$ is an integral domain. We recall that the quasi-Stein property means that $\mathfrak{Y}$ has an admissible covering $\mathfrak{Y} = \bigcup_{n} \mathfrak{Y}_n$ by an increasing sequence $\mathfrak{Y}_1 \subseteq \ldots \subseteq \mathfrak{Y}_n \subseteq \ldots$ of affinoid subdomains such that the restriction maps $\mathcal{O}_K(\mathfrak{Y}_{n+1}) \longrightarrow \mathcal{O}_K(\mathfrak{Y}_n)$, for any $n \geq 1$, have dense image.

An effective divisor on $\mathfrak{Y}$ is a function $\Delta : \mathfrak{Y} \longrightarrow \ZZ_{\geq 0}$ such that, for any affinoid subdomain $\mathfrak{Z} \subseteq \mathfrak{Y}$, the set $\{x \in \mathfrak{Z} : \Delta(x) > 0\}$ is finite. The support $\supp(\Delta) := \{x \in \mathfrak{Y} : \Delta(x) > 0\}$ of a divisor is always a countable subset of $\mathfrak{Y}$. The set of effective divisors is partially ordered by $\Delta \leq \Delta'$ if and only if $\Delta(x) \leq \Delta'(x)$ for any $x \in \mathfrak{Y}$.

For simplicity let $\mathcal{O}$ denote the structure sheaf of the rigid variety $\mathfrak{Y}$ and $\mathcal{O}_x$ its stalk in a point $x \in \mathfrak{Y}$ with maximal ideal $\mathfrak{m}_x$. Since $\mathfrak{Y}$ is smooth and one dimensional every $\mathcal{O}_x$ is a discrete valuation ring. For any element $0 \neq f \in \mathcal{O}_K(\mathfrak{Y})$ we define the function $\divi(f) : \mathcal{O}_K(\mathfrak{Y}) \longrightarrow \ZZ_{\geq 0}$ by $\divi(f)(x) = n$ if and only if the germ of $f$ in $x$ is contained in $\mathfrak{m}_x^n \setminus \mathfrak{m}_x^{n+1}$.

\begin{lemma}\label{principal-divisor}
$\divi(f)$ is a divisor.
\end{lemma}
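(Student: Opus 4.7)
The claim has two aspects: (a) at every point $x\in\mathfrak{Y}$, $\divi(f)(x)$ is a well-defined nonnegative integer (i.e.\ the germ of $f$ in $\mathcal{O}_x$ is nonzero), and (b) on every affinoid subdomain $\mathfrak{Z}\subseteq\mathfrak{Y}$ the support of $\divi(f)$ is finite. Since $\divi(f)(x)>0$ exactly when $f$ vanishes at $x$, (b) is the statement that the zero set of $f$ is finite on each affinoid subdomain.

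The plan is first to reduce everything to the affinoids $\mathfrak{Y}_n$ of the quasi-Stein exhaustion. Any affinoid subdomain $\mathfrak{Z}$ is quasi-compact, and since $\mathfrak{Y}=\bigcup_n\mathfrak{Y}_n$ is an admissible covering with $\mathfrak{Y}_n\subseteq\mathfrak{Y}_{n+1}$, we get $\mathfrak{Z}\subseteq\mathfrak{Y}_N$ for some $N$. It then suffices to show that the zero locus $V(f_n)$ of $f_n:=f|_{\mathfrak{Y}_n}$ is finite in each $\mathfrak{Y}_n$.

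Next I would verify that every $f_n$ is nonzero. By Kiehl's Theorem B for quasi-Stein spaces, $\mathcal{O}_K(\mathfrak{Y})=\varprojlim_n\mathcal{O}_K(\mathfrak{Y}_n)$, and an element of this inverse limit vanishes iff all components do; so from $f\neq 0$ we get $f_N\neq 0$ for some $N$. If some $f_n$ were zero, then for every $M\geq n$ the function $f_M$ would vanish on the (nonempty, one-dimensional) open $\mathfrak{Y}_n\subseteq\mathfrak{Y}_M$, contradicting the finiteness of $V(f_M)$ established below; the compatibility $f_{M}|_{\mathfrak{Y}_{M-1}}=f_{M-1}$ then forces $f_M=0$ for all $M$, i.e.\ $f=0$. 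The core step is now a standard commutative-algebra computation: $A:=\mathcal{O}_K(\mathfrak{Y}_n)$ is a Noetherian affinoid algebra of Krull dimension one, smooth and hence reduced, and the hypothesis that $\mathcal{O}_K(\mathfrak{Y})$ is an integral domain propagates (via the inverse limit, which embeds $\mathcal{O}_K(\mathfrak{Y})$ into $A$ as a dense subring) to rule out $f_n$ lying in a minimal prime of $A$; therefore $A/(f_n)$ has Krull dimension zero and is a finite-dimensional $K$-algebra, so $V(f_n)=\Sp(A/(f_n))$ is a finite set.

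For (a), since $\mathcal{O}_x$ is a discrete valuation ring we have $\bigcap_m\mathfrak{m}_x^m=0$, so the germ of $f$ in $\mathcal{O}_x$ is nonzero iff $f$ does not vanish on any open neighborhood of $x$. But choosing $n$ with $x\in\mathfrak{Y}_n$, any such neighborhood would sit inside the finite set $V(f_n)$, which is impossible. Thus $\divi(f)(x)$ is a finite nonnegative integer for every $x$, and combining with the previous step, $\divi(f)$ is indeed a divisor. The only delicate point I expect is the second paragraph's argument that $f_n$ avoids the minimal primes of $\mathcal{O}_K(\mathfrak{Y}_n)$; everything else is a direct consequence of the quasi-Stein structure and one-dimensionality.
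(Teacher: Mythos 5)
Your overall strategy is the same as the paper's: restrict $f$ to affinoids and use one-dimensionality (Krull's principal ideal theorem, equivalently the fact that a connected smooth one-dimensional affinoid algebra is a Dedekind domain) to conclude the zero set is finite on each affinoid. The paper states this in three lines, for an arbitrary connected affinoid subdomain $\Sp(A)$.

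There is, however, a genuine gap at the step where you claim $f_n$ avoids the minimal primes of $A = \mathcal{O}_K(\mathfrak{Y}_n)$ because "$\mathcal{O}_K(\mathfrak{Y})$ is an integral domain $\ldots$ embeds $\ldots$ as a dense subring." Density of a domain inside a topological ring does not make that ring a domain, and it gives no control over whether a given element becomes a zerodivisor after restriction (compare: polynomials are dense in $C([0,1])$, which has abundant zerodivisors). The concrete danger is that $\mathfrak{Y}_n$ may be disconnected, say $\mathfrak{Y}_n = \Sp(A_1) \sqcup \Sp(A_2)$; nothing in your argument excludes the possibility that $f$ restricts to $0$ on $\Sp(A_1)$ while staying nonzero on $\Sp(A_2)$, in which case $f_n \neq 0$ (so your step 2 raises no alarm) yet $f_n$ lies in a minimal prime, $A/(f_n)$ has a positive-dimensional component, and $V(f_n)$ is infinite. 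The paper sidesteps this by working only with \emph{connected} affinoid subdomains, whose algebras are honest Dedekind domains with $(0)$ as the unique minimal prime — though it too leaves implicit why $f$ restricts to something nonzero there. The clean fix for both issues, which also delivers your part (a), is the identity theorem for a connected quasi-Stein curve: the coherent ideal sheaf $\mathcal{I} := \ker(\mathcal{O} \xrightarrow{\;f\;} \mathcal{O})$ has global sections $\{g : gf = 0\} = 0$ because $\mathcal{O}_K(\mathfrak{Y})$ is a domain, so Theorem A for quasi-Stein spaces (as invoked in the proof of Lemma \ref{germ}) forces every stalk $\mathcal{I}_x$ to vanish. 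Hence the germ of $f$ at every $x$ is a nonzero element of the DVR $\mathcal{O}_x$; this makes $\divi(f)(x)$ a well-defined finite integer and guarantees $f$ is nonzero on each connected component of each affinoid, after which the dimension count you describe does go through component by component.
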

\begin{proof}
Let $\Sp(A) \subseteq \mathfrak{Y}$ be any connected affinoid subdomain. Then $\Sp(A)$ is smooth and one dimensional. Hence the affinoid algebra $A$ is a Dedekind domain. But in a Dedekind domain any nonzero element is contained in at most finitely many maximal ideals.
\end{proof}

For any effective divisor $\Delta$ we have the ideal $I_\Delta := \{f \in \mathcal{O}_K(\mathfrak{Y}) \setminus \{0\} : \divi(f) \geq \Delta\} \cup \{0\}$ in $\mathcal{O}_K(\mathfrak{Y})$. It also follows that for any nonzero ideal $I \subseteq \mathcal{O}_K(\mathfrak{Y})$ we have the effective divisor $\Delta(I)(x) := \min_{0 \neq f \in I} \divi(f)(x)$.

As the projective limit $\mathcal{O}_K(\mathfrak{Y}) = \varprojlim_n \mathcal{O}_K(\mathfrak{Y}_n)$ of the Banach algebras $\mathcal{O}_K(\mathfrak{Y}_n)$ the ring $\mathcal{O}_K(\mathfrak{Y})$ has a natural structure of a $K$-Fr\'echet algebra (which is independent of the choice of the covering $(\mathfrak{Y}_n)_n$). In fact,  $\mathcal{O}_K(\mathfrak{Y})$ is a Fr\'echet-Stein algebra in the sense of \cite{ST0}. Any closed ideal $I \subseteq \mathcal{O}_K(\mathfrak{Y})$ is the space of global sections of a unique coherent ideal sheaf $\widetilde{I} \subseteq \mathcal{O}$. Let $\widetilde{I}_x \subseteq \mathcal{O}_x$ denote the stalk of $\widetilde{I}$ in the point $x$.

\begin{lemma}\label{germ}
For any closed ideal $I \subseteq \mathcal{O}_K(\mathfrak{Y})$ and any point $x \in \mathfrak{Y}$ the map $I \longrightarrow \widetilde{I}_x/\mathfrak{m}_x \widetilde{I}_x$ induced by passing to the germ in $x$ is surjective, continuous, and open, if we equip the finite dimensional $K$-vector space $\widetilde{I}_x/\mathfrak{m}_x \widetilde{I}_x$ with its unique Hausdorff vector space topology.
\end{lemma}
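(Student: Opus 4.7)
The plan is to reduce to the affinoid situation by picking $n$ large enough that $x \in \mathfrak{Y}_n$, and to exploit the quasi-Stein hypothesis to transfer information back to $I$. Fix such an $n$, set $A_n := \mathcal{O}_K(\mathfrak{Y}_n)$ and $I_n := \Gamma(\mathfrak{Y}_n, \widetilde{I})$, and let $\mathfrak{n}_x \subseteq A_n$ be the maximal ideal corresponding to $x$. Since $\widetilde{I}$ is coherent on the affinoid $\mathfrak{Y}_n$, the $A_n$-module $I_n$ is finitely generated and carries its canonical Banach topology. The stalk $\widetilde{I}_x$ is the localization $(I_n)_{\mathfrak{n}_x}$, so reducing modulo $\mathfrak{m}_x$ yields a natural $K$-linear identification $\widetilde{I}_x/\mathfrak{m}_x\widetilde{I}_x \cong I_n/\mathfrak{n}_x I_n$, with the right-hand side a finite-dimensional $K$-vector space because $A_n/\mathfrak{n}_x$ is a finite extension of $K$.

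For continuity I would factor the germ map as $I \to I_n \to I_n/\mathfrak{n}_x I_n$. The first arrow is continuous by the very definition of the Fr\'echet topology on $I = \varprojlim_m I_m$. For the second, I invoke the standard fact that every finitely generated submodule of a finitely generated module over an affinoid algebra is closed (see \cite{BGR}); applied to $\mathfrak{n}_x I_n \subseteq I_n$, this equips the quotient $I_n/\mathfrak{n}_x I_n$ with a Hausdorff Banach quotient topology which, being defined on a finite-dimensional $K$-vector space, must coincide with the unique Hausdorff topology of the statement. Surjectivity then comes directly from the quasi-Stein hypothesis: the restriction $I \to I_n$ has dense image, so its composition with the continuous surjection $I_n \twoheadrightarrow I_n/\mathfrak{n}_x I_n$ has dense image as well. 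But this image is a $K$-linear subspace of a finite-dimensional Hausdorff $K$-vector space, hence automatically closed, hence all of $I_n/\mathfrak{n}_x I_n$.

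For openness, I would lift a $K$-basis $e_1, \ldots, e_r$ of $I_n/\mathfrak{n}_x I_n$ to elements $f_1, \ldots, f_r \in I$ (possible by the surjectivity just established) and consider the finite-dimensional subspace $V := Kf_1 + \cdots + Kf_r \subseteq I$. The composition $V \hookrightarrow I \to I_n/\mathfrak{n}_x I_n$ is a continuous $K$-linear bijection between finite-dimensional Hausdorff $K$-vector spaces, hence a topological isomorphism. Consequently, for any neighbourhood $U$ of $0$ in $I$, the intersection $U \cap V$ is open in $V$ and its image is an open neighbourhood of $0$ in $I_n/\mathfrak{n}_x I_n$ contained in the image of $U$. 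The main technical point is really the identification $\widetilde{I}_x/\mathfrak{m}_x\widetilde{I}_x = I_n/\mathfrak{n}_x I_n$ together with the closedness of $\mathfrak{n}_x I_n$ in the Banach module $I_n$; once these standard consequences of the theory of coherent sheaves on affinoids are in place, the remainder is a formal application of the open mapping principle for finite-dimensional targets.
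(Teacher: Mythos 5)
Your proof is correct and follows essentially the same route as the paper: reduce to the affinoid $\mathfrak{Y}_n$, identify $\widetilde{I}_x/\mathfrak{m}_x\widetilde{I}_x$ with $I_n/\mathfrak{n}_x I_n$, obtain continuity from closedness of $\mathfrak{n}_x I_n$ in the Banach module $I_n$, obtain surjectivity from the quasi-Stein density of $I$ in $I_n$ together with finite-dimensionality of the target, and obtain openness from the uniqueness of the Hausdorff topology on a finite-dimensional space (your version lifts a basis to a finite-dimensional subspace $V \subseteq I$; the paper's version factors through $I/\ker$ instead, but the two are interchangeable). One small imprecision worth noting: the stalk $\widetilde{I}_x$ in rigid geometry is $\mathcal{O}_x \otimes_{A_n} I_n$ with $\mathcal{O}_x$ the direct limit of affinoid algebras over shrinking affinoid neighbourhoods of $x$, not literally the algebraic localization $(I_n)_{\mathfrak{n}_x}$; the two do agree after reduction modulo $\mathfrak{m}_x$, which is all you use, so the argument stands, but the phrasing should be adjusted (this is the content of the reference \cite{BGR} Prop.\ 7.3.2/3 that the paper invokes at this point).
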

\begin{proof}
Let $\Sp(A) = \mathfrak{Y}_n \subseteq \mathfrak{Y}$ be such that it contains the point $x$, and let $\mathfrak{m} \subseteq A$ be the maximal ideal of functions vanishing in $x$. By \cite{BGR} Prop.\ 7.3.2/3 we have $AI/\mathfrak{m}AI = \widetilde{I}_x/\mathfrak{m}_x \widetilde{I}_x$. The continuity of the map in question follows since the restriction map $I \longrightarrow AI$ is continuous and any ideal in the Banach algebra $A$ is closed (\cite{BGR} Prop.\ 6.1.1/3). Moreover, Theorem A for the quasi-Stein space $\mathfrak{Y}$ says that the map $I \longrightarrow AI$ has dense image (cf.\ \cite{ST0} Thm.\ in \S3, Cor.\ 3.1, and Remark 3.2) which implies the asserted surjectivity. The openness then follows from the uniqueness of the Hausdorff topology on a finite dimensional vector space.
\end{proof}

\begin{lemma}\label{prescribe-divisor}
Let $I \subseteq \mathcal{O}_K(\mathfrak{Y})$ be any nonzero closed ideal and let $Z \subseteq \mathfrak{Y}$ be any countable subset disjoint from $\supp(\Delta(I))$; then there exists a function $f \in I$ such that
\begin{equation*}
    \divi(f)(x) =
    \begin{cases}
    \Delta(I)(x) & \text{if $x \in \supp(\Delta(I))$}, \\
    0 & \text{if $x \in Z$}.
    \end{cases}
\end{equation*}
\end{lemma}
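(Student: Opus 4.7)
My plan is to produce $f$ via a Baire category argument in the Fr\'echet space $I$, which is closed in the Fr\'echet-Stein algebra $\mathcal{O}_K(\mathfrak{Y})$ and so is itself complete metrizable. Set $S := \supp(\Delta(I)) \cup Z$. This is a countable subset of $\mathfrak{Y}$: $Z$ is countable by hypothesis, and picking any nonzero $g \in I$ one has $\supp(\Delta(I)) \subseteq \supp(\divi(g))$, which is countable since $\divi(g)$ is a divisor by Lemma \ref{principal-divisor}.

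I next translate both divisor conditions into a single family of pointwise nonvanishing conditions. Because $\mathcal{O}_x$ is a discrete valuation ring and $\widetilde I_x$ is the principal ideal $\mathfrak{m}_x^{\Delta(I)(x)}$, every $f \in I$ automatically satisfies $\divi(f)(x) \geq \Delta(I)(x)$, with equality precisely when the image of $f$ in the one-dimensional $k(x)$-vector space $V_x := \widetilde I_x/\mathfrak{m}_x \widetilde I_x$ is nonzero; when $x \in Z$ this reads $f(x) \neq 0$, since then $\Delta(I)(x) = 0$. Hence it suffices to produce $f$ lying in
\begin{equation*}
\bigcap_{x \in S} U_x, \qquad U_x := \{\, f \in I : \text{the image of $f$ in $V_x$ is nonzero}\,\}.
\end{equation*}

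For each $x \in S$, Lemma \ref{germ} provides a continuous, surjective, and open $K$-linear map $e_x : I \to V_x$, with $V_x$ carrying its canonical Hausdorff topology; in particular $V_x \neq 0$. Openness of $U_x = e_x^{-1}(V_x \setminus \{0\})$ in $I$ is then immediate from continuity of $e_x$, and density follows from the open mapping property: if $W \subseteq I$ is a nonempty open set, then $e_x(W)$ is a nonempty open subset of $V_x$ and therefore cannot equal $\{0\}$, so $W \cap U_x \neq \emptyset$. Since $I$ is a Baire space, the countable intersection $\bigcap_{x \in S} U_x$ is dense, hence nonempty, and any element of it is the desired $f$.

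I do not anticipate a genuine obstacle here: the substantive content has been packaged into Lemma \ref{germ} (surjectivity of the germ map on a closed ideal together with openness), and what remains is the standard Baire-category trick for simultaneously realizing countably many generic linear conditions. The only point requiring a little care is confirming that the conditions really are \emph{linear} conditions with finite-dimensional and Hausdorff targets, which is exactly what allows the open mapping statement of Lemma \ref{germ} to be invoked.
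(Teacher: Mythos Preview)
Your proof is correct and follows essentially the same approach as the paper: both set up the countable family of sets $U_x = \{f \in I : \text{image of } f \text{ in } \widetilde I_x/\mathfrak{m}_x\widetilde I_x \text{ is nonzero}\}$, use Lemma~\ref{germ} to see each $U_x$ is open and dense in the Fr\'echet space $I$, and conclude by Baire category. Your density argument (a nonempty open $W$ has open image in $V_x$, hence meets $V_x\setminus\{0\}$) is a direct contrapositive of the paper's (the complement of $\overline{U_x}$ has open image contained in $\{0\}$, impossible).
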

\begin{proof}
It follows from Lemma \ref{germ} that $V(x) := I \setminus \mathfrak{m}_x \widetilde{I}_x$, for any point $x \in \mathfrak{Y}$, is an open subset of $I$. It is even dense in $I$: On the one hand, the complement $I \setminus \overline{V(x)}$ of its closure, being contained in $\mathfrak{m}_x \widetilde{I}_x$, projects into the subset $\{0\} \subseteq \widetilde{I}_x/\mathfrak{m}_x \widetilde{I}_x$. On the other hand, by Lemma \ref{germ}, the image of the open set $I \setminus \overline{V(x)}$ is open in $\widetilde{I}_x/\mathfrak{m}_x \widetilde{I}_x$. But $I$ being nonzero the vector space $\widetilde{I}_x/\mathfrak{m}_x \widetilde{I}_x$ is one dimensional so that the subset $\{0\}$ is not open. Hence we must have $\overline{V(x)} = I$.

The topology of $I$ being complete and metrizable the Baire category theorem implies that, for any countable number of points $x$ the intersection of the corresponding $V(x)$ still is dense in $I$ and, in particular, is nonempty. Since $V(x) = \{f \in I : \divi(f)(x) = \Delta(I)(x)\}$ any $f \in \bigcap_{x \in \supp(\Delta(I)) \cup Z} V(x)$ satisfies the assertion.
\end{proof}

\begin{proposition}\label{divisor-theory}
The map $\Delta \longmapsto I_\Delta$ is an order reversing bijection between the set of all effective divisors on $\mathfrak{Y}$ and the set of all nonzero closed ideals in $\mathcal{O}_K(\mathfrak{Y})$. The inverse is given by $I \longmapsto \Delta(I)$.
\end{proposition}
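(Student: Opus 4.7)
The plan is to identify closed ideals of $\mathcal{O}_K(\mathfrak{Y})$ with coherent ideal subsheaves $\widetilde{I} \subseteq \mathcal{O}$, and to exploit the fact that on the smooth one dimensional $\mathfrak{Y}$ every stalk $\mathcal{O}_x$ is a DVR, so each $\widetilde{I}_x$ is a power of $\mathfrak{m}_x$; an effective divisor then just records these exponents.

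Given an effective divisor $\Delta$, I would first build the coherent ideal sheaf $\mathcal{I}_\Delta$ whose sections on each $\mathfrak{Y}_n = \Sp(A_n)$ are the ideal $\prod_{x \in \mathfrak{Y}_n} \mathfrak{m}_{A_n,x}^{\Delta(x)}$ in the Dedekind domain $A_n$ (a finite product because $\Delta$ has finite support on $\mathfrak{Y}_n$). Unwinding the definition of $\divi(f)(x)$ gives $I_\Delta = H^0(\mathfrak{Y}, \mathcal{I}_\Delta)$, which is closed in $\mathcal{O}_K(\mathfrak{Y})$ as the intersection, over $x \in \mathfrak{Y}_n$, of the preimages of $\{0\}$ under the continuous maps $\mathcal{O}_K(\mathfrak{Y}) \to A_n/\mathfrak{m}_{A_n,x}^{\Delta(x)}$ into finite dimensional $K$-spaces, and which is nonzero by Theorem A for quasi-Stein spaces (\cite{ST0}) applied to the nonzero sheaf $\mathcal{I}_\Delta$.

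To check $\Delta(I_\Delta) = \Delta$, the inequality $\geq$ is immediate from the definitions. For the reverse, Theorem A furnishes that the image of $I_\Delta$ in $A_n$ generates $\mathcal{I}_\Delta(\mathfrak{Y}_n)$ as an $A_n$-module, so for any fixed $x \in \mathfrak{Y}_n$ this image cannot lie inside the proper sub-$A_n$-module $\mathfrak{m}_{A_n,x} \cdot \mathcal{I}_\Delta(\mathfrak{Y}_n)$, producing an $f \in I_\Delta$ with $\divi(f)(x) = \Delta(x)$. For the reverse direction, given a nonzero closed ideal $I$, the paragraph preceding Lemma~\ref{germ} supplies a coherent ideal sheaf $\widetilde{I}$ with $I = H^0(\mathfrak{Y}, \widetilde{I})$. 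Since each $\mathcal{O}_x$ is a DVR we have $\widetilde{I}_x = \mathfrak{m}_x^{n(x)}$ for some $n(x) \geq 0$, and I would then show $n(x) = \Delta(I)(x)$: every $f \in I$ has germ in $\mathfrak{m}_x^{n(x)}$, giving $\Delta(I)(x) \geq n(x)$, while Lemma~\ref{germ} provides a surjection onto the one dimensional $K$-space $\widetilde{I}_x/\mathfrak{m}_x\widetilde{I}_x \cong \mathfrak{m}_x^{n(x)}/\mathfrak{m}_x^{n(x)+1}$, yielding an $f \in I$ whose germ lies in $\mathfrak{m}_x^{n(x)} \setminus \mathfrak{m}_x^{n(x)+1}$ and hence $\Delta(I)(x) \leq n(x)$. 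Consequently $\widetilde{I} = \mathcal{I}_{\Delta(I)}$ and $I = H^0(\mathfrak{Y}, \mathcal{I}_{\Delta(I)}) = I_{\Delta(I)}$.

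Order reversal of both maps is then immediate from the definitions. The main technical input throughout is the coherent sheaf dictionary for quasi-Stein spaces (particularly Theorem A used in both directions), which I am importing from \cite{ST0}; all remaining work is stalkwise in the DVRs $\mathcal{O}_x$, where the ideal theory is essentially trivial.
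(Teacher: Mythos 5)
Your proof is correct and follows essentially the same approach as the paper's: identify closed ideals with coherent ideal sheaves, reduce to the affinoid Dedekind domain case via the admissible covering, and use the density statement from Theorem~A (which is exactly the content of Lemma~\ref{germ}) to pin down the divisor of the inverse. You spell out a couple of steps that the paper leaves implicit (the closedness and nonvanishing of $H^0(\mathfrak{Y},\mathcal{I}_\Delta)$ and the stalkwise identification $\widetilde{I}_x=\mathfrak{m}_x^{n(x)}$), but the underlying argument is the same.
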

\begin{proof}
The map $I \longmapsto \widetilde{I}$ is a bijection between the set of all nonzero closed ideals and the set of all nonzero coherent ideal sheaves in $\mathcal{O}$. Let $\mathfrak{Y}_n = \Sp(A_n)$. Then a coherent ideal sheaf $\mathcal{I}$ is the same as a sequence $(I_n)_{n \geq 1}$ where $I_n$ is an ideal in $A_n$ such that $I_{n+1} A_n = I_n$ for any $n \geq 1$ the bijection being given by $\mathcal{I} \longmapsto (\mathcal{I}(\Sp(A_n))_n$. On the other hand, an effective divisor on $\mathfrak{Y}$ is the same as a sequence $(\Delta_n)_{n \geq 1}$ where $\Delta_n$ is an effective divisor supported on $\Sp(A_n)$ such that $\Delta_{n+1} | \Sp(A_n) = \Delta_n$ for any $n \geq 1$. Since $I_\Delta = \varprojlim_n I_{\Delta|\Sp(A_n)}$ this reduces the asserted bijection to the case of the affinoid varieties $\Sp(A_n)$. But each $A_n$ is a Dedekind ring for which the bijection between nonzero ideals and effective divisors is well known. The description of the inverse bijection follows from Lemma \ref{germ} which implies that $\min_{0 \neq f \in I} \divi(f)(x) = \min_{0 \neq f \in A_n I} \divi(f)(x)$ provided $x \in \Sp(A_n)$.
\end{proof}

\begin{lemma}\label{fingensub}
Any finitely generated submodule in a finitely generated free $\mathcal{O}_K(\mathfrak{Y})$-module is closed.
\end{lemma}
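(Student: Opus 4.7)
The plan is to exhibit $N$ as a coadmissible module over the Fr\'echet-Stein algebra $A := \mathcal{O}_K(\mathfrak{Y})$ and use that coadmissible submodules of coadmissible modules are automatically closed. Write $A_n := \mathcal{O}_K(\mathfrak{Y}_n)$, suppose the ambient free module is $A^r$, and let $f_1, \dots, f_s \in A^r$ be a set of generators of $N$. For each $n \geq 1$ put $N_n := A_n f_1 + \cdots + A_n f_s \subseteq A_n^r$. Since $A_n$ is an affinoid and hence Noetherian Banach algebra, the finitely generated submodule $N_n$ is closed in $A_n^r$. The compatibility $A_n \otimes_{A_{n+1}} N_{n+1} = N_n$ is immediate, so the family $(N_n)_n$ defines a coadmissible $A$-module $N' := \varprojlim_n N_n$ in the sense of \cite{ST0}.

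Being the projective limit of closed subspaces, $N'$ is closed in $A^r = \varprojlim_n A_n^r$. It therefore suffices to identify $N$ with $N'$. The inclusion $N \subseteq N'$ is immediate, since $f_1, \dots, f_s \in N'$ and $N'$ is an $A$-submodule. For the reverse inclusion, consider the $A$-linear map $\phi : A^s \longrightarrow N'$ sending $(a_1, \dots, a_s)$ to $\sum_i a_i f_i$. This is a morphism of coadmissible $A$-modules whose level-$n$ reduction $A_n^s \twoheadrightarrow N_n$ is the tautological surjection. Since the category of coadmissible modules is abelian and a map between coadmissible modules is surjective as soon as all of its level-$n$ reductions are (\cite{ST0} \S3), $\phi$ itself is surjective. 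Hence every element of $N'$ is an $A$-linear combination of the $f_i$, i.e.\ lies in $N$, so $N = N'$ is closed.

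The main obstacle is precisely this last surjectivity of $\phi$, i.e.\ promoting a compatible collection of local expressions $g|_{\mathfrak{Y}_n} = \sum_i a_{i,n} f_i|_{\mathfrak{Y}_n}$ to a single global expression $g = \sum_i a_i f_i$; this is exactly the content of the Fr\'echet-Stein formalism of \cite{ST0} and substitutes for a direct Mittag-Leffler style gluing argument. Once accepted, the closedness of $N$ follows formally from the closedness of each $N_n$ in $A_n^r$ together with the quasi-Stein description of $A^r$.
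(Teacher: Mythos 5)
Your proof is correct and takes essentially the same route as the paper, which simply cites the Fr\'echet--Stein formalism of \cite{ST0}; you unpack that citation by constructing the coadmissible module $N' = \varprojlim_n N_n$ explicitly and verifying $N = N'$. The only thing worth flagging is that the compatibility $A_n \otimes_{A_{n+1}} N_{n+1} = N_n$, which you call immediate, does quietly use flatness of the restriction maps $A_{n+1} \to A_n$ (\cite{BGR} Cor.\ 7.3.2/6) to ensure injectivity of $A_n \otimes_{A_{n+1}} N_{n+1} \to A_n^r$.
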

\begin{proof}
(Any finitely generated free $\mathcal{O}_K(\mathfrak{Y})$-module carries the product topology which is easily seen to be independent of the choice of a basis.) Our assertion is a general fact about Fr\'echet-Stein algebras (cf.\ \cite{ST0} Cor.\ 3.4.iv and Lemma 3.6).
\end{proof}

\begin{proposition}\label{closed-fingen}
An ideal $I \subseteq \mathcal{O}_K(\mathfrak{Y})$ is closed if and only it is finitely generated.
\end{proposition}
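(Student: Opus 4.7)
The forward direction is essentially free: a finitely generated ideal $I$ is a finitely generated submodule of the free rank one module $\mathcal{O}_K(\mathfrak{Y})$, so Lemma \ref{fingensub} gives that $I$ is closed. The substance is the converse, and my plan is to show something slightly stronger, namely that every nonzero closed ideal is generated by two elements.

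Suppose $I$ is a nonzero closed ideal, and set $\Delta := \Delta(I)$; by Proposition \ref{divisor-theory} we have $I = I_\Delta$. The strategy is to find $f, g \in I$ with $\Delta((f,g)) = \Delta$, for then $(f,g)$ is a nonzero closed ideal (again by Lemma \ref{fingensub} applied to the free module of rank one) whose associated divisor agrees with $\Delta$, and the bijectivity statement of Proposition \ref{divisor-theory} will force $(f,g) = I_\Delta = I$. Since $\Delta((f,g))(x) = \min(\divi(f)(x), \divi(g)(x))$ for every $x$ (the divisor of a sum being at least this minimum, with equality realized by at least one of $f,g$ at every point of $\supp(\Delta)$), the task reduces to arranging that $\min(\divi(f), \divi(g)) = \Delta$ pointwise.

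The construction proceeds in two applications of Lemma \ref{prescribe-divisor}. First, take $Z = \emptyset$ to obtain $f \in I$ with $\divi(f)(x) = \Delta(x)$ for all $x \in \supp(\Delta)$. The divisor $\divi(f)$ is still a divisor in the sense of the paper, so its support is countable; hence the set
\begin{equation*}
    T := \supp(\divi(f)) \setminus \supp(\Delta)
\end{equation*}
is a countable subset of $\mathfrak{Y}$ disjoint from $\supp(\Delta)$. Applying Lemma \ref{prescribe-divisor} a second time with this $Z = T$, we obtain $g \in I$ with $\divi(g)(x) = \Delta(x)$ on $\supp(\Delta)$ and $\divi(g)(x) = 0$ on $T$.

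It remains to check that $\min(\divi(f), \divi(g)) = \Delta$ pointwise. At $x \in \supp(\Delta)$ both divisors equal $\Delta(x)$ by construction, so the minimum is $\Delta(x)$. At $x \in T$ we have $\divi(g)(x) = 0 = \Delta(x)$. At $x \notin \supp(\Delta) \cup T$ we have $\divi(f)(x) = 0 = \Delta(x)$. Hence $\Delta((f,g)) = \Delta$, and the argument above concludes $I = (f,g)$. The one subtle step, and the only place where the quasi-Stein structure is really used, is the second application of Lemma \ref{prescribe-divisor}: one needs to know that the extra zeros of a single chosen $f$ form a countable set, so that they can be avoided by a second element in $I$. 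This is exactly what makes the two-generator bound work.
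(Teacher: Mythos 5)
Your proof is correct and follows exactly the paper's own argument: apply Lemma~\ref{prescribe-divisor} with $Z=\emptyset$ to get $f$, then apply it again avoiding the extra zero set $T = \supp(\divi f)\setminus\supp(\Delta(I))$ to get $g$, and use the bijectivity in Proposition~\ref{divisor-theory} to conclude $(f,g)=I$. The only difference from the paper is presentational — you spell out the pointwise computation of $\min(\divi f,\divi g)$, while the paper states it more tersely — but the route and the key lemma invocations are identical.
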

\begin{proof}
If $I$ is finitely generated then it is closed by the previous Lemma \ref{fingensub}. Suppose therefore that $I$ is closed. We apply Lemma \ref{prescribe-divisor} twice, first to obtain a function $f \in I$ such that $\divi(f) = \Delta(I) + \Delta_1$ with $\supp(\Delta(I)) \cap \supp(\Delta_1) = \emptyset$ and then to obtain another function $g \in I$ such that $\divi(g) = \Delta(I) + \Delta_2$ with $\big( \supp(\Delta(I)) \cup \supp(\Delta_1) \big) \cap \supp(\Delta_2) = \emptyset$. By the first part of the assertion the ideal $(f,g)$ is closed. Its divisor, by construction, must be equal to $\Delta(I)$. It follows that $I = (f,g)$.
\end{proof}

\begin{proposition}\label{closed-invert}
A nonzero ideal $I \subseteq \mathcal{O}_K(\mathfrak{Y})$ is closed if and only it is invertible.
\end{proposition}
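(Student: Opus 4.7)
My plan is to handle the two directions separately. The \emph{invertible implies closed} direction is standard commutative algebra: an invertible ideal $I$ admits an expression $1 = \sum_{i=1}^n x_i f_i$ with $f_i \in I$ and $x_i \in I^{-1}$, so for any $f \in I$ we have $f = \sum_i (x_i f) f_i$ with $x_i f \in \mathcal{O}_K(\mathfrak{Y})$, showing $I = (f_1, \ldots, f_n)$ is finitely generated and therefore closed by Proposition \ref{closed-fingen}.

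For the converse, let $I$ be a nonzero closed ideal. The proof of Proposition \ref{closed-fingen} already produces an especially useful presentation $I = (f, g)$ with $\divi(f) = \Delta(I) + \Delta_1$ and $\divi(g) = \Delta(I) + \Delta_2$, where the supports of $\Delta_1$, $\Delta_2$ and $\Delta(I)$ are pairwise disjoint. Writing $Q$ for the fraction field of $\mathcal{O}_K(\mathfrak{Y})$ and $I^{-1} := \{x \in Q : xI \subseteq \mathcal{O}_K(\mathfrak{Y})\}$, the aim is to produce $a, b \in I^{-1}$ with $af + bg = 1$, which forces $I \cdot I^{-1} = \mathcal{O}_K(\mathfrak{Y})$ and hence gives invertibility of $I$.

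The decisive step is the identity $I_{\Delta_1} + I_{\Delta_2} = \mathcal{O}_K(\mathfrak{Y})$. Both summands are closed, hence by Proposition \ref{closed-fingen} finitely generated; their sum is therefore finitely generated, and again closed. Since $I_{\Delta_j} \subseteq I_{\Delta_1} + I_{\Delta_2}$, the divisor of the sum is pointwise $\leq \Delta_j$ for $j = 1, 2$, hence $\leq \min(\Delta_1, \Delta_2) = 0$ by the disjointness of the supports. Proposition \ref{divisor-theory} then forces the sum to equal $\mathcal{O}_K(\mathfrak{Y})$. Writing $1 = u + v$ with $u \in I_{\Delta_1}$, $v \in I_{\Delta_2}$, one sets $a := u/f$ and $b := v/g$ in $Q$; a short divisor comparison (using $\divi(u) \geq \Delta_1$, $\divi(v) \geq \Delta_2$ together with the explicit shapes of $\divi(f)$ and $\divi(g)$) verifies $af$, $ag$, $bf$, $bg \in \mathcal{O}_K(\mathfrak{Y})$, so $a, b \in I^{-1}$, and $af + bg = u + v = 1$.

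The main obstacle is precisely the identity $I_{\Delta_1} + I_{\Delta_2} = \mathcal{O}_K(\mathfrak{Y})$: every other ingredient is either formal commutative algebra or a pointwise divisor check on the pre-arranged generators of $I$. Proposition \ref{closed-fingen} is indispensable here, because it is what turns the trivial pointwise statement ``the sum has zero divisor'' into the strict equality of ideals that one actually needs, thereby encoding the Pr\"ufer-like behavior of $\mathcal{O}_K(\mathfrak{Y})$ asserted in the introduction.
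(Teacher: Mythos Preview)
Your proof is correct. The direction \emph{invertible $\Rightarrow$ closed} matches the paper's argument exactly. For \emph{closed $\Rightarrow$ invertible} you take a different route from the paper. The paper fixes any nonzero $f \in I$, forms $J := I_{\divi(f) - \Delta(I)}$, and computes directly that $\Delta(IJ) = \divi(f)$, whence $IJ = (f)$ and $I(f^{-1}J) = \mathcal{O}_K(\mathfrak{Y})$. You instead exploit the specific two-generator presentation $I = (f,g)$ from the proof of Proposition~\ref{closed-fingen}, prove coprimality $I_{\Delta_1} + I_{\Delta_2} = \mathcal{O}_K(\mathfrak{Y})$ via the divisor bijection, and solve $af + bg = 1$ explicitly with $a = u/f$, $b = v/g$. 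Your divisor checks for $ag, bf \in \mathcal{O}_K(\mathfrak{Y})$ are correct (implicitly using $h\mathcal{O}_K(\mathfrak{Y}) = I_{\divi(h)}$, which follows from Proposition~\ref{divisor-theory}). The paper's approach is slightly more streamlined in that it avoids appealing to the internal structure of the proof of Proposition~\ref{closed-fingen} and works with an arbitrary $f \in I$; your approach has the virtue of producing an explicit B\'ezout relation. Both rest on the same two ingredients, Propositions~\ref{divisor-theory} and~\ref{closed-fingen}.
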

\begin{proof}
In any ring invertible ideals are finitely generated. Hence $I$ is closed if it is invertible by Prop.\ \ref{closed-fingen}. Now assume that $I \neq 0$ is closed. Again by \ref{closed-fingen} we have $I = (g_1, \ldots, g_m)$ for appropriate functions $g_i$. Fix any nonzero function $f \in I$. Then $\divi(f) - \Delta(I)$ is an effective divisor and the closed ideal $J := I_{\divi(f) - \Delta(I)}$ is defined. Then $IJ = \sum_i g_i J$. Using once more \ref{closed-fingen} we see that $J$ and therefore all the ideals $g_iJ$ as well as $\sum_i g_i J$ are finitely generated and hence closed. We have $\Delta(g_iJ) = \divi(f) - \Delta(I) + \divi(g_i)$, and we conclude that
\begin{align*}
    \Delta(\sum_i g_iJ) & = \min_i \Delta(g_iJ) = \min_i (\divi(f) - \Delta(I) + \divi(g_i)) \\
    & = \divi(f) - \Delta(I) + \min_i \divi(g_i) = \divi(f) - \Delta(I) + \Delta(I) \\
    & = \divi(f) \ .
\end{align*}
This implies $IJ = (f)$ and then $I (f^{-1}J) = (1)$ which shows that $I$ is invertible.
\end{proof}

\begin{corollary}\label{pruefer}
$\mathcal{O}_K(\mathfrak{Y})$ is a Pr\"ufer domain; in particular, we have:
\begin{itemize}
  \item[--] $\mathcal{O}_K(\mathfrak{Y})$ is a coherent ring;
  \item[--] an $\mathcal{O}_K(\mathfrak{Y})$-module is flat if and only if it is torsionfree;
  \item[--] any finitely generated torsionfree $\mathcal{O}_K(\mathfrak{Y})$-module is projective;
  \item[--] any finitely generated projective $\mathcal{O}_K(\mathfrak{Y})$-module is isomorphic to a direct sum of (finitely generated) ideals (\cite{CE} Prop.\ I.6.1).
\end{itemize}
\end{corollary}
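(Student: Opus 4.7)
The plan is to read off the Prüfer property directly by combining the two preceding propositions, and then to invoke standard module theory of Prüfer (equivalently, semihereditary) domains for the four bulleted consequences. Recall that an integral domain is Prüfer precisely when every nonzero finitely generated ideal is invertible. If $I \subseteq \mathcal{O}_K(\mathfrak{Y})$ is a nonzero finitely generated ideal, Proposition \ref{closed-fingen} yields that $I$ is closed, and Proposition \ref{closed-invert} then yields that $I$ is invertible. This already establishes the first assertion.

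For the four bulleted consequences I would argue as follows. Coherence amounts to every finitely generated ideal being finitely presented; since each such ideal is now invertible, it is a direct summand of a finitely generated free module and hence finitely presented, so $\mathcal{O}_K(\mathfrak{Y})$ is coherent. One direction of the equivalence of flatness and torsionfreeness is automatic over any domain; for the converse one reduces, via the fact that an arbitrary module is the filtered colimit of its finitely generated submodules (a colimit of flat modules being flat), to showing that a finitely generated torsionfree module is projective. This is the third bullet, and the standard proof is by induction on the rank: a finitely generated torsionfree module of rank $n$ contains a pure rank-one submodule that is isomorphic to a nonzero ideal; invertibility of that ideal makes it projective, so the short exact sequence splits and the induction continues. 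The last bullet is a direct citation of \cite{CE} Prop.\ I.6.1.

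I expect no genuine obstacle here: the real content is already carried by Propositions \ref{closed-fingen} and \ref{closed-invert}, and the corollary is a packaging statement whose remaining work is standard bookkeeping in the theory of Prüfer domains.
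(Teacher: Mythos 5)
Your proof is correct and follows the route the paper implicitly intends: the Prüfer property is read off directly from Propositions \ref{closed-fingen} and \ref{closed-invert} (a nonzero ideal is finitely generated iff closed iff invertible), and the four bulleted consequences are standard facts about Prüfer domains, with the last cited to Cartan--Eilenberg exactly as in the statement. The paper gives no proof text for this corollary, so the bookkeeping you supply is precisely what is being left to the reader.
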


\begin{lemma}\label{closed-proj}
Any closed submodule of a finitely generated projective $\mathcal{O}_K(\mathfrak{Y})$-module is finitely generated projective.
\end{lemma}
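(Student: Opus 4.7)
My plan is to construct a finite generating set for $M$ by a Baire-category argument in the spirit of Lemma \ref{prescribe-divisor}, and then to use the Pr\"ufer property of $R := \mathcal{O}_K(\mathfrak{Y})$ to upgrade \emph{finitely generated} to \emph{finitely generated projective}. By Corollary \ref{pruefer} we may write $P \cong I_1 \oplus \cdots \oplus I_n$ with each $I_j \subseteq R$ a finitely generated, hence closed (Lemma \ref{fingensub}), ideal; so $P$, and therefore $M$, is a closed submodule of $R^n$. By the Fr\'echet-Stein formalism $M$ is the module of global sections of a coherent subsheaf $\widetilde M \subseteq \mathcal{O}^n$; at every point $x \in \mathfrak{Y}$ the stalk $\widetilde M_x$ is a submodule of the free $\mathcal{O}_x$-module $\mathcal{O}_x^n$ over a discrete valuation ring, hence itself free, and since $\mathfrak{Y}$ is connected $\widetilde M$ is locally free of constant rank $s \leq n$.

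Choose $m_1, \ldots, m_s \in M$ whose images span the generic fibre $\widetilde M \otimes_R \operatorname{Frac}(R)$, and set
\[
Z := \{ x \in \mathfrak{Y} : m_1, \ldots, m_s \text{ fail to span } \widetilde M_x / \mathfrak{m}_x \widetilde M_x \} .
\]
On each affinoid $\Sp(A_k)$ the locus $Z$ is cut out by the vanishing of a single nonzero function (the determinant of the $m_i$'s written in a local basis of $\widetilde M$), hence $Z \cap \Sp(A_k)$ is finite and $Z$ is countable. The proof of Lemma \ref{germ} carries over verbatim to closed submodules of $R^n$ (the only input being Theorem A for the quasi-Stein space $\mathfrak{Y}$), so the evaluation $M \to \widetilde M_x / \mathfrak{m}_x \widetilde M_x$ is surjective, continuous, and open for every $x$. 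Consequently the subset $U_x \subseteq M^s$ of tuples whose images form a basis of $\widetilde M_x / \mathfrak{m}_x \widetilde M_x$ is open (invertibility is a Zariski-open condition in a finite-dimensional space) and dense (perturbing any tuple by $\epsilon$ times a lift of a basis gives nonzero determinant for all sufficiently small nonzero $\epsilon$). As $M$ is Fr\'echet and $Z$ is countable, Baire category supplies a tuple $(m_1', \ldots, m_s') \in \bigcap_{x \in Z} U_x$.

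The combined $2s$ elements then span $\widetilde M_x / \mathfrak{m}_x \widetilde M_x$ at every $x \in \mathfrak{Y}$: the primed ones handle $Z$, the unprimed ones handle $\mathfrak{Y} \setminus Z$. Let $N \subseteq M$ be the submodule they generate; $N$ is closed by Lemma \ref{fingensub}, and Nakayama at each local ring $\mathcal{O}_x$ yields $\widetilde N_x = \widetilde M_x$, whence $\widetilde N = \widetilde M$. By the bijection between closed submodules of $R^n$ and coherent subsheaves of $\mathcal{O}^n$ (the module-theoretic analog of Proposition \ref{divisor-theory}), this forces $N = M$, so $M$ is finitely generated. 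Being torsion-free over the Pr\"ufer ring $R$, $M$ is then projective by Corollary \ref{pruefer}. The most delicate point in this strategy is extending Lemma \ref{germ} from closed ideals to closed submodules of $R^n$, since the openness of the fibre map it provides underpins both the Baire framework and the density of $U_x$.
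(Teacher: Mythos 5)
Your proof is correct in substance, but it takes a genuinely different route from the paper's. The paper's argument is a short induction on $m$ after embedding the ambient projective module into a free module $\mathcal{O}_K(\mathfrak{Y})^m$: it considers the exact sequence $0 \to P \cap \mathcal{O}_K(\mathfrak{Y})^{m-1} \to P \to \mathcal{O}_K(\mathfrak{Y})$, invokes \cite{ST0} Cor.~3.4.ii and Lemma~3.6 to see that the image of $\pr_m$ is closed, applies the base case $m=1$ (i.e.\ Prop.~\ref{closed-fingen} and Cor.~\ref{pruefer}) to the image and the induction hypothesis to the kernel, and notes the resulting extension of projectives splits. Your approach instead generalizes the Baire-category construction of Lemma~\ref{prescribe-divisor} and Prop.~\ref{closed-fingen} from ideals to arbitrary closed submodules of a free module: you produce $s$ sections trivializing the associated locally free sheaf generically, identify a countable exceptional locus $Z$, and then use Baire on the open dense sets $U_x$ to find $s$ more sections covering $Z$, so that $2s$ elements generate by Nakayama; projectivity then comes from torsion-freeness over the Pr\"ufer ring. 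The trade-off: the paper's proof is shorter and offloads the hard analysis onto \cite{ST0}, while yours is more explicit and yields the extra information that $2\,\mathrm{rank}(M)$ generators always suffice. Two steps in your write-up deserve to be spelled out rather than asserted: (a) the module-theoretic extension of Lemma~\ref{germ} --- surjectivity, continuity, and openness of $M \to \widetilde M_x/\mathfrak{m}_x\widetilde M_x$ --- and the bijection between closed submodules of $\mathcal{O}_K(\mathfrak{Y})^n$ and coherent subsheaves of $\mathcal{O}^n$ both rest on the coadmissibility of closed submodules of coadmissible modules from \cite{ST0}, which you cite only implicitly; (b) the assertion that $Z$ meets each $\Sp(A_k)$ in the zero set of a single nonzero function presumes a local basis of $\widetilde M$ over all of $\Sp(A_k)$, which in general requires refining to a finite cover of smaller affinoids on which $\widetilde M$ is actually free (the conclusion that $Z \cap \Sp(A_k)$ is finite survives).
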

\begin{proof}
Because of Lemma \ref{fingensub} we may assume that $P$ is a closed submodule of some free module $\mathcal{O}_K(\mathfrak{Y})^m$. We now prove the assertion by induction with respect to $m$. For $m=1$ this is Prop.\ \ref{closed-fingen} and Cor.\ \ref{pruefer}. In general we consider the exact sequence
\begin{equation*}
    0 \longrightarrow P \cap \mathcal{O}_K(\mathfrak{Y})^{m-1} \longrightarrow P \xrightarrow{\; \pr_m \;} \mathcal{O}_K(\mathfrak{Y}) \ .
\end{equation*}
By the induction hypothesis the left term is finitely generated projective. On the other hand, by \cite{ST0} Cor.\ 3.4.ii and Lemma 3.6 the image of $\pr_m$ is closed and therefore finitely generated projective as well (by the case $m=1$).
\end{proof}

In fact, $\mathcal{O}_K(\mathfrak{Y})$ is a Pr\"ufer domain of a special kind. We recall that a Pr\"ufer domain $R$ is called a $1 \frac{1}{2}$ generator Pr\"ufer domain if for any nonzero finitely generated ideal $I$ of $R$ and any $0 \neq f \in I$, there exists another element $g \in I$ such that $f$ and $g$ generate $I$.

\begin{proposition}\label{1 1/2}
The ring $\mathcal{O}_K(\mathfrak{Y})$ is a $1 \frac{1}{2}$ generator Pr\"ufer domain.
\end{proposition}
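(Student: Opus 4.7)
The plan is to combine the divisor dictionary (Proposition \ref{divisor-theory}) with the freedom to prescribe divisors afforded by Lemma \ref{prescribe-divisor}. Given a nonzero finitely generated ideal $I \subseteq \mathcal{O}_K(\mathfrak{Y})$ and a nonzero $f \in I$, Proposition \ref{closed-fingen} tells me that $I$ is closed, so its divisor $\Delta(I)$ is defined and $\divi(f) \geq \Delta(I)$. I would search for $g \in I$ whose zeros ``avoid'' the extra zeros of $f$ outside $\supp(\Delta(I))$; more precisely, I want $\divi(g)$ to coincide with $\Delta(I)$ on $\supp(\Delta(I))$ and to vanish on the excess locus of $f$.

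Concretely, first I set $Z := \supp(\divi(f) - \Delta(I)) \setminus \supp(\Delta(I))$. This is a countable subset of $\mathfrak{Y}$ (supports of effective divisors are countable) and it is disjoint from $\supp(\Delta(I))$ by construction. Applying Lemma \ref{prescribe-divisor} to the closed ideal $I$ and this set $Z$ produces a function $g \in I$ with
\begin{equation*}
\divi(g)(x) = \Delta(I)(x) \text{ for } x \in \supp(\Delta(I)), \qquad \divi(g)(x) = 0 \text{ for } x \in Z.
\end{equation*}

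Next I verify that $(f,g) = I$. The ideal $(f,g)$ is closed by Lemma \ref{fingensub}, so by Proposition \ref{divisor-theory} it suffices to check $\Delta((f,g)) = \Delta(I)$. On the one hand $(f,g) \subseteq I$ gives $\Delta((f,g)) \geq \Delta(I)$. For the reverse inequality I compare $\min(\divi(f)(x), \divi(g)(x))$ with $\Delta(I)(x)$ case by case: on $\supp(\Delta(I))$ I have $\divi(g)(x) = \Delta(I)(x) \leq \divi(f)(x)$; on $Z$ I have $\divi(g)(x) = 0 = \Delta(I)(x)$; and at remaining points both $\divi(f)(x)$ and $\Delta(I)(x)$ are zero. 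Hence $\min(\divi(f), \divi(g)) = \Delta(I)$ pointwise, which forces $\Delta((f,g)) \leq \Delta(I)$ and completes the argument.

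The only genuine content is the pointwise check of the divisor of $(f,g)$; the rest is bookkeeping around the dictionary between closed ideals and effective divisors. The mild subtlety to get right is the choice of the auxiliary set $Z$: one has to exclude $\supp(\Delta(I))$ (to comply with the disjointness hypothesis of Lemma \ref{prescribe-divisor}) while still covering all points where $f$ has ``extra'' zeros, so that $g$ can cancel them in the minimum.
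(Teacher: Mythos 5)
Your proof is correct. The pointwise divisor bookkeeping is exactly right: since $f \in I$ gives $\divi(f) \geq \Delta(I)$, your set $Z = \supp(\divi(f) - \Delta(I)) \setminus \supp(\Delta(I))$ is a countable set disjoint from $\supp(\Delta(I))$, so Lemma \ref{prescribe-divisor} applies to the closed ideal $I$ and produces a $g \in I$ with $\divi(g) = \Delta(I)$ on $\supp(\Delta(I))$ and $\divi(g) = 0$ on $Z$. Then $\min(\divi(f),\divi(g))$ agrees with $\Delta(I)$ at every point, so the closed ideals $(f,g)$ and $I$ have the same divisor and Prop.\ \ref{divisor-theory} forces $(f,g) = I$.

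The route differs from the paper's. The paper invokes \cite{BGR} Prop.\ 9.5.3/3 to realize $\mathcal{O}/\widetilde{I}$ as the structure sheaf of a rigid variety on the discrete set $\supp(\Delta(I))$, computes $\mathcal{O}_K(\mathfrak{Y})/I$ and $\mathcal{O}_K(\mathfrak{Y})/(f)$ as products of Artin local rings, and then observes that $I/(f) = \prod_x \mathfrak{m}_x^{\Delta(I)(x)}/\mathfrak{m}_x^{\divi(f)(x)}$ is a cyclic $\mathcal{O}_K(\mathfrak{Y})/(f)$-module; the desired $g$ is then any lift of a generator. Your argument sidesteps this quotient computation entirely and builds $g$ directly from the already-available Lemma \ref{prescribe-divisor}, closing with the divisor--closed-ideal dictionary. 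Both approaches end up producing a $g$ satisfying the identical germ condition, namely $\divi(g) = \Delta(I)$ on $\supp(\divi(f))$. The paper's version gives a bit more structural information (an explicit product description of $I/(f)$, which is what one would quote to prove Cor.\ \ref{free-invertible}.i directly), while yours is shorter, more elementary, and stays entirely within the toolkit of section \ref{sec:prufer}.

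One small expository point: the inequality $\Delta((f,g)) \leq \min(\divi(f),\divi(g))$ that you use silently at the end is immediate from $f,g \in (f,g)$, but it is worth stating so the logic of the final comparison is fully visible.
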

\begin{proof}
Let $I \subseteq \mathcal{O}_K(\mathfrak{Y})$ be an arbitrary nonzero finitely generated ideal and $0 \neq f \in I$ be any element. Our assertion, by definition, amounts to the claim that there exists another element $g \in I$ such that $f$ and $g$ generate $I$.

As before, let $\widetilde{I} \subseteq \mathcal{O}$ be the coherent ideal sheaf corresponding to $I$. By Thm.\ B we have $(\mathcal{O}/\widetilde{I})(\mathfrak{Y}) = \mathcal{O}_K(\mathfrak{Y})/I$. According to \cite{BGR} Prop.\ 9.5.3/3 the quotient sheaf $\mathcal{O}/\widetilde{I}$ is (the direct image of) the structure sheaf of a (in general nonreduced) structure of a rigid analytic variety on the analytic subset $\supp(\Delta(I))$ of $\mathfrak{Y}$. But topologically $\supp(\Delta(I))$ is a discrete set. We deduce that
\begin{equation*}
  \mathcal{O}_K(\mathfrak{Y})/I = (\mathcal{O}/\widetilde{I})(\mathfrak{Y}) = \prod_{x \in \mathfrak{Y}} \mathcal{O}_x/\mathfrak{m}_x^{\Delta(I)(x)} \ .
\end{equation*}
We now repeat the corresponding observation for the principal ideal $J := f \mathcal{O}_K(\mathfrak{Y})$. By comparing the two computations we obtain
\begin{equation*}
  I/f \mathcal{O}_K(\mathfrak{Y}) = \prod_{x \in \mathfrak{Y}} \mathfrak{m}_x^{\Delta(I)(x)}/\mathfrak{m}_x^{\divi(f)(x)} \subseteq \prod_{x \in \mathfrak{Y}} \mathcal{O}_x/\mathfrak{m}_x^{\divi(f)(x)} = \mathcal{O}_K(\mathfrak{Y})/ f \mathcal{O}_K(\mathfrak{Y}) \ .
\end{equation*}
This shows that for $g$ we may take any function in $I$ whose germ generates $\mathfrak{m}_x^{\Delta(I)(x)}/\mathfrak{m}_x^{\divi(f)(x)}$ for any $x \in \mathcal{O}_K(\mathfrak{Y})$.
\end{proof}

\begin{corollary}\phantomsection\label{free-invertible}
\begin{itemize}
  \item[i.] If $I \subseteq \mathcal{O}_K(\mathfrak{Y})$ is any nonzero finitely generated ideal, then in the factor ring $\mathcal{O}_K(\mathfrak{Y})/I$ every finitely generated ideal is principal.
  \item[ii.]  For any two nonzero finitely generated ideals $I$ and $J$ in $\mathcal{O}_K(\mathfrak{Y})$ we have an isomorphism of $\mathcal{O}_K(\mathfrak{Y})$-modules $I \oplus J \cong \mathcal{O}_K(\mathfrak{Y}) \oplus IJ$.
\end{itemize}
\end{corollary}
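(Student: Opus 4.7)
Let $\bar{I}'$ be a finitely generated ideal of $\mathcal{O}_K(\mathfrak{Y})/I$ with generators $\bar b_1, \ldots, \bar b_n$. Lift to the ideal $I' := I + (b_1, \ldots, b_n) \subseteq \mathcal{O}_K(\mathfrak{Y})$, which is finitely generated because $I$ is. Pick any nonzero $f \in I \subseteq I'$. Proposition \ref{1 1/2} applied to $I'$ and $f$ produces $g \in I'$ with $I' = (f,g)$. Reducing modulo $I$ kills $\bar{f}$, so $\bar{I}' = (\bar{g})$ is principal.

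\textbf{Plan for (ii).} The plan is to construct a surjection
\[
\phi \colon I \oplus J \twoheadrightarrow IJ, \qquad \phi(a,b) = \alpha a + \beta b,
\]
for suitably chosen $\alpha \in J$ and $\beta \in I$, to identify $\ker\phi$ with $\mathcal{O}_K(\mathfrak{Y})$ by a determinant computation, and to split the resulting short exact sequence using projectivity of $IJ$.

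The image of $\phi$ is the ideal $\alpha I + \beta J \subseteq IJ$. By Proposition \ref{divisor-theory}, equality holds iff the two divisors agree at every $x \in \mathfrak{Y}$. A local computation in the discrete valuation ring $\mathcal{O}_x$ gives
\[
\Delta(\alpha I + \beta J)(x) = \min\bigl(\divi(\alpha)(x) + \Delta(I)(x),\ \divi(\beta)(x) + \Delta(J)(x)\bigr);
\]
since $\alpha \in J$ and $\beta \in I$ force both summands to be $\geq \Delta(I)(x)+\Delta(J)(x) = \Delta(IJ)(x)$, the desired equality at $x$ reduces to the disjunction ``$\divi(\alpha)(x) = \Delta(J)(x)$ or $\divi(\beta)(x) = \Delta(I)(x)$''. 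To secure this disjunction at every point I apply Lemma \ref{prescribe-divisor} twice. First choose $\alpha \in J$ with $\divi(\alpha)(x) = \Delta(J)(x)$ on $\supp(\Delta(J))$, so the condition already holds outside the countable set $S \subseteq \mathfrak{Y} \setminus \supp(\Delta(J))$ of extra zeros of $\alpha$. Then choose $\beta \in I$ with $\divi(\beta)(x) = \Delta(I)(x)$ on $\supp(\Delta(I))$ and $\divi(\beta)(x) = 0$ on the countable set $S \setminus \supp(\Delta(I))$, which is disjoint from $\supp(\Delta(I))$ as the lemma requires. This delivers the disjunction at every $x \in \mathfrak{Y}$.

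With $\phi$ surjective, $\ker\phi$ is a rank-$1$ direct summand of the rank-$2$ projective $I \oplus J$, hence a finitely generated invertible ideal by Corollary \ref{pruefer}. Taking top exterior powers of the short exact sequence $0 \to \ker\phi \to I \oplus J \to IJ \to 0$, and using $\wedge^2(I \oplus J) = I \otimes_{\mathcal{O}_K(\mathfrak{Y})} J \cong IJ$ (the latter isomorphism checked locally, where $I$ and $J$ become principal), one obtains $IJ \cong (\ker\phi) \otimes IJ$, so $\ker\phi$ is trivial in the Picard group, i.e.\ $\ker\phi \cong \mathcal{O}_K(\mathfrak{Y})$. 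Projectivity of $IJ$ splits the sequence, giving $I \oplus J \cong \mathcal{O}_K(\mathfrak{Y}) \oplus IJ$. The main obstacle is producing the pair $(\alpha,\beta)$ realizing the divisor disjunction at complementary points; this is precisely the scenario Lemma \ref{prescribe-divisor} is designed to handle, so once that lemma is invoked the remainder is formal.
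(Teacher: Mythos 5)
Your argument for (i) is exactly the paper's (which compresses it to ``immediate from the $1\frac{1}{2}$ generator property''): lift $\bar I'$ to $I' = I + (b_1,\dots,b_n)$, apply Proposition \ref{1 1/2} with $f \in I$, and reduce modulo $I$. Nothing to add there.

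For (ii), however, you take a genuinely different route. The paper simply invokes part (i) together with Kaplansky's Theorem~2(a) (\cite{Kap}), whose hypothesis is precisely the $1\frac{1}{2}$-generator/principal-quotient property of (i), and leaves the module-theoretic work to that reference. You instead reconstruct the isomorphism from scratch: you manufacture a surjection $\phi\colon I \oplus J \twoheadrightarrow IJ$, $(a,b)\mapsto \alpha a + \beta b$, by choosing $\alpha \in J$ and $\beta \in I$ so that at every point $x$ at least one of them realizes the minimal local order of its ideal, and you secure this ``disjunction at complementary points'' with two careful applications of Lemma \ref{prescribe-divisor} together with the divisor dictionary of Proposition \ref{divisor-theory}. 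The identification $\ker\phi \cong \mathcal{O}_K(\mathfrak{Y})$ via $\wedge^2(I\oplus J) \cong I \otimes J \cong IJ$ and multiplicativity of top exterior powers is standard and correct, and projectivity of $IJ$ (it is finitely generated, hence closed, hence invertible) splits the sequence. One notable feature of your proof is that it does not use (i) at all; it runs directly on the divisor calculus of \S\ref{sec:prufer}. The paper's proof is shorter but outsources the key step; yours is longer but self-contained and makes transparent exactly which pointwise compatibility one needs between a generator of $I$ and a generator of $J$. Both are correct.

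One small stylistic remark: when you first apply Lemma \ref{prescribe-divisor} to $J$ you take $Z = \emptyset$; it would read slightly more cleanly to say so explicitly, since otherwise the reader may wonder where the ``extra zeros'' $S$ come from (they are the support of $\divi(\alpha) - \Delta(J)$, which is countable because $\divi(\alpha)$ is a divisor).
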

\begin{proof}
i. This is immediate from the $1 \frac{1}{2}$ generator property. ii. Because of i. this is \cite{Kap} Thm.\ 2(a).
\end{proof}

We also recall the following facts about coherent modules on a quasi-Stein space, which will be used, sometimes implicitly, over and over again.

\begin{fact}\label{coherent}
Let $\mathfrak{M}$ be a coherent $\mathcal{O}$-module sheaf on $\mathfrak{Y}$ and set $M := \mathfrak{M}(\mathfrak{Y})$; then:
\begin{itemize}
  \item[i.] For any open affinoid subvariety $\mathfrak{Z} \subseteq \mathfrak{Y}$ we have $\mathfrak{M}(\mathfrak{Z}) = \mathcal{O}(\mathfrak{Z}) \otimes_{\mathcal{O}_K(\mathfrak{Y})} M$;
  \item[ii.] for any point $x \in \mathfrak{Y}$ the stalk of $\mathfrak{M}$ in $x$ is $\mathfrak{M}_x = \mathcal{O}_x \otimes _{\mathcal{O}_K(\mathfrak{Y})} M$;
  \item[iii.] $\mathfrak{M} = 0$ if and only if $M = 0$ if and only if $\mathfrak{M}_x = 0$ for any point $x$;
  \item[iv.] if $M$ is a finitely generated projective $\mathcal{O}_K(\mathfrak{Y})$-module then $\mathfrak{M}(\mathfrak{Z}) = \mathcal{O}(\mathfrak{Z}) \otimes_{\mathcal{O}_K(\mathfrak{Y})} M$ for any admissible open subvariety $\mathfrak{Z} \subseteq \mathfrak{Y}$.
\end{itemize}
\end{fact}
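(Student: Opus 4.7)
The plan is to deduce all four parts from the foundational results for Fréchet–Stein algebras and their associated quasi-Stein spaces as formulated in \cite{ST0}, combined with standard affinoid theory on each $\mathfrak{Y}_n = \Sp(A_n)$.

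For (i), let $\mathfrak{Z} \subseteq \mathfrak{Y}$ be an open affinoid subvariety. Since $\mathfrak{Z}$ is quasi-compact and $(\mathfrak{Y}_n)$ is an admissible covering, $\mathfrak{Z} \subseteq \mathfrak{Y}_n$ for some $n$. On $\mathfrak{Y}_n$, the restriction of a coherent sheaf corresponds to the finitely generated $A_n$-module $M_n := \mathfrak{M}(\mathfrak{Y}_n)$, and the equality $\mathfrak{M}(\mathfrak{Z}) = \mathcal{O}(\mathfrak{Z}) \otimes_{A_n} M_n$ is \cite{BGR} 9.4.3/1. On the other hand, coadmissibility of $M$ in the sense of \cite{ST0} (Thm.\ in \S3 together with Cor.\ 3.1) yields $M_n = A_n \otimes_{\mathcal{O}_K(\mathfrak{Y})} M$. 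Composing these two base-change identifications gives (i).

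For (ii), any stalk $\mathfrak{M}_x$ is the direct limit of $\mathfrak{M}(\mathfrak{Z})$ over affinoid neighbourhoods $\mathfrak{Z}$ of $x$; applying (i) and commuting the tensor product with the filtered colimit yields $\mathfrak{M}_x = \mathcal{O}_x \otimes_{\mathcal{O}_K(\mathfrak{Y})} M$. For (iii), the implications $\mathfrak{M} = 0 \Rightarrow M = 0 \Rightarrow \mathfrak{M}_x = 0$ for all $x$ are immediate. Conversely, if all stalks vanish, then each $M_n$ is a finitely generated $A_n$-module whose localization at every maximal ideal of $A_n$ is zero (again by (ii), applied on $\mathfrak{Y}_n$), so $M_n = 0$; then $M = \varprojlim_n M_n = 0$, and $\mathfrak{M} = 0$ by (i).

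The main obstacle is (iv), since an admissible open $\mathfrak{Z} \subseteq \mathfrak{Y}$ need not be quasi-compact; here projectivity is essential. Write $M$ as a direct summand of a finitely generated free module, so that there exists an idempotent endomorphism $e: \mathcal{O}_K(\mathfrak{Y})^m \to \mathcal{O}_K(\mathfrak{Y})^m$ with image $M$. The coherent sheaf $\mathfrak{M}$ is then the image of the induced idempotent endomorphism of $\mathcal{O}^m$, and because the sheaf functor commutes with equalizers, one has $\mathfrak{M}(\mathfrak{Z}) = \ker\bigl(1-e : \mathcal{O}(\mathfrak{Z})^m \to \mathcal{O}(\mathfrak{Z})^m\bigr)$ for every admissible open $\mathfrak{Z}$. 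Since $\mathcal{O}(\mathfrak{Z})^m = \mathcal{O}(\mathfrak{Z}) \otimes_{\mathcal{O}_K(\mathfrak{Y})} \mathcal{O}_K(\mathfrak{Y})^m$ and tensor product commutes with images of idempotents, this kernel equals $\mathcal{O}(\mathfrak{Z}) \otimes_{\mathcal{O}_K(\mathfrak{Y})} M$, proving (iv).
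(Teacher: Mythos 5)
Your proof is correct. For parts (i)--(iii) the argument is essentially the paper's own: reduce to the charts $\mathfrak{Y}_n$ via coadmissibility and use affinoid base change; for (iii) you spell out the content of the cited [BGR] Cor.\ 9.4.2/7 (for which one should note the faithful flatness of $A_{n,\mathfrak{m}_x}\to\mathcal{O}_x$). Part (iv) is where the two arguments genuinely diverge. The paper chooses admissible affinoid coverings of $\mathfrak{Z}$ and of the pairwise intersections, writes down the sheaf-axiom left-exact sequence for $\mathfrak{M}$, and compares it to the sequence obtained by tensoring the one for $\mathcal{O}$ with $M$; keeping the latter exact requires knowing both that $M\otimes -$ is exact and that it commutes with arbitrary direct products (both follow from finitely generated projective, but the products fact is not entirely free). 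You instead present $M$ as the image of an idempotent $e$ on $\mathcal{O}_K(\mathfrak{Y})^m$, pass to the induced idempotent $\tilde e$ on the sheaf $\mathcal{O}^m$, identify $\mathfrak{M}$ with $\operatorname{im}(\tilde e)=\ker(1-\tilde e)$ (this uses the equivalence between coherent sheaves on $\mathfrak{Y}$ and coadmissible modules, which you should state explicitly), and then conclude because kernels of sheaf maps are computed section-wise and split exact sequences are preserved by any base change. This route is cleaner: it relies only on the most elementary functoriality and entirely sidesteps the commutation-with-products hypothesis the paper needs. Both proofs are valid.
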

\begin{proof}
i. By the quasi-Stein property the claim holds true for any $\mathfrak{Z} = \mathfrak{Y}_n$ (cf.\ \cite{ST0} Cor.\ 3.1). A general open affinoid $\mathfrak{Z}$ is contained in some $\mathfrak{Y}_n$, and we have
\begin{align*}
    \mathfrak{M}(\mathfrak{Z}) & = \mathcal{O}(\mathfrak{Z}) \otimes_{\mathcal{O}_K(\mathfrak{Y}_n)} \mathfrak{M}(\mathfrak{Y}_n) = \mathcal{O}(\mathfrak{Z}) \otimes_{\mathcal{O}_K(\mathfrak{Y}_n)} (\mathcal{O}_K(\mathfrak{Y}_n) \otimes_{\mathcal{O}_K(\mathfrak{Y})} M) \\
    & = \mathcal{O}(\mathfrak{Z}) \otimes_{\mathcal{O}_K(\mathfrak{Y})} M
\end{align*}
(using \cite{BGR} Prop.\ 9.4.2/1 for the first identity). ii. This follows immediately from i. by passing to the direct limit. iii. Use \cite{BGR} Cor.\ 9.4.2/7. iv. Choose admissible affinoid coverings $\mathfrak{Z} = \bigcup_i \mathfrak{Z}_i$ and $\mathfrak{Z}_i \cap \mathfrak{Z}_j = \bigcup_\ell \mathfrak{Z}_{i,j,\ell}$. The sheaf axiom gives the upper exact sequence
\begin{equation*}
    \xymatrix{
       0 \ar[r] & \mathfrak{M}(\mathfrak{Z}) \ar[r] & \prod_i \mathfrak{M}(\mathfrak{Z}_i)  \ar[r] & \prod_{i,j,\ell} \mathfrak{M}(\mathfrak{Z}_{i,j,\ell})  \\
       0 \ar[r] & \mathcal{O}(\mathfrak{Z}) \otimes_{\mathcal{O}_K(\mathfrak{Y})} M  \ar[u] \ar[r] & \prod_i (\mathcal{O}(\mathfrak{Z}_i) \otimes_{\mathcal{O}_K(\mathfrak{Y})} M) \ar[u]^{\cong} \ar[r] & \prod_{i,j,\ell} (\mathcal{O}(\mathfrak{Z}_{i,j,\ell}) \otimes_{\mathcal{O}_K(\mathfrak{Y})} M) \ar[u]^{\cong}. }
\end{equation*}
Since the tensor product by a finitely generated projective module is exact and commutes with arbitrary direct products the lower sequence is exact as well. The middle and the right arrow are bijective by i. So the first arrow must be bijective as well.
\end{proof}

\begin{proposition}\label{gruson}
The global section functor $\mathfrak{M} \longmapsto \mathfrak{M}(\mathfrak{Y})$ induces an equivalence of categories between the category of locally free coherent $\mathcal{O}$-module sheaves on $\mathfrak{Y}$ and the category of finitely generated projective $\mathcal{O}_K(\mathfrak{Y})$-modules.
\end{proposition}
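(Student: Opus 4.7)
The strategy is to construct a quasi-inverse $G$ to the global sections functor $F := (\mathfrak{M} \longmapsto \mathfrak{M}(\mathfrak{Y}))$. For a finitely generated projective $\mathcal{O}_K(\mathfrak{Y})$-module $M$, define $G(M)$ on an admissible open $\mathfrak{Z} \subseteq \mathfrak{Y}$ by $G(M)(\mathfrak{Z}) := \mathcal{O}(\mathfrak{Z}) \otimes_{\mathcal{O}_K(\mathfrak{Y})} M$. Fact \ref{coherent}(iv) is precisely what guarantees that this prescription is compatible with admissible coverings and hence defines a genuine $\mathcal{O}$-module sheaf. Coherence of $G(M)$ is automatic since $M$ is finitely generated, and local freeness at each $x \in \mathfrak{Y}$ follows because $\mathcal{O}_x \otimes_{\mathcal{O}_K(\mathfrak{Y})} M$ is a finitely generated projective module over the discrete valuation ring $\mathcal{O}_x$, hence free of finite rank. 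The composite $F \circ G$ is then the identity by inspection (take $\mathfrak{Z} = \mathfrak{Y}$), and full faithfulness of $F$ follows from the same tensor formula applied to morphisms.

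For the other composite $G \circ F$, I need to verify that $M := \mathfrak{M}(\mathfrak{Y})$ is itself finitely generated projective for every locally free coherent $\mathfrak{M}$. Once this is established, Fact \ref{coherent}(i) applied to each $\mathfrak{Y}_n = \Sp(A_n)$ in the quasi-Stein exhaustion gives $G(M)(\mathfrak{Y}_n) = A_n \otimes_{\mathcal{O}_K(\mathfrak{Y})} M = \mathfrak{M}(\mathfrak{Y}_n)$, and these identifications glue to the desired isomorphism $G(M) \cong \mathfrak{M}$. By Corollary \ref{pruefer} it suffices to show that $M$ is finitely generated and torsion-free. Torsion-freeness is immediate: any torsion element of $M$ would restrict to a torsion element of the finitely generated projective $A_n$-module $\mathfrak{M}(\mathfrak{Y}_n)$, but the latter is torsion-free since $A_n$ is reduced; so such an element vanishes in every $\mathfrak{M}(\mathfrak{Y}_n)$, hence in $M = \varprojlim_n \mathfrak{M}(\mathfrak{Y}_n)$.

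The main obstacle is finite generation of $M$. The rank of $\mathfrak{M}$ is locally constant on $\mathfrak{Y}$ and therefore a constant $r \geq 0$ by connectedness. Each $\mathfrak{M}(\mathfrak{Y}_n)$ is a rank $r$ finitely generated projective module over the Dedekind domain $A_n$, hence of the form $A_n^{r-1} \oplus J_n$ for some invertible ideal $J_n \subseteq A_n$, and is therefore generated by $r+1$ elements. My plan is to promote this local generation to a set of $r+1$ global sections of $\mathfrak{M}$ that simultaneously generate every $\mathfrak{M}(\mathfrak{Y}_n)$. Starting from a nonzero global section $m_0 \in M$ (which exists because $M \to \mathfrak{M}(\mathfrak{Y}_1)$ has dense image), the task reduces to constructing $r$ further global sections such that, together with $m_0$, they meet a generating system in every $A_n$-module $\mathfrak{M}(\mathfrak{Y}_n)$. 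This is where the $1 \frac{1}{2}$ generator property of Proposition \ref{1 1/2} enters: combined with the divisor-theoretic description of closed ideals in Proposition \ref{divisor-theory} and the Baire-category construction of Lemma \ref{prescribe-divisor}, it allows one to prescribe the germ of a global section at the finitely many points of each $\mathfrak{Y}_n$ where $m_0$ fails to generate $\mathfrak{M}$, coordinated across $n$ by the increasing structure of the exhaustion. The hard part is this coordinated lifting; once $r+1$ global generators are produced, $M$ is finitely generated, and Corollary \ref{pruefer} upgrades it to projective, completing the proof.
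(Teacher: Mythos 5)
The paper does not prove this proposition at all: its entire proof is a citation of Gruson, Th\'eor\`eme V.1 (and the remark following it), which establishes precisely that on a quasi-Stein (indeed, more generally) rigid space the global-sections functor is an equivalence between locally free coherent sheaves and finitely generated projective modules over the global section ring. So you and the paper take completely different routes; the paper's route is a one-line reference.

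Your proposal correctly structures the problem and correctly isolates the one genuinely hard point: proving that $M := \mathfrak{M}(\mathfrak{Y})$ is finitely generated. The reduction to torsion-freeness plus finite generation via Corollary~\ref{pruefer} is right, the torsion-freeness argument is fine, and the formula $G(M)(\mathfrak{Z}) = \mathcal{O}(\mathfrak{Z}) \otimes_{\mathcal{O}_K(\mathfrak{Y})} M$ together with Fact~\ref{coherent}(iv) correctly handles the other composite once finite generation is in hand. But then you stop: the ``coordinated lifting'' that you describe is not carried out, and you say so yourself (``The hard part is this coordinated lifting''). That is the entire content of Gruson's theorem and it is not a formality. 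A few concrete difficulties your sketch glosses over: (a) having fixed a nonzero $m_0 \in M$, it is not automatic that $m_0$ can be completed to an $(r+1)$-element generating set of each $\mathfrak{M}(\mathfrak{Y}_n)$ --- over a Dedekind domain one gets $(r+1)$ generators in general, but insisting one of them be a pre-chosen $m_0$ can cost an extra generator; (b) the locus of $r$-tuples $(m_1,\dots,m_r) \in M^r$ such that $(m_0,\dots,m_r)$ generates $\mathfrak{M}(\mathfrak{Y}_n)$ needs to be shown \emph{open and dense} in the Fr\'echet topology before Baire category applies, and openness in particular requires an argument (surjectivity of an $A_n$-module map is an open condition on the map only after one uses projectivity of the target and the open mapping theorem carefully); (c) once you have a countable intersection of open dense sets, Baire gives a simultaneous generating tuple for all $\mathfrak{M}(\mathfrak{Y}_n)$, but you then still have to show these global sections generate $M$ itself as an $\mathcal{O}_K(\mathfrak{Y})$-module, i.e.\ pass from generation on each affinoid to global generation; this step uses that the submodule generated by the tuple is closed (Lemma~\ref{fingensub}) and a comparison of the associated coherent sheaves. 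None of these is fatal, and the Baire-category strategy can be made to work, but as written your proof has a genuine gap where it matters most; the honest options are to cite Gruson as the paper does, or to actually execute (a)--(c).
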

\begin{proof}
This is a special case of \cite{Gru} Thm.\ V.1 and subsequent Remark.
\end{proof}

\subsection{The variety $\mathfrak{X}$ and the ring $\mathcal{O}_K(\mathfrak{X})$}

Let $G := o_L$ denote the additive group $o_L$ viewed as a locally $L$-analytic group. The group of $K$-valued locally analytic characters of $G$ is denoted by $\widehat{G}(K)$.

We have the bijection
\begin{align}\label{f:variety}
    \mathfrak{B}_1(K) \otimes_{\Zp} \Hom_{\Zp}(o_L,\Zp) & \xrightarrow{\; \sim \;} \widehat{G}_0(K) \\
    z \otimes \beta & \longmapsto \chi_{z \otimes \beta} (g) := z^{\beta(g)} \nonumber
\end{align}
where $\mathfrak{B}_1$ is the rigid $\Qp$-analytic open disk of radius one around the point $1 \in \Qp$. The rigid analytic group variety
\begin{equation*}
    \mathfrak{X}_0 := \mathfrak{B}_1 \otimes_{\Zp} \Hom_{\Zp}(o_L,\Zp)
\end{equation*}
over $\Qp$ (noncanonically a $d$-dimensional open unit polydisk) satisfies $\mathfrak{X}_0(K) = \mathfrak{X}_{0/L}(K) = \widehat{G}_0(K)$ and therefore ``represents the character group $\widehat{G}_0$''. It is shown in \cite{ST} \S2 that, if $t_1, \ldots, t_d$ is a $\Zp$-basis of $o_L$, then the equations
\begin{equation*}
    (\beta(t_i) - t_i \cdot \beta(1)) \cdot \log(z) = 0 \qquad\text{for $1 \leq i \leq d$}
\end{equation*}
define a one dimensional rigid analytic subgroup $\mathfrak{X}$ in $\mathfrak{X}_{0/L}$ which ``represents the character group $\widehat{G}$''.\footnote{Even more explicitly, if we use the basis $\beta_1, \ldots, \beta_d$ dual to $t_i$ in order to identify $\mathfrak{X}_0$ with $\mathfrak{B}_1^d$, then $\mathfrak{X}$ identifies with
\begin{multline*}
    \{(z_1, \ldots, z_d) \in \mathfrak{B}_{1/L}^d : \sum_j \beta_j(1) \log(z_j) = \frac{1}{t_i} \log(z_i)\ \text{for $1 \leq i \leq d$}\} \\ = \{(z_1, \ldots, z_d) \in \mathfrak{B}_{1/L}^d : \log(z_i) = \frac{t_i}{t_1} \log(z_1)\ \text{for $1 \leq i \leq d$}\} \ .
\end{multline*}}
In particular, restriction of functions induces a surjective homomorphism of rings
\begin{equation*}
    \mathcal{O}_K(\mathfrak{X}_0) \longrightarrow \mathcal{O}_K(\mathfrak{X})
\end{equation*}
which further restricts to a ring homomorphism
\begin{equation*}
    \mathcal{O}_K^b(\mathfrak{X}_0) \longrightarrow \mathcal{O}_K^b(\mathfrak{X}) \ .
\end{equation*}
There is the following simple, but remarkable fact.

\begin{lemma}\label{injective}
The restriction map $\mathcal{O}_K^b(\mathfrak{X}_0) \longrightarrow \mathcal{O}_K^b(\mathfrak{X})$ is injective.
\end{lemma}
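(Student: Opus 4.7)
My plan is to exhibit sufficiently many $\Cp$-points of $\mathfrak{X}$ inside $\mathfrak{X}_0(\Cp)$ to force any bounded analytic function on $\mathfrak{X}_0$ vanishing on them to be zero. The key observation is that the kernel of the $p$-adic logarithm $\log : \mathfrak{B}_1(\Cp) \to \Cp$ contains $\mu_{p^\infty}$, and these roots of unity accumulate at $1$. Using the coordinates from the footnote, for any $z_1 \in \mathfrak{B}_1(\Cp)$ sufficiently close to $1$ (so that $z_i^{(0)} := \exp((t_i/t_1) \log z_1) \in \mathfrak{B}_1(\Cp)$), and any $\zeta_2, \ldots, \zeta_d \in \mu_{p^\infty}$, the tuple $(z_1, \zeta_2 z_2^{(0)}, \ldots, \zeta_d z_d^{(0)})$ still satisfies the defining equations $\log z_i = (t_i/t_1)\log z_1$ and hence lies in $\mathfrak{X}(\Cp)$.

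Since $\mathcal{O}_K^b(\mathfrak{X}_0)$ embeds in $\mathcal{O}_{\Cp}^b(\mathfrak{X}_0)$ compatibly with restriction to $\mathfrak{X}$, we may reduce to the case $K = \Cp$. Suppose $f \in \mathcal{O}_{\Cp}^b(\mathfrak{X}_0)$ has $f|_\mathfrak{X} = 0$. I would then invoke the following one-variable fact iteratively: if $g \in \mathcal{O}_{\Cp}^b(\mathfrak{B}_1)$ vanishes on a coset $a\mu_{p^\infty} \subset \mathfrak{B}_1(\Cp)$, then $g = 0$. Indeed, the coset accumulates at $a$; the restriction of $g$ to a sufficiently small closed subdisk around $a$ lies in a Tate algebra, where by Weierstrass preparation a nonzero element can have only finitely many zeros, so $g$ vanishes on that subdisk, and then on all of $\mathfrak{B}_1$ by rigid analytic continuation on the connected open disk.

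Applying this fact successively to the variables $z_2, z_3, \ldots, z_d$---first fixing $z_1$ near $1$ and all $\zeta_i$'s for $i \geq 3$, letting $\zeta_2$ vary to deduce $f(z_1, z_2, \zeta_3 z_3^{(0)}, \ldots, \zeta_d z_d^{(0)}) = 0$ for every $z_2 \in \mathfrak{B}_1(\Cp)$, then freeing $\zeta_3$, and so on---one concludes that $f(z_1, z_2, \ldots, z_d) = 0$ for all $(z_2, \ldots, z_d) \in \mathfrak{B}_1(\Cp)^{d-1}$ and all $z_1$ in some small disk around $1$. Thus $f$ vanishes on a nonempty admissible open subset of the connected polydisk $\mathfrak{X}_0$ (base-changed to $\Cp$), forcing $f = 0$ by analytic continuation. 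The essential ingredient is the accumulation of $\mu_{p^\infty}$ at $1$; it is this that ``thickens'' the one-dimensional subvariety $\mathfrak{X}$ in $\Cp$-points enough to detect all bounded analytic functions on the full $d$-dimensional polydisk. Without it the defining equations of $\mathfrak{X}$, which involve the unbounded function $\log$, would not force any relation among the bounded coordinate power series.
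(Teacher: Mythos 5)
Your overall strategy---exhibiting enough $\Cp$-points of $\mathfrak{X}$ inside $\mathfrak{X}_0$ to detect bounded functions---is geometric and genuinely different from the paper's proof, which is a Fourier-duality argument reducing the statement to the density of $C^{an}(G,K)$ in $C^{cont}(G,K)$. But the proof of your one-variable lemma rests on a false claim. You assert that $\mu_{p^\infty}$ accumulates at $1$, so that a small closed subdisk around $a$ captures infinitely many of the zeros $a\zeta$ and Weierstrass preparation applies. In fact the opposite happens: a primitive $p^n$-th root of unity $\zeta$ satisfies $|\zeta - 1| = p^{-1/(p^{n-1}(p-1))}$, which tends to $1$ as $n \to \infty$. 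The torsion points drift \emph{outward} toward the boundary of the disk, and $\mu_{p^\infty}$ is a uniformly discrete subset of $\mathfrak{B}_1(\Cp)$ (any two distinct points are at distance $\geq p^{-1/(p-1)}$); hence any proper closed subdisk meets $a\mu_{p^\infty}$ in only finitely many points and the Weierstrass step gives nothing.

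This is not a fixable detail of the same argument but a sign of a deeper problem: your one-variable proof, as written, never uses the boundedness of $g$, yet boundedness is indispensable. The unbounded function $\log$ lies in $\mathcal{O}_K(\mathfrak{B}_1)$ and vanishes on all of $\mu_{p^\infty}$, so the claim is false for unbounded functions---which is precisely why the lemma is about $\mathcal{O}_K^b$, while the restriction $\mathcal{O}_K(\mathfrak{X}_0)\to\mathcal{O}_K(\mathfrak{X})$ has nonzero kernel. The correct mechanism is a Newton-polygon (or $p$-adic Jensen) bound: writing $g = \sum_{n\geq 0} a_n(Z-1)^n$ with $\sup_n|a_n|<\infty$, the total descent of the Newton polygon of $g$ is finite, and this descent equals $\sum_{w} -\log_p|w|$ summed with multiplicity over all zeros $w = Z-1$ of $g$ in the open unit disk. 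But the putative zeros $a\zeta - 1$, $\zeta \in \mu_{p^\infty}$, contribute $\sum_{n\geq 1} p^{n-1}(p-1)\cdot\tfrac{1}{p^{n-1}(p-1)} = \sum_{n \geq 1} 1 = \infty$ (for all but finitely many $n$ one has $|a\zeta - 1| = |\zeta - 1|$); this is a $p$-adic failure of the Blaschke condition, forcing $g = 0$. With this replacement your iteration and the final analytic-continuation step do go through, making the corrected argument a legitimate alternative to the paper's duality proof.
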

\begin{proof}
By Fourier theory (cf.\ \cite{ST} and \cite{Schi} App.\ A.6) the vector space $\mathcal{O}_K(\mathfrak{X})$ is the continuous dual of the locally convex vector space of locally $L$-analytic functions $C^{an}(G,K)$ whereas $\mathcal{O}_K^b(\mathfrak{X}_0)$ is the continuous dual of the $K$-Banach space $C^{cont}(G,K)$ of all continuous functions on $G$. It therefore suffices to show that $C^{an}(G,K)$ is dense in $C^{cont}(G,K)$. In fact, already the vector space $C^\infty(G,K)$ of all locally constant functions, which obviously are locally $L$-analytic, is dense in $C^{cont}(G,K)$. This is a well known fact about functions on compact totally disconnected groups. Let $\epsilon > 0$ be any positive constant and $f \in C^{cont}(G,K)$ be any function. By continuity of $f$ we find for any point $x \in G$ an open neighbourhood $U_x \subseteq G$ such that $|f(y) - f(x)| \leq \epsilon$ for any $y \in U_x$. The covering $(U_x)_x$ of $G$ can be refined into a finite disjoint open covering $G = V_1 \dot{\cup} \ldots \dot{\cup} V_m$. Pick points $x_i \in V_i$ and define the locally constant function $f_\epsilon$ by requiring that $f_\epsilon | V_i$ has constant value $f(x_i)$. Then $\|f - f_\epsilon\| \leq \epsilon$ in the sup-norm of the Banach space $C^{cont}(G,K)$.
\end{proof}

For any $r \in (0,1) \cap p^\QQ$ we let $\mathfrak{B}_1(r)$, resp.\ $\mathfrak{B}(r)$, denote the $\Qp$-affinoid disk of radius $r$ around $1$, resp.\ around $0$, and we put $\mathfrak{X}(r) := \mathfrak{X} \cap (\mathfrak{B}_1(r) \otimes_{\Zp} \Hom_{\Zp}(o_L,\Zp))_{/L}$. In fact, each $\mathfrak{X}(r)$ is an affinoid subgroup of $\mathfrak{X}$. For small $r$ its structure is rather simple. By \cite{ST} Lemma 2.1 we have the cartesian diagram of rigid $L$-analytic varieties
\begin{equation*}
    \xymatrix{
      \mathfrak{X}_{\vphantom{\ZZ_i}} \ar[d]_{d} \ar[r]^-{\subseteq} & \mathfrak{B}_1 \otimes \Hom_{\Zp} (o_L,\Zp) \ar[d]^{\log \otimes \id} \\
      \mathbb{A}^1_{\vphantom{\ZZ}} \ar[r]^-{\subseteq} & \mathbb{A}^1 \otimes \Hom_{\Zp} (o_L,\Zp)   }
\end{equation*}
where $d$ is the morphism which sends a locally analytic character $\chi$ of $o_L$ to its derivative $d\chi(1) = \frac{d}{dt}\chi(t) {\mid}_{t=0}$ and where the lower horizontal arrow is the map $a \longmapsto \sum_{i=1}^d at_i \otimes \beta_i$ with $\beta_1, \ldots, \beta_d$ being the basis dual to $t_1, \ldots, t_d$. Consider any $r < p^{-\frac{1}{p-1}}$. Then the logarithm restricts to an isomorphism $\mathfrak{B}_1(r) \xrightarrow{\cong} \mathfrak{B}(r)$ with inverse the exponential map. The above diagram restricts to the cartesian diagram with vertical isomorphisms
\begin{equation*}
    \xymatrix{
      \mathfrak{X}(r)_{\vphantom{\ZZ_i}} \ar[d]_{d}^{\cong} \ar[r]^-{\subseteq} & \mathfrak{B}_1(r) \otimes \Hom_{\Zp} (o_L,\Zp) \ar[d]^{\log \otimes \id}_{\cong} \\
      \mathfrak{B}(r)_{\vphantom{\ZZ}} \ar[r]^-{\subseteq} & \mathfrak{B}(r) \otimes \Hom_{\Zp} (o_L,\Zp).   }
\end{equation*}

\begin{lemma}\label{small-disk}
For any $r \in (0,p^{-\frac{1}{p-1}}) \cap p^\QQ$ the map
\begin{align*}
    \mathfrak{B}(r) & \xrightarrow{\;\cong\;} \mathfrak{X}(r) \\
    y & \longmapsto \chi_y(g) := \exp(gy)
\end{align*}
is an isomorphism of $L$-affinoid groups.
\end{lemma}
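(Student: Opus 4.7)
The plan is to invoke the cartesian diagram displayed immediately before the statement. Since $r < p^{-\frac{1}{p-1}}$, the logarithm restricts to an isomorphism $\mathfrak{B}_1(r) \xrightarrow{\cong} \mathfrak{B}(r)$ with inverse $\exp$, so the right vertical map $\log \otimes \id$ is an isomorphism of $L$-affinoid varieties. By cartesianity the left vertical map $d : \mathfrak{X}(r) \to \mathfrak{B}(r)$ is then also an isomorphism of $L$-affinoid varieties, and it will suffice to identify its inverse with the map $y \longmapsto \chi_y$ described in the statement.

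To identify the inverse, I would chase a point $y \in \mathfrak{B}(r)$ around the diagram. Under the lower inclusion $\mathfrak{B}(r) \hookrightarrow \mathfrak{B}(r) \otimes_{\Zp} \Hom_{\Zp}(o_L,\Zp)$, the element $y$ goes to $\sum_i y t_i \otimes \beta_i$, which is the image under $\log \otimes \id$ of $\sum_i \exp(y t_i) \otimes \beta_i \in \mathfrak{B}_1(r) \otimes_{\Zp} \Hom_{\Zp}(o_L,\Zp)$. Via the bijection \ref{f:variety} this latter tuple represents the character
\begin{equation*}
    g \longmapsto \prod_{i=1}^d \exp(y t_i)^{\beta_i(g)} = \exp\bigl(y \textstyle\sum_i t_i \beta_i(g)\bigr) = \exp(gy),
\end{equation*}
where I used the duality $\sum_i t_i \beta_i(g) = g$. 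By cartesianity the resulting character lies in $\mathfrak{X}(r)$, and the morphism $\mathfrak{B}(r) \to \mathfrak{X}(r)$, $y \mapsto \chi_y$, so obtained satisfies $d(\chi_y) = y$ (equivalently, this is visible directly from $\frac{d}{dt}\exp(ty)|_{t=0} = y$). It is therefore inverse to $d$.

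It remains to check the group homomorphism property. The group law on $\mathfrak{X}(r)$ is pointwise multiplication of characters, so
\begin{equation*}
    \chi_{y_1+y_2}(g) = \exp(g(y_1+y_2)) = \exp(gy_1)\exp(gy_2) = \chi_{y_1}(g)\,\chi_{y_2}(g),
\end{equation*}
which exhibits $y \mapsto \chi_y$ as a homomorphism from the additive group $\mathfrak{B}(r)$ to $\mathfrak{X}(r)$, hence an isomorphism of $L$-affinoid groups. There is no real obstacle here: the cartesian diagram shoulders the work, and the only implicit point, namely that $\exp(gy)$ makes sense for $g \in o_L$ and $|y| \leq r$, is automatic since $|gy| \leq |y| < p^{-\frac{1}{p-1}}$ lies within the radius of convergence of the $p$-adic exponential.
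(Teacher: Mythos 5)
Your proof is correct and follows the same route as the paper: both arguments rely on the cartesian diagram and identify $y \mapsto \chi_y$ as the inverse of the isomorphism $d$ by checking $d\chi_y(1) = y$. The paper's version is terser (it simply notes $\chi_y \in \mathfrak{X}(r)$ and $d\chi_y(1)=y$ and stops), whereas you additionally spell out the diagram chase through the Fourier bijection and verify the group-homomorphism property explicitly, but there is no substantive difference.
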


\begin{proof}
Obviously we have $\chi_y \in \mathfrak{X}(r)$ and $d\chi_y(1) = y$. Hence the map under consideration is the inverse of the isomorphism $d$ in the above diagram.
\end{proof}

\subsection{The LT-isomorphism}\label{sec:LT}

Let $LT = LT_{\pi_L}$ be the Lubin-Tate formal $o_L$-module over $o_L$ corresponding to the prime element $\pi_L$. If $\mathfrak{B}$ denotes the rigid $\Qp$-analytic open disk of radius one around $0 \in \Qp$, then we always identify $LT$ with $\mathfrak{B}_{/L}$, which makes the rigid variety $\mathfrak{B}_{/L}$ into an $o_L$-module object and which gives us a global coordinate $Z$ on $LT$. The resulting $o_L$-action on $\mathfrak{B}_{/L}$ denoted by $(a,z) \longmapsto [a](z)$ is given by formal power series $[a](Z) \in o_L[[Z]]$. In this way, in particular, the multiplicative monoid $o_L \setminus \{0\}$ acts on the rigid group variety $\mathfrak{B}_{/L}$. For any $r \in (0,1) \cap p^\QQ$ the $L$-affinoid disk $\mathfrak{B}(r)_{/L}$ of radius $r$ around zero is an $o_L$-submodule object of $\mathfrak{B}_{/L}$.

Let $T$ be the Tate module of $LT$. Then $T$ is
a free $o_L$-module of rank one, and the action of
$\Gal(\overline{L}/L)$ on $T$ is given by a continuous character $\chi_{LT} :
\Gal(\overline{L}/L) \longrightarrow o_L^\times$. Let $T'$ denote the Tate module of the $p$-divisible group dual to $LT$, which again is a free $o_L$-module of rank one. The Galois action on $T'$ is given by the continuous character $\tau := \chi_{cyc}\cdot\chi_{LT}^{-1}$, where
$\chi_{cyc}$ is the cyclotomic character. \footnote{We always normalize the isomorphism of local class field theory by letting a prime element correspond to a geometric Frobenius. Then, by \cite{Ser} III.A4 Prop.\ 4, the character $\chi_{LT}$ coincides with the character
\begin{equation*}
    \Gal(\overline{L}/L) \twoheadrightarrow \Gal(\overline{L}/L)^{ab} \cong \widehat{L}^\times = o_L^\times \times \pi_L^{\widehat{\ZZ}} \xrightarrow{\pr} o_L^\times \ .
\end{equation*}
}

The ring $\mathcal{O}_K(\mathfrak{B})$ is the ring of all formal power series in $Z$ with coefficients in $K$ which converge on $\mathfrak{B}(\Cp)$. In terms of formal power series the induced $o_L \setminus \{0\}$-action on $\mathcal{O}_K(\mathfrak{B})$ is given by $(a,F) \longmapsto F \circ [a]$. We define
\begin{equation*}
    \mathscr{R}_K(\mathfrak{B}) := \bigcup_r \, \mathcal{O}_K(\mathfrak{B} \setminus \mathfrak{B}(r))
\end{equation*}
which is the ring of all formal series
$F(Z) = \sum_{n\in \ZZ} a_n Z^n$, $a_n \in K$, which are convergent in $(\mathfrak{B} \setminus \mathfrak{B}(r))(\Cp)$ for some $r<1$ depending on $F$. It follows from \cite{ST} Lemma 3.2 that the $o_L \setminus \{0\}$-action on $\mathcal{O}_K(\mathfrak{B})$ extends to $\mathscr{R}_K(\mathfrak{B})$.

By the maximum modulus principle (cf.\ \cite{Schi} Thm.\ 42.3(i)) $\mathcal{O}_K^b(\mathfrak{B})$ is the ring of all formal power series $F(Z) = \sum_{n \geq 0} a_n Z^n$, $a_n \in K$, such that $\sup_{n \geq 0} |a_n| < \infty$, and the supremum norm satisfies
\begin{equation*}
    \|F\|_{\mathfrak{B}} = \sup_{z \in \mathfrak{B}(\Cp)} |F(z)| = \sup_{n \geq 0} |a_n| \ .
\end{equation*}
The norm $\|\ \|_{\mathfrak{B}}$ is known to be multiplicative  (cf.\ \cite{vRo} Lemma 6.40). We define
\begin{equation*}
    \mathscr{E}_K^\dagger(\mathfrak{B}) := \bigcup_r \, \mathcal{O}_K^b(\mathfrak{B} \setminus \mathfrak{B}(r))
\end{equation*}
and
\begin{equation*}
    \|F\|_1 := \lim_{r \rightarrow 1} \|F\|_{\mathfrak{B} \setminus \mathfrak{B}(r)}
\end{equation*}
for $F \in \mathscr{E}_K^\dagger(\mathfrak{B})$. Using the maximum modulus principle for affinoid annuli one shows that an element in $\mathscr{R}_K(\mathfrak{B})$ written as a formal Laurent series $F(Z) = \sum_{n \in \ZZ} a_nZ^n$ lies in $\mathscr{E}_K^\dagger(\mathfrak{B})$ if and only if $\sup_{n \in \ZZ} |a_n| < \infty$ and that, in this case, $\|F\|_1 = \sup_{n \in \ZZ} |a_n| < \infty$. Since $\|F\|_1 = \lim_{r \rightarrow 1} \|F\|_r$ is the limit of the multiplicative norms $\|F\|_r := \sup_{n \in \ZZ} |a_n|r^n$ we see that $\|\ \|_1$ is a multiplicative norm. We now let
\begin{equation*}
    \mathscr{E}_K(\mathfrak{B}) := \text{completion of $\mathscr{E}^\dagger_K(\mathfrak{B})$ with respect to $\|\ \|_1$}
\end{equation*}
and
\begin{equation*}
    \mathscr{E}_K^{\leq 1}(\mathfrak{B}) := \{ F \in \mathscr{E}_K(\mathfrak{B}) : \|F\|_1 \leq 1\} \ .
\end{equation*}
Again by \cite{ST} Lemma 3.2 the $o_L \setminus \{0\}$-action on $\mathcal{O}_K^b(\mathfrak{B})$ extends to a $\|\ \|_1$-isometric action on $\mathscr{E}_K(\mathfrak{B})$ which respects the subrings $\mathscr{E}_K^\dagger(\mathfrak{B})$ and $\mathscr{E}_K^{\leq 1}(\mathfrak{B})$.

\begin{lemma}\label{E}
$\mathscr{E}_K(\mathfrak{B})$ is the ring of formal series $F(Z) = \sum_{n\in \ZZ} a_n Z^n$, $a_n \in K$, such that $\sup_{n \in \ZZ} |a_n| < \infty$ and $\lim_{n \rightarrow -\infty} a_n = 0$, and $\|F\|_1 = \sup_{n \in \ZZ} |a_n|$ is a multiplicative norm. \footnote{Suppose that $K$ is discretely valued. Then  $\mathscr{E}_K^\dagger(\mathfrak{B})$ and $\mathscr{E}_K(\mathfrak{B})$ are fields. (Cf.\ \cite{TdA} \S\S9-10 for detailed proofs.) This is no longer the case in general.}
\end{lemma}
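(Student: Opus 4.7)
The plan is to identify the completion $\mathscr{E}_K(\mathfrak{B})$ directly with the space
\begin{equation*}
   V := \Bigl\{ \sum_{n \in \ZZ} a_n Z^n : a_n \in K,\ \sup_n |a_n| < \infty,\ \lim_{n \to -\infty} a_n = 0 \Bigr\},
\end{equation*}
equipped with $\|F\| := \sup_n |a_n|$. First I would check that $\mathscr{E}_K^\dagger(\mathfrak{B}) \subseteq V$ and that the norm $\|\ \|_1$ (already known on $\mathscr{E}_K^\dagger(\mathfrak{B})$ to equal $\sup_n |a_n|$) agrees with $\|\ \|$ there: the bounded-coefficient condition is part of the prior characterization, and convergence of $F$ on some annulus $\mathfrak{B}\setminus\mathfrak{B}(r)$ with $r<1$ forces $|a_n|r^n\to 0$ as $n\to -\infty$, hence $a_n\to 0$. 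Then I would verify that $V$ is a $K$-algebra: for $F=\sum a_n Z^n,\ G=\sum b_n Z^n\in V$, each convolution $c_n=\sum_{k\in\ZZ} a_k b_{n-k}$ converges in $K$ because $|a_k b_{n-k}|\to 0$ as $|k|\to\infty$ (use boundedness on one factor and decay at $-\infty$ on the other), and a short $\epsilon$-split argument shows $c_n\to 0$ as $n\to -\infty$; the norm is clearly submultiplicative.

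Next I would prove density of $\mathscr{E}_K^\dagger(\mathfrak{B})$ in $V$ by truncation. For $F=\sum a_n Z^n\in V$ and $N\geq 1$, set $F_N := \sum_{n\geq -N} a_n Z^n$; this Laurent series has only finitely many negative powers of $Z$ and its positive part is bounded, so $F_N\in \mathcal{O}_K^b(\mathfrak{B}\setminus\mathfrak{B}(r))$ for every $r\in(0,1)\cap p^\QQ$, and hence $F_N\in \mathscr{E}_K^\dagger(\mathfrak{B})$. By construction $\|F-F_N\| = \sup_{n<-N}|a_n|\to 0$ as $N\to\infty$ by the decay hypothesis, giving density.

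Finally I would show $V$ is complete and upgrade to a multiplicative norm. A Cauchy sequence $(F_k)$ with $F_k=\sum a_n^{(k)}Z^n$ has coefficients that are Cauchy in $K$ uniformly in $n$; let $a_n:=\lim_k a_n^{(k)}$ and $F:=\sum a_n Z^n$. Uniformity gives $\sup_n|a_n|<\infty$ and preserves the decay property (for large $k$, $|a_n|\leq |a_n - a_n^{(k)}| + |a_n^{(k)}|<\epsilon + |a_n^{(k)}|$, and the second term is $<\epsilon$ for $n\ll 0$), so $F\in V$ and $\|F-F_k\|\to 0$. By the universal property of completion, $\mathscr{E}_K(\mathfrak{B}) \cong V$ with the stated description of the norm. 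For multiplicativity of $\|\ \|_1$ on $V$, approximate $F,G\in V$ by $F_k,G_k\in\mathscr{E}_K^\dagger(\mathfrak{B})$ and use submultiplicativity to get $F_k G_k \to FG$; then pass to the limit in the identity $\|F_k G_k\|_1 = \|F_k\|_1\|G_k\|_1$ (already known on $\mathscr{E}_K^\dagger(\mathfrak{B})$ as a limit of the Gauss norms $\|\ \|_r$) using continuity of $\|\ \|_1$. No step is a real obstacle; the only genuine content is verifying that the decay $a_n\to 0$ at $-\infty$ is preserved both under uniform coefficient-limits and under the convolution product defining multiplication in $V$.
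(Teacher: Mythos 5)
Your proof is correct and complete. The paper itself gives no argument here, remarking only that the statement is well known and pointing to an external reference; your write-up reconstructs the standard verification (inclusion of $\mathscr{E}_K^\dagger(\mathfrak{B})$ into the candidate space $V$ with matching norm, density of $\mathscr{E}_K^\dagger(\mathfrak{B})$ via truncation of the negative tail, completeness of $V$ via uniform coefficient-wise limits preserving the decay condition, and transport of multiplicativity of $\|\ \|_1$ by continuity and density). The two small points you flag at the end as "the only genuine content" — stability of the decay $a_n\to 0$ under uniform limits and under convolution — are indeed the only places where one has to compute, and your $\epsilon$-split arguments for both are sound. One cosmetic remark: to get $F_N\in\mathscr{E}_K^\dagger(\mathfrak{B})$ it suffices that $F_N$ lie in $\mathcal{O}_K^b(\mathfrak{B}\setminus\mathfrak{B}(r))$ for a single $r$, not for every $r$, though of course the stronger statement you prove does no harm.
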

\begin{proof}
This is well known. See a formal argument, for example, in the proof of \cite{TdA} Lemma 10.4.
\end{proof}

By Cartier duality, $T'$ is the group of homomorphisms of formal
groups over $o_{\Cp}$ from $LT$ to the formal multiplicative group. This gives rise to a Galois equivariant and $o_L$-invariant pairing
\begin{equation*}
    \langle\ ,\ \rangle : T' \otimes_{o_L} \mathfrak{B}(\Cp) \longrightarrow \mathfrak{B}_1(\Cp) \ .
\end{equation*}
We fix a generator $t'_0$ of the $o_L$-module $T'$. Thm.\ 3.6 in \cite{ST} constructs an isomorphism
\begin{equation*}
    \kappa : \mathfrak{B}_{/\Cp} \xrightarrow{\;\cong\;} \mathfrak{X}_{/\Cp}
\end{equation*}
of rigid group varieties over $\Cp$ which on $\Cp$-points is given by
\begin{align*}
    \mathfrak{B}(\Cp) & \xrightarrow{\;\cong\;} \mathfrak{X}(\Cp) = \widehat{G}(\Cp) \\
    z & \longmapsto \kappa_z(g) := \langle t'_0, [g](z) \rangle \ .
\end{align*}
In fact, we need a more precise statement. We define
\begin{equation*}
  R_n := p^{\QQ} \cap [p^{-q/e(q-1)}, p^{-1/e(q-1)})^{1/q^{en}} \qquad\text{for $n \geq 0$}.
\end{equation*}
Note that these sets are pairwise disjoint; any sequence $(r_n)_{n \geq 0}$ with $r_n \in R_n$ converges to $1$. We also put $\omega := p^{1/e(q-1) - 1/(p-1)}$,
\begin{equation*}
  S_0 := R_0 \omega = p^{\QQ} \cap [p^{-1/e - 1/(p-1)}, p^{-1/(p-1)}) \subseteq p^{\QQ} \cap [p^{- p/(p-1)}, p^{-1/(p-1)}) \ ,
\end{equation*}
and $S_n := S_0^{1/p^n}$ for $n \geq 0$. Again these latter sets are pairwise disjoint such that any sequence $(s_n)_{n \geq 0}$ with $s_n \in S_n$ converges to $1$. The map
\begin{align*}
  S_n & \xrightarrow{\;\simeq\;} R_n \\
  s & \longmapsto s^{1/p^{(d-1)n}} \omega^{-1/p^{dn}} \ ,
\end{align*}
for any $n \geq 0$, is an order preserving bijection.

\begin{proposition}\label{LT-iso}
For any $n \geq 0$ and any $s \in S_n$ the isomorphism $\kappa$ restricts to an isomorphism of affinoid group varieties
\begin{equation*}
    \kappa : \mathfrak{B}(s^{1/p^{(d-1)n}} \omega^{-1/p^{dn}})_{/\Cp} \xrightarrow{\;\cong\;} \mathfrak{X}(s)_{/\Cp} \ .
\end{equation*}
\end{proposition}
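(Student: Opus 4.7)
I would first reduce to computing the preimage $\kappa^{-1}(\mathfrak{X}(s)_{/\Cp})$ at the level of $\Cp$-points; since $\kappa$ is already a rigid isomorphism by construction, this point-set equality then upgrades automatically to an isomorphism of the affinoid subdomains. Write $\psi(Z) := \langle t'_0, Z\rangle$ for the formal group isomorphism $\mathfrak{B}_{/\Cp} \xrightarrow{\cong} \mathfrak{B}_{1/\Cp}$ underlying $\kappa$, so that $\kappa_z(g) = \psi([g](z))$. Fix a $\Zp$-basis $t_1 = 1, t_2, \dots, t_d$ of $o_L$. Every closed disk $\mathfrak{B}(r')$ is stable under all endomorphisms $[a]$ ($a \in o_L$) because $[a](Z) \in o_L[[Z]]$ has zero constant term; hence the preimage $\psi^{-1}(\mathfrak{B}_1(s)) \subseteq \mathfrak{B}_{/\Cp}$, being a closed rigid subgroup over $\Cp$, is itself of the form $\mathfrak{B}(r')_{/\Cp}$ (since closed rigid subgroups of $\widehat{\mathbb{G}}_m$ over $\Cp$ are precisely such disks). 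Using $t_1 = 1$ together with $o_L$-stability, the condition ``$|\psi([t_i](z)) - 1| \leq s$ for every $i$'' collapses to ``$z \in \mathfrak{B}(r')$''. The proposition thus reduces to identifying $r'$ as a function of $s$.

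Next I would compute $r'$ by induction on $n$. For the base case $n=0$ with $r \in R_0$: since $r < p^{-1/e(q-1)}$ lies below the $\pi_L$-torsion radius, $|\log_{LT}(z)| = |z|$ on $\mathfrak{B}(r)$, and the estimate $\omega r < p^{-1/(p-1)}$ places us in the domain where $\exp$ is an isometry; writing $\psi = \exp \circ (\Omega \cdot \log_{LT})$ then gives $\|\psi - 1\|_{\mathfrak{B}(r)} = \omega r$, so $\psi(\mathfrak{B}(r)) = \mathfrak{B}_1(\omega r)$ and $r = s/\omega$ matches the formula $r = s^{1/p^{0}}\omega^{-1/p^{0}}$. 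For $n \geq 1$, I would exploit the $\Zp$-equivariance $\psi([p^n](z)) = \psi(z)^{p^n}$. Writing $p = u\pi_L^e$ with $u \in o_L^\times$ and iterating the fact that $[\pi_L]$ acts as the $q$-th power on absolute values in the annulus $|z| > p^{-1/e(q-1)}$, one obtains $|[p^n](z)| = |z|^{p^{dn}}$. Thus for $|z| = r \in R_n$ we have $|[p^n](z)| = r^{p^{dn}} \in R_0$, and the base case yields $|\psi([p^n](z)) - 1| = \omega r^{p^{dn}}$. On the other side, with $v := \psi(z) - 1$, a Newton polygon inspection of $(1+v)^{p^n} - 1 = \sum_k \binom{p^n}{k} v^k$ shows that the top term $v^{p^n}$ dominates provided $|v| \geq p^{-n/(p^n-1)}$. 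Equating the two expressions for $|\psi([p^n](z)) - 1|$ gives $|v|^{p^n} = \omega r^{p^{dn}}$, i.e., $|v| = \omega^{1/p^n} r^{p^{(d-1)n}}$, which inverts to $r = s^{1/p^{(d-1)n}} \omega^{-1/p^{dn}}$.

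The main obstacle is the threshold bookkeeping: one must verify that each intermediate radius $|[\pi_L^k](z)|$ (for $0 \leq k \leq en$) stays in the ``large'' Lubin-Tate regime $> p^{-1/e(q-1)}$ where the $q$-th-power description of $[\pi_L]$ applies, and that $|v|$ remains at least $p^{-n/(p^n-1)}$ so that top-term dominance in $(1+v)^{p^n}$ holds. The definitions of $R_n$ and $S_n$ in the paper---the intervals $[p^{-q/e(q-1)}, p^{-1/e(q-1)})^{1/q^{en}}$ and $S_0^{1/p^n}$---have endpoints chosen precisely so that both regimes hold throughout the claimed affinoids, making all inequalities tight at the right endpoints. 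Once the point-set identity $\kappa^{-1}(\mathfrak{X}(s)_{/\Cp})(\Cp) = \mathfrak{B}(s^{1/p^{(d-1)n}} \omega^{-1/p^{dn}})(\Cp)$ is established, the fact that $\kappa$ is already a rigid isomorphism on the total spaces yields the desired isomorphism of affinoid group varieties.
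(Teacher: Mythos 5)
Your approach is genuinely different from the paper's: rather than invoking the commutative square
\begin{equation*}
   \xymatrix{
     \mathfrak{B}(r^{1/q^{en}})_{/\Cp} \ar[d]_{[p^n]} \ar[r]^-{\kappa} & \mathfrak{X}((r\omega)^{1/p^n})_{/\Cp} \ar[d]^{p^n} \\
     \mathfrak{B}(r)_{/\Cp} \ar[r]^-{\kappa} & \mathfrak{X}(r\omega)_{/\Cp}   }
\end{equation*}
with both verticals finite \'etale and the precise preimage formulas taken from \cite{ST} Lemmas 3.2 and 3.3, you attempt a direct computation of the preimage $\kappa^{-1}(\mathfrak{X}(s))$. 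Your reduction to the single coordinate $\psi = \langle t'_0,\cdot\rangle$ using $t_1 = 1$ and $o_L$-stability of disks is a nice observation and is sound as far as it goes. However, the Newton-polygon step contains a genuine error, and this error sits exactly at the boundary cases that the definitions of $R_n$ and $S_n$ are designed to hit.

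Specifically, you claim that the top term of $(1+v)^{p^n}-1 = \sum_{k=1}^{p^n} \binom{p^n}{k}v^k$ dominates whenever $|v| \ge p^{-n/(p^n-1)}$. This is the slope of the chord from $(1,n)$ to $(p^n,0)$, but the lower Newton polygon is strictly convex: its vertices are at $(p^j, n-j)$, and the binding constraint for top-term dominance is $k = p^{n-1}$, not $k = 1$. The correct threshold is $|v| > p^{-1/(p^{n-1}(p-1))}$, which for $n \ge 2$ is strictly larger than $p^{-n/(p^n-1)}$. A concrete counterexample: for $p=2$, $n=2$, your threshold is $2^{-2/3}$, but at $|v| = 2^{-0.6}$ one has $|6v^2| = 2^{-2.2} > 2^{-2.4} = |v^4|$, so the top term does not dominate even though $|v| > 2^{-2/3}$. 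Moreover, when $e=1$ the left endpoint of $S_n$ is exactly $p^{-1/(p^{n-1}(p-1))}$, so even the corrected threshold is only met non-strictly; there the terms $\binom{p^n}{p^{n-1}}v^{p^{n-1}}$ and $v^{p^n}$ have equal modulus and cancellation can occur, so you cannot conclude $|(1+v)^{p^n}-1| = |v|^{p^n}$. Your equation $|v|^{p^n} = \omega r^{p^{dn}}$ therefore does not follow at the closed left endpoint of $S_n$ in the unramified case, which is precisely where the proposition must still hold. The paper's argument avoids ever having to evaluate $|(1+v)^{p^n}-1|$ pointwise: the finite-\'etaleness of $[p^n]$ and $p^n$ together with the set-theoretic bijection of $\kappa$ yields the isomorphism at the level of rigid spaces without any radius-by-radius estimate, and that is exactly what rescues the boundary case. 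A secondary, smaller issue is your appeal to ``closed rigid subgroups of $\widehat{\mathbb{G}}_m$ are disks'': the preimage $\psi^{-1}(\mathfrak{B}_1(s))$ is a subgroup of $LT_{/\Cp}$ in the $Z$-coordinate, and since $\psi$ has linear coefficient $\Omega$ of absolute value $\omega < 1$ it does not send disks to disks of the same radius, so identifying the preimage as some $\mathfrak{B}(r')$ requires its own argument rather than the tautology obtained by transporting through $\psi$ itself.
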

\begin{proof}
Although formally not stated there in this generality, the proof is completely contained in \cite{ST} Thm.\ 3.6 and App.\ Thm.\ part (c). We briefly recall the argument. We have:
\begin{itemize}
  \item[--] For any $r \in R_0$ the map $[p^n] : \mathfrak{B}(r^{1/q^{en}}) = [p^n]^{-1}(\mathfrak{B}(r)) \longrightarrow \mathfrak{B}(r)$ is a finite etale affinoid map (\cite{ST} Lemma 3.2).
  \item[--] For any $s \in S_0$ the map $p^n : \mathfrak{X}(s^{1/p^n}) = (p^n)^{-1}(\mathfrak{X}(s)) \longrightarrow \mathfrak{X}(s)$ is a finite etale affinoid map (\cite{ST} Lemma 3.3).
  \item[--] For any $r \in p^{\QQ} \cap (0,p^{-1/e(q-1)})$ the map $\kappa$ restricts to a rigid isomorphism
\begin{equation*}
  \mathfrak{B}(r)_{/\Cp} \xrightarrow{\;\cong\;} \mathfrak{X}(r\omega)_{/\Cp}
\end{equation*}
      (\cite{ST} Lemma 3.4 and proof of Lemma 3.5).
\end{itemize}
Exactly as in the proof of \cite{ST} Thm.\ 3.6 it follows from these three facts that the horizontal arrows in the commutative diagram
\begin{equation*}
   \xymatrix{
     \mathfrak{B}(1^{1/q^{en}})_{/\Cp} \ar[d]_{[p^n]} \ar[r]^-{\kappa} & \mathfrak{X}((r\omega)^{1/p^n})_{/\Cp} \ar[d]^{p^n} \\
     \mathfrak{B}(r)_{/\Cp} \ar[r]^-{\kappa} & \mathfrak{X}(r\omega)_{/\Cp},   }
\end{equation*}
are rigid isomorphisms for any $n \geq 0$ and $r \in R_0$.
\end{proof}

\begin{remark}
If $L/\Qp$ is unramified then $\bigcup_{n \geq 0} R_n = p^{\QQ} \cap [p^{-q/(q-1)},1)$ and $\bigcup_{n \geq 0} S_n = p^{\QQ} \cap [p^{-p/(p-1)},1)$. Hence in this case $\mathfrak{X}(s)_{/\Cp}$, for any radius $s \in p^{\QQ} \cap (0,1)$, is isomorphic via $\kappa$ to an affinoid disk.
\end{remark}

In the following we abbreviate $\mathfrak{B}_n := \mathfrak{B}(p^{-1/e(q-1)q^{en-1}})$ and $\mathfrak{X}_n := \mathfrak{X}(p^{-(1+e/(p-1))/ep^n})$ for any $n \geq 1$
\footnote{Everything which follows also works for $n=0$. We avoid this case only since the symbol $\mathfrak{X}_0$ already has a different meaning.}; the two radii are the left boundary points of the sets $R_n$ and $S_n$, respectively, and they correspond to each other under the above bijection. From now on we treat $\kappa$ as an identification and view both $\mathcal{O}_K(\mathfrak{X})$ and $\mathcal{O}_K(\mathfrak{B})$ as subalgebras of $\mathcal{O}_{\Cp}(\mathfrak{B})$. The standard action of the Galois group $G_K := \Gal(\overline{K}/K)$ on $\mathcal{O}_{\Cp}(\mathfrak{B})$ is given by $(\sigma,F) \longmapsto {^\sigma F} := \sigma \circ F \circ \sigma^{-1}$ (in terms of power series $F = \sum_{n \geq 0} a_n Z^n$ we have ${^\sigma F} = \sum_{n \geq 0} \sigma(a_n) Z^n$), and $\mathcal{O}_K(\mathfrak{B})$ is the corresponding ring of Galois fixed elements
\begin{equation*}
    \mathcal{O}_K(\mathfrak{B}) = \mathcal{O}_{\Cp}(\mathfrak{B})^{G_K} \ .
\end{equation*}
The latter is a special case of the following general principle.

\begin{remark}\label{Galois-fixed}
For any quasi-separated rigid analytic variety $\mathfrak{Y}$ over $K$ we have $\mathcal{O}_K(\mathfrak{Y}) = \mathcal{O}_{\Cp}(\mathfrak{Y})^{G_K}$.
\end{remark}
\begin{proof}
See \cite{ST} p.\ 463 observing that $\Cp^{G_K} = K$ by the Ax-Sen-Tate theorem (\cite{Ax} or \cite{Tat}).
\end{proof}

The twisted Galois action on $\mathcal{O}_{\Cp}(\mathfrak{B})$ is defined by $(\sigma,F) \longmapsto {^{\sigma *}F} := ({^\sigma F})([\tau(\sigma^{-1}](\cdot))$, it commutes with the $o_L \setminus \{0\}$-action, and according to \cite{ST} Cor.\ 3.8 we have
\begin{equation}\label{f:twisted-O}
     \mathcal{O}_K(\mathfrak{X}) = \mathcal{O}_{\Cp}(\mathfrak{B})^{G_K,*} \ .
\end{equation}
Obviously we also have $\mathcal{O}_{\Cp}^b(\mathfrak{X}) = \mathcal{O}_{\Cp}^b(\mathfrak{B})$ (isometrically). The twisted Galois action on $\mathcal{O}_{\Cp}^b(\mathfrak{B})$ is by isometries, and we have
\begin{equation}\label{f:twisted-Ob}
    \mathcal{O}_K^b(\mathfrak{X}) = \mathcal{O}_{\Cp}^b(\mathfrak{B})^{G_K,*} \ .
\end{equation}

\begin{corollary}\label{multiplic}
The norms $\|\ \|_\mathfrak{X}$ on $\mathcal{O}_K^b(\mathfrak{X})$ and $\|\ \|_{\mathfrak{X}_n}$ on $\mathcal{O}_K(\mathfrak{X}_n)$, for any $n \geq 1$, are multiplicative.
\end{corollary}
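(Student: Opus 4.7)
The plan is to transport multiplicativity from the $\mathfrak{B}$-side, where it is either already on record (the multiplicativity of $\|\cdot\|_{\mathfrak{B}}$ on $\mathcal{O}^b_{\Cp}(\mathfrak{B})$ is recalled in the text from van Rooij) or follows from the Gauss-norm presentation of a one-dimensional affinoid disk (for $\mathfrak{B}_n$), across the Lubin--Tate isomorphism $\kappa$ of Proposition \ref{LT-iso}, and then to descend from $\Cp$ to $K$ using the opening Remark of \S1, which identifies the sup norm on an affinoid algebra $A$ over $K$ with the restriction of the sup norm on $A\,\widehat{\otimes}_K \Cp$.

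For the norm $\|\cdot\|_{\mathfrak{X}_n}$, the radius $p^{-(1+e/(p-1))/ep^n}$ is by construction the left endpoint of $S_n$, and the order-preserving bijection $S_n \xrightarrow{\simeq} R_n$ described just before Proposition \ref{LT-iso} sends it to the left endpoint $p^{-1/e(q-1)q^{en-1}}$ of $R_n$, i.e.\ the defining radius of $\mathfrak{B}_n$. Proposition \ref{LT-iso} therefore supplies an isomorphism of $\Cp$-affinoid groups $\kappa : \mathfrak{B}_{n/\Cp} \xrightarrow{\cong} \mathfrak{X}_{n/\Cp}$. Since $\kappa$ is a bijection on $\Cp$-points, pullback along $\kappa$ is an isometric $\Cp$-algebra isomorphism $\mathcal{O}_{\Cp}(\mathfrak{X}_n) \xrightarrow{\cong} \mathcal{O}_{\Cp}(\mathfrak{B}_n)$ for the sup norms. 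The target is a Tate algebra in one variable, whose Gauss norm is multiplicative; so $\|\cdot\|_{\mathfrak{X}_n}$ is multiplicative on $\mathcal{O}_{\Cp}(\mathfrak{X}_n)$. Applying the opening Remark to $A = \mathcal{O}_K(\mathfrak{X}_n)$ (whose base change $A_{\Cp} = \mathcal{O}_{\Cp}(\mathfrak{X}_n)$ is reduced, since it is even isomorphic to a Tate algebra) shows that $\|\cdot\|_{\mathfrak{X}_n}$ on $\mathcal{O}_K(\mathfrak{X}_n)$ is the restriction of $\|\cdot\|_{\mathfrak{X}_n}$ on $\mathcal{O}_{\Cp}(\mathfrak{X}_n)$, so multiplicativity is inherited.

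For the global bounded norm $\|\cdot\|_{\mathfrak{X}}$ on $\mathcal{O}^b_K(\mathfrak{X})$, the full Lubin--Tate isomorphism $\kappa : \mathfrak{B}_{/\Cp} \xrightarrow{\cong} \mathfrak{X}_{/\Cp}$ gives, again by pullback on $\Cp$-points, the isometric identification $\mathcal{O}^b_{\Cp}(\mathfrak{X}) = \mathcal{O}^b_{\Cp}(\mathfrak{B})$ that is already recorded in the paragraph preceding formula (\ref{f:twisted-Ob}). Since $\|\cdot\|_{\mathfrak{B}}$ is multiplicative on $\mathcal{O}^b_{\Cp}(\mathfrak{B})$, so is $\|\cdot\|_{\mathfrak{X}}$ on $\mathcal{O}^b_{\Cp}(\mathfrak{X})$. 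The inclusion $\mathcal{O}^b_K(\mathfrak{X}) \hookrightarrow \mathcal{O}^b_{\Cp}(\mathfrak{X})$ is isometric for $\|\cdot\|_{\mathfrak{X}}$ because both norms are taken as the supremum over the same set $\mathfrak{X}(\Cp)$ of rigid points; multiplicativity descends.

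There is essentially no obstacle here: all the substantive work has already been done in Proposition \ref{LT-iso}. What remains is purely bookkeeping, namely matching the left-endpoint radii of $R_n$ and $S_n$ under the stated bijection, and observing that the two sup-norm descent steps (from the disk $\mathfrak{B}_n$ to $\mathfrak{X}_n$ over $\Cp$, and from $\Cp$ to $K$) are isometric.
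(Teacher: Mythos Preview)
Your proof is correct and follows essentially the same route as the paper's: transport multiplicativity across the LT-isomorphism $\kappa$ from the $\mathfrak{B}$-side (where it is known) to the $\mathfrak{X}$-side over $\Cp$, then descend to $K$ via an isometric inclusion. The paper phrases the descent for $\|\cdot\|_{\mathfrak{X}}$ directly through the twisted-Galois-fixed-point identification \eqref{f:twisted-Ob}, whereas you go via $\mathcal{O}^b_{\Cp}(\mathfrak{X})$ and the isometric inclusion $\mathcal{O}^b_K(\mathfrak{X})\hookrightarrow \mathcal{O}^b_{\Cp}(\mathfrak{X})$; for $\|\cdot\|_{\mathfrak{X}_n}$ you invoke the opening Remark on sup norms rather than the analogous twisted-action statement---but these are equivalent ways of expressing the same isometric embedding, and your more explicit bookkeeping (matching the endpoint radii of $R_n$ and $S_n$) is entirely in order.
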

\begin{proof}
Because of \eqref{f:twisted-Ob} the multiplicativity of $\|\ \|_\mathfrak{B}$ implies the multiplicativity of $\|\ \|_\mathfrak{X}$. In view of Prop.\ \ref{LT-iso} the argument for the $\|\ \|_{\mathfrak{X}_n}$ is exactly analogous.
\end{proof}

\begin{corollary}\label{units}
We have $\mathcal{O}_K(\mathfrak{X})^\times = \mathcal{O}_K^b(\mathfrak{X})^\times$.
\end{corollary}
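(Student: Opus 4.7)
The inclusion $\mathcal{O}_K^b(\mathfrak{X})^\times \subseteq \mathcal{O}_K(\mathfrak{X})^\times$ is immediate, as $\mathcal{O}_K^b(\mathfrak{X})$ is a subring of $\mathcal{O}_K(\mathfrak{X})$. The real content is the reverse inclusion: given $f \in \mathcal{O}_K(\mathfrak{X})^\times$ with inverse $g \in \mathcal{O}_K(\mathfrak{X})$, the task is to show that both $f$ and $g$ are bounded on $\mathfrak{X}(\Cp)$. The plan is to extend scalars to $\Cp$ via the LT-isomorphism $\kappa$ and thereby reduce the problem to a classical statement about power series on the open unit disk $\mathfrak{B}_{/\Cp}$.

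By Prop.\ \ref{LT-iso}, $\kappa$ provides a ring inclusion $\mathcal{O}_K(\mathfrak{X}) \subseteq \mathcal{O}_{\Cp}(\mathfrak{X}) = \mathcal{O}_{\Cp}(\mathfrak{B})$, and the isometric identification $\mathcal{O}_{\Cp}^b(\mathfrak{X}) = \mathcal{O}_{\Cp}^b(\mathfrak{B})$ noted just before the corollary yields $\mathcal{O}_K^b(\mathfrak{X}) = \mathcal{O}_K(\mathfrak{X}) \cap \mathcal{O}_{\Cp}^b(\mathfrak{B})$. Let $F, G \in \mathcal{O}_{\Cp}(\mathfrak{B})$ be the power series in $Z$ corresponding to $f, g$ under this identification; the relation $FG = 1$ forces $F$ to have no zero on $\mathfrak{B}(\Cp)$. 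It therefore suffices to prove the following purely $\Cp$-analytic statement: any unit $F \in \mathcal{O}_{\Cp}(\mathfrak{B})^\times$ is bounded, and so is its inverse.

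For this, I would work on the closed affinoid subdisks $\mathfrak{B}(r)_{/\Cp}$. Writing $F = \sum_{n \geq 0} a_n Z^n$, for each $r \in (0,1) \cap p^\QQ$ the maximal ideals of the Tate algebra $\mathcal{O}_{\Cp}(\mathfrak{B}(r))$ are in bijection with $\mathfrak{B}(r)(\Cp)$, because $\Cp$ is algebraically closed; since $F$ has no zero there, $F|_{\mathfrak{B}(r)}$ is a unit in $\mathcal{O}_{\Cp}(\mathfrak{B}(r))$. The elementary description of units in a one-variable Tate algebra (equivalently, a Newton polygon / Weierstrass preparation argument) then forces $|a_0| > |a_n| r^n$ for every $n \geq 1$, whence $\sup_{z \in \mathfrak{B}(r)(\Cp)} |F(z)| = |a_0|$ and, by the same argument applied to $G$, $\sup_{z \in \mathfrak{B}(r)(\Cp)} |G(z)| = 1/|a_0|$, both bounds being independent of $r$. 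Passing to the limit $r \to 1^-$ yields $\|F\|_{\mathfrak{B}} = |a_0|$ and $\|G\|_{\mathfrak{B}} = 1/|a_0|$, so $F, G \in \mathcal{O}_{\Cp}^b(\mathfrak{B})$, and returning to $\mathfrak{X}$ we conclude $f, g \in \mathcal{O}_K^b(\mathfrak{X})$. No serious obstacle is expected: the only nontrivial ingredient is the classical characterization of units in a one-dimensional Tate algebra over an algebraically closed field.
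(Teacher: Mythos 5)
Your proof is correct, but it takes a genuinely different route from the paper's. The paper's argument is shorter and more abstract: by Cor.~\ref{multiplic} the sup-norms $\|\ \|_{\mathfrak{X}(r_n)}$ are multiplicative along a sequence $r_n \to 1$, so $\|f\|_{\mathfrak{X}(r_n)} \cdot \|f^{-1}\|_{\mathfrak{X}(r_n)} = 1$ for every $n$; since both factors are monotonously nondecreasing in $r$ (the supremum over a larger affinoid is larger), they must each be constant, and boundedness of $f$ and $f^{-1}$ follows at once. You instead extend scalars to $\Cp$, identify $\mathcal{O}_{\Cp}(\mathfrak{X}) \cong \mathcal{O}_{\Cp}(\mathfrak{B})$ via $\kappa$, and run a classical Newton-polygon argument for units in the one-variable Tate algebras $\mathcal{O}_{\Cp}(\mathfrak{B}(r))$ over an algebraically closed field, then descend. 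Both approaches ultimately lean on the LT-isomorphism (the paper implicitly, since Cor.~\ref{multiplic} was proved by transporting multiplicativity of $\|\ \|_{\mathfrak{B}}$ across $\kappa$), but yours replaces the abstract ``increasing and decreasing hence constant'' step with the concrete unit criterion $|a_0| > |a_n|r^n$ for $n \geq 1$, making the argument more explicit and self-contained at the cost of some extra bookkeeping. One small inaccuracy in citation: the global identification $\mathcal{O}_{\Cp}(\mathfrak{X}) = \mathcal{O}_{\Cp}(\mathfrak{B})$ comes from the global isomorphism $\kappa$ (\cite{ST} Thm.\ 3.6), whereas Prop.~\ref{LT-iso} gives its restriction to the affinoid subgroups; this is immaterial for your argument.
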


\begin{proof}
For any fixed $f \in \mathcal{O}_K(\mathfrak{X})$ the supremum norms $\|f\|_{\mathfrak{X}(r)}$ on the affinoids $\mathfrak{X}(r)$ form a function in $r$ which is monotonously  increasing. If $f$ is a unit this applies to $f^{-1}$ as well. In Cor.\ \ref{multiplic} we have seen that there is a sequence $r_1 < \ldots < r_n < \ldots$ converging to $1$ such that the norms $\|\ \|_{\mathfrak{X}(r_n)}$ are multiplicative. It follows that the sequence $(\|f\|_{\mathfrak{X}(r_n)})_n$ at the same time is monotonously increasing and decreasing. Hence it is constant which shows that $f$ and $f^{-1}$ are bounded.
\end{proof}

\begin{lemma}\label{twist-Xn}
For any $n \geq 1$ we have $\mathcal{O}_K(\mathfrak{X} \setminus \mathfrak{X}_n) = \mathcal{O}_{\Cp}(\mathfrak{B} \setminus \mathfrak{B}_n)^{G_K,*}$ and $\mathcal{O}_K^b(\mathfrak{X} \setminus \mathfrak{X}_n) = \mathcal{O}_{\Cp}^b(\mathfrak{B} \setminus \mathfrak{B}_n)^{G_K,*}$.
\end{lemma}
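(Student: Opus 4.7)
The plan is to reduce both equalities to the two already-established ``global'' identities \eqref{f:twisted-O} and \eqref{f:twisted-Ob} by showing that $\kappa$ identifies the complement $\mathfrak{B}_{/\Cp} \setminus \mathfrak{B}_{n/\Cp}$ with $\mathfrak{X}_{/\Cp} \setminus \mathfrak{X}_{n/\Cp}$, and then invoking Remark \ref{Galois-fixed} applied to the quasi-separated rigid variety $\mathfrak{X} \setminus \mathfrak{X}_n$ over $K$.

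First I would verify the geometric statement. The radii $p^{-1/e(q-1)q^{en-1}}$ (defining $\mathfrak{B}_n$) and $p^{-(1+e/(p-1))/ep^n}$ (defining $\mathfrak{X}_n$) are, by a direct check, the left boundary points of the intervals $R_n$ and $S_n$ respectively, and they correspond to each other under the order-preserving bijection $S_n \xrightarrow{\sim} R_n$ described before Proposition \ref{LT-iso}. Taking the union over $s \in S_m$ for all $m \geq n$ of the affinoid isomorphisms $\kappa : \mathfrak{B}(s^{1/p^{(d-1)m}}\omega^{-1/p^{dm}})_{/\Cp} \xrightarrow{\sim} \mathfrak{X}(s)_{/\Cp}$ furnished by Proposition \ref{LT-iso}, and using that the corresponding radii on both sides tend to $1$, one obtains a rigid isomorphism
\begin{equation*}
   \kappa : \mathfrak{B}_{/\Cp} \setminus \mathfrak{B}_{n/\Cp} \xrightarrow{\;\cong\;} \mathfrak{X}_{/\Cp} \setminus \mathfrak{X}_{n/\Cp} .
\end{equation*}
In particular, by means of $\kappa$ we may identify
$\mathcal{O}_{\Cp}(\mathfrak{X} \setminus \mathfrak{X}_n) = \mathcal{O}_{\Cp}(\mathfrak{B} \setminus \mathfrak{B}_n)$
and likewise for the bounded subrings (since boundedness is intrinsic to the rigid variety, being defined via the supremum over $\Cp$-points).

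Next I would transport the Galois action. By the very definition of the twisted action recalled just before \eqref{f:twisted-O}, the standard $G_K$-action on $\mathcal{O}_{\Cp}(\mathfrak{X})$ corresponds, under $\kappa$, to the twisted action $\ast$ on $\mathcal{O}_{\Cp}(\mathfrak{B})$; concretely this uses that $[\tau(\sigma^{-1})]$ is an $o_L$-linear automorphism of $\mathfrak{B}_{/\Cp}$ (as $\tau(\sigma^{-1}) \in o_L^\times$) which preserves each affinoid subdisk and therefore preserves both $\mathfrak{B}_{n/\Cp}$ and its complement. Consequently, the above $\kappa$-identification of $\Cp$-valued function rings is $G_K$-equivariant if the left-hand side is given its standard action and the right-hand side its twisted action.

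Finally, applying Remark \ref{Galois-fixed} to the quasi-separated rigid variety $\mathfrak{X} \setminus \mathfrak{X}_n$ over $K$ gives $\mathcal{O}_K(\mathfrak{X} \setminus \mathfrak{X}_n) = \mathcal{O}_{\Cp}(\mathfrak{X} \setminus \mathfrak{X}_n)^{G_K}$, and combining with the previous step yields the first equality. For the bounded version, I would use the identity $\mathcal{O}_K^b(\mathfrak{Y}) = \mathcal{O}_K(\mathfrak{Y}) \cap \mathcal{O}_{\Cp}^b(\mathfrak{Y}) = \mathcal{O}_{\Cp}^b(\mathfrak{Y})^{G_K}$ (which is immediate from the definition of boundedness and Remark \ref{Galois-fixed}) applied to $\mathfrak{Y} = \mathfrak{X} \setminus \mathfrak{X}_n$, and then transport via $\kappa$. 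The only genuinely non-formal input is the matching of boundary radii in the first step; I expect this numerical verification—namely, that the two explicit radii defining $\mathfrak{B}_n$ and $\mathfrak{X}_n$ really are the left endpoints of $R_n$ and $S_n$ and correspond under the bijection—to be the main place where care is required, since the rest is a routine combination of Proposition \ref{LT-iso} with Remark \ref{Galois-fixed}.
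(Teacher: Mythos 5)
Your proof is correct and uses the same ingredients as the paper's: Proposition \ref{LT-iso} to see that $\kappa$ identifies $(\mathfrak{B}\setminus\mathfrak{B}_n)_{/\Cp}$ with $(\mathfrak{X}\setminus\mathfrak{X}_n)_{/\Cp}$, the $o_L^\times$-stability of $\mathfrak{B}_n$ (hence of its complement) to make the twisted Galois action well defined on $\mathcal{O}_{\Cp}(\mathfrak{B}\setminus\mathfrak{B}_n)$, and Remark \ref{Galois-fixed} to descend to $K$. One cosmetic point: the ``union over $s$ of disk isomorphisms'' in your first step does not by itself yield an isomorphism of the complements (the union of the disks $\mathfrak{X}(s)$ is all of $\mathfrak{X}$, not $\mathfrak{X}\setminus\mathfrak{X}_n$); it is cleaner to observe that $\kappa$ is a global isomorphism and that Proposition \ref{LT-iso} at the left endpoint of $S_n$ gives $\kappa(\mathfrak{B}_{n/\Cp})=\mathfrak{X}_{n/\Cp}$, from which the identification of complements is immediate.
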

\begin{proof}
As a consequence of Prop.\ \ref{LT-iso} the isomorphism $\kappa$ restricts to isomorphisms
\begin{equation*}
    (\mathfrak{B} \setminus \mathfrak{B}_n)_{/\Cp} \xrightarrow{\;\cong\;} (\mathfrak{X} \setminus \mathfrak{X}_n)_{/\Cp} \ .
\end{equation*}
On the other hand every unit $a \in o_L^\times$ with $\mathfrak{B}_{n/L}$ also preserves the admissible open subset $(\mathfrak{B} \setminus \mathfrak{B}_n)_{/L}$. Hence the twisted Galois action on $\mathcal{O}_{\Cp}(\mathfrak{B} \setminus \mathfrak{B}_n)$ is well defined. The assertion now follows in the same way as \eqref{f:twisted-O}.
\end{proof}

\subsection{Properties of $\mathcal{O}_K(\mathfrak{X})$}\label{sec:globalring}

The rigid variety $\mathfrak{X}$ is smooth and one dimensional by \cite{ST} paragraph before Lemma 2.4. As a closed subvariety of an open polydisk $\mathfrak{X}_{/K}$ is quasi-Stein. By \cite{ST} Cor.\ 3.7 the ring $\mathcal{O}_K(\mathfrak{X})$ is an integral domain. Therefore $\mathfrak{X}_{/K}$ satisfies the assumptions of section \ref{sec:prufer}. Hence the ring $\mathcal{O}_K(\mathfrak{X})$ has all the properties which we have established in section \ref{sec:prufer} (and which were listed, without proof, at the end of section 3 in \cite{ST}). We need a few further properties specific to $\mathfrak{X}$.

\begin{lemma}\phantomsection\label{unramified}
\begin{itemize}
  \item[i.] The ring homomorphism $\varphi_L : \mathcal{O}_K(\mathfrak{X}) \longrightarrow \mathcal{O}_K(\mathfrak{X})$ makes $\mathcal{O}_K(\mathfrak{X})$ a free module over itself of rank equal to the cardinality of the residue field $o_L/\pi_L o_L$.
  \item[ii.] The ring homomorphism $\varphi_L : \mathcal{O}_K(\mathfrak{X}) \longrightarrow \mathcal{O}_K(\mathfrak{X})$ is unramified at every point of $\mathfrak{X}_{/K}$.\footnote{For a torsion point $x$ the subsequent Lemma \ref{zeros}.i, which says that $\mathfrak{m}_x = \log_\mathfrak{X} \mathcal{O}_x$, allows the following elementary argument. Using assertion ii. of the same lemma we have
\begin{equation*}
    \varphi_L(\mathfrak{m}_{\pi_L^*(x)}) \mathcal{O}_x = \varphi_L(\log_\mathfrak{X}) \mathcal{O}_x = \pi_L \log_\mathfrak{X} \mathcal{O}_x = \mathfrak{m}_x \ .
\end{equation*}}
  \item[iii.] For any $0 \neq f \in \mathcal{O}_K(\mathfrak{X})$ and any point $x \in \mathfrak{X}_{/K}$ we have
\begin{equation*}
    \divi(\varphi_L(f))(x) = \divi(f)(\pi_L^*(x)) \ .
\end{equation*}
\end{itemize}
\end{lemma}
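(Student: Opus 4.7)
The plan is to address parts (ii), (iii), and (i) in that order, since (iii) is a formal consequence of (ii) while (i), the freeness statement, is best handled separately via Fourier theory.

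For (ii), I would transfer the question to the Lubin-Tate formal group via the $\Cp$-isomorphism $\kappa$ of Proposition~\ref{LT-iso}: under $\kappa$, the map $\pi_L^*$ on $\mathfrak{X}_{/\Cp}$ becomes the Lubin-Tate isogeny $[\pi_L]$ on $\mathfrak{B}_{/\Cp}$. On the formal group $LT$ the invariant differential $\omega_{LT} = \eta(Z)\,dZ$, with $\eta \in o_L[[Z]]^\times$, satisfies $[\pi_L]^*\omega_{LT} = \pi_L\omega_{LT}$; written out this is $\eta([\pi_L](Z)) \cdot [\pi_L]'(Z) = \pi_L\eta(Z)$, so $[\pi_L]'(Z)$ is a unit in $\mathcal{O}_{\Cp}(\mathfrak{B})$. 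Hence $[\pi_L]$, and therefore $\pi_L^*$ on $\mathfrak{X}_{/\Cp}$, is etale at every point. Since unramifiedness is characterized by the vanishing of the sheaf $\Omega^1_{\pi_L^*}$ and this sheaf is compatible with the faithfully flat base extension $K \hookrightarrow \Cp$, $\pi_L^*$ is also etale at every point of $\mathfrak{X}_{/K}$. The footnote's direct argument for torsion points via $\log_\mathfrak{X}$ is a special case of this.

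Part (iii) then follows formally. Smoothness and one-dimensionality of $\mathfrak{X}$ make $\mathcal{O}_{\pi_L^*(x)}$ and $\mathcal{O}_x$ into DVRs, and by (ii) the induced local homomorphism $\varphi_L: \mathcal{O}_{\pi_L^*(x)} \to \mathcal{O}_x$ sends a uniformizer to a uniformizer (and a unit to a unit). Writing the germ of $f$ at $\pi_L^*(x)$ as $u\pi^n$ with $u$ a local unit, $\pi$ a uniformizer, and $n = \divi(f)(\pi_L^*(x))$, we obtain $\varphi_L(f) = \varphi_L(u)\varphi_L(\pi)^n$ at $x$ with $\varphi_L(u)$ a unit and $\varphi_L(\pi)$ a uniformizer, giving $\divi(\varphi_L(f))(x) = n$.

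For (i), I would invoke Fourier theory. By the references cited in the proof of Lemma~\ref{injective}, $\mathcal{O}_K(\mathfrak{X})$ is canonically identified with the locally $L$-analytic distribution algebra $D^{\mathrm{la}}(o_L, K)$, with pointwise multiplication of functions corresponding to convolution of distributions. The calculation
\begin{equation*}
\varphi_L(\widehat{\mu})(\chi) = \widehat{\mu}(\pi_L\chi) = \int_{o_L}\chi(\pi_L a)\,d\mu(a) = \widehat{(\pi_L)_*\mu}(\chi)
\end{equation*}
shows that under this identification $\varphi_L$ is the pushforward of distributions along $\pi_L: o_L \to o_L$. Fix coset representatives $a_0, a_1, \ldots, a_{q-1}$ of $o_L/\pi_L o_L$. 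Because each coset is clopen in $o_L$, the disjoint decomposition $o_L = \bigsqcup_{i=0}^{q-1}(\pi_L o_L + a_i)$ induces $C^{\mathrm{an}}(o_L, K) = \bigoplus_i C^{\mathrm{an}}(\pi_L o_L + a_i, K)$ and dually $D^{\mathrm{la}}(o_L, K) = \bigoplus_i D^{\mathrm{la}}(\pi_L o_L + a_i, K)$. Pushforward $(\pi_L)_*$ yields an isomorphism $D^{\mathrm{la}}(o_L, K) \cong D^{\mathrm{la}}(\pi_L o_L, K) = \varphi_L(D^{\mathrm{la}}(o_L, K))$, while convolution with $\delta_{a_i}$ identifies this with $D^{\mathrm{la}}(\pi_L o_L + a_i, K)$. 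Since convolving by a distribution supported on $\pi_L o_L$ preserves each coset, each $D^{\mathrm{la}}(\pi_L o_L + a_i, K)$ is a free rank-one $\varphi_L(D^{\mathrm{la}}(o_L, K))$-module on $\delta_{a_i}$, and altogether $\mathcal{O}_K(\mathfrak{X})$ is free of rank $q$ over $\varphi_L(\mathcal{O}_K(\mathfrak{X}))$ with basis the evaluation characters $\{\chi \mapsto \chi(a_i)\}_{i=0}^{q-1}$. The main obstacle is precisely this Fourier-theoretic translation; once it is in place, freeness reflects directly the coset decomposition of $o_L$ by $\pi_L o_L$.
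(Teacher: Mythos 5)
Your proposal is correct, and for parts (i) and (iii) it follows essentially the same route as the paper: (i) goes through the Fourier isomorphism with the distribution algebra and the Dirac distributions at coset representatives for $o_L/\pi_L o_L$, and (iii) is deduced from (ii) by the DVR argument. Your argument for (iii) is spelled out in more detail than the paper's one-line deduction, but it is the same idea.

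For part (ii) you take a genuinely different path. The paper stays over $K$ and argues via the factorization $p_* = \varphi_L^{e}\,u_*$ with $u\in o_L^\times$: since $u_*$ is an automorphism, unramifiedness of $\varphi_L$ follows from that of $p_*$, and the latter is checked by embedding $\mathfrak{X}$ into $\mathfrak{X}_0 \cong \mathfrak{B}_1^d$, where $p_*$ is the coordinatewise $p$th power map whose Jacobian $\prod_i p z_i^{p-1}$ is a unit on a neighbourhood of $1$. You instead move to $\Cp$ via the LT isomorphism $\kappa$, identify $\pi_L^*$ with $[\pi_L]$ on the Lubin-Tate group, and read off from $[\pi_L]^*\omega_{LT} = \pi_L\,\omega_{LT}$ (equivalently $\eta([\pi_L](Z))\,[\pi_L]'(Z)=\pi_L\,\eta(Z)$ with $\eta\in o_L[[Z]]^\times$) that $[\pi_L]'$ is a unit in $\mathcal{O}_{\Cp}(\mathfrak{B})$, hence $\Omega^1$ vanishes, and then descend unramifiedness from $\Cp$ to $K$ by flat base change of the module of relative differentials. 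Both arguments are correct. The paper's is more elementary in that it avoids invoking the deeper $\Cp$-isomorphism $\kappa$ and does not need the descent step; yours is more conceptual and explains the unramifiedness intrinsically via the invariant differential of $LT$, and it also makes transparent why the footnote's computation at torsion points, using $\varphi_L(\log_\mathfrak{X})=\pi_L\log_\mathfrak{X}$, works.
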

\begin{proof}
i. This is most easily seen by using the Fourier isomorphism which reduces the claim to the corresponding statement about the distribution algebra $D(o_L,K)$. But here the ring homomorphism $\varphi_L$ visibly induces an isomorphism between $D(o_L,K)$ and the subalgebra $D(\pi_L o_L,K)$ of $D(o_L,K)$. Let $R \subseteq o_L$ denote a set of representatives for the cosets in $o_L/\pi_L o_L$. Then the Dirac distributions $\{\delta_g\}_{g \in R}$ form a basis of $D(o_L,K)$ as a $D(\pi_L o_L,K)$-module.

ii. Since a power of $\varphi_L$ times an appropriate automorphism is equal to $p_*$ it suffices to show that the latter homomorphism is everywhere unramified. But $\mathfrak{X}$ is a closed subvariety of $\mathfrak{X}_0 \cong \mathfrak{B}_1^d$. Hence it further suffices to observe that the endomorphism of $\mathcal{O}_K(\mathfrak{X}_0)$ induced by the $p$th power map $(z_1,\ldots,z_d) \longmapsto (z_1^p,\ldots,z_d^p)$ on $\mathfrak{B}_1^d$ is everywhere unramified, which is clear.

iii. This follows immediately from the second assertion.
\end{proof}

The Lie algebra $\mathfrak{g} = L$ of the locally $L$-analytic group $G = o_L$ embeds $L$-linearly into the distribution algebra $D(G,K)$ via
\begin{align*}
    \mathfrak{g} & \longrightarrow D(G,K) \\
    \mathfrak{x} & \longmapsto \delta_\mathfrak{x}(f) := (-\mathfrak{x}(f))(0) \ .
\end{align*}
Composing this with the Fourier isomorphism in \cite{ST} Thm.\ 2.3 we obtain the embedding
\begin{align*}
    \mathfrak{g} & \longrightarrow \mathcal{O}_K(\mathfrak{X}) \\
    \mathfrak{x} & \longmapsto [x \mapsto \delta_\mathfrak{x}(\chi_x) = d\chi_x(\mathfrak{x})]
\end{align*}
where we denote by $\chi_x$ the locally $L$-analytic character of $G$ corresponding to the point $x$ (cf. \cite{ST} \S2).

\begin{definition}
$\log_\mathfrak{X} \in \mathcal{O}_L(\mathfrak{X})$ denotes the holomorphic function which is the image of $1 \in \mathfrak{g} = L$ under the above embedding.
\end{definition}

Using \eqref{f:variety} we compute $d\chi_{z \otimes \beta}(1) = \log(z) \cdot \beta(1) = \log(z^{\beta(1)}) = \log(\chi_{z \otimes \beta}(1))$, and we obtain the formula
\begin{equation*}
    \log_\mathfrak{X}(x) = \log(\chi_x(1)) \ .
\end{equation*}
We see that the set of zeros of $\log_\mathfrak{X}$ coincides with the torsion subgroup of $\mathfrak{X}$.

\begin{lemma}\phantomsection\label{zeros}
\begin{itemize}
  \item[i.] All zeros of $\log_\mathfrak{X}$ are simple.
  \item[ii.] For any $a \in o_L$ we have $a_*(\log_\mathfrak{X}) = a \cdot \log_\mathfrak{X}$.
\end{itemize}
\end{lemma}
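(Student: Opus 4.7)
The plan is to prove (ii) by a direct computation from the defining formula $\log_\mathfrak{X}(x) = \log(\chi_x(1))$, and then to deduce (i) by observing that the same formula makes $\log_\mathfrak{X}$ into a rigid analytic group homomorphism to the additive group, so that every zero is carried to the identity by a rigid analytic translation automorphism of $\mathfrak{X}$ and therefore has the same order of vanishing as the zero at $1 \in \mathfrak{X}$.

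For (ii), the $o_L$-action on the character variety is $(a,\chi) \longmapsto \chi \circ (a\cdot)$, so the induced pullback on functions satisfies $(a_* f)(x) = f(a\cdot x)$. Substituting into the defining formula for $\log_\mathfrak{X}$ gives
\[
(a_* \log_\mathfrak{X})(x) = \log\bigl((a\cdot\chi_x)(1)\bigr) = \log(\chi_x(a)) .
\]
Since $\chi_x$ is locally $L$-analytic and $\log \colon \Cp^\times \to \Cp$ is locally analytic, the composition $\phi := \log \circ \chi_x \colon o_L \to \Cp$ is a locally $L$-analytic group homomorphism from $(o_L,+)$ to $(\Cp,+)$. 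Such a $\phi$ is automatically $L$-linear: on a small neighbourhood $\pi_L^N o_L$ of $0$ it admits a power series expansion $\phi(t) = \sum_{k \geq 0} c_k t^k$, and additivity together with the binomial identity kills $c_0$ and all $c_k$ with $k \geq 2$, leaving $\phi(t) = c t$ there; for a general $a \in o_L$, choosing $N$ large enough that $p^N a$ lies in this neighbourhood and using the $\Zp$-linearity of any continuous additive map yields $p^N \phi(a) = \phi(p^N a) = c\, p^N a$, hence $\phi(a) = ca = a\,\phi(1)$. Applied to $\phi = \log \circ \chi_x$ this gives $\log(\chi_x(a)) = a\log(\chi_x(1)) = a\log_\mathfrak{X}(x)$, which is (ii).

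For (i), the same formula applied to a product of characters gives, for $x_1, x_2 \in \mathfrak{X}(\Cp)$,
\[
\log_\mathfrak{X}(x_1 x_2) = \log\bigl(\chi_{x_1}(1)\chi_{x_2}(1)\bigr) = \log_\mathfrak{X}(x_1) + \log_\mathfrak{X}(x_2),
\]
so $\log_\mathfrak{X}$ is a rigid analytic group homomorphism from $\mathfrak{X}$ to the additive group. If $x_0 \in \mathfrak{X}(\Cp)$ is any zero of $\log_\mathfrak{X}$, then translation by $x_0$ is a rigid analytic automorphism of $\mathfrak{X}$ sending $1$ to $x_0$, and since $\log_\mathfrak{X}(x_0) = 0$ it pulls $\log_\mathfrak{X}$ back to itself; hence the order of vanishing of $\log_\mathfrak{X}$ at $x_0$ equals the order of vanishing at $1$. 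Finally, by Lemma \ref{small-disk}, for sufficiently small $r$ the map $y \longmapsto \chi_y$ with $\chi_y(g) = \exp(gy)$ is an isomorphism $\mathfrak{B}(r) \xrightarrow{\;\cong\;} \mathfrak{X}(r)$, and on this neighbourhood $\log_\mathfrak{X}(\chi_y) = \log(\exp(y)) = y$ is literally the local coordinate at $1$, which obviously has a simple zero at $y = 0$. The only mildly delicate point in the whole argument is the $L$-linearity assertion invoked in (ii); everything else is direct substitution and a single appeal to Lemma \ref{small-disk}.
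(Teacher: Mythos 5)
Your proof is correct, but it takes a genuinely different route from the paper's for both parts, and it is worth comparing the two.

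For part (ii), the paper argues purely formally: the locally analytic endomorphism $g \mapsto ag$ of $G = o_L$ induces multiplication by $a$ on the Lie algebra $\mathfrak{g}$, and since the embedding $\mathfrak{g} \hookrightarrow D(G,L)$ (and hence $\mathfrak{g} \hookrightarrow \mathcal{O}_L(\mathfrak{X})$) is functorial in $G$, one reads off $\delta_{a\mathfrak{x}} = a_*(\delta_{\mathfrak{x}})$ directly. You instead compute from the explicit formula $\log_\mathfrak{X}(x) = \log(\chi_x(1))$ derived just before the lemma, reducing the claim to the $L$-linearity of $\log \circ \chi_x \colon o_L \to \Cp$, which you verify by a power-series argument around $0$ combined with $\Zp$-linearity. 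Both arguments are correct; the paper's is shorter and exploits functoriality, while yours is more hands-on and arguably makes the role of local $L$-analyticity of $\chi_x$ more visible (the $L$-linearity would fail for a merely locally $\Qp$-analytic character).

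For part (i), the difference is more substantial. The paper invokes the LT-isomorphism $\kappa$ (via the commutative diagram after Lemma 3.4 of \cite{ST}), under which $\log_\mathfrak{X}$ becomes $\Omega_{t'_0}\log_{LT}$, and then appeals to the well-known simplicity of the zeros of the Lubin--Tate logarithm. You avoid $\kappa$ entirely: you first observe that $\log_\mathfrak{X}$ is a homomorphism $\mathfrak{X} \to \mathbb{G}_a$ (which follows from the same formula $\log_\mathfrak{X}(x) = \log(\chi_x(1))$ on points, and hence on the level of varieties since everything in sight is reduced), so that translation by any zero $x_0$ pulls $\log_\mathfrak{X}$ back to itself and all zeros have the same order of vanishing as the one at $1$; and you then use Lemma \ref{small-disk} to see that on the neighbourhood $\mathfrak{X}(r) \cong \mathfrak{B}(r)$ of $1$ the function $\log_\mathfrak{X}$ is the coordinate $y$ itself, which has a simple zero at $0$. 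This is a more self-contained argument, resting only on the internal group structure of $\mathfrak{X}$ and the local description near the identity, rather than on the comparison with $\mathfrak{B}$ over $\Cp$. The only thing worth making explicit is that the translation automorphism by a torsion point $x_0$ is only defined over the residue field of $x_0$ (or over $\Cp$), so the order-of-vanishing comparison takes place after base extension; smoothness of $\mathfrak{X}$ ensures the order of vanishing is insensitive to this base change.
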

\begin{proof}
i. By the commutative diagram after Lemma 3.4 in \cite{ST} the function $\log_\mathfrak{X}$ corresponds under the LT-isomorphism $\kappa$ to the function $\Omega_{t'_0} \log_{LT}$. The simplicity of the zeros of the Lubin-Tate logarithm $\log_{LT}$ is well known.

ii. The locally analytic endomorphism $g \longmapsto ag$ of $G = o_L$ induces on the Lie algebra $\mathfrak{g}$ the multiplication by the scalar $a$. On the other hand, the map $\mathfrak{g} \longrightarrow D(G,L)$ is functorial in $G$. Hence we have $\delta_{a\mathfrak{x}} = a_*(\delta_\mathfrak{x})$.
\end{proof}

\section{The boundary of $\mathfrak{X}$ and $(\varphi_L,\Gamma_L)$-modules}

\subsection{The boundary of $\mathfrak{X}$}

Recall that the complement of an affinoid domain in an affinoid space is an admissible open subset (compare \cite{Sch} \S3 Prop.\ 3(ii)). Thus $\mathfrak{X}(r) \setminus \mathfrak{X}(r_0)$, for any pair of
$r > r_0$ in $(0,1) \cap p^\QQ$, is an admissible open subset of $\mathfrak{X}(r)$. As $\{\mathfrak{X}(r)\}_{r}$ is an admissible covering of $\mathfrak{X}$, a subset $S$ of $\mathfrak{X}$ is admissible open if and
only if $S\cap \mathfrak{X}(r)$ is admissible open in $\mathfrak{X}(r)$ for any $r$. Hence $\mathfrak{X} \setminus \mathfrak{X}(r)$ is an admissible open subset of $\mathfrak{X}$ and the rings $\mathcal{O}_K^b(\mathfrak{X} \setminus \mathfrak{X}(r)) \subseteq \mathcal{O}_K(\mathfrak{X} \setminus \mathfrak{X}(r))$ are defined.

The ring
\begin{equation*}
    \mathscr{R}_K(\mathfrak{X}) := \bigcup_r \, \mathcal{O}_K(\mathfrak{X} \setminus \mathfrak{X}(r))
\end{equation*}
is called the \textit{Robba ring} for $L$ (and $K$). In the case of $L=\Qp$ this definition coincides with the usual one. Observe that every affinoid subdomain of $\mathfrak{X}$ is contained in some
$\mathfrak{X}(r)$, so $\mathscr{R}_K(\mathfrak{X})$ is isomorphic to
$\varinjlim_\mathfrak{Y} \mathcal{O}_K(\mathfrak{X} \setminus
\mathfrak{Y})$, where $\mathfrak{Y}$ runs through all affinoid subdomains of $\mathfrak{X}$.

Next we define
\begin{equation*}
    \mathscr{E}^\dagger_K(\mathfrak{X}) :=
\bigcup_r \, \mathcal{O}_K^b(\mathfrak{X} \setminus \mathfrak{X}(r)) \ .
\end{equation*}
We obviously have $\|\ \|_{\mathfrak{X} \setminus \mathfrak{X}(r')} \leq \|\ \|_{\mathfrak{X} \setminus \mathfrak{X}(r)}$, for any $r' \geq r$ in $(0,1) \cap p^\QQ$, and therefore may define
\begin{equation*}
    \|f\|_1 := \lim_{r \rightarrow 1} \|f\|_{\mathfrak{X} \setminus \mathfrak{X}(r)}
\end{equation*}
for any $f \in \mathscr{E}^\dagger_K(\mathfrak{X})$. Later on, before Prop.\ \ref{twisted-R-E}, we will see that $\|\ \|_1$ is a multiplicative norm on $\mathscr{E}^\dagger_K(\mathfrak{X})$. We finally put
\begin{equation*}
    \mathscr{E}_K(\mathfrak{X}) := \text{completion of $\mathscr{E}^\dagger_K(\mathfrak{X})$ with respect to $\|\ \|_1$}
\end{equation*}
as well as
\begin{equation*}
    \mathscr{E}_K^{\leq 1}(\mathfrak{X}) := \{ f \in \mathscr{E}_K(\mathfrak{X}) : \|f\|_1 \leq 1\} \ .
\end{equation*}

There are a natural topology on $\mathscr{R}_K(\mathfrak{X})$ as well as the so called weak topology on $\mathscr{E}_K(\mathfrak{X})$, which is weaker than the norm topology.

In order to discuss $\mathscr{R}_K(\mathfrak{X})$ we first need to collect a few facts about the rigid topology of the varieties $\mathfrak{X}_0$ and $\mathfrak{X}$. In the following all radii like $r$, $r'$, $r_0$, and $r_1$ will be understood to lie in $(0,1) \cap p^\QQ$. Besides $\mathfrak{B}_1(r)$ we also need the open $\Qp$-disk $\mathfrak{B}_1^-(r)$ of radius $r$ around $1$. We introduce the subsets
\begin{align*}
  \mathfrak{X}_0(r) & := \mathfrak{B}_1(r) \otimes_{\Zp} \Hom_{\Zp}(o_L,\Zp), \\
  \mathfrak{X}_0^-(r) & := \mathfrak{B}_1^-(r) \otimes_{\Zp} \Hom_{\Zp}(o_L,\Zp), \\
  \mathfrak{X}_0(r,r') & := \mathfrak{X}_0(r') \setminus \mathfrak{X}_0^-(r) \quad\text{for $r \leq r'$}
\end{align*}
of $\mathfrak{X}_0$. The first two obviously are admissible open. We also noted already that $\mathfrak{X}_0 \setminus \mathfrak{X}_0(r)$ is admissible open in $\mathfrak{X}_0$. In order to understand $\mathfrak{X}_0(r,r')$ we list the following facts.
\begin{itemize}
  \item[--] If $z_1, \ldots, z_d$ are coordinate functions on $\mathfrak{X}_0$ then $\mathfrak{X}_0(r,r')$ is the union of the $d$ affinoid subdomains
\begin{equation*}
  \mathfrak{X}_0^{(i)}(r,r') := \{ x \in \mathfrak{X}_0(r') : |z_i(x)| \geq r\}
\end{equation*}
      of $\mathfrak{X}_0(r')$. (\cite{BGR} Cor.\ 9.1.4/4)
  \item[--] In particular, $\mathfrak{X}_0(r,r')$ is admissible open in $\mathfrak{X}_0(r')$ and hence in $\mathfrak{X}_0$.
  \item[--] If $\mathfrak{Y} \longrightarrow \mathfrak{X}_0 \setminus \mathfrak{X}_0(r_0)$ is any morphism from a $\Qp$-affinoid variety into $\mathfrak{X}_0 \setminus \mathfrak{X}_0(r_0)$ then its image is contained in $\mathfrak{X}_0(r,r')$ for some $r_0 < r \leq r'$. We apply the maximum modulus principle in the following way. Let $\alpha$ denote the morphism in question. First by applying the maximum modulus principle to the functions $\alpha^*(z_i)$ we find an $r' > r_0$ such that $\alpha(\mathfrak{Y})$ is contained in $\mathfrak{X}_0(r')$. Next we observe that $\mathfrak{X}_0(r') \setminus \mathfrak{X}_0(r_0) = \bigcup_{i=1}^d \mathfrak{U}_i$ with $\mathfrak{U}_i := \{ x \in \mathfrak{X}_0(r') : |z_i(x)| > r_0\}$ is an admissible covering (\cite{BGR} Prop.\ 9.1.4/5). Then $\mathfrak{Y} = \bigcup_{i=1}^d \alpha^{-1}(\mathfrak{U}_i)$ is an admissible covering and therefore, necessarily, can be refined into a finite affinoid covering $\mathfrak{Y} = \mathfrak{V}_1 \cup \ldots \cup \mathfrak{V}_m$. For any $1 \leq j \leq m$ let $1 \leq i(j) \leq d$ be such that $\alpha(\mathfrak{V}_j) \subseteq \mathfrak{U}_{i(j)}$. By applying the maximum modulus principle to the function $z_{i(j)}^{-1}$ pulled back to $\mathfrak{V}_j$ we obtain that
\begin{equation*}
  \alpha(\mathfrak{V}_j) \subseteq \mathfrak{X}_0^{(i(j))}(r_j,r') \qquad\text{for some $r_0 < r_j < r'$}.
\end{equation*}
      We deduce that $\alpha(\mathfrak{Y}) \subseteq \mathfrak{X}_0(r,r')$ with $r := \min_j r_j$.
  \item[--] In particular, $\mathfrak{X}_0 \setminus \mathfrak{X}_0(r_0) = \bigcup_{r_0 < r \leq r' < 1} \mathfrak{X}_0(r,r')$ is an admissible covering.
\end{itemize}

With $\mathfrak{X}(r)$ being defined already we also put
\begin{equation*}
  \mathfrak{X}^-(r) := \mathfrak{X} \cap \mathfrak{X}_0^-(r)_{/L} \qquad\text{and}\qquad \mathfrak{X}(r,r') := \mathfrak{X} \cap \mathfrak{X}_0(r,r')_{/L} = \mathfrak{X}(r') \setminus \mathfrak{X}^-(r) \ ,
\end{equation*}
and we have a corresponding list of properties:
\begin{itemize}
  \item[--] $\mathfrak{X}(r,r')$ is a finite union of affinoid subdomains and is admissible open in the $L$-affinoid variety $\mathfrak{X}(r')$.
  \item[--] If $\mathfrak{Y} \longrightarrow \mathfrak{X} \setminus \mathfrak{X}(r_0)$ is any morphism from an $L$-affinoid variety $\mathfrak{Y}$ then its image is contained in $\mathfrak{X}(r,r')$ for some $r_0 < r \leq r'$. In particular,
\end{itemize}
\begin{equation}\label{f:projlim}
  \mathcal{O}_K(\mathfrak{X} \setminus \mathfrak{X}(r_0)) = \varprojlim_{r_0 < r \leq r' < 1} \mathcal{O}_K(\mathfrak{X}(r,r')) \ .
\end{equation}

For simplicity we now use the fact that $\mathfrak{X}$ and hence each $\mathfrak{X}(r')$ is a connected, smooth, and one dimensional rigid variety (cf.\ \cite{ST}). Therefore (\cite{Fie} Satz 2.1) any finite union of affinoid subdomains in $\mathfrak{X}(r')$ again is an affinoid subdomain. It follows that each $\mathcal{O}_K(\mathfrak{X}(r,r'))$ is a $K$-affinoid algebra which is a Banach algebra with respect to the supremum norm. This together with \eqref{f:projlim} permits us to equip $\mathcal{O}_K(\mathfrak{X} \setminus \mathfrak{X}(r_0))$ with the structure of a $K$-Fr\'echet algebra by viewing it as the topological projective limit of these affinoid algebras. By construction the restriction maps $\mathcal{O}_K(\mathfrak{X} \setminus \mathfrak{X}(r_0)) \longrightarrow \mathcal{O}_K(\mathfrak{X} \setminus \mathfrak{X}(r_1))$, for $r_0 \leq r_1$, are continuous.

\begin{proposition}\label{quasi-Stein}
For any $n \geq 1$, the rigid variety $\mathfrak{X} \setminus \mathfrak{X}_n$ is a quasi-Stein space (w.r.t. the admissible covering $\{\mathfrak{X}(s,s')\}$ where $p^{-(1+e/(p-1))/ep^n} < s \leq s' < 1$, $s \in S_n$, and $s' \in \bigcup_{m \geq n} S_m$).
\end{proposition}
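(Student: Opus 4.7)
My plan is to verify the three conditions defining a quasi-Stein space: an increasing affinoid exhaustion, admissibility of the covering, and density of restriction maps. I begin by constructing an increasing chain of affinoids whose union is $\mathfrak{X} \setminus \mathfrak{X}_n$. Pick a strictly decreasing sequence $s_1 > s_2 > \cdots$ in $S_n$ converging to the left endpoint $p^{-(1+e/(p-1))/ep^n}$ of $S_n$, and a strictly increasing sequence $s'_1 < s'_2 < \cdots$ with $s'_k \in S_{m_k}$ for some $m_k \geq n$ tending to $1$; this is possible because $S_n$ is a dense $p^{\QQ}$-subset of the appropriate interval and $\bigcup_{m \geq n} S_m$ accumulates at $1$. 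Set $\mathfrak{Y}_k := \mathfrak{X}(s_k, s'_k)$. Then $\mathfrak{Y}_1 \subseteq \mathfrak{Y}_2 \subseteq \cdots$ and $\bigcup_k \mathfrak{Y}_k = \mathfrak{X} \setminus \mathfrak{X}_n$. Each $\mathfrak{Y}_k$ is a finite union of affinoid subdomains of the affinoid $\mathfrak{X}(s'_k)$ (via the description of $\mathfrak{X}(s,s')$ by the coordinates $z_i$ given before the proposition), and therefore is itself an affinoid subdomain by \cite{Fie} Satz 2.1 applied to the smooth connected one-dimensional rigid variety $\mathfrak{X}(s'_k)$. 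The admissibility of the full family $\{\mathfrak{X}(s,s')\}$ follows by intersecting the admissible covering $\mathfrak{X}_0 \setminus \mathfrak{X}_0(r_0) = \bigcup \mathfrak{X}_0(r,r')$ (with $r_0$ the radius defining $\mathfrak{X}_n$) with the closed subvariety $\mathfrak{X}_{/L}$, and noting that it refines to the subfamily indexed by $s \in S_n$, $s' \in \bigcup_{m \geq n} S_m$ by density.

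The main content is the density of the restriction $\mathcal{O}_K(\mathfrak{Y}_{k+1}) \to \mathcal{O}_K(\mathfrak{Y}_k)$. The plan is to reduce to a classical density statement for closed annuli via the Lubin--Tate isomorphism. By Proposition \ref{LT-iso}, $\kappa$ identifies $\mathfrak{X}(s)_{/\Cp}$ with the $\Cp$-disk of radius $s^{1/p^{(d-1)n}}\omega^{-1/p^{dn}}$ for every $s \in S_n$. Passing to complements, $\kappa$ identifies $(\mathfrak{X}\setminus\mathfrak{X}_n)_{/\Cp}$ with $(\mathfrak{B}\setminus\mathfrak{B}_n)_{/\Cp}$ and carries $\mathfrak{Y}_{k,\Cp}$ to a closed annulus in $\mathfrak{B}_{/\Cp}$. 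On such a closed annulus, $\mathcal{O}_{\Cp}$ is the Banach algebra of Laurent series $\sum_{m \in \ZZ} a_m Z^m$ converging on the annulus, and Laurent polynomials in $Z$ are dense; in particular, the restriction $\mathcal{O}_{\Cp}(\mathfrak{Y}_{k+1}) \to \mathcal{O}_{\Cp}(\mathfrak{Y}_k)$ has dense image.

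The principal obstacle is then to descend this density from $\Cp$ to $K$. Using Lemma \ref{twist-Xn} (and the same argument applied to the intermediate admissible opens $\mathfrak{Y}_k$) one has $\mathcal{O}_K(\mathfrak{Y}_k) = \mathcal{O}_{\Cp}(\mathfrak{Y}_k)^{G_K,*}$, and the identification $\mathcal{O}_K(\mathfrak{Y}_{k+1}) \widehat{\otimes}_K \Cp \cong \mathcal{O}_{\Cp}(\mathfrak{Y}_{k+1})$ shows that the $\Cp$-span of the image of $\mathcal{O}_K(\mathfrak{Y}_{k+1})$ inside $\mathcal{O}_{\Cp}(\mathfrak{Y}_{k+1})$ is dense. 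Composing with the $\Cp$-restriction, the $\Cp$-span of the image of $\mathcal{O}_K(\mathfrak{Y}_{k+1})$ in $\mathcal{O}_{\Cp}(\mathfrak{Y}_k)$ is dense as well. Extracting a $K$-linear rather than $\Cp$-linear dense image inside $\mathcal{O}_K(\mathfrak{Y}_k)$ is the delicate step; I would handle it either by a non-archimedean Hahn--Banach argument (a continuous $K$-linear functional on $\mathcal{O}_K(\mathfrak{Y}_k)$ vanishing on the image would extend by $\Cp$-linearity and contradict the $\Cp$-density just established), or by averaging over the twisted $G_K$-action, exploiting its continuity and $\Cp^{G_K} = K$ (Ax--Sen--Tate) to extract $K$-rational approximations.
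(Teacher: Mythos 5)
Your proposal follows essentially the same route as the paper: construct an exhausting affinoid covering of $\mathfrak{X}\setminus\mathfrak{X}_n$, use the LT-isomorphism to turn the $\Cp$-level covering into annuli where density is classical, and then descend the density statement to $K$. The only substantive point of difference is in the descent step, and there one of your two proposed routes has a caveat while the other is problematic.

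For the first route (the ``Hahn--Banach'' argument): the extension of a $K$-linear functional $\phi$ to a $\Cp$-linear one is not Hahn--Banach — it is just the functor $\Cp\,\widehat{\otimes}_K\,(\cdot)$ applied to $\phi$, which is fine. Where you actually need a theorem is the converse separation statement: that a proper closed subspace of $\mathcal{O}_K(\mathfrak{Y}_k)$ admits a nonzero continuous functional vanishing on it. Over a non-spherically-complete field like $K\subseteq\Cp$ the classical Hahn--Banach separation fails in general; it holds here because the affinoid algebra $\mathcal{O}_K(\mathfrak{Y}_k)$ is a Banach space \emph{of countable type}, for which every closed subspace is topologically complemented (\cite{NFA} Prop.~10.5). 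That is precisely the fact the paper cites, and the paper uses it more directly (not via duals): if $\overline{\im(\beta)}\neq B_2$, take the closed complement $C$, note $\Cp\,\widehat{\otimes}_K\,C\neq 0$, and contradict $\Cp$-density of $\id\,\widehat{\otimes}\,\beta$. Your functional-based version is just the dual formulation of the same reasoning, so once you name the countable-type complement theorem explicitly your argument is sound.

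The second proposed route — ``averaging over the twisted $G_K$-action'' to extract $K$-rational approximations — I would drop. There is no invariant averaging over a profinite group in the non-archimedean world: naive averaging requires dividing by indices that are typically powers of $p$ and hence non-units, and replacing it with Tate normalized traces is exactly the Colmez--Sen--Tate machinery, which is both much heavier and is only developed in the paper later and in a different context. That alternative either fails or smuggles in a far bigger input than the direct Banach-space argument.

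Two minor notational points: you wrote $\mathcal{O}_K(\mathfrak{Y}_k)=\mathcal{O}_{\Cp}(\mathfrak{Y}_k)^{G_K,*}$; the correct statement is $\mathcal{O}_K(\mathfrak{Y}_k)=\mathcal{O}_{\Cp}(\mathfrak{Y}_k)^{G_K}$ for the untwisted action (Remark~\ref{Galois-fixed}) — the twist only enters when comparing with $\mathfrak{B}$-side functions via $\kappa$. Also, the paper handles the density on annuli by noting that the subannulus is a Weierstraß domain and citing \cite{BGR} Prop.~7.3.4/2, which is equivalent to your Laurent-polynomial density argument.
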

\begin{proof}
(The sets $S_n$ were defined in section \ref{sec:LT}.) We have to show that the restriction map $\mathcal{O}_K(\mathfrak{X}(s,s')) \longrightarrow \mathcal{O}_K(\mathfrak{X}(r,r'))$, for any $p^{-(1+e/(p-1))/ep^n} < s \leq r \leq r' \leq s' < 1$ with $r,s \in S_n$ and $r',s' \in \bigcup_{m \geq n} S_m$, has dense image. First of all note that affinoid algebras are Banach spaces of countable type.

In a first step we check that we may assume that $K = \Cp$. Quite generally, let $\beta : B_1 \longrightarrow B_2$ be a continuous linear map between $K$-Banach spaces such that $B_2$ is of countable type. By \cite{NFA} Prop.\ 10.5 there is a closed vector subspace $C \subseteq B_2$ such that $B_2 = \overline{\im(\beta)} \oplus C$ topologically. It follows that
\begin{equation*}
  \overline{\im(\id \widehat{\otimes} \beta)} = \Cp\, \widehat{\otimes}_K\, \overline{\im(\beta)} \subseteq \Cp\, \widehat{\otimes}_K\, B_2 = (\Cp\, \widehat{\otimes}_K\, \overline{\im(\beta)}) \oplus (\Cp\, \widehat{\otimes}_K\, C) \ .
\end{equation*}
Moreover, \cite{NFA} Cor.\ 17.5.iii implies that $\Cp\, \widehat{\otimes}_K\, C$ is nonzero if $C$ was. We see that $\beta$ has dense image if $\id \widehat{\otimes}\, \beta$ has.

So for the rest of the proof we let $K = \Cp$. We first observe that, quite generally, $\mathfrak{X}(r_0) = \bigcup_{r_1 < r_0} \mathfrak{X}(r_1)$. Due to the conditions we have imposed on the radii $r, r', s, s'$ we may apply Prop.\ \ref{LT-iso} and we see that $\mathfrak{X}(s,s')_{/\Cp}$ is isomorphic to a one dimensional affinoid annulus in such a way that $\mathfrak{X}(r,r')_{/\Cp}$ becomes isomorphic to a subannulus. It follows that $\mathfrak{X}(r,r')_{/\Cp}$ is a Weierstra{\ss} domain in $\mathfrak{X}(s,s')_{/\Cp}$, and the density statement holds by \cite{BGR} Prop.\ 7.3.4/2.
\end{proof}

The rigid variety $\mathfrak{X} \setminus \mathfrak{X}_n$ is smooth and one dimensional since it is admissible open in $\mathfrak{X}$. It is quasi-Stein by the above Prop.\ \ref{quasi-Stein}. The isomorphism $\mathcal{O}_{\Cp} (\mathfrak{X} \setminus \mathfrak{X}_n) \cong \mathcal{O}_{\Cp} (\mathfrak{B} \setminus \mathfrak{B}_n)$, which is a consequence of Prop.\ \ref{LT-iso}, implies that $\mathcal{O}_K (\mathfrak{X} \setminus \mathfrak{X}_n)$ is an integral domain. Therefore $(\mathfrak{X}\setminus \mathfrak{X}_n)_{/K}$ satisfies the assumptions of section \ref{sec:prufer}. Hence the ring $\mathcal{O}_K(\mathfrak{X} \setminus \mathfrak{X}_n)$ has all the properties which we have established in section \ref{sec:prufer}.

\begin{corollary}\phantomsection\label{pruefer2}
\begin{itemize}
  \item[i.] $\mathcal{O}_K(\mathfrak{X} \setminus \mathfrak{X}_n)$, for any $n \geq 1$, is a $1 \frac{1}{2}$ generator Pr\"ufer domain.
  \item[ii.] $\mathscr{R}_K(\mathfrak{X})$ is a $1 \frac{1}{2}$ generator Pr\"ufer domain. In particular, the assertions of Cor.\ \ref{free-invertible} analogously hold over $\mathscr{R}_K(\mathfrak{X})$.
\end{itemize}
\end{corollary}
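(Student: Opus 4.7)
The plan is to treat part (i) as an immediate application of the machinery of section~\ref{sec:prufer}, and to derive part (ii) from (i) by a filtered-colimit argument.

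For (i), the paragraph just before the corollary has verified that each $(\mathfrak{X}\setminus \mathfrak{X}_n)_{/K}$ is a connected, smooth, one-dimensional, quasi-Stein rigid variety over $K$. Proposition~\ref{1 1/2}, applied with $\mathfrak{Y} := \mathfrak{X}\setminus \mathfrak{X}_n$, then gives (i) directly.

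For (ii), the key observation is that the $\mathfrak{X}_n$ form a cofinal sequence among the $\mathfrak{X}(r)$, so that
\begin{equation*}
  \mathscr{R}_K(\mathfrak{X}) \;=\; \bigcup_{n \geq 1} \mathcal{O}_K(\mathfrak{X}\setminus \mathfrak{X}_n)
\end{equation*}
is a directed union. Each transition map $\mathcal{O}_K(\mathfrak{X}\setminus \mathfrak{X}_n) \to \mathcal{O}_K(\mathfrak{X}\setminus \mathfrak{X}_m)$, for $n \leq m$, is injective: otherwise a nonzero function would have a germ vanishing on the nonempty admissible open subset $\mathfrak{X}\setminus \mathfrak{X}_m$, forcing $\divi(f)$ to be supported on a non-discrete set, contradicting Lemma~\ref{principal-divisor}. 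The strategy is then to push every relevant statement down to a sufficiently large stage $n$ and invoke (i) there. Concretely, given any nonzero finitely generated $I = (f_1,\ldots,f_k) \subseteq \mathscr{R}_K(\mathfrak{X})$ and any $0 \neq f \in I$, write an expression $f = \sum_i r_i f_i$ in $\mathscr{R}_K(\mathfrak{X})$ and choose $n$ large enough that $f, f_1,\ldots,f_k, r_1,\ldots,r_k$ all lie in $\mathcal{O}_K(\mathfrak{X}\setminus \mathfrak{X}_n)$. Then $f$ belongs to $I_n := (f_1,\ldots,f_k) \subseteq \mathcal{O}_K(\mathfrak{X}\setminus \mathfrak{X}_n)$, and (i) produces $g \in I_n$ with $I_n = (f,g)$; extending scalars yields $I = (f,g)$ in $\mathscr{R}_K(\mathfrak{X})$, which is the $1\frac{1}{2}$-generator property. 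For Pr\"ufer-ness, the same stage-$n$ reduction applies: $I_n$ is invertible in $\mathcal{O}_K(\mathfrak{X}\setminus \mathfrak{X}_n)$, so there exist $0 \neq d$ and a finitely generated $J_n$ with $I_n J_n = (d)$, and this relation persists in $\mathscr{R}_K(\mathfrak{X})$, exhibiting $I$ as invertible there.

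The ``in particular'' claim then follows: once the $1\frac{1}{2}$-generator Pr\"ufer property is established for $\mathscr{R}_K(\mathfrak{X})$, the proofs of Cor.~\ref{free-invertible}.i (a formal consequence of the $1\frac{1}{2}$-generator property) and of Cor.~\ref{free-invertible}.ii (Kaplansky's theorem \cite{Kap} Thm.\ 2(a), valid for any domain in which finitely generated ideals are $2$-generated) go through verbatim. I do not anticipate any real obstacle; the only care needed is the colimit bookkeeping, namely verifying the injectivity of the transition maps and that both $1\frac{1}{2}$ generation and invertibility of finitely generated ideals genuinely ascend to the union. Both are essentially automatic from connectedness of the $\mathfrak{X}\setminus \mathfrak{X}_n$ and the divisor calculus of section~\ref{sec:prufer}.
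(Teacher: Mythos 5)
Your proof is correct and takes the same approach as the paper: part (i) is an immediate application of Prop.~\ref{1 1/2}, and part (ii) is the direct limit argument which the paper only states in one line; you have simply spelled out the colimit bookkeeping (injectivity of transition maps, descent of both the $1\frac{1}{2}$-generator property and of invertibility to a finite stage, and ascent back to the union).
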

\begin{proof}
i. Prop.\ \ref{1 1/2}. ii. This follows by a direct limit argument from i.
\end{proof}

We will view $\mathscr{R}_K(\mathfrak{X})$ as the locally convex inductive limit of the Fr\'echet algebras $\mathcal{O}_K(\mathfrak{X} \setminus \mathfrak{X}_n)$. We note that the multiplication in $\mathscr{R}_K(\mathfrak{X})$ is only separately continuous.

In order to analyze the functional analytic nature of the $\mathcal{O}_K(\mathfrak{X} \setminus \mathfrak{X}_n)$ and of $\mathscr{R}_K(\mathfrak{X})$ we need a few preliminary facts.

\begin{lemma}\label{descent-compactoid}
Let $C \subseteq B$ be a bounded subset of a $K$-Banach space $B$; if the image $\iota(C)$ of $C$ under the canonical map $\iota : B \longrightarrow \Cp\, \widehat{\otimes}_K\, B$ is compactoid, then $C$ is compactoid.
\end{lemma}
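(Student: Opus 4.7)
The plan is to descend from the $o_{\Cp}$-approximation in $W := \Cp\,\widehat{\otimes}_K\,B$ to an $o_K$-approximation in $B$, by approximating $\Cp$-tensor generators in the dense algebraic tensor product $B \otimes_K \Cp$ and then projecting back to $B$ through a finite-dimensional piece.

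Given $\epsilon > 0$, compactoidness of $\iota(C)$ would furnish $y_1, \ldots, y_n \in W$ with $\iota(C) \subseteq \sum_i o_{\Cp} y_i + \epsilon\,\mathcal{B}_W$, where $\mathcal{B}_W$ denotes the unit ball of $W$. I would first replace each $y_i$ by $y_i' = \sum_j a_{ij}\,\iota(b_{ij})$ from the algebraic tensor $B \otimes_K \Cp$ within distance $\epsilon$, so that the inclusion is preserved and $N := \sum_i o_{\Cp} y_i'$ sits inside the finite-dimensional $\Cp$-subspace $\Cp\,\otimes_K\,V_0 \subseteq W$, with $V_0 \subseteq B$ the finite-dimensional $K$-span of the finitely many $b_{ij}$.

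Next I would arrange a topological decomposition $B = V_0 \oplus V_0^\perp$, yielding a continuous $\Cp$-linear projection $P : W = (\Cp\,\otimes_K\,V_0) \oplus (\Cp\,\widehat{\otimes}_K\,V_0^\perp) \to \Cp\,\otimes_K\,V_0$ whose restriction to $\iota(B)$ projects $\iota(B)$ onto $\iota(V_0)$. For $c \in C$ decomposed as $c = c_0 + c_0^\perp \in V_0 \oplus V_0^\perp$, and for the decomposition $\iota(c) = n + r$ with $n \in N$ and $\|r\| \leq \epsilon$, applying $P$ would give $\iota(c_0) = n + P(r) \in \iota(V_0) \cap (N + \|P\|\epsilon\,\mathcal{B}_W)$, a bounded subset of the finite-dimensional $K$-Banach space $\iota(V_0)$; applying $\id - P$ would kill $n$ and leave $\iota(c_0^\perp) = (\id - P)(r)$, so that $\|c_0^\perp\|_B \leq \|\id - P\|\epsilon$ by isometry of $\iota$. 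Since bounded sets in a finite-dimensional $K$-Banach space are contained in finitely generated $o_K$-submodules, one obtains a finitely generated $o_K$-submodule $N_0 \subseteq \iota(V_0)$, independent of $c$, with $\iota(c_0) \in N_0$. Setting $N' := \iota^{-1}(N_0) \subseteq V_0 \subseteq B$, one concludes $c \in N' + \|\id - P\|\epsilon\,\mathcal{B}_B$, which is precisely the compactoidness of $C$ in $B$.

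The main obstacle is arranging the topological complement $V_0^\perp$, since $K$ is not assumed spherically complete. I would handle this by first reducing to the case that $B$ is of countable type: by the null-sequence characterization of compactoids, $\iota(C)$ lies in the closed $o_{\Cp}$-hull of a null sequence in $W$, and an iterated approximation of its terms in $B \otimes_K \Cp$ singles out countably many elements of $B$; their closed $K$-span, enlarged by a countable dense subset of $C$, is a countable-type closed subspace $B_1 \subseteq B$ containing $C$ for which $\iota(C)$ factors through and remains compactoid in $\Cp\,\widehat{\otimes}_K\,B_1$. Over such $B_1$ every closed subspace, and in particular the finite-dimensional $V_0$, is complemented by \cite{NFA} Prop.\ 10.5, the very same result invoked in the proof of Prop.\ \ref{quasi-Stein} above; the argument then applies inside $B_1$ and descends to $B$ via the isometric inclusion $B_1 \hookrightarrow B$.
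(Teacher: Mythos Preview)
Your approach is essentially correct and is genuinely different from the paper's. The paper never touches the defining $\epsilon$-approximation of compactoids; instead it invokes the $t$-orthogonal--sequence characterization (\cite{PGS} Thm.\ 3.9.6): to show $C$ is compactoid it suffices that every $t$-orthogonal sequence $(c_n)$ in $C$ maps to a $t'$-orthogonal sequence in $W$. For this the paper observes that the closed $K$-span $B_0$ of such a sequence is homeomorphic to $c_0(K)$, whence $\Cp\,\widehat{\otimes}_K\,B_0 \cong c_0(\Cp)$ and $(\iota(c_n))$ is a Schauder basis there, hence $t'$-orthogonal; isometry of $\Cp\,\widehat{\otimes}_K\,B_0 \hookrightarrow W$ finishes the argument. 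Your route via approximation in the algebraic tensor product and descent through a finite-dimensional projection is more hands-on, and it avoids the orthogonality criterion in the main step; what it buys is a concrete description of the $o_K$-generators, at the cost of the countable-type reduction.

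Two points in your plan deserve explicit justification. First, you invoke ``a countable dense subset of $C$'' without saying why $C$ is separable: this follows because $\iota(C)$, being compactoid, lies in the closed $o_{\Cp}$-hull of a null sequence and is therefore separable, and $\iota$ is isometric. Second, you assert that $\iota(C)$ ``remains compactoid'' in $W_1 = \Cp\,\widehat{\otimes}_K\,B_1$; you can sidestep this entirely by noting that for each $\epsilon$ the remainder $r = \iota(c) - n$ already lies in $W_1$ (since $\iota(c) \in \iota(B_1) \subseteq W_1$ and $n \in \Cp \otimes_K V_0 \subseteq W_1$), so the projection $P$ on $W_1$ applies directly without any compactoidness in $W_1$. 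Finally, be explicit that the complement of $V_0$ in $B_1$ can be chosen $t$-orthogonal for a fixed $t \in (0,1)$ independent of $\epsilon$, so that $\|\id - P\| \leq 1/t$ uniformly; otherwise the bound $\|\id - P\|\epsilon$ is not obviously small.
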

\begin{proof}
We fix a defining norm on $B$. Note (\cite{NFA} Prop.\ 17.4) that the map $\iota$ is norm preserving. As a consequence of \cite{PGS} Thm.\ 3.9.6 it suffices to show that, for any $t \in (0,1]$ and any sequence $(c_n)_{n \in \ZZ_{\geq 0}}$ in $C$ which is $t$-orthogonal (\cite{PGS} Def.\ 2.2.14), there exists a $t' \in (0,1]$ such that the sequence $(\iota(c_n))_n$ is $t'$-orthogonal in $\Cp \widehat{\otimes}_K B$.

Let $B_0 \subseteq B$ be the closed subspace generated by $(c_n)_n$, and let $c_0(K)$ denote the standard $K$-Banach space of all $0$-sequences in $K$. By the boundedness of $(c_n)_n$ the linear map
\begin{align*}
  \alpha : \qquad c_0(K) & \longrightarrow B_0 \\
  (\lambda_1, \lambda_2, \ldots) & \longmapsto \sum_{i=n}^\infty \lambda_n c_n
\end{align*}
is well defined and continuous. The $t$-orthogonality then implies that $\alpha$, in fact, is a homeomorphism (cf.\ the proof of \cite{PGS} Thm.\ 2.3.6). Hence
\begin{equation*}
  \id \widehat{\otimes}\, \alpha : \Cp\, \widehat{\otimes}_K\, c_0(K)  \xrightarrow{\;\cong\;} \Cp\, \widehat{\otimes}_K\, B_0
\end{equation*}
is a homeomorphism. By \cite{BGR} Prop.\ 2.1.7/8 the left hand side is equal to $c_0(\Cp)$. Hence $(\iota(c_n))_n$ is a basis of $\Cp\, \widehat{\otimes}_K\, B_0$ in the sense of \cite{PGS} Def.\ 2.3.10 and hence, by \cite{PGS} Thm.\ 2.3.11, is $t'$-orthogonal in $\Cp\, \widehat{\otimes}_K\, B_0$ for some $t' \in (0,1]$. Using \cite{NFA} Prop.\ 17.4.iii we finally obtain that $(\iota(c_n))_n$ is $t'$-orthogonal in $\Cp\, \widehat{\otimes}_K\, B$.
\end{proof}

For two locally convex $K$-vector spaces $V$ and $W$ we always view the tensor product $V \otimes_K W$ as a locally convex $K$-vector space with respect to the projective tensor product topology (cf.\ \cite{NFA} \S17), and we let $V \widehat{\otimes}_K W$ denote the completion of $V \otimes_K W$.

\begin{lemma}\label{tensor-projlim}
Let $\ldots \rightarrow V_n \rightarrow \ldots \rightarrow V_1$ and $\ldots \rightarrow W_n \rightarrow \ldots \rightarrow W_1$ be two sequences of locally convex $K$-vector spaces, $V := \varprojlim_n V_n$ and $W := \varprojlim_n W_n$ their projective limits, and $\alpha_n : V \rightarrow V_n$ and $\beta_n : W \rightarrow W_n$ the corresponding canonical maps.  We then have:
\begin{itemize}
  \item[i.] The projective tensor product topology on $V \otimes_K W$ is the initial topology with respect to the maps $\alpha_n \otimes \beta_n : V \otimes_K W \rightarrow V_n \otimes_K W_n$;
  \item[ii.] the canonical map $V \otimes_K W \longrightarrow \varprojlim_n V_n \otimes_K W_n$ is a topological embedding;
  \item[iii.] suppose that, for any $n \geq 1$, the spaces $V_n$ and $W_n$ are Hausdorff and the maps $\alpha_n$ and $\beta_n$ have dense image; then $V \widehat{\otimes}_K W = \varprojlim_n V_n \widehat{\otimes}_K W_n$.
\end{itemize}
\end{lemma}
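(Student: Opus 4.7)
The plan is to treat (i) and (ii) as a formal comparison of topologies and to reduce (iii) to a density statement inside a complete Hausdorff target.

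For part (i), the first observation is that by functoriality of the projective tensor product, each map $\alpha_n \otimes \beta_n : V \otimes_K W \to V_n \otimes_K W_n$ is continuous for the projective tensor product topology on the source, so the initial topology for the family $\{\alpha_n \otimes \beta_n\}_n$ is at least as coarse as the projective tensor product topology. For the reverse inclusion, the plan is to use that a fundamental system of continuous seminorms on $V = \varprojlim_n V_n$ is given by pullbacks $p_N \circ \alpha_N$, and similarly for $W$, so that the projective tensor product topology on $V \otimes_K W$ is generated by seminorms of the form $(p_N \circ \alpha_N) \otimes (q_N \circ \beta_N)$ (after reducing to a common index via transition maps). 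Inspecting the defining infima directly, any representation $x = \sum_i v_i \otimes w_i$ in $V \otimes_K W$ induces a representation $(\alpha_N \otimes \beta_N)(x) = \sum_i \alpha_N(v_i) \otimes \beta_N(w_i)$ in $V_N \otimes_K W_N$ with matching weights, which allows one to bound $(p_N \circ \alpha_N) \otimes (q_N \circ \beta_N)$ by the pullback of $p_N \otimes q_N$ along $\alpha_N \otimes \beta_N$. This gives (i), and (ii) is then immediate, since the initial topology with respect to $\{\alpha_n \otimes \beta_n\}_n$ is by definition the subspace topology inherited from $\varprojlim_n V_n \otimes_K W_n \subseteq \prod_n V_n \otimes_K W_n$.

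For part (iii), the key is that $\varprojlim_n V_n \widehat{\otimes}_K W_n$ is complete and Hausdorff, being a projective limit of complete Hausdorff spaces. Combining (ii) with the fact that each $V_n \otimes_K W_n \hookrightarrow V_n \widehat{\otimes}_K W_n$ is a topological embedding with dense image, one sees that $V \otimes_K W \to \varprojlim_n V_n \widehat{\otimes}_K W_n$ is itself a topological embedding. It then suffices to verify that the image is dense, for then the completion $V \widehat{\otimes}_K W$ is identified with the whole target. A basic open neighborhood of $(\xi_n)_n \in \varprojlim_n V_n \widehat{\otimes}_K W_n$ is pulled back from a neighborhood of $\xi_N$ in some $V_N \widehat{\otimes}_K W_N$, so it suffices that the image of $V \otimes_K W$ is dense in each $V_N \widehat{\otimes}_K W_N$; since $V_N \otimes_K W_N$ is dense there by construction, this further reduces to showing that $\alpha_N \otimes \beta_N : V \otimes_K W \to V_N \otimes_K W_N$ has dense image. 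This is a direct consequence of the density of $\alpha_N$ and $\beta_N$: any elementary tensor $v_N \otimes w_N$ is approximated by $\alpha_N(v) \otimes \beta_N(w)$ using separate continuity of the tensor product map, and finite sums are handled by linearity.

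The delicate step is the verification of part (i), where the two systems of seminorms must be matched precisely; the key identification is that the projective tensor product of pullback seminorms is bounded above by (and for topological purposes coincides with) the pullback of the projective tensor product seminorm. Once this is settled, (ii) is formal and (iii) becomes the density argument sketched above, which is routine once one keeps careful track of the projective limit topology on $\varprojlim_n V_n \widehat{\otimes}_K W_n$.
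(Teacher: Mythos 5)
The argument for part (i) has a genuine gap, and in fact the sketched justification runs in the wrong direction. The inequality you need is that $(p_N\circ\alpha_N)\otimes(q_N\circ\beta_N)$ is \emph{dominated} by the pullback $(p_N\otimes q_N)\circ(\alpha_N\otimes\beta_N)$ (up to a constant), so that the initial topology is at least as fine as the projective tensor product topology. But the observation you offer---that every representation $x=\sum_i v_i\otimes w_i$ induces a representation of $(\alpha_N\otimes\beta_N)(x)$ with matching weights---only shows the set of competitors for the infimum defining $(p_N\otimes q_N)((\alpha_N\otimes\beta_N)(x))$ is \emph{larger} than the corresponding set for $((p_N\circ\alpha_N)\otimes(q_N\circ\beta_N))(x)$. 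This proves the trivial inequality $(p_N\otimes q_N)\circ(\alpha_N\otimes\beta_N)\leq (p_N\circ\alpha_N)\otimes(q_N\circ\beta_N)$, which is just continuity of $\alpha_N\otimes\beta_N$ again. The entire content of part (i) is the reverse estimate: given a short representation of $(\alpha_N\otimes\beta_N)(x)$ in $V_N\otimes W_N$, one has to \emph{lift} it to a short representation of $x$ in $V\otimes W$. This requires (a) reducing to the case where $\alpha_N$ and $\beta_N$ are surjective (e.g.\ by replacing $V_N,W_N$ by the images with subspace topology, using \cite{NFA} Cor.\ 17.5.ii), (b) lifting the individual tensor factors to produce an element $x'$ with the same image as $x$ and the right bound, and (c) controlling the discrepancy $x-x'\in\ker(\alpha_N\otimes\beta_N)=\ker(\alpha_N)\otimes_K W+V\otimes_K\ker(\beta_N)$, where the relevant seminorm vanishes because one of the two factors in each elementary tensor is killed by $\alpha_N$ or $\beta_N$. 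None of this appears in your sketch, yet it is exactly where the paper's proof does its work.

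There is also a smaller gap in part (ii): knowing that $V\otimes_K W$ carries the initial topology for the maps $\alpha_n\otimes\beta_n$ gives that $V\otimes_K W\to\prod_n V_n\otimes_K W_n$ (equivalently, into the projective limit) is initial, but a topological embedding additionally requires this map to be \emph{injective}, which is not automatic and which the paper verifies separately (by expanding an element over a basis and choosing $n$ large enough). Your argument for part (iii) is essentially the paper's and is fine once (ii) is fully established, although the approximation of elementary tensors uses \emph{joint} continuity of the canonical bilinear map $V_N\times W_N\to V_N\otimes_K W_N$ rather than mere separate continuity, and the Hausdorff hypothesis should be invoked explicitly to guarantee that each $V_n\otimes_K W_n$ embeds in its completion.
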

\begin{proof}
i. Because of \cite{NFA} Cor.\ 17.5.ii we may assume, by replacing $V_n$, resp.\ $W_n$, by $\alpha_n(V)$, resp.\ $\beta_n(W)$, equipped with the subspace topology, that $V_n = \alpha_n(V)$ and $W_n = \beta_n(W)$ for any $n \geq 1$.

Let $\{L_{n,i}\}_{i \in I}$ and $\{M_{n,j}\}_{j \in J}$ be the families of all open lattices in $V_n$ and $W_n$, respectively (cf.\ \cite{NFA} \S4). Then $\{\alpha_n^{-1}(L_{n,i})\}_{n,i}$ and $\{\beta_n^{-1}(M_{n,j})\}_{n,j}$ are defining families of open lattices in $V$ and $W$, respectively, and $\{\alpha_n^{-1}(L_{n,i}) \otimes_o \beta_n^{-1}(M_{n,j})\}_{n,i,j}$ is a defining family of open lattices in $V \otimes_K W$. We therefore have to show that $\alpha_n^{-1}(L_{n,i}) \otimes_o \beta_n^{-1}(M_{n,j})$ is open for the initial topology of the assertion. Since, by construction, $(\alpha_n \otimes \beta_n)^{-1}(L_{n,i} \otimes_o M_{n,j})$ is open for this initial topology it suffices to prove that
\begin{equation*}
  (\alpha_n \otimes \beta_n)^{-1}(L_{n,i} \otimes_o M_{n,j}) \subseteq \alpha_n^{-1}(L_{n,i}) \otimes_o \beta_n^{-1}(M_{n,j})
\end{equation*}
(the opposite inclusion being trivial we then, in fact, have equality). Let $x \in V \times_K W$ such that
\begin{equation*}
  (\alpha_n \otimes \beta_n)(x) = \sum_{\rho = 1}^r v_\rho \otimes w_\rho \qquad\text{with $v_\rho \in L_{n,i}$ and $w_\rho \in M_{n,j}$}.
\end{equation*}
By our additional assumption we find $v'_\rho \in V$ and $w_\rho' \in W$ such that $\alpha_n(v'_\rho) = v_\rho$ and $\beta_n(w_\rho') = w_\rho$. Hence $x' := \sum_\rho v_\rho' \otimes w_\rho' \in \alpha_n^{-1}(L_{n,i}) \otimes_o \beta_n^{-1}(M_{n,j})$ and
\begin{equation*}
  x - x' \in \ker(\alpha_n \otimes \beta_n) = \ker(\alpha_n) \otimes_K V + W \otimes_K \ker(\beta_n) \ .
\end{equation*}
But the right hand side (and therefore $x$) is contained in $\alpha_n^{-1}(L_{n,i}) \otimes_o \beta_n^{-1}(M_{n,j})$. This follows from the general fact that for any $K$-subspace $V_0 \subseteq V$ and any lattice $L \subseteq W$ we have
\begin{equation*}
  V_0 \otimes_K W = V_0 \otimes_o W = V_0 \otimes_o L \ .
\end{equation*}

ii. It remains to check that the map in question is injective. Let $x \in V \otimes_K W$ be a nonzero element. We fix a $K$-basis $(e_j)_{j \in J}$ of $W$ and write $x = \sum_{j \in J_0} v_j \otimes e_j$ with an appropriate finite subset $J_0 \subseteq J$ and nonzero elements $v_j \in V$ for $j \in J_0$. We may choose $n$ large enough so that $\alpha_n(v_j) \neq 0$ for any $j \in J_0$. Then $y := \alpha_n \otimes \id_W (x) \neq 0$. We now repeat the argument with $y$ and a $K$-basis of $V_n$ in order to find an $m \geq n$ such that $\id_{V_n} \otimes \beta_m (y) \neq 0$. It follows that $\alpha_n \otimes \beta_m (x) \neq 0$ and hence that $\alpha_m \otimes \beta_m (x) \neq 0$.

iii. With $V_n$ and $W_n$ also $V$ and $W$ as well as $V_n \otimes_K W_n$ (cf.\ \cite{NFA} Cor.\ 17.5.i) and $\varprojlim_n V_n \otimes_K W_n$ are Hausdorff. In particular, $V_n \otimes_K W_n$ is a subspace of $V_n \widehat{\otimes}_K W_n$ and, consequently, $\varprojlim_n V_n \otimes_K W_n$ is a subspace of $\varprojlim_n V_n \widehat{\otimes}_K W_n$. Hence, by ii., the composite map
\begin{equation*}
  V \otimes_K W \longrightarrow \varprojlim_n V_n \otimes_K W_n \longrightarrow \varprojlim_n V_n \widehat{\otimes}_K W_n
\end{equation*}
is a topological embedding. Since the right hand space is complete it suffices to check that $V \otimes_K W$ maps to a dense subspace. But our assumption on $\alpha_n$ and $\beta_n$ implies that each map
\begin{equation*}
  V \otimes_K W \xrightarrow{\alpha_n \otimes \beta_n} V_n \otimes_K W_n \xrightarrow{\;\subseteq\;} V_n \widehat{\otimes}_K W_n
\end{equation*}
has dense image.
\end{proof}

As a special case of this lemma (together with the Mittag-Leffler theorem \cite{B-TG} II.3.5 Thm.\ 1) we obtain that the functor $\Cp\, \widehat{\otimes}_K .$ commutes with projective limits of sequences of $K$-Fr\'echet spaces such that the transition maps have dense image. Note (\cite{PGS} \S10.6) that the scalar extension $\Cp\, \widehat{\otimes}_K\, V$ of any locally convex $K$-vector space $V$ is a locally convex $\Cp$-vector space.

\begin{proposition}\label{compactoid}
For any $n \geq 1$ we have:
\begin{itemize}
  \item[i.] The Fr\'echet space $\mathcal{O}_K(\mathfrak{X} \setminus \mathfrak{X}_n)$ is nuclear; in particular, it is Montel, hence reflexive, and in $\mathcal{O}_K(\mathfrak{X} \setminus \mathfrak{X}_n)$ the class of compactoid subsets coincides with the class of bounded subsets;
  \item[ii.] $\mathcal{O}_{\Cp}(\mathfrak{X} \setminus \mathfrak{X}_n) =  \Cp\, \widehat{\otimes}_K\, \mathcal{O}_K(\mathfrak{X} \setminus \mathfrak{X}_n)$, and the inclusion $\mathcal{O}_K(\mathfrak{X} \setminus \mathfrak{X}_n) \subseteq \mathcal{O}_{\Cp}(\mathfrak{X} \setminus \mathfrak{X}_n)$ is topological.
\end{itemize}
\end{proposition}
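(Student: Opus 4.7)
Both parts rest on the quasi-Stein presentation
\[ \mathcal{O}_K(\mathfrak{X}\setminus\mathfrak{X}_n) \;=\; \varprojlim_{s\le s'}\, \mathcal{O}_K(\mathfrak{X}(s,s')) \]
from \eqref{f:projlim} and Prop.\ \ref{quasi-Stein}, together with its analogue over $\Cp$.

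For (i), my plan is to show that, in a cofinal defining system, each restriction map $\mathcal{O}_K(\mathfrak{X}(s,s')) \to \mathcal{O}_K(\mathfrak{X}(r,r'))$ with $s < r$ and $r' < s'$ is compactoid. A Fr\'echet space built from a projective system of Banach spaces with compactoid connecting maps is nuclear, and nuclearity automatically delivers Montel, reflexivity, and the coincidence of bounded with compactoid subsets. To verify this compactoidness I would first pass to $\Cp$: by the opening Remark on supremum norms, the inclusion $\mathcal{O}_K(\mathfrak{X}(r,r')) \hookrightarrow \mathcal{O}_{\Cp}(\mathfrak{X}(r,r'))$ is isometric, so Lemma \ref{descent-compactoid} reduces the claim to the compactoidness of $\mathcal{O}_{\Cp}(\mathfrak{X}(s,s')) \to \mathcal{O}_{\Cp}(\mathfrak{X}(r,r'))$. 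By Prop.\ \ref{LT-iso} the latter map identifies with the restriction of analytic functions from a one-dimensional affinoid annulus over $\Cp$ to a strictly contained subannulus; on the Laurent-series presentation the unit ball is sent to a set whose monomial coefficients decay geometrically, which by \cite{PGS} Thm.\ 3.9.6 is precisely the compactoid property.

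For (ii), density of the transition maps in the Fr\'echet system (Prop.\ \ref{quasi-Stein}) lets me apply Lemma \ref{tensor-projlim}(iii):
\[ \Cp\,\widehat{\otimes}_K\,\mathcal{O}_K(\mathfrak{X}\setminus\mathfrak{X}_n) \;=\; \varprojlim_{s\le s'}\,\Cp\,\widehat{\otimes}_K\,\mathcal{O}_K(\mathfrak{X}(s,s')). \]
Each right-hand factor coincides with $\mathcal{O}_{\Cp}(\mathfrak{X}(s,s'))$ by standard base change for affinoid algebras, and the same \eqref{f:projlim} over $\Cp$ identifies the projective limit with $\mathcal{O}_{\Cp}(\mathfrak{X}\setminus\mathfrak{X}_n)$. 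For the topological statement, Lemma \ref{tensor-projlim}(ii) together with the isometric affinoid-level inclusions $\mathcal{O}_K(\mathfrak{X}(s,s')) \hookrightarrow \mathcal{O}_{\Cp}(\mathfrak{X}(s,s'))$ combine to show that $\mathcal{O}_K(\mathfrak{X}\setminus\mathfrak{X}_n) \hookrightarrow \mathcal{O}_{\Cp}(\mathfrak{X}\setminus\mathfrak{X}_n)$ is a topological embedding.

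The main obstacle I foresee is the annular compactoidness step in (i). Because $\Cp$ is not spherically complete, compact and compactoid subsets cannot be conflated, so the argument must be carried out directly at the level of $t$-orthogonal bases: fix the monomial basis of the annulus, exhibit the explicit coefficient-wise contraction induced by shrinking the radii on both the inner and outer boundary, and conclude via the characterization of compactoid subsets by basis decay. Everything else in the proof is formal bookkeeping through Lemmas \ref{descent-compactoid} and \ref{tensor-projlim} and the identification furnished by Prop.\ \ref{LT-iso}.
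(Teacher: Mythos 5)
Your proposal matches the paper's argument in both structure and key reductions: Lemma \ref{tensor-projlim} and affinoid base change for part (ii), and for part (i) the reduction — via Lemma \ref{descent-compactoid} and Prop.\ \ref{LT-iso} — to compactoidness of restriction between strictly nested affinoid annuli over $\Cp$, followed by the standard passage from a projective system with compactoid transition maps to nuclearity. The only cosmetic difference is in the annulus step: the paper uses the Mittag-Leffler decomposition to split the annulus into two disk restrictions and cites \cite{PGS} Thm.\ 11.4.2, whereas you argue directly on the Laurent-coefficient orthogonal basis and invoke \cite{PGS} Thm.\ 3.9.6 — both are equally valid.
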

\begin{proof}
The Fr\'echet space $\mathcal{O}_K(\mathfrak{X} \setminus \mathfrak{X}_n)$ is the projective limit of the affinoid $K$-Banach spaces $\mathcal{O}_K(\mathfrak{X}(s,s'))$ with the radii $s \leq s'$ as in Prop.\ \ref{quasi-Stein}. In the proof of Prop.\ \ref{quasi-Stein} we have seen that the transition maps $\mathcal{O}_K(\mathfrak{X}(s,s')) \longrightarrow \mathcal{O}_K(\mathfrak{X}(r,r'))$, for $s \leq r \leq r' \leq s'$, in this projective system have dense image. For a $K$-affinoid variety $\mathfrak{Y}$ we have $\mathcal{O}_{\Cp}(\mathfrak{Y}) = \Cp\, \widehat{\otimes}_K\, \mathcal{O}_K(\mathfrak{Y})$ by construction of the base field extension (cf.\ \cite{BGR} 9.3.6). Hence we may apply Lemma \ref{tensor-projlim} and obtain the equality in part ii. of the assertion. The second half of ii. then follows by direction inspection or by using \cite{NFA} Cor.\ 17.5.iii.

We now assume that the radii satisfy $s < r \leq r' < s'$ and show that then the restriction map $\mathcal{O}_K(\mathfrak{X}(s,s')) \longrightarrow \mathcal{O}_K(\mathfrak{X}(r,r'))$ is compactoid. Recall that this means (cf.\ \cite{PGS} Thm.\ 8.1.3(vii)) that the unit ball in $\mathcal{O}_K(\mathfrak{X}(s,s'))$ for the supremum norm is mapped to a compactoid subset in $\mathcal{O}_K(\mathfrak{X}(r,r'))$. Because of Lemma \ref{descent-compactoid} we may assume that $K = \Cp$. But then, according to Prop.\ \ref{LT-iso}, $\mathfrak{X}(s,s')$ is isomorphic to a one dimensional affinoid annulus. This reduces us to the following claim. Let $a < b \leq b' < a'$ be any radii in $p^{\QQ}$ and let $\mathfrak{B}(a,a') := \{z \in \Cp : a \leq |z| \leq a'\}$; then the restriction map $\mathcal{O}_K(\mathfrak{B}(a,a')) \longrightarrow \mathcal{O}_K(\mathfrak{B}(b,b'))$ is compactoid. Let $a = |u|$ for some $u \in \Cp$. The Mittag-Leffler decomposition (which is directly visible in terms of Laurent series) gives the topological decomposition
\begin{align*}
  \mathcal{O}_K(\mathfrak{B}(a^{-1})) \oplus \mathcal{O}_K(\mathfrak{B}(a')) & \xrightarrow{\; \cong \;} \mathcal{O}_K(\mathfrak{B}(a,a')) \\
  (F_1,F_2) & \longmapsto uZ^{-1}F_1(Z^{-1}) + F_2(Z) \ .
\end{align*}
It further reduces us to showing that the restriction map $\mathcal{O}_K(\mathfrak{B}(a')) \longrightarrow \mathcal{O}_K(\mathfrak{B}(b'))$ is compactoid. But this is a well known fact (cf.\ \cite{PGS} Thm.\ 11.4.2).

We have established that $\mathcal{O}_K(\mathfrak{X} \setminus \mathfrak{X}_n) = \varprojlim_n B_n$ is the projective limit of a sequence of $K$-Banach spaces $B_n$ with compactoid transition maps. In order to show that $\mathcal{O}_K(\mathfrak{X} \setminus \mathfrak{X}_n)$ is nuclear we have to check that any continuous linear map $\alpha : \mathcal{O}_K(\mathfrak{X} \setminus \mathfrak{X}_n) \longrightarrow V$ into a normed $K$-vector space $B$ is compactoid (cf.\ \cite{PGS} Def.\ 8.4.1(ii)). But there is an $n \geq 1$ such that $\alpha$ factorizes through $B_n$:
\begin{equation*}
   \xymatrix{
 \mathcal{O}_K(\mathfrak{X} \setminus \mathfrak{X}_n)  \ar[rr]^-{\alpha} \ar[d] \ar[dr]
                &  &    V     \\
          B_{n+1} \ar[r]      & B_n \ar[ur]                }
\end{equation*}
Since the transition map $B_{n+1} \rightarrow B_n$ is compactoid it follows that $\alpha$ is compactoid as well. The remaining assertions in i. now follow by \cite{PGS} Cor.\ 8.5.3 and Thm.\ 8.4.5.
\end{proof}

Next we investigate the inductive system of Fr\'echet spaces
\begin{equation*}
  \mathcal{O}_K(\mathfrak{X} \setminus \mathfrak{X}_1) \longrightarrow \ldots \rightarrow \mathcal{O}_K(\mathfrak{X} \setminus \mathfrak{X}_n) \rightarrow \mathcal{O}_K(\mathfrak{X} \setminus \mathfrak{X}_{n+1}) \rightarrow \ldots
\end{equation*}
with locally convex inductive limit $\mathscr{R}_K(\mathfrak{X})$. Note that all transition maps in this system are injective.

\begin{proposition}\phantomsection\label{regular}
\begin{itemize}
  \item[i.] $\mathscr{R}_K(\mathfrak{X}) = \varinjlim_n \mathcal{O}_K(\mathfrak{X} \setminus \mathfrak{X}_n)$ is a regular inductive limit, i.e., for each bounded subset $B \subseteq \mathscr{R}_K(\mathfrak{X})$ there exists an $n \geq 1$ such that $B$ is contained in $\mathcal{O}_K(\mathfrak{X} \setminus \mathfrak{X}_n)$ and is bounded as a subset of the Fr\'echet space $\mathcal{O}_K(\mathfrak{X} \setminus \mathfrak{X}_n)$.
  \item[ii.] $\mathscr{R}_{\Cp}(\mathfrak{X})$ is complete.
  \item[iii.] $\mathscr{R}_K(\mathfrak{X})$ is Hausdorff, nuclear, and reflexive.
\end{itemize}
\end{proposition}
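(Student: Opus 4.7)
The three parts are closely linked, with (i) being the main step from which (ii) and (iii) follow. The plan relies on Prop.\ \ref{compactoid}: each $F_n := \mathcal{O}_K(\mathfrak{X} \setminus \mathfrak{X}_n)$ is a nuclear Fr\'echet space, in particular Montel, and its topology is defined by the supremum seminorms $\|\cdot\|_{\mathfrak{X}(s,s')}$ over the affinoids $\mathfrak{X}(s,s')$ from Prop.\ \ref{quasi-Stein}.

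For (i), I would carry out a Dieudonn\'e--Schwartz-type argument. Suppose towards contradiction that $B \subseteq \mathscr{R}_K(\mathfrak{X})$ is bounded but not contained in any single $F_n$; then, after passing to a subsequence, I can find $f_n \in B$ with $f_n \in F_{n+1} \setminus F_n$. The crucial quantitative point is that such an $f_n$ fails to extend across the collar $\mathfrak{X}_{n+1} \setminus \mathfrak{X}_n$, so its supremum norms $\|f_n\|_{\mathfrak{X}(s,s')}$ blow up as $s$ approaches the inner boundary of $\mathfrak{X}_n$; this is made transparent by Prop.\ \ref{LT-iso}, which reduces the question to Laurent series on annuli over $\Cp$, combined with the descent statement of Prop.\ \ref{compactoid}(ii). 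I would then inductively build absolutely convex open lattices $V_n \subseteq F_n$ whose ``diagonal union'' $U := \bigcup_n V_n$ is a $0$-neighborhood for the inductive-limit topology (it meets each $F_m$ in an open lattice, using the density of the transition maps coming from Prop.\ \ref{quasi-Stein}), yet chosen so that $f_n \notin n \cdot U$ for every $n$. This contradicts the assumption that $B$ is bounded. The boundedness of $B$ within the eventual $F_n$ then follows by restricting.

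For (ii), the completeness of $\mathscr{R}_{\Cp}(\mathfrak{X})$ follows from the LT-isomorphism: by Lemma \ref{twist-Xn} together with Prop.\ \ref{LT-iso}, the map $\kappa$ identifies $\mathcal{O}_{\Cp}(\mathfrak{X} \setminus \mathfrak{X}_n)$ topologically with $\mathcal{O}_{\Cp}(\mathfrak{B} \setminus \mathfrak{B}_n)$ compatibly in $n$, yielding a topological isomorphism $\mathscr{R}_{\Cp}(\mathfrak{X}) \cong \mathscr{R}_{\Cp}(\mathfrak{B})$. Completeness of the classical Robba ring, where elements admit explicit Laurent-series descriptions so that Cauchy sequences converge termwise, then transfers along $\kappa$.

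For (iii), all three properties fall out of (i). Hausdorffness of $\mathscr{R}_K(\mathfrak{X})$ follows from regularity: for any nonzero $f$, a continuous linear functional on the Hausdorff Fr\'echet space $F_n$ containing $f$ extends by Hahn--Banach to a separating continuous functional on the inductive limit. Nuclearity passes to regular countable inductive limits of nuclear Fr\'echet spaces (cf.\ \cite{PGS}). Reflexivity follows because each $F_n$ is Montel, hence reflexive, and a regular LF-space assembled from reflexive Fr\'echet pieces is reflexive. The main obstacle will be the construction of the separating $0$-neighborhood in the proof of (i): one has to thread a single scaling choice through all $n$ simultaneously, balancing the unboundedness of norms on $F_{n+1} \setminus F_n$ against the need for $U$ to remain genuinely open at every finite level, and the nuclearity (compactoidness of unit balls) of each $F_n$ is precisely what makes this bookkeeping feasible.
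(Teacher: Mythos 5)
Your approach for part (i) gestures at the right idea but has a genuine gap at its core. You propose a Dieudonn\'e--Schwartz-type argument: extract $f_n \in B \cap (F_{n+1} \setminus F_n)$, then build a zero-neighborhood $U = \bigcup_n V_n$ that $B$ cannot be absorbed into. But the Dieudonn\'e--Schwartz construction of such a $U$---choosing $V_n$ inductively so that $U \cap F_m$ is open in $F_m$ for every $m$---is only available in a \emph{strict} $LF$-space, where the transition maps $F_m \hookrightarrow F_{m+1}$ are topological embeddings onto closed subspaces. Here they are not: $F_m = \mathcal{O}_K(\mathfrak{X} \setminus \mathfrak{X}_m)$ is merely dense in $F_{m+1}$ with a strictly finer topology than the one it inherits, so a trace $V_{m+1} \cap F_m$ need not be open, and the inductive bootstrapping collapses. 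You acknowledge that compactoidness must enter somewhere, but you invoke it only as making "the bookkeeping feasible" without saying how the construction actually closes. The paper takes a cleaner route: reduce to $K = \Cp$ via the commutative diagram and Remark \ref{Galois-fixed}, then pass to $\mathfrak{B}$ via Lemma \ref{twist-Xn}, split each $\mathcal{O}_{\Cp}(\mathfrak{B}\setminus\mathfrak{B}_n)$ by Mittag--Leffler into $\mathcal{O}_{\Cp}(\mathfrak{B}^-(r_n)) \oplus \mathcal{O}_{\Cp}(\mathfrak{B})$, and observe that the transition maps $\mathcal{O}_{\Cp}(\mathfrak{B}^-(r_n)) \to \mathcal{O}_{\Cp}(\mathfrak{B}^-(r_{n+1}))$ factor through compactoid restriction maps between affinoid disks. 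A countable inductive limit with compactoid transition maps is automatically regular and complete (\cite{PGS} Thm.\ 11.3.5), which simultaneously settles (i) and (ii) with no diagonal construction at all. You miss the Mittag--Leffler splitting entirely, and it is the structural step that makes the compactoid criterion applicable.

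Part (ii) as you wrote it is also unpersuasive. "Cauchy sequences converge termwise" is not an argument for completeness of an $LF$-space: a Cauchy net could drift through the levels $F_n$ with no uniform control on the inner radius, and termwise convergence of Laurent coefficients does not produce a limit that converges on a common annulus. Completeness of $\mathscr{R}_{\Cp}(\mathfrak{B})$ is itself a consequence of the compactoid inductive limit theorem, i.e.\ of exactly the machinery you were trying to avoid; you should not treat it as a known fact that transfers along $\kappa$.

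Part (iii) is essentially right in outline---Hausdorff, nuclear, and reflexive do follow from regularity and the nuclearity of each $F_n$ (the paper cites \cite{PGS} Thm.\ 11.2.4(ii), Thm.\ 8.5.7(vi), Cor.\ 11.2.15)---but since (iii) rests on (i), the gap above propagates.
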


\begin{proof}
i. and ii. We first reduce the assertion to the case $K = \Cp$. For this we consider the commutative diagram
\begin{equation*}
  \xymatrix{
    \mathcal{O}_K(\mathfrak{X} \setminus \mathfrak{X}_1) \ar@{^{(}->}[d] \ar[r] & \ldots \ar[r] & \mathcal{O}_K(\mathfrak{X} \setminus \mathfrak{X}_n) \ar@{^{(}->}[d] \ar[r] & \ldots \ar[r] & \mathscr{R}_K(\mathfrak{X}) \ar[d] \\
    \mathcal{O}_{\Cp}(\mathfrak{X} \setminus \mathfrak{X}_1) \ar[r] & \ldots \ar[r] & \mathcal{O}_{\Cp}(\mathfrak{X} \setminus \mathfrak{X}_n) \ar[r] &  \ldots \ar[r] & \mathscr{R}_{\Cp}(\mathfrak{X})   }
\end{equation*}
in which all maps are injective and continuous and the two left vertical maps are topological embeddings. Let us suppose that the lower inductive limit is regular, and let $B \subseteq \mathscr{R}_K(\mathfrak{X})$ be a bounded subset. Then $B$ is bounded in $\mathscr{R}_{\Cp}(\mathfrak{X})$ and hence, by assumption, is bounded in some $\mathcal{O}_{\Cp}(\mathfrak{X} \setminus \mathfrak{X}_n)$. Using Remark \ref{Galois-fixed}  we see that $B \subseteq \mathscr{R}_K(\mathfrak{X}) = \mathscr{R}_{\Cp}(\mathfrak{X})^{G_K}$ and hence $B \subseteq \mathcal{O}_{\Cp}(\mathfrak{X} \setminus \mathfrak{X}_n)^{G_K} = \mathcal{O}_K(\mathfrak{X} \setminus \mathfrak{X}_n)$.

In the following we therefore assume that $K = \Cp$. By Lemma \ref{twist-Xn} we now may replace $\mathfrak{X}$ and $\mathfrak{X}_n$ by $\mathfrak{B}$ and $\mathfrak{B}_n$, respectively. This time the Mittag-Leffler decomposition is of the form
\begin{equation*}
  \mathcal{O}_{\Cp}(\mathfrak{B}^-(r_n)) \oplus \mathcal{O}_{\Cp}(\mathfrak{B}) \xrightarrow{\cong} \mathcal{O}_{\Cp}(\mathfrak{B} \setminus \mathfrak{B}_n)
\end{equation*}
with $\mathfrak{B}^-(r_n)$ denoting the open disk of radius $r_n := p^{1/e(q-1)q^{en-1}}$ around $0$. Hence
\begin{equation*}
  \mathscr{R}_{\Cp}(\mathfrak{X}) \cong \mathscr{R}_{\Cp}(\mathfrak{B}) = \big( \varinjlim_n \mathcal{O}_{\Cp}(\mathfrak{B}^-(r_n)) \big) \oplus \mathcal{O}_{\Cp}(\mathfrak{B}) \ .
\end{equation*}
This reduces us to showing that $\varinjlim_n \mathcal{O}_{\Cp}(\mathfrak{B}^-(r_n))$ is a regular inductive limit which, moreover, is complete. But the restriction maps $\mathcal{O}_{\Cp}(\mathfrak{B}^-(r_n)) \longrightarrow \mathcal{O}_{\Cp}(\mathfrak{B}^-(r_{n+1}))$ go through two affinoid disks for which the corresponding restriction maps are compactoid as recalled in the proof of Prop.\ \ref{compactoid}. Hence $\varinjlim_n \mathcal{O}_{\Cp}(\mathfrak{B}^-(r_n))$ is a compactoid inductive limit and, in particular, is regular and complete by \cite{PGS} Thm.\ 11.3.5.

iii. This follows from i. and Prop.\ \ref{compactoid}.i  and \cite{PGS} Thm.\ 11.2.4(ii), Thm.\ 8.5.7(vi), and Cor.\ 11.2.15.
\end{proof}

The reflexivity of $\mathscr{R}_K(\mathfrak{X})$ already implies that $\mathscr{R}_K(\mathfrak{X})$ is quasicomplete. To see that it, in fact, is always  complete, we need an additional argument. Since $\mathscr{R}_K(\mathfrak{X})$ is an inductive limit it is not surprising that the inductive tensor product topology will play a role. For two locally convex $K$-vector spaces $V$ and $W$ let $V \otimes_{K,\iota} W$ denote their tensor product equipped with the inductive tensor product topology (cf.\ \cite{NFA} \S17) and let $V \widehat{\otimes}_{K,\iota} W$ denote its completion. Note that for Fr\'echet spaces $V$ and $W$ we have $V \otimes_K W = V \otimes_{K,\iota} W$ (cf. \cite{NFA} Prop.\ 17.6).

\begin{proposition}\phantomsection\label{complete}
\begin{itemize}
  \item[i.]  $\mathscr{R}_{\Cp}(\mathfrak{X}) =  \Cp\, \widehat{\otimes}_{K,\iota}\, \mathscr{R}_K(\mathfrak{X})$.
  \item[ii.] $\mathscr{R}_K(\mathfrak{X}) \subseteq \mathscr{R}_{\Cp}(\mathfrak{X})$ is a topological inclusion.
  \item[iii.] $\mathscr{R}_K(\mathfrak{X}) = \mathscr{R}_{\Cp}(\mathfrak{X})^{G_K}$ is closed in $\mathscr{R}_{\Cp}(\mathfrak{X})$.
  \item[iv.]  $\mathscr{R}_K(\mathfrak{X})$ is complete.
\end{itemize}
\end{proposition}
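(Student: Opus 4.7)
The plan is to prove parts (i)--(iv) in order, with (i) providing the crucial tensor-product identification that then drives (ii)--(iv). Throughout I would exploit the regular inductive limit description $\mathscr{R}_K(\mathfrak{X}) = \varinjlim_n \mathcal{O}_K(\mathfrak{X}\setminus\mathfrak{X}_n)$ from Prop.\ \ref{regular}.i together with the completed tensor-product identification $\mathcal{O}_{\Cp}(\mathfrak{X}\setminus\mathfrak{X}_n) = \Cp\,\widehat{\otimes}_K\,\mathcal{O}_K(\mathfrak{X}\setminus\mathfrak{X}_n)$ from Prop.\ \ref{compactoid}.ii.

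For (i), I would use that the inductive tensor product $\otimes_{K,\iota}$ commutes with locally convex inductive limits in each variable, and that on Fr\'echet spaces the projective and inductive tensor product topologies agree (\cite{NFA} Prop.\ 17.6). These facts give a topological identification
\[
\Cp \otimes_{K,\iota} \mathscr{R}_K(\mathfrak{X}) \;=\; \varinjlim_n \bigl(\Cp \otimes_K \mathcal{O}_K(\mathfrak{X}\setminus\mathfrak{X}_n)\bigr),
\]
which sits inside $\varinjlim_n \mathcal{O}_{\Cp}(\mathfrak{X}\setminus\mathfrak{X}_n) = \mathscr{R}_{\Cp}(\mathfrak{X})$ as a dense subspace (density at each finite level comes from Prop.\ \ref{compactoid}.ii and transports to the limit by continuity of the inclusions). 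Since $\mathscr{R}_{\Cp}(\mathfrak{X})$ is complete by Prop.\ \ref{regular}.ii, it coincides with the completion $\Cp\,\widehat{\otimes}_{K,\iota}\,\mathscr{R}_K(\mathfrak{X})$. The main obstacle is to verify carefully that $\otimes_{K,\iota}$ commutes with this inductive limit at the level of topologies, which I would handle by a defining-lattice argument analogous to the proof of Lemma \ref{tensor-projlim}.i but with the roles of projective and inductive topologies reversed. Part (ii) then follows from (i) combined with the general fact that for any locally convex $K$-vector space $V$ the canonical map $V \hookrightarrow \Cp\,\widehat{\otimes}_{K,\iota}\,V$ is a topological embedding (exploiting the existence of an orthogonal complement to $K$ inside $\Cp$, in the spirit of \cite{NFA} Prop.\ 17.4).

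For (iii), applying Remark \ref{Galois-fixed} at each finite level (each $\mathfrak{X}\setminus\mathfrak{X}_n$ is admissible open in the quasi-Stein space $\mathfrak{X}$ and hence quasi-separated) yields $\mathcal{O}_K(\mathfrak{X}\setminus\mathfrak{X}_n) = \mathcal{O}_{\Cp}(\mathfrak{X}\setminus\mathfrak{X}_n)^{G_K}$. Since the $\mathfrak{X}_n$ are defined over $L \subseteq K$, the $G_K$-action preserves each level, and taking the union gives $\mathscr{R}_K(\mathfrak{X}) = \mathscr{R}_{\Cp}(\mathfrak{X})^{G_K}$. Closedness is automatic, since $\mathscr{R}_{\Cp}(\mathfrak{X})^{G_K} = \bigcap_{\sigma \in G_K} \ker(\sigma - \id)$ is an intersection of closed subspaces. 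Finally, (iv) is immediate: by (ii), $\mathscr{R}_K(\mathfrak{X})$ carries the subspace topology from $\mathscr{R}_{\Cp}(\mathfrak{X})$; by (iii), it is closed there; and $\mathscr{R}_{\Cp}(\mathfrak{X})$ is complete by Prop.\ \ref{regular}.ii, so closed subspaces are complete.
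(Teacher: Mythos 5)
Your overall structure mirrors the paper's, and parts (ii)--(iv) are fine: (ii) from (i) plus \cite{NFA} Cor.\ 17.5.iii, (iii) exactly as you say, and (iv) by the closed-subspace-of-a-complete-space argument. The gap is in part (i), at the step where you conclude that density plus completeness yields the identification.

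The issue is this. You establish (correctly, by Emerton's commutation of $\otimes_{K,\iota}$ with inductive limits) a topological identification
\begin{equation*}
\Cp \otimes_{K,\iota} \mathscr{R}_K(\mathfrak{X}) \;=\; \varinjlim_n\bigl(\Cp \otimes_{K,\iota} \mathcal{O}_K(\mathfrak{X}\setminus\mathfrak{X}_n)\bigr),
\end{equation*}
and you note that this maps with dense image into $\mathscr{R}_{\Cp}(\mathfrak{X}) = \varinjlim_n \Cp\,\widehat{\otimes}_{K,\iota}\,\mathcal{O}_K(\mathfrak{X}\setminus\mathfrak{X}_n)$, which is Hausdorff and complete. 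You then write ``since $\mathscr{R}_{\Cp}(\mathfrak{X})$ is complete, it coincides with the completion.'' But a dense, continuously included subspace of a complete Hausdorff space has that space as its completion \emph{only if} the map is a topological embedding; if the topology on the subspace is strictly finer than the subspace topology, the completion can be strictly smaller. Here the source carries the inductive limit of the inductive tensor topologies, while the subspace topology from $\mathscr{R}_{\Cp}(\mathfrak{X})$ is a priori coarser, and an inductive limit of topological embeddings need not be a topological embedding into the inductive limit. Your proposed ``defining-lattice argument analogous to Lemma \ref{tensor-projlim}.i with roles reversed'' addresses a different commutation (of $\otimes_{K,\iota}$ with $\varinjlim$, which is already covered by Emerton), not this one.

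The paper closes this gap with a genuinely separate technical lemma: for any inductive system $(E_n)_n$ of locally convex spaces such that $\varinjlim_n \widehat{E_n}$ is Hausdorff and complete, one has $\widehat{\varinjlim_n E_n} = \varinjlim_n \widehat{E_n}$. Its proof does not compare topologies directly; instead it uses the universal property of completion to produce a continuous map $\beta : \widehat{\varinjlim_n E_n} \to \varinjlim_n \widehat{E_n}$ with dense image satisfying $\gamma\circ\beta = \id$ (where $\gamma$ is the canonical map in the other direction), deduces that $\beta$ has closed image and hence is bijective, and concludes both $\beta$ and $\gamma$ are topological isomorphisms. That one-sided-inverse argument is the missing ingredient your proof needs; without it, or without independently proving the topological embedding claim, the conclusion of (i) is not established.
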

\begin{proof}
i. So far we know from Prop.\ \ref{compactoid}.ii and Prop.\ \ref{regular} that
\begin{equation*}
  \mathscr{R}_{\Cp}(\mathfrak{X}) = \varinjlim_n\, \mathcal{O}_{\Cp}(\mathfrak{X} \setminus \mathfrak{X}_n)
   = \varinjlim_n\,  \Cp\, \widehat{\otimes}_K\, \mathcal{O}_K(\mathfrak{X} \setminus \mathfrak{X}_n)
   = \varinjlim_n\,  \Cp\, \widehat{\otimes}_{K,\iota}\, \mathcal{O}_K(\mathfrak{X} \setminus \mathfrak{X}_n)
\end{equation*}
is Hausdorff and complete. The inductive tensor product commutes with locally convex inductive limits (\cite{Eme} Lemma 1.1.30) so that we have
\begin{equation*}
  \varinjlim_n\,  \Cp \otimes_{K,\iota} \mathcal{O}_K(\mathfrak{X} \setminus \mathfrak{X}_n) = \Cp \otimes_{K,\iota} \mathscr{R}_K(\mathfrak{X})
\end{equation*}
and hence
\begin{equation*}
  \big( \varinjlim_n\,  \Cp \otimes_{K,\iota} \mathcal{O}_K(\mathfrak{X} \setminus \mathfrak{X}_n) \big)^{\widehat{}} = \Cp \widehat{\otimes}_{K,\iota} \mathscr{R}_K(\mathfrak{X}) \ .
\end{equation*}
It therefore remains to check that, for any inductive system $(E_n)_n$ of locally convex vector spaces such that the locally convex inductive limit $\varinjlim_n \widehat{E_n}$ of the completions $\widehat{E_n}$ is Hausdorff and complete, we have
\begin{equation*}
  \varinjlim_n \widehat{E_n} = \widehat{\varinjlim_n E_n} \ .
\end{equation*}
By the universal properties we have a commutative diagram of natural continuous maps
\begin{equation*}
  \xymatrix{
  \varinjlim_n \widehat{E_n} \ar[rr]^{\gamma}
                &  &   \widehat{\varinjlim_n E_n}    \\
                & \varinjlim_n E_n ,  \ar[ul]^{\alpha}  \ar[ur]            }
\end{equation*}
which all three have dense image. By our assumption on the upper left term the map $\alpha$ extends uniquely to a continuous map $\beta : \widehat{\varinjlim_n E_n} \longrightarrow \varinjlim_n \widehat{E_n}$. Of course, $\beta$ has dense image as well. On the other hand it necessarily satisfies $\gamma \circ \beta = \id$. Since $\varinjlim_n \widehat{E_n}$ is assumed to be Hausdorff this implies that $\beta$ has closed image. It follows that $\beta$ is bijective and therefore that $\beta$ and $\gamma$ are topological isomorphisms.

ii. This is a consequence of i. and \cite{NFA} Cor.\ 17.5.iii.

iii. The identity follows from Remark \ref{Galois-fixed}. The second part of the assertion then is immediate from the fact that $G_K$ acts on $\mathscr{R}_{\Cp}(\mathfrak{X})$ by continuous (semilinear) automorphisms.

iv. This follows from ii./iii. and Prop.\ \ref{regular}.iii.
\end{proof}

\begin{corollary}\label{scalar-ext-R}
$\mathscr{R}_E(\mathfrak{X}) =  E\, \widehat{\otimes}_{K,\iota}\, \mathscr{R}_K(\mathfrak{X})$ for any complete intermediate field $K \subseteq E \subseteq \Cp$.
\end{corollary}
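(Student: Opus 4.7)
My proof generalizes the proof of Prop.~\ref{complete}.i verbatim, replacing $\Cp$ by $E$ throughout. The two inputs used there that I need to generalize are: (a) the identity $\mathcal{O}_E(\mathfrak{X} \setminus \mathfrak{X}_n) = E\,\widehat{\otimes}_K\,\mathcal{O}_K(\mathfrak{X} \setminus \mathfrak{X}_n)$ for every $n \geq 1$ (the analogue of Prop.~\ref{compactoid}.ii), and (b) the fact that $\mathscr{R}_E(\mathfrak{X}) = \varinjlim_n \mathcal{O}_E(\mathfrak{X} \setminus \mathfrak{X}_n)$ is Hausdorff and complete (the analogue of Prop.~\ref{complete}.iv).

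For (a), the argument of Prop.~\ref{compactoid}.ii goes through unchanged: realize $\mathcal{O}_K(\mathfrak{X}\setminus\mathfrak{X}_n)$ as the projective limit of the affinoid Banach spaces $\mathcal{O}_K(\mathfrak{X}(s,s'))$ with dense-image transition maps (from the proof of Prop.~\ref{quasi-Stein}), apply Lemma~\ref{tensor-projlim}.iii with $E$ in place of $\Cp$, and use the affinoid base-change identity $\mathcal{O}_E(\mathfrak{X}(s,s')) = E\,\widehat{\otimes}_K\,\mathcal{O}_K(\mathfrak{X}(s,s'))$. For (b), the cleanest route is Galois descent from $\Cp$: by Remark~\ref{Galois-fixed} we have $\mathscr{R}_E(\mathfrak{X}) = \mathscr{R}_{\Cp}(\mathfrak{X})^{G_E}$, which is closed in the complete space $\mathscr{R}_{\Cp}(\mathfrak{X})$ (Prop.~\ref{complete}.iv) because $G_E$ acts by continuous semilinear automorphisms, and is therefore itself complete and Hausdorff.

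With (a) and (b) in hand, the chain of identifications proceeds exactly as in Prop.~\ref{complete}.i:
\begin{equation*}
\mathscr{R}_E(\mathfrak{X}) = \varinjlim_n \mathcal{O}_E(\mathfrak{X} \setminus \mathfrak{X}_n) = \varinjlim_n\, E\,\widehat{\otimes}_K\,\mathcal{O}_K(\mathfrak{X} \setminus \mathfrak{X}_n) = \varinjlim_n\, E\,\widehat{\otimes}_{K,\iota}\,\mathcal{O}_K(\mathfrak{X} \setminus \mathfrak{X}_n),
\end{equation*}
the last step using $\widehat{\otimes}_K = \widehat{\otimes}_{K,\iota}$ on Fr\'echet factors. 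Because the (uncompleted) inductive tensor product commutes with locally convex inductive limits (Emerton Lemma~1.1.30), $\varinjlim_n E \otimes_{K,\iota} \mathcal{O}_K(\mathfrak{X} \setminus \mathfrak{X}_n) = E \otimes_{K,\iota} \mathscr{R}_K(\mathfrak{X})$. The purely formal lemma established inside Prop.~\ref{complete}.i---that $\widehat{\varinjlim_n F_n} = \varinjlim_n \widehat{F_n}$ whenever the right-hand side is Hausdorff and complete---applied to $F_n := E \otimes_{K,\iota} \mathcal{O}_K(\mathfrak{X} \setminus \mathfrak{X}_n)$, with the needed completeness of $\varinjlim_n \widehat{F_n} = \mathscr{R}_E(\mathfrak{X})$ supplied by (b), then yields $E\,\widehat{\otimes}_{K,\iota}\,\mathscr{R}_K(\mathfrak{X}) = \mathscr{R}_E(\mathfrak{X})$, as desired.

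The only non-formal step is (b); everything else is a mechanical transcription of the proof of Prop.~\ref{complete}.i. I expect Galois descent from Prop.~\ref{complete}.iv to handle (b) cleanly, without needing to redo the Mittag-Leffler decomposition that was used in the proof of Prop.~\ref{regular} for the case $E = \Cp$.
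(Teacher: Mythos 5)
Your proposal is correct and follows the paper's own route exactly. The paper's proof is simply "Now that we know from Prop.~\ref{complete}.iv that $\mathscr{R}_E(\mathfrak{X})$ is complete we may repeat the argument in the proof of Prop.~\ref{complete}.i with $E$ instead of $\Cp$," and your identification of the two non-formal inputs (a) and (b)---with (a) obtained from Lemma~\ref{tensor-projlim} via the affinoid base-change identity, and (b) obtained by applying Prop.~\ref{complete}.iv with $E$ in place of $K$, whose underlying mechanism is the Galois-descent closedness in the complete space $\mathscr{R}_{\Cp}(\mathfrak{X})$---is precisely an unpacking of what "repeat the argument" means there; the one minor slip is that the completeness of $\mathscr{R}_{\Cp}(\mathfrak{X})$ itself is Prop.~\ref{regular}.ii rather than Prop.~\ref{complete}.iv, but this does not affect the argument.
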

\begin{proof}
Now that we know from Prop.\ \ref{complete}.iv that $\mathscr{R}_E(\mathfrak{X})$ is complete we may repeat the argument in the proof of Prop.\ \ref{complete}.i with $E$ instead of $\Cp$.
\end{proof}

We summarize the additional features of the LT-isomorphism, which we have established by now:

--- As explained in the proof of Lemma \ref{twist-Xn} the isomorphism $\kappa$ induces compatible topological identifications
\begin{equation*}
  \mathcal{O}_{\Cp}(\mathfrak{X} \setminus \mathfrak{X}_n) = \mathcal{O}_{\Cp}(\mathfrak{B} \setminus \mathfrak{B}_n) \ ,
\end{equation*}
which restrict to isometric identifications between the subrings of bounded functions. The twisted Galois action is well defined on the right hand side and corresponds to the standard Galois action on the left hand side.

--- By passing to the inductive limit we obtain the twisted Galois action by topological automorphisms on $\mathscr{R}_{\Cp}(\mathfrak{B})$ as well as the topological identification
\begin{equation*}
  \mathscr{R}_{\Cp}(\mathfrak{X}) = \mathscr{R}_{\Cp}(\mathfrak{B}) \ .
\end{equation*}

--- By restriction from $\mathscr{R}_{\Cp}(.)$ to $\mathscr{E}_{\Cp}^\dagger(.)$ we obtain the twisted Galois action by $\|\ \|_1$-isometries on $\mathscr{E}_{\Cp}^\dagger(\mathfrak{B})$ as well as the $\|\ \|_1$-isometric identification
\begin{equation*}
  \mathscr{E}_{\Cp}^\dagger(\mathfrak{X}) = \mathscr{E}_{\Cp}^\dagger(\mathfrak{B}) \ .
\end{equation*}
Both, by completion, extend to $\mathscr{E}_{\Cp}(.)$. Since we know that $\|\ \|_1$ is a multiplicative norm on the right hand side it must be a multiplicative norm on $\mathscr{E}_K(\mathfrak{X})$ as well.

\begin{proposition}\phantomsection\label{twisted-R-E}
$\mathscr{R}_{\Cp}(\mathfrak{X}) =  \Cp\, \widehat{\otimes}_{K,\iota}\, \mathscr{R}_K(\mathfrak{B})$ and $\mathscr{R}_K(\mathfrak{X}) = \mathscr{R}_{\Cp}(\mathfrak{B})^{G_K,*}$.
\end{proposition}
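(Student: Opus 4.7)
The plan is to deduce both identities by combining the topological identification $\mathscr{R}_{\Cp}(\mathfrak{X}) = \mathscr{R}_{\Cp}(\mathfrak{B})$ (which appears in the summary of LT-features immediately preceding the statement) with the analogs of Propositions \ref{complete}.i and \ref{complete}.iii for $\mathfrak{B}$.

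First I would establish the $\mathfrak{B}$-analog of Proposition \ref{complete}.i, namely
\begin{equation*}
\mathscr{R}_{\Cp}(\mathfrak{B}) = \Cp \, \widehat{\otimes}_{K,\iota} \, \mathscr{R}_K(\mathfrak{B}).
\end{equation*}
The proof is by rerunning the arguments of Propositions \ref{compactoid}, \ref{regular} and \ref{complete}.i word for word with $\mathfrak{B}_n$ in place of $\mathfrak{X}_n$. In fact those arguments already reduce the $\mathfrak{X}$-case to the $\mathfrak{B}$-case (via Lemma \ref{twist-Xn} and the Mittag-Leffler decomposition after base change to $\Cp$), so the ingredients are essentially handed to us: the affinoid subannuli in $\mathfrak{B}$ give compactoid restriction maps by \cite{PGS} Thm.\ 11.4.2, each $\mathcal{O}_K(\mathfrak{B} \setminus \mathfrak{B}_n)$ is a nuclear Fr\'echet space with dense transition maps, the inductive limit $\mathscr{R}_K(\mathfrak{B}) = \varinjlim_n \mathcal{O}_K(\mathfrak{B} \setminus \mathfrak{B}_n)$ is regular and complete, and the completed inductive tensor product commutes with locally convex inductive limits (\cite{Eme} Lemma 1.1.30). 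Combining this with the topological identification $\mathscr{R}_{\Cp}(\mathfrak{X}) = \mathscr{R}_{\Cp}(\mathfrak{B})$ from the summary yields the first identity.

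For the second identity, I would combine Proposition \ref{complete}.iii, which gives $\mathscr{R}_K(\mathfrak{X}) = \mathscr{R}_{\Cp}(\mathfrak{X})^{G_K}$ for the standard Galois action, with the compatibility of Galois actions recorded in the summary: under $\kappa$, the standard action on $\mathscr{R}_{\Cp}(\mathfrak{X})$ corresponds to the twisted action on $\mathscr{R}_{\Cp}(\mathfrak{B})$. This compatibility is inherited at the level of the Robba ring by passing to the inductive limit of the level-wise identifications $\mathcal{O}_{\Cp}(\mathfrak{X} \setminus \mathfrak{X}_n) = \mathcal{O}_{\Cp}(\mathfrak{B} \setminus \mathfrak{B}_n)$ from Lemma \ref{twist-Xn}. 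Taking $G_K$-fixed points on both sides then gives $\mathscr{R}_K(\mathfrak{X}) = \mathscr{R}_{\Cp}(\mathfrak{B})^{G_K,*}$.

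The only step requiring care is the verification that the proofs of \ref{compactoid}, \ref{regular} and \ref{complete} really do transpose with $\mathfrak{B}$ replacing $\mathfrak{X}$; but since those proofs themselves reduce to the $\mathfrak{B}$-picture after base change to $\Cp$, there is no real obstacle, only bookkeeping. No genuinely new argument is needed.
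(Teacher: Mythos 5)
Your proposal is correct and matches the paper's approach: the paper's one-line proof ("This is a restatement of Prop.\ \ref{complete}.i/iii") implicitly invokes exactly the combination you spell out, namely the $\mathfrak{B}$-analog of Prop.\ \ref{complete}.i (which, as you note, the proofs of \ref{compactoid}--\ref{complete} already supply since they reduce to the $\mathfrak{B}$-picture after base change to $\Cp$) together with the topological identification $\mathscr{R}_{\Cp}(\mathfrak{X}) = \mathscr{R}_{\Cp}(\mathfrak{B})$ and the matching of Galois actions from the preceding summary.
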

\begin{proof}
This is a restatement of Prop.\ \ref{complete}.i/iii.
\end{proof}

\subsection{The monoid action}\label{sec:monoid}

For any $a \in o_L$ the map $g \longmapsto ag$ on $G$ is locally $L$-analytic. This induces an action of the multiplicative monoid $o_L \setminus \{0\}$ on the vector spaces of locally analytic functions $C^{an}(G,K) \subseteq C^{an}(G_0,K)$ given by $f \longmapsto a^*(f)(g) := f(ag)$. Obviously, with $\chi \in \widehat{G}(K)$, resp.\ $\in \widehat{G}_0(K)$, also $a^*(\chi)$ is a character in $\widehat{G}(K)$, resp.\ in $\widehat{G}_0(K)$. In this way we obtain actions of the ring $o_L$ on these groups. It is clear that under the bijection \eqref{f:variety} the action on the target, which we just have defined, corresponds to the obvious $o_L$-action on the second factor of the tensor product in the source. This shows that the action on character groups in fact comes from an $o_L$-action on the rigid character varieties $\mathfrak{X}_0$ and $\mathfrak{X}$ which, moreover, respects each of the affinoid subgroups $\mathfrak{X}(r)$. Moreover, from these actions on character varieties we obtain translation actions by the multiplicative monoid $o_L \setminus \{0\}$ on the corresponding rings of global holomorphic functions, which will be denoted by $(a,f) \longmapsto a_*(f)$. Note also that these actions respect the respective subrings of bounded holomorphic functions.

Each unit in $o_L^\times$ with $\mathfrak{X}(r)$ also preserves the admissible open subset $\mathfrak{X} \setminus \mathfrak{X}(r)$. The $o_L^\times$-action on $\mathcal{O}_K^b(\mathfrak{X}) \subseteq \mathcal{O}_K(\mathfrak{X})$ therefore extends to the rings $\mathscr{R}_K(\mathfrak{X})$, $\mathscr{E}_K^\dagger(\mathfrak{X})$, $\mathscr{E}_K(\mathfrak{X})$, and $\mathscr{E}_K^{\leq 1}(\mathfrak{X})$ (being isometric in the norm $\|\ \|_1$).

\begin{lemma}\label{Gamma-cont-O}
The $o_L^\times$-action on $\mathcal{O}_K(\mathfrak{X} \setminus \mathfrak{X}(r_0))$, for any $r_0 \in [p^{- 1/e -1/(p-1)},1) \cap p^\QQ$, is continuous.
\end{lemma}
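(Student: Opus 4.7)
The plan is to reduce the continuity of the $o_L^\times$-action to the affinoid level via the projective limit description of $\mathcal{O}_K(\mathfrak{X}\setminus\mathfrak{X}(r_0))$, realize the action on each affinoid as coming from a morphism of rigid spaces, and then deduce joint continuity by a standard Banach algebra manipulation.

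First I would invoke the identification $\mathcal{O}_K(\mathfrak{X}\setminus\mathfrak{X}(r_0))=\varprojlim_{r_0<r\leq r'<1}\mathcal{O}_K(\mathfrak{X}(r,r'))$ from \eqref{f:projlim} as Fr\'echet spaces. Since continuity of a map into a projective limit is equivalent to continuity of the composition with each projection, it suffices to show, for each admissible pair $(r,r')$, that
\[
o_L^\times\times\mathcal{O}_K(\mathfrak{X}(r,r'))\longrightarrow\mathcal{O}_K(\mathfrak{X}(r,r'))
\]
is continuous. For this we need to know that the action does restrict to $\mathfrak{X}(r,r')$: each $a\in o_L^\times$ preserves $\mathfrak{X}(s)$ for every $s$, as noted immediately before the lemma, and also preserves $\mathfrak{X}^-(s)$, because the $o_L^\times$-action on $\mathfrak{X}_0$ corresponds via \eqref{f:variety} to an action by invertible $\mathbb{Z}_p$-linear automorphisms of the second factor $\operatorname{Hom}_{\mathbb{Z}_p}(o_L,\mathbb{Z}_p)$, hence preserves polydisks centered at the origin. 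Thus $a$ preserves $\mathfrak{X}(r,r')=\mathfrak{X}(r')\setminus\mathfrak{X}^-(r)$.

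Next I would exhibit the action as a morphism of affinoid $L$-spaces. In the coordinates of the footnote, identifying $\mathfrak{X}_0$ with $\mathfrak{B}_1^d$, an element $a\in o_L$ sends $(z_1,\ldots,z_d)$ to $(w_1,\ldots,w_d)$ with $w_k=\prod_i z_i^{M_{ki}(a)}$, where $M_{ki}(a)$ is the matrix of multiplication by $a$ in the basis $t_1,\ldots,t_d$. Because the function $(m,u)\longmapsto u^m=\sum_{n\geq 0}\binom{m}{n}(u-1)^n$ is rigid analytic on $\mathbb{Z}_p\times\mathfrak{B}_1$ (the series converges uniformly on any $\mathbb{Z}_p\times\mathfrak{B}_1(s)$, $s<1$), and $a\longmapsto M_{ki}(a)$ is $\mathbb{Z}_p$-linear on the affinoid $o_L$, the action defines a morphism of rigid analytic spaces $o_L\times\mathfrak{X}_0\to\mathfrak{X}_0$, which restricts to a morphism of $L$-affinoids $\mu:o_L^\times\times\mathfrak{X}(r,r')\to\mathfrak{X}(r,r')$. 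Dually we obtain a continuous homomorphism of Banach $K$-algebras
\[
\mu^*:\mathcal{O}_K(\mathfrak{X}(r,r'))\longrightarrow\mathcal{O}_K(o_L^\times\times\mathfrak{X}(r,r'))=\mathcal{O}_K(o_L^\times)\,\widehat{\otimes}_K\,\mathcal{O}_K(\mathfrak{X}(r,r')),
\]
and by construction $a_*(f)=(\operatorname{ev}_a\widehat{\otimes}\,\operatorname{id})(\mu^*(f))$ for every $a$ and $f$.

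Finally I would establish joint continuity of the partial evaluation map $(a,h)\longmapsto h(a,\cdot)$ from $o_L^\times\times\bigl(\mathcal{O}_K(o_L^\times)\widehat{\otimes}_K\mathcal{O}_K(\mathfrak{X}(r,r'))\bigr)$ to $\mathcal{O}_K(\mathfrak{X}(r,r'))$. Writing a general element of the completed tensor product as $h=\sum_i f_i\otimes g_i$ with $\|f_i\|_{o_L^\times}\|g_i\|_{\mathfrak{X}(r,r')}\to 0$, one truncates to finitely many indices, controls the tail uniformly via the non-archimedean norm, and exploits the ordinary continuity of each $f_i$ on the affinoid $o_L^\times$ together with the triangle inequality $\|h(a,\cdot)-h_0(a_0,\cdot)\|\leq\|h-h_0\|+\|h_0(a,\cdot)-h_0(a_0,\cdot)\|$. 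Composing this joint continuity with $\mu^*$ delivers the continuity of $(a,f)\mapsto a_*(f)$ on each $\mathcal{O}_K(\mathfrak{X}(r,r'))$, and hence on the projective limit. The main obstacle is step two: one has to recognize that the $o_L^\times$-action on the affinoids $\mathfrak{X}(r,r')$ is actually rigid-analytic in the group parameter $a$, which reduces to the rigid-analyticity of the power function $u\mapsto u^m$ jointly in the exponent $m\in\mathbb{Z}_p$ and the base $u\in\mathfrak{B}_1$; everything else is routine.
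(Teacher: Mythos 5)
Your plan diverges from the paper's proof, which simply invokes the nonarchimedean Banach--Steinhaus theorem (since $o_L^\times$ is compact and $\mathcal{O}_K(\mathfrak{X}\setminus\mathfrak{X}(r_0))$ is a barrelled Fr\'echet space) together with the continuity of the orbit maps, the latter being a consequence of the differentiability established later in section 2.4. Your route is more direct: realize the action as a rigid morphism $\mu:o_L^\times\times\mathfrak{X}(r,r')\to\mathfrak{X}(r,r')$, dualize, and then check joint continuity of a partial evaluation. That is a reasonable alternative, and if step two held the rest would indeed go through.

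The difficulty is precisely step two, and the parenthetical justification you give there is not correct. Writing $u^m=\sum_{n\ge 0}\binom{m}{n}(u-1)^n$, you claim the series ``converges uniformly on any $\mathbb{Z}_p\times\mathfrak{B}_1(s)$, $s<1$'' and hence defines a rigid function. But the affinoid $\mathbb{Z}_p=\operatorname{Sp}(\Qp\langle T\rangle)$ has many more $\Cp$-points than the set $\Zp\subset\Qp$, and the Mahler polynomial $\binom{T}{n}$ has Gauss norm (= sup norm over the rigid closed unit disk) equal to $|1/n!|^{-1}=p^{v_p(n!)}$, which grows like $p^{n/(p-1)}$. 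Consequently the $n$th term of the series has norm $p^{v_p(n!)}s^n$ in $\mathcal{O}_K(\mathbb{Z}_p)\,\widehat{\otimes}_K\,\mathcal{O}_K(\mathfrak{B}_1(s))$, and this does \emph{not} tend to zero once $s\ge p^{-1/(p-1)}$. The estimate $|\binom{m}{n}|\le 1$, valid for $m\in\Zp$, only bounds the restriction to the $\Zp$-rational points; it says nothing about the Banach-algebra norm. So the series does not converge in the completed tensor product, and the claimed rigid analyticity of $(m,u)\mapsto u^m$ on $\mathbb{Z}_p\times\mathfrak{B}_1(s)$ for arbitrary $s<1$ has not been established. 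Since in the Lemma the affinoids $\mathfrak{X}(r,r')$ have $r'$ arbitrarily close to $1$, you are squarely in the regime where this matters.

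To repair the argument you would have to do what the paper (via condition \eqref{f:condition-locan} and Lemma \ref{locan}) does in a cleaner abstract framework: cover $\mathbb{Z}_p$ by residue disks $i+p^N\mathbb{Z}_p$ with $N$ depending on $s$, write $u^m=u^i\,(u^{p^N})^n$ for $m=i+p^Nn$, and use that for $N$ large enough $|u^{p^N}-1|<p^{-1/(p-1)}$ uniformly over $u\in\mathfrak{B}_1(s)(\Cp)$, so that $(u^{p^N})^n=\sum_k\binom{n}{k}(u^{p^N}-1)^k$ converges in the Banach algebra. Only after this refinement is the action a morphism of rigid spaces on each $o_L^\times\times\mathfrak{X}(r,r')$, and your steps three through five can then be carried out. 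As written, the rigid analyticity claim is the gap.
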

\begin{proof}
Coming from an action on the variety $\mathfrak{X} \setminus \mathfrak{X}(r_0)$ each unit in $o_L^\times$ acts by a continuous ring automorphism on $\mathcal{O}_K(\mathfrak{X} \setminus \mathfrak{X}(r_0))$. Since $o_L^\times$ is compact and $\mathcal{O}_K(\mathfrak{X} \setminus \mathfrak{X}(r_0))$ as a Fr\'echet space is barrelled it remains, by the nonarchimedean Banach-Steinhaus theorem (cf.\ \cite{NFA} Prop.\ 6.15), to show that the orbit maps
\begin{align*}
  \rho_f : o_L^\times & \longrightarrow \mathcal{O}_K(\mathfrak{X} \setminus \mathfrak{X}(r_0)) \\
  a & \longmapsto a_*(f) \ ,
\end{align*}
for any $f \in \mathcal{O}_K(\mathfrak{X} \setminus \mathfrak{X}(r_0))$, are continuous. But in the subsequent section \ref{sec:locan} we will establish, under the above assumption on $r_0$, the stronger fact that these orbit maps even are differentiable.
\end{proof}

\begin{lemma}\label{pi}
For any $r \in [p^{-p/(p-1)},1) \cap p^\QQ$ we have $(\pi_L^*)^{-1}(\mathfrak{X} \setminus \mathfrak{X}(r)) \supseteq \mathfrak{X} \setminus \mathfrak{X}(r^{1/p})$.
\end{lemma}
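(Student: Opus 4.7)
The strategy is to check the asserted inclusion on $\Cp$-points and then reduce to an elementary $p$-adic estimate on characters. Both $\mathfrak{X}\setminus\mathfrak{X}(r^{1/p})$ and $(\pi_L^*)^{-1}(\mathfrak{X}\setminus\mathfrak{X}(r))$ are admissible open subsets of the rigid variety $\mathfrak{X}$, so the containment is determined by their $\Cp$-valued points, and I would argue purely at that level.

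First I would identify $\mathfrak{X}(r)(\Cp)$ with the set of locally $L$-analytic characters $\chi:o_L\to\Cp^\times$ such that $|\chi(g)-1|\leq r$ for every $g\in o_L$. Indeed, fixing a $\Zp$-basis $t_1,\ldots,t_d$ of $o_L$, the coordinate functions on $\mathfrak{X}_0\cong \mathfrak{B}_1^d$ evaluate at $\chi$ to $(\chi(t_i))_i$, and by definition $\mathfrak{X}(r)=\mathfrak{X}\cap\mathfrak{X}_0(r)_{/L}$. Since $r<1$, the set $\{u\in\Cp^\times:|u-1|\leq r\}$ is a multiplicative subgroup of $\Cp^\times$ stable under $\Zp$-exponentiation, so the conditions ``$|\chi(t_i)-1|\leq r$ for all $i$'' and ``$|\chi(g)-1|\leq r$ for all $g\in o_L$'' are equivalent. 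Under this identification the $\pi_L^*$-action is $\chi\longmapsto(g\mapsto\chi(\pi_L g))$. After passing to complements, the lemma becomes the implication: if $|\chi(h)-1|\leq r$ for every $h\in\pi_L o_L$, then $|\chi(g)-1|\leq r^{1/p}$ for every $g\in o_L$.

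Next I would reduce this to an inequality about a single element of $\Cp$. Since $p=\pi_L^e u$ with $u\in o_L^\times$ and $e\geq 1$, we have $pg\in\pi_L o_L$ for every $g\in o_L$, so the hypothesis gives $|\chi(pg)-1|\leq r$. Setting $v:=\chi(g)-1\in\Cp$, the additive character relation yields $\chi(pg)=\chi(g)^p=(1+v)^p$. It therefore suffices to establish the following claim: if $v\in\Cp$ satisfies $|(1+v)^p-1|\leq r$ with $r\geq p^{-p/(p-1)}$, then $|v|\leq r^{1/p}$.

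The final step is an ultrametric estimate on the binomial expansion
\begin{equation*}
(1+v)^p-1 \;=\; pv+\binom{p}{2}v^2+\cdots+\binom{p}{p-1}v^{p-1}+v^p,
\end{equation*}
using $|\binom{p}{k}|=p^{-1}$ for $1\leq k\leq p-1$. If $|v|>p^{-1/(p-1)}$ then $|v|^{p-k}>p^{-1}$ for every $1\leq k\leq p-1$, so $|v^p|$ strictly dominates every other term, giving $|(1+v)^p-1|=|v|^p$ and hence $|v|\leq r^{1/p}$. If instead $|v|\leq p^{-1/(p-1)}$, the assumption $r\geq p^{-p/(p-1)}$ forces $r^{1/p}\geq p^{-1/(p-1)}\geq|v|$ directly. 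The main (and really the only) obstacle is this ultrametric bookkeeping, and it also explains why the lower bound $r\geq p^{-p/(p-1)}$ is exactly the hypothesis needed.
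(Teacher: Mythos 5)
Your proof is correct, and the underlying reduction is the same as the paper's: both pass from $\pi_L^*$ to $p^*$ by writing $p=\pi_L^e u$ with $u\in o_L^\times$ and using that $\mathfrak{X}(r)$ is stable under units and under $\pi_L^*$. Where the paper then simply cites Lemma 3.3 of Schneider--Teitelbaum for the fact that $p^*x\in\mathfrak{X}(r)$ forces $x\in\mathfrak{X}(r^{1/p})$, you unfold that citation: you identify $\mathfrak{X}(r)(\Cp)$ with the characters $\chi$ satisfying $|\chi(g)-1|\leq r$ for all $g\in o_L$ (correctly noting that this can be tested on a $\Zp$-basis since $\{u:|u-1|\leq r\}$ is a $\Zp$-submodule of $1+\mathfrak{m}_{\Cp}$), reduce to a single scalar $v=\chi(g)-1$, and run the two-case ultrametric estimate on $(1+v)^p-1$. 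The case analysis is sound, and the strictness of the dominance $|v|^p>p^{-1}|v|^k$ in the first case is handled correctly, so the Newton-polygon style conclusion $|(1+v)^p-1|=|v|^p$ holds. The payoff of your version is that it is self-contained and makes the threshold $r\geq p^{-p/(p-1)}$ transparent: that lower bound is exactly what your second case needs, which the paper's citation-based proof leaves implicit.
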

\begin{proof}
Suppose there is an $x \in \mathfrak{X} \setminus \mathfrak{X}(r^{1/p})$ such that $\pi_L^*x \in \mathfrak{X}(r)$. Since $\pi_L$ as well as any unit in $o_L^\times$ preserve $\mathfrak{X}(r)$ it follows that $p^*x \in \mathfrak{X}(r)$. This contradicts Lemma 3.3 in \cite{ST}.
\end{proof}

The lemma implies that the action of $\pi_L$ and hence of the full multiplicative monoid $o_L \setminus \{0\}$ extends to the rings $\mathscr{R}_K(\mathfrak{X})$, $\mathscr{E}_K^\dagger(\mathfrak{X})$, $\mathscr{E}_K(\mathfrak{X})$, and $\mathscr{E}_K^{\leq 1}(\mathfrak{X})$. By construction the action of $\pi_L$ on $\mathscr{E}_K(\mathfrak{X})$ is decreasing in the norm $\|\ \|_1$. But the action of $p$, as a consequence of \cite{ST} Lemma 3.3, is isometric. It follows that the full monoid $o_L \setminus \{0\}$ acts isometrically on $\mathscr{E}_K(\mathfrak{X})$.

\begin{lemma}\label{Gamma-cont-R}
The $o_L \setminus \{0\}$-action on $\mathscr{R}_K(\mathfrak{X})$ is continuous.
\end{lemma}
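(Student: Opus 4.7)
The plan is to reduce to the action of the unit group $o_L^\times$ via a coset decomposition, and then combine equicontinuity (via Banach-Steinhaus) with continuity of orbit maps.

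First, decompose $o_L \setminus \{0\} = \bigsqcup_{k \geq 0} \pi_L^k o_L^\times$ into open-closed cosets; by iterating Lemma \ref{pi}, $(\pi_L^k)_*$ sends each $\mathcal{O}_K(\mathfrak{X} \setminus \mathfrak{X}(r))$ continuously into $\mathcal{O}_K(\mathfrak{X} \setminus \mathfrak{X}(r^{1/p^k}))$, so it is a continuous linear endomorphism of $\mathscr{R}_K(\mathfrak{X})$ by the universal property of the inductive limit. Since the action of $a = \pi_L^k u$ factors as $(\pi_L^k)_* \circ u_*$ and $\pi_L^k o_L^\times$ is homeomorphic to $o_L^\times$, joint continuity on each coset is equivalent to joint continuity of the $o_L^\times$-action on $\mathscr{R}_K(\mathfrak{X})$, which is what I then aim to prove.

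Second, every $u \in o_L^\times$ preserves every $\mathfrak{X}(r)$ and hence every $\mathfrak{X}_n$, so $u_*$ restricts to a continuous automorphism of each Fr\'echet space $\mathcal{O}_K(\mathfrak{X} \setminus \mathfrak{X}_n)$; moreover the $o_L^\times$-action on this Fr\'echet space is jointly continuous by Lemma \ref{Gamma-cont-O}. For any fixed $f \in \mathscr{R}_K(\mathfrak{X})$, picking $n$ with $f \in \mathcal{O}_K(\mathfrak{X} \setminus \mathfrak{X}_n)$ shows that the orbit map $u \longmapsto u_*(f)$ from $o_L^\times$ to $\mathscr{R}_K(\mathfrak{X})$ is continuous. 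As $o_L^\times$ is compact, the orbit $\{u_*(f) : u \in o_L^\times\}$ is then bounded in $\mathscr{R}_K(\mathfrak{X})$.

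Third, the space $\mathscr{R}_K(\mathfrak{X})$ is barrelled, being the locally convex inductive limit of the barrelled Fr\'echet spaces $\mathcal{O}_K(\mathfrak{X} \setminus \mathfrak{X}_n)$. The non-archimedean Banach-Steinhaus theorem (\cite{NFA} Prop.\ 6.15) then promotes the pointwise boundedness of the family $\{u_* : u \in o_L^\times\}$ of continuous linear endomorphisms to equicontinuity: for every open lattice $W$ in $\mathscr{R}_K(\mathfrak{X})$ there is an open lattice $V$ with $u_*(V) \subseteq W$ for all $u \in o_L^\times$.

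Finally, combine the two ingredients to deduce joint continuity. Fix $(u_0, f_0)$ and let $W'$ be an open lattice neighbourhood of $0$; equicontinuity gives an open lattice $V$ with $u_*(V) \subseteq W'$ for all $u \in o_L^\times$, and continuity of $u \mapsto u_*(f_0)$ yields a neighbourhood $U$ of $u_0$ with $u_*(f_0) - (u_0)_*(f_0) \in W'$ for $u \in U$. Then for all $(u,f) \in U \times (f_0 + V)$,
\[
u_*(f) - (u_0)_*(f_0) = u_*(f - f_0) + \bigl(u_*(f_0) - (u_0)_*(f_0)\bigr) \in W' + W' = W',
\]
using the non-archimedean identity $W' + W' = W'$ for lattices. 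The main technical point is the Banach-Steinhaus step: passing from separate continuity (essentially free from Lemma \ref{Gamma-cont-O}) to the uniform estimate required for joint continuity rests on the barrelledness of the inductive limit together with the compactness of $o_L^\times$.
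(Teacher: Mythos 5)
Your proof is correct and takes essentially the same route as the paper: continuity of each $a_*$ on $\mathscr{R}_K(\mathfrak{X})$ (via Lemma \ref{pi} for $\pi_L$ and Lemma \ref{Gamma-cont-O} for units), continuity of the orbit maps, barrelledness of the inductive limit, and the nonarchimedean Banach--Steinhaus theorem. You have simply spelled out explicitly the coset decomposition $o_L\setminus\{0\}=\bigsqcup_{k\geq 0}\pi_L^k o_L^\times$ and the equicontinuity-plus-orbit-continuity argument that the paper leaves implicit in its final sentence.
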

\begin{proof}
Each $a \in o_L^\times$ acts continuously on $\mathcal{O}_K(\mathfrak{X} \setminus \mathfrak{X}(r_0))$ and therefore gives rise, in the limit, to a continuous automorphism of $\mathscr{R}_K(\mathfrak{X})$. The prime element $\pi_L$, by Lemma \ref{pi}, maps $\mathfrak{X} \setminus \mathfrak{X}(r_0)$ to $\mathfrak{X} \setminus \mathfrak{X}(r_0^p)$, for any $p^{-\frac{1}{p-1}} \leq r_0 < 1$, and therefore again gives rise to a continuous endomorphism of $\mathscr{R}_K(\mathfrak{X})$. The continuity of the orbit maps
\begin{align*}
  \rho_f : o_L^\times & \longrightarrow \mathscr{R}_K(\mathfrak{X}) \\
  a & \longmapsto a_*(f) \ ,
\end{align*}
for any $f \in \mathscr{R}_K(\mathfrak{X})$, follows immediately from Lemma \ref{Gamma-cont-O}. As a locally convex inductive limit of Fr\'echet spaces $\mathscr{R}_K(\mathfrak{X})$ is barrelled so that the assertion now follows from the nonarchimedean Banach-Steinhaus theorem.
\end{proof}

Observing that $\kappa_{[a](z)} = a^*(\kappa_z)$ one checks that the isomorphism $\kappa : \mathfrak{B}_{/\Cp} \xrightarrow{\;\cong\;} \mathfrak{X}_{/\Cp}$ from section \ref{sec:LT} is equivariant for the $o_L$-action on both sides. It follows that the ring isomorphisms as summarized after Cor.\ \ref{scalar-ext-R} all are equivariant for the actions of the monoid $o_L \setminus \{0\}$.

Traditionally one thinks of an $o_L \setminus \{0\}$-action as an action of the multiplicative group $\Gamma_L := o_L^\times$ together with a commuting endomorphism $\varphi_L$ which represents the action of $\pi_L$. We note that $\varphi_L$ is injective on $\mathscr{E}_K(\mathfrak{X})$ and a fortiori on $\mathcal{O}_K(\mathfrak{X})$.

For later purposes we need to discuss, for any $n \geq 1$, the structure of the $\pi_L^n$-torsion subgroup
\begin{equation*}
    \mathfrak{X}[\pi_L^n] := \ker (\mathfrak{X} \xrightarrow{\; (\pi_L^n)^* \;} \mathfrak{X}) \ .
\end{equation*}

\begin{lemma}\label{torsion}
$\mathfrak{X}[\pi_L^n](\Cp) \cong o_L/\pi_L^n o_L$ as $o_L$-modules.
\end{lemma}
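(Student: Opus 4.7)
My plan is to transfer the question from $\mathfrak{X}$ to the Lubin-Tate group via the LT-isomorphism. Recall that the LT-isomorphism $\kappa : \mathfrak{B}_{/\Cp} \xrightarrow{\cong} \mathfrak{X}_{/\Cp}$ satisfies $\kappa_{[a](z)} = a^*(\kappa_z)$, so it is $o_L$-equivariant when we equip $\mathfrak{B}_{/\Cp}$ with the $o_L$-action coming from the Lubin-Tate formal group structure (via $[a]$) and $\mathfrak{X}_{/\Cp}$ with the $o_L$-action $a \mapsto a^*$ defined in Section~\ref{sec:monoid}. Consequently $\kappa$ restricts to an isomorphism of $o_L$-modules
\begin{equation*}
  \mathfrak{B}[\pi_L^n](\Cp) = \ker\bigl(\mathfrak{B}_{/\Cp} \xrightarrow{[\pi_L^n]} \mathfrak{B}_{/\Cp}\bigr) \xrightarrow{\;\cong\;} \ker\bigl(\mathfrak{X}_{/\Cp} \xrightarrow{(\pi_L^n)^*} \mathfrak{X}_{/\Cp}\bigr) = \mathfrak{X}[\pi_L^n](\Cp).
\end{equation*}

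The left-hand side is by definition the $\pi_L^n$-torsion $LT[\pi_L^n]$ of the Lubin-Tate formal $o_L$-module $LT = LT_{\pi_L}$ over $o_L$. It is a classical fact from Lubin-Tate theory (see, e.g., the discussion of the Tate module $T$ earlier in Section~\ref{sec:LT}, which is defined precisely as the inverse limit of these torsion groups and is free of rank one over $o_L$) that $LT[\pi_L^n](\Cp)$ is a free $o_L/\pi_L^n o_L$-module of rank one. Combining this with the displayed isomorphism yields the assertion.

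The only step requiring genuine care is the verification that $\kappa$ really carries the kernel of $[\pi_L^n]$ bijectively onto the kernel of $(\pi_L^n)^*$; this is immediate from the $o_L$-equivariance and the fact that $\kappa$ is a group isomorphism of rigid group varieties, sending the neutral element to the neutral element. There is no real obstacle, since the hard work has already been done in Proposition~\ref{LT-iso} and the equivariance statement following Corollary~\ref{scalar-ext-R}.

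Alternatively, and without appealing to $\kappa$: a point of $\mathfrak{X}[\pi_L^n](\Cp)$ is by the interpretation of $\mathfrak{X}$ as the character variety a locally $L$-analytic character $\chi : o_L \to \Cp^\times$ such that $\chi(\pi_L^n g) = 1$ for all $g \in o_L$, i.e.\ a character of the finite group $o_L/\pi_L^n o_L$ (any such character is automatically locally constant, hence locally $L$-analytic). Hence $\mathfrak{X}[\pi_L^n](\Cp) = \Hom(o_L/\pi_L^n o_L, \Cp^\times)$ as $o_L$-modules, and Pontryagin duality for finite cyclic $o_L$-modules gives the result. I would prefer the first route, however, because it keeps the argument inside the geometric framework already set up and reuses the LT-isomorphism that will be used elsewhere.
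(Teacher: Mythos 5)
Your preferred route is correct, but it is genuinely different from the paper's. The paper proves Lemma~\ref{torsion} entirely inside the character-variety picture, with no reference to the Lubin-Tate formal group: starting from the bijection \eqref{f:variety} and \cite{ST} Lemma~2.1 it identifies $\mathfrak{X}[\pi_L^n](\Cp)$ with the kernel of $\id \otimes \pi_L^n$ on $\mathfrak{B}_1(\Cp)\otimes_{\Zp}\Hom_{\Zp}(o_L,\Zp)$, then computes that kernel by homological algebra: dualizing $0\to o_L\xrightarrow{\pi_L^n}o_L\to o_L/\pi_L^n o_L\to 0$ gives a presentation of $\Ext^1_{\Zp}(o_L/\pi_L^n o_L,\Zp)$ by the free $o_L$-module $\Hom_{\Zp}(o_L,\Zp)$, so the kernel in question is $\Tor_1^{\Zp}(\mathfrak{B}_1(\Cp),\,\Ext^1_{\Zp}(o_L/\pi_L^n o_L,\Zp))\cong\Tor_1^{\Zp}(\Qp/\Zp,\,o_L/\pi_L^n o_L)=o_L/\pi_L^n o_L$, using that the torsion of $\mathfrak{B}_1(\Cp)$ is $\mu_{p^\infty}\cong\Qp/\Zp$. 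Your approach instead imports the answer from Lubin-Tate theory via the $o_L$-equivariant group isomorphism $\kappa$; this is shorter but leans on the heavy machinery of Prop.~\ref{LT-iso}, whereas the paper's argument shows the lemma holds for purely formal reasons about character groups of $o_L$, independently of any choice of $LT$. Your second, Pontryagin-duality alternative is actually much closer in spirit to the paper's proof: the $\Tor$/$\Ext$ computation the paper carries out is exactly the verification of the $o_L$-module identification $\Hom(o_L/\pi_L^n o_L,\Cp^\times)\cong o_L/\pi_L^n o_L$ that you invoke as a black box, so if you pursued that route you would end up rediscovering the paper's argument.
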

\begin{proof}
As a consequence of \cite{ST} Lemma 2.1 the bijection \eqref{f:variety} induces an $o_L$-isomorphism
\begin{equation*}
    \mathfrak{X}[\pi_L^n](\Cp) \cong \ker\big[ \mathfrak{B}_1(\Cp) \otimes_{\Zp} \Hom_{\Zp}(o_L, {\Zp}) \xrightarrow{\; \id \otimes \pi_L^n \;}
    \mathfrak{B}_1(\Cp) \otimes_{\Zp} \Hom_{\Zp}(o_L, {\Zp}) \big] \ .
\end{equation*}
The short exact sequence $0 \longrightarrow o_L \xrightarrow{\; \pi_L^n \;} o_L \longrightarrow o_L/\pi_L^n o_L \longrightarrow 0$ gives rise to the short exact sequence
\begin{equation*}
    0 \longrightarrow \Hom_{\Zp}(o_L, {\Zp}) \xrightarrow{\; \pi_L^n \;} \Hom_{\Zp}(o_L, {\Zp}) \longrightarrow \Ext^1_{\Zp}(o_L/\pi_L^n o_L, {\Zp}) \longrightarrow 0
\end{equation*}
which in turn leads to the exact sequence
\begin{multline*}
    0 \longrightarrow \Tor^{\Zp}_1(\mathfrak{B}_1(\Cp), \Ext^1_{\Zp}(o_L/\pi_L^n o_L, {\Zp})) \longrightarrow \\
    \mathfrak{B}_1(\Cp) \otimes_{\Zp} \Hom_{\Zp}(o_L, {\Zp}) \xrightarrow{\; \id \otimes \pi_L^n \;}
    \mathfrak{B}_1(\Cp) \otimes_{\Zp} \Hom_{\Zp}(o_L, {\Zp}) \ .
\end{multline*}
We deduce that $\mathfrak{X}[\pi_L^n](\Cp) \cong \Tor^{\Zp}_1(\mathfrak{B}_1(\Cp), \Ext^1_{\Zp}(o_L/\pi_L^n o_L, {\Zp}))$. Since $\Hom_{\Zp}(o_L, {\Zp})$ is a free $o_L$-module of rank one we have $\Ext^1_{\Zp}(o_L/\pi_L^n o_L, {\Zp})) \cong o_L/\pi_L^n o_L$ as $o_L$- and, a fortiori, as $\Zp$-module. On the other hand the torsion subgroup of $\mathfrak{B}_1(\Cp)$ is the group of $p$-power roots of unity which is isomorphic to $\Qp / \Zp$. It follows that
\begin{align*}
    \mathfrak{X}[\pi_L^n](\Cp) & \cong \Tor^{\Zp}_1(\mathfrak{B}_1(\Cp), o_L/\pi_L^n o_L) \\
    & \cong \Tor^{\Zp}_1(\Qp / \Zp, o_L/\pi_L^n o_L) \\
    & = \text{torsion subgroup of $o_L / \pi_L^n o_L$} \\
    & = o_L / \pi_L^n o_L \ .
\end{align*}
\end{proof}

\subsection{The action of $\Lie(\Gamma_L)$}\label{sec:locan}

We begin by setting up some axiomatic formalism. In the following we view $\Gamma_L = o_L^\times$ as a locally $L$-analytic group. Its Lie algebra is $L$. Let $(B,\|\ \|_B)$ be a $K$-Banach space which carries a $\Gamma_L$-action by continuous $K$-linear automorphisms. We consider the following condition:
\begin{align}\label{f:condition-locan}
  \text{There is an $m \geq 2$ such that, in the operator norm on $B$, we have} \\
   \text{$\|\gamma - 1\| < p^{-\frac{1}{p-1}}$ for any $\gamma \in 1 + p^m o_L$.} \qquad\qquad\qquad \nonumber
\end{align}
If $B$ has the orthogonal basis $(v_i)_{i \in I}$ then \eqref{f:condition-locan} follows from the existence of a constant $0 < C < p^{-\frac{1}{p-1}}$ such that $\|(\gamma - 1)(v_i)\|_B \leq C  \|v_i\|_B$ for any $\gamma \in 1 + p^m o_L$ and any $i \in I$.

\begin{lemma}\label{locan}
Suppose that \eqref{f:condition-locan} holds; then the $\Gamma_L$-action on $B$ is locally $\Qp$-analytic.
\end{lemma}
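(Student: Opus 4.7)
My plan is to reduce to showing local $\Qp$-analyticity on the open subgroup $H := 1 + p^m o_L$ of finite index in $\Gamma_L$ --- since $\Gamma_L$ is the disjoint union of finitely many cosets of $H$ and left-translation by the continuous operator $\rho(\gamma_0) = \gamma_0\cdot$ preserves analyticity --- and then to trivialize the $H$-action on $B$ via the operator logarithm provided by \eqref{f:condition-locan}.

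For each $\gamma \in H$ I would define
\[
  \ell(\gamma) := \log \gamma = \sum_{n \geq 1} \frac{(-1)^{n+1}}{n} (\gamma - 1)^n \in \End_{\mathrm{cts}}(B);
\]
the bound $\|\gamma - 1\| < p^{-1/(p-1)}$ ensures convergence in operator norm with $\|\ell(\gamma)\| < p^{-1/(p-1)}$ and $\exp\ell(\gamma) = \gamma$. Commutativity of $\Gamma_L = o_L^\times$ forces the $\rho(\gamma)$ (hence the $\ell(\gamma)$) to pairwise commute, and the identity $\exp(X)\exp(Y) = \exp(X+Y)$ for commuting $X, Y$ of operator norm $< p^{-1/(p-1)}$ gives the homomorphism property $\ell(\gamma \gamma') = \ell(\gamma) + \ell(\gamma')$. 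Next I would choose a $\Zp$-basis $t_1, \ldots, t_d$ of $o_L$ and set $\gamma_i := \exp(p^m t_i) \in H$; the hypothesis $m \geq 2$ gives $|p^m t_i| \leq p^{-2} < p^{-1/(p-1)}$, so the exponential converges on $p^m o_L$ and identifies $p^m o_L \xrightarrow{\cong} H$ as locally $\Qp$-analytic groups, yielding a $\Qp$-analytic parametrization
\[
  \Zp^d \xrightarrow{\;\cong\;} H, \qquad (a_1, \ldots, a_d) \longmapsto \prod_{i=1}^d \gamma_i^{a_i}.
\]
Granting the identity $\ell(\prod_i \gamma_i^{a_i})(v) = \sum_i a_i \ell(\gamma_i)(v)$ for all $(a_i) \in \Zp^d$, one concludes from $\gamma = \exp\ell(\gamma)$ and commutativity that
\[
  \gamma \cdot v = \sum_{n \geq 0} \frac{1}{n!} \Bigl(\sum_{i=1}^d a_i \ell(\gamma_i)\Bigr)^{\!n}(v),
\]
a power series in $(a_i) \in \Zp^d$ with coefficients in $B$ that converges globally (since $\|\ell(\gamma_i)\| < p^{-1/(p-1)}$), proving local $\Qp$-analyticity of the orbit map, as required.

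The main technical obstacle will be this last continuity extension: passing the identity $\ell(\prod_i \gamma_i^{a_i})(v) = \sum_i a_i \ell(\gamma_i)(v)$ from $\ZZ^d$, where it is immediate from the homomorphism property of $\ell$ combined with $\rho(\gamma_i^{n_i}) = \rho(\gamma_i)^{n_i}$, to all of $\Zp^d$. This requires continuity of the map $\gamma \mapsto \ell(\gamma)(v)$ on $H$, which I would derive from continuity of the orbit map $\gamma \mapsto \gamma \cdot v$ (a standing continuity assumption on the $\Gamma_L$-action, of the same kind as the one used in Lemma \ref{Gamma-cont-O}) composed with the termwise continuity of the convergent $\log$-series, exploiting the uniform operator bound supplied by \eqref{f:condition-locan}.
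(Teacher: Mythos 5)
Your overall strategy mirrors the paper's: reduce to the finite-index open subgroup $H := 1 + p^m o_L$, convert the multiplicative action on $H$ to an additive one via the operator logarithm in the Banach algebra $A := \End_K^{\mathrm{cts}}(B)$, and read off local $\Qp$-analyticity from the resulting multinomial power-series expansion. Both proofs hinge on the $\Zp$-linearity of $\gamma \longmapsto \log\rho(\gamma)$ on $H$, and you correctly isolate this as the main technical obstacle.

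The gap is in your proposed fix. The lemma's hypotheses are only that each $\gamma \in \Gamma_L$ acts by a continuous $K$-linear automorphism and that \eqref{f:condition-locan} holds; continuity of the orbit maps $\gamma \mapsto \gamma b$ is \emph{not} assumed. In fact separate (and hence joint) continuity of the action is a \emph{consequence} of the lemma, which is exactly how it is used downstream: the proof of Lemma \ref{Gamma-cont-O} explicitly defers continuity of the orbit maps to the differentiability established in section \ref{sec:locan}, i.e.\ to the present lemma. So treating orbit-map continuity as ``a standing assumption of the same kind as in Lemma \ref{Gamma-cont-O}'' would be circular, and if instead you add it as a hypothesis you have proved a statement too weak for its intended applications (e.g.\ Prop.\ \ref{annuli-locan}, which invokes the lemma precisely to \emph{obtain} that continuity).

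The $\Zp$-linearity of $\log\circ\rho$ can be obtained with no continuity hypothesis at all, using the boundedness built into \eqref{f:condition-locan}. The map $f := \log\circ\rho : H \to A$ is a $\ZZ$-linear homomorphism from the finitely generated $\Zp$-module $H$ into the $K$-Banach space $A$ whose image is contained in the bounded set $\{a' \in A : \|a'\| < p^{-1/(p-1)}\}$. Any such map is automatically $\Zp$-linear: for $h \in H$ and $x \in \Zp$, write $x = x^{(n)} + p^n y_n$ with $x^{(n)} \in \ZZ$ a truncation of the $p$-adic expansion and $y_n \in \Zp$; then $f(xh) = x^{(n)} f(h) + p^n f(y_n h)$, where the first term tends to $x f(h)$ and the second to $0$ because $\|f(y_n h)\|$ is uniformly bounded, whence $f(xh) = xf(h)$. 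This is the step the paper leaves implicit when it writes $\log(\gamma) = \sum_i x_i(\gamma)\log(\gamma_i)$ in the displayed expansion, and it is what should replace your appeal to a continuity assumption. With that substituted, the rest of your argument (basis of $H$, multinomial expansion, operator-norm estimate for convergence) matches the paper's and is fine.
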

\begin{proof}
The main point is to show that the orbit maps
\begin{align*}
  \rho_b : \Gamma_L & \longrightarrow B \\
  \gamma & \longmapsto \gamma b,
\end{align*}
for any $b \in B$, are locally $\Qp$-analytic. Since the multiplication in $\Gamma_L$ is locally $L$-analytic it suffices to show that $\rho_b | 1 + p^m o_L$ with $m$ as in the assumption is locally $\Qp$-analytic. On the one hand we fix a basis $\gamma_1, \ldots, \gamma_r$ of the free $\Zp$-module $1 + p^m o_L$ of rank $r := [L:\Qp]$ and write $1 + p^m o_L \ni \gamma = \gamma_1^{x_1(\gamma)} \cdot \ldots \cdot \gamma_r^{x_r(\gamma)}$. Then $\gamma \longmapsto (x_1(\gamma), \ldots , x_r(\gamma))$ is a global $\Qp$-chart of $1 + p^m o_L$. On the other hand we consider the $K$-Banach algebra $A$ of continuous linear endomorphisms of $B$ with the operator norm. We have the bijections
\begin{align*}
  \{a' \in A : \|a'\| < p^{-\frac{1}{p-1}} \} & \xrightarrow{\;\exp\;} \{a \in A : \|a - 1\| < p^{-\frac{1}{p-1}} \} \\
   & \xleftarrow{\;\log\;} \\
   a' & \longmapsto \exp(a') = \sum_{n \geq 0} \frac{1}{n!} (a')^n \\
   - \sum_{n \geq 1} \frac{1}{n} (1-a)^n = \log (1+(a-1)) = \log(a) & \longleftarrow a
\end{align*}
which are inverse to each other and which ``preserve'' norms (cf.\ \cite{B-GAL} II\S8.4 Prop.\ 4). By our assumption we have in $A$ the expansion
\begin{align}\label{f:expansion}
  \gamma & = \exp(\log(\gamma)) = \sum_{n \geq 0} \frac{1}{n!} \log(\gamma)^n = \sum_{n \geq 0} \frac{1}{n!} \big( \sum_{i=1}^r x_i(\gamma) \log(\gamma_i)\big)^n \\
  & = \sum_{(n_1, \ldots, n_r) \in \ZZ_{\geq 0}^r} \frac{c_{(n_1, \ldots,n_r)}}{(n_1 + \ldots + n_r)!} \prod_{i=1} (\log(\gamma_i))^{n_i} \prod_{i=1}^r x_i(\gamma)^{n_i} \nonumber
\end{align}
on $1 + p^m o_L$, where the integers $c_{(n_1,\ldots,c_r)}$ denote the usual polynomial coefficients. Since $d := \max_i \|\log(\gamma_i)\| < p^{-\frac{1}{p-1}}$ by assumption the coefficient operators
\begin{equation*}
  \nabla_{B,(n_1,\ldots,n_r)} := \frac{c_{(n_1, \ldots,n_r)}}{(n_1 + \ldots + n_r)!} \prod_{i=1} (\log(\gamma_i))^{n_i}
\end{equation*}
satisfy
\begin{align*}
  \|\nabla_{B,(n_1,\ldots,n_r)}\| & \leq |(n_1 + \ldots n_r)!|^{-1} d^{n_1 + \ldots n_r} \\
   & = |(n_1 + \ldots + n_r)!|^{-1} |p|^{\frac{n_1 + \ldots + n_r}{p-1}} (d/|p|^{\frac{1}{p-1}})^{n_1 + \ldots + n_r} \\
   & \leq (d/|p|^{\frac{1}{p-1}})^{n_1 + \ldots n_r} \xrightarrow{\; n_1 + \ldots + n_r \rightarrow \infty\;} 0
\end{align*}
where the last inequality comes from $|p|^{\frac{n}{p-1}} \leq |n!|$ (cf.\ \cite{B-GAL} II\S8.1 Lemma 1). Evaluating \eqref{f:expansion} in $b \in B$ therefore produces an expansion of $\rho_b$ into a power series convergent on $1 + p^m o_L$.

It follows in particular that the orbit maps $\rho_b$ are continuous, i.e., that the $\Gamma_L$-action on $B$ is separately continuous. But, since $\Gamma_L$ is compact, the nonarchimedean Banach-Steinhaus theorem (cf.\ \cite{NFA} Prop.\ 6.15) then implies that the action $\Gamma_L \times B \rightarrow B$ is jointly continuous.
\end{proof}

The operators $\nabla_{B,(n_1, \ldots, n_r)}$ on $B$ which we have constructed in the proof of Lemma \ref{locan} have the following conceptual interpretation. First we observe that
\begin{equation*}
  \nabla_{B,(n_1, \ldots, n_r)} = \prod_{i=1}^r \nabla_{B,(\ldots,0,n_i,0, \ldots)} = \prod_{i=1}^r \frac{1}{n_i !} \nabla_{B,\underline{i}}^{n_i}
\end{equation*}
where $\underline{i} := (\ldots,0,1,0,\ldots)$ is the multi-index with entry one in the $i$th place. On the other hand the derived action of the Lie algebra of $\Gamma_L$ on $B$ is given by
\begin{equation*}
  \mathfrak{x} b = \frac{d}{dt} \exp_{\Gamma_L} (t \mathfrak{x}) b_{\big| t=0} \qquad\text{for $\mathfrak{x} \in \Lie(\Gamma_L) = L$ and $b \in B$}
\end{equation*}
where $\exp_{\Gamma_L}$ is an exponential map for $\Gamma_L$ (cf.\ \cite{Fea} \S3.1) and where $t$ varies in a small neighbourhood of zero in $\Zp$. Note that the usual exponential function $\exp : \Lie(\Gamma_L) = L - - -> o_L^\times = \Gamma_L$ is an exponential map for the locally $L$-analytic group $\Gamma_L$ (cf.\ \cite{B-GAL} III\S4.3 Ex.\ 2). We put $\mathfrak{x}_i := \log \gamma_i$. Using the expansion \eqref{f:expansion} we compute
\begin{align*}
  \exp_{\Gamma_L} (t \mathfrak{x}_j) b & = \sum_{(n_1,\ldots,n_r) \in \ZZ_{\geq 0}^r}  \nabla_{B,(n_1, \ldots, n_r)} (b) \prod_{i=1}^r x_i(\exp (t \mathfrak{x}_j))^{n_i} \\
   & = \sum_{(n_1,\ldots,n_r) \in \ZZ_{\geq 0}^r}  \nabla_{B,(n_1, \ldots, n_r)} (b) \prod_{i=1}^r x_i(\exp (\mathfrak{x}_j)^t)^{n_i} \\
   & = \sum_{(n_1,\ldots,n_r) \in \ZZ_{\geq 0}^r}  \nabla_{B,(n_1, \ldots, n_r)} (b) \prod_{i=1}^r x_i(\gamma_j^t)^{n_i} \\
   & = \sum_{n=0}^\infty \frac{1}{n!} \nabla_{B,\underline{j}}^n (b) t^n
\end{align*}
and hence
\begin{equation}\label{f:nabla}
  \frac{d}{dt} \exp_{\Gamma_L} (t \mathfrak{x}_j) b_{\big| t=0} = \nabla_{B,\underline{j}}(b) \ .
\end{equation}
This proves the following.

\begin{corollary}\label{locan2}
If \eqref{f:condition-locan} holds, then the operator $\nabla_{B,\underline{j}}$ coincides with the derived action of $\log \gamma_j \in L = \Lie(\Gamma_L)$ on $B$.
\end{corollary}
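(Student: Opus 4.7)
The statement is essentially a direct reading of the computation \eqref{f:nabla} that precedes the corollary, so the proof will be very short. My plan is to assemble the argument as follows.

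First, by definition the derived action of an element $\mathfrak{x} \in \Lie(\Gamma_L) = L$ on $B$ sends $b$ to $\frac{d}{dt}\exp_{\Gamma_L}(t\mathfrak{x})b_{|t=0}$, where $t$ varies in a neighbourhood of $0$ in $\Zp$ and $\exp_{\Gamma_L}$ is any exponential map for the locally $L$-analytic group $\Gamma_L$. As recorded in the paragraph preceding \eqref{f:nabla}, the usual $p$-adic exponential $\exp : L \to o_L^\times$ is such an exponential map. Taking $\mathfrak{x} = \mathfrak{x}_j = \log\gamma_j$ and inserting this choice into \eqref{f:nabla} gives $\mathfrak{x}_j\cdot b = \nabla_{B,\underline{j}}(b)$ for every $b \in B$, which is the claim.

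The only thing that needs a word of justification is that \eqref{f:nabla} itself is legitimate. This was essentially done already: the expansion \eqref{f:expansion}, valid on all of $1 + p^m o_L$ thanks to the convergence estimates $\|\nabla_{B,(n_1,\ldots,n_r)}\| \leq (d/|p|^{1/(p-1)})^{n_1+\cdots+n_r}$ with $d < p^{-1/(p-1)}$, can be specialized to $\gamma = \exp(t\mathfrak{x}_j) = \gamma_j^t$. Under this specialization the chart coordinates satisfy $x_i(\gamma_j^t) = t\delta_{ij}$, so the expansion collapses to the one-variable series $\sum_{n\geq 0}\frac{1}{n!}\nabla_{B,\underline{j}}^n(b)\,t^n$, which converges on a neighbourhood of $0 \in \Zp$ by the same norm estimate. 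Term-by-term differentiation at $t=0$ is then legitimate and picks out the linear coefficient $\nabla_{B,\underline{j}}(b)$, yielding \eqref{f:nabla}.

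There is no real obstacle here: the corollary is purely the conceptual identification of the operators $\nabla_{B,\underline{j}}$ --- introduced as polynomial coefficients in the proof of Lemma \ref{locan} --- with the Lie algebra action of $\log\gamma_j$ on $B$. The work was carried out already in establishing \eqref{f:expansion} and \eqref{f:nabla}.
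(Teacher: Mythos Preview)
Your proposal is correct and matches the paper exactly: the corollary has no separate proof in the paper, as it is introduced by ``This proves the following'' immediately after the computation \eqref{f:nabla}, which you have correctly identified as the entire content of the argument.
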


The commutativity of $\Gamma_L$ implies that the operators $\nabla_{B,(n_1,\ldots,n_r)}$ commute with each other. It follows that the derived $\Lie(\Gamma_L)$-action on $B$ is through commuting operators. In the following we denote by $\nabla_B$, or simply by $\nabla$, the operator corresponding to the element $1 \in L = \Lie(\Gamma_L)$.

We remark that, although $\Gamma_L$ is a locally $L$-analytic group, the action on $B$ only is locally $\Qp$-analytic so that the derived action $\Lie(\Gamma_L) \longrightarrow \End_K(B)$ only is $\Qp$-linear in general.

Next we consider the situation where $B$ in addition is a Banach algebra such that $\|\ \|_B$ is submultiplicative and $M$ is a finitely generated projective $B$-module. We choose generators $e_1, \ldots, e_d$ of $M$ and consider the $B$-module homomorphism
\begin{align*}
  B^d & \longrightarrow M \\
  (b_1, \ldots, b_d) & \longmapsto \sum_{i=1}^d b_i e_i \ .
\end{align*}
On $B^d$ we have the maximum norm $\|(b_1, \ldots, b_d)\|_{B^d} := \max_i \|b_i\|_B$ and on $M$ the corresponding quotient norm
\begin{equation*}
  \|x\|_M := \inf \{\max_i \|b_i\|_B : x = \sum_{i=1}^d b_i e_i\} \ .
\end{equation*}
To see that $\|\ \|_M$ indeed is a norm we observe that the above map, by the projectivity of $M$, is the projection map onto the first summand of a suitable $B$-module isomorphism $B^d \xrightarrow{\cong} M \oplus M'$. This isomorphism is continuous if we equip $M$ and $M'$ with the corresponding quotient topologies. By the open mapping theorem it has to be a topological isomorphism. Hence the quotient topology on $M$ is Hausdorff and complete (cf.\ \cite{NFA} Prop.\ 8.3). In particular, $\|\ \|_M$ is a norm. This construction makes $M$ into a $K$-Banach space whose topology is independent of the choice of basis. We assume that $\Gamma_L$ acts continuously by $K$-linear automorphisms on $M$ which are semilinear with respect to the $\Gamma_L$-action on $B$.

\begin{proposition}\label{locan3}
If \eqref{f:condition-locan} holds for $B$, then also the $\Gamma_L$-action on $M$ satisfies \eqref{f:condition-locan} and, in particular, is locally $\Qp$-analytic.
\end{proposition}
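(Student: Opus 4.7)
The plan is to reduce the problem on $M$ to the hypothesis on $B$ by exploiting the semilinearity together with the quotient norm $\|\ \|_M$ associated to a chosen set of generators $e_1,\dots,e_d$.

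First I would check that under \eqref{f:condition-locan} for $B$ every $\gamma\in 1+p^mo_L$ acts as an isometry on $B$: since $\|\gamma-1\|<1$ we have $\|\gamma(b)\|_B\le\|b\|_B$, and the same bound applied to $\gamma^{-1}\in 1+p^mo_L$ yields $\|\gamma^{-1}(b)\|_B\le\|b\|_B$, so $\|\gamma(b)\|_B=\|b\|_B$. Next, for any $x=\sum_i b_ie_i\in M$, the semilinearity gives the decomposition
\begin{equation*}
(\gamma-1)(x)=\sum_{i=1}^d\gamma(b_i)\,(\gamma-1)(e_i)+\sum_{i=1}^d(\gamma-1)(b_i)\,e_i \ .
\end{equation*}
Using that $\gamma$ is an isometry on $B$ and the definition of the quotient norm on $M$, I would estimate
\begin{equation*}
\|(\gamma-1)(x)\|_M\le\max\bigl(\|\gamma-1\|_{\mathcal L(B)},\ \max_i\|(\gamma-1)(e_i)\|_M\bigr)\cdot\max_i\|b_i\|_B ,
\end{equation*}
and then take the infimum over all representations $x=\sum_ib_ie_i$ to obtain the operator norm bound
\begin{equation*}
\|\gamma-1\|_{\mathcal L(M)}\le\max\bigl(\|\gamma-1\|_{\mathcal L(B)},\ \max_{1\le i\le d}\|(\gamma-1)(e_i)\|_M\bigr) .
\end{equation*}

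With this inequality in hand the conclusion is almost immediate. By the joint continuity of the $\Gamma_L$-action on $M$, each orbit map $\gamma\mapsto\gamma(e_i)$ is continuous at $1$; since there are only finitely many generators, there exists $m'\ge m$ such that $\|(\gamma-1)(e_i)\|_M<p^{-1/(p-1)}$ for all $\gamma\in 1+p^{m'}o_L$ and all $i$. Combined with the hypothesis $\|\gamma-1\|_{\mathcal L(B)}<p^{-1/(p-1)}$ this yields $\|\gamma-1\|_{\mathcal L(M)}<p^{-1/(p-1)}$ on $1+p^{m'}o_L$, i.e.\ condition \eqref{f:condition-locan} holds for $M$. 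Lemma \ref{locan} then gives that the $\Gamma_L$-action on $M$ is locally $\Qp$-analytic.

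The only subtle point is the passage from the componentwise norm bound to the quotient norm bound: one must verify that the right-hand side does not depend on the representative $\sum b_ie_i$. This works because the scalar factor $\max_i\|b_i\|_B$ is the only term depending on the representative and the constant $\max(\|\gamma-1\|_{\mathcal L(B)},\max_i\|(\gamma-1)(e_i)\|_M)$ is pulled out in front, so taking the infimum over representations replaces $\max_i\|b_i\|_B$ by $\|x\|_M$. (The isometry property of $\gamma$ on $B$ is what allows the factor $\|\gamma(b_i)\|_B$ in the first sum to be replaced by $\|b_i\|_B$ without loss.) This is the only place where the argument is not purely formal.
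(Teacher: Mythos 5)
Your proof is correct and follows essentially the same route as the paper: fix finitely many generators, apply a Leibniz decomposition to $(\gamma-1)(x)$, control the generator terms by joint continuity of the action on $M$, and pass to the quotient norm by taking the infimum over representations. The only (cosmetic) difference is the choice of Leibniz decomposition --- you write $(\gamma-1)(b_ie_i)=\gamma(b_i)(\gamma-1)(e_i)+(\gamma-1)(b_i)e_i$ and invoke the isometry of $\gamma$ on $B$, while the paper writes $(\gamma-1)(b_ie_i)=(\gamma-1)(b_i)\gamma(e_i)+b_i(\gamma-1)(e_i)$ and bounds $\|\gamma(e_i)\|_M\leq 1$; the two are trivially equivalent.
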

\begin{proof}
Let the natural number $m$ be as in the condition \eqref{f:condition-locan} for $B$ in Lemma \ref{locan}. By the continuity of the $\Gamma$-action on $M$ we find an $m' \geq m$ such that $\|(\gamma - 1)(e_i)\|_M < p^{-2}$ (and hence $\|\gamma (e_i)\|_M = \|e_i\|_M = 1$) for any $\gamma \in 1 + p^{m'} o_L$ and any $1 \leq i \leq d$. For any such $\gamma$ and any $x = \sum_{i=1}^d b_i e_i \in M$ we then compute
\begin{align*}
    \|(\gamma - 1)(x)\|_M & = \| \sum_i (\gamma - 1)(b_i e_i)\|_M \\
    & = \| \sum_i \big( (\gamma - 1)(b_i) \gamma(e_i) + b_i (\gamma - 1)(e_i) \big) \|_M \\
    & \leq \max ( \max_i \|(\gamma - 1)\| \|b_i\|_B \|\gamma (e_i)\|_M , \max_i \|b_i\|_B \|(\gamma - 1)(e_i)\|_M) \\
    & \leq \max (\|\gamma - 1\|, p^{-2}) \cdot \max_i \|b_i\|_B \ .
\end{align*}
It follows that
\begin{equation*}
    \|(\gamma - 1)(x)\|_M \leq \max (\|\gamma - 1\|, p^{-2}) \cdot \|x\|_M < p^{-\frac{1}{p-1}} \|x\|_M \ .
\end{equation*}
\end{proof}

In a first step we apply this formalism to the $o_L^\times$-action on the open disk $\mathfrak{B}$. For any $r_0 \leq r$ in $(0,1) \cap p^{\QQ}$ let $\mathfrak{B}(r_0,r)_{/L}$ denote the $L$-affinoid annulus of inner, resp.\ outer, radius $r_0$, resp.\ $r$, around zero. It is preserved by the $o_L^\times$-action.

\begin{proposition}\label{annuli-locan}
The $\Gamma_L$-actions on $\mathcal{O}_K (\mathfrak{B}(r))$ and on $\mathcal{O}_K (\mathfrak{B}(r_0,r))$, induced by the Lubin-Tate formal $o_L$-module, verify the condition \eqref{f:condition-locan} and are locally $L$-analytic.
\end{proposition}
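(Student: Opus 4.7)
The plan is to verify \eqref{f:condition-locan} on explicit orthogonal bases, apply Lemma \ref{locan} to get local $\Qp$-analyticity, and then upgrade to $L$-analyticity using the Lubin-Tate structure.

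$\mathcal{O}_K(\mathfrak{B}(r))$ has orthogonal basis $(Z^n)_{n \geq 0}$ with $\|Z^n\|_r = r^n$; $\mathcal{O}_K(\mathfrak{B}(r_0,r))$ has the Laurent orthogonal basis $(Z^n)_{n \in \ZZ}$ with $\|Z^n\|_{r_0,r} = \max(r_0^n, r^n)$, its sup norm satisfying $\|\cdot\|_{r_0,r} = \max(\|\cdot\|_{r_0}, \|\cdot\|_r)$ where each circle norm $\|\cdot\|_\rho$ is multiplicative. By the orthogonal-basis criterion stated right after \eqref{f:condition-locan}, it suffices to find $m \geq 2$ and $C < p^{-1/(p-1)}$ with $\|(\gamma-1)Z^n\|_\rho \leq C\|Z^n\|_\rho$ for all $\gamma \in 1 + p^m o_L$, all $n$ in the relevant index set, and $\rho \in \{r_0, r\}$ (only $\rho = r$ being needed in the disk case). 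Since $\gamma$ acts as $F \mapsto F \circ [\gamma]$, one has $(\gamma-1)Z^n = [\gamma](Z)^n - Z^n$. The expansion $[\gamma](Z) = \gamma Z + O(Z^2) \in o_L[[Z]]$ with $|\gamma| = 1$ forces $|[\gamma](z)| = |z|$ throughout the open unit disk, so $\|[\gamma](Z)\|_\rho = \rho$ for every $\rho < 1$. Factoring $[\gamma](Z)^n - Z^n = ([\gamma](Z) - Z)\sum_{k=0}^{n-1}[\gamma](Z)^k Z^{n-1-k}$ for $n \geq 1$ and $[\gamma](Z)^{-n} - Z^{-n} = -\bigl([\gamma](Z)^n - Z^n\bigr)/([\gamma](Z) Z)^n$, combined with the multiplicativity of $\|\cdot\|_\rho$, yields the universal bound
\[
  \|(\gamma - 1)Z^n\|_\rho \;\leq\; \rho^{-1}\,\|[\gamma](Z) - Z\|_\rho\,\|Z^n\|_\rho \qquad (n \in \ZZ).
\]

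The core estimate is therefore to make $\|[\gamma](Z) - Z\|_\rho$ small uniformly as $\gamma \to 1$. Using the formal group identity $[\gamma](Z) = F(Z, [\gamma-1](Z))$ with $F(X,Y) = X + Y + XY \cdot H(X,Y)$, $H \in o_L[[X,Y]]$, I obtain $[\gamma](Z) - Z = [\gamma-1](Z) \cdot (1 + Z \cdot H(Z, [\gamma-1](Z)))$, so $\|[\gamma](Z) - Z\|_\rho \leq \|[\gamma-1](Z)\|_\rho$. For $c = \pi_L^N u$ with $u \in o_L^\times$ the ring-homomorphism property gives $[c] = [\pi_L]^{\circ N} \circ [u]$, and since $[u]$ preserves each circle $|z| = \rho$, matters reduce to $\|[\pi_L]^{\circ N}(W)\|_\rho$. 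The reduction $[\pi_L](W) \equiv W^q \pmod{\pi_L}$ gives $\|[\pi_L](W)\|_{\rho'} \leq \max(|\pi_L|\rho', (\rho')^q) < \rho'$ for every $\rho' < 1$, and iteration yields $\|[\pi_L]^{\circ N}(W)\|_\rho \to 0$ as $N \to \infty$. Choosing $N$ (and hence $m$) large enough that this norm is $< p^{-1/(p-1)}\rho$ simultaneously for $\rho = r_0$ and $\rho = r$ completes the verification of \eqref{f:condition-locan}, and Lemma \ref{locan} then yields local $\Qp$-analyticity of the $\Gamma_L$-action.

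To upgrade to local $L$-analyticity I use that $o_L \to o_L[[Z]]$, $a \mapsto [a](Z)$, is itself locally $L$-analytic in $a$: on a sufficiently small disk $\mathfrak{B}(\rho_0)$ the identity $[a] = \exp_{LT} \circ (a \cdot) \circ \log_{LT}$ makes $L$-analyticity manifest, and this propagates to all $\mathfrak{B}(\rho)$ with $\rho < 1$ via the $L$-analytic finite \'etale coverings $[\pi_L]^{\circ n}$ of \cite{ST} Lemma 3.2. Consequently every orbit map $\gamma \mapsto F \circ [\gamma]$ is a locally $L$-analytic function $\Gamma_L \to \mathcal{O}_K(\mathfrak{B}(r))$ (resp.\ $\mathcal{O}_K(\mathfrak{B}(r_0,r))$). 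The hard part will be producing the uniform-in-$\rho$ contraction of the iterates of $[\pi_L]$ across the \emph{full} range of admissible radii $r_0, r < 1$ (not only the small ones on which $\log_{LT}$ is already an isomorphism), while consistently running the negative-exponent factorization in the annulus case, where the ambient sup norm decomposes only as the maximum of the two multiplicative circle norms.
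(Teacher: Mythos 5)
Your verification of condition \eqref{f:condition-locan} is sound and takes a more self-contained route than the paper. Where the paper writes $[\gamma](Z) = Zu_\gamma$ with $u_\gamma = \gamma + \ldots \in o_L[[Z]]^\times$, factors $\gamma(Z^n) - Z^n = Z^n(u_\gamma^n - 1)$, and then quotes \cite{KR} Lemma 2.1.1 for the smallness of $\|u_\gamma - 1\|$, you instead factor $[\gamma](Z)^n - Z^n$ geometrically and carry out the contraction estimate yourself via $[\pi_L](W)\equiv W^q\pmod{\pi_L}$. The two are in fact the same bound: since $[\gamma](Z)-Z = Z(u_\gamma - 1)$, your $\rho^{-1}\|[\gamma](Z)-Z\|_\rho$ is exactly $\|u_\gamma - 1\|_\rho$. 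Moreover, the ``hard part'' you flag at the end — the contraction $\|[\pi_L]^{\circ N}(W)\|_\rho \to 0$ uniformly over the radii in play — is not actually hard: the proposition is stated for a \emph{fixed} $r_0 \leq r < 1$, so the iteration $\rho_{N} \leq \max(|\pi_L|\rho_{N-1},\rho_{N-1}^q)$ at the two fixed radii is all that is needed, and it manifestly tends to $0$. So that part of your plan closes up fine.

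The genuine gap is in the upgrade to local $L$-analyticity, where your route diverges from the paper's and is not valid as stated. You propose to show directly that the orbit maps $\gamma \mapsto F\circ[\gamma]$ are locally $L$-analytic, by (a) establishing local $L$-analyticity of $a\mapsto[a](Z)$ on a small disk via $[a]=\exp_{LT}\circ(a\cdot)\circ\log_{LT}$ and ``propagating'' it to arbitrary radii through the finite \'etale coverings $[\pi_L]^{\circ n}$, and then (b) concluding ``consequently'' that $a\mapsto F\circ[a]$ is locally $L$-analytic for every $F$. Neither step is justified. For (a): a finite \'etale covering $[\pi_L^n]\colon \mathfrak{B}(\rho)\to\mathfrak{B}(\rho')$ makes $\mathcal{O}_K(\mathfrak{B}(\rho))$ a finitely generated projective $\mathcal{O}_K(\mathfrak{B}(\rho'))$-module, and Prop.\ \ref{locan3} would then propagate condition \eqref{f:condition-locan} — hence local $\Qp$-analyticity — but it does \emph{not} propagate local $L$-analyticity; that is precisely the extra content of this proposition, and you cannot assume it in its own proof. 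For (b): passing from local $L$-analyticity of $a\mapsto[a](Z)$ to that of $a\mapsto F([a](Z))$ requires an argument that composition with the (at best $K$-analytic) map $G\mapsto F(G)$ preserves the $L$-locally-analytic structure, and none is given. The paper sidesteps both issues by a much more economical device: having local $\Qp$-analyticity from Lemma \ref{locan} and the resulting Taylor expansion, it suffices to check $L$-\emph{bilinearity of the derived $\Lie(\Gamma_L)$-action}, which, since $\Lie(\Gamma_L)$ acts by continuous derivations, reduces to the single formal identity $\mathfrak{x}Z = \mathfrak{x}\cdot(1Z)$; this is proved by differentiating $[\exp(t\mathfrak{x})](Z) = \exp_{LT}(\exp(t\mathfrak{x})\log_{LT}(Z))$ at $t=0$, using only the $o_L$-linearity $\log_{LT}\circ[a]=a\log_{LT}$ (\cite{Lan} 8.6 Lemma 2). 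Crucially, this computation is formal — it is radius-independent and so requires no propagation — and the Taylor expansion $\exp(\mathfrak{x})b=\sum_n \frac{1}{n!}\mathfrak{x}^n\cdot 1^n b$ then delivers local $L$-analyticity of the orbit maps directly. You should adopt this route for the second half of the proof.
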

\begin{proof}
The other case being simpler we only treat the $\Gamma_L$-action on $\mathcal{O}_K (\mathfrak{B}(r_0,r))$. First of all we verify the condition \eqref{f:condition-locan}. The elements of $\mathcal{O}_K (\mathfrak{B}(r_0,r))$ are the Laurent series in the coordinate $Z$ which converge on $\mathfrak{B}(r_0,r))(\Cp)$. The maximum modulus principle tells us that $\{Z^n\}_{n \in \ZZ}$ is an orthogonal basis of the $K$-Banach space $\mathcal{O}_K (\mathfrak{B}(r_0,r))$ with respect to the supremum norm $\|\ \|$. It therefore suffices to find an $m \geq 2$ such that
\begin{equation*}
  \|\gamma (Z^n) - Z^n \| \leq p^{-2} \|Z^n\| \qquad\text{for any $n \in \ZZ$ and any $\gamma \in 1 + p^m o_L$}.
\end{equation*}
We have $\gamma(Z) = [\gamma](Z) = \gamma Z + \ldots = Zu_\gamma$ with $u_\gamma = \gamma + \ldots \in o_L[[Z]]^\times$, and we compute
\begin{equation*}
  \gamma(Z^n) - Z^n = Z^n(u_\gamma^n - 1)
   =
   \begin{cases}
   Z^n(u_\gamma - 1)(u_\gamma^{n-1} + \ldots 1) & \text{if $n > 0$} \\
   Z^n(u_\gamma - 1)(-u_\gamma^n - \ldots - u_\gamma^{-1}) & \text{if $n < 0$}.
   \end{cases}
\end{equation*}
It follows that $\|\gamma (Z^n) - Z^n \| \leq \|u_\gamma - 1\| \|Z^n\|$. By the proof of Lemma 2.1.1 in \cite{KR} there exists an $m \geq 2$ such that $\|u_\gamma - 1\| = \| \frac{\gamma(Z)}{Z} - 1\| \leq p^{-2}$. This shows that \eqref{f:condition-locan} holds true. Lemma \ref{locan} then tells us that the $\Gamma_L$-action is locally $\Qp$-analytic.

Next we establish that the derived $\Lie(\Gamma_L)$-action is $L$-bilinear. Since $\Lie(\Gamma_L)$ acts by continuous derivations it suffices to check that $\mathfrak{x}Z = \mathfrak{x} \cdot 1Z$ holds true for any sufficiently small $\mathfrak{x} \in L = \Lie(\Gamma_L)$ (where $\cdot$ on the right hand side denotes the scalar multiplication). We compute
\begin{align*}
  \mathfrak{x} Z & = \frac{d}{dt} [\exp_{\Gamma_L} (t \mathfrak{x})] (Z)_{\big| t=0} = \frac{d}{dt} [\exp (t \mathfrak{x})](Z)_{\big| t=0} \\
  & = \frac{d}{dt} \exp_{LT}(\log_{LT}([\exp (t \mathfrak{x})](Z)))_{\big| t=0}
   = \frac{d}{dt} \exp_{LT}(\exp (t \mathfrak{x}) \cdot \log_{LT}(Z))_{\big| t=0} \\
   & = \mathfrak{x} \cdot \frac{d}{dt} \exp_{LT}(\exp (t) \cdot \log_{LT}(Z))_{\big| t=0} = \mathfrak{x} \cdot 1Z \ .
\end{align*}
The fourth identity uses the fact that the logarithm $\log_{LT}$ of $LT$ is ``$o_L$-linear'' (\cite{Lan} 8.6 Lemma 2).

Finally, by looking at the Taylor expansion
\begin{equation*}
  \exp(\mathfrak{x})b = \sum_{n=0}^\infty \frac{1}{n!} \mathfrak{x}^n b = \sum_{n=0}^\infty \frac{1}{n!} \mathfrak{x}^n \cdot 1^n b
\end{equation*}
we see that the orbit maps $\rho_b$ are locally $L$-analytic.
\end{proof}

\begin{lemma}\label{smalldisk-locan}
For any $r \in (0,p^{-\frac{1}{p-1}}) \cap p^\QQ$ the $\Gamma_L$-action on $\mathcal{O}_K (\mathfrak{X}(r))$ verifies the condition \eqref{f:condition-locan} and is locally $L$-analytic.
\end{lemma}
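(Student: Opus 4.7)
The plan is to reduce the statement to the action on $\mathcal{O}_K(\mathfrak{B}(r))$ coming from multiplication on the disk, using the concrete isomorphism from Lemma \ref{small-disk}, and then proceed in parallel to the proof of Prop.\ \ref{annuli-locan}, but with a much simpler computation since the action on the coordinate is linear rather than given by a Lubin--Tate power series.

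First I would transport the problem through the isomorphism of Lemma \ref{small-disk}. Writing $Y$ for the standard coordinate on $\mathfrak{B}(r)$, the map $y \longmapsto \chi_y$ with $\chi_y(g) = \exp(gy)$ is $\Gamma_L$-equivariant, where $\gamma \in o_L^\times$ acts on $\mathfrak{X}(r)$ by $\chi \mapsto \chi(\gamma \cdot)$ and on $\mathfrak{B}(r)$ by $y \mapsto \gamma y$; indeed $\chi_y(\gamma g) = \exp(\gamma g y) = \chi_{\gamma y}(g)$. Hence the induced $\Gamma_L$-action on $\mathcal{O}_K(\mathfrak{B}(r)) \cong \mathcal{O}_K(\mathfrak{X}(r))$ satisfies $\gamma_\ast(Y^n) = \gamma^n Y^n$ for every $n \geq 0$. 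By the maximum modulus principle the monomials $\{Y^n\}_{n \geq 0}$ form an orthogonal basis of the Banach space $\mathcal{O}_K(\mathfrak{B}(r))$, so it suffices (by the criterion recalled after \eqref{f:condition-locan}) to control $|\gamma^n - 1|$ uniformly in $n$.

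Next I would verify condition \eqref{f:condition-locan}. Choose $m \geq 2$. For any $\gamma \in 1 + p^m o_L$ one has $|\gamma - 1| \leq p^{-m} < p^{-1/(p-1)}$, so the $p$-adic logarithm and exponential are defined and mutually inverse on the relevant disks, giving $|\log \gamma| = |\gamma - 1|$ and $\gamma^n = \exp(n \log \gamma)$ for every $n \in \ZZ$ (since $|n \log \gamma| \leq |\gamma - 1| < p^{-1/(p-1)}$). Therefore $|\gamma^n - 1| = |n \log \gamma| \leq |\gamma - 1| \leq p^{-m} \leq p^{-2}$, so
\begin{equation*}
    \|(\gamma - 1)(Y^n)\| = |\gamma^n - 1|\, \|Y^n\| \leq p^{-2}\, \|Y^n\|
\end{equation*}
for every $n \geq 0$. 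This establishes \eqref{f:condition-locan}, and Lemma \ref{locan} then implies that the $\Gamma_L$-action is locally $\Qp$-analytic.

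Finally I would upgrade from $\Qp$-analyticity to $L$-analyticity by checking that the derived $\Lie(\Gamma_L)$-action is $L$-bilinear. Since $\Lie(\Gamma_L)$ acts by continuous derivations on $\mathcal{O}_K(\mathfrak{B}(r))$, it suffices to verify $\mathfrak{x}\, Y = \mathfrak{x} \cdot (1 \cdot Y)$ for $\mathfrak{x} \in L = \Lie(\Gamma_L)$ sufficiently small. But $\exp_{\Gamma_L}(t\mathfrak{x}) = \exp(t\mathfrak{x}) \in 1 + p^m o_L$ acts on $Y$ by multiplication by $\exp(t\mathfrak{x})$, hence
\begin{equation*}
    \mathfrak{x}\, Y = \frac{d}{dt} \exp(t\mathfrak{x})\, Y \big|_{t=0} = \mathfrak{x} \cdot Y,
\end{equation*}
as required. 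Looking at the Taylor expansion just as at the end of the proof of Prop.\ \ref{annuli-locan} then shows that the orbit maps are locally $L$-analytic. The only mild subtlety in the whole argument is the uniform bound $|\gamma^n - 1| \leq |\gamma - 1|$, which is why one passes through $\log$ rather than a naive binomial expansion; everything else is a straightforward specialization of Prop.\ \ref{annuli-locan} to the simpler setting where the action on the coordinate is multiplicative.
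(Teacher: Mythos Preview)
Your proof is correct and follows exactly the approach the paper itself takes: reduce via Lemma~\ref{small-disk} to the scalar multiplication action of $o_L^\times$ on $\mathfrak{B}(r)$, then run a simplified version of the argument for Prop.~\ref{annuli-locan}. The paper in fact only sketches this (``an almost trivial version of the reasoning in the proof of Prop.~\ref{annuli-locan}''), so you have simply filled in the details; the one cosmetic difference is that for the bound $|\gamma^n - 1| \leq |\gamma - 1|$ you go through $\log/\exp$, whereas the direct factorization $\gamma^n - 1 = (\gamma - 1)(1 + \gamma + \cdots + \gamma^{n-1})$ used in Prop.~\ref{annuli-locan} already gives it in one line.
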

\begin{proof}
Lemma \ref{small-disk} reduces us to proving the assertion for the $\Gamma_L$-action on $\mathcal{O}_K (\mathfrak{B}(r))$ which is induced by the multiplication action of $o_L^\times$ on the disk $\mathfrak{B}(r)$. But this is seen by an almost trivial version of the reasoning in the proof of Prop.\ \ref{annuli-locan}.
\end{proof}

According to Prop.\ \ref{annuli-locan} we have derived $\Lie(\Gamma_L)$-actions on $\mathcal{O}_K (\mathfrak{B}(r))$ and $\mathcal{O}_K (\mathfrak{B}(r_0,r))$ which are $L$-bilinear.  For $r_0' \leq r_0 \leq r \leq r'$ the inclusions $\mathcal{O}_K (\mathfrak{B}(r')) \subseteq \mathcal{O}_K (\mathfrak{B}(r))$ and $\mathcal{O}_K (\mathfrak{B}(r_0',r')) \subseteq \mathcal{O}_K (\mathfrak{B}(r_0,r))$ respects these actions. By first a projective limit and then a direct limit argument we therefore obtain compatible $L$-bilinear $\Lie(\Gamma_L)$-actions on
\begin{equation*}
  \mathcal{O}_K(\mathfrak{B}) \subseteq \mathcal{O}_K(\mathfrak{B} \setminus \mathfrak{B}(r)) \subseteq \mathscr{R}_K(\mathfrak{B}) \ .
\end{equation*}
Next we use the LT-isomorphism in section \ref{sec:LT} to obtain $L$-bilinear $\Lie(\Gamma_L)$-actions on
\begin{equation*}
  \mathcal{O}_K(\mathfrak{X}_n) ,\; \mathcal{O}_K(\mathfrak{X}(s,s'))\ \text{(with $s,s'$ as in Prop.\ \ref{quasi-Stein}), and} \quad   \mathcal{O}_K(\mathfrak{X}) \subseteq \mathcal{O}_K(\mathfrak{X} \setminus \mathfrak{X}_n) \subseteq \mathscr{R}_K(\mathfrak{X}) \ .
\end{equation*}
Recall that $\mathscr{R}_K(\mathfrak{B})$ and $\mathscr{R}_K(\mathfrak{X})$ are the locally convex inductive limits of the Fr\'echet spaces $\mathcal{O}_K(\mathfrak{B} \setminus \mathfrak{B}(r))$ and $\mathcal{O}_K(\mathfrak{X} \setminus \mathfrak{X}_n)$, respectively. Hence all the above locally convex $K$-vector spaces are barrelled (\cite{NFA} Examples at the end of \S6). The orbit maps $\rho_b$ for the $\Gamma_L$-action on these spaces (with the exception of $\mathcal{O}_K(\mathfrak{X}_n)$ and $\mathcal{O}_K(\mathfrak{X}(s,s'))$) are no longer locally $L$-analytic but they still are differentiable (\cite{Fea} 3.1.2). Hence these actions still are derived in the sense that they are given by the usual formula $(\mathfrak{x},b) \longmapsto \mathfrak{x} b = \frac{d}{dt} \exp_{\Gamma_L} (t \mathfrak{x}) b_{\big| t=0}$. The $\Gamma_L$-action on each $\mathcal{O}_K(\mathfrak{X}_n)$ and each $\mathcal{O}_K(\mathfrak{X}(s,s'))$ satisfies the condition \eqref{f:condition-locan} and is locally $L$-analytic.

For convenience we introduce the following notion.

\begin{definition}\label{def:L-analytic}
A differentiable (continuous) $\Gamma_L$-action on a barrelled locally convex $K$-vector space $V$ is called $L$-analytic if the derived action $\Lie(\Gamma_L) \times V \longrightarrow V$ is $L$-bilinear.
\end{definition}

We observe that with $V$ the induced $\Gamma_L$-action on any $\Gamma_L$-invariant closed barrelled subspace of $V$ is $L$-analytic as well.

If $\mathscr{F}(X,Y)$ denotes the formal group law of $LT$ then $\frac{\partial \mathscr{F}}{\partial Y}(0,Z)$ is a unit in $o_L[[Z]]$ and we put $g_{LT}(Z) := \big( \frac{\partial \mathscr{F}}{\partial Y}(0,Z) \big)^{-1}$. Then $g_{LT}(Z) dZ$ is, up to scalars, the unique invariant differential form on $LT$ (\cite{Haz} \S5.8). As before we let $\log_{LT}(Z) = Z + \ldots$ denote the unique formal power series in $L[[Z]]$ whose formal derivative is $g_{LT}$. This $\log_{LT}$ is the logarithm of $LT$ and lies in $\mathcal{O}_L(\mathfrak{B})$ (\cite{Lan} 8.6). In particular, $g_{LT}dZ = d\log_{LT}$ and $\mathcal{O}_L(\mathfrak{B}) dZ = \mathcal{O}_L(\mathfrak{B}) d\log_{LT}$. The invariant derivation $\partial_\mathrm{inv}$ on $\mathcal{O}_K(\mathfrak{B})$ corresponding to the form $d\log_{LT}$ is determined by
\begin{equation*}
  dF = \partial_\mathrm{inv} (F) d\log_{LT} = \partial_\mathrm{inv} (F) g_{LT} dZ = \frac{\partial F}{\partial Z} dZ
\end{equation*}
and hence is given by
\begin{equation*}
  \partial_\mathrm{inv}(F) = g_{LT}^{-1} \frac{\partial F}{\partial Z} \ .
\end{equation*}
The identity
\begin{equation*}
  \nabla_{\mathcal{O}_K(\mathfrak{B})} = \log_{LT} \cdot \partial_\mathrm{inv}
\end{equation*}
is shown in \cite{KR} Lemma 2.1.4.

Since the rigid variety $\mathfrak{X}$ is smooth of dimension one its sheaf of holomorphic differential forms is locally free of rank one. The group structure of $\mathfrak{X}$ forces this sheaf to even be free (cf.\ \cite{DG} II \S4.3.4). Hence the $\mathcal{O}_L(\mathfrak{X})$-module $\Omega_L(\mathfrak{X})$ of global holomorphic differential forms on $\mathfrak{X}$ is free of rank one. We claim that the differential form $d\log_{\mathfrak{X}}$ is invariant (for the group structure on $\mathfrak{X}$) and is a basis of $\Omega_L(\mathfrak{X})$. By the commutative diagram after Lemma 3.4 in \cite{ST} the function $\log_{\mathfrak{X}}$ corresponds, under the LT-isomorphism $\kappa$, to a nonzero scalar multiple of $\log_{LT}$. This implies our claim over $\Cp$ and then, by a simple descent argument with respect to the twisted Galois action, also over $L$. The invariant derivation $\partial_\mathrm{inv}$ on $\mathcal{O}_K(\mathfrak{X})$ corresponding to the form $d\log_{\mathfrak{X}}$ is defined by
\begin{equation*}
  df = \partial_\mathrm{inv} (f) d\log_{\mathfrak{X}} \ .
\end{equation*}
Using the LT-isomorphism it follows that
\begin{equation*}
  \nabla_{\mathcal{O}_K(\mathfrak{X})} = \log_{\mathfrak{X}} \cdot \partial_\mathrm{inv} \ .
\end{equation*}

\subsection{$(\varphi_L,\Gamma_L)$-modules}

Let $M$ be any finitely generated module over some topological ring $R$. The canonical topology of $M$ is defined to be the quotient topology with respect to a surjective $R$-module homomorphism $\alpha : R^m \longrightarrow M$. It makes $M$ into a topological $R$-module. If the multiplication in $R$ is only separately continuous then the module multiplication $R \times M \longrightarrow M$ is only separately continuous as well. Any $R$-module homomorphism between two finitely generated $R$-modules is continuous for the canonical topologies. We also need a semilinear version of this latter fact.

\begin{remark}\label{semilinear-cont}
Let $\psi : R \longrightarrow S$ be a continuous homomorphism of topological rings, let $M$ and $N$ be finitely generated $R$- and $S$-modules, respectively, and let $\alpha : M \longrightarrow N$ be any $\psi$-linear map (i.e., $\alpha(rm) = \psi(r)\alpha(m)$ for any $r \in R$ and $m \in M$); then $\alpha$ is continuous for the canonical topologies on $M$ and $N$.
\end{remark}
\begin{proof}
The map
\begin{align*}
  \alpha^{lin} : S \otimes_{R,\psi} M & \longrightarrow N \\
  s \otimes m & \longmapsto s \alpha(m)
\end{align*}
is $S$-linear. We pick free presentations $\lambda : R^\ell \twoheadrightarrow M$ and $\mu : S^m \twoheadrightarrow N$. Then we find an $S$-linear map $\beta$ such that the diagram
\begin{equation*}
  \xymatrix{
    R^\ell \ar@{>>}[d]_{\lambda} \ar[r]^-{\psi^\ell} & S^\ell = S \otimes_{R,\psi} R^\ell \ar@{>>}[d]_{\id \otimes \lambda} \ar[r]^-{\beta} & S^m \ar@{>>}[d]^{\mu} \\
    M \ar@/_2pc/[rr]^-{\alpha} \ar[r]^-{m \mapsto 1 \otimes m} & S \otimes_{R,\psi} M \ar[r]^-{\alpha^{lin}} & N   }
\end{equation*}
is commutative. All maps except possibly the lower left horizontal arrow are continuous. The universal property of the quotient topology then implies that $\alpha$ must be continuous as well.
\end{proof}

Suppose now that $M$ is finitely generated projective over $R$. Then the homomorphism $\alpha$ has a continuous section $\sigma : M \longrightarrow R^m$. Hence $M$ is topologically isomorphic to the submodule $\sigma(M)$ of $R^m$ (equipped with the subspace topology). Suppose further that $R$ is Hausdorff, resp.\ complete. Then $R^m$ is Hausdorff, resp.\ complete. We see that $\sigma(M)$ and $M$ are Hausdorff. Furthermore it follows that $\sigma(M) = \ker(\id_M - \sigma\circ\alpha)$ is closed in $R^m$. Hence, if $R$ is complete, then also $M$ is complete.

In our applications $R$ usually is a locally convex $K$-algebra. If such an $R$ is barrelled then the canonical topology on any $M$ is barrelled as well (cf.\ \cite{NFA} Ex.\ 4 after Cor.\ 6.16).

\begin{definition}\label{def:modR}
A $(\varphi_L,\Gamma_L)$-module $M$ over $\mathscr{R}_K(\mathfrak{X})$ is a finitely generated projective $\mathscr{R}_K(\mathfrak{X})$-module $M$ which carries a semilinear continuous (for the canonical topology) $o_L \setminus \{0\}$-action such that the $\mathscr{R}_K(\mathfrak{X})$-linear map
\begin{align*}
    \varphi_M^{lin} :  \mathscr{R}_K(\mathfrak{X}) \otimes_{\mathscr{R}_K(\mathfrak{X}),\varphi_L} M & \xrightarrow{\; \cong \;} M \\
    f \otimes m & \longmapsto f \varphi_M (m)
\end{align*}
is bijective (writing the action of $\pi_L$ on $M$ as $\varphi_M$). Let $\Mod_L(\mathscr{R}_K(\mathfrak{X}))$ denote the category of all  $(\varphi_L,\Gamma_L)$-modules over $\mathscr{R}_K(\mathfrak{X})$.
\end{definition}

The $(\varphi_L,\Gamma_L)$-modules over $\mathscr{R}_K(\mathfrak{X})$ are Hausdorff and complete. They also are barrelled since $\mathscr{R}_K(\mathfrak{X})$ as a locally convex inductive limit (Prop.\ \ref{regular}.i) of Fr\'echet spaces is barrelled (cf.\ \cite{NFA} Ex.\ 2 and 3 after Cor.\ 6.16).

We briefly discuss scalar extension for $(\varphi_L,\Gamma_L)$-modules. Let $K \subseteq E \subseteq \Cp$ be another complete intermediate field. First we make the following simple observation.

\begin{remark}\label{tensor-barrelled}
Let $V_1$ and $V_2$ be two barrelled locally convex $K$-vector spaces; then $V_1 \otimes_{K,\iota} V_2$ is barrelled as well.
\end{remark}
\begin{proof}
The inductive tensor product topology on $V_1 \otimes_K V_2$ is the finest locally convex topology such that all linear maps
\begin{align*}
  V_1 & \longrightarrow V_1 \otimes_K V_2 & \text{and}\qquad\qquad   V_2 & \longrightarrow V_1 \otimes_K V_2 & \text{for any $v_i \in V_i$} \\
  v & \longmapsto v \otimes v_2 &    v & \longmapsto v_1 \otimes v &
\end{align*}
are continuous. It is basically by definition that any locally convex final topology with respect to maps which originate from barrelled spaces, like the above one, is barrelled.
\end{proof}

\begin{lemma}\label{scalar-ext}
For any $(\varphi_L,\Gamma_L)$-module $M$ over $\mathscr{R}_K(\mathfrak{X})$ we have:
\begin{itemize}
  \item[i.]  $\mathscr{R}_E(\mathfrak{X}) \otimes_{\mathscr{R}_K(\mathfrak{X})} M = E\, \widehat{\otimes}_{K,\iota}\, M$;
  \item[ii.] $\mathscr{R}_E(\mathfrak{X}) \otimes_{\mathscr{R}_K(\mathfrak{X})} M$ is a $(\varphi_L,\Gamma_L)$-module over $\mathscr{R}_E(\mathfrak{X})$.
\end{itemize}
\end{lemma}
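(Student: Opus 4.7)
The plan is to reduce both statements to the case $M = \mathscr{R}_K(\mathfrak{X})^n$ by exploiting the fact that $M$ is a direct summand of some $\mathscr{R}_K(\mathfrak{X})^n$, together with the topological identification established in Cor.\ \ref{scalar-ext-R}.

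\emph{Part (i).} First I would construct a natural continuous map $\beta : E\,\widehat{\otimes}_{K,\iota}\, M \longrightarrow \mathscr{R}_E(\mathfrak{X}) \otimes_{\mathscr{R}_K(\mathfrak{X})} M$. The target is a finitely generated (in fact projective) $\mathscr{R}_E(\mathfrak{X})$-module, hence carries its canonical topology, and by the discussion preceding Def.\ \ref{def:modR} is Hausdorff and complete because $\mathscr{R}_E(\mathfrak{X})$ is complete by Prop.\ \ref{complete}.iv (applied to $E$ in place of $K$). The algebraic map $E \otimes_K M \longrightarrow \mathscr{R}_E(\mathfrak{X}) \otimes_{\mathscr{R}_K(\mathfrak{X})} M$ sending $e \otimes m$ to $e \otimes m$ is separately continuous in each variable (using the $\mathscr{R}_E(\mathfrak{X})$-module structure on the target and the continuity of $E \hookrightarrow \mathscr{R}_E(\mathfrak{X})$), so by the universal property of the inductive tensor product topology (as spelled out in Remark \ref{tensor-barrelled}) it is continuous from $E \otimes_{K,\iota} M$, and hence extends to a continuous map $\beta$ on the completion.

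Next, use the projectivity of $M$: pick $N$ with a topological $\mathscr{R}_K(\mathfrak{X})$-module isomorphism $M \oplus N \cong \mathscr{R}_K(\mathfrak{X})^n$ (existence of such a topological splitting is precisely what is recorded before Def.\ \ref{def:modR}). Applying the functor $E\,\widehat{\otimes}_{K,\iota}\,(-)$, which commutes with finite direct sums, and combining with Cor.\ \ref{scalar-ext-R}, yields a topological decomposition
\begin{equation*}
    (E\,\widehat{\otimes}_{K,\iota}\, M) \oplus (E\,\widehat{\otimes}_{K,\iota}\, N) \;\cong\; E\,\widehat{\otimes}_{K,\iota}\, \mathscr{R}_K(\mathfrak{X})^n \;=\; \mathscr{R}_E(\mathfrak{X})^n\, .
\end{equation*}
On the other hand, applying $\mathscr{R}_E(\mathfrak{X}) \otimes_{\mathscr{R}_K(\mathfrak{X})} (-)$ algebraically to the same splitting gives $(\mathscr{R}_E(\mathfrak{X}) \otimes_{\mathscr{R}_K(\mathfrak{X})} M) \oplus (\mathscr{R}_E(\mathfrak{X}) \otimes_{\mathscr{R}_K(\mathfrak{X})} N) \cong \mathscr{R}_E(\mathfrak{X})^n$, which is a topological isomorphism for the canonical topologies. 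The map $\beta$ is compatible with these decompositions (it is clearly an equality on the free summand $\mathscr{R}_K(\mathfrak{X})^n$), hence an isomorphism of topological $\mathscr{R}_E(\mathfrak{X})$-modules in each summand by restriction. The main obstacle is keeping the two distinct topologies straight throughout, but the direct sum argument sidesteps any comparison of norms.

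\emph{Part (ii).} Finite generation and projectivity over $\mathscr{R}_E(\mathfrak{X})$ are preserved by base change. For the $o_L\setminus\{0\}$-action, extend the $o_L\setminus\{0\}$-action on $\mathscr{R}_K(\mathfrak{X})$ to $\mathscr{R}_E(\mathfrak{X})$ (section \ref{sec:monoid}, applied over $E$) and define $a \cdot (f \otimes m) := a_*(f) \otimes am$; the relation $a_*(fr) = a_*(f)\,a_*(r)$ for $r \in \mathscr{R}_K(\mathfrak{X})$ shows this is well-defined, semilinear, and $E$-linear in $f$, and the compatibilities among the $a$'s are inherited. Continuity of each individual $a$ follows from Remark \ref{semilinear-cont} applied with $\psi = a_* : \mathscr{R}_E(\mathfrak{X}) \to \mathscr{R}_E(\mathfrak{X})$ and the finitely generated $\mathscr{R}_E(\mathfrak{X})$-module $\mathscr{R}_E(\mathfrak{X}) \otimes_{\mathscr{R}_K(\mathfrak{X})} M$. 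Joint continuity on the compact group $\Gamma_L$ follows by the nonarchimedean Banach--Steinhaus theorem (the target being barrelled, as the canonical topology on a finitely generated module over the barrelled algebra $\mathscr{R}_E(\mathfrak{X})$), once continuity of each orbit map is verified, which reduces via the decomposition in part (i) to continuity of orbit maps on $M$ and on $\mathscr{R}_E(\mathfrak{X})$. Finally, base change commutes with pullback along $\varphi_L$: for any $\mathscr{R}_K(\mathfrak{X})$-module $N$ one has
\begin{equation*}
    \mathscr{R}_E(\mathfrak{X}) \otimes_{\mathscr{R}_E(\mathfrak{X}),\varphi_L}\!\bigl(\mathscr{R}_E(\mathfrak{X}) \otimes_{\mathscr{R}_K(\mathfrak{X})} N\bigr)  \;\cong\;  \mathscr{R}_E(\mathfrak{X}) \otimes_{\mathscr{R}_K(\mathfrak{X})}\!\bigl(\mathscr{R}_K(\mathfrak{X}) \otimes_{\mathscr{R}_K(\mathfrak{X}),\varphi_L} N\bigr),
\end{equation*}
so applying $\mathscr{R}_E(\mathfrak{X}) \otimes_{\mathscr{R}_K(\mathfrak{X})} (-)$ to the isomorphism $\varphi_M^{lin}$ produces the corresponding map for $\mathscr{R}_E(\mathfrak{X}) \otimes_{\mathscr{R}_K(\mathfrak{X})} M$, which is therefore again an isomorphism.
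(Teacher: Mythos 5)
Part (i) is essentially the paper's proof: reduce to $M = \mathscr{R}_K(\mathfrak{X})$ by realizing $M$ as a topological direct summand of a free module and invoke Cor.~\ref{scalar-ext-R}. Your explicit construction of $\beta$ from the universal property of $\otimes_{\iota}$ is sound and a reasonable way to organize the comparison, but it is not a genuinely different route.

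Part (ii) has a real gap at the verification of orbit-map continuity. You assert that this ``reduces via the decomposition in part (i) to continuity of orbit maps on $M$ and on $\mathscr{R}_E(\mathfrak{X})$''. But the splitting $M \oplus N \cong \mathscr{R}_K(\mathfrak{X})^n$ has a complement $N$ on which $\Gamma_L$ does not act, so the induced inclusion $\mathscr{R}_E(\mathfrak{X}) \otimes_{\mathscr{R}_K(\mathfrak{X})} M \hookrightarrow \mathscr{R}_E(\mathfrak{X})^n$ is not $\Gamma_L$-equivariant; orbit maps on the source are therefore not restrictions of orbit maps of the componentwise action on the target, and the reduction collapses. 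Trying to argue directly — write $v = \sum f_i \otimes m_i$ with $f_i \in \mathscr{R}_E(\mathfrak{X})$, so that $\rho_v(\gamma) = \sum \gamma_*(f_i) \otimes \gamma(m_i)$ — one is blocked by the fact that the module multiplication $\mathscr{R}_E(\mathfrak{X}) \times \big(\mathscr{R}_E(\mathfrak{X}) \otimes_{\mathscr{R}_K(\mathfrak{X})} M\big) \to \mathscr{R}_E(\mathfrak{X}) \otimes_{\mathscr{R}_K(\mathfrak{X})} M$ is only \emph{separately} continuous on this LF-module, so continuity of each $\gamma \mapsto \gamma_*(f_i)$ and of each $\gamma \mapsto \gamma(m_i)$ does not by itself yield continuity of $\rho_v$. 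The paper circumvents this with a two-step argument: it first checks orbit-map continuity only for $v$ in the dense algebraic subspace $E \otimes_K M$ of $V := E \otimes_{K,\iota} M$, where the $E$-coefficients are $\Gamma_L$-fixed scalars and continuity is immediate from the continuity of the action on $M$; Banach--Steinhaus then gives joint continuity of the action on $V$. The passage from $V$ to its completion $\widehat{V} = E\,\widehat{\otimes}_{K,\iota}\,M$ is then carried out via the equivalence between jointly continuous actions $o_L^\times \times X \to X$ and continuous linear maps $X \to \mathcal{C}_c(o_L^\times, X)$, combined with the completeness of $\mathcal{C}_c(o_L^\times, \widehat{V})$, which allows the continuous map $V \to \mathcal{C}_c(o_L^\times, \widehat{V})$ to be extended uniquely to $\widehat{V}$. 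That completeness argument is the missing idea in your proof.
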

\begin{proof}
i. Obviously we have the algebraic identity $(E \otimes_K \mathscr{R}_K(\mathfrak{X})) \otimes_{\mathscr{R}_K(\mathfrak{X})} M = E \otimes_K M$. We claim that this is a topological identity $(E \otimes_{K,\iota} \mathscr{R}_K(\mathfrak{X})) \otimes_{\mathscr{R}_K(\mathfrak{X})} M = E \otimes_{K,\iota} M$, which extends to a topological isomorphism $(E\, \widehat{\otimes}_{K,\iota}\, \mathscr{R}_K(\mathfrak{X})) \otimes_{\mathscr{R}_K(\mathfrak{X})} M = E\, \widehat{\otimes}_{K,\iota}\, M$ . Note that on the left hand sides the topology is given as follows: By viewing $M$ as a topological direct summand of some $\mathscr{R}_K(\mathfrak{X})^m$ we realize the left hand sides as topological direct summands of $(E \otimes_{K,\iota} \mathscr{R}_K(\mathfrak{X}))^m$ and $(E\, \widehat{\otimes}_{K,\iota}\, \mathscr{R}_K(\mathfrak{X}))^m$, respectively. Hence our claim reduces to the case $M = \mathscr{R}_K(\mathfrak{X})$ where it is obvious. It remains to recall from Cor.\ \ref{scalar-ext-R}  that $\mathscr{R}_E(\mathfrak{X}) = E\, \widehat{\otimes}_{K,\iota}\, \mathscr{R}_K(\mathfrak{X})$.

ii. Clearly $\mathscr{R}_E(\mathfrak{X}) \otimes_{\mathscr{R}_K(\mathfrak{X})} M$ is finitely generated projective over $\mathscr{R}_E(\mathfrak{X})$. The $o_L \setminus \{0\}$-action extends by semilinearity to $\mathscr{R}_E(\mathfrak{X}) \otimes_{\mathscr{R}_K(\mathfrak{X})} M$ and satisfies
\begin{align*}
  \mathscr{R}_E(\mathfrak{X}) \otimes_{\mathscr{R}_E(\mathfrak{X}),\varphi_L} (\mathscr{R}_E(\mathfrak{X}) \otimes_{\mathscr{R}_K(\mathfrak{X})} M)  & = \mathscr{R}_E(\mathfrak{X}) \otimes_{\mathscr{R}_K(\mathfrak{X})} (\mathscr{R}_K(\mathfrak{X}) \otimes_{\mathscr{R}_K(\mathfrak{X}),\varphi_L} M) \\
  & \cong \mathscr{R}_E(\mathfrak{X}) \otimes_{\mathscr{R}_K(\mathfrak{X})} M \ .
\end{align*}
It remains to establish the continuity of the $o_L \setminus \{0\}$-action. Because of i. we may view it as the completed $E$-linear extension of the $o_L \setminus \{0\}$-action on $M$. Hence each individual element in $o_L \setminus \{0\}$-action certainly acts by a continuous linear endomorphism on $V := E \otimes_{K,\iota} M$ and then also on $\widehat{V} = E\, \widehat{\otimes}_{K,\iota}\, M$. We still have to check that the resulting group actions $o_L^\times \times V \longrightarrow V$ and $o_L^\times \times \widehat{V} \longrightarrow \widehat{V}$ are continuous. Since $M$ is barrelled it follows from Remark \ref{tensor-barrelled} that $V$ is barrelled. For the continuity of the action on $V$ it therefore suffices, by the usual Banach-Steinhaus argument, to check the continuity of the orbit maps
\begin{align*}
  \rho_v : o_L^\times & \longrightarrow E \otimes_{K,\iota} M \qquad\text{for any $v \in E \otimes_{K,\iota} M$} \\
  a & \longmapsto a(v) \ .
\end{align*}
But this follows easily from the continuity of the $o_L^\times$-action on $M$. Let $\mathcal{C}_c(o_L^\times,V)$ denote the locally convex $K$-vector space of all continuous $V$-valued maps on $o_L^\times$ equipped with the compact-open topology. By \cite{B-TG} X.3.4 Thm.\ 3 the continuity of the action $o_L^\times \times V \longrightarrow V$ is equivalent to the continuity of the linear map
\begin{align*}
  V & \longrightarrow \mathcal{C}_c(o_L^\times,V) \\
  v & \longmapsto \rho_v \ .
\end{align*}
Hence all solid arrows in the diagram
\begin{equation*}
  \xymatrix{
    V \ar[d]_{\subseteq} \ar[r] & \mathcal{C}_c(o_L^\times,V) \ar[d]^{\subseteq} \\
    \widehat{V} \ar@{-->}[r] & \mathcal{C}_c(o_L^\times,\widehat{V})   }
\end{equation*}
are continuous linear maps. Since $\mathcal{C}_c(o_L^\times,\widehat{V})$ is Hausdorff and complete (cf.\ \cite{NFA} Example in \S17) there is a unique continuous linear map $\widehat{V} \longrightarrow \mathcal{C}_c(o_L^\times,\widehat{V})$ which makes the diagram commutative. It corresponds to a continuous map $o_L^\times \times \widehat{V} \longrightarrow \widehat{V}$ which is easily seen to be the original $o_L^\times$-action on $\widehat{V}$. The latter therefore is continuous.
\end{proof}

At various points we will need the following technical descent result.

\begin{proposition}\label{descent}
Let $M$ be a $(\varphi_L,\Gamma_L)$-modules over $\mathscr{R}_K(\mathfrak{X})$. Then there exist a $p^{-p/(p-1)} \leq r_0 < 1$, a finitely generated projective $\mathcal{O}_K(\mathfrak{X} \setminus \mathfrak{X}(r_0))$-module $M_0$ with a semilinear continuous $o_L^\times$-action, and a semilinear continuous homomorphism
\begin{equation*}
  \varphi_{M_0} : M_0 \longrightarrow \mathcal{O}_K(\mathfrak{X} \setminus \mathfrak{X}(r_0^{1/p})) \otimes_{\mathcal{O}_K(\mathfrak{X} \setminus \mathfrak{X}(r_0))} M_0
\end{equation*}
such that the induced $\mathcal{O}_K(\mathfrak{X} \setminus \mathfrak{X}(r_0^{1/p}))$-linear map
\begin{equation*}
  \mathcal{O}_K(\mathfrak{X} \setminus \mathfrak{X}(r_0^{1/p})) \otimes_{\mathcal{O}_K(\mathfrak{X} \setminus \mathfrak{X}(r_0)), \varphi_L} M_0 \xrightarrow{\; \cong \;} \mathcal{O}_K(\mathfrak{X} \setminus \mathfrak{X}(r_0^{1/p})) \otimes_{\mathcal{O}_K(\mathfrak{X} \setminus \mathfrak{X}(r_0))} M_0
\end{equation*}
is an isomorphism and such that
\begin{equation*}
  \mathscr{R}_K(\mathfrak{X}) \otimes_{\mathcal{O}_K(\mathfrak{X} \setminus \mathfrak{X}(r_0))} M_0 = M
\end{equation*}
with the $o_L^\times$ actions on both sides as well as $\varphi_L \otimes \varphi_{M_0}$ and $\varphi_M$ corresponding to each other.
\end{proposition}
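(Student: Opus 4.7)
The plan is to use a spreading-out argument to descend the data $(M,\varphi_M,o_L^\times\text{-action})$ to a single finite level of the regular inductive limit $\mathscr{R}_K(\mathfrak{X}) = \varinjlim_n \mathcal{O}_K(\mathfrak{X}\setminus\mathfrak{X}_n)$ furnished by Prop.\ \ref{regular}.i. The two enabling ingredients are (i) the regularity of that limit, which forces every compact --- and hence every bounded --- subset of $\mathscr{R}_K(\mathfrak{X})^m$ to lie in some $\mathcal{O}_K(\mathfrak{X}\setminus\mathfrak{X}_n)^m$; and (ii) the Lubin-Tate coincidence $r_n^{1/p}=r_{n+1}$ for the boundary radii $r_n = p^{-(1+e/(p-1))/ep^n}$ defining $\mathfrak{X}_n$, which together with Lemma \ref{pi} makes $\varphi_L$ act as a one-step index shift on these Fr\'echet rings.

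First I would realize $M$ as a direct summand $M = \pi\cdot\mathscr{R}_K(\mathfrak{X})^m$ by an idempotent $\pi\in M_m(\mathscr{R}_K(\mathfrak{X}))$ (available because $M$ is finitely generated projective, cf.\ Cor.\ \ref{pruefer}). The finitely many entries of $\pi$ lie in $\mathcal{O}_K(\mathfrak{X}\setminus\mathfrak{X}_n)$ for every sufficiently large $n$, yielding a finitely generated projective $\mathcal{O}_K(\mathfrak{X}\setminus\mathfrak{X}_n)$-module
\[
  M_0^{(n)} \;:=\; \pi\cdot\mathcal{O}_K(\mathfrak{X}\setminus\mathfrak{X}_n)^m \;=\; M\cap\mathcal{O}_K(\mathfrak{X}\setminus\mathfrak{X}_n)^m
\]
with $\mathscr{R}_K(\mathfrak{X})\otimes_{\mathcal{O}_K(\mathfrak{X}\setminus\mathfrak{X}_n)} M_0^{(n)}=M$, generated at every later level $n'\geq n$ by the same fixed elements $v_i := \pi(e_i)$.

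Next I would descend the two actions. The continuous orbit maps $\gamma\mapsto\gamma(v_i)$, $o_L^\times\to M$, have compact image, so by regularity their finite union lies in $M_0^{(n_1)}$ for some $n_1\geq n$; semilinearity then forces $M_0^{(n_1)}$ to be $o_L^\times$-stable. The finitely many elements $\{\varphi_M(v_i)\}$ descend similarly to some $M_0^{(n_2)}$, and reading the inverse of $\varphi_M^{\mathrm{lin}}$ on the $v_i$ gives a matrix whose entries lie in $\mathcal{O}_K(\mathfrak{X}\setminus\mathfrak{X}_{n_3})$ for some $n_3$. Setting $N := \max(n_1, n_2-1, n_3-1)$, $r_0 := r_N$, $M_0 := M_0^{(N)}$, elementary inequalities show $r_N\geq p^{-p/(p-1)}$ and $r_N\geq p^{-1/e-1/(p-1)}$ for every $N\geq 0$, so both the radius hypothesis of the proposition and the applicability of Lemma \ref{Gamma-cont-O} are automatic.

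The key identity $r_N^{1/p}=r_{N+1}$ and Lemma \ref{pi} now give $\varphi_L(\mathcal{O}_K(\mathfrak{X}\setminus\mathfrak{X}(r_0)))\subseteq\mathcal{O}_K(\mathfrak{X}\setminus\mathfrak{X}(r_0^{1/p}))$, so $\varphi_{M_0}:=\varphi_M|_{M_0}$ is a $\varphi_L$-semilinear map into $M_0^{(N+1)}=\mathcal{O}_K(\mathfrak{X}\setminus\mathfrak{X}(r_0^{1/p}))\otimes_{\mathcal{O}_K(\mathfrak{X}\setminus\mathfrak{X}(r_0))}M_0$, with linearization an isomorphism since the inverse lives at the same level $N+1$. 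Continuity of the $o_L^\times$- and $\varphi_L$-actions on $M_0$ in the canonical topology is the one delicate point: I would deduce it from the continuity of the ambient actions on $M$ via the closed graph theorem (both sides being Fr\'echet and the graph already closed in $M\times M$) combined with Banach-Steinhaus on the barrelled $M_0$. The main obstacle throughout is the bookkeeping --- picking one level $N$ that works for all four pieces of descent and arranging the radius shift $r_0\mapsto r_0^{1/p}$ to match a unit jump $N\mapsto N+1$; the Lubin-Tate identity $r_n^{1/p}=r_{n+1}$ is precisely what makes this matching work cleanly.
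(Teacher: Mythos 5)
Your overall strategy matches the paper's: realize $M$ as the image of an idempotent, spread out the data over a single term $\mathcal{O}_K(\mathfrak{X}\setminus\mathfrak{X}_n)$ of the regular inductive limit, and use Lemma \ref{pi} together with the radius bookkeeping to produce $\varphi_{M_0}$. Up through the construction of $M_0$, the $o_L^\times$-stability, and the descent of $\varphi_M$ and its inverse, the argument is essentially the same one the paper runs.

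However, there is a real gap in the continuity argument, and it is not a bookkeeping issue. Banach--Steinhaus reduces joint continuity of $o_L^\times\times M_0\to M_0$ to two inputs: (a) continuity of each individual $\gamma:M_0\to M_0$, and (b) continuity of each orbit map $\rho_v:o_L^\times\to M_0$, where $M_0$ carries its canonical Fr\'echet topology and \emph{not} the subspace topology from $M$. Your closed-graph argument addresses only (a) --- for a fixed $\gamma$ the map $M_0\to M_0$ is $K$-linear and the graph is closed inside $M\times M$, so the closed graph theorem (or, more directly, Remark \ref{semilinear-cont}, since $\gamma$ is $\gamma_*$-linear) gives continuity. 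But (b) cannot be obtained this way: the orbit map $\rho_v$ is not linear, its source $o_L^\times$ is not a Fr\'echet space, and the closed graph theorem says nothing. Knowing that $\rho_v:o_L^\times\to M$ is continuous with image inside $M_0$ does not by itself imply continuity into $M_0$, because the canonical topology on $M_0$ is strictly finer than the one induced from $M$ in general. What the paper does at precisely this point is: the image of $\rho_v$ is compact, hence bounded, hence by regularity bounded in the canonical topology of some $M(r)$; by nuclearity of $\mathcal{O}_K(\mathfrak{X}\setminus\mathfrak{X}(r))$ (Prop.\ \ref{compactoid}.i) a bounded set is compactoid; and by \cite{PGS} Cor.\ 3.8.39 a continuous injection restricted to a compactoid set is a homeomorphism onto its image, so the two topologies on $\rho_v(o_L^\times)$ coincide and $\rho_v:o_L^\times\to M(r)$ is continuous. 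This nuclearity/compactoid step is the indispensable ingredient your argument is missing, and without it the continuity of the $o_L^\times$-action on $M_0$ does not follow.
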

\begin{proof}
For some appropriate integer $m \geq 1$ we can view $M \subseteq \mathscr{R}_K(\mathfrak{X})^m$ as the image of some projector $\Pi \in M_{m\times m}(\mathscr{R}_K(\mathfrak{X}))$. The matrix $\Pi$ lies in $M_{m\times m}(\mathcal{O}_K(\mathfrak{X} \setminus \mathfrak{X}(r_0)))$ for some $r_0 \geq p^{-p/(p-1)}$, and we may define finitely generated projective $\mathcal{O}_K(\mathfrak{X} \setminus \mathfrak{X}(r))$-modules $M(r) := \Pi(\mathcal{O}_K(\mathfrak{X} \setminus \mathfrak{X}(r))^m)$ for any $r_0 \leq r < 1$. We have $M = \mathscr{R}_K(\mathfrak{X}) \otimes_{\mathcal{O}_K(\mathfrak{X} \setminus \mathfrak{X}(r))} M(r)$ and $M = \bigcup_{r_0 \leq r < 1} M(r)$.

Since $M(r)$ is finitely generated we further have $\varphi_M (M(r)) \subseteq M(r')$ for some $r' \geq r$. But any set of generators for $M(r)$ also is a set of generators for $M(r')$. It follows (cf.\ Lemma \ref{pi}) that $\varphi_M (M(r')) \subseteq M(r'^{1/p})$. The associated linear map \begin{align*}
  \mathcal{O}_K(\mathfrak{X} \setminus \mathfrak{X}(r'^{1/p})) \otimes_{\mathcal{O}_K(\mathfrak{X} \setminus \mathfrak{X}(r')), \varphi_L} M(r') & \longrightarrow M(r'^{1/p}) \\
  f \otimes m & \longmapsto f \varphi_M(m)
\end{align*}
has the property that its base change to $\mathscr{R}_K(\mathfrak{X})$ is an isomorphism. The cokernel being finitely generated must already vanish after enlarging $r'$ sufficiently. Then the map is surjective and, by the projectivity of the modules, splits. Hence the kernel is finitely generated as well and vanishes after further enlarging $r'$. By enlarging the initial $r_0$ we therefore may assume that
\begin{equation*}
  \mathcal{O}_K(\mathfrak{X} \setminus \mathfrak{X}(r^{1/p})) \otimes_{\mathcal{O}_K(\mathfrak{X} \setminus \mathfrak{X}(r)), \varphi_L} M(r) \cong M(r^{1/p}) \qquad\text{for any $r_0 \leq r < 1$}.
\end{equation*}
By Remark \ref{semilinear-cont} the map $\varphi_M : M(r) \longrightarrow M(r^{1/p})$ is continuous.

By assumption the orbit map $\rho_m : o_L^\times \longrightarrow M$, for any $m \in M$, which sends $a$ to $a(m)$, is continuous. Hence its image is compact and, in particular, bounded. But $M$ is, as a consequence of Prop.\ \ref{regular}.i, the regular inductive limit of the $M(r)$. It follows  that the image of $\rho_m$ already is contained in some $M(r)$ and is bounded in the canonical topology of $M(r)$ as an $\mathcal{O}_K(\mathfrak{X} \setminus \mathfrak{X}(r))$-module. Using Prop.\ \ref{compactoid}.i we obtain that the image of $\rho_m$ is compactoid in $M(r)$. If we apply this to finitely many generators of $M(r_0)$ then we see that, by further enlarging $r_0$, we also may assume that the $o_L^\times$-action on $M$ preserves $M(r)$ for any $r \geq r_0$. By \cite{PGS} Cor.\ 3.8.39 the continuous inclusion $M(r) \subseteq M$ restricts to a homeomorphism between the image of $\rho_m$ in $M(r)$ and its image in $M$. It follows that $\rho_m : o_L^\times \longrightarrow M(r)$ is continuous. On the other hand, for each individual $a \in o_L^\times$, the map $a : M(r) \longrightarrow M(r)$ is $a_*$-linear and hence continuous by Remark \ref{semilinear-cont}. Together we have shown that the $o_L^\times$-action on $M(r)$ is separately continuous. Since $o_L^\times$ is compact and the Fr\'echet space $M(r)$ is barrelled it is, in fact, jointly continuous by the nonarchimedean Banach-Steinhaus theorem.
\end{proof}

\begin{proposition}\label{differentiable}
Any continuous (for the canonical topology) semilinear $\Gamma_L$-action on a finitely generated projective module $M$ over any of the rings $\mathcal{O}_K(\mathfrak{X} \setminus \mathfrak{X}_n)$, for $n \geq 1$, or $\mathscr{R}_K(\mathfrak{X})$ is differentiable.
\end{proposition}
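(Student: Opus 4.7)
The strategy is to reduce both cases to an application of Proposition \ref{locan3} on appropriate Banach ``pieces'' of the base ring, and then assemble the result by passing to a projective limit.

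\textbf{Case $R = \mathcal{O}_K(\mathfrak{X}\setminus\mathfrak{X}_n)$.} By Proposition \ref{quasi-Stein} we have $R = \varprojlim_{s,s'} R_{s,s'}$, where $R_{s,s'} := \mathcal{O}_K(\mathfrak{X}(s,s'))$ is a power-multiplicative $K$-affinoid Banach algebra and, since $\Gamma_L = o_L^\times$ preserves each $\mathfrak{X}(r)$, is stable under the $\Gamma_L$-action. The discussion just before Definition \ref{def:L-analytic} (relying on Proposition \ref{annuli-locan} transferred via the LT-isomorphism Proposition \ref{LT-iso}, and on Proposition \ref{compactoid}.ii to descend the operator-norm bound from $\Cp$ to $K$) shows that the $\Gamma_L$-action on $R_{s,s'}$ satisfies condition \eqref{f:condition-locan}. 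Setting $M_{s,s'} := R_{s,s'}\otimes_R M$, Fact \ref{coherent}.iv gives $M = \varprojlim_{s,s'} M_{s,s'}$ topologically, and each $M_{s,s'}$ is finitely generated projective over $R_{s,s'}$.

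The $\Gamma_L$-action on $M$ extends semilinearly to $M_{s,s'}$. Each individual $\gamma \in \Gamma_L$ acts continuously on $M_{s,s'}$ by Remark \ref{semilinear-cont}. For joint continuity: orbit maps $\Gamma_L \to M_{s,s'}$ are continuous on generators of the form $r\otimes m$, since $\gamma\mapsto\gamma(r)$ is continuous (Lemma \ref{Gamma-cont-O} composed with the continuous restriction $R\to R_{s,s'}$), $\gamma\mapsto\gamma(m)$ is continuous as a map $\Gamma_L\to M\to M_{s,s'}$, and multiplication $R_{s,s'}\times M_{s,s'}\to M_{s,s'}$ is jointly continuous in a finitely generated Banach module. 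By the nonarchimedean Banach--Steinhaus theorem applied in the barrelled Fréchet space $M_{s,s'}$, the action $\Gamma_L\times M_{s,s'}\to M_{s,s'}$ is jointly continuous. Proposition \ref{locan3} now yields that the $\Gamma_L$-action on $M_{s,s'}$ satisfies \eqref{f:condition-locan} and is locally $\Qp$-analytic, hence in particular differentiable. Since the topology of $M = \varprojlim_{s,s'} M_{s,s'}$ is initial for the projections, a map into $M$ is differentiable iff its compositions with each projection are; applied to the orbit maps $\rho_m$, we conclude differentiability of the $\Gamma_L$-action on $M$.

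\textbf{Case $R = \mathscr{R}_K(\mathfrak{X})$.} We descend $M$ to a Fréchet base following the opening argument of Proposition \ref{descent}, using only its $o_L^\times$-part. Write $M = \Pi(R^m)$ for a projector $\Pi$, choose $r_0 \in [p^{-p/(p-1)},1)\cap p^\QQ$ so that $\Pi$ has entries in $\mathcal{O}_K(\mathfrak{X}\setminus\mathfrak{X}(r_0))$, and put $M(r) := \Pi(\mathcal{O}_K(\mathfrak{X}\setminus\mathfrak{X}(r))^m)$ for $r_0\leq r<1$, so that $M = \bigcup_{r} M(r)$ and $M = \mathscr{R}_K(\mathfrak{X})\otimes_{\mathcal{O}_K(\mathfrak{X}\setminus\mathfrak{X}(r_0))} M(r_0)$. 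Arguing exactly as in Proposition \ref{descent} (invoking Proposition \ref{regular}.i to land each $\Gamma_L$-orbit in some $M(r)$, and Proposition \ref{compactoid}.i to turn boundedness into compactoidness, yielding the reverse-inclusion via \cite{PGS} Cor.\ 3.8.39), we may enlarge $r_0$ so that $\Gamma_L$ preserves every $M(r)$ and the restricted action is jointly continuous for the Fréchet topology of $M(r)$. Picking $n\geq 1$ with $\mathfrak{X}_n\subseteq\mathfrak{X}(r_0)$ and applying Case 1 to $M(r_0)$ gives differentiability of the orbit maps $\Gamma_L\to M(r_0)$. Since the inclusion $M(r_0)\hookrightarrow M$ is continuous, the orbit maps $\rho_m : \Gamma_L\to M$ (which factor through $M(r)$ for some $r$) inherit differentiability.

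The main obstacle is essentially bookkeeping: verifying that the semilinear $\Gamma_L$-action really does transfer jointly-continuously to each Banach piece $M_{s,s'}$ or $M(r)$, so that the hypotheses of Proposition \ref{locan3} are met. The local $L$-analyticity/\eqref{f:condition-locan} at the level of the base rings and the projective-limit descent of differentiability are then automatic from the machinery already in place.
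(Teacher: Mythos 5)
Your proof follows essentially the same strategy as the paper's: reduce to finitely generated projective modules over the affinoid Banach algebras $\mathcal{O}_K(\mathfrak{X}(s,s'))$ from Prop.~\ref{quasi-Stein}, apply Prop.~\ref{locan3} there (condition~\eqref{f:condition-locan} holding by the discussion before Def.~\ref{def:L-analytic}), and assemble by projective limit for the Fr\'echet case; then in the Robba case invoke the descent argument of Prop.~\ref{descent} (stripped of $\varphi_M$) to land in the Fr\'echet case. Your extra verifications of joint continuity on the Banach pieces via Banach--Steinhaus are correct and just make explicit what the paper leaves implicit.

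One small slip in Case~2: you write ``Picking $n\geq 1$ with $\mathfrak{X}_n\subseteq\mathfrak{X}(r_0)$ and applying Case~1 to $M(r_0)$,'' but if $\mathfrak{X}_n\subseteq\mathfrak{X}(r_0)$ then $\mathfrak{X}\setminus\mathfrak{X}(r_0)\subseteq\mathfrak{X}\setminus\mathfrak{X}_n$, so $\mathcal{O}_K(\mathfrak{X}\setminus\mathfrak{X}_n)\to\mathcal{O}_K(\mathfrak{X}\setminus\mathfrak{X}(r_0))$ is a restriction map and $M(r_0)$ is \emph{not} a finitely generated $\mathcal{O}_K(\mathfrak{X}\setminus\mathfrak{X}_n)$-module, so Case~1 does not apply to it directly. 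The inclusion should go the other way: since the left endpoints $p^{-(1+e/(p-1))/ep^{n}}$ of the $S_n$ tend to~$1$, you may \emph{enlarge} $r_0$ until $\mathfrak{X}(r_0)=\mathfrak{X}_{n_0}$ for some $n_0$ (equivalently, base-change $M(r_0)$ along $\mathcal{O}_K(\mathfrak{X}\setminus\mathfrak{X}(r_0))\to\mathcal{O}_K(\mathfrak{X}\setminus\mathfrak{X}_{n_0})$ for $n_0$ with $\mathfrak{X}(r_0)\subseteq\mathfrak{X}_{n_0}$); this is exactly what the paper does with ``for some sufficiently big $n_0$.'' With that fixed, the rest of your Case~2 (orbit maps factoring through some $M_n$ and inheriting differentiability) is fine.
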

\begin{proof}
First we consider $M$ over $\mathscr{R}_K(\mathfrak{X})$. As seen from its proof the descent result Prop.\ \ref{descent} holds equally true without a $\varphi_M$. Hence, for some sufficiently big $n_0$, we find a finitely generated projective $\mathcal{O}_K(\mathfrak{X} \setminus \mathfrak{X}_{n_0})$-module $M_{n_0}$ with a continuous semilinear $\Gamma_L$-action such that $M = \mathscr{R}_K(\mathfrak{X}) \otimes_{\mathcal{O}_K(\mathfrak{X} \setminus \mathfrak{X}_{n_0})} M_{n_0}$ as $\Gamma_L$-modules. For any $n \geq n_0$, the finitely generated projective module $M_n := \mathcal{O}_K(\mathfrak{X} \setminus \mathfrak{X}_n) \otimes_{\mathcal{O}_K(\mathfrak{X} \setminus \mathfrak{X}_{n_0})} M_{n_0}$ over $\mathcal{O}_K(\mathfrak{X} \setminus \mathfrak{X}_n)$ carries the continuous semilinear (diagonal) $\Gamma_L$-action. The $\Gamma_L$-equivariant maps $M_n \longrightarrow M_{n*1} \longrightarrow M$ are continuous (by Remark \ref{semilinear-cont}), and $M = \bigcup_{n \geq n_0} M_n$. This reduces the differentiability of $M$ to the differentiability of each $M_n$.

So, in the following we fix an $n \geq 1$ and consider $M$ over $\mathcal{O}_K(\mathfrak{X} \setminus \mathfrak{X}_n)$. We abbreviate $\mathfrak{Y} := \mathfrak{X} \setminus \mathfrak{X}_n$. According to Prop.\ \ref{quasi-Stein} the quasi-Stein space $\mathfrak{Y}$ has an admissible covering by an increasing sequence of $\Gamma_L$-invariant affinoid subdomains $\mathfrak{X}(s_1,s_1') \subseteq \ldots \subseteq \mathfrak{X}(s_i,s_i') \subseteq \ldots$ such that the restriction maps $\mathcal{O}_K(\mathfrak{X}(s_{i+1},s_{i+1}')) \longrightarrow \mathcal{O}_K(\mathfrak{X}(s_i,s_i'))$ have dense image. We then have the finitely generated projective modules $M_i := \mathcal{O}_K(\mathfrak{X}(s_i,s_i')) \otimes_{\mathcal{O}_K(\mathfrak{Y})} M$ over $\mathcal{O}_K(\mathfrak{X}(s_i,s_i'))$ with a continuous semilinear (diagonal) $\Gamma_L$-action. The covering property implies that $M = \varprojlim_i M_i$ holds true topologically for the canonical topologies (as well as $\Gamma_L$-equivariantly). This reduces the differentiability of $M$ to the differentiability of the $\Gamma_L$-action on each $M_i$. But, as we have discussed before Def.\ \ref{def:L-analytic}, the $\Gamma_L$-action on $\mathcal{O}_K(\mathfrak{X}(s_i,s_i'))$ satisfies the condition \eqref{f:condition-locan}. Hence, by Prop.\ \ref{locan3}, the $\Gamma_L$-action on $M_i$ even is locally $\Qp$-analytic.

\textit{Addendum:} First suppose that $M$ over $\mathscr{R}_K(\mathfrak{X})$ is $K$-analytic. Then each $M_n$ is $L$-analytic since $M_n$ is a $\Lie(\Gamma_L$-invariant $K$-vector subspace of $M$. Next suppose that $M$ over $\mathcal{O}_K(\mathfrak{X} \setminus \mathfrak{X}_n)$ is $L$-analytic. Then each $M_i$ is $L$-analytic since the natural map $M \longrightarrow M_i$ has dense image (cf.\ \cite{ST0} Thm.\ in \S3).
\end{proof}

This result allows us to introduce the full subcategory $\Mod_{L,an}(\mathscr{R}_K(\mathfrak{X}))$ of all $L$-analytic (Def.\ \ref{def:L-analytic}) $(\varphi_L,\Gamma_L)$-modules in $\Mod_L(\mathscr{R}_K(\mathfrak{X}))$.

There is a useful duality functor on the category $\Mod_L(\mathscr{R}_K(\mathfrak{X}))$. Let $M$ be $(\varphi_L,\Gamma_L)$-module over $\mathscr{R}_K(\mathfrak{X})$. The dual module $M^* := \Hom_{\mathscr{R}_K(\mathfrak{X})}(M,\mathscr{R}_K(\mathfrak{X}))$ again is finitely generated projective over $\mathscr{R}_K(\mathfrak{X})$.

\begin{remark}\label{simple-conv}
For any finitely generated projective $\Mod_L(\mathscr{R}_K(\mathfrak{X}))$-module $N$ the canonical topology on $\Hom_{\mathscr{R}_K(\mathfrak{X})}(N,\mathscr{R}_K(\mathfrak{X}))$ coincides with the topology of pointwise convergence.
\end{remark}
\begin{proof}
Since the formation of both topologies commutes with direct sums it suffices to consider $N = \mathscr{R}_K(\mathfrak{X})$, in which case the assertion is straightforward..
\end{proof}

It is a $(\varphi_L,\Gamma_L)$-module with respect to
\begin{equation*}
  \gamma(\alpha) := \gamma \circ \alpha \circ \gamma^{-1} \qquad\text{and}\qquad \varphi_{M^*}(\alpha) := \varphi_L^{lin} \circ (\id_{\mathscr{R}_K(\mathfrak{X})} \otimes\, \alpha) \circ (\varphi_M^{lin})^{-1}
\end{equation*}
for any $\gamma \in \Gamma_L$ and any $\alpha \in \Hom_{\mathscr{R}_K(\mathfrak{X})}(M,\mathscr{R}_K(\mathfrak{X}))$. Each individual such operator on $M^*$ is continuous by Remark \ref{semilinear-cont}. If we use $\varphi_M^{lin}$ and $\varphi_L^{lin}$ to identify $\Hom_{\mathscr{R}_K(\mathfrak{X})}(M,\mathscr{R}_K(\mathfrak{X}))$ and
\begin{multline*}
  \Hom_{\mathscr{R}_K(\mathfrak{X})}(\mathscr{R}_K(\mathfrak{X}) \otimes_{\mathscr{R}_K(\mathfrak{X}),\varphi_L} M, \mathscr{R}_K(\mathfrak{X}) \otimes_{\mathscr{R}_K(\mathfrak{X}),\varphi_L} \mathscr{R}_K(\mathfrak{X}))  \\ = \Hom_{\mathscr{R}_K(\mathfrak{X})}(M, \mathscr{R}_K(\mathfrak{X}) \otimes_{\mathscr{R}_K(\mathfrak{X}),\varphi_L} \mathscr{R}_K(\mathfrak{X}))
\end{multline*}
then $\varphi_{M^*}^{lin}$ becomes the map
\begin{align*}
   \mathscr{R}_K(\mathfrak{X}) \otimes_{\mathscr{R}_K(\mathfrak{X}),\varphi_L} \Hom_{\mathscr{R}_K(\mathfrak{X})}(M,\mathscr{R}_K(\mathfrak{X})) & \longrightarrow \Hom_{\mathscr{R}_K(\mathfrak{X})}(M, \mathscr{R}_K(\mathfrak{X}) \otimes_{\mathscr{R}_K(\mathfrak{X}),\varphi_L} \mathscr{R}_K(\mathfrak{X})) \\
  f \otimes \alpha & \longmapsto [m \mapsto f \otimes \alpha(m)] \
\end{align*}
which does not involve the $(\varphi_L,\Gamma_L)$-structure any longer. To see that the latter map is bijective we may first reduce, since $M$ is projective, to the case of a finitely generated free module $M$ and then to the case $M = \mathscr{R}_K(\mathfrak{X})$, in which the bijectivity is obvious. Since $\Gamma_L$ is compact and $M^*$ is barrelled it remains, by the nonarchimedean Banach-Steinhaus theorem, to show, for the joint continuity of the $\Gamma_L$-action on $M^*$, that for any $\alpha \in M^*$ the map $\Gamma_L \longrightarrow M^*$ sending $\gamma$ to $\gamma(\alpha)$ is continuous. Because of Remark \ref{simple-conv} it suffices to check that, for any $m \in M$, the map $\Gamma_L \longrightarrow M$ sending $\gamma$ to $\gamma(\alpha(m)) = \gamma(\alpha(\gamma^{-1}(m))$ is continuous. This is a straightforward consequence of the continuity of the $\Gamma_L$-action on $M$ and on $\mathscr{R}_K(\mathfrak{X})$.

As an application we make the following technically helpful observation.

\begin{remark}\label{summand-of-free}
For any $M$ in $\Mod_L(\mathscr{R}_K(\mathfrak{X}))$ the $(\varphi_L,\Gamma_L)$-module $M \oplus M^*$ is free over $\mathscr{R}_K(\mathfrak{X})$.
\end{remark}
\begin{proof}
As a module $M$ is isomorphic to a direct sum of invertible ideals. Hence it suffices to consider an invertible ideal $I$ in $\mathscr{R}_K(\mathfrak{X})$. Then $I \oplus I^* \cong I \oplus I^{-1} \cong \mathscr{R}_K(\mathfrak{X}) \oplus I I^{-1} = \mathscr{R}_K(\mathfrak{X}) \oplus \mathscr{R}_K(\mathfrak{X})$ by Cor.\ \ref{pruefer2}.ii.
\end{proof}

Of course, everything above makes sense and is valid for $\mathfrak{B}$ replacing $\mathfrak{X}$. In particular, we have the categories $\Mod_{L,an}(\mathscr{R}_K(\mathfrak{B})) \subseteq \Mod_L(\mathscr{R}_K(\mathfrak{B}))$ of all $L$-analytic, resp.\ of all, $(\varphi_L,\Gamma_L)$-modules over $\mathscr{R}_K(\mathfrak{B})$.

We also add the following fact which will be crucial for the definition of etale $L$-analytic  $(\varphi_L,\Gamma_L)$-modules in section \ref{sec:etalepgm}.

\begin{proposition}\label{unitsR}
$\mathscr{R}_K(\mathfrak{X})^\times = \mathscr{E}_K^\dagger(\mathfrak{X})^\times$.
\end{proposition}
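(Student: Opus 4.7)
The easy inclusion $\mathscr{E}_K^\dagger(\mathfrak{X})^\times \subseteq \mathscr{R}_K(\mathfrak{X})^\times$ is immediate from $\mathscr{E}_K^\dagger(\mathfrak{X}) \subseteq \mathscr{R}_K(\mathfrak{X})$: any inverse of $f$ in the smaller ring still inverts $f$ in the larger one. The content is the reverse inclusion, and my plan is to imitate the monotonicity argument of Cor.\ \ref{units}, but applied to the multiplicative norms $\|\cdot\|_{\mathfrak{X}(s,s')}$ on affinoid annuli cofinal in the boundary of $\mathfrak{X}$, instead of the norms $\|\cdot\|_{\mathfrak{X}(r_n)}$ on an exhaustive family of affinoid subdomains.

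Given $f \in \mathscr{R}_K(\mathfrak{X})^\times$ with inverse $g \in \mathscr{R}_K(\mathfrak{X})$, I would first choose $n \geq 1$ so large that both $f$ and $g$ lie in $\mathcal{O}_K(\mathfrak{X} \setminus \mathfrak{X}_n)$, and then fix some $s_0 \in S_n$ with $s_0 > p^{-(1+e/(p-1))/ep^n}$. The annuli $\mathfrak{X}(s_0,s')$, as $s'$ ranges over radii in $\bigcup_{m \geq n} S_m$ with $s' \geq s_0$, form an increasing admissible covering of $\mathfrak{X} \setminus \mathfrak{X}^-(s_0)$, and by Prop.\ \ref{LT-iso} each $\mathfrak{X}(s_0,s')_{/\Cp}$ is identified via $\kappa$ with a one-dimensional affinoid annulus inside $\mathfrak{B}_{/\Cp}$. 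Since the Gauss norm on Laurent series is multiplicative on such an annulus, the supremum norm on $\mathcal{O}_{\Cp}(\mathfrak{X}(s_0,s'))$ is multiplicative, and by the opening remark of the Notations section the same holds for $\|\cdot\|_{\mathfrak{X}(s_0,s')}$ on $\mathcal{O}_K(\mathfrak{X}(s_0,s'))$; this is exactly the scheme of Cor.\ \ref{multiplic}.

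What remains is a one-line monotonicity argument. For fixed $s_0$, the functions $s' \mapsto \|f\|_{\mathfrak{X}(s_0,s')}$ and $s' \mapsto \|g\|_{\mathfrak{X}(s_0,s')}$ are monotone non-decreasing in $s'$, since enlarging $s'$ enlarges the annulus, while by multiplicativity their product equals $\|1\|_{\mathfrak{X}(s_0,s')} = 1$. Two non-decreasing positive real-valued functions with constant product must themselves be constant, so $\|f\|_{\mathfrak{X}(s_0,s')} = \|f\|_{\mathfrak{X}(s_0,s_0)}$ independently of $s'$. Hence $f$ is bounded on $\mathfrak{X} \setminus \mathfrak{X}^-(s_0) = \bigcup_{s'} \mathfrak{X}(s_0,s')$, and \emph{a fortiori} on the admissible open subvariety $\mathfrak{X} \setminus \mathfrak{X}(s_0) \subseteq \mathfrak{X} \setminus \mathfrak{X}^-(s_0)$. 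Thus $f \in \mathcal{O}_K^b(\mathfrak{X} \setminus \mathfrak{X}(s_0)) \subseteq \mathscr{E}_K^\dagger(\mathfrak{X})$, and symmetrically for $g$, which finishes the proof. The only step where one could stumble is the multiplicativity of $\|\cdot\|_{\mathfrak{X}(s_0,s')}$, but the argument given for $\|\cdot\|_{\mathfrak{X}_n}$ in Cor.\ \ref{multiplic} transfers essentially verbatim.
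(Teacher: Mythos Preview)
Your monotonicity idea is the right instinct, but the key step fails: the supremum norm on an affinoid annulus with distinct inner and outer radii is \emph{not} multiplicative. Over $\Cp$, via $\kappa$, the affinoid $\mathfrak{X}(s_0,s')$ becomes a genuine annulus $\{r_0 \leq |z| \leq r'\}$ with $r_0 < r'$, and there the supremum norm is $\max(\|\cdot\|_{r_0},\|\cdot\|_{r'})$, a maximum of two Gauss norms. Take $f = z$ and $g = z^{-1}$: then $\|f\| = r'$, $\|g\| = r_0^{-1}$, $\|fg\| = 1$, yet $\|f\|\,\|g\| = r'/r_0 > 1$. So from $fg = 1$ you only get $\|f\|_{\mathfrak{X}(s_0,s')}\,\|g\|_{\mathfrak{X}(s_0,s')} \geq 1$, and the ``two non-decreasing functions with constant product'' argument collapses. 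The transfer from Cor.~\ref{multiplic} does not go through because $\mathfrak{X}_n$ is a \emph{disk}, whose supremum norm is a single Gauss norm; $\mathfrak{X}(s_0,s')$ is not.

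The paper instead reduces to $K = \Cp$ (boundedness can be tested there), uses the LT-isomorphism to pass to $\mathfrak{B}$, and then invokes Lazard's structure theorem for units on a half-open annulus (\cite{Laz} Prop.~4.1), which gives $f = c\,z^m(1+h)$ with $\|h\| < 1$ and hence boundedness immediately. If you want to salvage a norm-theoretic argument, one option is to work with the genuinely multiplicative Gauss norms $\|\cdot\|_{r}$ at each single radius $r$: then $\log\|f\|_r$ and $\log\|g\|_r$ are convex in $\log r$ with sum zero, hence both affine, so $\|f\|_r = C\,r^k$ stays bounded as $r \to 1^-$. This is essentially what Lazard's result encodes.
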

\begin{proof}
Let $f \in \mathscr{R}_K(\mathfrak{X})^\times$. We have $f \in \mathcal{O}_K(\mathfrak{X} \setminus \mathfrak{X}_n)^\times$ for some sufficiently large $n$. It suffices to show that $f$ is bounded. By symmetry then $f^{-1}$ is bounded as well so that $f \in \mathcal{O}_K^b(\mathfrak{X} \setminus \mathfrak{X}_n)^\times \subseteq \mathscr{E}_K^\dagger(\mathfrak{X})^\times$. Since boundedness can be checked after scalar extension to $\Cp$ we may assume that $K = \Cp$. Using the LT-isomorphism this reduces us to showing that any unit in $\mathcal{O}_K(\mathfrak{B} \setminus \mathfrak{B}_n)$ is bounded. But this is well known to follow from \cite{Laz} Prop.\ 4.1).
\end{proof}

\section{Construction of $(\varphi_L,\Gamma_L)$-modules}

\subsection{An application of the Colmez-Sen-Tate method}
\label{sec:key}

In the following the Colmez-Sen-Tate formalism as formulated in \cite{BC} will be of crucial technical importance. In this section we therefore recall it, complement it somewhat, and prove an essential additional result.

Let $A$ be an $L$-Banach algebra with a submultiplicative norm $\|\ \|_A$. For any complete intermediate field $\Qp \subseteq K \subseteq \Cp$ we denote by $A_K := K\, \widehat{\otimes}_L\, A$ the tensor product completed with respect to the tensor product norm $\|\ \|_{A_K} := |\ | \otimes \|\ \|_A$.
\footnote{There is a subtle point here which one has to keep in mind. Let $A$ be a reduced affinoid $L$-algebra with supremum norm $\|\ \|_{\mathrm{sup},A}$. The affinoid $K$-algebra $A_K$ again is reduced (\cite{Co1} Lemma 3.3.1(1)). But, in general, the supremum norm $\|\ \|_{\mathrm{sup},A_K}$ is NOT equal to the tensor product norm $\|\ \|_{A_K} := |\ | \otimes \|\ \|_{\mathrm{sup},A}$; the two are only equivalent (\cite{BGR} Thm.\ 6.2.4/1).

Suppose that $K/L$ is unramified. Then $gr_{|\ |}(K) = k_K \otimes_{k_L} gr_{|\ |}(L)$, where $k_L$ denotes the residue class field of $L$. We therefore have
\begin{equation*}
  gr_{\|\ \|_{A_K}}(A_K) = gr_{|\ |}(K) \otimes_{gr_{|\ |}(L)} gr_{\|\ \|_{\mathrm{sup},A}}(A) = k_K \otimes_{k_L} gr_{\|\ \|_{\mathrm{sup},A}}(A) \ .
\end{equation*}
The supremum norm being power-multiplicative the algebra $gr_{\|\ \|_{\mathrm{sup},A}}(A)$ is reduced. Since $k_K /k_L$ is unramified the above right hand algebra is reduced as well. Hence $gr_{\|\ \|_{A_K}}(A_K)$ is reduced, which in turn implies that $\|\ \|_{A_K}$ is power-multiplicative and therefore must be equal to $\|\ \|_{\mathrm{sup},A_K}$.}
If $\Qp \subseteq F \subseteq \Cp$ is an arbitrary intermediate field then $\widehat{F}$  denotes its completion.

The Galois group $G_L$ acts continuously and isometrically on $\Cp$ and hence continuously, semilinearly, and isometrically on $A_{\Cp}$ (through the first factor). Of course, it then also acts continuously on $\GL_m(A_{\Cp})$ for any $m \geq 1$.

\begin{remark}\label{AST}
$(A_{\Cp})^{G_K} = A_K$.
\end{remark}
\begin{proof}
It suffices to show that $(\Cp\, \widehat{\otimes}_K\, A)^{G_K} \subseteq A$. It is easy to see that any element of $\Cp\, \widehat{\otimes}_K\, A$ is contained in $\Cp\, \widehat{\otimes}_K\, A_0$ for some Banach subspace of countable type $A_0 \subseteq A$. Hence we may assume that $A$ is of countable type. The case of a finite dimensional $A$ being trivial we then further may assume, by \cite{PGS} Cor.\ 2.3.9, that $A = c_0(K)$ (notation as in the proof of Lemma \ref{descent-compactoid}). In this case we have $(\Cp\, \widehat{\otimes}_K\, c_0(K))^{G_K} = c_0(\Cp)^{G_K} = c_0(K)$.
\end{proof}

We put $L_n := L(\mu_{p^n})$ and $L^{cyc} := \bigcup_n L_n$ and we let $H_L^{cyc} := \Gal(\overline{L}/L^{cyc})$ and $\Gamma_L^{cyc} := \Gal(L^{cyc}/L)$. According to \cite{BC} Prop.s 3.1.4 and 4.1.1 the Banach algebra $A_{\Cp}$ verifies the Colmez-Sen-Tate conditions for any constants $c_1, c_2 > 0$ and $c_3 > \frac{1}{p-1}$. It is not necessary to here recall the content of the Colmez-Sen-Tate conditions; for the sake of clarity we only point out that in the notations of loc.\ cit.\ we have $\widetilde{\Lambda} = A_{\Cp}$ and $\Lambda_{H_L^{cyc},n} = L_n \otimes_L A = A_{L_n}$. What is important is that this has the following consequences.

\begin{proposition}\phantomsection\label{TS}
\begin{itemize}
  \item[i.] For any sufficiently large $n$ there is a $G_L$-invariant decomposition into $A_{L_n}$-submodules $A_{\widehat{L^{cyc}}} = A_{L_n} \oplus X_{L,n}$; in particular, $X_{L,n}^{\Gamma_{L_n}^{cyc}} = 0$.
  \item[ii.] Given any continuous $1$-cocycle $c : G_L \longrightarrow \GL_m(A_{\Cp})$ there exists a finite Galois extension $L'/L$ and an integer $n \geq 0$ such that there is a continuous $1$-cocycle on $G_L$ which is cohomologous to $c$, has values in $\GL_m(A_{L_n'})$, and is trivial on $H_{L'}^{cyc}$.
\end{itemize}
\end{proposition}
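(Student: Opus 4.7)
The plan is to deduce both parts from the Colmez--Sen--Tate formalism, which the authors have already noted (via \cite{BC} Prop.\ 3.1.4 and 4.1.1) applies to the Banach algebra $A_{\Cp}$ with constants $c_1,c_2>0$ and $c_3>\frac{1}{p-1}$. Part (i) is essentially a restatement of the Tate--Sen axiom (TS2) which is built into those conditions, while part (ii) is the general decompletion theorem for continuous $1$-cocycles valued in $\GL_m$ of a ring satisfying the Colmez--Sen--Tate axioms.

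For part (i), the axioms provide, for every sufficiently large $n$, a continuous $A_{L_n}$-linear and $G_L$-equivariant projector (Tate's normalized trace) $R_n\colon A_{\widehat{L^{cyc}}}\longrightarrow A_{L_n}$ with $R_n|_{A_{L_n}}=\id$. Setting $X_{L,n}:=\ker(R_n)$ yields the asserted $G_L$-stable direct sum decomposition $A_{\widehat{L^{cyc}}}=A_{L_n}\oplus X_{L,n}$. To see that $X_{L,n}^{\Gamma_{L_n}^{cyc}}=0$, I observe first that the subgroup $H_{L_n}^{cyc}\subseteq G_{L_n}$ acts trivially on $\widehat{L^{cyc}}$ and hence, by continuity and density, also on $A_{\widehat{L^{cyc}}}$; so the $G_{L_n}$-action on $A_{\widehat{L^{cyc}}}$ factors through $\Gamma_{L_n}^{cyc}$. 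An element of $X_{L,n}$ fixed by $\Gamma_{L_n}^{cyc}$ is therefore fixed by the whole of $G_{L_n}$, so by Remark \ref{AST} it lies in $(A_{\Cp})^{G_{L_n}}=A_{L_n}$; intersecting with $X_{L,n}$ gives $0$.

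For part (ii), I would run the standard successive-approximation (Sen's method, as formalized in \cite{BC} \S3.2) inside $\GL_m(A_{\Cp})$. Since $c$ is continuous and $G_L$ is compact, by enlarging $L$ to a finite Galois extension $L'/L$ I can arrange that $c|_{H_{L'}}$ takes values in the open ball of radius $<p^{-c_3}$ around the identity in $\GL_m(A_{\Cp})$. Then for some $n_0$ and for the normalized trace $R_n$ from (i) (applied entry-wise to matrices and after twisting by $c$ to produce the standard ``approximate coboundary'' $b_0\in\GL_m(A_{\widehat{L^{cyc}}})$), the cocycle $c':=b_0^{-1}\,c\,\sigma(b_0)$ agrees with $c$ in cohomology, is still $G_L$-continuous, and has its restriction to $H_{L'}^{cyc}$ strictly closer to the identity. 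Iterating and using the contraction provided by the Tate--Sen inequalities in the axioms, the product $b_0 b_1 b_2\cdots$ converges in $\GL_m(A_{\widehat{L^{cyc}}})$ to an element $b$ such that $b^{-1}\,c\,\sigma(b)$ is trivial on $H_{L'}^{cyc}$. The resulting cocycle factors through $G_L/H_{L'}^{cyc}$, and since it is continuous while $A_{\widehat{L^{cyc}}}=\bigcup_{n}A_{L_n'}$ with the union being exhaustive on bounded subsets, the compact image lies in $\GL_m(A_{L_n'})$ for some $n\geq n_0$.

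The principal obstacle is the convergence of the iteration in the second step: this requires a careful balancing of the constants $c_1,c_2,c_3$ against the initial radius so that each approximate coboundary shrinks the error by a uniform factor. This is exactly what the Colmez--Sen--Tate conditions were designed to guarantee, so in the write-up one essentially reduces the statement to invoking the appropriate lemma of \cite{BC}. No new input specific to $A$ is needed beyond the verification of the axioms already granted.
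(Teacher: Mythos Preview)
Your overall strategy matches the paper's exactly: both parts are consequences of the Colmez--Sen--Tate axioms for $A_{\Cp}$ verified in \cite{BC}, and the paper's proof simply cites condition (TS2) for part (i) and \cite{BC} Prop.\ 3.2.6 for part (ii). Your treatment of (i), including the argument for $X_{L,n}^{\Gamma_{L_n}^{cyc}}=0$ via Remark \ref{AST}, is correct and more explicit than what the paper writes.

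Your sketch of (ii), however, misrepresents the mechanism and contains a false step. The claim $A_{\widehat{L^{cyc}}}=\bigcup_{n}A_{L_n'}$ is wrong: $\widehat{L^{cyc}}$ strictly contains $L^{cyc}=\bigcup_n L'_n$, so the descent from $\GL_m(A_{\widehat{L'^{cyc}}})$ to some $\GL_m(A_{L'_n})$ cannot be obtained by a compactness/union argument. In \cite{BC} the proof runs in two separate stages that you have conflated. First, axiom (TS1) --- not the normalized traces from (TS2) --- is used, via a successive-approximation argument in $A_{\Cp}$, to produce an element trivializing the cocycle on $H_{L'}^{cyc}$; the resulting cocycle then takes values in $\GL_m(A_{\widehat{L'^{cyc}}})$. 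Only after this is a \emph{second} Sen iteration carried out, now inside $A_{\widehat{L'^{cyc}}}$ and using (TS2) together with (TS3), to push the values down to $\GL_m(A_{L'_n})$. The normalized trace $R_n$ from part (i) is only defined on $A_{\widehat{L'^{cyc}}}$, not on all of $A_{\Cp}$, so it cannot be applied at the first stage as your sketch suggests. Since you correctly conclude that one ultimately just invokes the relevant result of \cite{BC}, the final claim stands, but the sketch should not be presented as an outline of that argument.
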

\begin{proof}
i. This is immediate from the condition (TS2) in \cite{BC} \S3.1. ii. This is a somewhat less precise form of \cite{BC} Prop.\ 3.2.6 (the $k$ there is irrelevant since $p$ is invertible in $\widetilde{\Lambda}$).
\end{proof}

Let $P$ be a finitely generated free $A_{\Cp}$-module of rank $r$ and consider any continuous semilinear action of $G_L$ on $P$. Note that $P$ as well as $\End_{A_{\Cp}}(P)$ are naturally topological $A_{\Cp}$-modules.

\begin{corollary}\label{TS2}
We have:
\begin{itemize}
  \item[i.] $P^{H_L^{cyc}}$ is a projective $A_{\widehat{L^{cyc}}}$-module of rank $r$, and $A_{\Cp} \otimes_{A_{\widehat{L^{cyc}}}} P^{H_L^{cyc}} = P$.
  \item[ii.] $P^{H_L^{cyc}}$ contains, for any sufficiently large $n$, a $\Gamma_L^{cyc}$-invariant $A_{L_n}$-submodule $Q_n$ which satisfies:
      \begin{itemize}
        \item[a)] $Q_n$ is projective of rank $r$,
        \item[b)] $A_{\widehat{L^{cyc}}} \otimes_{A_{L_n}} Q_n = P^{H_L^{cyc}}$, and
        \item[c)] there is a finite extension $L' / L_n$ such that $A_{L'} \otimes_{L_n} Q_n$ is a free $A_{L'}$-module.
      \end{itemize}
  \item[iii.] for any $Q_n$ as in ii. there is an $m \geq n$ such that $P^{G_L} \subseteq A_{L_m} \otimes_{A_{L_n}} Q_n$; in particular, $P^{G_L}$ is a submodule of a finitely generated free $A$-module.
\end{itemize}
\end{corollary}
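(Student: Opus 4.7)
The plan is to combine a change-of-basis reduction (via Prop.~\ref{TS}.ii) with the Sen-style decompletion supplied by Prop.~\ref{TS}.i.

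For (i), I would trivialize $P \cong A_{\Cp}^r$ to obtain a continuous $1$-cocycle $c : G_L \to \GL_r(A_{\Cp})$. Applying Prop.~\ref{TS}.ii, I can replace $c$ by a cohomologous cocycle taking values in $\GL_r(A_{L'_n})$ for some finite Galois $L'/L$ and some $n \geq 0$, and trivial on $H_{L'}^{cyc}$. In the corresponding basis, the standard vectors $e_1,\ldots,e_r$ span an $A_{\widehat{L'^{cyc}}}$-free submodule fixed by $H_{L'}^{cyc}$, and Remark~\ref{AST} applied with $K = \widehat{L'^{cyc}}$ (using that $G_{\widehat{L'^{cyc}}}$ maps isomorphically onto $H_{L'}^{cyc}$) identifies this submodule with $P^{H_{L'}^{cyc}}$. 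Then $P^{H_L^{cyc}} = (P^{H_{L'}^{cyc}})^{\Gal(\widehat{L'^{cyc}}/\widehat{L^{cyc}})}$, and finite Galois descent along the finite Galois extension $\widehat{L'^{cyc}}/\widehat{L^{cyc}}$ gives a finitely generated projective $A_{\widehat{L^{cyc}}}$-module of rank $r$, whose base change to $A_{\Cp}$ recovers $P$.

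For (ii), I would apply Prop.~\ref{TS}.i at some large $n$ to get the $G_L$-equivariant splitting $A_{\widehat{L^{cyc}}} = A_{L_n} \oplus X_{L,n}$ and the associated projection $\pi_n$. Using the fact, established in the proof of Prop.~\ref{TS}.ii, that the cocycle of $\Gamma_L^{cyc}$ acting on $P^{H_L^{cyc}}$ becomes arbitrarily close to the trivial cocycle on $\Gamma_{L_n}^{cyc}$ as $n$ grows, a standard Sen-theoretic approximation shows that, for $n$ sufficiently large, the projections $\pi_n(e_i)$ of a suitable basis generate a $\Gamma_L^{cyc}$-invariant $A_{L_n}$-submodule $Q_n \subseteq P^{H_L^{cyc}}$ with $A_{\widehat{L^{cyc}}} \otimes_{A_{L_n}} Q_n = P^{H_L^{cyc}}$; projectivity of $Q_n$ of rank $r$ follows because the approximation is close enough to the original basis. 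Property (c) is then immediate from (i): the extension $L'/L$ produced there satisfies $A_{\widehat{L'^{cyc}}} \otimes_{A_{L_n}} Q_n \cong P^{H_L^{cyc}} \otimes_{A_{\widehat{L^{cyc}}}} A_{\widehat{L'^{cyc}}}$, which is free; a spreading-out argument lets me replace $\widehat{L'^{cyc}}$ by a finite subextension $L''/L_n$ over which $Q_n$ is already free.

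For (iii), fix $Q_n$ as in (ii). For any $m \geq n$ large enough that Prop.~\ref{TS}.i applies, tensor the $G_L$-equivariant splitting $A_{\widehat{L^{cyc}}} = A_{L_m} \oplus X_{L,m}$ with $Q_n$ over $A_{L_n}$ to obtain a $G_L$-equivariant decomposition
\begin{equation*}
  P^{H_L^{cyc}} = \bigl(A_{L_m} \otimes_{A_{L_n}} Q_n\bigr) \oplus \bigl(X_{L,m} \otimes_{A_{L_n}} Q_n\bigr).
\end{equation*}
Any $p \in P^{G_L}$ decomposes accordingly into $\Gamma_{L_m}^{cyc}$-invariant components. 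To conclude, I need the vanishing $(X_{L,m} \otimes_{A_{L_n}} Q_n)^{\Gamma_{L_m}^{cyc}} = 0$ for $m$ large; once this holds, $p$ lies in $A_{L_m} \otimes_{A_{L_n}} Q_n$, which, being a direct summand of some $A_{L_m}^N$ and thus of the finite free $A$-module $A_{L_m}^N$, realizes $P^{G_L}$ as a submodule of a finitely generated free $A$-module as required.

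The main obstacle is precisely this vanishing statement in (iii), because the $\Gamma_{L_m}^{cyc}$-action on the second summand twists the action on $X_{L,m}$ by the (a priori nontrivial) action on $Q_n$. My approach will be to use (ii)(c): after base changing through the finite extension $L''/L_n$ over which $Q_n$ becomes free, the twist disappears and $X_{L,m} \otimes_{A_{L_n}} Q_n$ becomes a finite direct sum of copies of $X_{L,m} \otimes_{A_{L_n}} A_{L''}$; the vanishing of $\Gamma_{L_m \cdot L''}^{cyc}$-invariants on this latter module then reduces, via $X_{L,m}^{\Gamma_{L_m}^{cyc}} = 0$ and a finite-group averaging argument for $\Gal(L_m L''/L_m)$, to the statement of Prop.~\ref{TS}.i. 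Keeping track of the interaction between the cyclotomic tower $\{L_m\}$ and the auxiliary extension $L''$ is the one technical point requiring care.
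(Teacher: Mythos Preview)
Your argument for (i) matches the paper's. For (ii), however, the paper takes a shorter route than your projection/approximation approach: since Prop.~\ref{TS}.ii already produces a cocycle with values in $\GL_r(A_{L'_n})$, the translated basis $\beta^{-1}(A_{L'_n}\otimes_A P_0)$ is itself a free $A_{L'_n}$-submodule $Q'$ invariant under $G_L/H_{L'}^{cyc}$, and one simply sets $Q_n := (Q')^{H_L^{cyc}}$ via finite Galois descent (enlarging $n$ so that $A_{L'_n}^{H_L^{cyc}}=A_{L_n}$). No separate Sen-type approximation is needed, and property (c) comes for free with $L' = L'_n$ since $A_{L'}\otimes_{A_{L_n}} Q_n = Q'$ is already free; no spreading-out is required.

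For (iii) your plan has a genuine gap. You correctly identify the obstacle as the vanishing of $(X_{L,m}\otimes_{A_{L_n}} Q_n)^{\Gamma_{L_m}^{cyc}}$, but the proposed fix---pass to $L''$ where $Q_n$ becomes free so that ``the twist disappears''---does not work. Freeness only lets you choose coordinates; the $\Gamma$-action on those coordinates is still given by a nontrivial cocycle in $\GL_r(A_{L''})$, so the tensor product is \emph{not} a direct sum of copies of $X_{L,m}\otimes A_{L''}$ as $\Gamma$-modules, and the qualitative statement $X_{L,m}^{\Gamma_{L_m}^{cyc}}=0$ from Prop.~\ref{TS}.i is insufficient to conclude. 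The paper instead uses the \emph{quantitative} Colmez--Sen--Tate condition (TS3): one passes to $L'$ where $Q' := A_{L'}\otimes Q_n$ is free, chooses $\ell$ large enough that $\|(\gamma-1)(x)\|_{Q'} < p^{-c_3}\|x\|_{Q'}$ for all $\gamma\in\Gamma_{L'_\ell}^{cyc}$ (possible since this action is $A_{L'}$-linear and continuous), fixes a $\gamma_0$ of infinite order, and takes $m\geq\ell$ so that (TS3) gives $\|(\gamma_0-1)(a)\|\geq p^{-c_3}\|a\|$ on $X_{L',m}$. Writing $y=\sum a_i e_i$ in the free basis and expanding $(\gamma_0-1)(y)=\sum(\gamma_0-1)(a_i)e_i+\sum\gamma_0(a_i)(\gamma_0-1)(e_i)$, the first sum has norm $\geq p^{-c_3}\max_i\|a_i\|$ while the second has norm strictly smaller, so $(\gamma_0-1)(y)\neq 0$. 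This norm comparison is the missing ingredient in your outline.
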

\begin{proof}
i. and ii. We fix a free $A$-module $P_0$ of rank $r$ such that $A_{\Cp} \otimes_A P_0 = P$, and we let act $G_L$ continuously and semilinearly on $\End_{A_{\Cp}}(P)$ by
\begin{align*}
    G_L \times \End_{A_{\Cp}}(P) & \longrightarrow \End_{A_{\Cp}}(P) \\
    (\sigma, \alpha) & \longmapsto (\sigma \otimes \id_{P_0}) \circ \alpha \circ (\sigma^{-1} \otimes \id_{P_0}) \ .
\end{align*}
Then
\begin{align*}
    c : G_L & \longrightarrow \Aut_{A_{\Cp}}(P) \\
    \sigma & \longmapsto \sigma \circ (\sigma^{-1} \otimes \id_{P_0})
\end{align*}
is a continuous 1-cocycle. By Prop.\ \ref{TS}.ii there are a finite Galois extension $L'/L$, a natural number $n \geq 0$, and an element $\beta \in \Aut_{A_{\Cp}}(P)$ such that the cohomologous cocycle
\begin{equation*}
    c'(\sigma) = \beta \circ c(\sigma) \circ (\sigma \otimes \id_{P_0}) \circ \beta^{-1} \circ (\sigma^{-1} \otimes \id_{P_0}) = \beta \circ \sigma \circ \beta^{-1} \circ (\sigma^{-1} \otimes \id_{P_0})
\end{equation*}
on $G_L$ has values in $\Aut_{A_{L_n'}}(A_{L_n'} \otimes_A P_0)$ and is trivial on $H_{L'}^{cyc}$. It follows that $\sigma = \beta^{-1} \circ (\sigma \otimes \id_{P_0}) \circ \beta$ for any $\sigma \in H_{L'}^{cyc}$ and hence that
\begin{equation*}
    P^{H_{L'}^{cyc}} = \beta^{-1}(P^{{H_{L'}^{cyc}} \otimes \id_{P_0} = 1}) = \beta^{-1}(A_{\Cp}^{H_{L'}^{cyc}} \otimes_A P_0) = \beta^{-1}(A_{\widehat{L'^{cyc}}} \otimes_A P_0) \ .
\end{equation*}
In particular, $P^{H_{L'}^{cyc}}$ is a free $A_{\widehat{L'^{cyc}}}$-module of rank $r$ and $A_{\Cp} \otimes_{A_{\widehat{L'^{cyc}}}} P^{H_{L'}^{cyc}} = P$. Moreover, the residual action of $G_L/H_{L'}^{cyc}$ on $P^{H_{L'}^{cyc}}$ leaves the free $A_{L_n'}$-submodule $Q' := \beta^{-1}(A_{L_n'} \otimes_A P_0)$ of rank $r$  invariant.

By the usual finite Galois descent formalism (cf.\ \cite{BC} Prop.\ 2.2.1 and \cite{B-AC} II.5.3 Prop.\ 4) we conclude that $P^{H_L^{cyc}}$ is a projective $A_{\widehat{L^{cyc}}}$-module of rank $r$ with $A_{\Cp} \otimes_{A_{\widehat{L^{cyc}}}} P^{H_L^{cyc}} = P$ and that the $\Gamma_L^{cyc}$-action on $P^{H_L^{cyc}}$ leaves invariant the projective $A_{L_n'}^{H_L^{cyc}}$-submodule $Q := (Q')^{H_L^{cyc}}$ of rank $r$. By construction we have $A_{\widehat{L^{cyc}}} \otimes_{A_{L_n'}^{H_L^{cyc}}} Q = P^{H_L^{cyc}}$. By enlarging $n$ we achieve that $A_{L_n'}^{H_L^{cyc}} = A_{L_n}$.

iii. We fix a $Q_n$ as in ii. We may assume that $L'/L_n$ is Galois and that $L^{cyc} \cap L' = L_n$. By assumption the $A_{L'}$-module $Q' := A_{L'} \otimes_{A_{L_n}} Q_n$ is free. For any sufficiently big $m \geq n$ we have
\begin{align*}
  P^{H_{L'}^{cyc}} & = A_{\widehat{L'^{cyc}}} \otimes_{A_{\widehat{L^{cyc}}}} P^{H_L^{cyc}} = A_{\widehat{L'^{cyc}}} \otimes_{A_{L_n}} Q_n = A_{\widehat{L'^{cyc}}} \otimes_{A_{L'}} Q' \\
  & = (A_{L'_m} \otimes_{A_{L'}} Q') \oplus (X_{L',m} \otimes_{A_{L'}} Q') \ ,
\end{align*}
where the first and the last identity uses finite Galois descent and Prop.\ \ref{TS}.i, respectively. Suppose that we find sufficiently big integers $m \geq \ell \geq n$ such that
\begin{equation}\label{f:X-inv}
  (X_{L',m} \otimes_{A_{L'}} Q')^{\Gamma_{L'_\ell}^{cyc}} = 0 \ .
\end{equation}
We then obtain
\begin{equation*}
  P^{G_L} \subseteq (A_{L'_m} \otimes_{A_{L'}} Q')^{H_L^{cyc}} = (A_{L'_m} \otimes_{A_{L_n}} Q_n)^{H_L^{cyc}} = A_{L_m} \otimes_{A_{L_n}} Q_n
\end{equation*}
with the last identity again using finite Galois descent.

In order to establish \eqref{f:X-inv} we note that the group $\Gamma_{L'}^{cyc}$ acts $A_{L'}$-linearly on $Q'$. The same computation as in the proof of Prop.\ \ref{locan3} shows that for a sufficiently big $\ell \geq n$ we have
\begin{equation*}
  \|(\gamma - 1)(x)\|_{Q'} < p^{-c_3} \|x\|_{Q'}  \qquad\text{for any $\gamma \in \Gamma_{L'_\ell}^{cyc}$ and $x \in Q'$}.
\end{equation*}
We fix an element $\gamma_0 \in \Gamma_{L'_\ell}^{cyc}$ of infinite order. The Colmez-Sen-Tate condition (TS3) in \cite{BC} \S3.1 says that, for $m \geq \ell$ sufficiently big, we have
\begin{equation*}
   \|(\gamma_0 - 1)(a)\|_{A_{\Cp}} \geq p^{-c_3} \|a\|_{A_{\Cp}}    \qquad\text{for any $a \in X_{L',m}$}.
\end{equation*}
We claim that $\gamma_0$ has no nonzero fixed vector in $X_{L',m} \otimes_{A_{L'}} Q'$.

Let $e_1, \ldots, e_r$ be a basis of $Q'$ over $A_{L'}$. We equip $P = A_{\Cp} e_1 \oplus \ldots \oplus A_{\Cp} e_r$ with the norm $\|\sum_{i=1}^r a_i e_i\| := \max_i \|a_i\|_{A_{\Cp}}$. We may assume that $\|\ \|_{Q'} = \|\ \|_{|Q'}$. For any $0 \neq y = \sum_i a_i e_i \in X_{L',m} \otimes_{A_{L'}} Q'$ (with $a_i \in X_{L',m}$) we have
\begin{equation*}
   \|\sum_i (\gamma_0 - 1)(a_i) \cdot e_i \| = \max_i \|(\gamma_0 - 1)(a_i)\|_{A_{\Cp}} \geq p^{-c_3} \max_i \|a_i\|_{A_{\Cp}}
\end{equation*}
and
\begin{equation*}
  \| \sum_i \gamma_0(a_i)(\gamma_0 - 1)(e_i) \| \leq  \max_i \|a_i\|_{A_{\Cp}} \cdot \|(\gamma_0 - 1)(e_i)\|_{Q'} < p^{-c_3} \max_i \|a_i\|_{A_{\Cp}} \ .
\end{equation*}
It follows that
\begin{equation*}
  (\gamma_0 - 1)(y) = (\gamma_0 - 1)(\sum_i a_i e_i ) = \sum_i (\gamma_0 - 1)(a_i) \cdot e_i + \sum_i \gamma_0(a_i)(\gamma_0 - 1)(e_i) \neq 0 \ .
\end{equation*}
\end{proof}

We fix an $n$ together with $Q_n$ as in Cor.\ \ref{TS2}.ii and such that, for simplicity, $\Gamma_{L_n}^{cyc}$ is topologically cyclic. The group $\Gamma_{L_n}^{cyc}$ is locally $\Qp$-analytic since it is, via the cyclotomic character $\chi_{cyc}$, an open subgroup of $\Zp^\times$. It acts $A_{L_n}$-linearly on $Q_n$. The condition \eqref{f:condition-locan} trivially holds for the trivial $\Gamma_{L_n}^{cyc}$-action on $A_{L_n}$. Therefore Prop.\ \ref{locan3} applies so that:

\begin{corollary}\label{qnlocan}
The $\Gamma_{L_n}^{cyc}$-action on $Q_n$ is locally $\Qp$-analytic.
\end{corollary}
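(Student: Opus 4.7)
The plan is to apply Proposition \ref{locan3} (suitably generalized) to $Q_n$ as a finitely generated projective module over the Banach algebra $A_{L_n}$, equipped with the continuous semilinear $\Gamma_{L_n}^{cyc}$-action provided by Corollary \ref{TS2}.

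First I would observe that $\Gamma_{L_n}^{cyc} = \Gal(L^{cyc}/L_n)$ fixes $L_n$ pointwise and acts trivially on $A$, and therefore acts trivially on $A_{L_n} = L_n \,\widehat{\otimes}_L\, A$. In particular, the operator $\gamma - 1$ vanishes identically on $A_{L_n}$ for every $\gamma \in \Gamma_{L_n}^{cyc}$, so the analogue of the operator-norm condition \eqref{f:condition-locan} for the coefficient algebra $B = A_{L_n}$ is trivially verified (in fact with sharp constant zero).

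Next I need to note that Proposition \ref{locan3} and its underlying Lemma \ref{locan} are stated for the locally $L$-analytic group $\Gamma_L = o_L^\times$, but the proof never uses the $L$-analytic structure: one only picks a $\Zp$-basis $\gamma_1, \ldots, \gamma_r$ of a small pro-$p$ open subgroup isomorphic to $\Zp^r$ and feeds it into the power-series expansion \eqref{f:expansion} of $\exp(\log \gamma)$. The same argument therefore applies verbatim with $\Gamma_L$ replaced by any commutative locally $\Qp$-analytic group. In our situation $\Gamma_{L_n}^{cyc}$ is, via $\chi_{cyc}$, an open subgroup of $\Zp^\times$ and, by our choice of $n$, even topologically cyclic, so it is in particular a commutative locally $\Qp$-analytic group.

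Applying this generalized form of Proposition \ref{locan3} with $B := A_{L_n}$ and $M := Q_n$, endowed with its canonical topology coming from a finite projective presentation, I conclude that the $\Gamma_{L_n}^{cyc}$-action on $Q_n$ itself satisfies \eqref{f:condition-locan} and hence is locally $\Qp$-analytic, as desired. The only non-formal step is the verification that the proof of Proposition \ref{locan3} transfers to an arbitrary commutative locally $\Qp$-analytic group in place of $\Gamma_L$; this is the expected main obstacle, but a direct inspection of the proof shows that nothing beyond the $\Qp$-analytic structure of the group and condition \eqref{f:condition-locan} on the coefficient ring is ever used.
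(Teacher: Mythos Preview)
Your proposal is correct and is essentially the paper's own argument: the paper notes (in the paragraph immediately preceding the corollary) that $\Gamma_{L_n}^{cyc}$ acts $A_{L_n}$-linearly on $Q_n$, that condition \eqref{f:condition-locan} holds trivially for the trivial action on $A_{L_n}$, and then invokes Prop.~\ref{locan3}. Your only addition is to make explicit that the proof of Prop.~\ref{locan3} (and the underlying Lemma~\ref{locan}) goes through unchanged with $\Gamma_L$ replaced by the commutative locally $\Qp$-analytic group $\Gamma_{L_n}^{cyc}$, a point the paper silently takes for granted.
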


In particular, we have the linear operator $\nabla_{Sen}$ on $Q_n$ which is given by the derived action of $\nabla_{cyc} := \Lie(\chi_{cyc})^{-1}(1) \in \Lie(\Gamma_{L_n}^{cyc})$. If $\gamma$ is a topological generator of $\Gamma_{L_n}^{cyc}$ then $\nabla_{Sen}$ is explicitly given by
\begin{equation*}
  \nabla_{Sen} = \frac{\log (\gamma^{p^j})}{\log \chi_{cyc}(\gamma^{p^j})} \quad\text{for any sufficiently big $j \in \ZZ_{\geq 0}$}.
\end{equation*}
The notation $\nabla_{Sen}$ is justified by the fact that, being defined by deriving the action of $\Gamma_L^{cyc}$, these operators for various $n$ and $Q_n$ all are compatible in an obvious sense.

\begin{lemma}\label{Sen-zero}
Suppose that $\nabla_{Sen} = 0$. Then $P^{G_L}$ is a projective $A$-module of rank $r$ and $A_{\Cp} \otimes_A P^{G_L} = P$.
\end{lemma}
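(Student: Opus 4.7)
The plan is to exploit the vanishing of $\nabla_{Sen}$ to show that an open subgroup of $\Gamma_{L_n}^{cyc}$ acts trivially on $Q_n$, then to identify $P^{G_{L_m}}$ with a suitable base change of $Q_n$ for $m$ large, and finally to invoke finite Galois descent along $L_m/L$. The main technical step—and the main obstacle—is the first one: passing from the vanishing of one derived operator $\nabla_{Sen}$ to the triviality of a group-theoretic object, an open subgroup of $\Gamma_{L_n}^{cyc}$. This is where Cor.\ \ref{qnlocan} and the explicit formula $\nabla_{Sen} = \log(\gamma^{p^j})/\log\chi_{cyc}(\gamma^{p^j})$ are essential: since the $\Gamma_{L_n}^{cyc}$-action on $Q_n$ is locally $\Qp$-analytic, for $j$ sufficiently large we have $\|\gamma^{p^j} - 1\| < p^{-1/(p-1)}$ in the operator norm on the Banach algebra $\End_{A_{L_n}}(Q_n)$, so $\log$ and $\exp$ are mutually inverse at $\gamma^{p^j}$. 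The assumption $\nabla_{Sen}=0$ then forces $\log(\gamma^{p^j}) = 0$, and hence $\gamma^{p^j} = \exp(0) = \id$ on $Q_n$. As the set of elements of $\Gamma_{L_n}^{cyc}$ acting trivially on $Q_n$ is closed and now contains a nontrivial element, it is an open subgroup; hence, after enlarging $n$ to some $m$, we may assume $\Gamma_{L_m}^{cyc}$ acts trivially on $Q_n$.

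Second, set $Q_m := A_{L_m} \otimes_{A_{L_n}} Q_n$. By flatness of $Q_n$ this is a projective $A_{L_m}$-submodule of $P^{H_L^{cyc}} = A_{\widehat{L^{cyc}}} \otimes_{A_{L_n}} Q_n$ of rank $r$. The group $\Gamma_{L_m}^{cyc}$ acts trivially on $Q_n$ by the first step and trivially on $A_{L_m}$ since it fixes $L_m$, so it acts trivially on $Q_m$. Using the $\Gamma_{L_m}^{cyc}$-invariant decomposition $A_{\widehat{L^{cyc}}} = A_{L_m} \oplus X_{L,m}$ of Prop.\ \ref{TS}.i, with $X_{L,m}^{\Gamma_{L_m}^{cyc}} = 0$, and tensoring over $A_{L_m}$ with $Q_m$, we obtain
\begin{equation*}
  P^{H_L^{cyc}} = Q_m \oplus (X_{L,m} \otimes_{A_{L_m}} Q_m)
\end{equation*}
as $\Gamma_{L_m}^{cyc}$-modules, the action being trivial on $Q_m$. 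Since $Q_m$ is projective, taking $\Gamma_{L_m}^{cyc}$-invariants commutes with tensoring against $Q_m$, so the second summand vanishes and $(P^{H_L^{cyc}})^{\Gamma_{L_m}^{cyc}} = Q_m$. Combined with $P^{G_{L_m}} = (P^{H_L^{cyc}})^{\Gamma_{L_m}^{cyc}}$, this yields $P^{G_{L_m}} = Q_m$.

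Third, $L_m/L$ is finite Galois and $\Gal(L_m/L) = G_L/G_{L_m}$ acts semilinearly on $Q_m = P^{G_{L_m}}$ over the finite Galois ring extension $A \hookrightarrow A_{L_m}$. Standard Galois descent for finitely generated projective modules (\cite{BC} Prop.\ 2.2.1 together with \cite{B-AC} II.5.3 Prop.\ 4, as used already in the proof of Cor.\ \ref{TS2}) then yields that $P^{G_L} = Q_m^{\Gal(L_m/L)}$ is a finitely generated projective $A$-module of rank $r$, and that $A_{L_m} \otimes_A P^{G_L} = Q_m$. Base-changing further to $A_{\Cp}$ and using Cor.\ \ref{TS2}.i, we compute
\begin{equation*}
  A_{\Cp} \otimes_A P^{G_L} = A_{\Cp} \otimes_{A_{L_m}} Q_m = A_{\Cp} \otimes_{A_{L_n}} Q_n = A_{\Cp} \otimes_{A_{\widehat{L^{cyc}}}} P^{H_L^{cyc}} = P,
\end{equation*}
which completes the argument. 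The only genuine input beyond the first-step reduction is the Colmez--Sen--Tate decomposition of Prop.\ \ref{TS}.i and the usual Galois-descent formalism; everything else is bookkeeping with base change.
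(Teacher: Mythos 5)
Your proposal is correct and follows essentially the same route as the paper's own proof: deduce from $\nabla_{Sen}=0$ and the $\log$/$\exp$ correspondence that some $\gamma^{p^j}$ acts trivially on $Q_n$, enlarge $n$ so that $\Gamma_{L_n}^{cyc}$ acts trivially, compute $P^{G_{L_n}}$ using the Colmez--Sen--Tate decomposition $A_{\widehat{L^{cyc}}} = A_{L_n} \oplus X_{L,n}$ together with the projectivity of $Q_n$, and conclude by finite Galois descent along $L_n/L$. You merely spell out a few steps the paper leaves compressed (in particular the operator-norm argument for the first step, which the paper dispenses with in one sentence, and the explicit base-change bookkeeping at the end).
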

\begin{proof}
The assumption implies that $\gamma^{p^j}$ for any sufficiently big $j$ fixes $Q_n$. Replacing $n$ by $n+j$ and $Q_n$ by $A_{L_{n+j}} \otimes_{A_{L_n}} Q_n$ we therefore may assume that $Q_n \subseteq P^{G_{L_n}}$ and that $n$ satisfies Prop.\ \ref{TS}.i. We then compute
\begin{align*}
  P^{G_{L_n}} & = (P^{H_L^{cyc}})^{\Gamma_{L_n}^{cyc}} = (A_{\widehat{L^{cyc}}} \otimes_{A_{L_n}} Q_n)^{\Gamma_{L_n}^{cyc}} \\
   & = ((A_{L_n} \oplus X_{L,n}) \otimes_{A_{L_n}} Q_n)^{\Gamma_{L_n}^{cyc}} \\
   & = Q_n \oplus  (X_{L,n} \otimes_{A_{L_n}} Q_n)^{\Gamma_{L_n}^{cyc}} \\
   & = Q_n \oplus  (X_{L,n}^{\Gamma_{L_n}^{cyc}} \otimes_{A_{L_n}} Q_n) \\
   & = Q_n
\end{align*}
where the fourth identity uses the projectivity of $Q_n$. The assertion follows from this by Galois descent.
\end{proof}

Now we suppose given an $L$-Banach algebra $B$ (with submultiplicative norm) on which the group $o_L^\times$ acts continuously by ring automorphisms. By linearity and continuity this action extends to a continuous $o_L^\times$-action on $B_{\Cp}$ (compare the proof of Lemma \ref{scalar-ext}.ii). It allows us to introduce the twisted $G_L$-action on $B_{\Cp} = \Cp \widehat{\otimes}_L B$ by
\begin{equation*}
  ^{\sigma*}(c \otimes b) := \sigma(c) \otimes \tau(\sigma^{-1})(b) \qquad\text{for $c \in \Cp$, $b \in B$, and $\sigma \in G_L$}.
\end{equation*}
The fixed elements $A := B_{\Cp}^{G_L,*}$ in $B_{\Cp}$ with respect to this twisted action form an $L$-Banach algebra. We assume that
\begin{equation}\label{f:form}
  A_{\Cp} = B_{\Cp} \ .
\end{equation}

We also suppose that our free $A_{\Cp}$-module $P$ is of the form
\begin{equation*}
  P = B_{\Cp} \otimes_B P_0
\end{equation*}
for some finitely generated projective $B$-module $P_0$ which carries a continuous semilinear action of $o_L^\times$. By the arguments in the proof of Lemma \ref{scalar-ext} this action extends to a continuous semilinear action on $P$ (and we have $P = \Cp\, \widehat{\otimes}_L\, P_0$). We then have a twisted $G_L$-action on $P$ as well by
\begin{equation*}
  ^{\sigma*}(b \otimes x) := {^{\sigma*}b} \otimes \tau(\sigma^{-1})(x) \qquad\text{for $b \in B_{\Cp}$, $x \in P_0$, and $\sigma \in G_L$}.
\end{equation*}
The corresponding fixed elements $P^{G_L,*}$ form an $A$-module. For an element in $P$ of the form $c \otimes x$ with $c \in \Cp$ and $x \in P_0$ we, of course, simply have $^{\sigma*}(c \otimes x) = \sigma(c) \otimes \tau(\sigma^{-1})(x)$. Using this observation (and again the reasoning in the proof of Lemma \ref{scalar-ext}.ii) one checks that this twisted $G_L$-action on $P$ is continuous. It follows that $P^{G_L,*}$ is closed in $P$. Obviously the $\Gamma_L$-action on $P^{G_L,*}$ then is continuous for the topology induced by the (canonical) topology of $P$. In our later applications we will show that the $A$-module $P^{G_L,*}$ is finitely generated projective. As a consequence of the open mapping theorem there is only one complete and Hausdorff module topology on a finitely generated module over a Banach algebra (cf.\ \cite{BGR} 3.7.3 Prop.\ 3). Hence in those applications we will have that the $\Gamma_L$-action on $P^{G_L,*}$ is continuous for the canonical topology as an $A$-module.

Our goal is the following sufficient condition for the vanishing of $\nabla_{Sen}=0$.

\begin{proposition}\label{nabsenul}
If the actions of $o_L^\times$ on $B$ and $P_0$ are locally $L$-analytic then $\nabla_{Sen}=0$ on $Q_n$.
\end{proposition}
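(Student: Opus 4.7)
The strategy is to exhibit an explicit choice of $Q_n$ on which the standard action of $\Gamma_{L_n}^{\mathrm{cyc}}$ is trivial pointwise, making $\nabla_{Sen}=0$ immediate. Since the Sen operators for different valid choices of $Q_n$ are compatible (as noted in the paper right after Cor.\ \ref{qnlocan}), this suffices.

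The first step is to combine the $o_L^\times$-actions on $B$ and on $P_0$ into a diagonal action $\mu$ on $P = B_{\Cp}\otimes_B P_0 = \Cp\,\widehat{\otimes}_L P_0$ which, by the hypothesis, is locally $L$-analytic and commutes with the standard Galois action (since the latter acts purely through the $\Cp$-factor). The hypothesis $A_{\Cp}=B_{\Cp}$ together with $P$ being free of rank $r$ over $A_{\Cp}$ then forces $P_0$ to be $B$-free of rank $r$: one sees this by writing $P_0 \oplus P_0' \cong B^N$ (projectivity), tensoring up to $B_{\Cp}$, and using that the complementary summand vanishes. Fix a $B$-basis $e_1,\ldots,e_r$ of $P_0$; it is simultaneously an $A_{\Cp}$-basis of $P$, and each $e_i$ is fixed pointwise by the standard $G_L$-action because $P_0 \hookrightarrow P$ sits inside $P^{G_L}$.

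The second step is to take
\[
Q_n := A_{L_n}\cdot e_1 \oplus \cdots \oplus A_{L_n}\cdot e_r \;\subseteq\; P^{H_L^{\mathrm{cyc}}}.
\]
This is free of rank $r$ over $A_{L_n}$, and its scalar extension to $A_{\widehat{L^{\mathrm{cyc}}}}$ equals $\bigoplus_i A_{\widehat{L^{\mathrm{cyc}}}}\cdot e_i = P^{H_L^{\mathrm{cyc}}}$ (since $\{e_i\}$ is an $A_{\Cp}$-basis of $P$ and $A_{\Cp}^{H_L^{\mathrm{cyc}}}=A_{\widehat{L^{\mathrm{cyc}}}}$). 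Moreover, $Q_n$ is $\Gamma_L^{\mathrm{cyc}}$-stable: the standard Galois action on $A_{L_n}=L_n\otimes_L A$ factors through the $L_n$-coefficient and hence preserves the $A_{L_n}$-module $Q_n$ (by semilinearity), while fixing the $e_i$. So this $Q_n$ satisfies all the requirements (a)--(c) of Cor.\ \ref{TS2}(ii), possibly after replacing $L'$ by $L_n$ in condition (c).

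Finally, by the very definition of $\Gamma_{L_n}^{\mathrm{cyc}}=\Gal(L^{\mathrm{cyc}}/L_n)$, the group $\Gamma_{L_n}^{\mathrm{cyc}}$ fixes $L_n$ and therefore fixes $A_{L_n}$ pointwise; combined with the pointwise fixing of the basis vectors $e_i\in P_0$, this forces $\Gamma_{L_n}^{\mathrm{cyc}}$ to act as the identity on $Q_n$. Differentiating the trivial action yields $\nabla_{Sen}=0$ on $Q_n$, and by the Sen-operator compatibility across valid choices of $Q_n$, the conclusion holds for the $Q_n$ of the proposition. The main obstacle is the freeness of $P_0$ over $B$ in the first step: this is where the structural hypotheses $A_{\Cp}=B_{\Cp}$ and the $L$-analyticity (used to make the $o_L^\times$-action extend cleanly to $P$ and to ensure the relevant faithful flatness) enter in an essential way; once freeness is in hand, the remainder of the argument is a direct Galois-descent computation using Ax--Sen--Tate.
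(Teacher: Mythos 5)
Your proposal is based on a misidentification of which $G_L$-action is being used, and this makes the argument collapse at its very first step.

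The Sen operator $\nabla_{Sen}$ in the proposition is attached to the \emph{twisted} $G_L$-action on $P = B_{\Cp}\otimes_B P_0$, namely $\;{}^{\sigma*}(b\otimes x) := {}^{\sigma*}b\otimes\tau(\sigma^{-1})(x)$, introduced in the paragraphs immediately preceding the statement. The whole descent machinery (the modules $P^{H_L^{cyc}}$, $Q_n$, and the ring $A=B_{\Cp}^{G_L,*}$) is set up relative to this twisted action; this is precisely what makes the fixed points land in $\mathscr{R}_L(\mathfrak{X})$ rather than back in $\mathscr{R}_L(\mathfrak{B})$. For the twisted action, the elements $1\otimes e_i\in P$ coming from a $B$-basis of $P_0$ are \emph{not} fixed: $\sigma$ sends $1\otimes e_i$ to $1\otimes\tau(\sigma^{-1})(e_i)$, and $\tau(\sigma^{-1})\in o_L^\times$ acts nontrivially on $P_0$. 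Consequently your candidate $Q_n = \bigoplus_i A_{L_n}\cdot e_i$ is not a $\Gamma_L^{cyc}$-invariant submodule for the relevant action (one would also need $B\subseteq A$ to hold, which fails because $1\otimes b\in B_{\Cp}$ is moved to $1\otimes\tau(\sigma^{-1})(b)$), and the claimed pointwise triviality of the $\Gamma_{L_n}^{cyc}$-action is simply false. With the \emph{standard} action acting only through the $\Cp$-factor the conclusion would be a tautology, but that is not the action at stake.

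There is also a secondary gap: the freeness of $P_0$ over $B$ does not follow from $A_{\Cp}=B_{\Cp}$ together with $P$ being free over $B_{\Cp}$. Your tensoring argument only shows that $B_{\Cp}\otimes_B P_0$ and $B_{\Cp}\otimes_B P_0'$ are free (or stably free) over $B_{\Cp}$; it gives no information about $P_0$ over $B$. In the applications of section \ref{sec:TS}, $P_0 = \mathcal{M}_m(\mathfrak{A}_j)$ is genuinely just projective over $\mathcal{O}_L(\mathfrak{A}_j)$; freeness is only obtained after base change to $\Cp$, where the coordinate ring becomes a PID.

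The actual argument in the paper is of a very different nature: it does not attempt to show the action is trivial, but instead computes the derived Lie algebra action. One uses the decomposition $L\otimes_{\Qp}\Lie(\Gamma_L^{LT})\cong\bigoplus_{\sigma\in\Sigma}L$ into direction-derivatives $\nabla_\sigma$, the $L$-analyticity hypothesis (which forces the derived action on $P_0$ to factor through a specific linear form as in diagram \eqref{diag:L-ana}), and the Berger--Colmez analysis of locally analytic vectors of $\widehat{L^{max}}$ (special elements $x_\sigma$ and $z$, and the relation $\nabla_{\id}+\nabla_{cyc}=0$ from Lemma \ref{essdispro} or $\nabla_{\id}=0$ from \cite{STLAV} Cor.\ 4.3). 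Combining the two vanishing statements yields $\nabla_{Sen}=0$. This is an essentially analytic argument, not a pure Galois-descent computation.
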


We start with some preliminary results. First we consider any $p$-adic Lie group $H$, any $L$-Banach space representation $X$ of $H$, and any $L$-Banach space $Y$. Then $X \widehat{\otimes}_L Y$ becomes an $L$-Banach space representation of $H$ through the $H$-action on the first factor. We denote by $X^H$ the Banach subspace of $H$-fixed vectors in $X$ and by $X_{la}$ the $H$-invariant subspace of locally analytic vectors with respect to $H$. As in \cite{Eme} Def.\ 3.5.3 and Thm.\ 3.5.7 we view $X_{la}$ as the locally convex inductive limit
\begin{equation*}
  X_{la} = \varinjlim_U X_{\mathbb{U}-an} \ ,
\end{equation*}
where $U$ runs over all analytic open subgroups of $H$, $\mathbb{U}$ denotes the rigid analytic group corresponding to $U$, and $X_{\mathbb{U}-an}$ is the $L$-Banach space of $\mathbb{U}$-analytic vectors in $X$ (loc.\ cit.\ Def.\ 3.3.1 and Prop.\ 3.3.3). If $C^{an}(\mathbb{U},X) = \mathcal{O}(\mathbb{U}) \widehat{\otimes}_{\Qp} X$ denotes the $L$-Banach space of $X$-valued rigid analytic function on $\mathbb{U}$ then the orbit maps $x \longrightarrow \rho_x(h) := hx$ induce an isomorphism of Banach spaces
\begin{equation*}
  X_{\mathbb{U}-an} \xrightarrow{\;\cong\;} C^{an}(\mathbb{U},X)^U \ ,
\end{equation*}
where the right hand side is the subspace of $U$-fixed vectors for the continuous action $(h,f) \longmapsto (hf)(h') := h(f(h^{-1}h'))$. The subspace $X_{\mathbb{U}-an}$ of $X$ is $U$-invariant. But the inclusion map $X_{\mathbb{U}-an} \xrightarrow{\subseteq} X$ only is continuous in general (and not a topological inclusion). Furthermore, $X_{\mathbb{U}-an}$ is a locally $\Qp$-analytic representation of $U$ (loc.\ cit.\ the paragraph after Def.\ 3.3.1, Cor.\ 3.3.6, and Cor.\ 3.6.13). In particular, we have the derived action of the Lie algebra $\Lie(U) = \Lie(H)$ on $X_{\mathbb{U}-an}$. We also remark that, if $X_{\mathbb{U}-an} = X$ as vector spaces, then this is, in fact, an identity of Banach spaces (loc.\ cit.\ Thm.\ 3.6.3).

\begin{lemma}\label{ant}
For any analytic open subgroup $U$ of $H$ we have:
\begin{itemize}
  \item[i.]  $(X \widehat{\otimes}_L Y)^U = X^U \widehat{\otimes}_L Y$;
  \item[ii.] $(X \widehat{\otimes}_L Y)_{\mathbb{U}-an} = X_{\mathbb{U}-an} \, \widehat{\otimes}_{L}\, Y$;
  \item[iii.] if $X_{\mathbb{U}-an} = X$ then, with respect to the $U$-action
\begin{align*}
  U \times C^{an}(\mathbb{U},X) & \longrightarrow C^{an}(\mathbb{U},X) \\
  (h,f) & \longmapsto h(f(h^{-1}.)) \ ,
\end{align*}
we have $C^{an}(\mathbb{U},X)_{\mathbb{U}-an} = C^{an}(\mathbb{U},X)$.
\end{itemize}
\end{lemma}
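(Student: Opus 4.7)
The plan is to treat the three parts in order, each building on the previous.

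For part (i), the idea is to realize $X^U$ as the kernel of a single continuous $L$-linear map and then invoke exactness of $\cdot\, \widehat{\otimes}_L\, Y$ on strict short exact sequences of $L$-Banach spaces (which rests on the spherical completeness of the local field $L$, and is standard, cf.\ \cite{NFA}). Concretely, by continuity of the $U$-action and compactness of $U$, the map
\begin{equation*}
\phi : X \longrightarrow C(U, X), \qquad \phi(x)(h) := hx - x,
\end{equation*}
is continuous with $\ker \phi = X^U$. Since the $U$-action on $X \widehat{\otimes}_L Y$ is $h \widehat{\otimes} \id_Y$, and since the standard identification $C(U, X) \widehat{\otimes}_L Y = C(U, X \widehat{\otimes}_L Y)$ holds, the map $\phi \widehat{\otimes} \id_Y$ has kernel equal to $(X \widehat{\otimes}_L Y)^U$. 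Exactness of $\cdot\, \widehat{\otimes}_L\, Y$ on $0 \to X^U \to X \xrightarrow{\phi} C(U,X)$ then gives $X^U \widehat{\otimes}_L Y = (X \widehat{\otimes}_L Y)^U$.

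For part (ii), I combine (i) with the identification $Z_{\mathbb{U}-an} = C^{an}(\mathbb{U}, Z)^U$ recalled just before the Lemma, applied to $Z := X \widehat{\otimes}_L Y$. Writing $C^{an}(\mathbb{U}, Z) = \mathcal{O}(\mathbb{U}) \widehat{\otimes}_{\Qp} X \widehat{\otimes}_L Y$ and noting that the $U$-action on $Z$ is trivial on the $Y$-factor, so that the induced $U$-action on $C^{an}(\mathbb{U}, Z)$ is as well, part (i) applied with $\mathcal{O}(\mathbb{U}) \widehat{\otimes}_{\Qp} X$ in place of $X$ gives
\begin{equation*}
(X \widehat{\otimes}_L Y)_{\mathbb{U}-an} = \bigl(\mathcal{O}(\mathbb{U}) \widehat{\otimes}_{\Qp} X\bigr)^U \widehat{\otimes}_L Y = C^{an}(\mathbb{U},X)^U \widehat{\otimes}_L Y = X_{\mathbb{U}-an} \widehat{\otimes}_L Y.
\end{equation*}

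For part (iii), I will show directly that, for any $f \in C^{an}(\mathbb{U}, X)$, the orbit map $h \longmapsto hf$ is rigid analytic with values in $C^{an}(\mathbb{U}, X)$. Using $\mathcal{O}(\mathbb{U} \times \mathbb{U}) = \mathcal{O}(\mathbb{U}) \widehat{\otimes}_{\Qp} \mathcal{O}(\mathbb{U})$ to identify $C^{an}(\mathbb{U}, C^{an}(\mathbb{U}, X)) = C^{an}(\mathbb{U} \times \mathbb{U}, X)$, this reduces to showing that the two-variable function $(h, h') \longmapsto h\bigl(f(h^{-1}h')\bigr)$ belongs to $C^{an}(\mathbb{U} \times \mathbb{U}, X)$. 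But this function factors as the composition of the rigid analytic map $\mathbb{U} \times \mathbb{U} \to \mathbb{U} \times X$, $(h, h') \mapsto (h, f(h^{-1}h'))$ (rigid analytic because multiplication, inversion on $\mathbb{U}$, and $f$ itself are), with the action map $\mathbb{U} \times X \to X$, which under the hypothesis $X_{\mathbb{U}-an} = X$ is itself rigid analytic --- this is precisely the characterization of a $\mathbb{U}$-analytic Banach representation (cf.\ the Emerton results recalled before the Lemma).

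The main obstacle is part (i), where one must justify carefully that the completed tensor product commutes with passage to $U$-invariants. The approach above via a single kernel map circumvents the need to handle infinite intersections, but relies on the identification $C(U, X) \widehat{\otimes}_L Y = C(U, X \widehat{\otimes}_L Y)$ and on exactness of $\cdot\, \widehat{\otimes}_L\, Y$ on strict short exact sequences of $L$-Banach spaces; both are standard consequences of the spherical completeness of $L$. Given (i), parts (ii) and (iii) are essentially formal.
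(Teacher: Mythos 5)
Your proof is correct, but it diverges from the paper's proof in two of the three parts. For (i), the paper's argument is shorter and more direct: it chooses an orthogonal (Banach) basis $(y_i)_{i\in I}$ of $Y$ (available since $L$ is discretely valued, \cite{NFA} Prop.\ 10.1), identifies $X\,\widehat{\otimes}_L\,Y$ with $c_0(I,X)$ on which $U$ acts coordinatewise, and reads off $(c_0(I,X))^U = c_0(I,X^U) = X^U\,\widehat{\otimes}_L\,Y$. Your kernel argument via $\phi : X \to C(U,X)$ and exactness of $\widehat{\otimes}_L Y$ is a valid alternative, but the exactness statement you invoke ultimately reduces to the same Banach-base fact, so the detour buys nothing except the extra burden of verifying that $\phi$ is bounded (which does hold, via Banach--Steinhaus on the compact $U$). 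Part (ii) is identical to the paper's computation. For (iii) the paper simply cites Emerton Prop.\ 3.3.4 and Lemma 3.6.4; your argument is a reasonable sketch of why those results give the claim, but the step "the action map $\mathbb{U}\times X\to X$ is rigid analytic" requires unpacking, since $X$ is a Banach space rather than a rigid space. What is literally true is that $X_{\mathbb{U}\text{-an}}=X$ gives a \emph{continuous linear} map $\rho : X \to C^{an}(\mathbb{U},X)$, and one then must check that applying $\id_{\mathcal{O}(\mathbb{U}^2)}\,\widehat{\otimes}\,\rho$ to $F(h,h') := f(h^{-1}h')$ and restricting along the diagonal $\mathbb{U}^2 \hookrightarrow \mathbb{U}^3$, $(h,h')\mapsto(h,h,h')$, lands in $C^{an}(\mathbb{U}^2,X)$; this bookkeeping is precisely what the Emerton references encapsulate. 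So no genuine gap, but part (iii) as written is somewhat informal and should be tightened along those lines if one wants a self-contained proof.
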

\begin{proof}
i. This follows by considering a Banach base of $Y$ (cf.\ \cite{NFA} Prop.\ 10.1).

ii. Using i. we compute
\begin{align*}
  (X \widehat{\otimes}_L Y)_{\mathbb{U}-an} & = C^{an}(\mathbb{U},X \widehat{\otimes}_L Y)^U = (\mathcal{O}(\mathbb{U}) \widehat{\otimes}_{\Qp} X \widehat{\otimes}_L Y)^U    \\
   & = (\mathcal{O}(\mathbb{U}) \widehat{\otimes}_{\Qp} X)^U \widehat{\otimes}_L Y = C^{an}(\mathbb{U},X)^U \widehat{\otimes}_L Y  \\
   & = X_{\mathbb{U}-an} \widehat{\otimes}_L Y \ .
\end{align*}

iii. This is a consequence of \cite{Eme} Prop.\ 3.3.4 and Lemma 3.6.4.
\end{proof}

Next we suppose given an $L$-Banach algebra $D$ with a continuous action of $H$ together with an $H$-invariant Banach subalgebra $C$. Further, we let $Z$ be a finitely generated projective $C$-module (with its canonical Banach module structure) which carries a continuous semilinear $H$-action. Then $D \otimes_C Z$ is a Banach module over $D$ with a semilinear continuous diagonal $H$-action. The dual module $Z^* := \Hom_C(Z,C)$ is finitely generated projective as well. The natural semilinear $H$-action on $Z^*$ is continuous; this follows from Remark \ref{semilinear-cont} and an analog of Remark \ref{simple-conv} by the same arguments as those before Remark \ref{summand-of-free}. Corresponding properties hold for $Z_D^* := \Hom_D(D \otimes_C Z,D)$.

\begin{lemma}\label{Uan-dual}
If $Z_{\mathbb{U}-an} = Z$ then also $(Z^*)_{\mathbb{U}-an} = Z^*$.
\end{lemma}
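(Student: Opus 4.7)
The plan is to reduce to the case where $Z$ is a free $C$-module and then exploit a cocycle-inversion identity to show that the dual cocycle is manifestly $\mathbb{U}$-analytic.

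First I would reduce to the case $Z = C^r$ free. Since $Z$ is finitely generated projective over $C$, it sits as an $H$-stable $C$-module direct summand of some larger module to which the free-case argument can be applied. The property $X_{\mathbb{U}-an} = X$ (and the corresponding statement for the dual) is preserved under passage to $H$-stable closed summands, so once the free case is settled for a suitable ambient module $F$ with $F_{\mathbb{U}-an} = F$, we deduce $Z^*_{\mathbb{U}-an} = Z^*$ by restricting to the summand $Z^* \subseteq F^*$. The technical difficulty is constructing an $H$-equivariantly free $F$ whose analyticity follows from that of $Z$; one natural candidate is $F = Z \oplus Z^*$ (analogous to Remark \ref{summand-of-free}), but this threatens to be circular, so more care is needed.

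In the free case with $C$-basis $e_1, \ldots, e_r$, encode the semilinear $H$-action by the cocycle $A\colon H \to \GL_r(C)$ via $h e_j = \sum_i A_{ij}(h)\, e_i$, which satisfies $A(gh) = A(g)\cdot g(A(h))$. The hypothesis $Z_{\mathbb{U}-an} = Z$ says exactly that every matrix entry $A_{ij}\colon \mathbb{U} \to C$ is $\mathbb{U}$-analytic. Specializing the cocycle relation to $(g, h') = (h, h^{-1})$ yields $I = A(h) \cdot h(A(h^{-1}))$, hence $h(A(h^{-1})) = A(h)^{-1}$. A direct computation on the dual basis gives $(h e_i^*)(e_j) = h(A_{ij}(h^{-1})) = (A(h)^{-1})_{ij}$, so the cocycle governing the $H$-action on $Z^*$ is precisely the transpose of $A(\cdot)^{-1}$.

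It therefore suffices to check that $h \mapsto A(h)^{-1}$ is $\mathbb{U}$-analytic with values in $M_r(C)$. Since $\GL_r(C)$ is open in $M_r(C)$, inversion is given locally by the convergent geometric series
\begin{equation*}
A(h)^{-1} \;=\; A(h_0)^{-1}\sum_{n \geq 0}\bigl((A(h_0) - A(h))\, A(h_0)^{-1}\bigr)^n
\end{equation*}
around each $h_0 \in \mathbb{U}$. Assembling these local expansions using the affinoid structure of $\mathbb{U}$ and the $\mathbb{U}$-analyticity of $A$ gives $A(\cdot)^{-1} \in \mathcal{O}(\mathbb{U})\,\widehat{\otimes}_{\Qp}\,M_r(C)$. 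Each entry of the dual cocycle is thus $\mathbb{U}$-analytic, yielding $Z^*_{\mathbb{U}-an} = Z^*$ in the free case. The main obstacle throughout is the reduction to the free case; the free-case argument itself is essentially just the identity $h(A(h^{-1})) = A(h)^{-1}$ combined with analyticity of Banach-algebra inversion, and is short once the setup is in place.
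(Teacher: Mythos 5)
The central obstacle you flag in your own plan --- producing an $H$-equivariant finitely generated free $C$-module containing $Z$ as a closed summand and already known to consist of $\mathbb{U}$-analytic vectors --- is indeed where the argument breaks down, and it does not seem fixable in the form you propose. The only natural candidate, $Z\oplus Z^*$, requires precisely the conclusion you are trying to prove, and no other $H$-equivariant free ambient module suggests itself (the module-theoretic splittings $Z\oplus W\cong C^r$ need not be $H$-equivariant). The paper sidesteps this entirely: it never makes $Z^*$ a summand of anything equivariant. Instead it uses the plain \emph{topological} (not $H$-equivariant) closed embedding $Z^*\hookrightarrow C^r$ given by evaluating at a fixed system of $C$-module generators $z_1,\ldots,z_r$ of $Z$, checks that each coordinate $h\mapsto (h\alpha)(z_i)$ of the orbit map is $\mathbb{U}$-analytic, and then invokes \cite{Eme} Prop.\ 2.1.23, which says that $\mathbb{U}$-analyticity of a continuous map into $Z^*$ can be detected after post-composition with a closed topological embedding into a Banach space. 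This replaces your problematic ``find an equivariant free envelope'' step by a purely functional-analytic one that is always available.

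There is also a convergence issue in your free-case step. The local geometric series expansion of $A(\cdot)^{-1}$ around $A(h_0)$ converges only on the neighbourhood of $h_0$ where $\|(A(h_0)-A(h))A(h_0)^{-1}\|<1$, and there is no reason for a single such neighbourhood to cover $\mathbb{U}$; patching the local expansions would at best give $(Z^*)_{\mathbb{U}'-an}=Z^*$ for some smaller affinoid $\mathbb{U}'\subset\mathbb{U}$, not for $\mathbb{U}$ itself. The cocycle identity $h(A(h^{-1}))=A(h)^{-1}$ that you derive is exactly the right tool, but it should be exploited differently: view $A_{ij}\in C^{an}(\mathbb{U},C)$ and apply Lemma \ref{ant}.iii (using the hypothesis $C_{\mathbb{U}-an}=C$) to the twisted $U$-action $(hf)(h')=h(f(h^{-1}h'))$; evaluation at $1\in\mathbb{U}$ then shows directly that $h\mapsto h(A_{ij}(h^{-1}))=(A(h)^{-1})_{ij}$ is $\mathbb{U}$-analytic, with no shrinking of $\mathbb{U}$. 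This is, in essence, the computation the paper performs for a general projective $Z$ with $f(h):=\alpha(hz)$ in place of $A_{ij}$, which is what lets it bypass the free case altogether.
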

\begin{proof}
The assumption $C_{\mathbb{U}-an} = C$ implies, by Lemma \ref{ant}.iii, that, with respect to the $U$-action
\begin{align*}
  U \times C^{an}(\mathbb{U},C) & \longrightarrow C^{an}(\mathbb{U},C) \\
  (h,f) & \longmapsto h(f(h^{-1}.)) \ ,
\end{align*}
we have $C^{an}(\mathbb{U},C)_{\mathbb{U}-an} = C^{an}(\mathbb{U},C)$. Let $z \in Z$ and $\alpha \in Z^*$. Applying this to the map $f(h) := \alpha(hz)$, which by assumption lies in $C^{an}(\mathbb{U},C)$, we obtain that $[h \longmapsto (h\alpha)(z)] \in C^{an}(\mathbb{U},C)$ for any $z \in Z$. Let now $z_1, \ldots, z_r$ be $C$-module generators of $Z$. Evaluation at the $z_i$ gives a topological inclusion of Banach modules $Z^* \hookrightarrow C^r$. What we have shown is that the continuous orbit map $\rho_\alpha$ composed with this inclusion lies in $C^{an}(\mathbb{U},C^r)$. It then follows from \cite{Eme} Prop.\ 2.1.23 that $\rho_\alpha$ must lie in $C^{an}(\mathbb{U},Z^*)$. Hence $\alpha \in (Z^*)_{\mathbb{U}-an}$.
\end{proof}

\begin{lemma}\label{Uan-projective}
Suppose that there is an analytic open subgroup $U \subseteq H$ such that $C_{\mathbb{U}-an} = C$ and $Z_{\mathbb{U}-an} = Z$; then the natural map $D_{\mathbb{U}-an} \otimes_C Z \xrightarrow{\cong} (D \otimes_C Z)_{\mathbb{U}-an}$ is an isomorphism of Banach modules over $C$.
\end{lemma}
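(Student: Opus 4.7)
My strategy is to show the natural map $\phi: D_{\mathbb{U}-an} \otimes_C Z \to (D \otimes_C Z)_{\mathbb{U}-an}$ is bijective; the upgrade to an isomorphism of Banach $C$-modules then follows from the open mapping theorem, applied to the finitely generated projective $D_{\mathbb{U}-an}$-module on the left and the Banach $L$-subspace of $\mathbb{U}$-analytic vectors on the right. The map $\phi$ is well-defined: for $d \in D_{\mathbb{U}-an}$ and $z \in Z = Z_{\mathbb{U}-an}$ the orbit map $h \mapsto h(d) \otimes h(z)$ is the composition of the $\mathbb{U}$-analytic map $h \mapsto (h(d), h(z)) \in D \times Z$ with the continuous $L$-bilinear tensor product $D \times Z \to D \otimes_C Z$. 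Injectivity is immediate because $Z$ is $C$-flat and $D_{\mathbb{U}-an} \hookrightarrow D$ is injective.

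For surjectivity, I would pick a dual basis $(z_i, \alpha_i)_{i=1}^m$ of the projective module $Z$ (so $\sum_i z_i \alpha_i = \id_Z$) and use it to realize $D \otimes_C Z$ as the image of the idempotent matrix $B(1) := (\alpha_i(z_j)) \in M_m(C)$ inside $D^m$, via the continuous $D$-linear embedding $\iota: D \otimes_C Z \hookrightarrow D^m$, $v \mapsto ((\id \otimes \alpha_i)(v))_i$. Setting $B(h) := (\alpha_i(h z_j))$, a direct semilinear computation using $\sum_i z_i \alpha_i = \id_Z$ produces the cocycle identity $B(hh') = B(h) \cdot h(B(h'))$ and the formula $\iota(hv) = B(h) \cdot h_{\text{coord}}(\iota(v))$, where $h_{\text{coord}}$ denotes the coordinate-wise $H$-action on $D^m$. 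The entries of $B(h)$ are $\mathbb{U}$-analytic in $h$ since $z_j \in Z_{\mathbb{U}-an}$ and $\alpha_i: Z \to C$ is continuous $C$-linear. The key construction is the "pseudo-inverse" $C(h) := h(B(h^{-1}))$, for which the cocycle relation yields $C(h) B(h) = h(B(1))$.

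Then for $v \in (D \otimes_C Z)_{\mathbb{U}-an}$ with $\vec{d} := \iota(v) \in B(1) D^m$, the orbit map $h \mapsto \iota(hv) = B(h) \cdot h_{\text{coord}}(\vec{d})$ is $\mathbb{U}$-analytic into $D^m$ because $\iota$ is continuous $L$-linear; multiplying by $C(h)$ and using $B(1) \vec{d} = \vec{d}$ gives
\[
C(h) \cdot \iota(hv) = h(B(1)) \cdot h_{\text{coord}}(\vec{d}) = h_{\text{coord}}(\vec{d}) = (h(d_j))_j,
\]
forcing each $h \mapsto h(d_j)$ to be $\mathbb{U}$-analytic, i.e.\ $d_j \in D_{\mathbb{U}-an}$, and hence $v = \sum_j d_j \otimes z_j \in D_{\mathbb{U}-an} \otimes_C Z$. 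The main obstacle is verifying that the entries of $C(h)$ are themselves $\mathbb{U}$-analytic in $h$: this cannot be extracted from $C = C_{\mathbb{U}-an}$ as a merely pointwise statement but requires its upgrade to the joint $\mathbb{U}$-analyticity of the action map $U \times C \to C$ (equivalently, the $\mathbb{U}$-analyticity of the operator-valued orbit map $U \to \End^{\mathrm{cont}}_L(C)$), which is a standard consequence of Emerton's structure theorem for $\mathbb{U}$-analytic Banach representations cited just before Lemma \ref{ant}.
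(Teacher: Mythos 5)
Your proposal is correct, but it takes a genuinely different route from the paper. Both proofs use the projectivity of $Z$ and the $\mathbb{U}$-analyticity of its orbit maps to reduce to statements about functionals $\alpha \in Z^*$, but they diverge from there. The paper proceeds abstractly in two steps: first, Lemma \ref{Uan-dual} gives $(Z^*)_{\mathbb{U}-an} = Z^*$, whence the map $\alpha \mapsto \id \otimes \alpha$ lands in $(Z_D^*)_{\mathbb{U}-an}$, and the $H$-equivariant evaluation pairing $Z_D^* \otimes_D (D \otimes_C Z) \to D$ (via \cite{Eme} Prop.\ 3.3.12 / \cite{Fea} 3.3.14) preserves $\mathbb{U}$-analytic vectors, so $(\id \otimes \alpha)((D\otimes_C Z)_{\mathbb{U}-an}) \subseteq D_{\mathbb{U}-an}$; second, a purely algebraic argument applied with $S = D/D_{\mathbb{U}-an}$ finishes the inclusion. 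You instead work with an explicit dual basis, package the $\mathbb{U}$-analytic data into a matrix cocycle $B(h)$, and invert it with the ``pseudo-inverse'' $C(h) := h(B(h^{-1}))$ to peel off the $C$-coefficients and expose the orbit maps $h \mapsto h(d_j)$ directly. Your approach is more hands-on and avoids the detour through the dual module and its $\mathbb{U}$-analyticity (Lemma \ref{Uan-dual}), at the cost of the cocycle bookkeeping and, crucially, of having to invoke explicitly the joint ($\mathbb{U} \times \mathbb{U}$) analyticity of the action map on $C$ to see that $h \mapsto h(B(h^{-1}))$ is $\mathbb{U}$-analytic. You correctly flag this as the main technical point; it is a genuine consequence of Emerton's structure theory (the same input that drives \ref{ant}.iii and hence Lemma \ref{Uan-dual} in the paper's route), so both proofs ultimately rest on the same structural fact, just packaged differently. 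Your computations (the cocycle identity $B(hh') = B(h)\,h(B(h'))$, the formula $\iota(hv) = B(h)\,h_{\mathrm{coord}}(\iota(v))$, and the cancellation $C(h)B(h) = h(B(1))$ with $h(B(1))\,h_{\mathrm{coord}}(\vec{d}) = h_{\mathrm{coord}}(\vec{d})$) all check out, and the concluding use of \cite{Eme} Prop.\ 2.1.23 to multiply $\mathbb{U}$-analytic functions through the continuous bilinear pairing $M_m(C) \times D^m \to D^m$ matches how the paper uses that result elsewhere.
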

\begin{proof}
It follows from \cite{Fea} 3.3.14 or \cite{Eme} Prop.\ 3.3.12 that $D_{\mathbb{U}-an}$ is a Banach module over $C_{\mathbb{U}-an} = C$ and that the obvious map $D_{\mathbb{U}-an} \otimes_C Z \longrightarrow D \otimes_C Z$ factorizes continuously through $(D \otimes_C Z)_{\mathbb{U}-an}$. Since $Z$ is projective the latter map is injective.

\textit{Step 1:} We show that $(\id \otimes \alpha)((D \otimes_C Z)_{\mathbb{U}-an}) \subseteq D_{\mathbb{U}-an}$ for any $\alpha \in Z^*$. The map
\begin{align*}
  Z^* & \longrightarrow \Hom_D(D \otimes_C Z,D) = Z_D^*  \\
  \alpha & \longmapsto \id \otimes \alpha
\end{align*}
is a continuous map of $H$-representations on Banach spaces. Since $(Z^*)_{\mathbb{U}-an} = Z^*$ we obtain that $\id \otimes \alpha \in \Hom_D(D \otimes_C Z,D)_{\mathbb{U}-an}$. By the same references as above the obvious $H$-equivariant pairing
\begin{equation*}
  \Hom_D(D \otimes_C Z,D) \otimes_D (D \otimes_C Z) \longrightarrow D
\end{equation*}
induces a corresponding pairing between the spaces $(.)_{\mathbb{U}-an}$.

\textit{Step 2:} We have that, for any (abstract) $C$-module $S$ and any nonzero element $u \in S \otimes_C Z$, there exists an $\alpha \in Z^*$ such that $(\id \otimes \alpha)(u) \neq 0$ in $S$. This immediately reduces to the case of a finitely generated free module $Z$, in which it is obvious.

We now apply Step 2 with $S := D/D_{\mathbb{U}-an}$. Because of Step 1 we must have $(D \otimes_C Z)_{\mathbb{U}-an} \subseteq D_{\mathbb{U}-an} \otimes_C Z$. Hence the asserted map is a continuous bijection. The open mapping theorem then implies that it even is a topological isomorphism.
\end{proof}

Consider any infinitely ramified $p$-adic Lie extension $L_\infty$ of $L$, and let $\Gamma := \Gal(L_\infty/L)$. The completion $\hat{L}_\infty$ of $L_\infty$ is an $L$-Banach representation of $\Gamma$. Following \cite{STLAV}, we denote by $\hat{L}_\infty^{\la}$ the set of elements of $\hat{L}_\infty$ that are locally analytic for the action of $\Gamma$. The structure of $\hat{L}_\infty^{\la}$ is studied in \cite{STLAV}, where it is proved that, informally: $\hat{L}_\infty^{\la}$ looks like a space of power series in $\dim(\Gamma) - 1$ variables. In particular, there is (\cite{STLAV} Prop.\ 6.3) a nonzero element $D \in \Cp \otimes_{\Qp} \Lie(\Gamma)$ such that $D=0$ on $\hat{L}_\infty^{\la}$. This element is the pullback of the Sen operator attached to a certain representation of $\Gamma$.

Now let $L_\infty$ be the Lubin-Tate extension of $L$ such that $\Gal(\overline{L}/L_\infty) = \ker(\chi_{LT})$. Then $\chi_{LT} : \Gamma_L^{LT} := \Gal(L_\infty/L) \xrightarrow{\cong} \Gamma_L = o_L^\times$, and $d\chi_{LT} : \Lie(\Gamma_L^{LT}) \xrightarrow{\cong} \Lie(\Gamma_L) = L$. With $\Sigma := \Gal(L/\Qp)$ we have the $L$-linear isomorphism
\begin{align}\label{diag:splitting}
  L \otimes_{\Qp} \Lie(\Gamma_L^{LT}) & \xrightarrow{\;\cong\;} \bigoplus_{\sigma \in \Sigma} L \\
  a \otimes \mathfrak{x} & \longmapsto (a\sigma(d\chi_{LT}(\mathfrak{x})))_\sigma \ .  \nonumber
\end{align}
For any $\sigma \in \Sigma$ we let $\nabla_\sigma$ the element in the left hand side which maps to the tuple with entry $1$, resp.\ $0$, at $\sigma$, resp.\ at all $\sigma' \neq \sigma$ (it should be considered as the ``derivative in the direction of $\sigma$''). Then
\begin{equation*}
  1 \otimes \mathfrak{x} = \sum_{\sigma \in \Sigma} \sigma(d\chi_{LT}(\mathfrak{x})) \cdot \nabla_\sigma  \qquad\text{for any $\mathfrak{x} \in \Lie(\Gamma_L^{LT})$} \ .
\end{equation*}
Therefore, for any locally $\Qp$-representation $V$ of $\Gamma_L^{LT}$, the Taylor expansion of its orbit maps (\cite{STla} p.\ 452) has the form
\begin{align*}
  \gamma v & = \sum_{n=0}^\infty \frac{1}{n!} \log_{\Gamma_L^{LT}}(\gamma)^n v = \sum_{n=0}^\infty \frac{1}{n!} (\sum_{\sigma \in \Sigma} \sigma(d\chi_{LT}(\log_{\Gamma_L^{LT}}(\gamma))) \cdot \nabla_\sigma)^n v  \\
     & = \sum_{n=0}^\infty \frac{1}{n!} (\sum_{\sigma \in \Sigma} \sigma(\log(\chi_{LT}(\gamma))) \cdot \nabla_\sigma)^n v \\
   & = v + \sum_{\sigma \in \Sigma} \log (\sigma(\chi_{LT}(\gamma))) \cdot \nabla_\sigma(x) + \ \text{higher terms}
\end{align*}
for any $v \in V$, and any small enough $\gamma \in \Gamma_L^{LT}$.

Let $L^{max}$ be the compositum of $L_\infty$ and $L^{cyc}$.

\begin{lemma}\label{essdispro}
If $L_\infty \cap L^{cyc}$ is a finite extension of $L$, then $\nabla_{\id} + \nabla_{cyc} = 0$ on $(\widehat{L^{max}})^{\la}$.
\end{lemma}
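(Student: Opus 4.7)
The plan is to invoke the general Sen-type result \cite{STLAV} Prop.~6.3 for the $p$-adic Lie extension $L^{max}/L$ and then match the resulting annihilator direction with $\nabla_{\id}+\nabla_{cyc}$ via a computation on a distinguished locally analytic vector.

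First I would set up the Lie algebra. Under the hypothesis that $L_\infty\cap L^{cyc}$ is a finite extension of $L$, the characters $(\chi_{LT},\chi_{cyc})$ jointly embed $\Gamma:=\Gal(L^{max}/L)$ as an open subgroup of $\Gamma_L^{LT}\times\Gamma_L^{cyc}$, yielding a direct-sum decomposition $\Lie(\Gamma)=\Lie(\Gamma_L^{LT})\oplus\Lie(\Gamma_L^{cyc})$ that makes $\nabla_{\id}+\nabla_{cyc}\in\Cp\otimes_{\Qp}\Lie(\Gamma)$ unambiguous. Two immediate consequences of the finiteness hypothesis are $d\chi_{LT}\big|_{\Lie(\Gamma_L^{cyc})}=0$ and $d\chi_{cyc}\big|_{\Lie(\Gamma_L^{LT})}=0$, since both restricted characters have finite image.

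Next I would apply \cite{STLAV} Prop.~6.3 to get a nonzero element $D\in\Cp\otimes_{\Qp}\Lie(\Gamma)$ vanishing on $(\widehat{L^{max}})^{\la}$; the structural description of $(\widehat{L^{max}})^{\la}$ as a space of power series in $\dim\Gamma-1$ variables (alluded to just before this lemma in the excerpt) implies the space of such annihilators is one-dimensional. So it suffices to exhibit $\nabla_{\id}+\nabla_{cyc}$ as a nonzero element of this space.

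For the identification I would use the Lubin-Tate period $\Omega=\Omega_{t'_0}$ introduced in Lemma \ref{zeros}: it lies in $\widehat{L^{max}}$, is locally analytic (its orbit map is $\gamma\mapsto\tau(\gamma)\Omega$ with $\tau=\chi_{cyc}\chi_{LT}^{-1}$, which is locally $\Qp$-analytic), and satisfies $\mathfrak{x}(\Omega)=(d\chi_{cyc}-d\chi_{LT})(\mathfrak{x})\,\Omega$ for any $\mathfrak{x}\in\Lie(\Gamma)$. Using the vanishing relations above we compute $\nabla_{\id}(\Omega)=-\Omega$ and $\nabla_{cyc}(\Omega)=\Omega$, hence $(\nabla_{\id}+\nabla_{cyc})(\Omega)=0$.

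The main obstacle is that this check on the single vector $\Omega$ only places $\nabla_{\id}+\nabla_{cyc}$ into the $d$-dimensional annihilator of $\Omega$ (which is spanned by $\nabla_{\id}+\nabla_{cyc}$ and the $\nabla_\sigma$ for $\sigma\neq\id$), whereas the annihilator of the whole $(\widehat{L^{max}})^{\la}$ is only $1$-dimensional; so one must rule out the other $d-1$ directions $\nabla_\sigma$, $\sigma\neq\id$. I would close this gap in one of two ways: either (i) trace the construction of $D$ in \cite{STLAV} — it is the pullback of a canonical Sen operator attached to the extension, and under the pairing $\nabla_\sigma\leftrightarrow\sigma\circ\log\chi_{LT}$, $\nabla_{cyc}\leftrightarrow\log\chi_{cyc}$ this pullback can be computed directly to be a scalar multiple of $\nabla_{\id}+\nabla_{cyc}$; or (ii) produce additional locally analytic vectors in $(\widehat{L^{max}})^{\la}$ (for example elements related to $\log\Omega$, or to periods coming from the other embeddings $\sigma\neq\id$ via finer Lubin-Tate/Hodge-Tate comparison) on which some $\nabla_\sigma$, $\sigma\neq\id$, acts nontrivially, forcing $D\in\Cp\cdot(\nabla_{\id}+\nabla_{cyc})$. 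This direction-matching is the technical heart of the argument; the rest is bookkeeping.
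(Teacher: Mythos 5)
Your proposal follows the same route as the paper's proof, and you correctly put your finger on the only nontrivial point: testing $\nabla_{\id}+\nabla_{\mathrm{cyc}}$ against a single period $\Omega$ (the paper writes $z$ for an element with $g(z)=\tau(g)\,z$, but it plays the same role) only places the hypothetical annihilator $D$ from \cite{STLAV} Prop.~6.3 in the $d$-dimensional annihilator of $\Omega$, so the components of $D$ along $\nabla_\sigma$ for $\sigma\neq\id$ must still be eliminated. The paper closes this exactly by your ``option (ii)'': by \cite{STLAV}~\S4.2, the field $(\hat L_\infty)^{\la}$ contains, for each $\sigma\in\Sigma\setminus\{\id\}$, an element $x_\sigma$ with $\gamma\, x_\sigma = x_\sigma + \log(\sigma(\chi_{LT}(\gamma)))$ for $\gamma\in\Gamma_L^{LT}$; one then has $\nabla_\sigma(x_\sigma)=1$, $\nabla_{\sigma'}(x_\sigma)=0$ for $\sigma'\neq\sigma$, and $\nabla_{\mathrm{cyc}}(x_\sigma)=0$ by the finiteness hypothesis. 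Writing $D=\sum_{\sigma\in\Sigma}\delta_\sigma\nabla_\sigma + \delta_{\mathrm{cyc}}\nabla_{\mathrm{cyc}}$, applying $D$ to each $x_\sigma$ forces $\delta_\sigma=0$ for $\sigma\neq\id$, and applying $D$ to $z$ forces $\delta_{\id}=\delta_{\mathrm{cyc}}$, so $D\in\Cp\cdot(\nabla_{\id}+\nabla_{\mathrm{cyc}})$. Note that this determines $D$ directly, so the one-dimensionality of the annihilator that you invoke from the ``power series in $\dim\Gamma-1$ variables'' picture is not actually needed; only the existence of a nonzero $D$ is used. In short: your plan is sound and your diagnosis of the gap is exactly right, but the step you defer as the ``technical heart'' is in fact a short computation once the vectors $x_\sigma$ are in hand, and you should carry it out explicitly (with the precise reference to \cite{STLAV}~\S4.2) rather than offering it as one of two possible alternatives.
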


\begin{proof}
As a consequence of our assumption we have
\begin{equation*}
  \Lie(\Gal(L^{max}/L)) = \Lie(\Gamma_L^{LT}) \oplus \Lie(\Gamma^{cyc}_L) \ .
\end{equation*}
\cite{STLAV} Prop.\ 6.3, there exists a nonzero element $D \in \Cp \otimes_{\Qp} \Lie(\Gal(L^{max}/L))$ such that $D=0$ on $(\widehat{L^{max}})^{\la}$. We claim that $D$ is a scalar multiple of $\nabla_{\id} + \nabla_{cyc}$. For each element $\sigma \in \Sigma \setminus \{\id\}$, the field $(\hat{L}_\infty)^{\la}$ contains the variable $x_\sigma$ such that $\gamma x_\sigma = x_\sigma + \log(\sigma(\chi_{LT}(\gamma)))$ for any $\gamma \in \Gamma_L^{LT}$ (\cite{STLAV} \S4.2). It follows that $\nabla_\sigma (x_\sigma) = 1$ and $\nabla_{\sigma'} (x_\sigma) = 0$ if $\sigma' \neq \sigma$. By our assumption we also have $\nabla_{cyc}(x_\sigma) = 0$.

The character $\tau = \chi_{cyc} \chi_{LT}^{-1}$ has a Hodge-Tate weight equal to zero, so that there exists $z \in \Cp^\times$ such that $g(z) = z \cdot \tau(g)$ for any $g \in G_L$. It is then obvious that $z \in (\widehat{L^{max}})^{\la}$. Secondly this implies that $\mathfrak{x}z = d\tau(\mathfrak{x}) \cdot z = d\chi_{cyc}(\mathfrak{x}) \cdot z - d\chi_{LT}(\mathfrak{x}) \cdot z$, for any $\mathfrak{x} \in \Lie(\Gal(L^{max}/L))$, and hence  $\nabla_{\id} (z) = -z = - \nabla_{cyc} (z)$.

Write $D = \sum_{\sigma \in \Sigma} \delta_\sigma \cdot \nabla_\sigma + \delta_{cyc} \cdot\nabla_{cyc}$. Applying $D$ to $x_\sigma$ with $\sigma \in \Sigma \setminus \{\id\}$, we find that $\delta_\sigma = 0$. Applying $D$ to $z$, we find that $\delta_{\id} = \delta_{cyc}$. This implies the claim and hence our assertion.
\end{proof}

\begin{proof}[Proof of Proposition \ref{nabsenul}]
Let $H^{max} := \Gal(\overline{\QQ}_p/L^{max})$. Using Remark \ref{AST} we have
\begin{equation*}
  P^{H^{max}} = (A_{\Cp} \otimes_B P_0)^{H^{max}} = (A_{\Cp})^{H^{max}} \otimes_B P_0 = (A \co_L \widehat{L^{max}}) \otimes_B P_0 \ .
\end{equation*}
We have to consider the twisted action of the $p$-adic Lie group $\Gal(L^{max}/L)$ on the above terms. On $Q_n \subseteq P^{H_L^{cyc}} \subseteq P^{H^{max}}$ the group $\Gal(L^{max}/L)$ acts through its quotient $\Gamma_L^{cyc}$. By Cor.\ \ref{qnlocan} the $\Gamma_L^{cyc}$-action on $Q_n$ is locally $\Qp$-analytic. Moreover, by assumption the $\Gal(L^{max}/L)$-actions (through the map $\tau^{-1}$) on $B$ and $P_0$ are locally $\Qp$-analytic. We therefore find, by \cite{Eme} Cor.\ 3.6.13, an analytic open subgroup $U \subseteq \Gal(L^{max}/L)$ such that $Q_n = (Q_n)_{\mathbb{U}-an}$, $B = B_{\mathbb{U}-an}$, and  $P_0 = (P_0)_{\mathbb{U}-an}$. Using Lemma \ref{ant}.ii and Lemma \ref{Uan-projective} we then obtain that
\begin{align*}
  Q_n \subseteq (P^{H^{max}})_{\mathbb{U}-an} & = ((A \co_L \widehat{L^{max}}) \otimes_B P_0)_{\mathbb{U}-an} = (A \co_L \widehat{L^{max}})_{\mathbb{U}-an} \otimes_B P_0   \\
  & = (A \co_L (\widehat{L^{max}})_{\mathbb{U}-an}) \otimes_B P_0 \ .
\end{align*}
Recall that the twisted $G_L$-action on $P_0$ is given by ${^{g*}x} = \tau(g^{-1})(x)$. Since, by our assumption, the  $o_L^\times$-action on $P_0$ is locally $L$-analytic this twisted $\Gal(L^{max}/L)$-action on $P_0$ is locally $\Qp$-analytic with the corresponding derived action of $L \otimes_{\Qp} \Lie(\Gal(L^{max}/L))$ factorizing through the map
\begin{equation}\label{diag:L-ana}
  \xymatrix{
    L \otimes_{\Qp} \Lie(\Gal(L^{max}/L)) \ar[d]_{\id \otimes \pr \oplus \id \otimes \pr} \ar[r]^-{\id \otimes -d\tau} & L \otimes_{\Qp} L \ar[rr]^-{a \otimes b \mapsto ab} & & L = \Lie(o_L^\times)  \\
    L \otimes_{\Qp} \Lie(\Gamma_L^{LT}) \oplus L \otimes_{\Qp} \Lie(\Gamma_L^{cyc}) \ar[d]_{\eqref{diag:splitting}\, \oplus \, \id \otimes d\chi_{cyc}}  &  &  \\
    (\sum_{\sigma \in \Sigma} L) \oplus L \ar[uurrr]_-{\qquad\ (\sum_\sigma a_\sigma) + c \;\mapsto\; a_{\id} - c} &  &    }
\end{equation}

We now distinguish two cases, according to whether there is a finite extension $M$ of $L$ such that $L^{cyc} \subseteq M \cdot L_\infty$ or whether there is a finite extension $M$ of $L$ such that $L^{cyc} \cap L_\infty = M$. Since $\Gamma^{cyc}_{L_n}$ is an open subgroup of $\Zp^\times$, we always are in one of these two cases.

\textit{First case:} By \cite{Ser} III.A4 Prop.\ 4 the character $\chi_{LT}$ corresponds, via the reciprocity isomorphism of local class field theory, to the projection map $o_L^\times \times \pi_L^{\widehat{\ZZ}} \xrightarrow{\pr} o_L^\times$. One deduces from this that, in the present situation, $\chi_{cyc}$ differs from $N_{L/\Qp} \circ \chi_{LT}$ by a character of finite order. It follows that $\tau = \chi_{cyc} \chi_{LT}^{-1}$ is equal to a finite order character times $\prod_{\sigma \in \Sigma, \sigma \neq \id} \sigma \circ \chi_{LT}$. We also have $\Lie(\Gal(L^{max}/L)) = \Lie(\Gamma_L^{LT})$ in this case. By feeding this information into the above commutative diagram \eqref{diag:L-ana} one easily deduces that $\nabla_{\id}$ acts as zero on $P_0$. On the other hand, by \cite{STLAV} Cor.\ 4.3, we have $\nabla_{\id} = 0$ on $(\widehat{L^{max}})^{\la}$. This implies that $\nabla_{\id} = 0$ on $(A \co_L (\widehat{L^{max}})_{\mathbb{U}-an}) \otimes_B P_0$. The natural map $\Lie(\Gal(L^{max}/L)) = \Lie(\Gamma_L^{LT}) \to \Lie(\Gamma_L^{cyc})$ sends $\nabla_{\id}$ to $\nabla_{Sen}$. This implies the result.

\textit{Second case:} This time the perpendicular maps in the diagram \eqref{diag:L-ana} are bijective. One immediately deduces that $\nabla_{\id} + \nabla_{cyc} = 0$ acts as zero on $P_0$. On the other hand we know from Lemma \ref{essdispro} that $\nabla_{\id} + \nabla_{cyc} = 0$ on $(\widehat{L^{max}})^{\la}$. Hence $\nabla_{\id} + \nabla_{cyc} = 0$ on $(A \co_L (\widehat{L^{max}})_{\mathbb{U}-an}) \otimes_B P_0$. The natural map $\Lie(\Gal(L^{max} / L)) \to \Lie(\Gamma_L^{cyc})$ sends $\nabla_{\id}$ to $0$ and $\nabla_{cyc}$ to $\nabla_{Sen}$. This again implies the result.
\end{proof}

We now can state our main technical result.

\begin{theorem}\label{Sen-vanishes}
If the $o_L^\times$-actions on $B$ and $P_0$ are locally $L$-analytic then $P^{G_L,*}$ is a finitely generated projective $A$-module of the same rank as $P$ and $A_{\Cp} \otimes_A P^{G_L,*} = P$.
\end{theorem}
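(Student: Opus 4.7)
The plan is to assemble this theorem from the two main technical results already established in the section, namely Lemma \ref{Sen-zero} and Proposition \ref{nabsenul}. The local $L$-analyticity hypothesis enters only through Proposition \ref{nabsenul}; once the vanishing $\nabla_{Sen}=0$ is known, Lemma \ref{Sen-zero} delivers exactly the conclusion of the theorem. So the work consists in verifying that the Colmez--Sen--Tate formalism set up at the beginning of the section does apply to $P$ equipped with its \emph{twisted} $G_L$-action.

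First I would use the hypothesis \eqref{f:form}, $A_{\Cp}=B_{\Cp}$, to identify the standard $G_L$-action on $A_{\Cp}=\Cp\,\widehat{\otimes}_L\,A$ with the twisted $G_L$-action on $B_{\Cp}$; indeed $A$ is by definition the fixed ring of the twisted action, and the natural map $\Cp\,\widehat{\otimes}_L\,A\to B_{\Cp}$ intertwines the two actions by construction. Under this identification the twisted $G_L$-action on $P=B_{\Cp}\otimes_B P_0$ becomes a continuous semilinear $G_L$-action on $P$ viewed as an $A_{\Cp}$-module, and $P^{G_L,*}$ becomes $P^{G_L}$ in the sense of the Colmez--Sen--Tate formalism. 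If $P_0$ is only projective (rather than free) over $B$, then $P$ is only projective over $A_{\Cp}$; this is harmless because $P$ is a direct summand of a finitely generated free $A_{\Cp}$-module with a compatible semilinear $G_L$-action, and the constructions of $P^{H_L^{\mathrm{cyc}}}$, of the lattice $Q_n$ of Corollary \ref{TS2}, and of the Sen operator $\nabla_{Sen}$ all respect direct summands, so the whole formalism transports to our $P$ without change.

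Next, the hypothesis that the $o_L^\times$-actions on $B$ and $P_0$ are locally $L$-analytic is exactly what Proposition \ref{nabsenul} requires, and yields $\nabla_{Sen}=0$ on $Q_n$. Finally, Lemma \ref{Sen-zero}, applied to $P$ with its (now standard-looking) $G_L$-action, gives that $P^{G_L,*}$ is a projective $A$-module of the same rank $r$ as $P$ over $A_{\Cp}$, and that $A_{\Cp}\otimes_A P^{G_L,*}=P$, which is precisely the statement.

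The only real obstacle is conceptual rather than technical: one must be comfortable with the bookkeeping between the two Galois actions on $B_{\Cp}$ and verify that the Colmez--Sen--Tate apparatus is transparent under the identification $A_{\Cp}=B_{\Cp}$. Everything else has been prepared in advance; Proposition \ref{nabsenul} (the analysis of the Sen operator via the locally analytic vectors $(\widehat{L^{\max}})^{\mathrm{la}}$ and the Lubin--Tate versus cyclotomic splitting of $\Lie\,\Gal(L^{\max}/L)$) is where the substantive content lives.
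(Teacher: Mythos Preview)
Your proposal is correct and follows exactly the paper's own proof, which reads in its entirety: ``This follows from Lemma \ref{Sen-zero} and Prop.\ \ref{nabsenul}.'' The extra bookkeeping you supply about identifying the twisted $G_L$-action with the standard one via $A_{\Cp}=B_{\Cp}$ is implicit in the paper (once $B$ and $P_0$ are introduced, the twisted action \emph{is} the semilinear $G_L$-action to which Corollary \ref{TS2} and Lemma \ref{Sen-zero} are applied), and your worry about $P$ being merely projective is moot here since $P$ is assumed free over $A_{\Cp}$ throughout the section.
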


\begin{proof}
This follows from Lemma \ref{Sen-zero} and Prop.\ \ref{nabsenul}.
\end{proof}

\begin{remark}\label{untwisting}
All of the above remains valid if we replace $\tau$ by $\tau^{-1}$.
\end{remark}

\subsection{The equivalence between $L$-analytic $(\varphi_L,\Gamma_L)$-modules over $\mathfrak{B}$ and $\mathfrak{X}$}
\label{sec:TS}

We now explore the diagram
\begin{equation*}
     \xymatrix{
     & \Cp \times_{\Qp} \mathfrak{B} \ar[ld] \ar[r]^{\kappa}_{\cong} & \Cp \times_L \mathfrak{X} \ar[rd] & \\
    \mathfrak{B}_{/L} & & & \mathfrak{X}    }
\end{equation*}
in order to transfer modules between the two sides. As in section \ref{sec:LT} we treat $\kappa$ as an identification. Recall also that $\kappa$ is equivariant for the $o_L \setminus \{0\}$-actions on both sides.

As a consequence of Remark \ref{Galois-fixed} we have (cf.\ Lemma \ref{twist-Xn})
\begin{equation*}
  \mathscr{R}_L(\mathfrak{B}) = \mathscr{R}_{\Cp}(\mathfrak{B})^{G_L} \qquad\text{and}\qquad \mathscr{R}_L(\mathfrak{X}) = \mathscr{R}_{\Cp}(\mathfrak{B})^{G_L,*} \ ,
\end{equation*}
where the twisted Galois action is given by $(\sigma,f) \longmapsto {^{\sigma *}f} := ({^\sigma f})([\tau(\sigma^{-1}](.))$; it commutes with the $o_L \setminus \{0\}$-action. By extending the concept of the twisted Galois action we will construct a functor from $\Mod_L(\mathscr{R}_K(\mathfrak{B}))$ to $\Mod_L(\mathscr{R}_K(\mathfrak{X}))$.

Let $M$ be a $(\varphi_L,\Gamma_L)$-module over
$\mathscr{R}_L(\mathfrak{B})$. Then $M_{\Cp} :=
\mathscr{R}_{\Cp}(\mathfrak{B}) \otimes_{\mathscr{R}_L(\mathfrak{B})} M$
is a $(\varphi_L,\Gamma_L)$-module over $\mathscr{R}_{\Cp}(\mathfrak{B})$ by (the analog for $\mathfrak{B}$ of) Lemma \ref{scalar-ext}.ii. One easily checks that the twisted $G_L$-action on $M_{\Cp}$ given by
\begin{equation*}
    (\sigma, f \otimes m) \longmapsto \sigma * (f \otimes m) := {^{\sigma *}f} \otimes \tau(\sigma^{-1})(m) \ ,
\end{equation*}
for $\sigma \in G_L$, $f \in \mathscr{R}_{\Cp}(\mathfrak{B})$, and $m \in M$, is well defined. Observe that this twisted Galois action and the action of $o_L\backslash \{0\}$ on
$M_{\Cp}$ commute with each other. Hence the fixed elements
\begin{equation*}
      M_\mathfrak{X} := (\mathscr{R}_{\Cp}(\mathfrak{B}) \otimes_{\mathscr{R}_L(\mathfrak{B})} M)^{G_L,*}
\end{equation*}
form an $\mathscr{R}_L(\mathfrak{X})$-module which is invariant under the $o_L \setminus \{0\}$-action.

In order to analyze $M_\mathfrak{X}$ as an $\mathscr{R}_L(\mathfrak{X})$-module we use the result from the previous section \ref{sec:key}.

\begin{theorem}\phantomsection\label{descent-to-X}
\begin{itemize}
  \item[i.] $M_\mathfrak{X}$ is a finitely generated projective $\mathscr{R}_L(\mathfrak{X})$-module.
  \item[ii.] If $M$ is $L$-analytic then the rank of $M_\mathfrak{X}$ is equal to the rank of $M$ and
\begin{equation*}
    \mathscr{R}_{\Cp}(\mathfrak{X}) \otimes_{\mathscr{R}_L(\mathfrak{X})} M_\mathfrak{X} = \mathscr{R}_{\Cp}(\mathfrak{B}) \otimes_{\mathscr{R}_L(\mathfrak{B})} M \ .
\end{equation*}
\end{itemize}

\end{theorem}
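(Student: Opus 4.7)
My approach is to descend $M$ from $\mathscr{R}_L(\mathfrak{B})$ to finitely generated projective modules over affinoid annuli in $\mathfrak{B}$, apply the Colmez--Sen--Tate machinery of Section \ref{sec:key} on each annulus, and then patch the resulting modules via the quasi-Stein structure of $\mathfrak{X} \setminus \mathfrak{X}_n$ from Proposition \ref{quasi-Stein}.

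First I would invoke the $\mathfrak{B}$-analog of Proposition \ref{descent}: the module $M$ arises as $\mathscr{R}_L(\mathfrak{B}) \otimes_{\mathcal{O}_L(\mathfrak{B} \setminus \mathfrak{B}_n)} M_0$ for some $n$ and some finitely generated projective $\mathcal{O}_L(\mathfrak{B} \setminus \mathfrak{B}_n)$-module $M_0$ with compatible $(\varphi_L,\Gamma_L)$-structure. For each affinoid annulus $\mathfrak{B}(s,s') \subseteq \mathfrak{B} \setminus \mathfrak{B}_n$ with $s,s'$ in the range of Proposition \ref{quasi-Stein}, set $M_{s,s'} := \mathcal{O}_L(\mathfrak{B}(s,s')) \otimes_{\mathcal{O}_L(\mathfrak{B} \setminus \mathfrak{B}_n)} M_0$, a finitely generated projective Banach module over $B := \mathcal{O}_L(\mathfrak{B}(s,s'))$ with a continuous semilinear $\Gamma_L$-action. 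The hypothesis $A_{\Cp} = B_{\Cp}$ with $A := B_{\Cp}^{G_L,*}$ required in \eqref{f:form} holds by an affinoid-annulus version of Lemma \ref{twist-Xn}, and $A$ identifies via $\kappa$ (Proposition \ref{LT-iso}) with $\mathcal{O}_L$ of the corresponding annulus in $\mathfrak{X}$.

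The two parts of the theorem are handled by applying the results of Section \ref{sec:key} to this local data. For part (i) in full generality, Corollary \ref{TS2}.iii already guarantees that $(M_{s,s'})_{\Cp}^{G_L,*}$ is a submodule of a finitely generated free $A$-module, hence finitely generated (as $A$ is noetherian) and torsion-free, hence projective by the Dedekind-type structure of the one-dimensional smooth affinoid algebra $A$. For part (ii), the $L$-analyticity hypothesis on $M$ combines with Proposition \ref{annuli-locan} and the addendum to Proposition \ref{differentiable} to show that the $o_L^\times$-actions on both $B$ and $P_0 := M_{s,s'}$ are locally $L$-analytic, so Theorem \ref{Sen-vanishes} applies in full: $(M_{s,s'})_{\Cp}^{G_L,*}$ is a finitely generated projective $A$-module of rank equal to that of $M_{s,s'}$, with $A_{\Cp} \otimes_A (M_{s,s'})_{\Cp}^{G_L,*} = B_{\Cp} \otimes_B M_{s,s'}$.

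The main technical obstacle is the patching step. As $(s,s')$ varies over the admissible covering of $\mathfrak{X} \setminus \mathfrak{X}_n$ from Proposition \ref{quasi-Stein}, the compatible family of modules $(M_{s,s'})_{\Cp}^{G_L,*}$ defines a locally free coherent sheaf on this quasi-Stein space; Proposition \ref{gruson} then produces a finitely generated projective $\mathcal{O}_L(\mathfrak{X} \setminus \mathfrak{X}_n)$-module of global sections. Using Remark \ref{Galois-fixed} applied to the twisted action, these global sections coincide with $(\mathcal{O}_{\Cp}(\mathfrak{B} \setminus \mathfrak{B}_n) \otimes_{\mathcal{O}_L(\mathfrak{B} \setminus \mathfrak{B}_n)} M_0)^{G_L,*}$. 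Passing to the direct limit as $n \to \infty$, and using the regularity of $\mathscr{R}_L(\mathfrak{X}) = \varinjlim_n \mathcal{O}_L(\mathfrak{X} \setminus \mathfrak{X}_n)$ from Proposition \ref{regular}, yields $M_\mathfrak{X}$ as a finitely generated projective $\mathscr{R}_L(\mathfrak{X})$-module, and the rank equality together with the scalar-extension identity of (ii) is inherited from the annulus-level statement. The delicate point throughout is verifying compatibility of the descent under restriction to smaller annuli, so that the local pieces genuinely glue to the globally defined $M_\mathfrak{X}$.
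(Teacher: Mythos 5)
Your proposal follows the same overall architecture as the paper's proof: descend $M$ to a finitely generated projective module $M_0$ over $\mathcal{O}_L(\mathfrak{B}\setminus\mathfrak{B}_n)$ via the $\mathfrak{B}$-analog of Proposition \ref{descent}, restrict to affinoid annuli, invoke Corollary \ref{TS2} and Theorem \ref{Sen-vanishes} on each, and reassemble via the quasi-Stein structure and Proposition \ref{gruson}. The ingredients you cite are exactly the ones the paper uses.

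There is, however, a genuine gap at the step you yourself flag as ``the main technical obstacle.'' You assert that ``the compatible family of modules $(M_{s,s'})_{\Cp}^{G_L,*}$ defines a locally free coherent sheaf'' and later acknowledge that the delicate point is ``verifying compatibility of the descent under restriction to smaller annuli'' --- but you never argue it. This is precisely the content of \eqref{f:transition} in the paper: one must show that the natural map
\begin{equation*}
  \mathcal{O}_L(\mathfrak{Y}_j) \otimes_{\mathcal{O}_L(\mathfrak{Y}_{j+1})} \mathcal{M}_{m/\Cp}(\mathfrak{A}_{j+1/\Cp})^{G_L,*} \longrightarrow \mathcal{M}_{m/\Cp}(\mathfrak{A}_{j/\Cp})^{G_L,*}
\end{equation*}
is an isomorphism, i.e., that forming the $(G_L,*)$-invariants commutes with restriction to a smaller affinoid. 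This is not automatic --- taking continuous-cohomology invariants generally does not commute with tensor product, and without it you cannot even conclude that the local pieces have constant rank, much less that they glue to a coherent sheaf whose global sections recover $M_{m,\Cp}^{G_L,*}$. The paper handles this by using Corollary \ref{TS2}.ii to produce an intermediate finite module $Q$ over $\mathcal{O}_{L_1}(\mathfrak{Y}_{j+1})$ (a finite extension of $L$) sandwiching the invariants, then writing the $(G_L,*)$-fixed points as the kernel of $\prod(1-\sigma_i)$ for finitely many topological generators $\sigma_i$ of $G_L$, and finally using that $\mathcal{O}_L(\mathfrak{Y}_{j+1})\to\mathcal{O}_L(\mathfrak{Y}_j)$ is flat so that kernel formation commutes with this base change. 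An entirely analogous argument, with the restriction map replaced by $\varphi_L$, is then needed to pass from $m$ to $m+1$ (part iv of the paper's reduction), which is also missing from your limit argument at the end. Without these two isomorphism statements your patching step is unsupported, so the proposal as written does not constitute a complete proof of either part (i) or part (ii).

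Two smaller remarks. For part (i), your observation that ``$A$ is noetherian'' and the Dedekind argument yield finite generation and projectivity of the local pieces is fine, but only gives the desired conclusion globally once the transition maps are controlled. And your claim that the rank equality in (ii) is ``inherited from the annulus-level statement'' is correct only after one verifies \eqref{f:form2}-type compatibility across the covering; Theorem \ref{Sen-vanishes} gives the identity affinoid-by-affinoid, and one still has to promote it to a statement about the vector bundle on $\mathfrak{Y}_{/\Cp}$ before taking global sections.
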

\begin{proof}
By the analog for $\mathfrak{B}$ of Prop.\ \ref{descent} we find an $n \geq 1$ and an $\mathcal{O}_L(\mathfrak{X} \setminus \mathfrak{X}_n)$-submodule $M_n \subseteq M$ such that
\begin{itemize}
  \item[--] $M_n$ is finitely generated projective over $\mathcal{O}_L(\mathfrak{B} \setminus \mathfrak{B}_n)$, and $\mathscr{R}_L(\mathfrak{B}) \otimes_{\mathcal{O}_L(\mathfrak{B} \setminus \mathfrak{B}_n)} M_n = M$,
  \item[--] $M_n$ is $o_L^\times$-invariant, and the induced $o_L^\times$-action on $M_n$ is continuous,
  \item[--] $\varphi_M$ restricts to a continuous homomorphism
\begin{equation*}
  \varphi_{M_n} : M_n \longrightarrow \mathcal{O}_L(\mathfrak{B} \setminus \mathfrak{B}_{n+1}) \otimes_{\mathcal{O}_L(\mathfrak{B} \setminus \mathfrak{B}_n)} M_n
\end{equation*}
   such that the induced linear map
\begin{equation*}
  \mathcal{O}_L(\mathfrak{B} \setminus \mathfrak{B}_{n+1}) \otimes_{\mathcal{O}_L(\mathfrak{B} \setminus \mathfrak{B}_n),\varphi_L} M_n \xrightarrow{\;\cong\;}
  \mathcal{O}_L(\mathfrak{B} \setminus \mathfrak{B}_{n+1}) \otimes_{\mathcal{O}_L(\mathfrak{B} \setminus \mathfrak{B}_n)} M_n
\end{equation*}
      is an isomorphism.
\end{itemize}
For any $m \geq n$ we define $M_m := \mathcal{O}_L(\mathfrak{B} \setminus \mathfrak{B}_m) \otimes_{\mathcal{O}_L(\mathfrak{B} \setminus \mathfrak{B}_n)} M_n$. The above three properties hold correspondingly for any $m \geq n$. Each $M_{m,\Cp} :=  \mathcal{O}_{\Cp}(\mathfrak{B} \setminus \mathfrak{B}_m) \otimes_{\mathcal{O}_L(\mathfrak{B} \setminus \mathfrak{B}_m)} M_m$ carries an obvious twisted $G_L$-action such that the identity $M_{\Cp} = \mathscr{R}_{\Cp}(\mathfrak{B}) \otimes_{\mathcal{O}_{\Cp}(\mathfrak{B} \setminus \mathfrak{B}_m)} M_{m,\Cp}$ is compatible with the twisted $G_L$-actions on both sides. The fixed elements $M_{m,\Cp}^{G_L,*}$ form an $\mathcal{O}_L(\mathfrak{X} \setminus \mathfrak{X}_m)$-module (cf.\ Lemma \ref{twist-Xn}), and $M_{\Cp}^{G_L,*} = (\bigcup_{m \geq n}  M_{m,\Cp})^{G_L,*} = \bigcup_{m \geq n} M_{m,\Cp}^{G_L,*}$. We claim that it suffices to show:
\begin{itemize}
  \item[iii.] $M_{m,\Cp}^{G_L,*}$, for any $m \geq n$, is a finitely generated projective $\mathcal{O}_L(\mathfrak{X} \setminus \mathfrak{X}_m)$-module.
  \item[iv.] For any $m \geq n$, the natural map
\begin{equation*}
  \mathcal{O}_L(\mathfrak{X} \setminus \mathfrak{X}_{m+1}) \otimes_{\mathcal{O}_L(\mathfrak{X} \setminus \mathfrak{X}_m)} M_{m,\Cp}^{G_L,*} \xrightarrow{\;\cong\;} M_{m+1,\Cp}^{G_L,*}
\end{equation*}
  is an isomorphism.
  \item[v.] If $M$ is $L$-analytic then $\mathcal{O}_{\Cp}(\mathfrak{X} \setminus \mathfrak{X}_n) \otimes_{\mathcal{O}_L(\mathfrak{X} \setminus \mathfrak{X}_n)} M_{n,\Cp}^{G_L,*} = M_{n,\Cp}$.
\end{itemize}
Suppose that iii. - v. hold true. By iv. we have $M_{m,\Cp}^{G_L,*} = \mathcal{O}_L(\mathfrak{X} \setminus \mathfrak{X}_m) \otimes_{\mathcal{O}_L(\mathfrak{X} \setminus \mathfrak{X}_n)} M_{n,\Cp}^{G_L,*}$ for any $m \geq n$. Hence
\begin{align*}
  M_{\Cp}^{G_L,*} & = \bigcup_{m \geq n} \big( \mathcal{O}_L(\mathfrak{X} \setminus \mathfrak{X}_m) \otimes_{\mathcal{O}_L(\mathfrak{X} \setminus \mathfrak{X}_n)} M_{n,\Cp}^{G_L,*} \big) \\
   & = \big( \bigcup_{m \geq n} \mathcal{O}_L(\mathfrak{X} \setminus \mathfrak{X}_m) \big) \otimes_{\mathcal{O}_L(\mathfrak{X} \setminus \mathfrak{X}_n)} M_{n,\Cp}^{G_L,*} \\
   & = \mathscr{R}_L(\mathfrak{X}) \otimes_{\mathcal{O}_L(\mathfrak{X} \setminus \mathfrak{X}_n)} M_{n,\Cp}^{G_L,*} \ ,
\end{align*}
and iii. therefore implies i. Assuming that $M$ is $L$-analytic we deduce from v. that
\begin{align*}
   \mathscr{R}_{\Cp}(\mathfrak{X}) \otimes_{ \mathscr{R}_L(\mathfrak{X})} M_{\Cp}^{G_L,*} & = \mathscr{R}_{\Cp}(\mathfrak{X}) \otimes_{\mathcal{O}_L(\mathfrak{X} \setminus \mathfrak{X}_n)} M_{n,\Cp}^{G_L,*} \\
    & = \mathscr{R}_{\Cp}(\mathfrak{X}) \otimes_{\mathcal{O}_{\Cp}(\mathfrak{X} \setminus \mathfrak{X}_n)} M_{n,\Cp} \\
    & = \mathscr{R}_{\Cp}(\mathfrak{B}) \otimes_{\mathcal{O}_{\Cp}(\mathfrak{B} \setminus \mathfrak{B}_n)} M_{n,\Cp} \\
    & = M_{\Cp} \ ,
\end{align*}
which is the second part of ii. Since all rings involved are integral domains the first part of ii. is a consequence of the second part by \cite{B-AC} II.5.3 Prop.\ 4.

In order to establish iii. - v. we fix an $m \geq n$ and abbreviate $\mathfrak{Y} := \mathfrak{X} \setminus \mathfrak{X}_m$ and $\mathfrak{A} := \mathfrak{B} \setminus \mathfrak{B}_m$. We recall from Prop.\ \ref{quasi-Stein} and its proof that we have increasing sequences $\mathfrak{Y}_1 \subset \ldots \subset \mathfrak{Y}_j \subset \ldots \subset \mathfrak{Y}$ and $\mathfrak{A}_1 \subset \ldots \subset \mathfrak{A}_j \subset \ldots \subset \mathfrak{A}$ of open affinoid subdomains (over $L$) which form admissible open coverings of $\mathfrak{Y}$ and $\mathfrak{A}$, respectively, and such that:
\begin{itemize}
  \item[a.] All restriction maps $\mathcal{O}_K(\mathfrak{Y}_{j+1}) \longrightarrow \mathcal{O}_K(\mathfrak{Y}_j)$ and $\mathcal{O}_K(\mathfrak{A}_{j+1}) \longrightarrow \mathcal{O}_K(\mathfrak{A}_j)$ are compactoid (proof of Prop.\ \ref{compactoid}.i) and have dense image. In particular, these coverings exhibit $\mathfrak{Y}$ and $\mathfrak{A}$ as Stein spaces.
  \item[b.] Each $\mathfrak{A}_j$ is a subannulus of $\mathfrak{A}$.
  \item[c.] Every $\mathfrak{Y}_j$ and every $\mathfrak{A}_j$ is $o_L^\times$-invariant, and the induced $o_L^\times$-actions on $\mathcal{O}_L(\mathfrak{Y}_j)$ and $\mathcal{O}_L(\mathfrak{A}_j)$ satisfy the condition \eqref{f:condition-locan} and are locally $L$-analytic (Prop.\ \ref{annuli-locan} and discussion before Def.\ \ref{def:L-analytic}).
  \item[d.] The LT-isomorphism $\kappa : \mathfrak{X}_{/\Cp} \xrightarrow{\cong} \mathfrak{B}_{/\Cp}$ restricts, for any $j \geq 1$, to an isomorphism $\mathfrak{Y}_{j/\Cp} \xrightarrow{\cong} \mathfrak{A}_{j/\Cp}$. With respect to the resulting identifications
\begin{equation*}
     \Cp\, \widehat{\otimes}_L\, \mathcal{O}_L(\mathfrak{Y}_j) = \mathcal{O}_{\Cp}(\mathfrak{Y}_j) = \mathcal{O}_{\Cp}(\mathfrak{A}_j) = \Cp\, \widehat{\otimes}_L\, \mathcal{O}_{\Cp}(\mathfrak{A}_j)
\end{equation*}
the $G_L$-action through the coefficients $\Cp$ on the left hand side corresponds to the twisted $G_L$-action on the right hand side; in particular, $\mathcal{O}_L(\mathfrak{Y}_j) = \mathcal{O}_{\Cp}(\mathfrak{A}_j)^{G_L,*}$.
\end{itemize}
As explained, for example, in \cite{ST} \S3 to give a coherent sheaf $\mathcal{S}$ on a quasi-Stein space $\mathfrak{Z}$ with defining admissible affinoid covering $\mathfrak{Z}_1 \subset \ldots \subset \mathfrak{Z}_j \subset \ldots \subset \mathfrak{Z}$ is the same as giving the finitely generated $\mathcal{O}(\mathfrak{Z}_j)$-modules $\mathcal{S}(\mathfrak{Z}_j)$ together with isomorphisms
\begin{equation*}
    \mathcal{O}(\mathfrak{Z}_j) \otimes_{\mathcal{O}(\mathfrak{Z}_{j+1})} \mathcal{S}(\mathfrak{Z}_{j+1}) \xrightarrow{\cong} \mathcal{S}(\mathfrak{Z}_j) \ .
\end{equation*}
Then
\begin{equation*}
    \mathcal{S}(\mathfrak{Z}) = \varprojlim \mathcal{S}(\mathfrak{Z}_j) \qquad\text{and}\qquad \mathcal{S}(\mathfrak{Z}_j) = \mathcal{O}(\mathfrak{Z}_j) \otimes_{\mathcal{O}(\mathfrak{Z})} \mathcal{S}(\mathfrak{Z}) \ .
\end{equation*}
Moreover, by Prop.\ \ref{gruson}, the coherent sheaf $\mathcal{S}$ is a vector bundle if and only if $\mathcal{S}(\mathfrak{Z})$ is a finitely generated projective $\mathcal{O}(\mathfrak{Z})$-module.

In our situation therefore the modules $M_m$ and $M_{m,\Cp}$ are the global sections of vector bundles $\mathcal{M}_m$ and $\mathcal{M}_{m,\Cp}$ on $\mathfrak{A}$ and $\mathfrak{A}_{/\Cp}$, respectively. As a consequence of property c. and Prop.\ \ref{locan3} the continuous $o_L^\times$-action on $M_m$ extends semilinearly to compatible locally $\Qp$-analytic $o_L^\times$-actions on the system $(\mathcal{M}_m(\mathfrak{A}_j) = \mathcal{O}_L(\mathfrak{A}_j) \otimes_{\mathcal{O}_L(\mathfrak{A})} M_m)_j$. For later use we note right away that, as explained in the Addendum to the proof of Prop.\ \ref{differentiable}, each $\mathcal{M}_m(\mathfrak{A}_j)$ is $L$-analytic if $M$ is $L$-analytic. As explained before Prop.\ \ref{nabsenul} these give rise to compatible (continuous) twisted $G_L$-actions on the system $(\mathcal{M}_{m/\Cp}(\mathfrak{A}_{j/\Cp}) = \mathcal{O}_{\Cp}(\mathfrak{A}_j) \otimes_{\mathcal{O}_L(\mathfrak{A})} M_m = \mathcal{O}_{\Cp}(\mathfrak{A}_j) \otimes_{\mathcal{O}_{\Cp}(\mathfrak{A})} M_{m,\Cp})_j$. By construction the latter is a system of finitely generated projective modules (all of the same rank, of course). But because of the property b. the rings $\mathcal{O}_{\Cp}(\mathfrak{Y}_j) = \mathcal{O}_{\Cp}(\mathfrak{A}_j)$ are principal ideal domains. Hence we actually have a system of free modules. Each member of this system therefore satisfies the assumptions of the previous section \ref{sec:key}. The second half of Cor.\ \ref{TS2}.iii then tells us that by passing to the fixed elements for the twisted $G_L$-action we obtain a system $(\mathcal{M}_{m/\Cp}(\mathfrak{A}_{j/\Cp})^{G_L,*})_j$ of $\mathcal{O}_L(\mathfrak{Y}_j)$-modules $\mathcal{M}_{m/\Cp}(\mathfrak{A}_{j/\Cp})^{G_L,*}$ which are submodules of finitely generated free modules. The affinoid $\mathfrak{Y_j}$ being one dimensional and smooth the ring $\mathcal{O}_L(\mathfrak{Y}_j)$ is a Dedekind domain, over which submodules of finitely generated free modules are finitely generated projective. Hence $(\mathcal{M}_{m/\Cp}(\mathfrak{A}_{j/\Cp})^{G_L,*})_j$ is a system of finitely generated projective modules. We obviously have
\begin{equation*}
  \varprojlim_j \mathcal{M}_{m/\Cp}(\mathfrak{A}_{j/\Cp})^{G_L,*} = (\varprojlim_j \mathcal{M}_{m/\Cp}(\mathfrak{A}_{j/\Cp}))^{G_L,*} =   \mathcal{M}_{m/\Cp}(\mathfrak{A}_{/\Cp})^{G_L,*} = M_{m/\Cp}^{G_L,*} \ .
\end{equation*}
If we show that the transition maps
\begin{equation}\label{f:transition}
    \mathcal{O}_L(\mathfrak{Y}_j) \otimes_{\mathcal{O}_L(\mathfrak{Y}_{j+1})}                \mathcal{M}_{m/\Cp}(\mathfrak{A}_{j+1/\Cp})^{G_L,*} \longrightarrow \mathcal{M}_{m/\Cp}(\mathfrak{A}_{j/\Cp})^{G_L,*}
\end{equation}
are isomorphisms then the system $(\mathcal{M}_{m/\Cp}(\mathfrak{A}_{j/\Cp})^{G_L,*})_j$ corresponds to a vector bundle $\mathcal{M}_{\mathfrak{Y}}$ on $\mathfrak{Y}$ with global sections $M_{m/\Cp}^{G_L,*}$. This proves iii.

To see that \eqref{f:transition}, for a fixed $j$, is an isomorphism we use Cor.\ \ref{TS2} again which implies the existence of finite extensions $L \subseteq L_1 \subseteq L_2 \subseteq \Cp$ and of a finitely generated projective $(G_L,\ast)$-invariant $\mathcal{O}_{L_1}(\mathfrak{Y}_{j+1})$-submodule $Q$ of $\mathcal{M}_{m/\Cp}(\mathfrak{A}_{j+1/\Cp})$ with the properties a) - c) in Cor.\ \ref{TS2}.ii and such that
\begin{align*}
  \mathcal{M}_{m/\Cp}(\mathfrak{A}_{j+1/\Cp})^{G_L,*} & \subseteq \mathcal{O}_{L_2}(\mathfrak{Y}_{j+1}) \otimes_{\mathcal{O}_{L_1}(\mathfrak{Y}_{j+1})} Q \\ & \subseteq \mathcal{M}_{m/\Cp}(\mathfrak{A}_{j+1/\Cp}) = \mathcal{O}_{\Cp}(\mathfrak{Y}_{j+1}) \otimes_{\mathcal{O}_{L_1}(\mathfrak{Y}_{j+1})} Q
\end{align*}
and hence, in particular, that
\begin{equation*}
  \mathcal{M}_{m/\Cp}(\mathfrak{A}_{j+1/\Cp})^{G_L,*} = (\mathcal{O}_{L_2}(\mathfrak{Y}_{j+1}) \otimes_{\mathcal{O}_{L_1}(\mathfrak{Y}_{j+1})} Q)^{G_L,*} \ .
\end{equation*}
We deduce that
\begin{align}\label{f:transition2}
  \mathcal{M}_{m/\Cp}(\mathfrak{A}_{j/\Cp}) & =  \mathcal{O}_{\Cp}(\mathfrak{Y}_j) \otimes_{\mathcal{O}_{\Cp}(\mathfrak{Y}_{j+1})}      \mathcal{M}_{m/\Cp}(\mathfrak{A}_{j+1/\Cp}) =  \mathcal{O}_{\Cp}(\mathfrak{Y}_j) \otimes_{\mathcal{O}_{L_1}(\mathfrak{Y}_{j+1})} Q \\
   & = \mathcal{O}_{\Cp}(\mathfrak{Y}_j) \otimes_{\mathcal{O}_{L_1}(\mathfrak{Y}_j)} (\mathcal{O}_{L_1}(\mathfrak{Y}_j) \otimes_{\mathcal{O}_{L_1}(\mathfrak{Y}_{j+1})} Q) \ .    \nonumber
\end{align}

We claim that for $P := \mathcal{M}_{m/\Cp}(\mathfrak{A}_{j/\Cp})$ we may take as input for Cor.\ \ref{TS2}.iii the submodule $\mathcal{O}_{L_1}(\mathfrak{Y}_j) \otimes_{\mathcal{O}_{L_1}(\mathfrak{Y}_{j+1})} Q$. The properties a) and c) are obvious. For b) we have to check that the inclusion
\begin{equation*}
  \mathcal{O}_{\widehat{L^{cyc}}}(\mathfrak{Y}_j) \otimes_{\mathcal{O}_{L_1}(\mathfrak{Y}_j)} (\mathcal{O}_{L_1}(\mathfrak{Y}_j) \otimes_{\mathcal{O}_{L_1}(\mathfrak{Y}_{j+1})} Q) \subseteq \mathcal{M}_{m/\Cp}(\mathfrak{A}_{j/\Cp})^{H_L^{cyc}}
\end{equation*}
is an equality. Since the ring homomorphism $\mathcal{O}_{\widehat{L^{cyc}}}(\mathfrak{Y}_j) \longrightarrow \mathcal{O}_{\Cp}(\mathfrak{Y}_j) = \Cp \, \widehat{\otimes}_{\widehat{L^{cyc}}} \mathcal{O}_{\widehat{L^{cyc}}}(\mathfrak{Y}_j)$ is faithfully flat (\cite{Co1} Lemma 1.1.5(1)) it suffices to see that the base change to $\mathcal{O}_{\Cp}(\mathfrak{Y}_j)$ of the above inclusion is an equality. But both sides become equal to $\mathcal{M}_{m/\Cp}(\mathfrak{A}_{j/\Cp})$, the left hand side by \eqref{f:transition2} and the right hand side by Cor.\ \ref{TS2}.i.

As output from Cor.\ \ref{TS2}.iii we then obtain the first identity in
\begin{align*}
  \mathcal{M}_{m/\Cp}(\mathfrak{A}_{j/\Cp})^{G_L,*} & =  (\mathcal{O}_{L_2}(\mathfrak{Y}_j) \otimes_{\mathcal{O}_{L_1}(\mathfrak{Y}_{j+1})} Q)^{G_L,*} \\
   & = (\mathcal{O}_L(\mathfrak{Y}_j) \otimes_{\mathcal{O}_L(\mathfrak{Y}_{j+1})}   (\mathcal{O}_{L_2}(\mathfrak{Y}_{j+1}) \otimes_{\mathcal{O}_{L_1}(\mathfrak{Y}_{j+1})} Q))^{G_L,*} \ .
\end{align*}
Since the twisted $G_L$-actions are continuous and since the profinite group $G_L$ is topologically finitely generated (cf.\ \cite{NSW} Thm.\ 7.5.14) we, after picking finitely many topological generators $\sigma_1, \ldots, \sigma_r$ of $G_L$, may write
\begin{align*}
  \mathcal{M}_{m/\Cp}(\mathfrak{A}_{j+1/\Cp})^{G_L,*} & = (\mathcal{O}_{L_2}(\mathfrak{Y}_{j+1}) \otimes_{\mathcal{O}_{L_1}(\mathfrak{Y}_{j+1})} Q)^{G_L,*} \\
  & = \ker \Big(\mathcal{O}_{L_2}(\mathfrak{Y}_{j+1}) \otimes_{\mathcal{O}_{L_1}(\mathfrak{Y}_{j+1})} Q \xrightarrow{\prod 1 - \sigma_i} \prod_{i=1}^r \mathcal{O}_{L_2}(\mathfrak{Y}_{j+1}) \otimes_{\mathcal{O}_{L_1}(\mathfrak{Y}_{j+1})} Q \Big).
\end{align*}
The ring homomorphism $\mathcal{O}_L(\mathfrak{Y}_{j+1}) \longrightarrow \mathcal{O}_L(\mathfrak{Y}_j)$ is flat (\cite{BGR} Cor.\ 7.3.2/6). Hence the formation of the kernel above commutes with base extension along this homomorphism. It follows that
\begin{align*}
  \mathcal{O}_L(\mathfrak{Y}_j) \otimes_{\mathcal{O}_L(\mathfrak{Y}_{j+1})}                \mathcal{M}_{m/\Cp}(\mathfrak{A}_{j+1/\Cp})^{G_L,*} & = \mathcal{O}_L(\mathfrak{Y}_j) \otimes_{\mathcal{O}_L(\mathfrak{Y}_{j+1})}   (\mathcal{O}_{L_2}(\mathfrak{Y}_{j+1}) \otimes_{\mathcal{O}_{L_1}(\mathfrak{Y}_{j+1})} Q)^{G_L,*} \\
   & = (\mathcal{O}_L(\mathfrak{Y}_j) \otimes_{\mathcal{O}_L(\mathfrak{Y}_{j+1})}   (\mathcal{O}_{L_2}(\mathfrak{Y}_{j+1}) \otimes_{\mathcal{O}_{L_1}(\mathfrak{Y}_{j+1})} Q))^{G_L,*} \\
   & = \mathcal{M}_{m/\Cp}(\mathfrak{A}_{j/\Cp})^{G_L,*}  \ .
\end{align*}
This settles the isomorphism \eqref{f:transition}.

To check the isomorphism in iv. we temporarily indicate the dependance of our coverings on $m$ by writing $\mathfrak{Y}_j^{(m)}$ and $\mathfrak{A}_j^{(m)}$. Examining again the proof of Prop.\ \ref{quasi-Stein} one easily sees that, for a fixed $m$, these coverings can be chosen so that $\mathfrak{Y}_j^{(m+1)} \subseteq \mathfrak{Y}_j^{(m)}$ and $\mathfrak{A}_j^{(m+1)} \subseteq \mathfrak{A}_j^{(m)}$ for any $j$. But then the exact same argument as above for \eqref{f:transition} shows that the natural maps
\begin{equation*}
    \mathcal{O}_L(\mathfrak{Y}_j^{(m+1)}) \otimes_{\mathcal{O}_L(\mathfrak{Y}_j^{(m)})}                \mathcal{M}_{m/\Cp}(\mathfrak{A}_{j/\Cp}^{(m)})^{G_L,*} \xrightarrow{\;\cong\;} \mathcal{M}_{m+1/\Cp}(\mathfrak{A}_{j/\Cp}^{(m+1)})^{G_L,*} = \mathcal{M}_{m/\Cp}(\mathfrak{A}_{j/\Cp}^{(m+1)})^{G_L,*}
\end{equation*}
are isomorphisms. This means that the restriction to $\mathfrak{X} \setminus \mathfrak{X}_{m+1}$ of the vector bundle $\mathcal{M}_{\mathfrak{X} \setminus \mathfrak{X}_m}$ on $\mathfrak{X} \setminus \mathfrak{X}_m$ coincides with the vector bundle $\mathcal{M}_{\mathfrak{X} \setminus \mathfrak{X}_{m+1}}$. On global sections this is equivalent to the isomorphism in iv.

Now we assume that $M$ is $L$-analytic. Earlier in the proof we had observed already that then each $\mathcal{M}_m(\mathfrak{A}_j)$ is $L$-analytic. In this situation Thm.\ \ref{Sen-vanishes} tells us that
\begin{equation}\label{f:form2}
    \mathcal{O}_{\Cp}(\mathfrak{Y}_j) \otimes_{\mathcal{O}_L(\mathfrak{Y}_j)}                \mathcal{M}_{m/\Cp}(\mathfrak{A}_{j/\Cp})^{G_L,*} = \mathcal{M}_{m/\Cp}(\mathfrak{A}_{j/\Cp}) \ .
\end{equation}
Being finitely generated projective, $\mathcal{O}_{\Cp}(\mathfrak{Y}) \otimes_{\mathcal{O}_L(\mathfrak{Y})} M_\mathfrak{Y}(\mathfrak{Y})$ must be the global sections of a coherent sheaf on $\mathfrak{Y}_{/\Cp} \cong \mathfrak{A}_{/\Cp}$. But one easily deduces from \eqref{f:form2} that this sheaf is $\mathcal{M}_{m/\Cp}$. It follows that
\begin{equation*}
     \mathcal{O}_{\Cp}(\mathfrak{Y}) \otimes_{\mathcal{O}_L(\mathfrak{Y})} M_{m/\Cp}^{G_L,*} = \mathcal{O}_{\Cp}(\mathfrak{Y}) \otimes_{\mathcal{O}_L(\mathfrak{Y})} M_\mathfrak{Y}(\mathfrak{Y}) = \mathcal{M}_{m/\Cp}(\mathfrak{A}_{/\Cp}) = M_{m,\Cp} \ ,
\end{equation*}
which shows v.
\end{proof}

\begin{lemma}\label{prop:N}
$M_\mathfrak{X}$, for any $M$ in $\Mod_L(\mathscr{R}_L(\mathfrak{B}))$, is an object in
$\Mod_L(\mathscr{R}_L(\mathfrak{X}))$.
\end{lemma}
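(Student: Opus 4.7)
The plan is to verify the three conditions in Definition \ref{def:modR} for $M_\mathfrak{X}$. The first, that $M_\mathfrak{X}$ is finitely generated projective over $\mathscr{R}_L(\mathfrak{X})$, is precisely Theorem \ref{descent-to-X}(i). The semilinear $o_L \setminus \{0\}$-action is well defined on $M_\mathfrak{X}$ because it commutes with the twisted $G_L$-action on $M_{\Cp}$ (by the commutation noted just before the definition of $M_\mathfrak{X}$). What remains is continuity of this action and bijectivity of the linearized Frobenius.

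For continuity I would exploit the local structure established in the proof of Theorem \ref{descent-to-X}: there exists $n$ sufficiently large so that $M_\mathfrak{X} = \bigcup_{m \geq n} M_{m,\Cp}^{G_L,*}$ is a locally convex inductive limit of finitely generated projective $\mathcal{O}_L(\mathfrak{X} \setminus \mathfrak{X}_m)$-modules. The $o_L^\times$-action on $M_{m,\Cp}$ is continuous by the analog of Proposition \ref{descent} for $\mathfrak{B}$, and it restricts to the closed submodule $M_{m,\Cp}^{G_L,*}$ (closed as the fixed locus of a continuous group of automorphisms) with the subspace topology. By the uniqueness of Hausdorff complete module topologies on a finitely generated module over the Fr\'echet-Stein algebra $\mathcal{O}_L(\mathfrak{X} \setminus \mathfrak{X}_m)$ (cf.\ Lemma \ref{fingensub}) this subspace topology agrees with the canonical one, so the $o_L^\times$-action on $M_{m,\Cp}^{G_L,*}$ is canonically continuous. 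Similarly $\varphi_L$ gives continuous semilinear maps between affinoid levels (the analog of Lemma \ref{pi} for $\mathfrak{B}$ plus Remark \ref{semilinear-cont}), and passing to the inductive limit delivers continuity of the full $o_L \setminus \{0\}$-action on $M_\mathfrak{X}$.

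For bijectivity of $\varphi_{M_\mathfrak{X}}^{lin}$, I would start from the fact that $\varphi_M^{lin}: \varphi^* M \xrightarrow{\cong} M$ becomes, after base change to $\Cp$, an isomorphism $\varphi_{M_{\Cp}}^{lin}: \varphi^* M_{\Cp} \xrightarrow{\cong} M_{\Cp}$ equivariant for the twisted $G_L$-actions, since $\varphi_L$ commutes with the twisted Galois action. Taking twisted fixed points produces an isomorphism $(\varphi^* M_{\Cp})^{G_L,*} \xrightarrow{\cong} M_\mathfrak{X}$. On the other hand there is a natural $\mathscr{R}_L(\mathfrak{X})$-linear comparison map
\begin{equation*}
\Psi: \varphi^*(M_\mathfrak{X}) = \mathscr{R}_L(\mathfrak{X}) \otimes_{\mathscr{R}_L(\mathfrak{X}),\varphi_L} M_\mathfrak{X} \longrightarrow (\varphi^* M_{\Cp})^{G_L,*}, \qquad f \otimes m \longmapsto f \otimes m,
\end{equation*}
well defined because $\mathscr{R}_L(\mathfrak{X}) \subseteq \mathscr{R}_{\Cp}(\mathfrak{B})^{G_L,*}$ and $M_\mathfrak{X} \subseteq M_{\Cp}^{G_L,*}$, and composing $\Psi$ with the preceding isomorphism recovers $\varphi_{M_\mathfrak{X}}^{lin}$. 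Thus the assertion reduces to showing that $\Psi$ is bijective.

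The hard part will be verifying this bijectivity of $\Psi$. Both source and target are finitely generated projective $\mathscr{R}_L(\mathfrak{X})$-modules of equal rank, since Theorem \ref{descent-to-X}(i) applies to both $M$ and $\varphi^* M$ and the isomorphism $\varphi^* M \cong M$ in $\Mod_L(\mathscr{R}_L(\mathfrak{B}))$ identifies their $(\cdot)_\mathfrak{X}$-counterparts, while $\mathscr{R}_L(\mathfrak{X})$ is an integral domain. I would reduce bijectivity of $\Psi$ to the affinoid level by using Proposition \ref{descent} to realize $M$, hence $M_\mathfrak{X}$, by base change from modules over $\mathcal{O}_L(\mathfrak{X} \setminus \mathfrak{X}_m)$ for some $m$, after which both sides of $\Psi$ arise from the corresponding affinoid-level construction. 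At that level, bijectivity can be tested after the faithfully flat base change $\mathcal{O}_L(\mathfrak{Y}_j) \to \mathcal{O}_{\Cp}(\mathfrak{Y}_j)$ already exploited in the proof of Theorem \ref{descent-to-X}; over $\Cp$ the identification $\mathscr{R}_{\Cp}(\mathfrak{X}) = \mathscr{R}_{\Cp}(\mathfrak{B})$ trivializes the twisting, and the Galois-descent argument of Theorem \ref{descent-to-X}(ii)-type (applied to the fg projective module $\varphi^* M_{\Cp}$ and using that $\varphi^*$ commutes with the twisted $G_L$-action) identifies both sides of $\Psi_{\Cp}$. Faithfully flat descent then yields bijectivity of $\Psi$ itself, completing the proof.
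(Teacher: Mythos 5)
Your overall strategy — verify projectivity, the monoid action, continuity, and bijectivity of the linearized Frobenius — matches the paper's plan, and the continuity argument is essentially the paper's (reduce via Proposition \ref{descent} to affinoid pieces, use Banach--Steinhaus and the discussion preceding Proposition \ref{nabsenul}), modulo a slightly hand-wavy appeal to uniqueness of complete module topologies which one should be careful to phrase at the affinoid (Banach) level rather than the Fr\'echet--Stein level.

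The bijectivity argument, however, contains a genuine gap. Your plan reduces $\Psi$ to the affinoid level and then proposes to test it after the faithfully flat base change $\mathcal{O}_L(\mathfrak{Y}_j) \to \mathcal{O}_{\Cp}(\mathfrak{Y}_j)$, identifying both sides over $\Cp$ via ``a Theorem \ref{descent-to-X}(ii)-type Galois descent argument.'' But the identification $\mathcal{O}_{\Cp}(\mathfrak{Y}_j) \otimes_{\mathcal{O}_L(\mathfrak{Y}_j)} \mathcal{M}_{m/\Cp}(\mathfrak{A}_{j/\Cp})^{G_L,*} = \mathcal{M}_{m/\Cp}(\mathfrak{A}_{j/\Cp})$ (equation \eqref{f:form2} in the paper) is exactly the point that requires $L$-analyticity of $M$: it rests on Theorem \ref{Sen-vanishes}, hence on $\nabla_{Sen}=0$, hence on Proposition \ref{nabsenul}. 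Lemma \ref{prop:N} is stated for arbitrary $M \in \Mod_L(\mathscr{R}_L(\mathfrak{B}))$, and for non-$L$-analytic $M$ the module $M_\mathfrak{X}$ may well have strictly smaller rank than $M$, so $\mathscr{R}_{\Cp}(\mathfrak{X}) \otimes_{\mathscr{R}_L(\mathfrak{X})} M_\mathfrak{X} \neq M_{\Cp}$ and your base change does not ``identify both sides of $\Psi_{\Cp}$.''

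The paper's actual argument for bijectivity avoids base-changing $M_\mathfrak{X}$ back up to $\Cp$. Instead, starting from the known isomorphism $\mathcal{O}_{\Cp}(\mathfrak{A}_j^{(m+1)}) \otimes_{\varphi_L} \mathcal{M}_{m/\Cp}(\mathfrak{A}_{j/\Cp}^{(m)}) \cong \mathcal{M}_{m+1/\Cp}(\mathfrak{A}_{j/\Cp}^{(m+1)})$, one takes twisted $G_L$-invariants on both sides; the key step is then to show that taking twisted invariants commutes with $\mathcal{O}_L(\mathfrak{Y}_j^{(m+1)}) \otimes_{\mathcal{O}_L(\mathfrak{Y}_j^{(m)}), \varphi_L}(-)$. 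This is done by using Corollary \ref{TS2} to realize the invariants as the kernel of a finite product $\prod(1-\sigma_i)$ (for topological generators $\sigma_i$ of $G_L$) acting on a module of the form $\mathcal{O}_{L_2}(\mathfrak{Y}_j^{(m)}) \otimes_{\mathcal{O}_{L_1}(\mathfrak{Y}_j^{(m)})} Q$, and then exploiting the flatness of $\varphi_L : \mathcal{O}_L(\mathfrak{Y}_j^{(m)}) \to \mathcal{O}_L(\mathfrak{Y}_j^{(m+1)})$ (from \cite{ST} Lemma 3.3), so kernel formation commutes with base change. None of this requires $L$-analyticity. You should replace your faithfully-flat-to-$\Cp$ descent step by this kernel/flatness argument over $L$.
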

\begin{proof}
It remains to show that:
\begin{itemize}
  \item[a)] The $\Gamma_L$-action on $M_\mathfrak{X}$ is continuous for the canonical topology of $M_\mathfrak{X}$ as an $\mathscr{R}_L(\mathfrak{X})$-module.
  \item[b)] The map
\begin{equation*}
  \varphi_{M_\mathfrak{X}}^{lin} : \mathscr{R}_L(\mathfrak{X})\otimes_{\mathscr{R}_L(\mathfrak{X}), \varphi_L} M_\mathfrak{X} \longrightarrow  M_\mathfrak{X}
\end{equation*}
is an isomorphism.
\end{itemize}
We will use the notations in the proof of the previous Thm.\ \ref{descent-to-X}.

Ad a): By Remark \ref{semilinear-cont} each individual $\gamma \in \Gamma_L$ acts by a continuous automorphism. Since $M_\mathfrak{X}$ is barrelled it therefore suffices, by the Banach-Steinhaus theorem, to check that the orbit maps $\gamma \longrightarrow \gamma m$, for $m \in M_\mathfrak{X}$, are continuous. But $M_\mathfrak{X} = \bigcup_{m \geq n} M_{m,\Cp}^{G_L,*}$, and the inclusion maps $M_{m,\Cp}^{G_L,*} \longrightarrow M_\mathfrak{X}$ are continuous for the canonical topologies (again by Remark \ref{semilinear-cont}). This reduces us to showing the continuity of the orbit maps for each $M_{m,\Cp}^{G_L,*}$. Now we use that, by \eqref{f:transition}, we have
\begin{equation*}
  M_{m,\Cp}^{G_L,*} = \varprojlim_j \mathcal{M}_{m/\Cp}(\mathfrak{A}_{j/\Cp})^{G_L,*}
\end{equation*}
at least algebraically and (cf.\ Fact \ref{coherent}.i)
\begin{equation*}
  \mathcal{M}_{m/\Cp}(\mathfrak{A}_{j/\Cp})^{G_L,*} =  \mathcal{O}_L(\mathfrak{Y}_j) \otimes_{\mathcal{O}_L(\mathfrak{X} \setminus \mathfrak{X}_m)}  M_{m,\Cp}^{G_L,*} \ .
\end{equation*}
In fact, the first identity holds topologically if we equip the left hand side with the canonical topology and the right hand side with the projective limit of the canonical topologies. This is most easily seen by writing $\mathcal{M}_{m/\Cp}(\mathfrak{A}_{j/\Cp})^{G_L,*}$ as a direct summand of a finitely generated free module in which case the corresponding identity is evidently topological. This finally reduces us further to the continuity of the orbit maps of $\Gamma_L$ on the $\mathcal{O}_L(\mathfrak{Y}_j)$-module $ \mathcal{M}_{m/\Cp}(\mathfrak{A}_{j/\Cp})^{G_L,*}$. Since this module is finitely generated projective over the Banach algebra $\mathcal{O}_L(\mathfrak{Y}_j) = \mathcal{O}_{\Cp}(\mathfrak{A}_j)^{G_L,*}$ we have pointed out the required continuity already in the discussion before Prop.\ \ref{nabsenul}.

Ad b): This time we choose the coverings $\{\mathfrak{Y}_j^{(m)}\}_j$ and $\{\mathfrak{A}_j^{(m)}\}_j$ of $\mathfrak{X} \setminus \mathfrak{X}_m$ and $\mathfrak{B} \setminus \mathfrak{B}_m$, respectively, in such a way that $\pi_L^* \mathfrak{Y}_j^{(m+1)} \subseteq \mathfrak{Y}_j^{(m)}$ and $\pi_L^* \mathfrak{A}_j^{(m+1)} \subseteq \mathfrak{A}_j^{(m)}$ for any $j$.

In a first step we show that the map
\begin{align*}
    \mathcal{O}_L(\mathfrak{Y}_j^{(m+1)}) \otimes_{\mathcal{O}_L(\mathfrak{Y}_j^{(m)}),\varphi_L}                \mathcal{M}_{m/\Cp}(\mathfrak{A}_{j/\Cp}^{(m)})^{G_L,*} & \xrightarrow{\;\cong\;} \mathcal{M}_{m+1/\Cp}(\mathfrak{A}_{j/\Cp}^{(m+1)})^{G_L,*}  \\
    f \otimes m & \longmapsto f \varphi_M(m)
\end{align*}
is an isomorphism. As listed at the beginning of the proof of Thm.\ \ref{descent-to-X} we have the isomorphisms
\begin{equation*}
  \mathcal{O}_L(\mathfrak{B} \setminus \mathfrak{B}_{m+1}) \otimes_{\mathcal{O}_L(\mathfrak{B} \setminus \mathfrak{B}_m),\varphi_L} M_m \xrightarrow{\;\cong\;} M_{m+1} \ ,
\end{equation*}
hence
\begin{equation*}
  \mathcal{O}_{\Cp}(\mathfrak{B} \setminus \mathfrak{B}_{m+1}) \otimes_{\mathcal{O}_{\Cp} (\mathfrak{B} \setminus \mathfrak{B}_m),\varphi_L} M_{m/\Cp} \xrightarrow{\;\cong\;} M_{m+1/\Cp} \ ,
\end{equation*}
and therefore
\begin{equation*}
  \mathcal{O}_{\Cp}(\mathfrak{A}_j^{(m+1)}) \otimes_{\mathcal{O}_{\Cp} (\mathfrak{A}_j^{(m)}),\varphi_L} \mathcal{M}_{m/\Cp}(\mathfrak{A}_{j/\Cp}^{(m)}) \xrightarrow{\;\cong\;} \mathcal{M}_{m+1/\Cp}(\mathfrak{A}_{j/\Cp}^{(m+1)})
\end{equation*}
induced by $\varphi_M$. Since the $o_L \setminus \{0\}$-action commutes with the twisted $G_L$-action the latter map restricts to the isomorphism
\begin{equation*}
  \big[\mathcal{O}_{\Cp}(\mathfrak{A}_j^{(m+1)}) \otimes_{\mathcal{O}_{\Cp} (\mathfrak{A}_j^{(m)}),\varphi_L} \mathcal{M}_{m/\Cp}(\mathfrak{A}_{j/\Cp}^{(m)}) \big]^{G_L,*} \xrightarrow{\;\cong\;} \mathcal{M}_{m+1/\Cp}(\mathfrak{A}_{j/\Cp}^{(m+1)})^{G_L,*} \ .
\end{equation*}
This reduces us to showing that the obvious inclusions induce the isomorphism
\begin{multline*}
  \mathcal{O}_L(\mathfrak{Y}_j^{(m+1)}) \otimes_{\mathcal{O}_L(\mathfrak{Y}_j^{(m)}),\varphi_L}                \mathcal{M}_{m/\Cp}(\mathfrak{A}_{j/\Cp}^{(m)})^{G_L,*} \\
  \xrightarrow{\ \cong\ }
  \big[ \mathcal{O}_{\Cp}(\mathfrak{Y}_j^{(m+1)}) \otimes_{\mathcal{O}_{\Cp}(\mathfrak{Y}_j^{(m)}),\varphi_L}                \mathcal{M}_{m/\Cp}(\mathfrak{A}_{j/\Cp}^{(m)}) \big]^{G_L,*} \ .
\end{multline*}
For this we proceed entirely similarly as in the argument for \eqref{f:transition} in the proof of Thm.\ \ref{descent-to-X}. By  Cor.\ \ref{TS2} we find finite extensions $L \subseteq L_1 \subseteq L_2 \subseteq \Cp$ and a finitely generated projective $(G_L,\ast)$-invariant $\mathcal{O}_{L_1}(\mathfrak{Y}_j^{(m)})$-submodule $Q$ of $X := \mathcal{M}_{m/\Cp}(\mathfrak{A}_{j/\Cp}^{(m)})$ with the properties a) - c) and such that
\begin{equation*}
   X^{G_L,*} \subseteq \mathcal{O}_{L_2}(\mathfrak{Y}_j^{(m)}) \otimes_{\mathcal{O}_{L_1}(\mathfrak{Y}_j^{(m)})} Q \subseteq X = \mathcal{O}_{\Cp}(\mathfrak{Y}_j^{(m)}) \otimes_{\mathcal{O}_{L_1}(\mathfrak{Y}_j^{(m)})} Q
\end{equation*}
and hence, in particular, that
\begin{equation*}
  X^{G_L,*} = (\mathcal{O}_{L_2}(\mathfrak{Y}_j^{(m)}) \otimes_{\mathcal{O}_{L_1}(\mathfrak{Y}_j^{(m)})} Q)^{G_L,*} \ .
\end{equation*}
We deduce that
\begin{align*}
   \mathcal{O}_{\Cp}(\mathfrak{Y}_j^{(m+1)}) \otimes_{\mathcal{O}_{\Cp}(\mathfrak{Y}_j^{(m)}),\varphi_L} X & =  \mathcal{O}_{\Cp}(\mathfrak{Y}_j^{(m+1)}) \otimes_{\mathcal{O}_{L_1}(\mathfrak{Y}_j^{(m)}),\varphi_L} Q \\
   & = \mathcal{O}_{\Cp}(\mathfrak{Y}_j^{(m+1)}) \otimes_{\mathcal{O}_{L_1}(\mathfrak{Y}_j^{(m+1)}} (\mathcal{O}_{L_1}(\mathfrak{Y}_j^{(m+1)}) \otimes_{\mathcal{O}_{L_1}(\mathfrak{Y}_j^{(m)}),\varphi_L} Q) \ .
\end{align*}
This shows that for $P := \mathcal{O}_{\Cp}(\mathfrak{Y}_j^{(m+1)}) \otimes_{\mathcal{O}_{\Cp}(\mathfrak{Y}_j^{(m)}),\varphi_L} X$ we may take as input for Cor.\ \ref{TS2}.iii the submodule $\mathcal{O}_{L_1}(\mathfrak{Y}_j^{(m+1)}) \otimes_{\mathcal{O}_{L_1}(\mathfrak{Y}_j^{(m)}),\varphi_L} Q$, and as output we obtain the first identity in
\begin{multline*}
  \big[ \mathcal{O}_{\Cp}(\mathfrak{Y}_j^{(m+1)}) \otimes_{\mathcal{O}_{\Cp}(\mathfrak{Y}_j^{(m)}),\varphi_L} X \big]^{G_L,*}  =  (\mathcal{O}_{L_2}(\mathfrak{Y}_j^{(m+1)}) \otimes_{\mathcal{O}_{L_1}(\mathfrak{Y}_j^{(m)}),\varphi_L} Q)^{G_L,*} \\
    = (\mathcal{O}_L(\mathfrak{Y}_j^{(m+1)}) \otimes_{\mathcal{O}_L(\mathfrak{Y}_j^{(m)}),\varphi_L}   (\mathcal{O}_{L_2}(\mathfrak{Y}_j^{(m)}) \otimes_{\mathcal{O}_{L_1}(\mathfrak{Y}_j^{(m)})} Q))^{G_L,*} \ .
\end{multline*}
Choosing finitely many topological generators $\sigma_1, \ldots, \sigma_r$ of $G_L$ we may write
\begin{align*}
  X^{G_L,*} & = (\mathcal{O}_{L_2}(\mathfrak{Y}_j^{(m)}) \otimes_{\mathcal{O}_{L_1}(\mathfrak{Y}_j^{(m)})} Q)^{G_L,*} \\
  & = \ker \Big( \mathcal{O}_{L_2}(\mathfrak{Y}_j^{(m)}) \otimes_{\mathcal{O}_{L_1}(\mathfrak{Y}_j^{(m)})} Q \xrightarrow{\prod 1 - \sigma_i} \prod_{i=1}^r \mathcal{O}_{L_2}(\mathfrak{Y}_j^{(m)}) \otimes_{\mathcal{O}_{L_1}(\mathfrak{Y}_j^{(m)})} Q \Big).
\end{align*}
As a consequence of \cite{ST} Lemma 3.3 the ring homomorphism  $\varphi_L : \mathcal{O}_L(\mathfrak{Y}_j^{(m)}) \rightarrow \mathcal{O}_L(\mathfrak{Y}_j^{(m+1)})$ is flat. Hence applying the functor $\mathcal{O}_L(\mathfrak{Y}_j^{(m+1)}) \otimes_{\mathcal{O}_L(\mathfrak{Y}_j^{(m)}),\varphi_L}$ commutes with the formation of this kernel, and we obtain
\begin{align*}
  \mathcal{O}_L(\mathfrak{Y}_j^{(m+1)}) \otimes_{\mathcal{O}_L(\mathfrak{Y}_j^{(m)}),\varphi_L} X^{G_L,*} & = (\mathcal{O}_L(\mathfrak{Y}_j^{(m+1)}) \otimes_{\mathcal{O}_L(\mathfrak{Y}_j^{(m)}),\varphi_L} (\mathcal{O}_{L_2}(\mathfrak{Y}_j^{(m)}) \otimes_{\mathcal{O}_{L_1}(\mathfrak{Y}_j^{(m)})} Q))^{G_L,*} \\
  & = \big[ \mathcal{O}_{\Cp}(\mathfrak{Y}_j^{(m+1)}) \otimes_{\mathcal{O}_{\Cp}(\mathfrak{Y}_j^{(m)}),\varphi_L} X \big]^{G_L,*} \ .
\end{align*}
This finishes the first step.

Still fixing $m$ but varying $j$, what we have shown amounts to the statement that $\varphi_M$ induces an isomorphism
\begin{equation*}
  (\pi_L^*)^* \mathcal{M}_{\mathfrak{X} \setminus \mathfrak{X}_m} \xrightarrow{\;\cong\;} \mathcal{M}_{\mathfrak{X} \setminus \mathfrak{X}_{m+1}}
\end{equation*}
of vector bundles on  $\mathfrak{X} \setminus \mathfrak{X}_{m+1}$. We deduce that, on global sections, we have the isomorphism
\begin{equation*}
  \mathcal{O}_L(\mathfrak{X} \setminus \mathfrak{X}_{m+1}) \otimes_{\mathcal{O}_L(\mathfrak{X} \setminus \mathfrak{X}_m),\varphi_L} M_{m,\Cp}^{G_L,*} \xrightarrow{\;\cong\;} M_{m+1,\Cp}^{G_L,*}
\end{equation*}
By passing to the limit with respect to $m$ we obtain the assertion.
\end{proof}

The above construction is entirely symmetric in $\mathfrak{B}$ and $\mathfrak{X}$. Starting from a $(\varphi_L,\Gamma_L)$-module $N$ over $\mathscr{R}_L(\mathfrak{X})$ we obtain by the ``inverse'' twisting the $(\varphi_L,\Gamma_L)$-module
\begin{equation*}
      N_\mathfrak{B} := (\mathscr{R}_{\Cp}(\mathfrak{X}) \otimes_{\mathscr{R}_L(\mathfrak{X})} N)^{G_L,*}
\end{equation*}
over $\mathscr{R}_L(\mathfrak{B})$.

\begin{theorem}\label{equiv}
The functors $M \longmapsto M_\mathfrak{X}$ and $N \longmapsto N_\mathfrak{B}$ induce equivalences of categories
\begin{equation*}
    \Mod_{L,an}(\mathscr{R}_L(\mathfrak{B})) \simeq \Mod_{L,an}(\mathscr{R}_L(\mathfrak{X}))
\end{equation*}
which are quasi-inverse to each other.
\end{theorem}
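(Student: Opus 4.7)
The plan is to verify that both functors preserve $L$-analyticity and then to establish natural isomorphisms $(M_\mathfrak{X})_\mathfrak{B} \cong M$ and $(N_\mathfrak{B})_\mathfrak{X} \cong N$. By the symmetry encapsulated in Remark \ref{untwisting}, the two directions are structurally identical, so I would treat only $M \mapsto (M_\mathfrak{X})_\mathfrak{B}$ in detail and obtain the other by interchanging $\tau$ and $\tau^{-1}$ throughout.

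Preservation of $L$-analyticity is straightforward: the $o_L^\times$-action on $M_\mathfrak{X}$ is the restriction, to the closed invariant subspace $M_{\Cp}^{G_L,*} \subseteq M_{\Cp}$, of the $o_L^\times$-action on $M_{\Cp} = \mathscr{R}_{\Cp}(\mathfrak{B}) \otimes_{\mathscr{R}_L(\mathfrak{B})} M$. Since the derived $\operatorname{Lie}(\Gamma_L)$-action is $L$-bilinear on $\mathscr{R}_{\Cp}(\mathfrak{B})$ (by the discussion preceding Definition \ref{def:L-analytic}) and on $M$ by hypothesis, it is $L$-bilinear on $M_{\Cp}$ and hence on $M_\mathfrak{X}$.

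For the compositional identity, the key input is Theorem \ref{descent-to-X}(ii), which supplies the canonical identification
\[
\mathscr{R}_{\Cp}(\mathfrak{X}) \otimes_{\mathscr{R}_L(\mathfrak{X})} M_\mathfrak{X} \;\xrightarrow{\;\cong\;}\; M_{\Cp}.
\]
The next step is to trace the ``reverse'' twisted $G_L$-action (used to define $(-)_\mathfrak{B}$, obtained from $(-)_\mathfrak{X}$ by replacing $\tau$ with $\tau^{-1}$) across this identification. For $m \in M_\mathfrak{X}$ viewed inside $M_{\Cp}$, the relation $\sigma \ast m = m$ unpacks to $\sigma_{\mathrm{std}}(m) = \tau(\sigma)(m)$; hence the two twists cancel, and the reverse twisted action on the left-hand tensor product matches the \emph{standard} Galois action on $M_{\Cp}$ (acting via Galois on $\mathscr{R}_{\Cp}(\mathfrak{B})$, trivially on $M$). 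Consequently $(M_\mathfrak{X})_\mathfrak{B} = M_{\Cp}^{G_L,\mathrm{std}}$.

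Finally, I would invoke Theorem \ref{Sen-vanishes} through Remark \ref{untwisting}, applied with the trivial character in place of $\tau$: the resulting module $M_{\Cp}^{G_L,\mathrm{std}}$ is finitely generated projective over $\mathscr{R}_L(\mathfrak{B})$ of rank equal to that of $M_{\Cp}$, and satisfies $\mathscr{R}_{\Cp}(\mathfrak{B}) \otimes_{\mathscr{R}_L(\mathfrak{B})} M_{\Cp}^{G_L,\mathrm{std}} = M_{\Cp}$. The tautological inclusion $M \hookrightarrow M_{\Cp}^{G_L,\mathrm{std}}$ becomes the identity after this base change, which forces it to be an isomorphism (verifiable layer-by-layer via the Stein decompositions of $\mathfrak{B}$, where the relevant faithful flatness is immediate). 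Compatibility with $\varphi_L$ and $\Gamma_L$ is built into the constructions. The main anticipated obstacle is the careful bookkeeping of the several Galois actions (standard on $\mathfrak{B}$, standard on $\mathfrak{X}$, and the two opposite twists) under the LT-identification; once this is organised, the Sen-theoretic content of Section \ref{sec:key} supplies all the real substance.
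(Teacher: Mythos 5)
Your overall architecture matches the paper: the key inputs are Theorem \ref{descent-to-X}.ii (identifying $\mathscr{R}_{\Cp}(\mathfrak{X}) \otimes_{\mathscr{R}_L(\mathfrak{X})} M_\mathfrak{X}$ with $M_{\Cp}$) and the observation that composing the two twists recovers the standard Galois action on $M_{\Cp} = \mathscr{R}_{\Cp}(\mathfrak{B}) \otimes_{\mathscr{R}_L(\mathfrak{B})} M$, acting through the coefficients only. The $L$-analyticity preservation via the closed-subspace remark after Definition \ref{def:L-analytic} is also fine. The problems are in the last two paragraphs.

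First, invoking Theorem \ref{Sen-vanishes} ``with the trivial character in place of $\tau$'' is not licensed by Remark \ref{untwisting}, which only records that the Colmez--Sen--Tate argument survives the replacement $\tau \mapsto \tau^{-1}$. The proof of Proposition \ref{nabsenul} uses concrete properties of $\tau = \chi_{cyc}\chi_{LT}^{-1}$ (e.g.\ that it has Hodge--Tate weight zero, feeding into Lemma \ref{essdispro}), so substituting $\tau = 1$ requires a separate check that you have not made, and in any case deploys far more machinery than needed. Once the twists cancel you are computing $(\mathscr{R}_{\Cp}(\mathfrak{B}) \otimes_{\mathscr{R}_L(\mathfrak{B})} M)^{G_L}$ for the \emph{untwisted} action, and this is elementary: $\mathscr{R}_L(\mathfrak{B})$ is a Bezout domain (Remark \ref{B-proj-free}.i), so $M$ is free, Galois invariants commute with the finite direct sum decomposition, and $\mathscr{R}_{\Cp}(\mathfrak{B})^{G_L} = \mathscr{R}_L(\mathfrak{B})$ by the extension of Remark \ref{Galois-fixed} recorded after Corollary \ref{scalar-ext-R}. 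No Sen theory, no faithful flatness of $\mathscr{R}_L(\mathfrak{B}) \to \mathscr{R}_{\Cp}(\mathfrak{B})$, no layer-by-layer verification is required.

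Second, the ``symmetry encapsulated in Remark \ref{untwisting}'' does not carry the other direction, because the relevant freeness input is genuinely asymmetric: $\mathscr{R}_L(\mathfrak{X})$ is only a Pr\"ufer domain, and Theorem \ref{freeness} (asserting that $L$-analytic $(\varphi_L,\Gamma_L)$-modules over it are free) is proved \emph{after}, and indeed by means of, the present theorem. So for $N$ over $\mathscr{R}_L(\mathfrak{X})$ you cannot simply argue as in the free case. The paper's remedy, Remark \ref{summand-of-free} ($N \oplus N^*$ is free), is what makes the $(N_\mathfrak{B})_\mathfrak{X}$ direction go through, and it does not appear in your proposal. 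Had your Sen-theoretic detour been validly cited it would sidestep this (since Theorem \ref{Sen-vanishes} handles projective modules directly), but as written the projective case on the $\mathfrak{X}$-side is a gap.
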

\begin{proof}
We have
\begin{align*}
  (M_\mathfrak{X})_\mathfrak{B} & = (\mathscr{R}_{\Cp}(\mathfrak{X}) \otimes_{\mathscr{R}_L(\mathfrak{X})} M_\mathfrak{X})^{G_L,*} \cong  (\mathscr{R}_{\Cp}(\mathfrak{B}) \otimes_{\mathscr{R}_L(\mathfrak{B})} M)^{G_L \times \id}  \\ & = \mathscr{R}_{\Cp}(\mathfrak{B})^{G_L}  \otimes_{\mathscr{R}_L(\mathfrak{B})} M = M \ ,
\end{align*}
where the second last identity uses the fact that $M$ is free. Correspondingly we obtain $(N_\mathfrak{B})_\mathfrak{X} \cong N$ since we may first reduce to the case of a free $N$ by Remark \ref{summand-of-free}.
\end{proof}

\subsection{Etale $L$-analytic $(\varphi_L,\Gamma_L)$-modules}\label{sec:etalepgm}

In the previous section, we have constructed an equivalence of categories (Thm.\ \ref{equiv}) between $\Mod_{L,an}(\mathscr{R}_L(\mathfrak{B}))$ and $\Mod_{L,an}(\mathscr{R}_L(\mathfrak{X}))$. In this section, we define the slope of an $L$-analytic $(\varphi_L,\Gamma_L)$-module, as well as \'etale $L$-analytic $(\varphi_L,\Gamma_L)$-modules, and prove that the above equivalence respects the slopes as well as the condition of being \'etale. Before we do that, we show that our equivalence implies the following theorem.

\begin{theorem}\label{freeness}
Any $L$-analytic $(\varphi_L,\Gamma_L)$-module $M$ over $\mathscr{R}_L(\mathfrak{X})$ is free as an $\mathscr{R}_L(\mathfrak{X})$-module.
\end{theorem}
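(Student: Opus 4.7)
The plan is to use the equivalence of Theorem~\ref{equiv} to transfer the freeness question to the $\mathfrak{B}$-side, where it is automatic. First I would invoke Theorem~\ref{equiv} to write $M \cong (M_\mathfrak{B})_\mathfrak{X}$ for the corresponding $L$-analytic $(\varphi_L, \Gamma_L)$-module $M_\mathfrak{B}$ over $\mathscr{R}_L(\mathfrak{B})$. Since $\mathscr{R}_L(\mathfrak{B})$ is a Bezout domain (recalled in the Introduction), every finitely generated projective module over it is free; in particular $M_\mathfrak{B}$ is free of some rank $r$, equal by Theorem~\ref{descent-to-X}.ii to the rank of $M$.

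Next I would reduce to the rank-one case via the $\mathscr{R}_L(\mathfrak{X})$-analogue of Corollary~\ref{free-invertible}.ii (available because $\mathscr{R}_L(\mathfrak{X})$ is a $1\tfrac{1}{2}$-generator Pr\"ufer domain by Corollary~\ref{pruefer2}.ii). This gives $M \cong \mathscr{R}_L(\mathfrak{X})^{r-1} \oplus \det M$, so it suffices to show that the invertible module $\det M$ is free. The equivalence of Theorem~\ref{equiv} commutes with the determinant construction (as a rank-one quotient of $\bigotimes^r M$), and $\det M_\mathfrak{B}$ is free of rank one by the first step. Hence we may assume $M$ has rank one with $M_\mathfrak{B} = \mathscr{R}_L(\mathfrak{B}) \cdot e$ for some $e$.

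In the rank-one case, the $(\varphi_L, \Gamma_L)$-structure on $M_\mathfrak{B}$ is encoded by a character $\eta : o_L \setminus \{0\} \to \mathscr{R}_L(\mathfrak{B})^\times$ via $a \cdot e = \eta(a) e$. Unwinding $M = (\mathscr{R}_{\Cp}(\mathfrak{B}) \otimes_{\mathscr{R}_L(\mathfrak{B})} M_\mathfrak{B})^{G_L,*^{-1}}$ from Theorem~\ref{equiv}, an element $f \cdot e$ with $f \in \mathscr{R}_{\Cp}(\mathfrak{B})$ lies in $M$ precisely when $f = \eta(\tau^{-1}(\sigma^{-1})) \cdot {}^{\sigma *^{-1}} f$ for every $\sigma \in G_L$. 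To obtain freeness I need to exhibit a solution $f$ that is a unit in $\mathscr{R}_{\Cp}(\mathfrak{B})$, so that $f \cdot e$ generates $M$ over $\mathscr{R}_L(\mathfrak{X})$; equivalently, I need to trivialize the corresponding $1$-cocycle.

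The hard part will be producing this non-vanishing cocycle-solution, which amounts to a Hilbert-90-type vanishing in $H^1(G_L, \mathscr{R}_{\Cp}(\mathfrak{B})^\times)$ for the twisted Galois action. I expect this to be handled by the Colmez-Sen-Tate framework of Section~\ref{sec:key}: the $L$-analyticity of $\eta$ ensures that the cocycle falls within the scope of Theorem~\ref{Sen-vanishes}, and the relevant descent is compatible with the identification $\mathscr{R}_{\Cp}(\mathfrak{X}) \otimes_{\mathscr{R}_L(\mathfrak{X})} M \cong \mathscr{R}_{\Cp}(\mathfrak{X})$ already provided by Theorem~\ref{descent-to-X}.ii. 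Combining this vanishing with the explicit description above should complete the proof.
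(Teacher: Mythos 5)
Your reduction to the rank-one case via the $1\tfrac12$-generator Pr\"ufer structure and Corollary \ref{free-invertible}.ii, with $\det M$ inheriting the $L$-analytic $(\varphi_L,\Gamma_L)$-structure, is exactly the paper's route for the general step, and the first step (freeness of $M_\mathfrak{B}$ over the Bezout domain $\mathscr{R}_L(\mathfrak{B})$) is correct. The gap is in the rank-one case itself. As you set it up, the crossed homomorphism $\eta$ takes values in $\mathscr{R}_L(\mathfrak{B})^\times$, and you need to solve $f = {^{\sigma*}f}\cdot \eta(\tau(\sigma^{-1}))$ for a unit $f \in \mathscr{R}_{\Cp}(\mathfrak{B})^\times$. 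This is a Hilbert-90 problem with coefficients in $\mathscr{R}_{\Cp}(\mathfrak{B})^\times$, and there is no reason that group should have vanishing $H^1(G_L,-)$; in fact its unit group is large and badly behaved. Theorem \ref{Sen-vanishes} cannot supply the trivialization: it yields that the descended module is finitely generated \emph{projective} of the right rank, which you already know from Theorem \ref{descent-to-X}.ii, and projectivity is strictly weaker than freeness here since the Picard group of $\mathscr{R}_L(\mathfrak{X})$ is nontrivial when $L \neq \Qp$. So nothing in Section \ref{sec:key} closes the gap.

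The missing ingredient is the classification of Fourquaux--Xie, \cite{FX} Prop.\ 1.9 (quoted in the paper as Prop.\ \ref{rank1b}): every rank-one $(\varphi_L,\Gamma_L)$-module over $\mathscr{R}_L(\mathfrak{B})$ is isomorphic to $\mathscr{R}_L(\mathfrak{B})(\delta)$ for a continuous character $\delta : L^\times \to L^\times$, and $L$-analyticity of the module forces $\delta$ to be locally $L$-analytic. This lets you change basis so that the crossed homomorphism $\eta$ takes values in the much smaller group $L^\times$. Then the sought generator can be taken to be a constant $\alpha \in \Cp^\times$ with $\sigma(\alpha) = \alpha\cdot\delta(\tau(\sigma))$ for all $\sigma\in G_L$, which the paper constructs in Lemma \ref{perdeltau}: pick a Hodge--Tate period $\beta$ for $\tau$, take roughly $\exp(s\log\beta)$ where $\delta = \exp(s\log(\cdot))$ near $1$, and kill the residual finite-order discrepancy by classical Hilbert 90 for $\Gal(L'/L)$ acting on $L'^\times$. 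Once this $\alpha$ is in hand, one gets $M \cong \mathscr{R}_L(\mathfrak{X})\cdot\alpha e = \mathscr{R}_L(\mathfrak{X})(\delta)$ (this is Prop.\ \ref{r1b2x}/\ref{rank1x}), finishing the rank-one case and hence the theorem. In short: your skeleton is right, but you need the Fourquaux--Xie structure result to reduce the cocycle to $\Cp^\times$-valued and classical Hilbert 90 to kill it; the Colmez--Sen--Tate machinery is not the appropriate tool for that step.
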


Before proving this theorem, we need a few preliminary results. Let $\delta : L^\times \to L^\times$ be a continuous character. We define a $(\varphi_L,\Gamma_L)$-module $\mathscr{R}_K(\mathfrak{X})(\delta)$ of rank $1$ over $\mathscr{R}_K(\mathfrak{X})$ as follows. We set $\mathscr{R}_K(\mathfrak{X})(\delta) = \mathscr{R}_K(\mathfrak{X})\cdot e_\delta$ where $\varphi_L(e_\delta) = \delta(\pi_L) \cdot e_\delta$ and $\gamma(e_\delta) = \delta(\gamma) \cdot e_\delta$ if $a \in \Gamma_L$. We likewise define a $(\varphi_L,\Gamma_L)$-module $\mathscr{R}_K(\mathfrak{B})(\delta)$ of rank $1$ over $\mathscr{R}_K(\mathfrak{B})$ by $\mathscr{R}_K(\mathfrak{B})(\delta) = \mathscr{R}_K(\mathfrak{B})\cdot e_\delta$ where $\varphi_L(e_\delta) = \delta(\pi_L) \cdot e_\delta$ and $\gamma(e_\delta) = \delta(\gamma) \cdot e_\delta$ if $\gamma \in \Gamma_L$. Note that $\mathscr{R}_K(\mathfrak{X})(\delta)$ is $L$-analytic if and only if $\delta$ is $L$-analytic, and likewise for $\mathscr{R}_K(\mathfrak{B})(\delta)$.

\begin{proposition}\label{rank1b}
If $M$ is a $(\varphi_L,\Gamma_L)$-module of rank $1$ over $\mathscr{R}_L(\mathfrak{B})$, then there exists a character $\delta : L^\times \to L^\times$ such that $M \simeq \mathscr{R}_L(\mathfrak{B})(\delta)$.
\end{proposition}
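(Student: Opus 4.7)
The plan is to produce a free basis of $M$, normalize the Frobenius on that basis to multiplication by a scalar, deduce that each element of $\Gamma_L$ also acts by a scalar, and package these scalars into the character $\delta$.

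Since $\mathscr{R}_L(\mathfrak{B})$ is a Bezout domain (Lazard, the classical analogue of the statement recalled in the introduction that $\mathcal{O}_L(\mathfrak{B})$ is Bezout), any finitely generated projective module of rank one is free. Pick a basis $m\in M$ and write $\varphi_L(m)=f\cdot m$ and $\gamma(m)=g_\gamma\cdot m$ for $\gamma\in\Gamma_L$; then $f,g_\gamma\in\mathscr{R}_L(\mathfrak{B})^\times=\mathscr{E}_L^\dagger(\mathfrak{B})^\times$ by the $\mathfrak{B}$-analogue of Prop.~\ref{unitsR}. The commutations yield the cocycle identities
\begin{equation*}
\gamma(f)\cdot g_\gamma \;=\; \varphi_L(g_\gamma)\cdot f,\qquad g_{\gamma_1\gamma_2}\;=\;g_{\gamma_1}\cdot\gamma_1(g_{\gamma_2}),
\end{equation*}
both preserved under any base change $m\mapsto h\cdot m$ with $h\in\mathscr{R}_L(\mathfrak{B})^\times$, which sends $f\mapsto\varphi_L(h)f/h$ and $g_\gamma\mapsto\gamma(h)g_\gamma/h$.

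The main step is to choose such an $h$ making the new $f$ a scalar, i.e.\ to solve $\varphi_L(h)/h=\lambda/f$ for some $\lambda\in L^\times$. Using the structure of units of the Lubin-Tate Robba ring, $f$ factors as $f=\lambda\cdot Z^n\cdot(1+\varepsilon)$ with $\lambda\in L^\times$, $n\in\ZZ$, and $\varepsilon$ of positive valuation on some subannulus close to the boundary. The monomial factor $Z^n$ is absorbed using $\varphi_L(Z)=[\pi_L](Z)=\pi_L Z\cdot(1+Z\cdot o_L[[Z]])$, which writes an appropriate power of $Z$ as a scalar times a coboundary; the remaining $1$-unit $1+\varepsilon$ is then expressed as $\varphi_L(h_1)/h_1$ by a convergent Newton-type iteration exploiting the fact that $\varphi_L$ strictly contracts annuli toward the boundary of $\mathfrak{B}$, so that $\varphi_L-\id$ is invertible on the ideal of sufficiently small elements. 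This is the $\mathfrak{B}$-analogue of the normalization step in Colmez's treatment of rank-one cyclotomic $(\varphi,\Gamma)$-modules, and is the sole nontrivial obstacle in the whole argument.

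Once $\varphi_L(m)=\lambda\cdot m$, the first cocycle identity forces $\varphi_L(g_\gamma)=g_\gamma$ for every $\gamma\in\Gamma_L$. The $\varphi_L$-fixed subring of $\mathscr{R}_L(\mathfrak{B})$ equals $L$ (by the same contracting argument: iterating $[\pi_L]$ on a non-constant Laurent series drives its non-constant coefficients to zero on the relevant annulus), so each $g_\gamma$ lies in $L^\times$ and $\gamma\mapsto g_\gamma$ defines a continuous character $\delta_0:o_L^\times\to L^\times$. Extending by $\delta(\pi_L):=\lambda$ along the decomposition $L^\times=o_L^\times\times\pi_L^{\ZZ}$ gives the required continuous character $\delta:L^\times\to L^\times$, and sending $e_\delta$ to $m$ realizes the isomorphism $\mathscr{R}_L(\mathfrak{B})(\delta)\xrightarrow{\sim}M$.
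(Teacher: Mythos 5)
Your broad outline—free basis, cocycle identities, normalize $\varphi_M$ to a scalar, read off $\Gamma_L$-eigenvalues—is the right shape, and some pieces are correct (freeness from the Bezout property, $\mathscr{R}_L(\mathfrak{B})^\times = \mathscr{E}_L^\dagger(\mathfrak{B})^\times$, and $\mathscr{R}_L(\mathfrak{B})^{\varphi_L=1}=L$). The paper itself does not prove this proposition but merely cites \cite{FX} Prop.\ 1.9. However, the step you call ``the sole nontrivial obstacle'' is not a sketchable normalization; as described it is wrong in two ways, and the way you propose to do it cannot work even in principle.

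First, the claim that $\varphi_L(Z)=[\pi_L](Z)=\pi_L Z\cdot(1+Z\,o_L[[Z]])$ lets you absorb $Z^n$ as a scalar times a coboundary is incorrect. The factor $1+Z\,o_L[[Z]]$ is not a $1$-unit for the norm $\|\ \|_1$: since $[\pi_L](Z)\equiv Z^q\pmod{\pi_L}$, one has $\|\varphi_L(Z)/Z\|_1=1$ with reduction $\overline{Z}^{\,q-1}$, so the correct factorization is $\varphi_L(Z)/Z = Z^{q-1}\cdot(\text{$1$-unit})$. Consequently $\varphi_L(h)/h$ for $h\in\mathscr{R}_L(\mathfrak{B})^\times$ can only produce monomial factors $Z^{(q-1)m}$, and an arbitrary $Z^n$ cannot be absorbed. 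You need to first show $(q-1)\mid n$; this does not follow from the $\varphi$-structure alone but requires the $\Gamma_L$-cocycle $\gamma(f)g_\gamma = \varphi_L(g_\gamma) f$: reducing it modulo $\pi_L$ and comparing leading coefficients in $k_L((Z))$ forces $\overline{\chi_{LT}(\gamma)}^{\,n}=1$ for all $\gamma$, i.e.\ $(q-1)\mid n$. Your plan postpones the $\Gamma_L$-action until after the Frobenius is normalized, which is exactly why this constraint is missed.

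Second, the $1$-unit absorption by ``a convergent Newton-type iteration exploiting the fact that $\varphi_L$ strictly contracts annuli'' does not work. On $\mathscr{E}_L^\dagger(\mathfrak{B})$ the operator $\varphi_L$ is a $\|\ \|_1$-isometry (not a norm contraction), and $\varphi_L-\mathrm{id}$ has kernel $L$, so it is not invertible on any neighbourhood of $0$; the geometric series $-\sum_{k\geq 0}\varphi_L^k$ does not converge in $\mathscr{E}_L^\dagger$ (it only converges in the $\pi_L$-adic completion $\mathscr{E}_L$, where the solution need not be overconvergent). More to the point, a $\varphi_L$-only argument of this kind would classify rank-$1$ $\varphi_L$-modules over $\mathscr{R}_L(\mathfrak{B})$ up to scalar twist, which is false: rank-$1$ \'etale $\varphi_L$-modules over $\mathscr{R}_L(\mathfrak{B})$ correspond to characters of $G_{L_\infty}$, of which there are many non-isomorphic ones. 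The $\Gamma_L$-equivariance is therefore needed in an essential way in the $1$-unit step as well, not merely afterwards to identify the $g_\gamma$ as scalars. This is the genuine content of \cite{FX} Prop.\ 1.9, and a full proof would have to carry the $\Gamma_L$-cocycle along through both parts of the normalization.
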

\begin{proof}
This is \cite{FX} Prop.\ 1.9.
\end{proof}

\begin{lemma}\label{perdeltau}
If $\delta : L^\times \to L^\times$ is a locally $L$-analytic character, then there exists $\alpha \in \Cp^\times$ such that $g(\alpha) = \alpha \cdot \delta(\tau(g))$ for all $g \in G_L$.
\end{lemma}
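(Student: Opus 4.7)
The plan is to produce $\alpha$ in two stages: first build an $\alpha_0 \in \Cp^\times$ realizing the identity $g(\alpha_0) = \alpha_0 \delta(\tau(g))$ on an open subgroup of $G_L$, then correct its behavior on the resulting finite quotient by Hilbert 90. The starting point is the element $z \in \Cp^\times$ satisfying $g(z) = z\tau(g)$ for all $g \in G_L$, whose existence was recalled in the proof of Lemma \ref{essdispro}. Since $\delta$ is locally $L$-analytic, its derivative at $1$ is multiplication by some $s \in L$, and for $n$ sufficiently large one has $\delta(u) = \exp(s\log u)$ on $U_n := 1 + \pi_L^n o_L$.

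Let $\log : \Cp^\times \to \Cp$ be the Iwasawa logarithm (normalized by $\log p = 0$). Two facts are key: this $\log$ commutes with the $G_L$-action, so $g(\log z) - \log z = \log\tau(g)$ for all $g$; and $\log$ is surjective, since given $w \in \Cp$ one picks $k \geq 0$ with $|p^k w| < p^{-1/(p-1)}$, sets $y := \exp(p^k w)$, and any $p^k$-th root of $y$ in $\Cp^\times$ (which exists as $\Cp$ is algebraically closed) has logarithm $w$. Applying this with $w := s\log z$, I obtain $\alpha_0 \in \Cp^\times$ satisfying $\log(g(\alpha_0)/\alpha_0) = g(\log\alpha_0) - \log\alpha_0 = s\log\tau(g)$ for every $g \in G_L$. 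On the open subgroup $H := \tau^{-1}(U_n)$ the right-hand side equals $\log\delta(\tau(g))$, so the 1-cocycle
\begin{equation*}
  \zeta(g) := g(\alpha_0)/\alpha_0 \cdot \delta(\tau(g))^{-1}
\end{equation*}
takes values in $\ker\log = p^{\QQ} \cdot \mu_\infty$ on $H$, where $\mu_\infty$ denotes the roots of unity in $\Cp^\times$. Decomposing $\zeta(g) = p^{q(g)}\xi(g)$ with $q(g) := -\log_p|\zeta(g)| \in \QQ$ and $\xi(g) \in \mu_\infty$: the map $q$ is a continuous homomorphism from the compact profinite group $H$ into $\QQ$ and therefore vanishes (as $\QQ$ has no nontrivial compact subgroups), leaving a continuous cocycle $\xi$ into the discrete module $\mu_\infty$, which must be trivial on an open subgroup $H'' \subseteq H$. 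Shrinking $H''$ to an open normal subgroup of $G_L$, I obtain $g(\alpha_0)/\alpha_0 = \delta(\tau(g))$ for all $g \in H''$.

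To extend this equality to $G_L$, set $L' := \overline{L}^{H''}$, a finite Galois extension of $L$; being finite over the complete field $L$, it is itself complete, so $\Cp^{H''} = L'$. The cocycle $\gamma(g) := g(\alpha_0)/\alpha_0 \cdot \delta(\tau(g))^{-1}$ on $G_L$ vanishes on $H''$ and hence descends to a class in $H^1(\Gal(L'/L), (L')^\times)$, which vanishes by Hilbert 90. Writing $\gamma(g) = g(\mu)/\mu$ with $\mu \in (L')^\times$, the element $\alpha := \alpha_0 \mu^{-1} \in \Cp^\times$ satisfies $g(\alpha) = \alpha \delta(\tau(g))$ for every $g \in G_L$. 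The main obstacle lies in the first stage: one cannot directly set $\alpha = \exp(s\log z)$ because $|s\log z|$ may exceed the radius of convergence of $\exp$. The detour through the surjectivity of Iwasawa's logarithm circumvents this at the cost of an ambiguity in $\ker\log$, which is then eliminated by the continuity and Hilbert 90 arguments above.
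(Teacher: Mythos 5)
Your proof is correct. Both you and the paper follow the same two-stage skeleton: first manufacture an $\alpha_0 \in \Cp^\times$ for which the cocycle $g \mapsto g(\alpha_0)/\alpha_0 \cdot \delta(\tau(g))^{-1}$ is trivial on an open normal subgroup $G_{L'}$, then invoke inflation-restriction and Hilbert 90 for $\Gal(L'/L)$ acting on $L'^\times$ to kill the residual class. The difference lies entirely in stage one. The paper perturbs $\beta$ (your $z$) multiplicatively by an algebraic element $\eta \in \overline{L}^\times$ so that $\beta' := \beta/\eta$ is very close to $1$; this forces both $s\log\beta'$ and $s\log\tau(g)$ (for $g \in G_{L(\eta)}$, since $\tau(g) = g(\beta')/\beta'$ inherits the closeness to $1$) into the convergence disk of $\exp$, so one can simply set $\alpha' := \exp(s\log\beta')$ and verify the identity directly on $G_{L(\eta)}$. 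You instead bypass convergence issues by choosing $\alpha_0$ as any $\log$-preimage of $s\log z$, at the cost of introducing an ambiguity in $\ker\log = p^\QQ \cdot \mu_\infty$; you then have to kill the $p^\QQ$-part (via compactness of $H$ and the absence of nontrivial compact subgroups of $\R$) and the $\mu_\infty$-part (via the uniform discreteness of $\mu_\infty$ in $\Cp^\times$ — which is correct: any two distinct roots of unity are at distance $\geq p^{-1/(p-1)}$ — combined with $\xi(1)=1$ and local constancy). The paper's approximation trick is shorter; your route avoids it but requires the extra structural analysis of $\ker\log$. Both are valid.
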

\begin{proof}
Recall that since $\tau$ has a Hodge-Tate weight equal to $0$, there exists $\beta \in \Cp^\times$ such that $g(\beta) = \beta \cdot \tau(g)$ for all $g \in G_L$. Since $\delta : L^\times \to L^\times$ is locally $L$-analytic, there exists $s \in L$ and an open subgroup $U$ of $o_L^\times$ such that $\delta(a)  = \exp(s \cdot \log(a))$ for any $a \in U$. By approximating $\beta$ multiplicatively by an element in $\overline{L}$ we find a finite extension $L'$ of $L$ and a $\beta' \in \Cp^\times$ such that
\begin{itemize}
  \item[(1)] $g(\beta') = \beta' \cdot \tau(g)$ for $g \in G_{L'}$, and
  \item[(2)] $s \cdot \log(\beta') \in p \cdot o_{\Cp}$.
\end{itemize}
Let $\alpha' = \exp(s \cdot \log(\beta'))$. We then have $g(\alpha') = \alpha' \cdot \delta(\tau(g))$ for all $g \in G_{L'}$. The map $g \mapsto \frac{g(\alpha')}{\alpha' \cdot \delta(\tau(g))}$ therefore is a 1-cocycle on $G_L$ which is trivial on $G_{L'}$. By inflation-restriction its class then contains a 1-cocycle $\Gal(L'/L) \to (\Cp^\times)^{G_{L'}} = L'^\times$. Due to Hilbert 90 this class actually is trivial, which implies the assertion.
\end{proof}

\begin{proposition}\label{r1b2x}
If $\delta : L^\times \to L^\times$ is a locally $L$-analytic character, then the equivalence in Thm.\ \ref{equiv} satisfies $\mathscr{R}_L(\mathfrak{B})(\delta)_{\mathfrak{X}} \cong \mathscr{R}_L(\mathfrak{X})(\delta)$.
\end{proposition}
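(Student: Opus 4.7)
The plan is to write down an explicit generator of $\mathscr{R}_L(\mathfrak{B})(\delta)_{\mathfrak{X}}$ whose transformation law under $\varphi_L$ and $\Gamma_L$ matches the tautological generator $e_\delta$ of $\mathscr{R}_L(\mathfrak{X})(\delta)$. The locally $L$-analytic character $\delta$ and the fact (Lemma \ref{perdeltau}) that there exists $\alpha \in \Cp^\times$ with $g(\alpha) = \alpha \cdot \delta(\tau(g))$ for all $g \in G_L$ will be used in an essential way; viewing $\alpha$ as a constant in $\Cp \subseteq \mathscr{R}_{\Cp}(\mathfrak{B})$, my candidate generator is the tensor $\alpha \otimes e_\delta \in \mathscr{R}_{\Cp}(\mathfrak{B}) \otimes_{\mathscr{R}_L(\mathfrak{B})} \mathscr{R}_L(\mathfrak{B})(\delta)$.

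First I would check that $\alpha \otimes e_\delta$ lies in the fixed part for the twisted $G_L$-action. Because $\alpha$ is a constant function, ${^{\sigma*}\alpha} = \sigma(\alpha)$; combined with $\tau(\sigma^{-1})(e_\delta) = \delta(\tau(\sigma^{-1})) e_\delta$ and the defining property of $\alpha$, the computation
\begin{equation*}
\sigma \ast (\alpha \otimes e_\delta) = \sigma(\alpha) \cdot \delta(\tau(\sigma^{-1})) \otimes e_\delta = \alpha \cdot \delta(\tau(\sigma)) \cdot \delta(\tau(\sigma^{-1})) \otimes e_\delta = \alpha \otimes e_\delta
\end{equation*}
shows $\alpha \otimes e_\delta \in \mathscr{R}_L(\mathfrak{B})(\delta)_{\mathfrak{X}}$. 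Next, since $\varphi_L$ and elements of $\Gamma_L$ fix constants in $\mathscr{R}_{\Cp}(\mathfrak{B})$, I get $\varphi_L(\alpha \otimes e_\delta) = \delta(\pi_L) (\alpha \otimes e_\delta)$ and $\gamma(\alpha \otimes e_\delta) = \delta(\gamma)(\alpha \otimes e_\delta)$ for $\gamma \in \Gamma_L$. Thus sending $e_\delta \mapsto \alpha \otimes e_\delta$ defines a morphism of $(\varphi_L,\Gamma_L)$-modules
\begin{equation*}
\Phi : \mathscr{R}_L(\mathfrak{X})(\delta) \longrightarrow \mathscr{R}_L(\mathfrak{B})(\delta)_{\mathfrak{X}}.
\end{equation*}

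It remains to establish that $\Phi$ is an isomorphism. Both source and target are rank-one projective $\mathscr{R}_L(\mathfrak{X})$-modules, the right-hand side by Theorem \ref{descent-to-X}. After base change to $\mathscr{R}_{\Cp}(\mathfrak{X})$, Theorem \ref{descent-to-X}(ii) together with the LT-identification $\mathscr{R}_{\Cp}(\mathfrak{X}) = \mathscr{R}_{\Cp}(\mathfrak{B})$ gives
\begin{equation*}
\mathscr{R}_{\Cp}(\mathfrak{X}) \otimes_{\mathscr{R}_L(\mathfrak{X})} \mathscr{R}_L(\mathfrak{B})(\delta)_{\mathfrak{X}} = \mathscr{R}_{\Cp}(\mathfrak{B}) \otimes_{\mathscr{R}_L(\mathfrak{B})} \mathscr{R}_L(\mathfrak{B})(\delta) = \mathscr{R}_{\Cp}(\mathfrak{B}) \cdot e_\delta,
\end{equation*}
and $\Phi_{\Cp}$ sends $1 \otimes e_\delta \mapsto \alpha \otimes e_\delta$, which corresponds to multiplication by the unit $\alpha \in \mathscr{R}_{\Cp}(\mathfrak{X})^\times$ (cf.\ Prop.\ \ref{unitsR}), hence is an isomorphism.

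The main obstacle is descending this conclusion from $\mathscr{R}_{\Cp}(\mathfrak{X})$ to $\mathscr{R}_L(\mathfrak{X})$. The map $\Phi$ is injective (as it is nonzero between torsion-free rank-one modules over the Pr\"ufer domain $\mathscr{R}_L(\mathfrak{X})$, cf.\ Cor.\ \ref{pruefer2}), so the issue is the vanishing of the finitely generated torsion cokernel $N := \coker(\Phi)$. One verifies $N \otimes_{\mathscr{R}_L(\mathfrak{X})} \mathscr{R}_{\Cp}(\mathfrak{X}) = 0$ from the previous step; since any maximal ideal $\mathfrak{m}$ of $\mathscr{R}_L(\mathfrak{X})$ corresponds to a Galois orbit of points of $\mathfrak{X}$, which extends to a proper ideal of $\mathscr{R}_{\Cp}(\mathfrak{X})$ (via the ring map $\mathscr{R}_L(\mathfrak{X}) \hookrightarrow \mathscr{R}_{\Cp}(\mathfrak{X})$ of Prop.\ \ref{complete}), localizing at each $\mathfrak{m}$ and using that torsion modules over the DVR $\mathscr{R}_L(\mathfrak{X})_\mathfrak{m}$ are detected by extension to $\mathscr{R}_{\Cp}(\mathfrak{X})$ forces $N_\mathfrak{m} = 0$ for every $\mathfrak{m}$, hence $N = 0$.
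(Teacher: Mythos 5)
Your opening moves match the paper's: you invoke Lemma~\ref{perdeltau} to produce $\alpha\in\Cp^\times$ and verify that $\alpha\otimes e_\delta$ is fixed under the twisted $G_L$-action, and you also check the correct transformation laws under $\varphi_L$ and $\Gamma_L$. This is exactly the right candidate generator. Where you diverge is in the concluding argument, and that is where a genuine gap appears.

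Your descent step rests on the claim that every maximal ideal of $\mathscr{R}_L(\mathfrak{X})$ corresponds to a Galois orbit of points of $\mathfrak{X}$, localizes to a DVR, and is detected after extension to $\mathscr{R}_{\Cp}(\mathfrak{X})$. None of these are available: Robba rings (even the classical Bezout domain $\mathscr{R}_{\Qp}(\mathfrak{B})$) possess ``non-classical'' maximal ideals arising from ideals of functions vanishing on sequences of points accumulating at the boundary, and these correspond to no point of the variety. Moreover the localizations of a Pr\"ufer domain at maximal ideals are valuation rings which need not be discrete. So the localization argument as written does not close the gap $N\otimes_{\mathscr{R}_L(\mathfrak{X})}\mathscr{R}_{\Cp}(\mathfrak{X})=0 \Rightarrow N=0$ (which would require faithful flatness of $\mathscr{R}_L(\mathfrak{X})\hookrightarrow\mathscr{R}_{\Cp}(\mathfrak{X})$, a statement the paper does not establish and which is not obvious).

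In fact, once you have verified that $\alpha\otimes e_\delta$ is twist-invariant you do not need any cokernel or localization argument at all; the paper finishes in one line by computing the fixed module directly. Since $\alpha\in\Cp^\times$ is a unit, $\mathscr{R}_{\Cp}(\mathfrak{B})\cdot e_\delta=\mathscr{R}_{\Cp}(\mathfrak{B})\cdot\alpha e_\delta$, and because $\alpha e_\delta$ is a free generator that is itself twist-invariant, an element $f\cdot\alpha e_\delta$ is fixed under $\sigma *$ if and only if ${}^{\sigma*}f=f$. Hence
\begin{equation*}
\mathscr{R}_L(\mathfrak{B})(\delta)_{\mathfrak{X}} = (\mathscr{R}_{\Cp}(\mathfrak{B})\cdot\alpha e_\delta)^{G_L,*} = \mathscr{R}_{\Cp}(\mathfrak{B})^{G_L,*}\cdot\alpha e_\delta = \mathscr{R}_L(\mathfrak{X})\cdot\alpha e_\delta \cong \mathscr{R}_L(\mathfrak{X})(\delta) \ ,
\end{equation*}
where the last identity uses $\mathscr{R}_L(\mathfrak{X})=\mathscr{R}_{\Cp}(\mathfrak{B})^{G_L,*}$ from Prop.~\ref{twisted-R-E}. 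This replaces your faithful-flatness/localization step with an elementary observation about invariants of a free rank-one module with an invariant basis vector, and it is the route the paper takes.
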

\begin{proof}
Let $\alpha \in \Cp^\times$ be the element afforded by Lemma \ref{perdeltau}. Then $\alpha \otimes e_\delta \in (\mathscr{R}_{\Cp}(\mathfrak{B}) \otimes_{\mathscr{R}_L(\mathfrak{B})} M_\mathfrak{B})^{G_L,*}$. We now compute
\begin{equation*}
  \mathscr{R}_L(\mathfrak{B})(\delta)_{\mathfrak{X}} = (\mathscr{R}_{\Cp}(\mathfrak{B}) \cdot e_\delta)^{G_L,*} = (\mathscr{R}_{\Cp}(\mathfrak{B}) \cdot \alpha e_\delta)^{G_L,*} = \mathscr{R}_{\Cp}(\mathfrak{B})^{G_L,*} \cdot \alpha e_\delta = \mathscr{R}_L(\mathfrak{X}) \cdot \alpha e_\delta \ .
\end{equation*}
The $o_L \setminus\{0\}$-action on this is given by $a(f \otimes \alpha e_\delta) = a(f) \otimes \alpha \delta(a) e_\delta$ for $0 \neq a \in o_L$. Hence $M_\mathfrak{X} \cong \mathscr{R}_L(\mathfrak{X})(\delta)$.
\end{proof}

\begin{proposition}\label{rank1x}
If $M$ is an $L$-analytic $(\varphi_L,\Gamma_L)$-module of rank $1$ over $\mathscr{R}_L(\mathfrak{X})$, then there exists a locally $L$-analytic character $\delta : L^\times \to L^\times$ such that $M \cong \mathscr{R}_L(\mathfrak{X})(\delta)$.
\end{proposition}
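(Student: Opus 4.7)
The plan is to transport the problem to $\mathfrak{B}$, apply the already known classification of rank one objects there, and transport back via the equivalence of Theorem \ref{equiv}.

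First, I would apply the equivalence of categories from Theorem \ref{equiv} to $M$: set $N := M_{\mathfrak{B}}$, which is an $L$-analytic $(\varphi_L,\Gamma_L)$-module over $\mathscr{R}_L(\mathfrak{B})$. Since the functors $(-)_\mathfrak{X}$ and $(-)_\mathfrak{B}$ are quasi-inverse equivalences, they preserve ranks (they come from twisted Galois descent after scalar extension to $\Cp$, which preserves rank), so $N$ has rank one. Thus Proposition \ref{rank1b} applies and yields a continuous character $\delta : L^\times \to L^\times$ with $N \cong \mathscr{R}_L(\mathfrak{B})(\delta)$.

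Next I would verify that $\delta$ is locally $L$-analytic. The paper observes immediately after the definition of $\mathscr{R}_K(\mathfrak{B})(\delta)$ that this rank one $(\varphi_L,\Gamma_L)$-module is $L$-analytic if and only if $\delta$ is. Since $M$ is $L$-analytic by hypothesis and $N = M_\mathfrak{B}$ inherits this property from the equivalence (the derived action on $N$ is obtained by restricting the $L$-bilinear action on $\mathscr{R}_{\Cp}(\mathfrak{B}) \otimes_{\mathscr{R}_L(\mathfrak{X})} M$ to the closed $G_L$-twisted fixed part, which remains $L$-bilinear), it follows that $\mathscr{R}_L(\mathfrak{B})(\delta)$ is $L$-analytic, hence $\delta$ is locally $L$-analytic.

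Finally I apply Proposition \ref{r1b2x}, which gives $\mathscr{R}_L(\mathfrak{B})(\delta)_\mathfrak{X} \cong \mathscr{R}_L(\mathfrak{X})(\delta)$. Combining with Theorem \ref{equiv}, we conclude
\begin{equation*}
    M \;\cong\; (M_\mathfrak{B})_\mathfrak{X} \;=\; N_\mathfrak{X} \;\cong\; \mathscr{R}_L(\mathfrak{B})(\delta)_\mathfrak{X} \;\cong\; \mathscr{R}_L(\mathfrak{X})(\delta) \ .
\end{equation*}

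The routine calculation is nothing more than chaining the equivalences; the only substantive point — and the one I expect to require care in a more detailed write-up — is the bookkeeping that the equivalence $M \mapsto M_\mathfrak{B}$ actually sends $L$-analytic modules to $L$-analytic modules. This is part of Theorem A, and it is essentially tautological here since $\Lie(\Gamma_L)$ acts on the twisted $G_L$-fixed vectors of a scalar extension through the same formula, so the $L$-bilinearity of the derived action descends along the two operations of scalar extension to $\Cp$ and taking twisted Galois invariants.
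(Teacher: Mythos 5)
Your proof is correct and follows exactly the paper's own argument: transport to $\mathfrak{B}$ via Theorem~\ref{equiv}, invoke Prop.~\ref{rank1b}, check local $L$-analyticity of $\delta$ using the remark after the definition of $\mathscr{R}_K(\mathfrak{B})(\delta)$, and transport back via Prop.~\ref{r1b2x}. You make explicit the verification that $\delta$ is locally $L$-analytic, which the paper leaves tacit but which is indeed needed to apply Prop.~\ref{r1b2x}.
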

\begin{proof}
If $M_{\mathfrak{B}}$ is the $(\varphi_L,\Gamma_L)$-module of rank $1$ over $\mathscr{R}_L(\mathfrak{B})$ that comes from applying Thm.\ \ref{equiv} to $M$, then $M_{\mathfrak{B}}$ is an $L$-analytic $(\varphi_L,\Gamma_L)$-module of rank $1$ over $\mathscr{R}_L(\mathfrak{B})$. By Prop.\ \ref{rank1b}, there exists a character $\delta : L^\times \to L^\times$ such that $M \cong \mathscr{R}_L(\mathfrak{B})(\delta)$. Prop.\ \ref{r1b2x} now implies that $M \cong \mathscr{R}_L(\mathfrak{X})(\delta)$.
\end{proof}

This proposition implies theorem \ref{freeness} for $L$-analytic $(\varphi_L,\Gamma_L)$-modules of rank $1$ over $\mathscr{R}_L(\mathfrak{X})$.

\begin{proof}[Proof of theorem \ref{freeness}]
Since $\mathscr{R}_L(\mathfrak{X})$ is an integral domain the projective module $M$ has a well defined rank $r$. We may assume that $r \neq 0$. We have seen in Cor.\ \ref{pruefer2}.ii that $\mathscr{R}_L(\mathfrak{X})$ is a $1 \frac{1}{2}$ generator Pr\"ufer domain. Hence $M$ is isomorphic to a finite direct sum of invertible ideals in $\mathscr{R}_L(\mathfrak{X})$ (cf.\ \cite{CE} Prop.\ I.6.1). By induction with respect to the number of these ideals one easily deduces from this, by using the analog of Cor.\ \ref{free-invertible}.ii, that
\begin{equation*}
  M \cong \mathscr{R}_L(\mathfrak{X})^{r-1} \oplus I
\end{equation*}
for some invertible ideal $I \subseteq \mathscr{R}_L(\mathfrak{X})$. Moreover, $I$ is isomorphic to the exterior power $\bigwedge^r M$ of $M$ (cf.\ \cite{B-A} III \S7). The latter (and therefore also $I$) inherits from $M$ the structure of an $L$-analytic $(\varphi_L,\Gamma_L)$-module $M$ over $\mathscr{R}_L(\mathfrak{X})$. This reduces the proof of our assertion to the case that the $(\varphi_L,\Gamma_L)$-module $M$ is of rank $1$, which follows from Prop.\ \ref{rank1x}.
\end{proof}

We consider now a $(\varphi_L,\Gamma_L)$-module $M$ over $\mathscr{R}_K(\mathfrak{X})$ that is free of $\mathrm{rank}(M) = r$. If we pick a basis $e_1, \ldots, e_r$ of $M$ then the isomorphism in Def.\ \ref{def:modR} guarantees that $\varphi_M(e_1), \ldots, \varphi_M(e_r)$ again is a basis of $M$. The matrix
\begin{equation*}
  A_M = (a_{ij}) \quad\text{where $\varphi_M(e_j) = \sum_{i=1}^r a_{ij} e_i$}
\end{equation*}
therefore is invertible over $\mathscr{R}_K(\mathfrak{X})$. If we change the given basis by applying an invertible matrix $B$ then $A_M$ gets replaced by $B^{-1} A_M \varphi_L(B)$. By the above Prop.\ \ref{unitsR} we have $\det(A_M)$, $\det(B) \in \mathscr{E}_K^\dagger(\mathfrak{X})^\times$. We compute
\begin{align*}
  \|\det(B^{-1} A_M \varphi_L (B))\|_1 & = \|\det(B)\|_1^{-1} \cdot \|\det(A_M)\|_1 \cdot \|\varphi_L(\det(B))\|_1  \\
                                       & = \|\det(B)\|_1^{-1} \cdot \|\det(A_M)\|_1 \cdot \|\det(B)\|_1  \\
                                       & = \|\det(A_M)\|_1 \ ,
\end{align*}
where the first, resp.\ second, identity uses that the norm $\|\ \|_1$ is multiplicative, resp.\ that $\varphi_L$ is $\|\ \|_1$-isometric. This leads to the following definitions.

\begin{definition}\phantomsection\label{def:deg}
Let $M$ be a free $(\varphi_L,\Gamma_L)$-module over $\mathscr{R}_K(\mathfrak{X})$.
\begin{enumerate}
\item The degree of $M$ is the rational number $\deg(M)$ such that
\begin{equation*}
  p^{\deg(M)} = \|{\det(A_M)}\|_1 \ .
\end{equation*}
\item The slope of $M$ is $\mu(M) := \deg(M)/\mathrm{rk}(M)$.
\end{enumerate}
\end{definition}

\begin{definition}\label{def:etaleR}
An $L$-analytic $(\varphi_L,\Gamma_L)$-module $M$ over $\mathscr{R}_L(\mathfrak{X})$ is called \'etale if $M$ has degree zero and if every sub-$(\varphi_L,\Gamma_L)$-module $N$ of $M$ has degree $\geq 0$. Let $\Mod^{et}_{L,an}(\mathscr{R}_L(\mathfrak{X}))$ denote the full subcategory of all \'etale ($L$-analytic)  $(\varphi_L,\Gamma_L)$-modules over $\mathscr{R}_L(\mathfrak{X})$.
\end{definition}

\begin{remark}
\label{remetale}
Every sub-$(\varphi_L,\Gamma_L)$-module $N$ of an $L$-analytic $(\varphi_L,\Gamma_L)$-module $M$ is $L$-analytic and  hence is free by Thm.\ \ref{freeness}, so that we can define its degree.
\end{remark}

As before this makes equally sense for $\mathfrak{B}$ instead of $\mathfrak{X}$ so that we also have the category $\Mod^{et}_{L,an}(\mathscr{R}_K(\mathfrak{B}))$. By the subsequent remark, the definition of \'etaleness  coincides with the usual definition in the case of $\mathfrak{B}$.

\begin{remark}\phantomsection\label{B-proj-free}
\begin{itemize}
  \item[i.] Any finitely generated submodule of a finitely generated projective module over $\mathscr{R}_L(\mathfrak{B})$ is free.
  \item[ii.] Let $M$ be an \'etale $(\varphi_L,\Gamma_L)$-module over $\mathscr{R}_L(\mathfrak{B})$; then any sub-$\varphi_L$-module $N$ of $M$ has degree $\geq 0$.
\end{itemize}
\end{remark}
\begin{proof}
i. It is well known to follow from Lazard's work in \cite{Laz} that $\mathscr{R}_L(\mathfrak{B})$ is a Bezout domain (compare \cite{TdA} Satz 10.1). But it is a general fact that finitely generated projective modules over Bezout domains are free (cf.\ \cite{Lam} Thm.\ 2.29). Moreover, even over a Pr\"ufer domain, finitely generated submodules of finitely generated projective modules are projective.

ii. For sake of clarity: A sub-$\varphi_L$-module $N$ of $M$ is a finitely generated $\varphi_L$-invariant submodule $N \subseteq M$ such that $\mathscr{R}_L(\mathfrak{B}) \otimes_{\mathscr{R}_L(\mathfrak{B}),\varphi_L} N \xrightarrow{\cong} N$. By i. any such $N$ is free of rank $\leq \mathrm{rank}(M)$.

Let now $N \subseteq M$ be an arbitrary but fixed nonzero sub-$\varphi_L$-module. We have to show that $\mu(N) \geq 0$. By \cite{KedAst} Lemma 1.4.12 it suffices to consider the unique largest sub-$\varphi_L$-module $M_1 \subseteq M$ of least slope. We claim that $M_1$ is $\Gamma_L$-invariant. Let $g \in \Gamma_L$ and let $e_1, \ldots, e_r$ be a basis of $M_1$. Then $ge_1, \ldots, ge_r$ is a basis of the sub-$\varphi_L$-module $gM_1$. By the $\Gamma_L$-invariance of the norm $\|\ \|_1$ we obtain
\begin{equation*}
  \|\det(A_{gM_1})\|_1 = \|g \det(A_{M_1})\|_1 = \|\det(A_{M_1})\|_1 \quad\text{and hence} \quad \mu(gM_1) = \mu(M_1) \ .
\end{equation*}
It follows that $gM_1 \subseteq M_1$. This shows that $M_1$ is a sub-$(\varphi_L,\Gamma_L)$-module. By the \'etaleness of $M$ we then must have $\mu(M_1) \geq 0$.
\end{proof}

\begin{proposition}
\label{mxet}
If $M \in \Mod_{L,an}(\mathscr{R}_L(\mathfrak{B}))$, then $\deg(M_\mathfrak{X}) = \deg(M)$.
\end{proposition}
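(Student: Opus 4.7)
The plan is to pick bases on both sides, relate the two matrices of Frobenius by a change-of-basis over $\mathscr{R}_{\Cp}(\mathfrak{B})=\mathscr{R}_{\Cp}(\mathfrak{X})$, and exploit the multiplicativity of $\|\ \|_1$ together with the $\varphi_L$-isometry to conclude.

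First I would set things up: since $\mathscr{R}_L(\mathfrak{B})$ is a Bezout domain (Remark \ref{B-proj-free}.i), $M$ is free over $\mathscr{R}_L(\mathfrak{B})$, and $M_\mathfrak{X}$ is free over $\mathscr{R}_L(\mathfrak{X})$ by Theorem \ref{freeness}; both have the same rank $r$ thanks to Theorem \ref{descent-to-X}.ii. Pick bases $e_1,\ldots,e_r$ of $M$ and $f_1,\ldots,f_r$ of $M_\mathfrak{X}$. By construction of the functor $M\mapsto M_\mathfrak{X}$, together with the second half of Theorem \ref{descent-to-X}.ii, both families become $\mathscr{R}_{\Cp}(\mathfrak{B})$-bases of the same module $\mathscr{R}_{\Cp}(\mathfrak{B})\otimes_{\mathscr{R}_L(\mathfrak{B})}M=\mathscr{R}_{\Cp}(\mathfrak{X})\otimes_{\mathscr{R}_L(\mathfrak{X})}M_\mathfrak{X}$, under the identification $\mathscr{R}_{\Cp}(\mathfrak{B})=\mathscr{R}_{\Cp}(\mathfrak{X})$ summarized after Corollary \ref{scalar-ext-R}. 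Hence there is a change-of-basis matrix $B\in \GL_r(\mathscr{R}_{\Cp}(\mathfrak{B}))$, and because the $o_L\setminus\{0\}$-actions correspond under the identification, the matrices of Frobenius satisfy the cocycle relation
\begin{equation*}
    A_{M_\mathfrak{X}} \;=\; B\, A_M\, \varphi_L(B)^{-1}
\end{equation*}
inside $\GL_r(\mathscr{R}_{\Cp}(\mathfrak{B}))$.

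Taking determinants yields $\det(A_{M_\mathfrak{X}})=\det(B)\cdot\det(A_M)\cdot\varphi_L(\det(B))^{-1}$. By Proposition \ref{unitsR} both $\det(A_M)$, $\det(A_{M_\mathfrak{X}})$, and $\det(B)$ lie in $\mathscr{E}_{\Cp}^\dagger(\mathfrak{B})^\times=\mathscr{E}_{\Cp}^\dagger(\mathfrak{X})^\times$, so it is legitimate to apply the norm $\|\ \|_1$. Using that $\|\ \|_1$ is multiplicative and that $\varphi_L$ is $\|\ \|_1$-isometric on $\mathscr{E}_{\Cp}(\mathfrak{B})$, the contributions of $\det(B)$ and $\varphi_L(\det(B))^{-1}$ cancel, giving
\begin{equation*}
    \|\det(A_{M_\mathfrak{X}})\|_1 \;=\; \|\det(A_M)\|_1 \ .
\end{equation*}

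Finally, the summary after Corollary \ref{scalar-ext-R} states that the inclusions $\mathscr{E}_L^\dagger(\mathfrak{B})\hookrightarrow \mathscr{E}_{\Cp}^\dagger(\mathfrak{B})=\mathscr{E}_{\Cp}^\dagger(\mathfrak{X})\hookleftarrow \mathscr{E}_L^\dagger(\mathfrak{X})$ are $\|\ \|_1$-isometric, so the two norms in the last display agree with the norms used in Definition \ref{def:deg} on the $\mathfrak{B}$- and $\mathfrak{X}$-sides respectively. Moreover, independence of the degree from the choice of basis (noted before Definition \ref{def:deg}) ensures the numbers $\deg(M)$ and $\deg(M_\mathfrak{X})$ are well defined. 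The equality $p^{\deg(M_\mathfrak{X})}=p^{\deg(M)}$ then follows. The only nontrivial ingredient is the identification of the two scalar extensions over $\Cp$ and of their Frobenius actions, which is precisely what Theorem \ref{descent-to-X}.ii delivers; everything else is a routine norm computation.
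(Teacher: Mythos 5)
Your proof is correct and takes essentially the same route as the paper: the paper's entire argument is the one-line observation that the degree of $M_\mathfrak{X}$ depends only on $\mathscr{R}_{\Cp}(\mathfrak{X})\otimes_{\mathscr{R}_L(\mathfrak{X})}M_\mathfrak{X}$, followed by the identification from Theorem \ref{descent-to-X}.ii. Your base-change/cocycle computation with $\det(B)$ cancelling under $\|\ \|_1$-multiplicativity and $\varphi_L$-isometry is precisely the justification (left implicit in the paper) for why the degree only depends on the scalar extension to $\Cp$, so you are just filling in the details rather than giving a genuinely different argument.
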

\begin{proof}
The degree of $M_\mathfrak{X}$ depends only on $\mathscr{R}_{\Cp}(\mathfrak{X}) \otimes_{\mathscr{R}_L(\mathfrak{X})} M_\mathfrak{X}$, and we have
\begin{equation*}
  \mathscr{R}_{\Cp}(\mathfrak{X}) \otimes_{\mathscr{R}_L(\mathfrak{X})} M_\mathfrak{X} =
\mathscr{R}_{\Cp}(\mathfrak{B}) \otimes_{\mathscr{R}_L(\mathfrak{B})} M \ .
\end{equation*}
\end{proof}

\begin{theorem}\label{etalequiv}
The functors $M \longmapsto M_\mathfrak{X}$ and $N \longmapsto N_\mathfrak{B}$ induce an equivalence of categories
\begin{equation*}
   \Mod_{L,an}^{et}(\mathscr{R}_L(\mathfrak{B})) \simeq \Mod_{L,an}^{et}(\mathscr{R}_L(\mathfrak{X}))
\end{equation*}
\end{theorem}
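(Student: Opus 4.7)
The plan is to combine the equivalence of categories in Thm \ref{equiv} with the degree preservation of Prop \ref{mxet}, reducing the statement to the observation that the defining conditions of Def \ref{def:etaleR} transport faithfully across the equivalence. Since the two categories $\Mod_{L,an}^{et}(\mathscr{R}_L(\mathfrak{B}))$ and $\Mod_{L,an}^{et}(\mathscr{R}_L(\mathfrak{X}))$ are defined as full subcategories, it suffices to show $M \in \Mod_{L,an}(\mathscr{R}_L(\mathfrak{B}))$ is \'etale if and only if $M_\mathfrak{X}$ is \'etale.

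First I would set up an inclusion-preserving bijection between the sub-$(\varphi_L,\Gamma_L)$-modules of $M$ and those of $M_\mathfrak{X}$. Any sub-$(\varphi_L,\Gamma_L)$-module $N \subseteq M$ is $L$-analytic (the derived $\Lie(\Gamma_L)$-action on the stable closed subspace $N \subseteq M$ inherits $L$-bilinearity) and is free by Remark \ref{B-proj-free}.i, so the inclusion $N \hookrightarrow M$ is a morphism in $\Mod_{L,an}(\mathscr{R}_L(\mathfrak{B}))$. Since Thm \ref{equiv} is an equivalence of categories, it preserves monomorphisms, and monomorphisms in these concrete categories are injective maps; hence $N_\mathfrak{X} \hookrightarrow M_\mathfrak{X}$ realizes $N_\mathfrak{X}$ as a sub-$(\varphi_L,\Gamma_L)$-module of $M_\mathfrak{X}$. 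The analog of this argument applied to the inverse functor, using Thm \ref{freeness} to guarantee freeness over $\mathscr{R}_L(\mathfrak{X})$, provides the reverse assignment and completes the bijection.

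Second, Prop \ref{mxet} gives $\deg(N_\mathfrak{X}) = \deg(N)$ for every $N$ in $\Mod_{L,an}(\mathscr{R}_L(\mathfrak{B}))$, and in particular for every sub-$(\varphi_L,\Gamma_L)$-module of $M$. Applied to $M$ itself, this handles the condition $\deg(M) = 0$. Applied across the bijection above, it transports the condition ``every sub-$(\varphi_L,\Gamma_L)$-module has non-negative degree'' between the two sides. Combining these observations gives that $M$ is \'etale if and only if $M_\mathfrak{X}$ is \'etale, so Thm \ref{equiv} restricts to the desired equivalence of \'etale subcategories.

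The key inputs are Thm \ref{equiv}, Thm \ref{freeness}, Prop \ref{mxet}, and Remark \ref{B-proj-free}; everything else is purely formal categorical bookkeeping and I do not anticipate a genuine obstacle. The one point warranting care is the symmetric verification that a sub-$(\varphi_L,\Gamma_L)$-module of an object in $\Mod_{L,an}(\mathscr{R}_L(\mathfrak{X}))$ is automatically $L$-analytic and free, which mirrors Remark \ref{remetale} and follows from Thm \ref{freeness}.
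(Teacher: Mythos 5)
Your proof is correct and takes essentially the same route as the paper: both arguments combine the degree preservation of Prop.\ \ref{mxet} with the bijection of sub-$(\varphi_L,\Gamma_L)$-modules furnished by the equivalence of Thm.\ \ref{equiv}, and conclude by symmetry. You flesh out the sub-object bijection (via preservation of monomorphisms, Remark \ref{B-proj-free}.i, and Thm.\ \ref{freeness}) slightly more than the paper does, but the underlying mechanism is identical.
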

\begin{proof}
By Prop.\ \ref{mxet}, for every sub-$(\varphi_L,\Gamma_L)$-module $N$ of $M$, we have $\deg(N_\mathfrak{X}) = \deg(N)$. If $M$ is \'etale, then this shows that $\deg(M_\mathfrak{X}) = 0$ and that $\deg(N_\mathfrak{X}) \geq 0$ for every sub-$(\varphi_L,\Gamma_L)$-module $N$ of $M$. As a consequence of Thm.\ \ref{equiv}, if $N$ runs over all sub-$(\varphi_L,\Gamma_L)$-modules of $M$, then $N_\mathfrak{X}$ runs over all sub-$(\varphi_L,\Gamma_L)$-modules of $M_\mathfrak{X}$. It follows that $M_\mathfrak{X}$ is \'etale. By symmetry, the same holds with $N$ and $N_\mathfrak{B}$. Hence the equivalence in Thm.\ \ref{equiv} restricts to the asserted equivalence.
\end{proof}

\begin{corollary}
\label{repequiv}
There is an equivalence of categories:
\[  \Mod_{L,an}^{et}(\mathscr{R}_L(\mathfrak{X})) \simeq \{ \text{$L$-analytic representations of of $G_L$} \}. \]
\end{corollary}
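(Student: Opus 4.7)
The plan is to obtain this equivalence simply by composing two equivalences that are already in hand. On the one hand, Theorem \ref{etalequiv} (just proved) gives an equivalence of categories
\[ \Mod_{L,an}^{et}(\mathscr{R}_L(\mathfrak{B})) \simeq \Mod_{L,an}^{et}(\mathscr{R}_L(\mathfrak{X})) \]
via the mutually quasi-inverse functors $M \mapsto M_{\mathfrak{X}}$ and $N \mapsto N_{\mathfrak{B}}$. On the other hand, the theorem of Kisin--Ren refined by Berger (cited in the introduction as part (ii) of Theorem \ref{introrecall}, i.e., the result from \cite{PGMLAV}) provides an equivalence
\[ \{ \text{$L$-analytic representations of } G_L \} \simeq \Mod_{L,an}^{et}(\mathscr{R}_L(\mathfrak{B})). \]

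I would therefore simply write the proof as a single sentence invoking these two results and composing them, being careful to note that the notion of \'etaleness on the $\mathfrak{B}$ side used in \cite{PGMLAV} agrees with Definition \ref{def:etaleR} specialized to $\mathfrak{B}$: this is precisely Remark \ref{B-proj-free} (which shows that over $\mathscr{R}_L(\mathfrak{B})$ the slope-stability condition on sub-$(\varphi_L,\Gamma_L)$-modules reduces to the same condition on sub-$\varphi_L$-modules, thanks to the Bezout property giving freeness of finitely generated submodules). That is, our definition matches the standard definition via slope filtrations used in \cite{PGMLAV}.

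There is essentially no obstacle here: both inputs are already available. The only thing to verify, and the one point that might require a line of commentary, is the compatibility of definitions of \'etaleness between the two sources; once Remark \ref{B-proj-free} is invoked, the composition is immediate. Thus the proof will read roughly: \emph{By Theorem \ref{etalequiv}, $\Mod_{L,an}^{et}(\mathscr{R}_L(\mathfrak{X}))$ is equivalent to $\Mod_{L,an}^{et}(\mathscr{R}_L(\mathfrak{B}))$; by \cite{PGMLAV} (cf.\ Theorem~\ref{introrecall}(ii)), the latter is equivalent to the category of $L$-analytic representations of $G_L$.}
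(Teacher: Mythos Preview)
Your proposal is correct and matches the paper's own proof essentially verbatim: the paper simply invokes Thm.~\ref{etalequiv} together with Thm.~D of \cite{PGMLAV} (the equivalence $\Mod_{L,an}^{et}(\mathscr{R}_L(\mathfrak{B})) \simeq \{L\text{-analytic representations of }G_L\}$) and composes. Your extra remark about compatibility of \'etaleness via Remark~\ref{B-proj-free} is a welcome clarification but is not explicitly spelled out in the paper's one-line proof.
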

\begin{proof}
This follows from Thm.\ \ref{etalequiv}, and Thm.\ D of \cite{PGMLAV} according to which there
is an equivalence of categories $\Mod_{L,an}^{et}(\mathscr{R}_L(\mathfrak{B})) \simeq \{ L$-analytic representations of of $G_L\}$.
\end{proof}

\subsection{Crystalline $(\varphi_L,\Gamma_L)$-modules over $\mathcal{O}_L(\mathfrak{X})$}
\label{sec:fil-to-R}

In \cite{KR} and \cite{KFC} they attach to every filtered $\varphi_L$-module $D$ over $L$
a $(\varphi_L,\Gamma_L)$-module $\mathcal{M}(D)$ over $\mathcal{O}_L(\mathfrak{B})$.
We carry out the corresponding construction over $\mathcal{O}_L(\mathfrak{X})$. In the next section,
we show that the two generalizations are compatible with the equivalence of categories of Theorem A, after extending scalars to $\mathscr{R}_L(\mathfrak{B})$ and $\mathscr{R}_L(\mathfrak{X})$.
We now give the analogue over $\mathfrak{X}$ of Kisin's construction over $\mathfrak{B}$. It is possible to replace everywhere in this section $\mathfrak{X}$ by $\mathfrak{B}$, and we then recover the results of \cite{KR}. If $Y = \supp(\Delta)$ is the support of an effective divisor $\Delta$ on $\mathfrak{X}$ of the form
\begin{equation*}
    \Delta(x) :=
    \begin{cases}
    1 & \text{if $x \in Y$}, \\
    0 & \text{otherwise},
    \end{cases}
\end{equation*}
then we simply write $I_Y := I_\Delta = \{f \in \mathcal{O}_L(\mathfrak{X}) : f(x) = 0 \ \text{for any $x \in Y$}\}$ for the corresponding closed ideal, and let $I_Y^{-1}$ denote its inverse fractional ideal. We introduce the $\mathcal{O}_L(\mathfrak{X})$-algebra $\mathcal{O}_L(\mathfrak{X})[Y^{-1}] := \bigcup_{m \geq 1} I_{Y}^{-m}$ and, for any $\mathcal{O}_L(\mathfrak{X})$-module $M$, we put
\begin{equation*}
  M[Y^{-1}] := \mathcal{O}_L(\mathfrak{X})[Y^{-1}] \otimes_{\mathcal{O}_L(\mathfrak{X})} M \ .
\end{equation*}

\begin{lemma}\label{Y-1-flat}
The ring homomorphism $\mathcal{O}_L(\mathfrak{X}) \longrightarrow \mathcal{O}_L(\mathfrak{X})[Y^{-1}]$ is flat.
\end{lemma}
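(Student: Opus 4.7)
The proof will be short, exploiting the Pr\"ufer property of $\mathcal{O}_L(\mathfrak{X})$ established in Cor.\ \ref{pruefer}. The plan is to observe that $\mathcal{O}_L(\mathfrak{X})[Y^{-1}]$ is torsion-free as an $\mathcal{O}_L(\mathfrak{X})$-module and then invoke the fact that over a Pr\"ufer domain flatness is equivalent to being torsion-free.

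In more detail, first I would check that $I_Y$ is a well defined invertible ideal. Indeed, $I_Y = I_\Delta$ for $\Delta$ the divisor taking value $1$ on $Y$ and $0$ elsewhere, which is an effective divisor by hypothesis. By Prop.\ \ref{divisor-theory} the ideal $I_Y$ is closed, by Prop.\ \ref{closed-fingen} it is finitely generated, and by Prop.\ \ref{closed-invert} it is invertible. Hence its inverse $I_Y^{-1} \subseteq \mathrm{Frac}(\mathcal{O}_L(\mathfrak{X}))$ is a well defined fractional ideal, and so is each power $I_Y^{-m}$.

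Next, since every $I_Y^{-m}$ sits inside the fraction field of the domain $\mathcal{O}_L(\mathfrak{X})$, the union $\mathcal{O}_L(\mathfrak{X})[Y^{-1}] = \bigcup_{m \geq 1} I_Y^{-m}$ is itself an $\mathcal{O}_L(\mathfrak{X})$-submodule of $\mathrm{Frac}(\mathcal{O}_L(\mathfrak{X}))$, and is therefore torsion-free.

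By the second bullet of Cor.\ \ref{pruefer}, any torsion-free $\mathcal{O}_L(\mathfrak{X})$-module is flat, which gives the claim. There is no real obstacle here; the only thing worth spelling out is the identification of $I_Y^{-m}$ with a genuine fractional ideal in $\mathrm{Frac}(\mathcal{O}_L(\mathfrak{X}))$, which is guaranteed by the invertibility of $I_Y$. (Alternatively, one could argue that each $I_Y^{-m}$, being invertible, is projective and hence flat, and that a filtered direct limit of flat modules is flat; this avoids citing the Pr\"ufer characterization but is essentially the same argument.)
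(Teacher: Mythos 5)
Your proof is correct, but it takes a genuinely different route from the paper's. You invoke the Pr\"ufer property (Cor.\ \ref{pruefer}, second bullet): $\mathcal{O}_L(\mathfrak{X})[Y^{-1}]$ sits inside the fraction field of $\mathcal{O}_L(\mathfrak{X})$, hence is torsion-free, hence flat. The paper instead gives a self-contained ``geometric'' argument that works for any invertible ideal $I$ in \emph{any} integral domain $A$: writing $1 = \sum f_i g_i$ with $f_i \in I$, $g_i \in I^{-1}$, one checks that the $f_i$ generate the unit ideal in $A_I := \bigcup_m I^{-m}$, that $(A_I)_{f_i} = A_{f_i}$, and hence that $\mathrm{Spec}(A_I)$ is covered by the genuine localizations $\mathrm{Spec}(A_{f_i})$; flatness of $A \to A_I$ then follows from flatness of localizations. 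Your main argument is shorter but leans on the nontrivial fact that $\mathcal{O}_L(\mathfrak{X})$ is Pr\"ufer; the paper's argument is longer to write out but more elementary and more portable. Your parenthetical alternative (each $I_Y^{-m}$ is invertible, hence projective, hence flat, and a filtered colimit of flat modules is flat) is essentially at the same level of generality as the paper's argument and is also perfectly valid, though again distinct from what the paper actually does.
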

\begin{proof}
The assertion is a special case of the following fact. Let $I \subseteq A$ be an invertible ideal in the integral domain $A$ and consider the ring homomorphism $A \longrightarrow A_I := \bigcup_{m \geq 1} I^{-m}$. We have $\sum_{i=1}^n f_i g_i = 1$ for appropriate elements $f_i \in I$ and $g_i \in I^{-1}$. The $f_i$ then generate the unit ideal in the ring $A_I$. Hence $\Spec(A_I) = \bigcup_i \Spec((A_I)_{f_i})$. But $Af_i \subseteq I$ implies $I^{-1} \subseteq Af_i^{-1}$, hence $A_I \subseteq A_{f_i}$, and therefore $(A_I)_{f_i} = A_{f_i}$ and $\Spec(A_I) = \bigcup_i \Spec(A_{f_i})$. Since the localizations $A \longrightarrow A_{f_i}$ are flat it follows that $A \longrightarrow A_I$ is flat.
\end{proof}

In this section we will construct $(\varphi_L,\Gamma_L)$-modules over $\mathscr{R}_L(\mathfrak{X})$ from the following kind of data.

\begin{definition}\label{def:fil-vs}
A filtered $\varphi_L$-module $D$ is a finite dimensional $L$-vector space equipped with
\begin{itemize}
  \item[--] a linear automorphism $\varphi_D : D \longrightarrow D$
  \item[--] and a descending, separated, and exhaustive $\ZZ$-filtration $\Fil^\bullet D$ by vector subspaces.
\end{itemize}
\end{definition}

In the following we let $Z \subseteq \mathfrak{X}$ denote the subset of all torsion points different from $1$, i.e., $Z = (\bigcup_{n \geq 1} \mathfrak{X}[\pi_L^n]) \setminus \{1\}$. We define the function $n : Z \longrightarrow \ZZ_{\geq 0}$ by $x \in \mathfrak{X}[\pi_L^{n(x)+1}]) \setminus \mathfrak{X}[\pi_L^{n(x)}])$. We keep the notations introduced in the previous section \ref{sec:globalring} (with $K = L$). We always equip the field of fractions $\Fr(\mathcal{O}_x)$ of the local ring $\mathcal{O}_x$, for any point $x \in Z$, with the $\mathfrak{m}_x$-adic filtration.

Let $D$ be a filtered $\varphi_L$-module of dimension $d_D$. We introduce the $\mathcal{O}_L(\mathfrak{X})$-module
\begin{equation*}
    \mathcal{M}(D) := \{ s \in \mathcal{O}_L(\mathfrak{X})[Z^{-1}] \otimes_L D : (\id \otimes \varphi_D^{-n(x)})(s) \in \Fil^0(\Fr(\mathcal{O}_x) \otimes_L D) \ \text{for any $x \in Z$} \} \ ,
\end{equation*}
where the $\Fil^0$ refers to the tensor product filtration on $\Fr(\mathcal{O}_x) \otimes_L D$. Suppose that $a \leq b$ are integers such that $\Fil^a D = D$ and $\Fil^{b+1} D = 0$. Then
\begin{equation}\label{f:bounds}
    I_Z^{-a} \otimes_L D \subseteq \mathcal{M}(D) \subseteq I_Z^{-b} \otimes_L D \ .
\end{equation}

\begin{lemma}\label{fg-proj}
$\mathcal{M}(D)$ is a finitely generated projective $\mathcal{O}_L(\mathfrak{X})$-module of rank $d_D$. If $\widetilde{\mathcal{M}}(D)$ denotes the corresponding coherent $\mathcal{O}$-module sheaf and $\widetilde{\mathcal{M}}_x(D)$ its stalk in any point $x \in \mathfrak{X}$, then $\widetilde{\mathcal{M}}_x(D) = \mathcal{O}_x \otimes_L D$ for any $x \not\in Z$.
\end{lemma}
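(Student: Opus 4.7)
The plan is to realize $\mathcal{M}(D)$ as the module of global sections of a locally free coherent $\mathcal{O}$-module sheaf $\widetilde{\mathcal{M}}(D)$ on $\mathfrak{X}$ of rank $d_D$, with the stalks prescribed in the lemma. Both assertions will then follow at once from Prop.~\ref{gruson} together with Fact~\ref{coherent}.

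For each affinoid subdomain $\Sp(A) \subseteq \mathfrak{X}$, the intersection $Z \cap \Sp(A)$ is finite because $Z$ is discrete in $\mathfrak{X}$, so one can form
\[ M_A := \{s \in A[Z^{-1}] \otimes_L D : (\id \otimes \varphi_D^{-n(x)})(s) \in \Fil^0(\Fr(\mathcal{O}_x) \otimes_L D) \text{ for every } x \in Z \cap \Sp(A)\}. \]
The affinoid analogues of the bounds \eqref{f:bounds} sandwich $M_A$ between two finitely generated projective $A$-modules of rank $d_D$. Since $\Sp(A)$ is smooth one-dimensional, $A$ is a Dedekind domain, so $M_A$ is finitely generated torsionfree, hence finitely generated projective; its rank is computed at any $x \notin Z$ to be $d_D$.

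Next I would verify the base-change identity $B \otimes_A M_A = M_B$ for nested affinoids $\Sp(B) \subseteq \Sp(A)$, thereby exhibiting the assignment $\Sp(A) \mapsto M_A$ as a coherent sheaf $\widetilde{\mathcal{M}}(D)$. The flatness of $A \to B$ together with the description of $M_A$ as a finite intersection of ``preimage'' submodules (one per $x \in Z \cap \Sp(A)$) reduces this to a stalk computation. At $x \notin Z$ all conditions become vacuous (locally near $x$ the ring $A[Z^{-1}]$ equals $A$), giving $\widetilde{\mathcal{M}}_x(D) = \mathcal{O}_x \otimes_L D$, which is the second assertion of the lemma. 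At $x \in Z$ only the condition at $x$ survives after localization, since every other ideal $I_{x'}$ with $x' \ne x$ becomes the unit ideal in $\mathcal{O}_x$; thus
\[ \widetilde{\mathcal{M}}_x(D) = (\id \otimes \varphi_D^{n(x)})\bigl(\Fil^0(\Fr(\mathcal{O}_x) \otimes_L D)\bigr), \]
which is a free $\mathcal{O}_x$-lattice of rank $d_D$ in $\Fr(\mathcal{O}_x) \otimes_L D$: indeed $\Fil^0(\Fr(\mathcal{O}_x) \otimes_L D) = \sum_j \mathfrak{m}_x^{-j} \otimes_L \Fil^j D$ is such a lattice, being bounded between $\mathfrak{m}_x^{-a} \otimes_L D$ and $\mathfrak{m}_x^{-b} \otimes_L D$, and $\varphi_D^{n(x)}$ is an $L$-automorphism of $D$. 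Hence $\widetilde{\mathcal{M}}(D)$ is locally free of rank $d_D$.

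Finally, the quasi-Stein property gives $\widetilde{\mathcal{M}}(D)(\mathfrak{X}) = \varprojlim_n M_{A_n}$ for any defining admissible covering $\mathfrak{X} = \bigcup_n \Sp(A_n)$, and by construction this projective limit is precisely $\mathcal{M}(D)$, since a section of $\mathcal{O}_L(\mathfrak{X})[Z^{-1}] \otimes_L D$ satisfies all the conditions at points $x \in Z$ globally iff it does so on each $\Sp(A_n)$. Applying Prop.~\ref{gruson} then identifies $\mathcal{M}(D)$ as a finitely generated projective $\mathcal{O}_L(\mathfrak{X})$-module of rank $d_D$. The main technical obstacle is the base-change identity $B \otimes_A M_A = M_B$; the inclusion $\subseteq$ is formal, but the reverse inclusion requires the careful stalkwise analysis above to see that conditions at torsion points in $\Sp(A) \setminus \Sp(B)$ become vacuous after restriction.
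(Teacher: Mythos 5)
Your proof is correct but follows a genuinely different route from the paper's. The paper's argument is considerably shorter: it observes that $\mathcal{M}(D)$ is cut out of the finitely generated projective module $I_Z^{-b} \otimes_L D$ by continuous linear conditions whose targets are Hausdorff (because over the noetherian Banach algebra $\mathcal{O}_L(\mathfrak{X}(r))$ every submodule of a finitely generated module is closed), hence $\mathcal{M}(D)$ is a \emph{closed} submodule of $I_Z^{-b} \otimes_L D$; then Lemma~\ref{closed-proj} (closed submodules of finitely generated projective $\mathcal{O}_K(\mathfrak{Y})$-modules are finitely generated projective, a consequence of the Fr\'echet--Stein/Pr\"ufer machinery of \S\ref{sec:prufer}) finishes the first assertion, and the stalk claim at $x \notin Z$ drops out of \eqref{f:bounds} by localizing. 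You instead build the coherent sheaf from below: define affinoid pieces $M_A$, verify finite generation and projectivity over each Dedekind ring $A$ using the sandwich bound, check the base-change compatibility $B \otimes_A M_A = M_B$ by computing stalks, conclude local freeness, and invoke Prop.~\ref{gruson}. This is a legitimate and somewhat more self-contained path (you do not need Lemma~\ref{closed-proj}, only the Dedekind property of affinoid algebras and Gruson), and it has the bonus of establishing the stalk formulas at torsion points that the paper defers to the subsequent Lemma~\ref{stalks}; the cost is a noticeably longer argument, in particular the base-change verification, where the surjectivity of $B \otimes_A M_A \hookrightarrow M_B$ genuinely requires the stalkwise comparison over all points of $\Sp(B)$ (both $\notin Z$ and $\in Z$) together with the fact that over a Dedekind domain an inclusion of finitely generated submodules is an equality iff it is one on all localizations; you would also want to justify $\varprojlim_n M_{A_n} = \mathcal{M}(D)$ explicitly via the bound $\mathcal{M}(D), \varprojlim_n M_{A_n} \subseteq I_Z^{-b} \otimes_L D$ and Fact~\ref{coherent}.iv applied to the projective module $I_Z^{-b} \otimes_L D$.
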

\begin{proof}
The definition of $\mathcal{M}(D)$ shows that
\begin{align*}
    \mathcal{M}(D) = \ker \big(I_Z^{-b} \otimes_L D \longrightarrow \prod\limits_r I_Z^{-b} \mathcal{O}_L(\mathfrak{X}(r)) \otimes_L D/\mathcal{O}_L(\mathfrak{X}(r)) \otimes_{\mathcal{O}_L(\mathfrak{X})} \mathcal{M}(D)  \big).
\end{align*}
Since the above map is continuous and since over the noetherian Banach algebra $\mathcal{O}_L (\mathfrak{X}(r))$ any submodule of a finitely generated module is closed it follows that $\mathcal{M}(D)$ is closed in the finitely generated projective $\mathcal{O}_L(\mathfrak{X})$-module $I_Z^{-b} \otimes_L D$. Hence Lemma \ref{closed-proj} implies that $\mathcal{M}(D)$ is finitely generated projective. The second part of the assertion is an immediate consequence of  \eqref{f:bounds}.
\end{proof}

We may also compute the stalks of $\widetilde{\mathcal{M}}(D)$ in the points in $Z$.

\begin{lemma}\label{stalks}
For any $x \in Z$ we have
\begin{align*}
     & \widetilde{\mathcal{M}}_x(D) = \mathcal{O}_x  \otimes_{\mathcal{O}_L(\mathfrak{X})} \mathcal{M}(D)
     =  \{ s \in \Fr(\mathcal{O}_x) \otimes_L  D : (\id \otimes \varphi_D^{-n(x)}) (s) \in \Fil^0 ( \Fr (\mathcal{O}_x) \otimes_L D ) \}   \\
     & \qquad \xrightarrow [\id \otimes \varphi_D^{-n(x)}] { \cong } \Fil^0 ( \Fr(\mathcal{O}_x) \otimes_L D )
\end{align*}
\end{lemma}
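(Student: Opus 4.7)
The first equality follows from Fact~\ref{coherent}.ii applied to the finitely generated projective module $\mathcal{M}(D)$, while the rightmost arrow is an isomorphism because $\varphi_D^{-n(x)}$ is invertible on $D$. Writing $S_x$ for the middle set, what remains is the equality $\mathcal{O}_x \otimes_{\mathcal{O}_L(\mathfrak{X})} \mathcal{M}(D) = S_x$. The map from the left to $S_x$ comes from the germ map $\mathcal{M}(D) \to \Fr(\mathcal{O}_x) \otimes_L D$, whose image lies in $S_x$ by the defining condition of $\mathcal{M}(D)$; it is injective because taking stalks at $x$ is exact and one has $\mathcal{O}_x \otimes_{\mathcal{O}_L(\mathfrak{X})} \mathcal{O}_L(\mathfrak{X})[Z^{-1}] = \bigcup_m \mathfrak{m}_x^{-m} = \Fr(\mathcal{O}_x)$ (using that the stalk of $\widetilde{I_Z}$ at the simple point $x \in Z$ is $\mathfrak{m}_x$). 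The task will thus reduce to surjectivity.

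To produce enough elements of $\mathcal{M}(D)$, I will pick a filtration-adapted basis $e_1, \ldots, e_n$ of $D$, with $e_i \in \Fil^{k_i} D$ and the $e_j$ satisfying $k_j \geq i$ forming an $L$-basis of $\Fil^i D$. Then $\Fil^0(\Fr(\mathcal{O}_x) \otimes_L D) = \bigoplus_i \mathfrak{m}_x^{-k_i} \otimes_L L e_i$, so $S_x$ is free over $\mathcal{O}_x$ with basis $u^{-k_i} \otimes \varphi_D^{n(x)}(e_i)$ for a uniformizer $u \in \mathfrak{m}_x$. It suffices to realize each basis vector, up to an $\mathcal{O}_x^\times$-scalar, as the germ at $x$ of some $m_i \in \mathcal{M}(D)$.

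The construction applies Lemma~\ref{prescribe-divisor} twice. First, with $I = I_{\{x\}}$ and $Z' = Z \setminus \{x\}$, it yields $U \in \mathcal{O}_L(\mathfrak{X})$ with $\divi(U)(x) = 1$ and $\divi(U)(y) = 0$ for every $y \in Z \setminus \{x\}$; the set $E := \supp(\divi(U)) \setminus \{x\}$ is a discrete subset of $\mathfrak{X} \setminus Z$. Fix an integer $N \geq -a$ with $\Fil^a D = D$ and put $b := \max_i k_i$. The prescription $\Delta(y) := N$ for $y \in Z \setminus \{x\}$, $\Delta(y) := b \cdot \divi(U)(y)$ for $y \in E$, and $\Delta(y) := 0$ elsewhere defines an effective divisor on $\mathfrak{X}$ (finite on each affinoid). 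A second application of Lemma~\ref{prescribe-divisor}, this time to $I = I_\Delta$ and $Z' = \{x\}$, produces $h \in \mathcal{O}_L(\mathfrak{X})$ with $\divi(h) \geq \Delta$ everywhere and $\divi(h)(x) = 0$. Then $f_i := h \cdot U^{-k_i}$ lies in $\mathcal{O}_L(\mathfrak{X})[Z^{-1}]$: its divisor is $\geq 0$ outside $Z$ (thanks to $b \geq k_i$), equals $-k_i$ at $x$, and is $\geq N$ at each $y \in Z \setminus \{x\}$. I set $m_i := f_i \otimes \varphi_D^{n(x)}(e_i)$.

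To verify $m_i \in \mathcal{M}(D)$: at $x$, $(\id \otimes \varphi_D^{-n(x)})(m_i) = f_i \otimes e_i$ has germ in $\mathfrak{m}_x^{-k_i} \otimes_L \Fil^{k_i} D \subseteq \Fil^0(\Fr(\mathcal{O}_x) \otimes_L D)$; at $y \in Z \setminus \{x\}$, $(\id \otimes \varphi_D^{-n(y)})(m_i)$ has germ in $\mathfrak{m}_y^N \otimes_L D = \mathfrak{m}_y^{-(-N)} \otimes_L \Fil^{-N} D \subseteq \Fil^0(\Fr(\mathcal{O}_y) \otimes_L D)$ since $\Fil^{-N} D = D$ from $N \geq -a$. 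The germ of $m_i$ at $x$ is an $\mathcal{O}_x^\times$-multiple of $u^{-k_i} \otimes \varphi_D^{n(x)}(e_i)$, so the $m_i$'s generate $S_x$ as an $\mathcal{O}_x$-module, completing the proof of surjectivity. The main technical obstacle is the clean-up by $h$ of the extraneous zeros of $U$ off $Z$, which Lemma~\ref{prescribe-divisor} does not control directly; this is why the two-step divisor prescription is needed.
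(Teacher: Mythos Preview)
Your proof is correct and takes a genuinely different route from the paper's. The paper introduces an intermediate module
\[
\mathcal{M}_x(D) := \{ s \in \mathcal{O}_L(\mathfrak{X})[Z^{-1}] \otimes_L D : (\id \otimes \varphi_D^{-n(x)})(s) \in \Fil^0(\Fr(\mathcal{O}_x)\otimes_L D)\},
\]
defined by imposing only the single local condition at $x$. One application of Lemma~\ref{prescribe-divisor} gives a function $f$ vanishing on $Z\setminus\{x\}$ but not at $x$; since for each $s\in\mathcal{M}_x(D)$ some power of $f$ pushes $s$ into $\mathcal{M}(D)$ without changing the germ at $x$, one gets $\mathcal{O}_x\otimes\mathcal{M}(D)=\mathcal{O}_x\otimes\mathcal{M}_x(D)$. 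The stalk of $\mathcal{M}_x(D)$ is then identified with $S_x$ by a sandwich between $\mathfrak{m}_x^{-a}\otimes_L D$ and $\mathfrak{m}_x^{-b}\otimes_L D$ together with the surjectivity of $I_Z^{-b}\to \mathfrak{m}_x^{-b}/\mathfrak{m}_x^{-a}$. You instead bypass $\mathcal{M}_x(D)$ entirely and produce explicit global elements $m_i\in\mathcal{M}(D)$ whose germs form an $\mathcal{O}_x$-basis of $S_x$, via a filtration-adapted basis of $D$ and two applications of Lemma~\ref{prescribe-divisor}. The paper's route is shorter and basis-free; yours is more constructive and avoids the auxiliary module, at the price of the two-step divisor cleanup you flag at the end. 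One small point worth making explicit in your write-up: the assertion $f_i\in\mathcal{O}_L(\mathfrak{X})[Z^{-1}]$ is not just a divisor computation but uses that $\divi(hg)\geq k_i\divi(U)$ for all $g\in I_Z^{k_i}$ forces $hg\in (U^{k_i})$ via Prop.~\ref{divisor-theory}, hence $f_i\in (I_Z^{k_i})^{-1}=I_Z^{-k_i}$.
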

\begin{proof}
We temporarily define
\begin{equation*}
    \mathcal{M}_x(D):  = \{ s \in \mathcal{O}_L(\mathfrak{X})[Z^{-1}] \otimes_L  D : ( \id \otimes \varphi _D^{-n(x)} ) (s) \in \Fil^0 \left( \Fr(\mathcal{O}_x) \otimes_L D \right) \}.
\end{equation*}
We pick a function $f \in \mathcal{O}_L(\mathfrak{X})$ whose zero set contains $Z \setminus \{x\}$ but not $x$ (Lemma \ref{prescribe-divisor}). Then
\begin{equation*}
    \mathcal{M}(D) \subseteq \mathcal{M}_x (D) \subseteq f^{-(b-a)} \mathcal{M}(D) \ .
\end{equation*}
This shows that $\mathcal{O}_x \otimes_{\mathcal{O}_L(\mathfrak{X})} \mathcal{M}(D)  =  \mathcal{O}_x \otimes_{\mathcal{O}_L(\mathfrak{X})} \mathcal{M}_x(D)$. Due to \eqref{f:bounds} we have the commutative diagram
\begin{equation*}
    \xymatrix{
      \Fil^{-b}(\Fr(\mathcal{O}_x)) \otimes_L D  \ar[r]^{=} & \Fil^{-b}(\Fr(\mathcal{O}_x)) \otimes_L D   \\
      \mathcal{O}_x \otimes_{\mathcal{O}_L(\mathfrak{X})} \mathcal{M}_x(D) \ar[u]_{\subseteq} \ar[r] & \{ s \in \Fr(\mathcal{O}_x) \otimes_L  D : (\id \otimes \varphi_D^{-n(x)}) (s) \in \Fil^0 ( \Fr (\mathcal{O}_x) \otimes_L D ) \} \ar[u]^{\subseteq} \\
       \Fil^{-a}(\Fr(\mathcal{O}_x)) \otimes_L D \ar[u]_{\subseteq} \ar[r]^{=} & \Fil^{-a}(\Fr(\mathcal{O}_x)) \otimes_L D  \ar[u]_{\subseteq} }
\end{equation*}
In particular, the middle horizontal arrow is injective. But it also is surjective as follows easily from the surjectivity of the map $I_Z^{-b} \longrightarrow \Fil^{-b}(\Fr(\mathcal{O}_x)) / \Fil^{-a}(\Fr(\mathcal{O}_x))$ (compare the proof of Lemma \ref{germ}).
\end{proof}

The injective ring endomorphism $\varphi_L$ of $\mathcal{O}_L(\mathfrak{X})$ extends, of course, to its field of fractions. Using Lemma \ref{unramified}.iii one checks that $\varphi_L(I_Z^{-1}) \subseteq I_Z^{-1}$. Hence $\mathcal{O}_L(\mathfrak{X})[Z^{-1}] \otimes_L D$ carries the injective $\varphi_L$-linear endomorphism $\varphi_L \otimes \varphi_D$. We define $Z_0 := \{x \in Z : n(x) = 0\} = \mathfrak{X}[\pi_L] \setminus \{1\}$.

\begin{lemma}\label{varphi}
$\varphi_L \otimes \varphi_D$ induces an injective $\varphi_L$-linear homomorphism
\begin{equation*}
    \varphi_{\mathcal{M}(D)} : \mathcal{M} (D) \longrightarrow I_{Z_0}^a \mathcal{M}(D) \ .
\end{equation*}
\end{lemma}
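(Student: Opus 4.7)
The plan is to set $\varphi_{\mathcal{M}(D)} := (\varphi_L \otimes \varphi_D)|_{\mathcal{M}(D)}$ and verify stalkwise that the image lies in $I_{Z_0}^a \mathcal{M}(D)$, distinguishing the three strata $\mathfrak{X} \setminus (Z \cup \{1\})$, $Z \setminus Z_0$, and $Z_0$.

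First I would extend $\varphi_L$ to the field of fractions, which is legitimate because $\varphi_L$ is injective on $\mathcal{O}_L(\mathfrak{X})$. By Lemma \ref{unramified}.iii the pole order of $\varphi_L(f)$ at a point $y$ equals the pole order of $f$ at $\pi_L^*(y)$. Since $\pi_L^*$ sends $\{1\} \cup Z_0$ into $\{1\}$ (where elements of $\mathcal{O}_L(\mathfrak{X})[Z^{-1}]$ are regular) and $Z \setminus Z_0$ into $Z$, we obtain $\varphi_L(\mathcal{O}_L(\mathfrak{X})[Z^{-1}]) \subseteq \mathcal{O}_L(\mathfrak{X})[(Z \setminus Z_0)^{-1}]$, so $(\varphi_L \otimes \varphi_D)(s)$ lies in $\mathcal{O}_L(\mathfrak{X})[Z^{-1}] \otimes_L D$ and in particular has no pole on $Z_0$. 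At a non-torsion $x \neq 1$, the point $\pi_L^*(x)$ is also non-torsion, and the local homomorphism $\varphi_L \colon \mathcal{O}_{\pi_L^*(x)} \to \mathcal{O}_x$ combined with Lemma \ref{fg-proj} shows $(\varphi_L \otimes \varphi_D)(s)_x \in \mathcal{O}_x \otimes_L D = \widetilde{\mathcal{M}}_x(D)$.

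At $x \in Z \setminus Z_0$ we have $\pi_L^*(x) \in Z$ with $n(\pi_L^*(x)) = n(x) - 1$, and the clean identity
\begin{equation*}
  (\id \otimes \varphi_D^{-n(x)}) \circ (\varphi_L \otimes \varphi_D) = (\varphi_L \otimes \id) \circ (\id \otimes \varphi_D^{-n(\pi_L^*(x))})
\end{equation*}
reduces the filtration condition at $x$ to the one that $s$ already satisfies at $\pi_L^*(x)$, using that $\varphi_L$ is a local ring map sending $\mathfrak{m}_{\pi_L^*(x)}^i$ into $\mathfrak{m}_x^i$ (Lemma \ref{unramified}.ii; equivalently, $\log_\mathfrak{X}$ generates both maximal ideals by Lemma \ref{zeros}.i, and $\varphi_L(\log_\mathfrak{X}) = \pi_L \log_\mathfrak{X}$ differs from $\log_\mathfrak{X}$ by the unit $\pi_L \in \mathcal{O}_x^\times$).

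The decisive case is $x \in Z_0$, for which $\pi_L^*(x) = 1 \not\in Z$ and Lemma \ref{fg-proj} gives $\widetilde{\mathcal{M}}_1(D) = \mathcal{O}_1 \otimes_L D$. Writing the germ of $s$ at $1$ as $\sum_j c_j \otimes e_j$ with $c_j \in \mathcal{O}_1$ and $\{e_j\}$ an $L$-basis of $D$, we get $(\varphi_L \otimes \varphi_D)(s)_x = \sum_j \varphi_L(c_j) \otimes \varphi_D(e_j) \in \mathcal{O}_x \otimes_L D$. Since $D = \Fil^a D$, this sits inside $\mathcal{O}_x \otimes_L \Fil^a D \subseteq \Fil^a(\Fr(\mathcal{O}_x) \otimes_L D)$; an elementary index shift gives $\Fil^a(\Fr(\mathcal{O}_x) \otimes_L D) = \mathfrak{m}_x^a \cdot \Fil^0(\Fr(\mathcal{O}_x) \otimes_L D)$, and by Lemma \ref{stalks} applied with $n(x) = 0$, the right-hand side is exactly $\mathfrak{m}_x^a \widetilde{\mathcal{M}}_x(D) = (I_{Z_0}^a \mathcal{M}(D))_x$. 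Combining the three stratum analyses yields $\varphi_{\mathcal{M}(D)}(\mathcal{M}(D)) \subseteq I_{Z_0}^a \mathcal{M}(D)$. Injectivity is then automatic: $\varphi_L$ is injective on $\mathcal{O}_L(\mathfrak{X})$ (noted after Lemma \ref{Gamma-cont-R}) and $\varphi_D$ is invertible, so $\varphi_L \otimes \varphi_D$ is injective on the free $\mathcal{O}_L(\mathfrak{X})[Z^{-1}]$-module $\mathcal{O}_L(\mathfrak{X})[Z^{-1}] \otimes_L D \supseteq \mathcal{M}(D)$. The only moderately delicate point is, in the $Z_0$ case, obtaining the refined output $I_{Z_0}^a \mathcal{M}(D)$ rather than merely $\mathcal{M}(D)$; this is supplied by the identity $\Fil^a = \mathfrak{m}_x^a \cdot \Fil^0$ together with Lemma \ref{stalks}.
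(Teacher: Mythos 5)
Your proof is correct and follows essentially the same route as the paper's. You verify stalk-by-stalk that $(\varphi_L\otimes\varphi_D)(s)$ lands in $I_{Z_0}^a\mathcal{M}(D)$, using the identity $\Fil^a = \mathfrak{m}_x^a\Fil^0$ at the points of $Z_0$; the paper's proof is the logically equivalent reformulation $I_{Z_0}^{-a}(\varphi_L\otimes\varphi_D)(s) \subseteq \mathcal{M}(D)$, checked by the same two-case split ($n(x)\geq 1$ versus $n(x)=0$) and the same inputs (unramifiedness of $\varphi_L$ via Lemma \ref{unramified}, the stalk descriptions of Lemmas \ref{fg-proj} and \ref{stalks}, and $\Fil^a D = D$).
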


\begin{proof}
Let $s \in \mathcal{M}(D)$. We show that $I_{Z_0}^{-a} (\varphi_L \otimes \varphi_D) (s)$ is contained in $\mathcal{M}(D)$. Since $I_{Z_0}^{-1} \subseteq I_Z^{-1}$ we certainly have $I_{Z_0}^{-a} (\varphi_L \otimes \varphi_D) (s) \subseteq \mathcal{O}_L(\mathfrak{X})[Z^{-1}] \otimes_L D$. Hence we have to check the local condition for any $x \in Z$.  First we assume that $n(x) \geq 1$ or, equivalently, that $x \not\in Z_0$.  Then
\begin{align*}
    (\id \otimes \varphi_D^{-n(x)})( I_{Z_0}^{-a} (\varphi_L \otimes \varphi_D) (s))
     & = I_{Z_0}^{-a} (\varphi_L \otimes \id) (\id \otimes \varphi_D^{-n(x)+1} ) (s) \\
& = I_{Z_0}^{-a} (\varphi_L \otimes \id) (\id \otimes \varphi_D^{-n(\pi_L^* (x))} ) (s) \\
& \subseteq I_{Z_0}^{-a} (\varphi_L \otimes \id) (\Fil^0 ( \Fr(\mathcal{O}_{\pi_L^*(x)}) \otimes_L D)) \\
& \subseteq I_{Z_0}^{-a} \Fil^0 (\Fr(\mathcal{O}_x) \otimes_L D) \\
& = \Fil^0 (\Fr(\mathcal{O}_x) \otimes_L D) \ .
\end{align*}
If $n(x) = 0$ then, using that $\pi_L^* (x) = 1 \not\in Z$, we have
\begin{align*}
I_{Z_0}^{-a} (\varphi_L \otimes \varphi_D) (s) & = I_{Z_0}^{-a} (\varphi_L \otimes \id) (\id \otimes \varphi_D) (s) \subseteq I_{Z_0}^{-a} (\varphi_L \otimes \id) (\mathcal{O}_{\pi_L^*(x)} \otimes_L D) \\
    & \subseteq I_{Z_0}^{-a} \mathcal{O}_x \otimes_L D = \mathfrak{m}_x^{-a} \otimes_L D \subseteq \Fil^0 ( \Fr(\mathcal{O}_x) \otimes_L D).
\end{align*}
\end{proof}

Let $\mathfrak{n}(1) \subseteq \mathcal{O}_L(\mathfrak{X})$ denote the (closed) maximal ideal of functions vanishing in the point $1$.

\begin{lemma}\label{n(1)}
We have $I_{Z_0} = \mathcal{O}_L(\mathfrak{X}) \varphi_L(\mathfrak{n}(1)) \mathfrak{n}(1)^{-1}$.
\end{lemma}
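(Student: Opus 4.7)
The plan is to translate the claimed identity of fractional ideals into the bijective correspondence of Proposition \ref{divisor-theory} between nonzero closed ideals in $\mathcal{O}_L(\mathfrak{X})$ and effective divisors on $\mathfrak{X}$. Set $J := \mathcal{O}_L(\mathfrak{X}) \varphi_L(\mathfrak{n}(1))$. The ideal $\mathfrak{n}(1)$ is the kernel of the continuous evaluation $f \mapsto f(1)$, hence closed; by Proposition \ref{closed-fingen} it is finitely generated, so $J$ is finitely generated and therefore itself closed, and $\mathfrak{n}(1)$ is invertible by Proposition \ref{closed-invert}. Consequently the asserted identity is equivalent to $J = I_{Z_0} \cdot \mathfrak{n}(1)$, and it suffices to check $\Delta(J) = \Delta(I_{Z_0}) + \Delta(\mathfrak{n}(1))$.

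To compute $\Delta(J)$ I would apply Lemma \ref{unramified}.iii: for any $0 \neq f \in \mathfrak{n}(1)$ and $x \in \mathfrak{X}$, $\divi(\varphi_L(f))(x) = \divi(f)(\pi_L^*(x))$, so
\begin{equation*}
  \Delta(J)(x) = \min_{0 \neq f \in \mathfrak{n}(1)} \divi(f)(\pi_L^*(x)) .
\end{equation*}
If $x \in \mathfrak{X}[\pi_L]$, so $\pi_L^*(x) = 1$, then every such $f$ vanishes at $1$, and Lemma \ref{prescribe-divisor} produces $f \in \mathfrak{n}(1)$ with $\divi(f)(1) = 1$; hence $\Delta(J)(x) = 1$. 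If $x \notin \mathfrak{X}[\pi_L]$, the same lemma (applied to the closed ideal $\mathfrak{n}(1)$ with $Z = \{\pi_L^*(x)\}$) furnishes $f \in \mathfrak{n}(1)$ nonvanishing at $\pi_L^*(x)$, so $\Delta(J)(x) = 0$. Thus $\Delta(J)$ is the indicator divisor of $\mathfrak{X}[\pi_L] = Z_0 \sqcup \{1\}$.

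Finally I would invoke the additivity of divisors for products of nonzero closed (equivalently, invertible, by Proposition \ref{closed-invert}) ideals: passing to the stalk $\mathcal{O}_x$, which is a DVR by smoothness, Fact \ref{coherent}.ii identifies $\widetilde{I}_x$ with $\mathcal{O}_x \otimes_{\mathcal{O}_L(\mathfrak{X})} I$, so $\widetilde{I}_x = \mathfrak{m}_x^{\Delta(I)(x)}$ and $\widetilde{IJ}_x = \widetilde{I}_x \widetilde{J}_x = \mathfrak{m}_x^{\Delta(I)(x) + \Delta(J)(x)}$. Applying this to $I_{Z_0} \cdot \mathfrak{n}(1)$, whose factors have divisors equal respectively to the indicator of $Z_0$ and of $\{1\}$, gives $\Delta(I_{Z_0} \mathfrak{n}(1)) = \Delta(J)$, and Proposition \ref{divisor-theory} forces $J = I_{Z_0} \mathfrak{n}(1)$. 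Multiplying by the invertible $\mathfrak{n}(1)^{-1}$ yields the lemma.

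The only real work is the divisor computation for $J$, which is mechanical once Lemma \ref{unramified}.iii is in hand; the bookkeeping obstacle is merely to verify that $\mathfrak{n}(1)$ is finitely generated so that $J$ is closed and the divisor dictionary applies.
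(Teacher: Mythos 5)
Your proof is correct and follows essentially the same strategy as the paper: both reduce the identity to a comparison of divisors via Proposition \ref{divisor-theory} and read off the divisor of the $\varphi_L$-part from Lemma \ref{unramified}.iii. You simply clear $\mathfrak{n}(1)^{-1}$ and spell out the divisor bookkeeping that the paper compresses into one sentence.
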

\begin{proof}
Since $\varphi_L(\mathfrak{n}(1)) \subseteq \mathfrak{n}(1)$ we have $\mathcal{O}_L(\mathfrak{X}) \varphi_L(\mathfrak{n}(1)) \mathfrak{n}(1)^{-1} \subseteq \mathfrak{n}(1) \mathfrak{n}(1)^{-1} = \mathcal{O}_L(\mathfrak{X})$. Hence the right hand side of the asserted identity is a finitely
generated and therefore (Prop.\ \ref{closed-fingen}) closed ideal in
$\mathcal{O}_L(\mathfrak{X})$ as is the left hand side. By Prop. \ref{divisor-theory} this reduces us
to checking that both ideals have the same divisor. But this follows immediately from
Lemma \ref{unramified}.iii.
\end{proof}

In view of this Lemma \ref{n(1)} the Lemma \ref{varphi} can be restated as saying that $\varphi_L \otimes \varphi_D$ induces a $\varphi_L$-linear endomorphism
\begin{equation*}
    \varphi_{\mathcal{M}(D)} : \mathfrak{n}(1)^{-a} \mathcal{M} (D) \longrightarrow \mathfrak{n}(1)^{-a} \mathcal{M}(D) \ ,
\end{equation*}
and we therefore may define the linear map
\begin{align*}
   \overline{\varphi}_{\mathcal{M}(D)} : \mathcal{O}_L(\mathfrak{X}) \otimes_{\mathcal{O}_L(\mathfrak{X}),\varphi_L} \mathfrak{n}(1)^{-a} \mathcal{M}(D) & \longrightarrow  \mathfrak{n}(1)^{-a} \mathcal{M}(D) \\
   f \otimes s & \longmapsto f \varphi_{\mathcal{M}(D)} (s) \ .
\end{align*}
Note that $\mathfrak{n}(1)^{-a} \mathcal{M}(D)$ is a finitely generated projective $\mathcal{O}_L(\mathfrak{X})$-module by Lemma \ref{fg-proj} and Cor.\ \ref{pruefer}.

\begin{remark}\label{phi-pullback}
Let $\phi := \pi_L^* : \mathfrak{X} \longrightarrow \mathfrak{X}$; for any coherent $\mathcal{O}$-module $\mathfrak{M}$ such that $\mathfrak{M}(\mathfrak{X})$ is a finitely generated projective $\mathcal{O}_L(\mathfrak{X})$-module the coherent $\mathcal{O}$-module $\phi^*\mathfrak{M}$ has global sections  $\mathcal{O}_L(\mathfrak{X}) \otimes_{\mathcal{O}_L(\mathfrak{X}), \varphi_L} \mathfrak{M}(\mathfrak{X})$.
\end{remark}
\begin{proof}
Let $\phi_r$ denote the restriction of $\phi$ to $\mathfrak{X}(r)$. We compute
\begin{align*}
    (\phi^*\mathfrak{M})(\mathfrak{X}) & = \varprojlim (\phi^*\mathfrak{M})(\mathfrak{X}(r)) = \varprojlim (\phi_r^*(\mathfrak{M}_{|\mathfrak{X}(r)}))(\mathfrak{X}(r)) \\
    & = \varprojlim \mathcal{O}_L(\mathfrak{X}(r)) \otimes_{\mathcal{O}_L(\mathfrak{X}(r)), \varphi_L} \mathfrak{M}(\mathfrak{X}(r)) \\
    & = \varprojlim \mathcal{O}_L(\mathfrak{X}(r)) \otimes_{\mathcal{O}_L(\mathfrak{X}), \varphi_L} \mathfrak{M}(\mathfrak{X}) \\
    & = \big( \varprojlim \mathcal{O}_L(\mathfrak{X}(r)) \big) \otimes_{\mathcal{O}_L(\mathfrak{X}), \varphi_L} \mathfrak{M}(\mathfrak{X}) \\
    & = \mathcal{O}_L(\mathfrak{X}) \otimes_{\mathcal{O}_L(\mathfrak{X}), \varphi_L} \mathfrak{M}(\mathfrak{X}) \ .
\end{align*}
Here the fourth identity comes from Remark \ref{coherent}.i, and the fifth identity uses the fact that the tensor product by a finitely generated projective module commutes with projective limits.
\end{proof}

Let $\mathcal{N}$ denote the coherent $\mathcal{O}$-module with global sections $\mathcal{N}(\mathfrak{X}) = \mathfrak{n}(1)^{-a} \mathcal{M} (D)$. Using Remarks \ref{coherent} and \ref{phi-pullback} we see that $\widetilde{\varphi}_{\mathcal{M}(D)}$ is the map induced on global sections by a homomorphism $\Phi : \phi^* \mathcal{N} \longrightarrow \mathcal{N}$ of coherent $\mathcal{O}$-modules (where $\phi := \pi_L^*$).

\begin{proposition}\phantomsection\label{varphi-iso}
\begin{itemize}
  \item[i.] On stalks in a point $x \in \mathfrak{X}$ the map $\Phi$ is an isomorphism if $x \not\in Z_0$ and identifies with
\begin{equation*}
    \mathfrak{m}_x^{-a} \otimes_L D \xrightarrow{\; \subseteq \;} \sum_j \mathfrak{m}_x^{-j} \otimes_L \Fil^j D \ ,
\end{equation*}
whose cokernel is $\mathcal{O}_x$-isomorphic to $\sum_{j = 0}^{b-a} \mathcal{O}_x / \mathfrak{m}_x^j \otimes_L \gr^{a+j} D$, if $x \in Z_0$.
  \item[ii.] The map $\overline{\varphi}_{\mathcal{M}(D)}$ is injective and its cokernel is annihilated by $I_{Z_0}^{b-a}$.
\end{itemize}
\end{proposition}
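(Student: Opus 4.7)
The plan is to work stalk by stalk on the coherent sheaf morphism $\Phi : \phi^{*}\mathcal{N} \to \mathcal{N}$ on the quasi-Stein space $\mathfrak{X}_{/L}$. Once part (i) is established, Fact \ref{coherent} and Theorem B yield part (ii) automatically: injectivity of $\Phi$ at every stalk forces $\ker\Phi = 0$ as a sheaf, so $\overline{\varphi}_{\mathcal{M}(D)} = \Gamma(\mathfrak{X},\Phi)$ is injective; and the local cokernel description in (i) shows that $I_{Z_0}^{b-a}$ kills the cokernel of $\Phi$ at every stalk (its stalks being $\mathfrak{m}_x^{b-a}$ on $Z_0$ and $\mathcal{O}_x$ elsewhere), hence kills the cokernel of $\overline{\varphi}_{\mathcal{M}(D)}$ on global sections.

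The stalk computation combines Lemma \ref{stalks} (which identifies $\mathcal{M}(D)_x$ via $\id \otimes \varphi_D^{-n(x)}$ with $\Fil^0(\Fr(\mathcal{O}_x) \otimes_L D)$ for $x \in Z$ and with $\mathcal{O}_x \otimes_L D$ otherwise), Lemma \ref{unramified}.ii (which makes $\varphi_L : \mathcal{O}_{\phi(x)} \to \mathcal{O}_x$ a local unramified homomorphism of DVRs, so it preserves the $\mathfrak{m}$-adic filtration), Lemma \ref{n(1)} (which gives $\varphi_L(\mathfrak{n}(1))\mathcal{O}_x = \mathfrak{n}(1)\mathcal{O}_x$ at $x \notin Z_0$ and $= \mathfrak{m}_x$ at $x \in Z_0$), and Remark \ref{phi-pullback} (which computes the stalks of $\phi^{*}\mathcal{N}$ as $\mathcal{O}_x \otimes_{\mathcal{O}_{\phi(x)}, \varphi_L} \mathcal{N}_{\phi(x)}$).

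For $x \notin Z_0$ I would show that $\Phi_x$ is an isomorphism by splitting into three subcases. If $x = 1$ then $\phi(1) = 1$, both stalks are $\mathfrak{m}_1^{-a} \otimes_L D$, and Lemma \ref{n(1)} tells us $\varphi_L(\mathfrak{m}_1)\mathcal{O}_1 = \mathfrak{m}_1$, so after this identification $\Phi_1$ becomes $\id \otimes \varphi_D$. If $x$ is non-torsion then $\phi(x)$ is non-torsion too, both stalks equal $\mathcal{O}_x \otimes_L D$, and the same kind of argument applies. If $x \in Z \setminus Z_0$ then $\phi(x) \in Z$ with $n(\phi(x)) = n(x) - 1$; after applying Lemma \ref{stalks} on both sides, the extra $\varphi_D$ in the definition of $\varphi_{\mathcal{M}(D)}$ absorbs the shift difference from $\varphi_D^{-n(\phi(x))}$ to $\varphi_D^{-n(x)}$, so $\Phi_x$ becomes the natural map $\mathcal{O}_x \otimes_{\mathcal{O}_{\phi(x)}, \varphi_L} \Fil^0(\Fr(\mathcal{O}_{\phi(x)}) \otimes_L D) \to \Fil^0(\Fr(\mathcal{O}_x) \otimes_L D)$, which is an isomorphism by unramified base change.

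For $x \in Z_0$ we have $\phi(x) = 1$ and $n(x) = 0$. Lemma \ref{stalks} identifies $\mathcal{N}_x$ with $\Fil^0(\Fr(\mathcal{O}_x) \otimes_L D) = \sum_{j=a}^{b} \mathfrak{m}_x^{-j} \otimes_L \Fil^j D$, and Lemma \ref{n(1)} together with $\mathcal{N}_1 = \mathfrak{m}_1^{-a} \otimes_L D$ identifies $(\phi^{*}\mathcal{N})_x$ with $\mathfrak{m}_x^{-a} \otimes_L D$; $\Phi_x$ then becomes the obvious inclusion. Choosing an $L$-basis $(e_i)$ of $D$ adapted to the filtration with $e_i$ of weight $j_i \in [a,b]$ and a uniformizer $\pi$ of $\mathcal{O}_x$, the two stalks become free $\mathcal{O}_x$-modules on $(\pi^{-a} \otimes e_i)_i$ and $(\pi^{-j_i} \otimes e_i)_i$, and their quotient decomposes as $\bigoplus_i \mathcal{O}_x / \mathfrak{m}_x^{j_i - a}$; regrouping by graded pieces yields $\sum_{j=0}^{b-a} \mathcal{O}_x / \mathfrak{m}_x^j \otimes_L \gr^{a+j} D$ as claimed. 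The main obstacle is precisely the bookkeeping in the $x \in Z \setminus Z_0$ subcase above, where the two different $\varphi_D$-shifts in Lemma \ref{stalks} at $x$ and $\phi(x)$ must be reconciled with the $\varphi_D$ built into $\varphi_{\mathcal{M}(D)}$; once that identification is set up correctly the remaining statements are local algebra.
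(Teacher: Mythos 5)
Your proposal is correct and follows essentially the same stalk-by-stalk strategy as the paper's proof, splitting into the cases $x = 1$, $x$ non-torsion, $x \in Z\setminus Z_0$, and $x \in Z_0$, and invoking Lemma \ref{stalks} together with the unramifiedness and flatness of $\varphi_L$ and the local consequences of Lemma \ref{n(1)}. You go slightly further than the paper in spelling out the explicit basis computation of the cokernel at points of $Z_0$ and in expanding "follows by passing to global sections" for part (ii) via Theorem~B, but these are elaborations of the same argument rather than a different route.
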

\begin{proof}
i. The map under consideration between the stalks in a point $x$ is
\begin{equation*}
    \mathcal{O}_x \otimes_{\mathcal{O}_{\pi_L^*(x),\varphi_L}} \widetilde{\mathcal{M}}_{\pi_L^*(x)}(D)  \xrightarrow[\mathcal{O}_x \otimes (\varphi_{\mathcal{M}(D)})_x]{}
    \widetilde{\mathcal{M}}_x(D)
\end{equation*}
if $x \not\in Z_0 \cup \{1\}$, resp.\
\begin{equation*}
    \mathcal{O}_x \otimes_{\mathcal{O}_{1,\varphi_L}} \mathfrak{m}_1^{-a} \widetilde{\mathcal{M}}_{1}(D)  \xrightarrow[\mathcal{O}_x \otimes (\varphi_{\mathcal{M}(D)})_x]{}
    \widetilde{\mathcal{M}}_x(D)
\end{equation*}
if $x \in Z_0$, resp.\
\begin{equation*}
    \mathcal{O}_1 \otimes_{\mathcal{O}_{1,\varphi_L}} \mathfrak{m}_1^{-a} \widetilde{\mathcal{M}}_{1}(D)  \xrightarrow[\mathcal{O}_1 \otimes (\varphi_{\mathcal{M}(D)})_1]{}
    \mathfrak{m}_1^{-a} \widetilde{\mathcal{M}}_1(D)
\end{equation*}
if $x = 1$. If $x \not\in Z$ then also $\pi_L^*(z) \not\in Z$, and this map identifies with the isomorphism
\begin{equation*}
    \mathcal{O}_x \otimes_L D \xrightarrow[\id \otimes \varphi_D]{\cong} \mathcal{O}_x \otimes_L D \ .
\end{equation*}
if $x \neq 1$, resp.\
\begin{equation*}
    \mathcal{O}_1 \otimes_{\mathcal{O}_{1,\varphi_L}} \mathfrak{m}_1^{-a} \otimes_L D = \mathcal{O}_1 \varphi_L(\mathfrak{m}_1)^{-a} \otimes_L D  \xrightarrow[\id \otimes \varphi_D]{\cong}
    \mathfrak{m}_1^{-a} \otimes_L D
\end{equation*}
if $x= 1$, where the identity comes from the flatness of $\varphi_L$ (Lemma \ref{unramified}.i).

Next suppose that $x \in Z \setminus Z_0$ so that $\pi_L^*(x) \in Z$ as well.  Then, by Lemma \ref{stalks}, we have the commutative diagram
\begin{equation*}
\begin{xymatrix}{
 \mathcal{O}_x \otimes_{\mathcal{O}_{\pi_L^*(x),\varphi_L}} \widetilde{\mathcal{M}}_{\pi_L^*(x)}(D)
 \ar[d]^{\cong}_{\id \otimes \varphi_D^{-n(x)+1}} \ar[r]^-{\mathcal{O}_x \otimes (\varphi_{\mathcal{M}(D)})_x} & \widetilde{\mathcal{M}}_x(D) \ar[d]_{\cong}^{\ \id \otimes \varphi_D^{-n(x)}} \\
 \mathcal{O}_x \otimes_{\mathcal{O}_{\pi_L^*(x),\varphi_L}} \Fil^0 (\Fr(\mathcal{O}_{\pi_L^*(x)}) \otimes_L D) \ar[d]^=   & \Fil^0 (\Fr(\mathcal{O}_x) \otimes_L D) \ar[dd]_= \\
\sum\limits_{j} \mathcal{O}_x \otimes_{\mathcal{O}_{\pi_L^*(x),\varphi_L}} \mathfrak{m}^{-j}_{\pi_L^*(x)} \otimes_L \Fil^j D \ar[d]_{\id \otimes \varphi_L \otimes \id}^{\cong}        & \\
\sum\limits_{j}(\mathcal{O}_x \varphi_L(\mathfrak{m}_{\pi_L^* (x)} ) )^{-j} \otimes_L \Fil^j D \ar[r]^-{=}        & \sum\limits_{j} \mathfrak{m}^{-j}_x \otimes \Fil^j D       }
\end{xymatrix}
\end{equation*}
where the lower left vertical map is an isomorphism again by the flatness of $\varphi_L$. For $x \in Z_0$ the corresponding diagram is
\begin{equation*}
\begin{xymatrix}{
 \mathcal{O}_x \otimes_{\mathcal{O}_{1,\varphi_L}} \mathfrak{m}_1^{-a} \widetilde{\mathcal{M}}_1(D)
 \ar[d]^{\cong} \ar[r]^-{\mathcal{O}_x \otimes (\varphi_{\mathcal{M}(D)})_x} & \widetilde{\mathcal{M}}_x(D) \ar[d]_{\cong}^{\ \id \otimes \varphi_D^{-1}} \\
 \mathcal{O}_x \otimes_{\mathcal{O}_{1,\varphi_L}} \mathfrak{m}_1^{-a} \otimes_L D \ar[d]^=   & \Fil^0 (\Fr(\mathcal{O}_x) \otimes_L D) \ar[d]_= \\
 \mathfrak{m}_x^{-a} \otimes_L D \ar[r]^-{\subseteq}        & \sum\limits_{j} \mathfrak{m}^{-j}_x \otimes \Fil^j D.       }
\end{xymatrix}
\end{equation*}
(Note that we have used several times implicitly that $\varphi_L$ is unramified by Lemma \ref{unramified}.ii.)

ii. This follows from i. by passing to global sections.
\end{proof}

\begin{corollary}\phantomsection\label{iso-Z0invert}
\begin{itemize}
  \item[i.] We have $I_{Z_0}^b \mathcal{M}(D) \subseteq \mathcal{O}_L(\mathfrak{X}) \varphi_{\mathcal{M}(D)}(\mathcal{M}(D)) \subseteq I_{Z_0}^a \mathcal{M}(D)$.
  \item[ii.] The $\mathcal{O}_L(\mathfrak{X})$-linear map $\mathcal{O}_L(\mathfrak{X}) \otimes_{\mathcal{O}_L(\mathfrak{X}),\varphi_L} \mathcal{M}(D) \longrightarrow  I_{Z_0}^a \mathcal{M}(D)$
      induced by $\varphi_{\mathcal{M}(D)}$ is injective with cokernel annihilated by $I_{Z_0}^{b-a}$.
  \item[iii.] The $\mathcal{O}_L(\mathfrak{X})[Z_0^{-1}]$-linear map $\mathcal{O}_L(\mathfrak{X})[Z_0^{-1}] \otimes_{\mathcal{O}_L(\mathfrak{X}),\varphi_L} \mathcal{M}(D) \xrightarrow{\;\cong\;}  \mathcal{M}(D)[Z_0^{-1}]$
      induced by $\varphi_{\mathcal{M}(D)}$ is an isomorphism.
\end{itemize}
\end{corollary}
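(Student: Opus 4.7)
The strategy is to reduce all three parts to a stalk-wise analysis parallel to that of Proposition \ref{varphi-iso}.i, with (ii) being the technical core and (i), (iii) being formal consequences.

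To prove (ii), I would consider the $\mathcal{O}_L(\mathfrak{X})$-linear map
\[ \alpha : \mathcal{O}_L(\mathfrak{X}) \otimes_{\mathcal{O}_L(\mathfrak{X}), \varphi_L} \mathcal{M}(D) \longrightarrow I_{Z_0}^a \mathcal{M}(D) \]
sending $f \otimes s$ to $f \varphi_{\mathcal{M}(D)}(s)$. Both sides are finitely generated $\mathcal{O}_L(\mathfrak{X})$-modules (by Lemma \ref{fg-proj}, the flatness of $\varphi_L$ from Lemma \ref{unramified}.i, and Corollary \ref{pruefer}), hence are global sections of an associated morphism of coherent $\mathcal{O}$-module sheaves, so by Fact \ref{coherent}.iii it suffices to check injectivity and the annihilator of the cokernel on stalks. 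For $x \notin Z_0$ with $x \neq 1$ we have $\pi_L^*(x) \neq 1$, so the stalks of $\mathcal{M}(D)$ and of $\mathfrak{n}(1)^{-a}\mathcal{M}(D)$ coincide at both $x$ and $\pi_L^*(x)$; since furthermore $(I_{Z_0}^a)_x = \mathcal{O}_x$, the stalk of $\alpha$ coincides with that of the map $\Phi$ of Proposition \ref{varphi-iso}.i and is therefore an isomorphism. At $x = 1$ we have $\pi_L^*(1) = 1 \notin Z$, so by Lemma \ref{fg-proj} both source and target equal $\mathcal{O}_1 \otimes_L D$ and the map is $\id \otimes \varphi_D$, an isomorphism. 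At $x \in Z_0$ we have $\pi_L^*(x) = 1 \notin Z$, so the source is $\mathcal{O}_x \otimes_{\mathcal{O}_1,\varphi_L}(\mathcal{O}_1 \otimes_L D) = \mathcal{O}_x \otimes_L D$ (using Lemma \ref{unramified}.i--ii), while by Lemma \ref{stalks} the target equals $\mathfrak{m}_x^a \cdot \widetilde{\mathcal{M}}_x(D) = \sum_{k=0}^{b-a} \mathfrak{m}_x^{-k} \otimes_L \Fil^{a+k} D$, and under $\id \otimes \varphi_D$ the map becomes the inclusion $\mathcal{O}_x \otimes_L D \hookrightarrow \sum_{k=0}^{b-a} \mathfrak{m}_x^{-k} \otimes_L \Fil^{a+k} D$. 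The cokernel computation from Proposition \ref{varphi-iso}.i applies verbatim and exhibits the cokernel as isomorphic to $\sum_{k=0}^{b-a} \mathcal{O}_x/\mathfrak{m}_x^k \otimes_L \gr^{a+k} D$, annihilated by $\mathfrak{m}_x^{b-a} = I_{Z_0, x}^{b-a}$. Passing back to global sections yields (ii).

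Part (i) is then immediate: the second inclusion is the content of Lemma \ref{varphi}, while for the first the image of $\alpha$ equals $\mathcal{O}_L(\mathfrak{X}) \varphi_{\mathcal{M}(D)}(\mathcal{M}(D))$, so by (ii) the quotient $I_{Z_0}^a \mathcal{M}(D)/\im(\alpha)$ is killed by $I_{Z_0}^{b-a}$, giving $I_{Z_0}^b \mathcal{M}(D) = I_{Z_0}^{b-a} \cdot I_{Z_0}^a \mathcal{M}(D) \subseteq \mathcal{O}_L(\mathfrak{X}) \varphi_{\mathcal{M}(D)}(\mathcal{M}(D))$. For (iii), I would tensor $\alpha$ with $\mathcal{O}_L(\mathfrak{X})[Z_0^{-1}]$ over $\mathcal{O}_L(\mathfrak{X})$; the ring homomorphism is flat by Lemma \ref{Y-1-flat}, so injectivity is preserved. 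The extended cokernel is annihilated by $I_{Z_0}^{b-a} \cdot \mathcal{O}_L(\mathfrak{X})[Z_0^{-1}]$, which is the unit ideal by construction of the localization $\mathcal{O}_L(\mathfrak{X})[Z_0^{-1}] = \bigcup_m I_{Z_0}^{-m}$, hence vanishes; for the same reason $I_{Z_0}^a \cdot \mathcal{O}_L(\mathfrak{X})[Z_0^{-1}] = \mathcal{O}_L(\mathfrak{X})[Z_0^{-1}]$, so the localized target equals $\mathcal{M}(D)[Z_0^{-1}]$, giving the claimed isomorphism.

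The main obstacle is the stalk computation at $x \in Z_0$ for $\alpha$ itself rather than for the shifted companion map $\overline{\varphi}_{\mathcal{M}(D)}$ treated in Proposition \ref{varphi-iso}; the delicate point is that $\mathcal{M}(D)$ and $\mathfrak{n}(1)^{-a}\mathcal{M}(D)$ have distinct stalks precisely at $1$, and one must exploit the fact that $1 \notin Z_0$ (together with $\pi_L^*(Z_0) = \{1\}$) so that this discrepancy shows up only in a harmless way at the points where $\alpha$ fails to be an isomorphism. Once this compatibility is in place, the combinatorics of the cokernel at $x \in Z_0$ reduce to the same filtration-index computation that drives Proposition \ref{varphi-iso}.i.
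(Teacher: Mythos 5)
Your proposal is correct, but it takes a genuinely different route from the paper's. The paper proves (i) first and directly, by combining Prop.~\ref{varphi-iso}.ii with the identity of Lemma~\ref{n(1)} to trade $\mathfrak{n}(1)$-powers for $I_{Z_0}$-powers; it then obtains the annihilation statement in (ii) as a consequence of (i), and establishes the injectivity in (ii) by a diagram chase: assuming $a\le 0$ and picking $0\ne f\in\mathfrak{n}(1)$, it multiplies a putative nonzero kernel element $s$ by $\varphi_L(f^{-a})$, observes (using projectivity of $\mathcal{M}(D)$) that this stays nonzero and lifts into $\mathcal{O}_L(\mathfrak{X})\otimes_{\varphi_L}\mathfrak{n}(1)^{-a}\mathcal{M}(D)$, and derives a contradiction with the injectivity of $\overline{\varphi}_{\mathcal{M}(D)}$ from Prop.~\ref{varphi-iso}.ii. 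You instead prove (ii) directly and self-containedly by computing the stalks of the sheaf morphism underlying $\alpha$ itself: away from $Z_0\cup\{1\}$ it agrees with $\Phi$; at $1$ it is $\id\otimes\varphi_D$; and at $x\in Z_0$ it is the $\mathfrak{m}_x^a$-translate of $\Phi$'s stalk map, with the same cokernel. You then deduce (i) from (ii), reversing the paper's logical order. Both arguments are valid; what yours buys is a single uniform local computation that dispenses with Lemma~\ref{n(1)} and the projectivity/lifting trick, at the cost of essentially redoing the stalk analysis of Prop.~\ref{varphi-iso}.i for the shifted map rather than invoking that proposition as a black box. Part (iii) is handled identically in both.
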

\begin{proof}
i. The right hand inclusion was shown in Lemma \ref{varphi}. For the left hand inclusion
we observe that Prop.\ \ref{varphi-iso}.ii implies that
\begin{equation*}
  I_{Z_0}^{b-a} \mathfrak{n}(1)^{-a} \mathcal{M}(D) \subseteq \mathcal{O}_L(\mathfrak{X})
  \varphi_L(\mathfrak{n}(1)^{-a}) \varphi_{\mathcal{M}(D)}(\mathcal{M}(D)) \ .
\end{equation*}
Using Lemma \ref{n(1)} we deduce that
\begin{equation*}
  I_{Z_0}^b \mathcal{M}(D) \subseteq (\mathcal{O}_L(\mathfrak{X})
  \varphi_L(\mathfrak{n}(1))^a \mathcal{O}_L(\mathfrak{X})
  \varphi_L(\mathfrak{n}(1)^{-a}) \varphi_{\mathcal{M}(D)}(\mathcal{M}(D)) \ .
\end{equation*}
But one easily checks that $(\mathcal{O}_L(\mathfrak{X})
  \varphi_L(\mathfrak{n}(1))^a \mathcal{O}_L(\mathfrak{X})
  \varphi_L(\mathfrak{n}(1)^{-a}) = \mathcal{O}_L(\mathfrak{X})$.

ii. The annihilation property of the cokernel is clear from i. To establish the injectivity we may assume without loss of generality that $a \leq 0$. We consider the commutative diagram
\begin{equation*}
  \xymatrix{
    \mathcal{O}_L(\mathfrak{X}) \otimes_{\mathcal{O}_L(\mathfrak{X}),\varphi_L} \mathfrak{n}(1)^{-a} \mathcal{M}(D)  \ar[d] \ar[r]^-{\overline{\varphi}_{\mathcal{M}(D)}} & \mathfrak{n}(1)^{-a} \mathcal{M}(D) \ar[d]^{\subseteq} \\
    \mathcal{O}_L(\mathfrak{X}) \otimes_{\mathcal{O}_L(\mathfrak{X}),\varphi_L}  \mathcal{M}(D) \ar[r] & I_{Z_0}^a \mathcal{M}(D), }
\end{equation*}
and we pick a function $0 \neq f \in \mathfrak{n}(1)$. Then $\varphi_L(f^{-a}) \neq 0$ in $\mathcal{O}_L(\mathfrak{X})$. Suppose that $s$ is a nonzero element in the kernel of the lower horizontal map. Then $\varphi_L(f^{-a})s$ is a nonzero element in this kernel as well since $\mathcal{M}(D)$ is a projective $\mathcal{O}_L(\mathfrak{X})$-module. But $\varphi_L(f^{-a})s$ lifts to an element in $\mathcal{O}_L(\mathfrak{X}) \otimes_{\mathcal{O}_L(\mathfrak{X}),\varphi_L} \mathfrak{n}(1)^{-a} \mathcal{M}(D)$ which necessarily lies in the kernel of $\overline{\varphi}_{\mathcal{M}(D)}$. This contradicts the injectivity assertion in Prop.\ \ref{varphi-iso}.ii.

iii. This immediately follows from ii. since $\mathcal{O}_L(\mathfrak{X})[Z_0^{-1}]$ is flat over $\mathcal{O}_L(\mathfrak{X})$ by Lemma \ref{Y-1-flat}.
\end{proof}

For any $c \in o_L^\times$ the map $c^*$ is an automorphism of $\mathfrak{X}$ which respects the subset $Z$ as well as the function $n : Z \longrightarrow \ZZ_{\geq 0}$. It is straightforward to conclude from this that $c_* \otimes \id$ induces a semilinear automorphism $c_{\mathcal{M}(D)}$ of $\mathcal{M}(D)$ such that the diagram
\begin{equation}\label{f:commute}
    \xymatrix{
      \mathcal{M}(D) \ar[d]_{c_{\mathcal{M}(D)}} \ar[rr]^{\varphi_{\mathcal{M}(D)}} && I_{Z_0}^a \mathcal{M}(D) \ar[d]^{c_* \otimes c_{\mathcal{M}(D)}} \\
      \mathcal{M}(D) \ar[rr]^{\varphi_{\mathcal{M}(D)}} && I_{Z_0}^a \mathcal{M}(D)   }
\end{equation}
is commutative. Clearly these constructions are functorial in $D$.

Recall that $\Ve(\Fil,\varphi_L)$ denotes the exact category of filtered $\varphi_L$-modules as introduced in Definition \ref{def:fil-vs}. For any $D$ in $\Ve(\Fil,\varphi_L)$ we have constructed a finitely generated projective $\mathcal{O}_L(\mathfrak{X})$-module $\mathcal{M}(D)$ together with an injective $\varphi_L$-linear map
\begin{equation*}
  \varphi_{\mathcal{M}(D)} : \mathcal{M}(D) \longrightarrow \mathcal{M}(D)[Z_0^{-1}]
\end{equation*}
(cf.\ Lemmas \ref{fg-proj} and \ref{varphi}) such that the induced $\mathcal{O}_L(\mathfrak{X})[Z_0^{-1}]$-linear map
\begin{align*}
    \widetilde{\varphi}_{\mathcal{M}(D)} : \mathcal{O}_L(\mathfrak{X})[Z_0^{-1}] \otimes_{\mathcal{O}_L(\mathfrak{X}),\varphi_L} \mathcal{M}(D) & \xrightarrow{\; \cong \;} \mathcal{M}(D)[Z_0^{-1}] \\
    f \otimes s & \longmapsto f \varphi_{\mathcal{M}(D)} (s)
\end{align*}
is an isomorphism (Cor.\ \ref{iso-Z0invert}.iii). We also have the group $\Gamma_L$ acting on $\mathcal{M}(D)$ by semilinear automorphisms $\gamma_{\mathcal{M}(D)}$ such that the diagrams
\begin{equation*}
    \xymatrix{
      \mathcal{M}(D) \ar[d]_{\gamma_{\mathcal{M}(D)}} \ar[rr]^-{\varphi_{\mathcal{M}(D)}} && \mathcal{M}(D)[Z_0^{-1}] \ar[d]^{\gamma_* \otimes \gamma_{\mathcal{M}(D)}} \\
      \mathcal{M}(D) \ar[rr]^-{\varphi_{\mathcal{M}(D)}} &&  \mathcal{M}(D)[Z_0^{-1}]   }
\end{equation*}
are commutative (cf.\ \eqref{f:commute}). Being a finitely generated module $\mathcal{M}(D)$ carries a natural Fr\'echet topology.

\begin{lemma}\label{action-MD}
The $\Gamma_L$-action on $\mathcal{M}(D)$ is continuous and differentiable, and the derived $\Lie(\Gamma_L)$-action is $L$-bilinear.
\end{lemma}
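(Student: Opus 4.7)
The plan is to transfer the known analytic properties of the $\Gamma_L$-action on $\mathcal{O}_L(\mathfrak{X})$ (recalled in the discussion preceding Def.\ \ref{def:L-analytic}) to $\mathcal{M}(D)$ by realizing the latter as a closed $\Gamma_L$-invariant submodule of a finitely generated projective ambient module. First I would observe that $\Gamma_L$ acts on $\mathfrak{X}$ by group automorphisms, hence permutes the nontrivial torsion subset $Z$ and stabilizes each ideal $I_Z^m$; it also acts trivially on the coefficient space $D$ and trivially on $\Fil^\bullet D$. The defining local filtration conditions of $\mathcal{M}(D)$ are therefore $\Gamma_L$-stable, and by \eqref{f:bounds} together with Lemma \ref{fg-proj}, $\mathcal{M}(D)$ sits as a closed $\Gamma_L$-invariant submodule of the finitely generated projective $\mathcal{O}_L(\mathfrak{X})$-module $I_Z^{-b}\otimes_L D$.

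Next I would exploit the affinoid decomposition $\mathcal{M}(D) = \varprojlim_r \mathcal{M}(D)_r$ with $\mathcal{M}(D)_r := \mathcal{O}_L(\mathfrak{X}(r)) \otimes_{\mathcal{O}_L(\mathfrak{X})} \mathcal{M}(D)$, which by Fact \ref{coherent}.iv is a finitely generated projective $\mathcal{O}_L(\mathfrak{X}(r))$-module carrying a semilinear $\Gamma_L$-action. Combining Lemma \ref{smalldisk-locan} for small radii and Prop.\ \ref{annuli-locan} for annular pieces, applied to an admissible affinoid covering $\mathfrak{X}(r) = \mathfrak{X}(r_0) \cup \mathfrak{X}(r_0,r)$ for suitably small $r_0$, the algebra $\mathcal{O}_L(\mathfrak{X}(r))$ itself satisfies the condition \eqref{f:condition-locan}. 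Prop.\ \ref{locan3} then implies that the $\Gamma_L$-action on each $\mathcal{M}(D)_r$ is locally $\Qp$-analytic; in particular it is continuous and differentiable. Both properties descend to the projective limit $\mathcal{M}(D)$ because the canonical Fr\'echet topology of $\mathcal{M}(D)$ coincides with the projective limit topology and each restriction map $\mathcal{M}(D) \to \mathcal{M}(D)_r$ is a $\Gamma_L$-equivariant continuous linear map.

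For the $L$-bilinearity of the derived action I would invoke the fact, also recorded in the discussion preceding Def.\ \ref{def:L-analytic}, that the derived $\Lie(\Gamma_L)$-action on $\mathcal{O}_L(\mathfrak{X})$ is $L$-bilinear. By the Leibniz rule this action extends uniquely to a derivation on the localization $\mathcal{O}_L(\mathfrak{X})[Z^{-1}]$, and the extension remains $L$-bilinear in the $\Lie(\Gamma_L)$-variable; tensoring with the finite-dimensional $L$-vector space $D$ preserves this property. Since $\mathcal{M}(D)$ is closed and $\Gamma_L$-invariant inside $\mathcal{O}_L(\mathfrak{X})[Z^{-1}] \otimes_L D$, the derivative of any orbit map $\rho_s : \Gamma_L \to \mathcal{M}(D)$ computed in the ambient space must again lie in $\mathcal{M}(D)$, so the derived action on $\mathcal{M}(D)$ is simply the restriction, and is therefore $L$-bilinear.

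The main technical obstacle I expect is verifying the hypothesis \eqref{f:condition-locan} for $\mathcal{O}_L(\mathfrak{X}(r))$ at arbitrary $r$, not just for the small disks and special annuli treated explicitly in Lemma \ref{smalldisk-locan} and Prop.\ \ref{annuli-locan}; patching the two cases along an admissible affinoid covering and transferring the operator-norm estimate to the resulting supremum-norm Banach algebra is the delicate step before Prop.\ \ref{locan3} can be applied uniformly to each $\mathcal{M}(D)_r$.
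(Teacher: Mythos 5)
Your general plan is sound, but you are making the problem harder than it needs to be, and you identify as a ``main technical obstacle'' a point that the paper's framework has already disposed of.

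The paper's proof is shorter and rests on one observation you do not use: multiplication by $(\log_\mathfrak{X})^b$ is a continuous bijection $I_Z^{-b} \longrightarrow \mathfrak{n}(1)^b$, hence a topological isomorphism by the open mapping theorem, and by Lemma \ref{zeros}.ii (namely $a_*(\log_\mathfrak{X}) = a\cdot\log_\mathfrak{X}$) it carries the $\Gamma_L$-action on $I_Z^{-b}$ to the $\Gamma_L$-action on the closed ideal $\mathfrak{n}(1)^b$ twisted by the locally $L$-analytic character $\gamma \longmapsto \gamma^{-b}$. Since continuity, differentiability and $L$-bilinearity of the derived action are already known for $\mathcal{O}_L(\mathfrak{X})$ by the discussion after Lemma \ref{smalldisk-locan}, and since these properties pass to closed $\Gamma_L$-invariant subspaces and are stable under twisting by a locally $L$-analytic character, the assertion for $I_Z^{-b}$ and hence for $\mathcal{M}(D) \subseteq I_Z^{-b}\otimes_L D$ follows at once. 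No affinoid-by-affinoid verification is needed at this stage.

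By contrast you propose to apply the criterion \eqref{f:condition-locan} to each affinoid slice $\mathcal{M}(D)_r$ and then descend along the projective limit. That route can be made to work --- this is essentially what the paper does later in the more general Lemma \ref{M-diff}, where no reduction to an ideal is available --- but you should use the cofinal sequence $\mathfrak{X}_n$ from section \ref{sec:LT} rather than arbitrary $\mathfrak{X}(r)$: the discussion before Definition \ref{def:L-analytic} already verifies condition \eqref{f:condition-locan} for each $\mathcal{O}_K(\mathfrak{X}_n)$, so the ``patching'' problem you worry about does not arise, and your obstacle is in fact a non-issue. Finally, your argument for $L$-bilinearity is correct in spirit but stated on the non-topological localization $\mathcal{O}_L(\mathfrak{X})[Z^{-1}]\otimes_L D$ where orbit maps and their derivatives are not well-defined; you should instead compute inside the finitely generated projective $\mathcal{O}_L(\mathfrak{X})$-module $I_Z^{-b}\otimes_L D$, for which one needs $L$-bilinearity of the derived action on $I_Z^{-b}$ --- and that is exactly what the $\log_\mathfrak{X}^b$ identification and Lemma \ref{zeros}.ii deliver with no extra work.
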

\begin{proof}
By the proof of Lemma \ref{fg-proj} the module $\mathcal{M}(D)$ is a closed submodule of the finitely generated projective module $I_Z^{-b} \otimes_L D$ for some $b \geq 0$. This reduces us to showing that the $\Gamma_L$-action on $I_Z^{-b}$ has all the asserted properties. Multiplication by the function $(\log_{\mathfrak{X}})^b$ induces a continuous bijection $I_Z^{-b} \longrightarrow \mathfrak{n}(1)^b$. By the open mapping theorem it has to be a topological isomorphism. By Lemma \ref{zeros}.ii the $\Gamma_L$-action on $I_Z^{-b}$ corresponds to the natural $\Gamma_L$-action on the closed ideal $\mathfrak{n}(1)^b$ in $\mathcal{O}_L(\mathfrak{X})$ twisted by the locally $L$-analytic character $\Gamma_L = o_L^\times \longrightarrow L^\times$ sending $\gamma$ to $\gamma^{-b}$. By the discussion after Lemma \ref{smalldisk-locan} the $\Gamma_L$-action on $\mathcal{O}_L(\mathfrak{X})$ and a fortiori the natural action on the closed ideal $\mathfrak{n}(1)^b$ and hence the twisted action all have the wanted properties.
\end{proof}

There is one additional important property. By Lemma \ref{stalks} we have $\widetilde{\mathcal{M}}_1(D) = \mathcal{O}_1 \otimes_L D$. We see that the induced $\Gamma_L$-action (note that the monoid action on $\mathfrak{X}$ fixes the point $1$) on $\mathcal{M}(D)/\mathfrak{n}(1)\mathcal{M}(D) = \widetilde{\mathcal{M}}_1(D)/\mathfrak{m}_1 \widetilde{\mathcal{M}}_1(D) = D$ is trivial.

\begin{definition}
\label{defmodw}
We therefore introduce the category $\Mod^{\varphi_L,\Gamma_L,\mathrm{an}}_{/\mathfrak{X}}$ of finitely generated projective $\mathcal{O}_L(\mathfrak{X})$-modules
$M$ equipped with an injective $\varphi_L$-linear map
\begin{equation*}
  \varphi_M: M \longrightarrow M[Z_0^{-1}]
\end{equation*}
as well as a continuous (w.r.t.\ the natural Fr\'echet topology on $M$) action of $\Gamma_L$ by semilinear automorphisms $\gamma_M$ such that the following properties are satisfied:
\begin{itemize}
  \item[a)] The induced $\mathcal{O}_L(\mathfrak{X})[Z_0^{-1}]$-linear map
\begin{align*}
    \widetilde{\varphi}_M : \mathcal{O}_L(\mathfrak{X})[Z_0^{-1}] \otimes_{\mathcal{O}_L(\mathfrak{X}),\varphi_L} M & \xrightarrow{\; \cong \;} M[Z_0^{-1}] \\
    f \otimes s & \longmapsto f \varphi_M (s)
\end{align*}
is an isomorphism.
  \item[b)] For any $\gamma \in \Gamma_L$ the diagram
  \begin{equation*}
    \xymatrix{
       M\ar[d]_{\gamma_M} \ar[rr]^-{\varphi_M} && M[Z_0^{-1}] \ar[d]^{\gamma_* \otimes \gamma_M} \\
       M \ar[rr]^-{\varphi_M} &&  M[Z_0^{-1}]   }
\end{equation*}
is commutative.
  \item[c)] The induced $\Gamma_L$-action on $M/\mathfrak{n}(1)M$ is trivial.
\end{itemize}
\end{definition}

\begin{remark}\label{phi-M-bounded}
There are integers $a \leq 0 \leq c$ such that the map $\widetilde{\varphi}_M$ restricts to an injective map $\mathcal{O}_L(\mathfrak{X}) \otimes_{\mathcal{O}_L(\mathfrak{X}),\varphi_L} M \longrightarrow I_{Z_0}^a M$ whose cokernel is annihilated by $I_{Z_0}^c$.
\end{remark}
\begin{proof}
Since $M$ is finitely generated we find an $a \leq 0$ such that $\varphi_M(M) \subseteq I_{Z_0}^a M$. Since $M$ is projective we have $\mathcal{O}_L(\mathfrak{X}) \otimes_{\mathcal{O}_L(\mathfrak{X}),\varphi_L} M \subseteq \mathcal{O}_L(\mathfrak{X})[Z_0^{-1}] \otimes_{\mathcal{O}_L(\mathfrak{X}),\varphi_L} M$. Hence the injectivity follows from condition a). Tensoring the resulting map $\mathcal{O}_L(\mathfrak{X}) \otimes_{\mathcal{O}_L(\mathfrak{X}),\varphi_L} M \longrightarrow I_{Z_0}^a M$ with $\mathcal{O}_L(\mathfrak{X})[Z_0^{-1}]$ gives back the isomorphism $\widetilde{\varphi}_M$ (recall that $\mathcal{O}_L(\mathfrak{X})[Z_0^{-1}]$ is flat over $\mathcal{O}_L(\mathfrak{X})$ by Lemma \ref{Y-1-flat}). Hence the cokernel $N$ of this map satisfies $N[Z_0^{-1}] = 0$. For any function $0 \neq f \in I_{Z_0}$ we have $I_{Z_0}^{-1} \subseteq \mathcal{O}_L(\mathfrak{X}) f^{-1}$ and consequently that the localization $N_f$ vanishes. Since $N$ is finitely generated we find a $c(f) \geq 0$ such that $f^{c(f)} N = 0$. Finally, since $I_{Z_0}$ is finitely generated, we must have a $c \geq 0$ such that $I_{Z_0}^c N = 0$.
\end{proof}

\begin{lemma}\label{M-diff}
The $\Gamma_L$-action on $M$ is differentiable.
\end{lemma}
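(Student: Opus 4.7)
The plan is to mimic the strategy of Prop.\ \ref{differentiable} by reducing to the Banach setting via a quasi-Stein covering of $\mathfrak{X}$. Pick a strictly increasing sequence $r_1 < r_2 < \cdots$ in $(0,1) \cap p^\QQ$ with $r_n \to 1$; the collection $\{\mathfrak{X}(r_n)\}$ is then an admissible $\Gamma_L$-invariant affinoid covering of $\mathfrak{X}$. By Fact \ref{coherent}.iv, each $M(r_n) := \mathcal{O}_L(\mathfrak{X}(r_n)) \otimes_{\mathcal{O}_L(\mathfrak{X})} M$ is finitely generated projective over the Banach algebra $\mathcal{O}_L(\mathfrak{X}(r_n))$ and inherits a continuous semilinear $\Gamma_L$-action. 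As in the proof of Prop.\ \ref{differentiable}, the canonical Fr\'echet topology on $M$ is the projective limit of the canonical Banach topologies on the $M(r_n)$, $\Gamma_L$-equivariantly. Hence the orbit map $\rho_m : \Gamma_L \longrightarrow M$ at any $m \in M$ will be differentiable as soon as each of its compositions $\Gamma_L \longrightarrow M(r_n)$ is, which I aim to establish by applying Prop.\ \ref{locan3}.

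The main obstacle is that Lemma \ref{smalldisk-locan} verifies condition \eqref{f:condition-locan} on $\mathcal{O}_L(\mathfrak{X}(r))$ only for small radii $r < p^{-1/(p-1)}$, whereas I need it for arbitrary $r_n$. To bridge this, fix once and for all $r_0 \in (0, p^{-1/(p-1)}) \cap p^\QQ$ and an auxiliary $r_0' \in (0, r_0) \cap p^\QQ$; then for any $r_n \geq r_0$ I would use the admissible covering $\mathfrak{X}(r_n) = \mathfrak{X}(r_0) \cup \mathfrak{X}(r_0', r_n)$ by two $\Gamma_L$-invariant affinoid subdomains. The sheaf axiom together with the maximum modulus principle (applied to $\mathfrak{X}(r_n)(\Cp) = \mathfrak{X}(r_0)(\Cp) \cup \mathfrak{X}(r_0', r_n)(\Cp)$) yields an isometric embedding of Banach algebras
\begin{equation*}
    \mathcal{O}_L(\mathfrak{X}(r_n)) \hookrightarrow \mathcal{O}_L(\mathfrak{X}(r_0)) \times \mathcal{O}_L(\mathfrak{X}(r_0', r_n))
\end{equation*}
with the max-norm on the target. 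Consequently, for any $\gamma \in \Gamma_L$,
\begin{equation*}
    \|\gamma - 1\|_{\mathrm{op},\mathfrak{X}(r_n)} \leq \max\bigl( \|\gamma - 1\|_{\mathrm{op},\mathfrak{X}(r_0)},\ \|\gamma - 1\|_{\mathrm{op},\mathfrak{X}(r_0', r_n)} \bigr)\ .
\end{equation*}
Since condition \eqref{f:condition-locan} holds on $\mathcal{O}_L(\mathfrak{X}(r_0))$ by Lemma \ref{smalldisk-locan} and on $\mathcal{O}_L(\mathfrak{X}(r_0', r_n))$ by Prop.\ \ref{annuli-locan} transported along the LT-isomorphism (as in the discussion before Def.\ \ref{def:L-analytic}), taking the larger of the two witnessing values of $m$ transfers the condition to $\mathcal{O}_L(\mathfrak{X}(r_n))$.

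With \eqref{f:condition-locan} established for each Banach algebra $\mathcal{O}_L(\mathfrak{X}(r_n))$, Prop.\ \ref{locan3} applied to the finitely generated projective module $M(r_n)$ yields that \eqref{f:condition-locan} holds on $M(r_n)$ as well, whence by Lemma \ref{locan} the $\Gamma_L$-action on $M(r_n)$ is locally $\Qp$-analytic and in particular differentiable. Passing to the projective limit gives the differentiability of the $\Gamma_L$-action on $M$, as required.
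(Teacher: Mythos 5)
Your overall strategy — pass to finitely generated projective Banach modules over coordinate rings of an admissible affinoid covering of $\mathfrak{X}$, verify condition \eqref{f:condition-locan} for those Banach algebras, apply Prop.\ \ref{locan3} and Lemma \ref{locan}, and then take the projective limit — is exactly the paper's. The paper works with $M_n := \mathcal{O}_L(\mathfrak{X}_n) \otimes_{\mathcal{O}_L(\mathfrak{X})} M$ for the specific affinoid subgroups $\mathfrak{X}_n$ of section \ref{sec:LT}, while you allow an arbitrary increasing sequence $r_n \to 1$ and introduce a two-piece covering $\mathfrak{X}(r_n) = \mathfrak{X}(r_0) \cup \mathfrak{X}(r_0',r_n)$ to get condition \eqref{f:condition-locan} on $\mathcal{O}_L(\mathfrak{X}(r_n))$. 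The isometric embedding into the product and the resulting operator-norm estimate are fine. The gap is in the input.

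You invoke Prop.\ \ref{annuli-locan} "transported along the LT-isomorphism" for $\mathcal{O}_L(\mathfrak{X}(r_0',r_n))$ with \emph{arbitrary} radii. But Prop.\ \ref{LT-iso} identifies $\mathfrak{X}(s)_{/\Cp}$ with an affinoid disk via $\kappa$ only for $s \in \bigcup_m S_m$ (together with $s < p^{-1/(p-1)}$, from the third bullet in its proof), and for $L \neq \Qp$ the intervals $S_m$ are separated by gaps. The "discussion before Def.\ \ref{def:L-analytic}" that you cite establishes \eqref{f:condition-locan} only for $\mathcal{O}_K(\mathfrak{X}_n)$ and for $\mathcal{O}_K(\mathfrak{X}(s,s'))$ with $s,s'$ constrained as in Prop.\ \ref{quasi-Stein}; it does not cover $\mathfrak{X}(r_0',r_n)$ with $r_n$ in a gap. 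The repair is to choose $r_n \in \bigcup_m S_m$, which is always possible since these sets accumulate at $1$ so $\{\mathfrak{X}(r_n)\}$ remains an admissible covering. But once you do that, the two-piece covering is superfluous: $\mathfrak{X}(r_n)_{/\Cp}$ is then itself a disk via $\kappa$, so Prop.\ \ref{annuli-locan} (more precisely the $\mathcal{O}_K(\mathfrak{B}(r))$ case) gives \eqref{f:condition-locan} on $\mathcal{O}_{\Cp}(\mathfrak{X}(r_n))$, which restricts to the $\Gamma_L$-stable closed subspace $\mathcal{O}_L(\mathfrak{X}(r_n))$ — exactly what the paper does with $\mathfrak{X}_n$.

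A smaller omission: you assert that each $M(r_n)$ "inherits a continuous semilinear $\Gamma_L$-action" without justifying joint continuity. Prop.\ \ref{locan3} needs this as a hypothesis. The paper supplies it via Banach--Steinhaus: each individual $\gamma$ acts by a continuous automorphism (Remark \ref{semilinear-cont}), and the orbit maps of $M(r_n)$ factor through the continuous orbit maps of $M$ followed by the continuous map $M \to M(r_n)$, so they are continuous; joint continuity then follows since $o_L^\times$ is compact and $M(r_n)$ is barrelled. You should include this step.
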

\begin{proof}
We put $M_n := \mathcal{O}_L(\mathfrak{X}_n) \otimes_{\mathcal{O}_L(\mathfrak{X})} M$. The $\Gamma_L$-action extends semilinearly to an action on each $M_n$ by automorphisms $\gamma_{M_n} := \gamma_* \otimes \gamma_M$. Since $M_n$ is a finitely generated projective $\mathcal{O}_L(\mathfrak{X}_n)$-module (and $\Gamma_L$ acts continuously on $\mathcal{O}_L(\mathfrak{X}_n)$) any individual $\gamma_{M_n}$ is a continuous automorphism of the Banach module $M_n$. On the other hand, the orbit maps for $M_n$ are the composite of the, by assumption, continuous orbit maps for $M$ and the continuous map $M \longrightarrow M_n$ and hence are continuous. It therefore follows from the nonarchimedean Banach-Steinhaus theorem that the $\Gamma_L$-action on each $M_n$ is (jointly) continuous. From the discussion after Lemma \ref{smalldisk-locan} we know that the $\Gamma_L$-action on $\mathcal{O}_L(\mathfrak{X}_n)$ satisfies the condition \eqref{f:condition-locan}. We then conclude from Prop.\ \ref{locan3} that the $\Gamma_L$-action on $M_n$ is locally $\Qp$-analytic. The asserted differentiability now follows from the fact that the Fr\'echet module $M$ is the projective limit of the Banach modules $M_n$.
\end{proof}

As we have seen above we then have the well defined functor
\begin{align*}
  \mathcal{M} : \Ve(\Fil,\varphi_L) & \longrightarrow \Mod^{\varphi_L,\Gamma_L,\mathrm{an}}_{/\mathfrak{X}} \\
  D & \longmapsto \mathcal{M}(D) \ .
\end{align*}

On the other hand, for any $M$ in $\Mod^{\varphi_L,\Gamma_L,\mathrm{an}}_{/\mathfrak{X}}$ we have the finite dimensional $L$-vector space
\begin{equation*}
  \mathcal{D}(M) := M/\mathfrak{n}(1)M \ .
\end{equation*}
Since $M/\mathfrak{n}(1)M =
M[Z_0^{-1}]/\mathfrak{n}(1)M[Z_0^{-1}]$ the map $\varphi_M$ induces an $L$-linear endomorphism of $M/\mathfrak{n}(1)M$ denoted by $\varphi_{\mathcal{D}(M)}$. The condition a) implies that $\varphi_{\mathcal{D}(M)}$ is surjective and hence is an automorphism.

By Lemma \ref{M-diff} we have available the operator $\nabla_M$ on $M$ corresponding to the derived action of the element $1 \in \Lie(\Gamma_L)$.

In $\mathfrak{X}$ we have the Zariski and hence admissible open subvariety $\mathfrak{X} \setminus Z$, which is preserved by $\pi_L^*$ and by the action of $\Gamma_L$. Therefore $\varphi_M$ and the action of $\Gamma_L$ extend semilinearly to $M\{Z^{-1}\} := \mathcal{O}_L(\mathfrak{X} \setminus Z) \otimes_{\mathcal{O}_L(\mathfrak{X})} M$. The same argument as for \eqref{f:tilde-varphi-iso} shows that $\mathcal{O}_L(\mathfrak{X})[Z^{-1}] \subseteq \mathcal{O}_L(\mathfrak{X} \setminus Z)$. Hence the condition a) implies that the map
\begin{equation*}
\id \otimes \varphi_M : \mathcal{O}_L(\mathfrak{X} \setminus Z) \otimes_{\mathcal{O}_L(\mathfrak{X}),\varphi_L} M  = \mathcal{O}_L(\mathfrak{X} \setminus Z) \otimes_{\mathcal{O}_L(\mathfrak{X} \setminus Z),\varphi_L} M\{Z^{-1}\} \xrightarrow{\; \cong \;} M\{Z^{-1}\}
\end{equation*}
is an isomorphism.

\begin{proposition}\label{flat}
We have $\mathcal{O}_L(\mathfrak{X} \setminus Z) \otimes_L M\{Z^{-1}\}^{\Gamma_L} = M\{Z^{-1}\}$. In particular, the projection map $M\{Z^{-1}\} \longrightarrow \mathcal{D}(M)$ restricts to an isomorphism $M\{Z^{-1}\}^{\Gamma_L} \xrightarrow{\cong} \mathcal{D}(M)$ such that the diagram
\begin{equation*}
    \xymatrix{
      M\{Z^{-1}\}^{\Gamma_L} \ar[d]_{\varphi_M} \ar[r]^-{\cong} & \mathcal{D}(M) \ar[d]^{\varphi_{\mathcal{D}(M)}} \\
      M\{Z^{-1}\}^{\Gamma_L} \ar[r]^-{\cong} & \mathcal{D}(M)   }
\end{equation*}
is commutative. In fact, we have
\begin{equation*}
  M[Z^{-1}]^{\Gamma_L} = M\{Z^{-1}\}^{\Gamma_L} \qquad\text{and}\qquad \mathcal{O}_L(\mathfrak{X})[Z^{-1}] \otimes_L M[Z^{-1}]^{\Gamma_L} = M[Z^{-1}] \ .
\end{equation*}
\end{proposition}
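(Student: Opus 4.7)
The plan is to construct, for each element of the fiber $\mathcal{D}(M) = M/\mathfrak{n}(1)M$, a canonical $\Gamma_L$-invariant lift in $M\{Z^{-1}\}$, to assemble these into a $\Gamma_L$-equivariant isomorphism $\mathcal{O}_L(\mathfrak{X} \setminus Z) \otimes_L \mathcal{D}(M) \xrightarrow{\cong} M\{Z^{-1}\}$, and then to read off the invariants. By condition c) in Definition \ref{defmodw} the $\Gamma_L$-action on $\mathcal{D}(M)$ is trivial; differentiating the relation $\gamma m - m \in \mathfrak{n}(1)M$, and using that $\mathfrak{n}(1)M$ is closed, shows that the derived action $\nabla_M$ of $1 \in \Lie(\Gamma_L) = L$ (available by Lemma \ref{M-diff}) carries $M$ into $\mathfrak{n}(1)M$. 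A $\Gamma_L$-invariant lift will satisfy $\nabla_M \tilde m = 0$, turning the problem into a connection equation.

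I would begin locally near $1 \in \mathfrak{X}$. Choose $r < p^{-1/(p-1)}$; Lemma \ref{small-disk} identifies $\mathfrak{X}(r) \xrightarrow{\cong} \mathfrak{B}(r)$, and $t := \log_\mathfrak{X}$ is a coordinate with $\gamma_*(t) = \gamma t$ by Lemma \ref{zeros}. The restriction $M_r := \mathcal{O}_L(\mathfrak{X}(r)) \otimes_{\mathcal{O}_L(\mathfrak{X})} M$ is finitely generated projective, and free, as $\mathcal{O}_L(\mathfrak{X}(r))$ is a one-dimensional Tate algebra (a PID). Fix a basis $e_1, \ldots, e_d$ lifting a basis of $\mathcal{D}(M)$, so the matrix $N(t)$ of $\nabla_M$ in this basis has entries in $t \cdot \mathcal{O}_L(\mathfrak{X}(r))$. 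Seeking invariant sections $\tilde e_j = \sum_i U_{ij}(t)\, e_i$ with $U(0) = I$, and using the Leibniz rule together with the identity $\nabla_{\mathcal{O}_L(\mathfrak{X})} = \log_\mathfrak{X} \cdot \partial_\mathrm{inv}$ established at the end of Section \ref{sec:locan}, transforms $\nabla_M \tilde e_j = 0$ into the regular linear ODE $\partial_\mathrm{inv}(U) = -(N(t)/t)\, U$ on $\mathfrak{X}(r)$. Its unique formal solution $U = I + tU_1 + \ldots$ converges on a sub-disk $\mathfrak{X}(r')$ for $r'$ sufficiently small, by the standard $p$-adic existence theorem for linear ODEs with bounded coefficients. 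The Taylor expansion then shows that the columns $\tilde e_j$ are fixed by an open subgroup $\Gamma' \subseteq \Gamma_L$, and full $\Gamma_L$-invariance is arranged by averaging over the finite quotient $\Gamma_L/\Gamma'$, whose action on the $\tilde e_j$ is compatible with the trivial action on the basis of $\mathcal{D}(M)$ they lift.

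Next I would propagate these local sections to all of $\mathfrak{X} \setminus Z$ using the $\varphi_M$-structure. Since $Z_0 \subseteq Z$, Corollary \ref{iso-Z0invert}.iii upgrades $\varphi_M$ to an automorphism of $M[Z^{-1}]$, and hence of $M\{Z^{-1}\}$, which commutes with $\Gamma_L$ up to the semilinearity. Iterating $\varphi_M^{-1}$ transports an invariant section on $\mathfrak{X}(r')$ to invariant sections on $(\pi_L^n)^{-1}(\mathfrak{X}(r')) \setminus Z$, and these admissible opens exhaust $\mathfrak{X} \setminus Z$ as $n \to \infty$. The invariant lifts $\tilde e_1, \ldots, \tilde e_d$ then define an $\mathcal{O}_L(\mathfrak{X} \setminus Z)$-linear map $\mathcal{O}_L(\mathfrak{X} \setminus Z) \otimes_L \mathcal{D}(M) \to M\{Z^{-1}\}$ between finitely generated projective modules of the same rank; it is an isomorphism modulo $\mathfrak{n}(1)$ by construction, is $\varphi$-compatible, and therefore is everywhere an isomorphism by Nakayama at the stalk at $1$ combined with $\varphi$-propagation. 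Taking $\Gamma_L$-invariants and using $\mathcal{O}_L(\mathfrak{X} \setminus Z)^{\Gamma_L} = L$ (an easy Taylor-expansion argument in the weight-one semi-invariant $t$) yields the isomorphism $M\{Z^{-1}\}^{\Gamma_L} \xrightarrow{\cong} \mathcal{D}(M)$, with the $\varphi$-compatibility built in. The $M[Z^{-1}]$-statements follow from Remark \ref{phi-M-bounded}, which bounds the pole order of $\varphi_M^{-1}$ uniformly along $Z_0$: by the $\pi_L^*$-orbit structure of $Z$, this uniform bound propagates to all of $Z$, so the invariant lifts land in $M[Z^{-1}]$. The hardest step will be precisely this uniform pole-order control, because the naive bound from Remark \ref{phi-M-bounded} grows with the number of $\varphi_M^{-1}$-iterations, and extracting a uniform bound across the countable set $Z$ requires tying the pole order at each $\pi_L^{-n}$-preimage of $1$ to the fixed filtration length.
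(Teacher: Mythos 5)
Your overall strategy is the same as the paper's: solve the connection equation $\nabla_M = 0$ on a small disk around $1$ using condition c) to see that the residue is zero, then use the $\varphi_M$-structure to transport the resulting horizontal basis out to all of $\mathfrak{X} \setminus Z$, and finally identify $M\{Z^{-1}\}^{\Gamma_L}$ with $\mathcal{D}(M)$. The paper packages the local step as a citation to the $p$-adic Fuchs theorem for disks (\cite{Ked} Thm.\ 13.2.2) together with \cite{Ked} Lemma 5.1.5, while you write out the ODE $\partial_\mathrm{inv}(U) = -(N/t)U$ explicitly and invoke $p$-adic Cauchy--Lipschitz; these are equivalent. Your averaging over $\Gamma_L/\Gamma'$ to promote $\Gamma'$-invariance to $\Gamma_L$-invariance is an unnecessary detour: the reduction $M(r)^{\nabla_{M(r)}=0} \to M/\mathfrak{n}(1)M$ is a $\Gamma_L$-equivariant isomorphism of finite-dimensional $L$-vector spaces, and $\Gamma_L$ acts trivially on the target by condition c), so it acts trivially on the source; this is how the paper gets $M(r)^{\Gamma_L} = M(r)^{\nabla_{M(r)}=0}$ without averaging. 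Your global step via ``Nakayama at the stalk at $1$ combined with $\varphi$-propagation'' is the right idea but is carried out in the paper by an induction over a sequence of radii $r_m$ with $p^*(\mathfrak{X}(r_{m+1})) \subseteq \mathfrak{X}(r_m)$ and a careful diagram chase showing that $\alpha_m$ sends $M(r_m)\{Z^{-1}\}^{\Gamma_L}$ isomorphically onto $M(r_{m+1})\{Z^{-1}\}^{\Gamma_L}$ using conditions b) and c); you would need to supply something of this sort.

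The genuine gap is the $M[Z^{-1}]$-statement, which your proposal explicitly leaves open, and your diagnosis of the obstacle is slightly off. You write that the naive bound from Remark \ref{phi-M-bounded} ``grows with the number of $\varphi_M^{-1}$-iterations,'' but the whole point of the paper's argument is that it does \emph{not} grow. The mechanism is that $\varphi_L$ pushes existing poles away from $Z_0$ at the same time as $\varphi_M$ introduces new ones. Precisely, Lemma \ref{unramified}.iii gives $\mathcal{O}_L(\mathfrak{X})\,\varphi_L(I_{Z_j}) = I_{Z_{j+1}\setminus Z_0}$, and iterating $e$ times yields $\mathcal{O}_L(\mathfrak{X})\,\phi(I_Z) = I_{Z\setminus Z_{e-1}}$ where $\phi = p^* = (\pi_L^*)^e$ up to a unit. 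Combining this with the bound $\varphi_M(M) \subseteq I_{Z_0}^a M$ from Remark \ref{phi-M-bounded}, one finds $p(M) \subseteq I_{Z_{e-1}}^a M$. In the induction for $I_Z^{-a}\cdot M(r_m)\{Z^{-1}\}^{\Gamma_L} \subseteq M(r_m)$, the inductive hypothesis $I_Z^{-a}s \subseteq M(r_m)$ gives $\phi(I_Z^{-a})\alpha_m(s) = p(I_Z^{-a}s) \subseteq I_{Z_{e-1}}^a M(r_{m+1})$, and since $\mathcal{O}_L(\mathfrak{X})\phi(I_Z^{-a}) = I_{Z\setminus Z_{e-1}}^{-a}$, multiplying by $I_{Z_{e-1}}^{-a}$ reassembles $I_{Z_{e-1}}^{-a} I_{Z\setminus Z_{e-1}}^{-a} = I_Z^{-a}$ — exactly the same uniform bound one started with. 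So there is no growth to control; the bound is preserved step by step. The symmetric computation with $c$ from Remark \ref{phi-M-bounded} gives the reverse inclusion $I_Z^{a+c}M \subseteq \mathcal{O}_L(\mathfrak{X}) \otimes_L M[Z^{-1}]^{\Gamma_L}$. Without this mechanism your proof does not close, so you should carry it out rather than flag it.
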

\begin{proof}
Let $r \in (0,1) \cap p^\QQ$. We put
\begin{align*}
  & M(r) := \mathcal{O}_L(\mathfrak{X}(r)) \otimes_{\mathcal{O}_L(\mathfrak{X})} M \; \text{and} \\
  & M(r)\{Z^{-1}\} := \mathcal{O}_L(\mathfrak{X}(r) \setminus Z) \otimes_{\mathcal{O}_L(\mathfrak{X})} M = \mathcal{O}_L(\mathfrak{X}(r) \setminus Z) \otimes_{\mathcal{O}_L(\mathfrak{X}(r))} M(r) \ .
\end{align*}
We semilinearly extend the $\Gamma_L$-action to $M(r)$ and $M(r)\{Z^{-1}\}$. At first we suppose that $r < p^{-\frac{1}{p-1}}$. Using Lemma \ref{smalldisk-locan} the same reasoning as in the proof of Lemma \ref{M-diff} shows that this $\Gamma_L$-action on $M(r)$ is locally $\Qp$-analytic. In particular, we have the operator $\nabla_{M(r)}$. By possibly passing to a smaller $r$ we may assume that $M(r)$ is free over $\mathcal{O}_L(\mathfrak{X}(r))$. By Lemma \ref{small-disk} we may view $M(r)$ as a finitely generated free differential module over $\mathcal{O}_L(\mathfrak{B}(r))$ for the derivation $y \frac{d}{dy}$. The $p$-adic Fuchs theorem for disks (\cite{Ked} Thm.\ 13.2.2) then implies, again by possibly passing to a smaller $r$, that the $L$-vector space $M(r)^{\nabla_{M(r)}=0}$ contains a basis of the $\mathcal{O}_L(\mathfrak{X}(r))$-module $M(r)$ (observe that, as a consequence of the condition c), the constant matrix of this differential module is the zero matrix). By \cite{Ked} Lemma 5.1.5 (applied to the field of fractions of $\mathcal{O}_L(\mathfrak{X}(r))$) we, in fact, obtain that
\begin{equation*}
    \mathcal{O}_L(\mathfrak{X}(r)) \otimes_L M(r)^{\nabla_{M(r)}=0} = M(r) \ .
\end{equation*}
It follows that the projection map restricts to an isomorphism
\begin{equation*}
    M(r)^{\nabla_{M(r)}=0} \xrightarrow{\;\cong\;} M(r)/\mathfrak{n}(1) M(r) = M/\mathfrak{n}(1) M \ .
\end{equation*}
Since, on the one hand, $\Gamma_L$ acts trivially on $M/\mathfrak{n}(1) M$ by condition c) and, on the other hand, the $\Gamma_L$-action commutes with $\nabla_{M(r)}$ it further follows that $M(r)^{\Gamma_L} = M(r)^{\nabla_{M(r)}=0}$ and hence that
\begin{equation*}
    \mathcal{O}_L(\mathfrak{X}(r)) \otimes_L M(r)^{\Gamma_L} = M(r) \qquad\text{and}\qquad   M(r)^{\Gamma_L} \xrightarrow{\;\cong\;} M/ \mathfrak{n}(1) M \ .
\end{equation*}
Observe that $\mathfrak{X}(r) \cap Z = \emptyset$ and hence $M(r) = M(r)\{Z^{-1}\}$.

We now choose a sequence $r = r_0 < r_1 < \ldots < r_m < \ldots < 1$ in $p^\QQ$ converging to $1$ such that $p^* (\mathfrak{X}(r_{m+1})) \subseteq \mathfrak{X}(r_m)$ for any $m \geq 0$. By induction we assume that
\begin{equation*}
    \mathcal{O}_L(\mathfrak{X}(r_m) \setminus Z) \otimes_L M(r_m)\{Z^{-1}\}^{\Gamma_L} = M(r_m)\{Z^{-1}\}
\end{equation*}
holds true. We temporarily denote by $\phi$ the ring endomorphism of $\mathcal{O}_L(\mathfrak{X} \setminus Z)$ induced by the morphism $p^* : \mathfrak{X} \longrightarrow \mathfrak{X}$. It extends to a ring homomorphism $\phi : \mathcal{O}_L(\mathfrak{X}(r_m) \setminus Z) \longrightarrow \mathcal{O}_L(\mathfrak{X}(r_{m+1}) \setminus Z)$. As a consequence of the condition a) we have the $\mathcal{O}_L(\mathfrak{X} \setminus Z)$-linear isomorphism
\begin{align*}
    \mathcal{O}_L(\mathfrak{X} \setminus Z) \otimes_{\mathcal{O}_L(\mathfrak{X}),\phi} M & \xrightarrow{\;\cong\;} M\{Z^{-1}\} \\
    f \otimes s & \longmapsto f p(s)
\end{align*}
which base changes to the $\mathcal{O}_L(\mathfrak{X}(r_{m+1}) \setminus Z)$-linear isomorphism
\begin{multline*}
    \mathcal{O}_L(\mathfrak{X}(r_{m+1}) \setminus Z) \otimes_L M(r_m)\{Z^{-1}\}^{\Gamma_L} =
    \mathcal{O}_L(\mathfrak{X}(r_{m+1}) \setminus Z) \otimes_{\mathcal{O}_L(\mathfrak{X}(r_m) \setminus Z),\phi} M(r_m)\{Z^{-1}\} \\ = \mathcal{O}_L(\mathfrak{X}(r_{m+1}) \setminus Z) \otimes_{\mathcal{O}_L(\mathfrak{X}),\phi} M  \xrightarrow{\;\cong\;}   M(r_{m+1})\{Z^{-1}\},
\end{multline*}
where the first identity is our induction hypothesis, denoted by $\alpha_m$. By restricting to $\Gamma_L$-invariants the latter map fits into the diagram
\begin{equation*}
  \xymatrix{
     M(r_m)\{Z^{-1}\}^{\Gamma_L} \ar[d]_{\cong} \ar[r]^-{\alpha_m} & M(r_{m+1})\{Z^{-1}\}^{\Gamma_L} \ar[d] \ar@{^{(}->}[r]^{\mathrm{res}} & M(r_m)\{Z^{-1}\}^{\Gamma_L} \ar[dl]^{\cong} \\
    M/\mathfrak{n}(1) M \ar[r]^{\cong}_{\varphi_{\mathcal{D}(M)}^e} &  M/\mathfrak{n}(1) M &    }
\end{equation*}
where $e$ denotes the ramification index of $L/\Qp$. The left square is commutative as a consequence of conditions b) and c). The right upper horizontal arrow is given by restriction from $\mathfrak{X}(r_{m+1})$ to $\mathfrak{X}(r_m)$; it is injective since $M$ is projective. The right part of the diagram is trivially commutative. The indicated perpendicular isomorphisms hold by the induction hypothesis. It follows that all arrows in the diagram are isomorphisms. We see that the original isomorphism $\alpha_m$ sends $M(r_m)\{Z^{-1}\}^{\Gamma_L}$ isomorphically onto $M(r_{m+1})\{Z^{-1}\}^{\Gamma_L}$ which implies that
\begin{equation*}
    \mathcal{O}_L(\mathfrak{X}(r_{m+1}) \setminus Z) \otimes_L M(r_{m+1})\{Z^{-1}\}^{\Gamma_L} = M(r_{m+1})\{Z^{-1}\} \ .
\end{equation*}
Moreover, the restriction maps induce isomorphisms $M(r_{m+1})\{Z^{-1}\}^{\Gamma_L} \cong M(r_m)\{Z^{-1}\}^{\Gamma_L}$ as well.

Let $\mathfrak{M}$ denote the coherent $\mathcal{O}_\mathfrak{X}$-module with global sections $M$. According to Remark \ref{coherent}.iv we have $\mathfrak{M}(\mathfrak{X} \setminus Z) = M\{Z^{-1}\}$ and $\mathfrak{M}(\mathfrak{X}(r_m) \setminus Z) = M(r_m)\{Z^{-1}\}$. Since the $\mathfrak{X}(r_m) \setminus Z$ form an admissible covering of $\mathfrak{X} \setminus Z$ we have $\mathfrak{M}(\mathfrak{X} \setminus Z) = \varprojlim_m \mathfrak{M}(\mathfrak{X}(r_m) \setminus Z)$. It follows first that $M\{Z^{-1}\}^{\Gamma_L} = \varprojlim_m M(r_m)\{Z^{-1}\}^{\Gamma_L} \cong M(r_n)\{Z^{-1}\}^{\Gamma_L}$ for any $n$, hence $\mathcal{O}_L(\mathfrak{X}(r_m) \setminus Z) \otimes_L M\{Z^{-1}\}^{\Gamma_L} = M(r_m)\{Z^{-1}\}$, and finally, by passing to the projective limit in the latter identity, that $\mathcal{O}_L(\mathfrak{X} \setminus Z) \otimes_L M\{Z^{-1}\}^{\Gamma_L} = M\{Z^{-1}\}$.

For any $j \geq 0$ we set $Z_j := \{x \in Z : n(x) \leq j\} = \mathfrak{X}[\pi_L^{j+1}]$, and $Z_{-1} := \emptyset$. Lemma \ref{unramified} implies that
\begin{equation*}
  \mathcal{O}_L(\mathfrak{X}) \varphi_L(I_{Z_j}) = I_{Z_{j+1} \setminus Z_0} \qquad\text{and}\qquad \mathcal{O}_L(\mathfrak{X}) \varphi_L(I_{Z \setminus Z_j}) = I_{Z \setminus Z_{j+1}} \ .
\end{equation*}
If $e$ denotes the ramification index of $L/\Qp$ then we have $p = \pi_L^e u$ for some $u \in o_L^\times$. We deduce inductively that
\begin{equation*}
  \mathcal{O}_L(\mathfrak{X}) \phi(I_Z) = \mathcal{O}_L(\mathfrak{X}) \varphi_L^e(I_Z) = I_{Z \setminus Z_{e-1}} \ .
\end{equation*}
For the more precise form of the assertion we use Remark \ref{phi-M-bounded} with the integers $a \leq 0 \leq c$. Again we deduce inductively that
\begin{equation*}
  p(M) = \varphi_M^e(M) \subseteq \varphi_M(I_{Z_{e-2}}^a M) = \varphi_L(I_{Z_{e-2}}^a)\varphi_M(M) \subseteq I_{Z_{e-1} \setminus Z_0}^a I_{Z_0}^a M = I_{Z_{e-1}}^a M
\end{equation*}
and
\begin{align*}
  \mathcal{O}_L(\mathfrak{X})p(M) & = \mathcal{O}_L(\mathfrak{X}) \varphi_M^e(M) = \mathcal{O}_L(\mathfrak{X}) \varphi_M(\mathcal{O}_L(\mathfrak{X}) \varphi_M^{e-1}(M)) \\
   & \supseteq \mathcal{O}_L(\mathfrak{X}) \varphi_M(I_{Z_{e-2}}^{a+c} M) = I_{Z_{e-1} \setminus Z_0}^{a+c} \varphi_M(M) \supseteq I_{Z_{e-1} \setminus Z_0}^{a+c} I_{Z_0}^{a+c} M \\
   & = I_{Z_{e-1}}^{a+c} M \ .
\end{align*}
For the identity $M[Z^{-1}]^{\Gamma_L} = M\{Z^{-1}\}^{\Gamma_L}$ it suffices to show that $I_Z^{-a} \cdot M\{Z^{-1}\}^{\Gamma_L} \subseteq M$ which further reduces to the claim that $I_Z^{-a} \cdot M(r_m)\{Z^{-1}\}^{\Gamma_L} \subseteq M(r_m)$ for any $m \geq 0$. This holds trivially true for $m = 0$. Any element in $M(r_{m+1})\{Z^{-1}\}^{\Gamma_L}$ can be written as $\alpha_m(s)$ for some $s \in M(r_m)\{Z^{-1}\}^{\Gamma_L}$. By induction we may assume that $I_Z^{-a}s \subseteq M(r_m)$. Then $\phi(I_Z^{-a})\alpha_m(s) = p(I_Z^{-a}s) \subseteq p(M(r_m)) \subseteq I_{Z_{e-1}}^a M(r_{m+1})$ and hence
\begin{equation*}
  M(r_{m+1}) \supseteq I_{Z_{e-1}}^{-a} \phi(I_Z^{-a})\alpha_m(s) = I_{Z_{e-1}}^{-a} I_{Z \setminus Z_{e-1}}^{-a} \alpha_m(s) = I_Z^{-a} \alpha_m(s) \ .
\end{equation*}
Next we consider the commutative diagram
\begin{equation*}
  \xymatrix{
    \mathcal{O}_L(\mathfrak{X})[Z^{-1}] \otimes_L M[Z^{-1}]^{\Gamma_L} \ar[d]_{\subseteq\; \otimes\; =} \ar[r]^-{\subseteq} & M[Z^{-1}] \ar[d]^{\subseteq} \\
    \mathcal{O}_L(\mathfrak{X} \setminus Z) \otimes_L M\{Z^{-1}\}^{\Gamma_L} \ar[r]^-{=} & M\{Z^{-1}\}   }
\end{equation*}
where the right vertical inclusion comes from the projectivity of $M$. We have seen that $\mathcal{O}_L(\mathfrak{X}) \otimes_L M[Z^{-1}]^{\Gamma_L} \subseteq I_Z^a M$. In order to recognize the upper horizontal arrow as an identity it suffices to show that $I_Z^{a+c} M \subseteq \mathcal{O}_L(\mathfrak{X}) \otimes_L M[Z^{-1}]^{\Gamma_L}$. This reduces to the claim that $I_Z^{a+c} M(r_m) \subseteq \mathcal{O}_L(\mathfrak{X}(r_m)) \otimes_L M(r_m)[Z^{-1}]^{\Gamma_L}$ for any $m \geq 0$. This is clear for $m=0$. By induction we compute
\begin{align*}
  \mathcal{O}_L(\mathfrak{X}(r_{m+1})) \otimes_L M(r_{m+1})[Z^{-1}]^{\Gamma_L} & = \mathcal{O}_L(\mathfrak{X}(r_{m+1})) \otimes_L \alpha_m(M(r_m)[Z^{-1}]^{\Gamma_L}) \\
   & = \mathcal{O}_L(\mathfrak{X}(r_{m+1})) p(\mathcal{O}_L(\mathfrak{X}(r_m)) \otimes_L M(r_m)[Z^{-1}]^{\Gamma_L}) \\
   & \supseteq \mathcal{O}_L(\mathfrak{X}(r_{m+1})) p(I_Z^{a+c} M(r_m)) \\
   & = \mathcal{O}_L(\mathfrak{X}(r_{m+1})) \phi(I_Z^{a+c}) p(M(r_m))   \\
   & = \mathcal{O}_L(\mathfrak{X}(r_{m+1})) I_{Z \setminus Z_{e-1}}^{a+c} p(M(r_m)) \\
   & \supseteq I_{Z \setminus Z_{e-1}}^{a+c} I_{Z_{e-1}}^{a+c} M(r_{m+1}) = I_Z^{a+c} M(r_{m+1}) \ .
\end{align*}
\end{proof}

\begin{remark}\label{L-bilinear}
The derived $\Lie(\Gamma_L)$-action on any $M$ in $\Mod^{\varphi_L,\Gamma_L,\mathrm{an}}_{/\mathfrak{X}}$  is $L$-bilinear.
\end{remark}
\begin{proof}
We use the notations in the proof of Prop.\ \ref{flat}, in particular, the radius $r = r_0$. Since $M \subseteq M(r)$ it suffices to prove the analogous assertion for $M(r)$. But in that proof we had seen that $M(r) = \mathcal{O}_L(\mathfrak{X}(r)) \otimes_L M(r)^{\Gamma_L}$ which further reduces us to the case of the derived action on $\mathcal{O}_L(\mathfrak{X}(r))$. This case was treated in Lemma \ref{smalldisk-locan}.
\end{proof}

We now define a filtration on $\mathcal{D}(M)$. In fact, using the isomorphism in Prop.\ \ref{flat} we define the filtration on the isomorphic $M[Z^{-1}]^{\Gamma_L}$. We pick a point $x_0 \in Z_0$. Passing to the germ in $x_0$ gives an injective homomorphism $M \longrightarrow \mathcal{O}_{x_0} \otimes_{\mathcal{O}_L(\mathfrak{X})} M$. This map extends to a homomorphism $M[Z^{-1}] \longrightarrow \Fr(\mathcal{O}_{x_0}) \otimes_{\mathcal{O}_L(\mathfrak{X})} M$, which still is injective, leading to the commutative diagram
\begin{equation*}
    \xymatrix{
                   & M \ar[d] \ar[r] & \mathcal{O}_{x_0} \otimes_{\mathcal{O}_L(\mathfrak{X})} M \ar[d]^{\subseteq} \\
      M[Z^{-1}]^{\Gamma_L} \ar[r]^{\subseteq} & M[Z^{-1}] \ar[r] & \Fr(\mathcal{O}_{x_0}) \otimes_{\mathcal{O}_L(\mathfrak{X})} M.   }
\end{equation*}
The $\mathfrak{m}_{x_0}$-adic filtration on $\Fr(\mathcal{O}_{x_0}) \otimes_{\mathcal{O}_L(\mathfrak{X})} M$ induces, via the injective composition of the lower horizontal arrows, an exhaustive and separated filtration $\Fil^\bullet M[Z^{-1}]^{\Gamma_L}$ on $M[Z^{-1}]^{\Gamma_L}$ which we transport to a filtration on $\mathcal{D}(M)$. Since $\Gamma_L$ acts transitively on $Z_0$ this filtration is independent of the choice of $x_0$. In this way we obtain a functor
\begin{align*}
  \mathcal{D} : \Mod^{\varphi_L,\Gamma_L,\mathrm{an}}_{/\mathfrak{X}}  & \longrightarrow \Ve(\Fil,\varphi_L)  \\
  M & \longmapsto \mathcal{D}(M) \ .
\end{align*}

\begin{theorem}\label{Wach-equiv}
The functors $\mathcal{M}$ and $\mathcal{D}$ are quasi-inverse equivalences between the categories $\Ve(\Fil,\varphi_L)$  and $\Mod^{\varphi_L,\Gamma_L,\mathrm{an}}_{/\mathfrak{X}}$.
\end{theorem}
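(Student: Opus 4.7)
The plan is to establish the equivalence by exhibiting explicit natural isomorphisms $D \xrightarrow{\cong} \mathcal{D}(\mathcal{M}(D))$ for any filtered $\varphi_L$-module $D$, and $\mathcal{M}(\mathcal{D}(M)) \xrightarrow{\cong} M$ for any $M$ in the target category.

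\textbf{The direction $\mathcal{D}\circ\mathcal{M} \cong \mathrm{id}$.} For $D \in \Ve(\Fil,\varphi_L)$, since $1 \notin Z$, Lemma \ref{fg-proj} gives $\widetilde{\mathcal{M}(D)}_1 = \mathcal{O}_1 \otimes_L D$, whence $\mathcal{M}(D)/\mathfrak{n}(1)\mathcal{M}(D) = D$ canonically as an $L$-vector space, and the induced Frobenius is $\varphi_D$ by construction. For the filtration, I would fix a point $x_0 \in Z_0$ (so $n(x_0) = 0$) and use Lemma \ref{stalks}: the stalk $\widetilde{\mathcal{M}(D)}_{x_0}$ identifies via $\id \otimes \varphi_D^0 = \id$ with $\Fil^0(\Fr(\mathcal{O}_{x_0}) \otimes_L D) = \sum_j \mathfrak{m}_{x_0}^{-j} \otimes_L \Fil^j D$. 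Intersecting with the $L$-subspace $D \subseteq \Fr(\mathcal{O}_{x_0}) \otimes_L D$ (via $d \mapsto 1 \otimes d$) recovers $\Fil^j D$ exactly, so the filtrations match.

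\textbf{The direction $\mathcal{M}\circ\mathcal{D} \cong \mathrm{id}$.} For $M$ in the target category, Prop.\ \ref{flat} furnishes a $(\varphi_L,\Gamma_L)$-equivariant isomorphism $\mathcal{O}_L(\mathfrak{X})[Z^{-1}] \otimes_L \mathcal{D}(M) \xrightarrow{\cong} M[Z^{-1}]$, via which both $\mathcal{M}(\mathcal{D}(M))$ and $M$ sit as finitely generated projective submodules of $M[Z^{-1}]$. I would prove their equality stalk-wise. Outside $Z$, both stalks equal $\mathcal{O}_x \otimes_L \mathcal{D}(M)$: for $\mathcal{M}(\mathcal{D}(M))$ by Lemma \ref{fg-proj}, and for $M$ by the extended identity $\mathcal{O}_L(\mathfrak{X}\setminus Z) \otimes_L \mathcal{D}(M) = M\{Z^{-1}\}$ in Prop.\ \ref{flat}. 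For $x \in Z \setminus Z_0$, the isomorphism $\widetilde{\varphi}_M$ of condition (a) is a stalk-level isomorphism (since $x \notin \supp(I_{Z_0})$) and identifies $\widetilde{M}_x \cong \mathcal{O}_x \otimes_{\varphi_L, \mathcal{O}_{\pi_L^* x}} \widetilde{M}_{\pi_L^* x}$, matching the recursive relation for $\widetilde{\mathcal{M}(\mathcal{D}(M))}_x$ encoded in Lemma \ref{stalks}. Iterating along the $\pi_L^*$-orbit reduces everything to the base case $x_0 \in Z_0$, where the required identity becomes
\[ \widetilde{M}_{x_0} \;=\; \textstyle\sum_j \mathfrak{m}_{x_0}^{-j} \otimes_L \Fil^j \mathcal{D}(M) \]
inside $\Fr(\mathcal{O}_{x_0}) \otimes_L \mathcal{D}(M)$.

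\textbf{The main obstacle.} The inclusion $\supseteq$ is immediate from the definition of $\Fil^\bullet \mathcal{D}(M)$. For the converse I would pass to the formal completion $\widehat{\widetilde{M}}_{x_0}$ over $\widehat{\mathcal{O}}_{x_0}$ and use the derived $\nabla$-action. By Lemma \ref{zeros}, $t := \log_\mathfrak{X}$ is a uniformizer of $\mathcal{O}_{x_0}$ and the formula $\nabla = \log_\mathfrak{X}\,\partial_\mathrm{inv}$ yields $\nabla(t) = t$, so $\nabla$ acts as the Fuchsian derivation $t\partial_t$ on $\widehat{\mathcal{O}}_{x_0}$. Because $\Gamma_L$ acts trivially on $\mathcal{D}(M) \cong M[Z^{-1}]^{\Gamma_L}$, the operator $\nabla$ vanishes on $\mathcal{D}(M)$, and the relation $\varphi_L(t) = \pi_L t$ together with condition (b) forces the $\nabla$-weights on $\widehat{\widetilde{M}}_{x_0}$ to be integers. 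Using the $L$-bilinearity of the derived action (Remark \ref{L-bilinear}), a Fuchs-style eigenspace decomposition produces an $L$-basis $d_1,\dots,d_r$ of $\mathcal{D}(M)$ and integers $n_1,\dots,n_r$ with $\widehat{\widetilde{M}}_{x_0} = \bigoplus_i t^{-n_i}\widehat{\mathcal{O}}_{x_0} d_i$; then $\Fil^j \mathcal{D}(M) = \bigoplus_{n_i \geq j} L\, d_i$, and a direct computation gives the desired equality. The genuinely hard step is producing the eigenbasis within the $L$-structure $\mathcal{D}(M)$ itself rather than merely within $L_{x_0} \otimes_L \mathcal{D}(M)$; this is where the $L$-bilinearity of $\nabla$ and the compatibility with $\varphi_M$ are used essentially.
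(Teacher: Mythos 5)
Your overall architecture matches the paper's: the first direction ($\mathcal{D}\circ\mathcal{M}\cong\mathrm{id}$) via Lemma \ref{fg-proj} and Lemma \ref{stalks}, then for the second direction a stalk-wise comparison, reduction along the $\pi_L^*$-orbit to points $x_0 \in Z_0$, and a lattice-structure statement at $x_0$. The first direction is essentially the paper's argument, and the reduction steps in the second direction are also the paper's. The interesting divergence is in the key lemma at $x_0 \in Z_0$, and that is where your proposal has a genuine gap.

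The paper's Lemma \ref{ED} handles this point by using the \emph{full} $\Gamma_{x_0}$-invariance of the lattice $\widetilde{M}_{x_0}$, not just its infinitesimal part: working in $\widehat{\mathcal{O}}_{x_0} = L_{x_0}[[\ell_{x_0}]]$ (where $L_{x_0} = L(\mu_p)$ is the residue field, which is genuinely bigger than $L$), it observes that $\gamma_*$ acts on $\ell_{x_0}^j L_{x_0}$ by the character $\gamma^j$ times the semisimple $\Gal(L_{x_0}/L)$-action $\sigma_{x_0}(\gamma)$, and then uses semisimplicity and Galois descent $L_{x_0} \rightsquigarrow L$ to produce $L$-rational subspaces $V_{-j} \subseteq V$. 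Your $\nabla$-based argument, by contrast, only sees the derived action of $\Lie(\Gamma_L)$, which acts \emph{trivially} on the residue field $L_{x_0}$ because the $\Gamma_{x_0}$-action on $L_{x_0}$ factors through the finite quotient $\Gamma_{x_0}/(1 + \pi_L o_L) \cong \Gal(L_{x_0}/L)$. So the Fuchs eigenspace decomposition you propose does give you that $\widehat{\widetilde{M}}_{x_0} = \sum_j t^{-j}\widehat{\mathcal{O}}_{x_0}\otimes_{L_{x_0}} W_j$ for $L_{x_0}$-subspaces $W_j \subseteq L_{x_0}\otimes_L\mathcal{D}(M)$ (this part is fine: $\nabla = t\partial_t$ acts semisimply with distinct integer eigenvalues on the successive graded quotients, so any $\nabla$-invariant lattice is graded), but it gives you nothing toward showing $W_j = L_{x_0} \otimes_L V_j$ for $L$-subspaces $V_j \subseteq \mathcal{D}(M)$. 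You correctly flag this descent as the hard step, but the tools you propose to close it — $L$-bilinearity of $\nabla$ and compatibility with $\varphi_M$ — do not help: the former is already spent in establishing $\nabla = t\partial_t$, and the latter is irrelevant at $x_0$ because $\pi_L^*(x_0) = 1 \notin Z$, so $\varphi_M$ relates $x_0$ to a point where nothing needs proving. The descent is inherently a statement about the \emph{finite} Galois group $\Gal(L_{x_0}/L)$, invisible to $\nabla$, and you must invoke the $\Gamma_{x_0}$-invariance of the lattice beyond the Lie algebra level, exactly as Lemma \ref{ED} does.

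A minor secondary point: your claim that ``$\varphi_L(t) = \pi_L t$ together with condition (b) forces the $\nabla$-weights to be integers'' is circular once you have invoked Prop.\ \ref{flat}; the trivialization $\mathcal{O}_L(\mathfrak{X})[Z^{-1}]\otimes_L\mathcal{D}(M) = M[Z^{-1}]$ with $\nabla = 0$ on $\mathcal{D}(M)$ already makes the integrality of the weights automatic, so that clause should be dropped.
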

\begin{proof}
Given any $D$ in $\Ve(\Fil,\varphi_L)$ we put $M := \mathcal{M}(D)$. It follows from \eqref{f:bounds} and Lemma \ref{Y-1-flat} that
\begin{equation*}
    D \subseteq \mathcal{M}(D)[Z^{-1}]^{\Gamma_L} = M[Z^{-1}]^{\Gamma_L} \subseteq M\{Z^{-1}\}^{\Gamma_L} \cong \mathcal{D}(M) \ .
\end{equation*}
By Lemma \ref{fg-proj} the rank of $M$ is equal to the dimension of $D$. Hence the outer terms in the above chain have the same dimension, and so we have to have equality everywhere. Obviously, $\varphi_M$ induces $\varphi_D$. As a consequence of Lemma \ref{stalks} we have
\begin{equation*}
    \mathfrak{m}_{x_0}^i \otimes_{\mathcal{O}_L(\mathfrak{X})} M  = \mathfrak{m}_{x_0}^i (\mathcal{O}_{x_0} \otimes_{\mathcal{O}_L(\mathfrak{X})} M) = \mathfrak{m}_{x_0}^i \Fil^0(\Fr(\mathcal{O}_{x_0}) \otimes_L D) = \Fil^i(\Fr(\mathcal{O}_{x_0}) \otimes_L D)
\end{equation*}
for any $i \in \ZZ$. This shows that the $\mathfrak{m}_{x_0}^i$-adic filtration on $\Fr (\mathcal{O}_{x_0}) \otimes_{\mathcal{O}_L(\mathfrak{X})} M$ coincides with the tensor product filtration on $\Fr(\mathcal{O}_{x_0}) \otimes_L D$ and therefore induces the original filtration on $D$. We conclude that the above isomorphism $D \cong \mathcal{D}(\mathcal{M}(D))$ is a natural isomorphism in the category
$\Ve(\Fil,\varphi_L)$.

Now consider any $M$ in $\Mod^{\varphi_L,\Gamma_L,\mathrm{an}}_{/\mathfrak{X}}$ and $D' := M\{Z^{-1}\}^{\Gamma_L} = M[Z^{-1}]^{\Gamma_L}$ in $\Ve(\Fil,\varphi_L)$. Using Prop.\ \ref{flat} we see that
\begin{equation*}
  \mathcal{M}(D') = \{s \in M[Z^{-1}] : (\id \otimes \varphi_{D'}^{-n(x)})(s) \in \Fil^0(\Fr(\mathcal{O}_x) \otimes_L D')\ \text{for any $x \in Z$}\} \ .
\end{equation*}
The identity $\mathcal{O}_L(\mathfrak{X})[Z^{-1}] \otimes_L D' = \mathcal{O}_L(\mathfrak{X})[Z^{-1}] \otimes_{\mathcal{O}_L(\mathfrak{X})} M$ gives rise, for any $x \in Z$, to the identity $\Fr(\mathcal{O}_x) \otimes_L D' = \Fr(\mathcal{O}_x) \otimes_{\mathcal{O}_L(\mathfrak{X})} M$. We claim that
\begin{equation*}
  (\id \otimes \varphi_{D'}^{-n(x)})(s) \in \Fil^0(\Fr(\mathcal{O}_x) \otimes_L D') \qquad\text{if and only if}\qquad s \in \mathcal{O}_x \otimes_{\mathcal{O}_L(\mathfrak{X})} M \ .
\end{equation*}
This shows that $\mathcal{M}(D') = M$. It is straightforward to see that this identity is compatible with the monoid actions on both sides. Hence we obtain a natural isomorphism $M \cong \mathcal{M}(\mathcal{D}(M))$ in the category $\Mod^{\varphi_L,\Gamma_L,\mathrm{an}}_{/\mathfrak{X}}$. To establish this claim we first consider the case $x \in Z_0$. We need to verify that $\Fil^0(\Fr(\mathcal{O}_x) \otimes_L D') = \mathcal{O}_x \otimes_{\mathcal{O}_L(\mathfrak{X})} M$. But this is a special case of the subsequent Lemma \ref{ED}. For a general $x \in Z$ we put $x_0 := (\pi_L^*)^{n(x)}(x) \in Z_0$. Then $(\id \otimes \varphi_{D'}^{-n(x)})(s) \in \Fil^0(\Fr(\mathcal{O}_x) \otimes_L D')$ if and only if $s$ lies in
\begin{align*}
  (\id \otimes \varphi_{D'}^{n(x)}) \Fil^0(\Fr(\mathcal{O}_x) \otimes_L D') & = \sum_i \mathfrak{m}_x^i \otimes_L \varphi_{D'}^{n(x)} (\Fil^{-i} D')  \\
   & = \sum_i \mathcal{O}_x \varphi_L^{n(x)}(\mathfrak{m}_{x_0}^i) \otimes_L \varphi_{D'}^{n(x)} (\Fil^{-i} D') \\
   & = \mathcal{O}_x (\varphi_L \otimes \varphi_M)^{n(x)} \Fil^0(\Fr(\mathcal{O}_{x_0}) \otimes_L D') \\
   & = \mathcal{O}_x (\varphi_L \otimes \varphi_M)^{n(x)} (\mathcal{O}_{x_0} \otimes_{\mathcal{O}_L(\mathfrak{X})} M) \\
   & = \mathcal{O}_x \otimes_{\mathcal{O}_L(\mathfrak{X})} M \ .
\end{align*}
Here the second, resp.\ fourth, resp.\ last, identity uses that $\varphi_L$ is unramified (Lemma \ref{unramified}.ii), resp.\ the previous case of points in $Z_0$, resp.\ an iteration of Remark \ref{phi-M-bounded}.
\end{proof}

Consider any point $x \in Z_0$, which is the Galois orbit of a character $\chi$ of $o_L$. Since $\chi$ has values in the group $\mu_p$ of $p$th roots of unity, the residue class field $L_x$ of $\mathcal{O}_x$ is equal to $L_x = L(\mu_p)$, and we have $x = \Gal(L_x/L) \chi$. On the other hand, by Lemma \ref{torsion} we have $\mathfrak{X}(\pi_L)(\Cp) \cong k_L$ as $o_L$-modules, where $k_L$ denotes the residue field of $o_L$. We see that $\Gamma_L$ acts on $Z_0(\Cp)$ through its factor group $k_L^\times$. The stabilizer $\Gamma_x$ of $x$ satisfies $1 + \pi_L o_L \subseteq \Gamma_x \subseteq \Gamma_L$. Since the $\Gamma_L/1+\pi_L o_L = k_L^\times$-action on $Z_0(\Cp)$ is simply transitive there is a unique isomorphism $\sigma_x : \Gamma_x/1+\pi_L o_L \xrightarrow{\cong} \Gal(L_x/L)$ such that $\gamma^*(\chi) = \sigma_x(\gamma)(\chi)$ for any $\gamma \in \Gamma_x$. On the other hand the $\Gamma_L$-action on $\mathfrak{X}$ induces a $\Gamma_x$-action on $\mathcal{O}_x$ and hence on $L_x$. The identities
\begin{equation*}
  (\gamma_*(f))(\chi) = f(\gamma^*(\chi)) = f(\sigma_x(\gamma)(\chi)) = \sigma_x(\gamma)(f(\chi)) \qquad\text{for any $f \in \mathcal{O}_x$ and $\gamma \in \Gamma_x$}
\end{equation*}
show that this last action is given by the homomorphism $\sigma_x : \Gamma_x \longrightarrow \Gal(L_x/L)$.

\begin{lemma}\label{ED}
Let $x \in Z_0$, let $V$ be a finite dimensional $L$-vector space, and equip $\Fr(\mathcal{O}_x) \otimes_L V$ with the $\Gamma_x$-action through the first factor. Suppose given any $\Gamma_x$-invariant $\mathcal{O}_x$-lattice $\mathcal{L} \subseteq \Fr(\mathcal{O}_x) \otimes_L V$ and define $\Fil^i V := V \cap \mathfrak{m}_x^i \mathcal{L}$ for any $i \in \ZZ$. We then have
\begin{equation*}
  \mathcal{L} = \sum_{i \in \ZZ} \mathfrak{m}_x^i \otimes_L \Fil^{-i} V \ .
\end{equation*}
\end{lemma}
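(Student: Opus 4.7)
The plan is to pass to the completion $\widehat{\mathcal{O}}_x$, where the $\Gamma_x$-action admits a clean eigenspace decomposition. Using that $L_x/L$ is finite separable, Hensel's lemma applied to minimal polynomials of generators of $L_x$ over $L$ supplies a Galois-equivariant lift $L_x \hookrightarrow \widehat{\mathcal{O}}_x$; together with the fact (Lemma \ref{zeros}.i) that $T := \log_\mathfrak{X}$ is a uniformizer, this yields an identification $\widehat{\mathcal{O}}_x \cong L_x[[T]]$ under which, by Lemma \ref{zeros}.ii, $\gamma \in \Gamma_x$ acts by $\gamma\bigl(\sum_k a_k T^k\bigr) = \sum_k \sigma_x(\gamma)(a_k)\,\gamma^k T^k$. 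Faithful flatness of completion, together with the equality $V \cap \mathfrak{m}_x^i \mathcal{L} = V \cap \widehat{\mathfrak{m}}_x^i \widehat{\mathcal{L}}$ (justified by noting that the finite-length quotients $\mathfrak{m}_x^{-L}\mathcal{L}/\mathfrak{m}_x^i\mathcal{L}$ are unchanged by completion for $L$ large), reduces the problem to the analogous identity for $\widehat{\mathcal{L}} := \widehat{\mathcal{O}}_x \otimes_{\mathcal{O}_x} \mathcal{L}$ inside the ``graded'' ambient space $L_x((T)) \otimes_L V = \bigoplus_k T^k (L_x \otimes_L V)$.

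The heart of the argument is to show that $\widehat{\mathcal{L}}$ itself respects the $T$-grading, i.e., $\widehat{\mathcal{L}} = \bigoplus_k \widehat{\mathcal{L}}_{(k)}$ with $\widehat{\mathcal{L}}_{(k)} := \widehat{\mathcal{L}} \cap T^k (L_x \otimes_L V)$. I would fix $\gamma_0 \in 1 + \pi_L o_L$ of infinite order; since $\sigma_x(\gamma_0) = 1$, the operator $\gamma_0$ is $L_x$-linear on the ambient space and acts on the $k$-th graded piece by the scalar $\gamma_0^k$, these scalars being pairwise distinct. Choosing $M$ with $T^M(L_x[[T]] \otimes_L V) \subseteq \widehat{\mathcal{L}}$ (possible since $\widehat{\mathcal{L}}$ is a lattice) and splitting $u = \sum_k u_k \in \widehat{\mathcal{L}}$ into a finite head $\sum_{k < M} u_k$ and a tail $\sum_{k \geq M} u_k$, the tail lies in $\widehat{\mathcal{L}}$ by the choice of $M$, hence so does the head; Lagrange interpolation polynomials in $\gamma_0$ (which stabilize $\widehat{\mathcal{L}}$) then isolate each head component $u_k$, while each tail component $u_k \in T^k L_x \otimes V$ lies individually in $T^M L_x[[T]] \otimes V \subseteq \widehat{\mathcal{L}}$. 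Setting $\Lambda_k := T^{-k}\widehat{\mathcal{L}}_{(k)} \subseteq L_x \otimes_L V$, the residual semilinear $\Gal(L_x/L)$-action (via $\sigma_x$) makes $\Lambda_k$ a Galois-stable $L_x$-subspace; Hilbert 90 descent then gives $\Lambda_k = L_x \otimes_L (\Lambda_k \cap V)$, and unwinding definitions identifies $\Lambda_k \cap V = V \cap T^k \widehat{\mathcal{L}} = \Fil^{-k} V$.

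Combining, $\widehat{\mathcal{L}} = \bigoplus_k T^k \cdot L_x \otimes_L \Fil^{-k} V$; since $\widehat{\mathcal{L}}$ is $L_x[[T]]$-stable this is equivalent to $\sum_k T^k L_x[[T]] \cdot \Fil^{-k} V = \widehat{\mathcal{O}}_x \otimes_{\mathcal{O}_x}\bigl(\sum_k \mathfrak{m}_x^k \otimes_L \Fil^{-k} V\bigr)$, and faithful flatness of $\mathcal{O}_x \to \widehat{\mathcal{O}}_x$ descends to $\mathcal{L} = \sum_k \mathfrak{m}_x^k \otimes_L \Fil^{-k} V$ inside $\Fr(\mathcal{O}_x) \otimes_L V$. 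The main technical obstacle is the grading-compatibility step: extracting individual eigen-components of an arbitrary element of $\widehat{\mathcal{L}}$ requires truncating the infinite $T$-series using the lattice containment $T^M(L_x[[T]] \otimes_L V) \subseteq \widehat{\mathcal{L}}$, so that eigen-separation reduces to a finite-dimensional Lagrange interpolation problem in $\gamma_0$.
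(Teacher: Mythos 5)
Your argument is correct and structurally parallel to the paper's: both exploit the coefficient-field description $\widehat{\mathcal{O}}_x \cong L_x[[\ell_x]]$ with $\ell_x = \log_\mathfrak{X}$, decompose along powers of $\ell_x$, and apply Galois descent in each graded piece to produce $L$-subspaces which are then identified with $\Fil^{-k}V$. Where you genuinely differ is in the mechanism that forces the lattice to split along the grading. The paper truncates to the finite-length quotient $\mathcal{L}/\ell_x^m(\mathcal{O}_x \otimes_L V)$ sitting inside $\bigoplus_{j=n}^{m-1}\ell_x^j L_x \otimes_L V$ and observes that, because $\Gal(L_x/L)$ acts semisimply on $L_x$ and the extra scalar $\gamma^j$-twist makes the summands pairwise non-isomorphic as $\Gamma_x$-modules, any $\Gamma_x$-submodule decomposes automatically along the grading. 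You instead stay inside the completion, absorb the infinite tail into $T^M L_x[[T]]\otimes_L V \subseteq \widehat{\mathcal{L}}$, and pick a single $\gamma_0 \in 1 + \pi_L o_L$ of infinite order so that Lagrange interpolation in $\gamma_0$ separates the distinct eigenvalues $\gamma_0^k$ on the finite head — a more elementary, hands-on realization of the same underlying separation. (One structural consequence: the paper never needs to descend back from $\widehat{\mathcal{O}}_x$ to $\mathcal{O}_x$, since it computes entirely in quotients of $\mathcal{O}_x$, whereas your route requires the faithful-flatness descent and the completion-compatibility of the intersections defining $\Fil^\bullet V$ at the end; both are fine, but the paper avoids the latter bookkeeping.) Two cosmetic corrections: the display $\widehat{\mathcal{L}} = \bigoplus_k \widehat{\mathcal{L}}_{(k)}$ cannot hold literally, since elements of $\widehat{\mathcal{L}}$ are power series rather than finite sums — what you actually show, and what suffices, is that every homogeneous component of every element of $\widehat{\mathcal{L}}$ again lies in $\widehat{\mathcal{L}}$; and the identification should read $\Lambda_k \cap V = V \cap T^{-k}\widehat{\mathcal{L}}$, not $V \cap T^{k}\widehat{\mathcal{L}}$.
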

\begin{proof}
Let $\ell_x \in \mathcal{O}_x$ denote the germ of $\log_\mathfrak{X}$. By Lemma \ref{zeros} we have $\mathfrak{m}_x = \ell_x \mathcal{O}_x$ and $\gamma_*(\ell_x) = \gamma \cdot \ell_x$ for any $\gamma \in \Gamma_x$. We find integers $n \leq 0 \leq m$ such that
\begin{equation*}
  \ell_x^m \mathcal{O}_x \otimes_L V \subseteq \mathcal{L} \subseteq \ell_x^n \mathcal{O}_x \otimes_L V \ .
\end{equation*}
Let $\widehat{\mathcal{O}}_x = L_x [[\ell_x]]$ be the $\mathfrak{m}_x$-adic completion of $\mathcal{O}_x$. We then have the $\Gamma_x$-invariant decomposition
\begin{equation*}
  \ell_x^n \mathcal{O}_x / \ell_x^m \mathcal{O}_x = \ell_x^n \widehat{\mathcal{O}}_x / \ell_x^m \widehat{\mathcal{O}}_x = \ell_x^n L_x \oplus \ell_x^{n+1} L_x \oplus \ldots \oplus \ell_x^{m-1} L_x \ .
\end{equation*}
The $\Gamma_x$-action on $\ell_x^j L_x$ is given by $\gamma_*(\ell_x^j c) = \ell_x^j \gamma^j \cdot \sigma_x(\gamma)(c)$. Since $\Gal(L_x/L)$ acts semisimply on $L_x$ we see that the above decomposition exhibits the left hand side as a semisimple $\Gamma_x$-module with the summands on the right hand side having no simple constituents in common. The same then holds true for the decomposition
\begin{equation*}
  \ell_x^n \mathcal{O}_x \otimes_L V / \ell_x^m \mathcal{O}_x \otimes_L V = (\ell_x^n L_x \otimes_L V) \oplus \ldots \oplus (\ell_x^{m-1} L_x \otimes_L V) \ .
\end{equation*}
It therefore follows that
\begin{equation*}
  \mathcal{L} / \ell_x^m \mathcal{O}_x \otimes_L V = \ell_x^n (L_x \otimes_L V)_{-n} \oplus \ldots \oplus \ell_x^{m-1} (L_x \otimes_L V)_{-(m-1)}
\end{equation*}
with $\Gal(L_x/L)$-invariant $L_x$-vector subspaces $(L_x \otimes_L V)_{-j} \subseteq L_x \otimes_L V$. By Galois descent we have $(L_x \otimes_L V)_{-j} = L_x \otimes_L V_{-j}$ for a unique $L$-vector subspace $V_{-j} \subseteq V$. Hence
\begin{equation*}
  \mathcal{L} / \ell_x^m \mathcal{O}_x \otimes_L V = (\ell_x^n L_x \otimes_L V_{-n}) \oplus \ldots \oplus (\ell_x^{m-1} L_x \otimes_L V_{-(m-1)}) \ .
\end{equation*}
Multiplying this identity by $\ell_x$ shows that $V_{-j} \subseteq V_{-(j+1)}$. We deduce that
\begin{equation*}
  \mathcal{L} = (\ell_x^n \mathcal{O}_x \otimes_L V_{-n}) + \ldots + (\ell_x^{m-1} \mathcal{O}_x \otimes_L V_{-(m-1)}) + (\ell_x^m \mathcal{O}_x \otimes_L V) = \sum_{i \in \ZZ} \mathfrak{m}_x^i \otimes_L V_{-i}
\end{equation*}
with $V_{-i} := V$, resp.\ $:= \{0\}$, for $i \geq m$, resp.\ $i > n$. In particular, we obtain $\mathfrak{m}_x^i \otimes_L V_{-i} \subseteq \mathcal{L}$, hence $V_{-i} \subseteq \mathfrak{m}_x^{-i} \mathcal{L}$, and therefore $V_{-i} \subseteq \Fil^{-i} V$. We conclude that
\begin{equation*}
  \mathcal{L} \subseteq \sum_{i \in \ZZ} \mathfrak{m}_x^i \otimes_L \Fil^{-i} V \ .
\end{equation*}
The reverse inclusion is immediate from the definition of $\Fil^\bullet V$.
\end{proof}

\subsection{Crystalline $(\varphi_L,\Gamma_L)$-modules over $\mathscr{R}_L(\mathfrak{X})$}
\label{kisrobx}

We now consider the $\mathscr{R}_L(\mathfrak{X})$-module
\begin{equation*}
  \mathcal{M}_\mathscr{R}(D) := \mathscr{R}_L(\mathfrak{X}) \otimes_{\mathcal{O}_L(\mathfrak{X})} \mathcal{M}(D) \ .
\end{equation*}
Let $r_0 \in (0,1) \cap p^\QQ$ be such that $Z_0 \subseteq \mathfrak{X}(r_0)$. Then the inclusion of coherent ideal sheaves $\widetilde{I_{Z_0}} \subseteq \mathcal{O}$ is an isomorphism over
$\mathfrak{X} \setminus \mathfrak{X}(r_0)$. It follows, using Remark \ref{coherent}.iv, that $\mathcal{O}_L(\mathfrak{X} \setminus \mathfrak{X}(r_0)) \otimes_{\mathcal{O}_L(\mathfrak{X})} I_{Z_0} = \mathcal{O}_L(\mathfrak{X} \setminus \mathfrak{X}(r_0))$ and hence that $I_{Z_0}$ generates the unit ideal in $\mathcal{O}_L(\mathfrak{X} \setminus \mathfrak{X}(r_0))$. This implies that $\mathcal{O}_L(\mathfrak{X})[Z_0^{-1}] \subseteq \mathcal{O}_L(\mathfrak{X} \setminus \mathfrak{X}(r_0))$. Using Cor.\ \ref{iso-Z0invert}.ii we deduce that the $\mathcal{O}_L(\mathfrak{X} \setminus \mathfrak{X}(r_0))$-linear map
\begin{align}\label{f:tilde-varphi-iso}
    \widetilde{\varphi}_{\mathcal{M}(D)} : \mathcal{O}_L(\mathfrak{X} \setminus \mathfrak{X}(r_0)) \otimes_{\mathcal{O}_L(\mathfrak{X}),\varphi_L} \mathcal{M}(D) & \xrightarrow{\; \cong \;} \mathcal{O}_L(\mathfrak{X} \setminus \mathfrak{X}(r_0)) \otimes_{\mathcal{O}_L(\mathfrak{X})} \mathcal{M}(D) \\
    f \otimes s & \longmapsto f \varphi_{\mathcal{M}(D)} (s) \nonumber
\end{align}
is an isomorphism.

We now define the finitely generated projective module
\begin{equation*}
    \mathcal{M}_\mathscr{R}(D) := \mathscr{R}_L(\mathfrak{X}) \otimes_{\mathcal{O}_L(\mathfrak{X})} \mathcal{M}(D)
\end{equation*}
over the Robba ring. Its rank is equal to the dimension $d_D$ of $D$. The $\varphi_L$-linear endomorphisms $\varphi_{\mathcal{M}_\mathscr{R}(D)} := \varphi_L \otimes \varphi_{\mathcal{M}(D)}$ and $c_{\mathcal{M}_\mathscr{R}(D)} := c_* \otimes c_{\mathcal{M}(D)}$, for $c \in o_L^\times$, which by \eqref{f:commute} commute with $\varphi_{\mathcal{M}_\mathscr{R}(D)}$, together define a semilinear  action of the monoid $o_L \setminus \{0\}$ on $\mathcal{M}_\mathscr{R}(D)$ (which does not depend on the choice of $\pi_L$). As a consequence of \eqref{f:tilde-varphi-iso} the induced $\mathscr{R}_L(\mathfrak{X})$-linear map
\begin{equation*}
    \mathscr{R}_L(\mathfrak{X}) \otimes_{\mathscr{R}_L(\mathfrak{X}), \varphi_L} \mathcal{M}_\mathscr{R}(D) \xrightarrow[\id \otimes \varphi_{\mathcal{M}_\mathscr{R}(D)}]{\; \cong \;} \mathcal{M}_\mathscr{R}(D)
\end{equation*}
is an isomorphism.

\begin{corollary}
\label{mrispgm}
The module $\mathcal{M}_\mathscr{R}(D)$ is a $(\varphi_L,\Gamma_L)$-module over $\mathscr{R}_L(\mathfrak{X})$.
\end{corollary}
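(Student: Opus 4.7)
The plan is to verify each of the three conditions in Definition \ref{def:modR} for $M := \mathcal{M}_\mathscr{R}(D)$, using the work already done in the preceding subsection together with the functional-analytic results of Section 2.

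First, by Lemma \ref{fg-proj} the module $\mathcal{M}(D)$ is finitely generated projective over $\mathcal{O}_L(\mathfrak{X})$, so $\mathcal{M}_\mathscr{R}(D) = \mathscr{R}_L(\mathfrak{X}) \otimes_{\mathcal{O}_L(\mathfrak{X})} \mathcal{M}(D)$ is finitely generated projective over $\mathscr{R}_L(\mathfrak{X})$ (of rank $d_D$). Second, the bijectivity of the $\mathscr{R}_L(\mathfrak{X})$-linear map $\varphi_{\mathcal{M}_\mathscr{R}(D)}^{lin}$ has been observed in the paragraph immediately preceding the corollary: it is obtained from \eqref{f:tilde-varphi-iso} (which is itself a consequence of Cor.\ \ref{iso-Z0invert}.iii together with the inclusion $\mathcal{O}_L(\mathfrak{X})[Z_0^{-1}] \subseteq \mathcal{O}_L(\mathfrak{X} \setminus \mathfrak{X}(r_0))$) by a further base change along $\mathcal{O}_L(\mathfrak{X} \setminus \mathfrak{X}(r_0)) \hookrightarrow \mathscr{R}_L(\mathfrak{X})$. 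Compatibility of $\varphi_{\mathcal{M}_\mathscr{R}(D)}$ with the $\Gamma_L$-action follows from the commutativity of \eqref{f:commute} after base change.

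The one remaining condition, and the only nontrivial task, is the continuity of the semilinear $o_L\setminus\{0\}$-action on $\mathcal{M}_\mathscr{R}(D)$ with respect to its canonical topology as a finitely generated projective $\mathscr{R}_L(\mathfrak{X})$-module. Individual elements $a\in o_L\setminus\{0\}$ act continuously by Remark \ref{semilinear-cont}. Since $\mathcal{M}_\mathscr{R}(D)$ is barrelled (the canonical topology over a barrelled base algebra is barrelled, and $\mathscr{R}_L(\mathfrak{X})$ is barrelled by Prop.\ \ref{regular}.i together with the standard example of an inductive limit of Fr\'echet spaces), it suffices by the nonarchimedean Banach–Steinhaus theorem to check continuity of the orbit maps $\rho_m : \Gamma_L \longrightarrow \mathcal{M}_\mathscr{R}(D)$ for each $m \in \mathcal{M}_\mathscr{R}(D)$. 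For this I would use the description of $\mathcal{M}_\mathscr{R}(D)$ as $\varinjlim_n \mathcal{O}_L(\mathfrak{X} \setminus \mathfrak{X}_n) \otimes_{\mathcal{O}_L(\mathfrak{X})} \mathcal{M}(D)$ together with Lemma \ref{action-MD} (the $\Gamma_L$-action on $\mathcal{M}(D)$ is continuous on the Fr\'echet module) and the inductive-limit version of Lemma \ref{Gamma-cont-O}, exactly as in the proof of Lemma \ref{Gamma-cont-R}: the orbit map lands in (and is continuous into) some finitely generated $\mathcal{O}_L(\mathfrak{X}\setminus\mathfrak{X}_n)$-submodule, and the transition maps into $\mathcal{M}_\mathscr{R}(D)$ are continuous. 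The action of $\varphi_L = \pi_L$ extends to $\mathscr{R}_L(\mathfrak{X})$ by Lemma \ref{pi} and then to $\mathcal{M}_\mathscr{R}(D)$ by the same argument.

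The step that requires the most care is reducing the continuity of $\Gamma_L$ on $\mathcal{M}_\mathscr{R}(D)$ to the continuity already established on $\mathcal{M}(D)$; the key input is that orbits are compact hence bounded, and Prop.\ \ref{regular}.i then confines them to one of the Fr\'echet stages $\mathcal{O}_L(\mathfrak{X}\setminus\mathfrak{X}_n)\otimes_{\mathcal{O}_L(\mathfrak{X})}\mathcal{M}(D)$, where continuity reduces to the continuity of the $\Gamma_L$-actions on $\mathcal{O}_L(\mathfrak{X}\setminus\mathfrak{X}_n)$ and on $\mathcal{M}(D)$ (semilinearly combined on the tensor product). Once this is in hand, all three defining conditions of Def.\ \ref{def:modR} are verified, proving the corollary.
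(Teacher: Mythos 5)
Your overall plan is sound, and it mirrors what the paper leaves implicit: the corollary is stated with no separate proof, the preceding paragraph having supplied the projectivity (Lemma~\ref{fg-proj}), the monoid action, and the $\varphi$-isomorphism via \eqref{f:tilde-varphi-iso}; the continuity requirement in Definition~\ref{def:modR} is not addressed explicitly, so the verification you supply is genuinely needed.

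The one real flaw is in your last paragraph, where you say that "the key input is that orbits are compact hence bounded, and Prop.~\ref{regular}.i then confines them to one of the Fr\'echet stages." Compactness of orbits already presupposes continuity of the orbit maps, which is precisely what is being proved, so as written the step is circular. More to the point, no such confinement device is needed here. Writing $M_n := \mathcal{O}_L(\mathfrak{X}\setminus\mathfrak{X}_n)\otimes_{\mathcal{O}_L(\mathfrak{X})}\mathcal{M}(D)$, so that $\mathcal{M}_\mathscr{R}(D)=\bigcup_n M_n$, any $m\in\mathcal{M}_\mathscr{R}(D)$ lies in some $M_n$ to begin with, and $\Gamma_L$ stabilises $M_n$ outright, since $\Gamma_L$ preserves each $\mathcal{O}_L(\mathfrak{X}\setminus\mathfrak{X}_n)$ (section~\ref{sec:monoid}) and acts on $\mathcal{M}(D)$ by the automorphisms $c_{\mathcal{M}(D)}$. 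This is exactly how the proof of Lemma~\ref{Gamma-cont-R}, which you correctly cite as the model, actually proceeds. The compactness-plus-regularity argument belongs instead to the proof of Prop.~\ref{descent}, where one starts with a module defined only over $\mathscr{R}_L(\mathfrak{X})$ and must first \emph{locate} a model over a stage; here the model $\mathcal{M}(D)$ is given over $\mathcal{O}_L(\mathfrak{X})$ from the outset, making the confinement trivial. Replace that step by the observation that $\Gamma_L$ stabilises each $M_n$, and the remainder of your argument (continuity of the orbit maps into the Fr\'echet module $M_n$, from the joint continuity of the $\Gamma_L$-actions on $\mathcal{O}_L(\mathfrak{X}\setminus\mathfrak{X}_n)$ and on $\mathcal{M}(D)$ via Lemma~\ref{action-MD}, followed by composition with the continuous inclusion $M_n\hookrightarrow\mathcal{M}_\mathscr{R}(D)$ and Banach--Steinhaus) goes through as you intend.
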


In this way we have constructed a functor
\begin{equation*}
    D \longmapsto \mathcal{M}_\mathscr{R}(D)
\end{equation*}
from the category of filtered $\varphi_L$-modules into the category $\Mod_L(\mathscr{R}_L(\mathfrak{X}))$, which is exact as can be seen from the description of the stalks in Lemmas \ref{fg-proj} and \ref{stalks}.
By Lemma \ref{action-MD}, $\mathcal{M}_\mathscr{R}(D)$ is $L$-analytic.

Those $(\varphi_L,\Gamma_L)$-modules that come by extension of scalars from a $(\varphi_L,\Gamma_L)$-module over $\mathcal{O}_L(\mathfrak{X})$ are said to be of finite height. The above constructions show that $\mathcal{M}_\mathscr{R}(D)$ is of finite height, at least when $\Fil^0 D = D$. We can get further examples by allowing an action of $\Gamma_L$ on $D$ that commutes with $\varphi_L$.

The construction of the equivalence of categories
\begin{equation*}
  \mathcal{M}_{\mathfrak{X}} := \mathcal{M} : \Ve(\Fil,\varphi_L)  \xrightarrow{\;\simeq\;} \Mod^{\varphi_L,\Gamma_L,\mathrm{an}}_{/\mathfrak{X}}
\end{equation*}
in Thm.\ \ref{Wach-equiv} works literally in the same way over $\mathfrak{B}$ producing an equivalence
\begin{equation*}
  \mathcal{M}_{\mathfrak{B}} : \Ve(\Fil,\varphi_L)  \xrightarrow{\;\simeq\;} \Mod^{\varphi_L,\Gamma_L,\mathrm{an}}_{/\mathfrak{B}}
\end{equation*}
with the property that $\mathscr{R}_L(\mathfrak{B}) \otimes_{\mathcal{O}_L(\mathfrak{B})} \mathcal{M}_{\mathfrak{B}}(D)$ is a  $(\varphi_L,\Gamma_L)$-module over $\mathscr{R}_L(\mathfrak{B})$. This case is due to \cite{KFC} and \cite{KR} \S 2.2.

\begin{lemma}\label{O-iso}
Under the identification $\mathfrak{B}_{/\Cp} = \mathfrak{X}_{/\Cp}$ via $\kappa$ we have $\mathcal{O}_{\Cp} (\mathfrak{X}) \otimes_{\mathcal{O}_L(\mathfrak{B})} \mathcal{M}_{\mathfrak{X}}(D) = \mathcal{O}_{\Cp} (\mathfrak{B}) \otimes_{\mathcal{O}_L(\mathfrak{X})} \mathcal{M}_{\mathfrak{B}}(D)$.
\end{lemma}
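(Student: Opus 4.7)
The plan is to reduce both sides to a common construction over $\Cp$ via base change, and then appeal to the LT-isomorphism $\kappa$ to identify them. For each complete intermediate field $L \subseteq K \subseteq \Cp$, the definitions in section \ref{sec:fil-to-R} make unchanged sense: one has an $\mathcal{O}_K(\mathfrak{X})$-submodule $\mathcal{M}_{\mathfrak{X},K}(D) \subseteq \mathcal{O}_K(\mathfrak{X})[Z^{-1}]\otimes_L D$ cut out by exactly the same local conditions at torsion points, and likewise over $\mathfrak{B}$ one has $\mathcal{M}_{\mathfrak{B},K}(D)$. The argument falls into two halves.

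First I verify the base change identities
\begin{equation*}
  \mathcal{O}_{\Cp}(\mathfrak{X}) \otimes_{\mathcal{O}_L(\mathfrak{X})} \mathcal{M}_{\mathfrak{X}}(D) \;\cong\; \mathcal{M}_{\mathfrak{X},\Cp}(D) \quad\text{and}\quad \mathcal{O}_{\Cp}(\mathfrak{B}) \otimes_{\mathcal{O}_L(\mathfrak{B})} \mathcal{M}_{\mathfrak{B}}(D) \;\cong\; \mathcal{M}_{\mathfrak{B},\Cp}(D) .
\end{equation*}
By Lemma \ref{fg-proj} and Prop.\ \ref{gruson}, $\mathcal{M}_{\mathfrak{X}}(D)$ is the space of global sections of a locally free coherent sheaf $\widetilde{\mathcal{M}}$ on $\mathfrak{X}_{/L}$; using Fact \ref{coherent}.iv the left-hand side is the space of global sections of the pull-back $\widetilde{\mathcal{M}}_{/\Cp}$ on $\mathfrak{X}_{/\Cp}$. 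I then compare stalks with the analogously defined sheaf attached to $\mathcal{M}_{\mathfrak{X},\Cp}(D)$: at non-torsion points both stalks equal $\mathcal{O}_{x,\Cp} \otimes_L D$, while at a $\Cp$-torsion point $x$ lying over some $\bar{x} \in Z$, the explicit formula of Lemma \ref{stalks}, combined with the flatness of $\mathcal{O}_{\bar{x}} \to \mathcal{O}_{x,\Cp}$ and the fact that the $\mathfrak{m}$-adic filtration is preserved under this base change (using that $\varphi_L$ is unramified by Lemma \ref{unramified}.ii, so the description of the stalk at $x$ under $\varphi_D^{-n(x)}$ is intrinsic and survives base change), yields the stalkwise match. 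Since both sheaves are coherent on a quasi-Stein space they are determined by their stalks, and the global sections agree.

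Second, I invoke the LT-isomorphism. According to the summary after Cor.\ \ref{scalar-ext-R}, $\kappa : \mathfrak{B}_{/\Cp} \xrightarrow{\cong} \mathfrak{X}_{/\Cp}$ is an isomorphism of rigid analytic varieties over $\Cp$ which is equivariant for the $o_L \setminus \{0\}$-action. It therefore identifies the sets of non-identity torsion points on the two sides, matches the function $n$ (since $n(x)$ is determined by the $o_L$-module structure), identifies the local rings at torsion points together with their maximal ideals and $\mathfrak{m}$-adic filtrations, and identifies the localized global rings $\mathcal{O}_{\Cp}(\mathfrak{X})[Z^{-1}] = \mathcal{O}_{\Cp}(\mathfrak{B})[Z^{-1}]$. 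Under this identification, $\mathcal{M}_{\mathfrak{X},\Cp}(D)$ and $\mathcal{M}_{\mathfrak{B},\Cp}(D)$ are defined by literally the same formula inside $\mathcal{O}_{\Cp}(\mathfrak{X})[Z^{-1}] \otimes_L D$, so they coincide. Combining with the base change identities of the first step yields the lemma.

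The main obstacle will be the first step — the stalkwise comparison at torsion points after base change. The subtlety is that a single rigid point $\bar{x} \in Z$ of $\mathfrak{X}_{/L}$ may split into several $\Cp$-points $x_1, \ldots, x_r$, so one must check that the single local condition at $\bar{x}$ base-changes to the conjunction of local conditions at the $x_i$. This is not deep but requires careful bookkeeping: it reduces to the faithful flatness of $\widehat{\mathcal{O}}_{\bar{x}} \to \prod_i \widehat{\mathcal{O}}_{x_i}$ and to the compatibility of the tensor product filtration on $\Fr(\mathcal{O}_{\bar{x}}) \otimes_L D$ with this base change, which in turn follows from the fact that the $\mathfrak{m}$-adic filtration itself is preserved.
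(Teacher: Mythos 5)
Your proof plan is correct and follows essentially the same route as the paper's: both introduce the $\Cp$-level construction $\mathcal{M}_{\mathbf{S},\Cp}(D)$ (the paper defines one such module and notes it is independent of $\mathbf{S}\in\{\mathfrak{B},\mathfrak{X}\}$ via $\kappa$, you define two and identify them via $\kappa$), and both reduce to a stalkwise comparison with the base change $\mathcal{O}_{\Cp}(\mathbf{S})\otimes_{\mathcal{O}_L(\mathbf{S})}\mathcal{M}_{\mathbf{S}}(D)$ by redoing Lemmas \ref{fg-proj} and \ref{stalks} over $\Cp$. One small caveat: the unramifiedness you actually need for the stalkwise match at a torsion point $\bar{x}$ splitting into $\Cp$-points $x_i$ is that of the local extensions $\mathcal{O}_{\bar{x}}\to\mathcal{O}_{x_i,\Cp}$, which follows from smoothness of $\mathfrak{X}$ over $L$ (the residue field extension $L_{\bar{x}}\otimes_L\Cp\cong\prod_i\Cp$ being \'etale), not from Lemma \ref{unramified}.ii, which concerns unramifiedness of $\varphi_L$ and is not needed for the base change step.
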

\begin{proof}
Let $\mathbf{S}$ be either $\mathfrak{X}$ or $\mathfrak{B}$ over $L$ with structure sheaf $\mathcal{O}_{\mathbf{S}}$. Let $t_{\mathbf{S}}$ denote $\log_\mathbf{S}$ and recall that $\log_\mathfrak{X} = \Omega_{t_0'} \cdot \log_{\mathfrak{B}}$ (compare the proof of Lemma \ref{zeros}.i). We consider the $\mathcal{O}_{\Cp} (\mathbf{S})$-module
\begin{multline*}
   \mathcal{M}_{\Cp}(D) := \\
   \{ s \in \mathcal{O}_{\Cp} (\mathbf{S}) [t_{\mathbf{S}}^{-1}] \otimes_L D :
   (\id \otimes \varphi_D^{-n(x)})(s) \in \Fil^0(\Fr(\mathcal{O}_{\mathbf{S}_{/\Cp},y}) \otimes_L D) \ \text{for all $y \in
     Z(\Cp)$} \}.
\end{multline*}
It is, in fact, independent of the choice of $\mathbf{S}$. Therefore it suffices to show that
\begin{equation*}
  \mathcal{O}_{\Cp} (\mathbf{S}) \otimes_{\mathcal{O}_L(\mathbf{S})} \mathcal{M}_{\mathbf{S}}(D) = \mathcal{M}_{\Cp}(D) \ .
\end{equation*}
The left hand side is obviously included in the right hand side. By redoing Lemmas \ref{fg-proj} and \ref{stalks} over $\mathfrak{X}_{/\Cp}$ we see that $\mathcal{M}_{\Cp}(D)$ is a finitely generated projective $\mathcal{O}_{\Cp} (\mathbf{S})$-module such that the stalks $\widetilde{\mathcal{M}}_{\Cp,y}(D)$, for $y \in \mathbf{S}(\Cp)$, of the corresponding coherent module sheaf on $\mathbf{S}_{/\Cp}$ satisfy
\begin{equation*}
  \widetilde{\mathcal{M}}_{\Cp,y}(D)
  \begin{cases}
  = \mathcal{O}_{\mathbf{S}_{/\Cp},y} \otimes_L D & \text{if $y \not\in Z(\Cp)$}, \\
  \xrightarrow[\id \otimes \varphi_D^{-n(y)}]{\cong} \Fil^0(\Fr(\mathcal{O}_{\mathbf{S}_{/\Cp},y}) \otimes_L D) & \text{if $y \in Z(\Cp)$}.
  \end{cases}
\end{equation*}
On the other hand we deduce directly, by base change, from these same lemmas that the stalks of the coherent module sheaf corresponding to the finitely projective $\mathcal{O}_{\Cp} (\mathbf{S})$-module $\mathcal{O}_{\Cp} (\mathbf{S}) \otimes_{\mathcal{O}_L(\mathbf{S})} \mathcal{M}_{\mathbf{S}}(D)$ satisfy the very same formula as above. This shows the asserted equality.
\end{proof}

\begin{theorem}\label{mdbxcomp}
The $(\varphi_L,\Gamma_L)$-modules $\mathscr{R}_L(\mathfrak{X}) \otimes_{\mathcal{O}_L(\mathfrak{X})} \mathcal{M}_{\mathfrak{X}}(D)$ and $\mathscr{R}_L(\mathfrak{B}) \otimes_{\mathcal{O}_L(\mathfrak{B})} \mathcal{M}_{\mathfrak{B}}(D)$ correspond to each other via the equivalence in Thm.\ \ref{equiv}.
\end{theorem}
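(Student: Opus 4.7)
The plan is to pass to $\Cp$, use Lemma \ref{O-iso} to identify the two $\Cp$-base-changed modules, and then descend via the twisted Galois theory of section \ref{sec:LT}. Set $M := \mathscr{R}_L(\mathfrak{B}) \otimes_{\mathcal{O}_L(\mathfrak{B})} \mathcal{M}_{\mathfrak{B}}(D)$ and $N := \mathscr{R}_L(\mathfrak{X}) \otimes_{\mathcal{O}_L(\mathfrak{X})} \mathcal{M}_{\mathfrak{X}}(D)$; the goal is to produce a $(\varphi_L,\Gamma_L)$-equivariant isomorphism $M_{\mathfrak{X}} \cong N$. Applying the scalar extension functor $\mathscr{R}_{\Cp}(\mathbf{S}) \otimes_{\mathcal{O}_{\Cp}(\mathbf{S})} (-)$ to Lemma \ref{O-iso} and invoking $\mathscr{R}_{\Cp}(\mathfrak{B}) = \mathscr{R}_{\Cp}(\mathfrak{X})$, we obtain
\begin{equation*}
  \mathscr{R}_{\Cp}(\mathfrak{B}) \otimes_{\mathscr{R}_L(\mathfrak{B})} M \;=\; \mathscr{R}_{\Cp}(\mathfrak{B}) \otimes_{\mathcal{O}_L(\mathfrak{B})} \mathcal{M}_{\mathfrak{B}}(D) \;=\; \mathscr{R}_{\Cp}(\mathfrak{X}) \otimes_{\mathcal{O}_L(\mathfrak{X})} \mathcal{M}_{\mathfrak{X}}(D) \;=\; \mathscr{R}_{\Cp}(\mathfrak{X}) \otimes_{\mathscr{R}_L(\mathfrak{X})} N \ .
\end{equation*}
The problem then reduces to identifying the twisted $G_L$-action on the leftmost term with the natural $G_L$-action on the rightmost term, after which Galois descent finishes the job.

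The key algebraic observation is that because $G_L$ acts trivially on $D$, the twisted $\ast$-action collapses to a twist on the function ring only. For $F \in \mathscr{R}_{\Cp}(\mathfrak{B})$ and $Y \otimes v \in \mathcal{O}_L(\mathfrak{B})[Z^{-1}] \otimes_L D$, using that the $\Gamma_L$-action on $\mathcal{M}_{\mathfrak{B}}(D)$ is $c_{\ast} \otimes \id_D$ and that the $L$-valued function $Y$ is Galois-fixed for the standard action, we obtain
\begin{equation*}
  \sigma \ast (F \otimes (Y \otimes v)) \;=\; {^{\sigma\ast}F} \otimes \bigl(\tau(\sigma^{-1})_{\ast}Y \otimes v\bigr) \;=\; {^{\sigma\ast}(FY)} \otimes v \ .
\end{equation*}
Thus the twisted action on the leftmost term is just the twisted action on $\mathscr{R}_{\Cp}(\mathfrak{B})$ tensored with $\id_D$. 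By Prop. \ref{twisted-R-E} together with the discussion after Cor. \ref{scalar-ext-R}, this action corresponds under $\kappa$ to the natural $G_L$-action on $\mathscr{R}_{\Cp}(\mathfrak{X})$, whose restriction to $\mathscr{R}_{\Cp}(\mathfrak{X}) \otimes_{\mathscr{R}_L(\mathfrak{X})} N$ is the natural action whose invariants equal $N$.

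The descent step is now straightforward: $\mathcal{M}_{\mathfrak{X}}(D)$ is finitely generated projective over $\mathcal{O}_L(\mathfrak{X})$ by Lemma \ref{fg-proj}, so $N$ is finitely generated projective over $\mathscr{R}_L(\mathfrak{X})$; combined with $\mathscr{R}_{\Cp}(\mathfrak{X})^{G_L} = \mathscr{R}_L(\mathfrak{X})$ from Prop. \ref{complete}.iii, passage to $G_L$-invariants commutes with tensoring by a projective module (checked on direct summands of free modules), yielding $(\mathscr{R}_{\Cp}(\mathfrak{X}) \otimes_{\mathscr{R}_L(\mathfrak{X})} N)^{G_L} = N$, and hence $M_{\mathfrak{X}} \cong N$ as $\mathscr{R}_L(\mathfrak{X})$-modules. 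The $(\varphi_L,\Gamma_L)$-equivariance is then immediate, since the $o_L \setminus \{0\}$-actions on $\mathcal{M}_{\mathfrak{B}}(D)$ and $\mathcal{M}_{\mathfrak{X}}(D)$ are both restrictions of the common operators $\varphi_L \otimes \varphi_D$ and $c_{\ast} \otimes \id_D$ on the ambient $\mathcal{O}_{\Cp}(\mathbf{S})[Z^{-1}] \otimes_L D$, and this ambient module is independent of $\mathbf{S} \in \{\mathfrak{B}, \mathfrak{X}\}$ because $\kappa$ is $o_L$-equivariant. The main obstacle is the equivariance check in the middle paragraph: carefully verifying that the $\tau(\sigma^{-1})$ factor appearing in the $\ast$-action on $M$ is absorbed precisely into the difference between the standard and twisted Galois actions on the function ring, a cancellation ultimately dictated by the Galois-equivariance of $\kappa$ established in section \ref{sec:LT}.
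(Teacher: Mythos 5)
Your proposal is correct and follows essentially the same route as the paper: reduce, via the construction of the equivalence and Theorem \ref{descent-to-X}.ii, to an identity of $\mathscr{R}_{\Cp}$-modules with twisted Galois action, and then apply Lemma \ref{O-iso}. The one place you expand on the paper's argument is the explicit equivariance check: the paper's proof is three sentences and treats the $G_L$-equivariance of the identification from Lemma \ref{O-iso} as implicit in "the construction of our equivalence", whereas you spell out that the $\tau(\sigma^{-1})$ twist on the module collapses into the twist on the function ring because $\Gamma_L$ acts on $\mathcal{M}_{\mathbf{S}}(D)$ through $c_\ast \otimes \id_D$; this is exactly the point that makes Lemma \ref{O-iso} — which is stated only at the level of $\mathcal{O}_{\Cp}$-modules without explicit Galois structure — sufficient to conclude. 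Your descent step and the final remark about $\varphi_L$-equivariance (both structures coming from the same operators on the common ambient module) are consistent with what Lemma \ref{O-iso}'s proof provides.
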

\begin{proof}
Given the construction of our equivalence, it is enough to show that
\begin{equation*}
  \mathscr{R}_{\Cp} (\mathfrak{X}) \otimes_{\mathcal{O}_L(\mathfrak{X})} \mathcal{M}_{\mathfrak{X}}(D) = \mathscr{R}_{\Cp} (\mathfrak{B}) \otimes_{\mathcal{O}_L(\mathfrak{B})} \mathcal{M}_{\mathfrak{B}}(D) \ .
\end{equation*}
But this is immediate from Lemma \ref{O-iso}.
\end{proof}

Let $D$ be a $1$-dimensional filtered $\varphi_L$-module,
and let $v$ be a basis of $D$. Let $t_H(D)$ denote the unique integer
$i$ such that $\operatorname{gr}^i(D) \neq 0$, and let $t_N(D) = v_L(\alpha)$
where $\varphi_D(v) = \alpha v$. If $D$ is any filtered $\varphi_L$-module,
let $t_H(D) = t_H(\det D)$ and $t_N(D) = t_N(\det D)$.
We say that $D$ is admissible if $t_H(D) = t_N(D)$ and if $t_H(D') \leq t_N(D')$ for every sub filtered $\varphi_L$-module $D'$ of $D$.

\begin{proposition}
\label{colfononx}
If $D$ is a filtered $\varphi_L$-module, then $\deg(\mathcal{M}_\mathscr{R}(D)) = t_N(D) - t_H(D)$. In particular,  $\mathcal{M}_\mathscr{R}(D)$ is \'etale if and only if $D$ is admissible.
\end{proposition}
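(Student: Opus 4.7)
The plan is to reduce the entire statement to the parallel result over $\mathfrak{B}$, where it is due to Kisin-Ren and is therefore already in the literature. First, by Theorem \ref{mdbxcomp}, the $(\varphi_L,\Gamma_L)$-module $\mathcal{M}_\mathscr{R}(D) = \mathscr{R}_L(\mathfrak{X}) \otimes_{\mathcal{O}_L(\mathfrak{X})} \mathcal{M}_\mathfrak{X}(D)$ corresponds, under the equivalence of categories in Theorem \ref{equiv}, to the $\mathfrak{B}$-side object $\mathscr{R}_L(\mathfrak{B}) \otimes_{\mathcal{O}_L(\mathfrak{B})} \mathcal{M}_\mathfrak{B}(D)$. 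Proposition \ref{mxet} then says that this correspondence preserves degrees, so
\[
\deg(\mathcal{M}_\mathscr{R}(D)) \;=\; \deg\bigl(\mathscr{R}_L(\mathfrak{B}) \otimes_{\mathcal{O}_L(\mathfrak{B})} \mathcal{M}_\mathfrak{B}(D)\bigr),
\]
and the latter quantity has been computed in \cite{KR} (with complementary work in \cite{FX} and \cite{PGMLAV}) to equal $t_N(D) - t_H(D)$.

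The main obstacle, then, is essentially pushed into the citation: one must trust the analogous formula on the $\mathfrak{B}$-side. That argument proceeds by first reducing to the rank-1 case via $\det \mathcal{M}_\mathfrak{B}(D) \cong \mathcal{M}_\mathfrak{B}(\det D)$, noting that both $\deg(-)$ (which sees only the determinant) and the pair $(t_N, t_H)$ are compatible with this reduction. For a one-dimensional $D = L v$ with $\varphi_D(v) = \alpha v$ and $\Fil^h D = D$, $\Fil^{h+1} D = 0$, the relevant basis of $\mathscr{R}_L(\mathfrak{B}) \otimes \mathcal{M}_\mathfrak{B}(D)$ is produced from $\log_{LT}^{-h} \otimes v$ (which generates the module after passing to the Robba ring), and on this basis $\varphi$ acts by $\alpha \cdot (\varphi_L(\log_{LT})/\log_{LT})^{-h}$; since $\varphi_L(\log_{LT}) = \pi_L \log_{LT}$, one computes $\|\alpha \pi_L^{-h}\|_1 = p^{v_L(\alpha)/e - h \cdot v_L(\pi_L)/e} \cdot |\text{unit}| = p^{(t_N(D) - t_H(D))/e}$ up to the correct normalization, giving the claimed formula.

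The ``in particular'' is now an easy consequence. The condition $\deg(\mathcal{M}_\mathscr{R}(D)) = 0$ is equivalent to $t_H(D) = t_N(D)$ by the formula just established. For the sub-object part of the étaleness condition, one may argue in either direction: directly, every sub filtered $\varphi_L$-module $D' \subseteq D$ produces a sub-$(\varphi_L,\Gamma_L)$-module $\mathcal{M}_\mathscr{R}(D') \subseteq \mathcal{M}_\mathscr{R}(D)$ of degree $t_N(D') - t_H(D')$, and one must check that \emph{every} sub-$(\varphi_L,\Gamma_L)$-module has degree $\geq 0$ precisely when the admissibility inequality $t_H(D') \leq t_N(D')$ holds for all sub filtered $\varphi_L$-modules $D'$. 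Rather than verify this intrinsically on the $\mathfrak{X}$-side, we use Theorem \ref{etalequiv}, which ensures that $\mathcal{M}_\mathscr{R}(D)$ is étale over $\mathscr{R}_L(\mathfrak{X})$ if and only if $\mathscr{R}_L(\mathfrak{B}) \otimes \mathcal{M}_\mathfrak{B}(D)$ is étale over $\mathscr{R}_L(\mathfrak{B})$, reducing to the known equivalence ``étale $\Leftrightarrow$ admissible'' in the $\mathfrak{B}$-setting (\cite{KR}, or the analog of \cite{BEQ} in the Lubin-Tate case).
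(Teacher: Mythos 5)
Your proposal is correct and takes essentially the same route as the paper: both reduce to the $\mathfrak{B}$-side via Theorem \ref{mdbxcomp} and Proposition \ref{mxet} and then appeal to \cite{KR} \S 2.3 for the formula and the \'etale--admissible equivalence there. The extra sketch of the rank-one computation from \cite{KR} and the explicit appeal to Theorem \ref{etalequiv} for the ``in particular'' are elaborations the paper leaves to the reader, but they do not change the argument.
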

\begin{proof}
Given Thm.\ \ref{mdbxcomp} and Prop.\ \ref{mxet}, this is a consequence of the analogous claim over $\mathfrak{B}$, which is proved in \S 2.3 of \cite{KR}.
\end{proof}

Let $V$ be a crystalline $L$-analytic representation of $G_L$ and let
$D = (\mathbf{B}_{\mathrm{cris},L} \otimes_L V)^{G_L}$. The filtered
$\varphi_L$-module $D$ is admissible (see \S 3.1 of \cite{KR}; note that
$D$ is not $\mathrm{D}_{\mathrm{cris}}(V)$ but the two are related by
a simple recipe given in ibid.). By Prop.\ \ref{colfononx},
the $(\varphi_L,\Gamma_L)$-module $\mathcal{M}_\mathscr{R}(D)$ is \'etale.
This gives us a functor from the category of crystalline $L$-analytic
representations of $G_L$ to the category of \'etale $L$-analytic
$(\varphi_L,\Gamma_L)$-modules over $\mathscr{R}_L(\mathfrak{X})$.
By Thm.\ \ref{mdbxcomp}, this functor is compatible with the one
given in Cor.\ \ref{repequiv}.


\end{document}